      \def\@setcopyright{}
      \def\serieslogo@{}
\newcommand{\Complex}{\mathbb C}
\newcommand{\C}{\mathbb C}
\newcommand{\Real}{\mathbb R}
\newcommand{\N}{\mathbb N}
\newcommand{\ddbar}{\overline\partial}
\newcommand{\pr}{\partial}
\newcommand{\ol}{\overline}
\newcommand{\Td}{\widetilde}
\newcommand{\norm}[1]{\left\Vert#1\right\Vert}
\newcommand{\abs}[1]{\left\vert#1\right\vert}
\newcommand{\set}[1]{\left\{#1\right\}}
\newcommand{\To}{\rightarrow}
 \def\cC{\mathscr{C}}
\theoremstyle{plain}
\newtheorem{thm}{Theorem}[section]
\newtheorem{cor}[thm]{Corollary}
\newtheorem{lem}[thm]{Lemma}
\newtheorem{ass}[thm]{Assumption}
\theoremstyle{definition}
\newtheorem{defn}[thm]{Definition}
\theoremstyle{remark}
\newtheorem{rem}[thm]{Remark}
\numberwithin{equation}{section}
\begin{document}
\title[]{On the singularities of the Bergman projections for 
lower energy forms on complex manifolds with boundary}
\author[]{Chin-Yu Hsiao}
\address{Institute of Mathematics, Academia Sinica, 6F, 
Astronomy-Mathematics Building, No.1, Sec.4, Roosevelt Road, Taipei 10617, Taiwan}
\email{chsiao@math.sinica.edu.tw or chinyu.hsiao@gmail.com} 
\author[]{George Marinescu}
\address{Universit{\"a}t zu K{\"o}ln,  Mathematisches Institut,
    Weyertal 86-90,   50931 K{\"o}ln, Germany\\
    \& Institute of Mathematics `Simion Stoilow', Romanian Academy,
Bucharest, Romania}
\email{gmarines@math.uni-koeln.de}

\begin{abstract}
Let $M$ be a complex manifold of dimension $n$ with smooth boundary $X$. 
Given $q\in\set{0,1,\ldots,n-1}$, let $\Box^{(q)}$ be the $\ddbar$-Neumann Laplacian 
for $(0,q)$ forms. We show that the spectral kernel of $\Box^{(q)}$ 
admits a full asymptotic expansion near the non-degenerate part of the boundary $X$ 
and the  Bergman projection admits an asymptotic expansion under some 
local closed range condition. As applications, we establish 
Bergman kernel asymptotic expansions for some domains with 
weakly pseudoconvex boundary and $S^1$-equivariant 
Bergman kernel asymptotic expansions and embedding theorems 
for domains with holomorphic $S^1$-action. 
\end{abstract}

\maketitle \tableofcontents

\section{Introduction and statement of the main results} \label{s-intro}

Let $M$ be a relatively compact open subset with smooth 
boundary $X$ of a complex manifold $M'$ of complex dimension $n$. 
The study of the $\ddbar$-Neumann Laplacian on $M$ is a classical 
subject in several complex variables. For $q\in\{0,1,\ldots,n-1\}$, 
let $\Box^{(q)}$ be the $\ddbar$-Neumann Laplacian for 
$(0,q)$-forms on $M$. The Levi form of $X$ 
is said to satisfy condition $Z(q)$ at $p\in X$ if it has at least 
$n-q$ positive or at least $q+1$
negative eigenvalues. When condition $Z(q)$ holds at each point 
of $X$, Kohn's $L^2$ estimates give the
hypoellipicity with loss of one derivative for the solutions
of $\Box^{(q)}u=f$, that is, $\ker\Box^{(q)}$ is a finite dimensional
subspace of $\Omega^{0,q}(\overline{M})$ and 
for each $(0,q)$-form $f$ orthogonal to $\ker\Box^{(q)}$
with derivatives of order $\leq s$ in $L^2$ 
the equation $\Box^{(q)}u=f$ has a solution $u$
with derivatives of order $\leq s+1$ in $L^2$
(see \cite{CS01,FK72,Hor65,Ko63-64,KN65}). 

The Bergman projection $B^{(q)}$ is the orthogonal projection onto the 
kernel of $\Box^{(q)}$ in the $L^2$ space. The Schwartz kernel 
$B^{(q)}(\cdot,\cdot)$ of $B^{(q)}$ is called the Bergman kernel.
If $Z(q)$ holds, the above results show that 
the Bergman projection $B^{(q)}$ is a smoothing operator on $\ol M$
and $B^{(q)}(\cdot,\cdot)$ is a smooth on $\ol{M}\times\ol{M}$. 
When $Z(q)$ fails at some point of $X$, the study of the 
boundary behavior of the Bergman kernel $B^{(q)}(\cdot,\cdot)$
is a very interesting problem.

The case when $q=0$ and the Levi form is positive definite on $X$ (so $Z(0)$ fails) 
is especially a classical subject with a rich history.
After the seminal paper of Bergman \cite{Berg32},
H\"ormander \cite[Theorem 3.5.1]{Hor65} (see also \cite{Die73})
determined the limit of $B^{(0)}(x,x)$ when 
$x$ approaches a strictly pseudoconvex point of the boundary
of a domain for which the maximal $\ddbar$ operator acting on functions
has a closed range. More precisely, if $\rho$ is a defining function of $M$,
then $(-\rho(x))^{n+1}B^{(0)}(x,x)\to c\det\mathcal{L}_{\rho}(x_0)$, as
$x\to x_0$, where $x_0\in X$ is a point where the Levi form $\mathcal{L}_{\rho}(x_0)$
is positive definite and $c>0$ is a universal constant.
There are many extensions and variations of H\"ormander's asymptotics for weakly
pseudoconvex or hyperconvex domains, see e.\,g.\ 
\cite{Ba92,BoSt95,Cat89,Chen17,NRSW89,Oh84} and references therein.

The existence of the complete asymptotic expansion $B^{(0)}(x,x)$ at
the boundary was obtained by 
Fefferman~\cite{Fer74} on the diagonal, namely, there are
functions $a,b\in\cC^\infty(\ol{M})$ such that
\begin{equation}\label{feff}
B^{(0)}(x,x)=a(x)(-\rho(x))^{-(n+1)}+b(x)\log(-\rho(x))
\end{equation} 
in $M$.
Subsequently, Boutet de Monvel-Sj\"{o}strand~\cite{BS76} proved the 
off-diagonal asymptotics of $B^{(0)}(x,y)$ in complete generality 
(cf.\ \eqref{e-gue190708ycd}, \eqref{bdms}).

If $q=n-1$ and the Levi form is negative definite (so $Z(n-1)$ fails), 
H\"{o}rmander~\cite[Theorem 4.6]{Hor04} obtained the corresponding 
asymptotics for the Bergman projection for $(0,n-1)$-forms 
in the distribution sense. For general $q>0$, the first author showed 
in~\cite{Hsiao08} that if $Z(q)$ fails, the Levi form is non-degenerate on $X$ 
and $\Box^{(q)}$ has $L^2$ closed range, the singularities of 
the Bergman projection for $(0,q)$-forms admits a full asymptotic expansion. 

In the developments about the Bergman projection mentioned above 
one assumes that the Levi form is non-degenerate on $X$. 
When the Levi form is degenerate on some part of $X$ there are fewer results. 
Fix a point $p\in X$. Suppose that $Z(q)$ fails at $p$ and the Levi form is 
non-degenerate near $p$ (the Levi form can be degenerate away $p$). 
In this work, we show that the spectral kernel of $\Box^{(q)}$ admits a 
full asymptotic expansion near $p$ and the Bergman projection for $(0,q)$-forms 
admits an asymptotic expansion near $p$ under certain closed range condition. 
Our results are natural generalizations of the asymptotics of the Bergman kernel
for strictly pseudoconvex domains by Fefferman \cite{Fer74}
and Boutet~de Monvel and Sj{\"o}strand \cite{BS76}
and they are conjectured by H\"ormander \cite[p.\ 1306]{Hor04}.


Another motivation to study the spectral kernel of $\Box^{(q)}$ 
comes from geometric quantization. 
An important question in the presence of a Lie group $G$ acting on $M'$
is ``quantization commutes with reduction" \cite{GS:82}, see \cite{Ma10} 
for a survey.
The study of $G$-invariant Bergman projection plays an important role 
in geometric quantization. 
If we consider a manifold with boundary as above, 
the $\ddbar$-Neumann Laplacian may not have 
$L^2$ closed range but the $G$-invariant $\ddbar$-Neumann Laplacian has $L^2$ closed range. 
In these cases, we can use the 
asymptotic expansion for the spectral kernel of $\Box^{(q)}$ to study 
$G$-invariant Bergman projection. Therefore, our results about spectral 
kernels for the $\ddbar$-Neumann Laplacian could have applications 
in geometric quantization on complex manifolds with boundary. 
In~\cite{HMM19}, we used the asymptotic expansions of the spectral kernels for the 
Kohn Laplacian to study the geometric quantization on CR manifolds. 

We now formulate the main results. 
We refer to Section~\ref{s:prelim} for some notations and terminology used here. 
Let $M$ be a relatively compact open subset with $\cC^\infty$ boundary $X$ of a
complex manifold $M'$ of dimension $n$. 
We fix a Hermitian metric $\langle\,\cdot\,|\,\cdot\,\rangle$ on 
$\Complex TM'$ so that $T^{1,0}M'\perp T^{0,1}M'$. 
The Hermitian metric $\langle\,\cdot\,|\,\cdot\,\rangle$ on $\C TM'$ 
induces by duality, a Hermitian metric $\langle\,\cdot\,|\,\cdot\,\rangle$ on 
$\oplus^{p,q=n}_{p,q=1}T^{*p,q}M'$. 
Let $\rho\in\cC^\infty(M',\Real)$ be a defining function of $X$,
that is, $\rho=0$ on $X$, $\rho<0$ on $M$ and $d\rho\neq0$ near $X$. 
From now on, we take a defining function $\rho$ so that 
$\norm{d\rho}=1$ on $X$. Let $dv_{M'}$ be the volume form on $M'$ 
induced by the Hermitian metric $\langle\,\cdot\,|\,\cdot\,\rangle$ on 
$\Complex TM'$ and let $(\,\cdot\,|\,\cdot\,)_M$  be the inner product on 
$\Omega^{0,q}(\ol M)$ induced by $\langle\,\cdot\,|\,\cdot\,\rangle$ 
(see \eqref{e-gue190312}). 
Let $L^2_{(0,q)}(M)$ be the completion of $\Omega^{0,q}(\ol M)$ 
with respect to $(\,\cdot\,|\,\cdot\,)_M$. We extend $(\,\cdot\,|\,\cdot\,)_M$ 
to $L^2_{(0,q)}(M)$ in the standard way. 

Given $q\in\{0,1,\ldots,n-1\}$, let 
\[\Box^{(q)}: {\rm Dom\,}\Box^{(q)}\subset L^2_{(0,q)}(M)\To L^2_{(0,q)}(M)\]
be the  $\ddbar$-Neumann Laplacian on $(0, q)$ forms 
(see \eqref{e-gue190312syd}). The operator $\Box^{(q)}$ is a non-negative self-adjoint operator. 
For a Borel set $B\subset\Real$ we denote by $E(B)$ the spectral projection of $\Box^{(q)}$ 
corresponding to the set $B$, where $E$ is the spectral measure of $\Box^{(q)}$.
For $\lambda\geq0$ we consider the spectral spaces of $\Box^{(q)}$,
\begin{equation} \label{e-gue190708yyd}
H^q_{\leq\lambda}(\ol M):={\rm Ran\,}E\bigr((-\infty,\lambda]\bigr)\subset L^2_{(0,q)}(M).
\end{equation}
For $\lambda=0$ we obtain the space of harmonic forms 
$H^q(\ol M):=H^q_{\leq0}(\ol M)={\rm Ker\,}\Box^{(q)}$. 
For $\lambda\geq0$, let
\begin{equation}\label{e-gue190708yydI}
B^{(q)}_{\leq\lambda}:L^2_{(0,q)}(M)\To H^q_{\leq\lambda}(\ol M)
\end{equation}
be the orthogonal projection with respect to the $L^2$ inner product $(\,\cdot\,|\,\cdot\,)_M$ 
and let
$$B^{(q)}_{\leq\lambda}(x,y)\in\mathscr D'(M\times M,T^{*0,q}M\boxtimes(T^{*0,q}M)^*)$$
denote the distribution kernel of $B^{(q)}_{\leq\lambda}$. For $\lambda=0$ we obtain the
\emph{Bergman projection} 
$B^{(q)}:=B^{(q)}_{\leq0}$ and the \emph{Bergman kernel} 
$B^{(q)}(x,y):=B^{(q)}_{\leq0}(x,y)$. 

The boundary $X$ is a CR manifold of real dimension $2n-1$
 with natural CR structure $T^{1,0}X:=T^{1,0}M'\cap\Complex TX$ 
 (see the discussion after \eqref{e-gue190312yyd}). 
 The Levi form of $X$ is given by \eqref{e-gue190312sds}. 
 Let $U$ be an open set of $M'$ with $U\cap X\neq\emptyset$. 
Let $A$ and $B$ be $\cC^\infty$ vector bundles over $M'$ and let 
$F_1, F_2: \cC^\infty_0(U\cap\ol M,A)\To\mathscr D'(U\cap\ol M,B)$ 
be continuous operators. Let 
$F_1(x,y), F_2(x,y)\in\mathscr D'((U\times U)\cap(\ol M\times\ol M), A\boxtimes B^*)$ 
be the distribution kernels of $F_1$ and $F_2$, respectively. We write 
$$F_1\equiv F_2\:\:\text{or} 
\:\: F_1(x,y)\equiv F_2(x,y)\mod\cC^\infty((U\times U)\cap(\ol M\times\ol M))$$ 
if $F_1(x,y)=F_2(x,y)+r(x,y)$, where 
$r(x,y)\in\cC^\infty((U\times U)\cap(\ol M\times\ol M),A\boxtimes B^*)$. 
Let $$S^{n}_{1, 0}((U\times U)\cap(\ol M\times\ol M)\times
]0, \infty[\,,T^{*0,q}M'\boxtimes(T^{*0,q}M')^*)$$ 
denote the H\"ormander symbol space on $(U\times U)\cap(\ol M\times\ol M)\times]0, \infty[$ 
(see the discussion after \eqref{e-gue190326scd} and \eqref{e-gue190529ycdI}).
Our first main result is the following. 

\begin{thm}\label{t-gue190708yyd}
Let $M$ be a relatively open subset with smooth boundary $X$ of a complex manifold $M'$ 
of complex dimension $n$. Let $U$ be an open set of $M'$ with $U\cap X\neq\emptyset$. 
Suppose that the Levi form is non-degenerate of constant signature $(n_-, n_+)$ on $U\cap X$, 
where $n_-$ denotes the number of negative eigenvalues of the Levi form on $U\cap X$. 
Fix $\lambda>0$. If $q\neq n_-$ then 
\begin{equation}\label{e-gue190810yyd}
B^{(q)}_{\leq\lambda}(x,y)\equiv0\mod\cC^\infty((U\times U)\cap(\ol M\times\ol M)).
\end{equation}
For $q=n_-$ we have 
\begin{equation}\label{e-gue190708ycd}
B^{(q)}_{\leq\lambda}(x,y)\equiv\int^\infty_0e^{i\phi(x, y)t}b(x, y, t)dt
\mod\cC^\infty((U\times U)\cap(\ol M\times\ol M)),
\end{equation}
where
\[
\begin{split}
&b(x, y, t)\in S^{n}_{1, 0}((U\times U)\cap(\ol M\times\ol M)\times
]0, \infty[,T^{*0,q}M'\boxtimes(T^{*0,q}M')^*),\\
b(x, y, t)\sim&\sum^\infty_{j=0}b_j(x, y)t^{n-j}\:\: 
\text{in 
$S^{n}_{1, 0}((U\times U)\cap(\ol M\times\ol M)\times
]0, \infty[,T^{*0,q}M'\boxtimes(T^{*0,q}M')^*)$},\\
\end{split}
\]
and $b_0(x,x)$ is given by \eqref{e-gue190531yyda} below. Moreover, 
\begin{equation}\label{e-gue190708ycdII}
\begin{split}
&\phi(x, y)\in C^\infty((U\times U)\cap(\ol M\times\ol M)),\ \ {\rm Im\,}\phi\geq0, \\
&\phi(x, x)=0,\ \ x\in U\cap X,\ \ \phi(x, y)\neq0\ \ \mbox{if}\ \ (x, y)\notin{\rm diag\,}((U\times U)\cap(X\times X)), \\
&{\rm Im\,}\phi(x, y)>0\ \ \mbox{if}\ \ (x, y)\notin(U\times U)\cap(X\times X), \\
&\phi(x, y)=-\ol{\phi(y, x)},\\
&\mbox{$d_x\phi(x,x)=-\omega_0(x)-id\rho(x)$, for every $x\in U\cap X$},
\end{split}\end{equation}
where 
$\omega_0(x)\in\cC^\infty(X, T^*X)$ is the global Reeb one form 
on $X$ given by \eqref{e-gue190312scdqII}. 
\end{thm}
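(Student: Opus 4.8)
The plan is to reduce the global statement to a microlocal analysis of the $\ddbar$-Neumann problem near a non-degenerate boundary point, and then construct a microlocal parametrix for the spectral projector $E\bigl((-\infty,\lambda]\bigr)$ of $\Box^{(q)}$ whose kernel has the stated Fourier-integral form. First I would localize: since the statement is modulo $\cC^\infty$ on $(U\times U)\cap(\ol M\times\ol M)$, I can work in a small coordinate patch around any $p\in U\cap X$, choose boundary normal coordinates in which $X=\{\rho=0\}$ and $\rho$ is (up to sign) the normal variable, and record that in these coordinates the reduced boundary operator attached to $\Box^{(q)}$ is, up to smoothing, a pseudodifferential operator on $X$ whose symbol is governed by the Levi form. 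The key structural input—available from Kohn's $L^2$ theory and the calculus of the $\ddbar$-Neumann problem as used in \cite{Hsiao08,Hor04,BS76}—is that away from the characteristic variety the problem is elliptic (in the Boutet de Monvel transmission sense), so only a conic neighborhood of the characteristic set $\Sigma\subset T^*X$ (one ray over each boundary point, generated by $\omega_0$) contributes to the singularities of $B^{(q)}_{\le\lambda}$.

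Next I would carry out the model computation: on the osculating Heisenberg group, or equivalently after rescaling near $\Sigma$, the operator $\Box^{(q)}$ together with the $\ddbar$-Neumann boundary condition becomes an explicit harmonic-oscillator-type system in the normal direction whose spectrum is computed from the eigenvalues of the Levi form. This is exactly where the dichotomy $q=n_-$ versus $q\neq n_-$ enters: when $q\neq n_-$ the model operator has a spectral gap, the low-lying spectral space is trivial on the model, hence $B^{(q)}_{\le\lambda}\equiv0$ microlocally near $\Sigma$ and, combined with ellipticity off $\Sigma$, globally on $U$—this gives \eqref{e-gue190810yyd}. When $q=n_-$ the model operator has a one-dimensional (Bergman-type) kernel in the fiber, and the associated finite-rank projector produces, after integrating the rescaling parameter, an oscillatory integral $\int_0^\infty e^{i\phi t}b(x,y,t)\,dt$; the phase $\phi$ is obtained by solving an eikonal equation with the boundary data $d_x\phi(x,x)=-\omega_0(x)-id\rho(x)$ dictated by $\Sigma$ and the geometry of $\ol M$, which forces $\mathrm{Im}\,\phi\ge0$ and the listed vanishing and symmetry properties, while $b$ and its expansion $b\sim\sum_j b_jt^{n-j}$ come from a transport-equation hierarchy, with $b_0(x,x)$ pinned down by the model as in \eqref{e-gue190531yyda}.

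The construction is then completed by a standard but delicate gluing argument: one builds an approximate spectral projector $P_\lambda$ with kernel of the above form, checks $(\Box^{(q)}-\text{error})P_\lambda\equiv0$ and $P_\lambda^2\equiv P_\lambda$, $P_\lambda^*\equiv P_\lambda$ modulo smoothing and modulo the part of the spectrum in $(0,\lambda]$, and finally invokes the closed-range/finiteness properties of $\Box^{(q)}$ under $Z(q-1)$ and $Z(q+1)$ (which hold off $\Sigma$ because the Levi form is non-degenerate there) together with the spectral theorem to identify $P_\lambda$ with $B^{(q)}_{\le\lambda}$ up to a smoothing operator. Here the fact that we cut off at $\lambda>0$ rather than at $0$ is what lets us avoid any global closed-range hypothesis: the "bad" directions where $\Box^{(q)}$ may fail to have closed range contribute spectrum only near $0$ away from $\Sigma$, hence are absorbed into $H^q_{\le\lambda}\ominus$(microlocal model part) and do not affect the singularities near $U$.

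The main obstacle I expect is the normal-direction analysis of the $\ddbar$-Neumann boundary condition within the Fourier-integral/complex-phase calculus: unlike the Kohn Laplacian on a closed CR manifold (handled in \cite{HMM19}), here one must propagate the oscillatory ansatz across the boundary, solve the eikonal equation as a boundary-value problem for a complex phase with the prescribed first-order data, and verify that the resulting $\phi$ satisfies $\mathrm{Im}\,\phi>0$ off $X\times X$ and that the $\ddbar$-Neumann condition is compatible with the transport equations at all orders—i.e.\ making the formal Heisenberg model rigorous uniformly up to the boundary and checking that no obstruction to solving the transport hierarchy arises from the boundary condition. Controlling the symbol estimates in $S^n_{1,0}$ uniformly on $(U\times U)\cap(\ol M\times\ol M)$, including at boundary points where the Levi form degenerates outside $U$, is the technically heaviest part.
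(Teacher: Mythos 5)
Your overall architecture (construct a complex-phase Fourier integral approximate projector near $U\cap X$, then identify it with $B^{(q)}_{\leq\lambda}$) coincides with the paper's, but your route to the oscillatory integral itself differs: you propose to redo the Heisenberg-model/eikonal/transport construction for the $\ddbar$-Neumann problem from scratch, whereas the paper obtains the approximate projector $\Pi^{(q)}$ by a microlocal Hodge decomposition $I-\ol{\pr}^*_fN^{(q+1)}\ddbar-\ddbar N^{(q-1)}\ol{\pr}^*_f$ built from parametrices in the adjacent degrees $q\pm1$ (where $Z$ holds on $U\cap X$), and then identifies its kernel with $\tilde P S_-L^{(q)}$, i.e.\ the Poisson operator applied to the boundary Szeg\H{o}-type projector already constructed in \cite{Hsiao08}; the phase and symbol data in \eqref{e-gue190708ycdII} and \eqref{e-gue190531yyda} are imported from there. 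Your construction route is in principle viable (it is essentially the content of \cite{Hsiao08}), but it is not where the new difficulty of this theorem lies.

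The genuine gap is in your final identification step. Approximate idempotence, approximate self-adjointness, $\Box^{(q)}P_\lambda\equiv0$ and an appeal to ``the spectral theorem'' do not give $P_\lambda\equiv B^{(q)}_{\leq\lambda}\mod\cC^\infty$, because a priori $B^{(q)}_{\leq\lambda}$ is only an $L^2$-bounded operator with no local regularity near $U$, so none of the compositions needed for the comparison are under control; moreover the ``closed-range/finiteness properties under $Z(q\pm1)$'' you invoke are not available, since those conditions hold only on $U\cap X$ (the Levi form may degenerate elsewhere) and yield only \emph{local} parametrices, not global closed range or finite-dimensional spectral spaces. The missing content is exactly Section~\ref{s-gue190531yyda} of the paper: using $N^{(q\pm1)}$ together with the quantitative spectral bounds (e.g.\ $\|\ol{\pr}^*(\Box^{(q)})^mB^{(q)}_{\leq\lambda}v\|_M\leq\lambda^{m+\frac12}\|v\|_M$ as in Lemma~\ref{l-gue190603yydI}) one bootstraps that $\ddbar B^{(q)}_{\leq\lambda}$, $\ol{\pr}^*B^{(q)}_{\leq\lambda}$ and hence $\Box^{(q)}B^{(q)}_{\leq\lambda}$ map $L^2_{(0,q)}(M)$ into $H^s_{\rm loc}(U\cap\ol M)$ for all $s$ (Theorem~\ref{t-gue190606yyd}), and that $B^{(q)}_{\leq\lambda}$ extends to $H^{-s}_{\rm comp}\To H^{-s}_{\rm loc}$ (Lemma~\ref{l-gue190604sydk}); then, with the $L^2$-bounded $A^{(q)}_\lambda$ satisfying $\Box^{(q)}A^{(q)}_\lambda+B^{(q)}_{\leq\lambda}=I$, one shows $B^{(q)}_{\leq\lambda}-\Pi^{(q)}B^{(q)}_{\leq\lambda}$ and $B^{(q)}_{\leq\lambda}-B^{(q)}_{\leq\lambda}\Pi^{(q)}$ are smoothing while $\Pi^{(q)}-\Pi^{(q)}B^{(q)}_{\leq\lambda}$ and $\Pi^{(q)}-B^{(q)}_{\leq\lambda}\Pi^{(q)}$ are only ``half-smoothing'', and concludes by computing $(\Pi^{(q)}-B^{(q)}_{\leq\lambda})^2$ in two ways (Theorem~\ref{t-gue190606ycd}). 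The same bootstrap is needed for your claim \eqref{e-gue190810yyd} when $q\neq n_-$: microlocal triviality of the model near $\Sigma$ plus ellipticity off $\Sigma$ does not by itself yield smoothness of the kernel of a globally defined spectral projector. Without this regularity theory for $B^{(q)}_{\leq\lambda}$, your plan stalls precisely at the step you dispose of in one sentence.
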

We refer to Remark~\ref{r-gue190531yyd} 
for the precise meaning of the oscillatory integral 
in \eqref{e-gue190708ycd}
and to Theorem~\ref{t-gue190531yyda} 
for more properties for the phase $\phi$. 
The phase function $\varphi_-(x,y):=\phi(x,y)|_{X\times X}$
is the same as the phase function 
appearing in the description of the singularities of the Szeg\H{o} kernels 
for lower energy forms in~\cite[Theorems 3.3, 3.4]{HM17}. 

The complex Fourier integral operator 
$\int^\infty_0e^{i\phi(z, w)t}b(z, w, t)dt$ in \eqref{e-gue190708ycd} 
can be taken to be independent of $\lambda$. 
Hence for every $\lambda_1>\lambda>0$, 
$B^{(q)}_{\leq\lambda_1}(x,y)\equiv B^{(q)}_{\leq\lambda}(x,y)$
modulo $\cC^\infty((U\times U)\cap(\ol M\times\ol M)).$ 

By integrating over $t$ of the oscillatory integral 
$\int^\infty_0e^{i\phi(x, y)t}b(x, y, t)dt$ in \eqref{e-gue190708ycd}, 
we have the following corollary of Theorem~\ref{t-gue190708yyd}.

\begin{cor} \label{c-gue190709yyd}
Let $M$ be a relatively compact subset with smooth boundary $X$ 
of a complex manifold $M'$ of complex dimension $n$. 
Let $U$ be an open set of $M'$ with $U\cap X\neq\emptyset$. 
Suppose that the Levi form is non-degenerate of constant signature $(n_-, n_+)$ on $U\cap X$. 
Let $q=n_-$. 
Then there exist smooth functions
$F, G\in\cC^\infty((U\times U)\cap(\ol M\times\ol M),T^{*0,q}M'
\boxtimes(T^{*0,q}M')^*))$ such that for every $\lambda>0$, we have
modulo $\cC^\infty((U\times U)\cap(\ol M\times\ol M))$,
\begin{equation}\label{bdms}
B^{(q)}_{\leq\lambda}(x,y)\equiv 
F(-i(\phi(x, y)+i0))^{-n-1}+G\log(-i(\phi(x, y)+i0)).
\end{equation}
Moreover, we have
\begin{equation} \label{e-gue190709yyd} 
\begin{split}
F(x,y) &=\sum^{n}_{j=0}(n-j)!b_j(x, y)(-i\phi(x, y))^j+f_\lambda(z, w)(\phi(x, y))^{n+1},  \\
G(x,y) &\equiv\sum^\infty_{j=0}\frac{(-1)^{j+1}}{j!}b_{n+j+1}(x, y)(-i\phi(x, y))^j
\mod\cC^\infty((U\times U)\cap(\ol M\times\ol M))
\end{split}\end{equation}
where
$b_j(x, y)$, $j\in\N_0$, and $\phi(x,y)$ are as in Theorem~\ref{t-gue190708yyd} and
$f_\lambda(z, w)$  is a $\lambda$-dependent smooth function
in $\cC^\infty((U\times U)\cap(\ol M\times\ol M),T^{*0,q}M'
\boxtimes(T^{*0,q}M')^*)$.
\end{cor}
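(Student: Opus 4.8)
The plan is to deduce Corollary~\ref{c-gue190709yyd} from Theorem~\ref{t-gue190708yyd} by evaluating the oscillatory integral $\int^\infty_0e^{i\phi(x,y)t}b(x,y,t)\,dt$ term by term along the classical expansion $b(x,y,t)\sim\sum_{j\geq0}b_j(x,y)t^{n-j}$. The elementary ingredient is the Laplace transform $\int^\infty_0 e^{-st}t^{z-1}\,dt=\Gamma(z)s^{-z}$, valid for $\mathrm{Re}\,s>0$, with $s=-i\sigma$ and $z$ continued meromorphically. This gives, for $k\in\N_0$,
\begin{equation*}
\int^\infty_0 e^{i\sigma t}\,t^{k}\,dt=k!\,\bigl(-i(\sigma+i0)\bigr)^{-k-1},
\end{equation*}
and, for an integer $m\geq1$ and a cut-off $\chi\in\cC^\infty(\Real)$ with $\chi\equiv0$ near $0$ and $\chi\equiv1$ near $+\infty$,
\begin{equation*}
\int^\infty_0 e^{i\sigma t}\,\chi(t)\,t^{-m}\,dt=\frac{(-1)^{m}}{(m-1)!}\,\bigl(-i(\sigma+i0)\bigr)^{m-1}\log\bigl(-i(\sigma+i0)\bigr)+g_m(\sigma),\qquad g_m\in\cC^\infty(\Real).
\end{equation*}
The second formula follows by writing $\int^\infty_0 e^{i\sigma t}\chi(t)t^{-m+\varepsilon}\,dt$ as $\Gamma(1-m+\varepsilon)\bigl(-i(\sigma+i0)\bigr)^{m-1-\varepsilon}$ minus $\int^\infty_0 e^{i\sigma t}\bigl(1-\chi(t)\bigr)t^{-m+\varepsilon}\,dt$; both terms are meromorphic in $\varepsilon$ near $0$, their simple poles have residues proportional to the polynomial $\bigl(-i\sigma\bigr)^{m-1}$ and hence cancel, the finite part of the second term is smooth in $\sigma$, and expanding $\bigl(-i(\sigma+i0)\bigr)^{-\varepsilon}=1-\varepsilon\log\bigl(-i(\sigma+i0)\bigr)+O(\varepsilon^{2})$ extracts the logarithmic finite part of the first. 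Since $\mathrm{Im}\,\phi\geq0$ on $(U\times U)\cap(\ol M\times\ol M)$, the argument $-i(\phi+i0)$ has nonnegative real part, so $\bigl(-i(\phi+i0)\bigr)^{-n-1}$ and $\log\bigl(-i(\phi+i0)\bigr)$ are well-defined distributions there.

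Substituting the expansion of $b$: for $0\leq j\leq n$ the power $n-j$ is $\geq0$, and the first formula gives $\int^\infty_0 e^{i\phi t}b_j t^{n-j}\,dt=(n-j)!\,b_j\,\bigl(-i(\phi+i0)\bigr)^{-(n-j+1)}=(n-j)!\,b_j\,(-i\phi)^{j}\,\bigl(-i(\phi+i0)\bigr)^{-n-1}$, using $\bigl(-i(\phi+i0)\bigr)^{j}=(-i\phi)^{j}$ for the nonnegative integer $j$. Summing over $j=0,\dots,n$ produces the term $F\bigl(-i(\phi+i0)\bigr)^{-n-1}$ of \eqref{bdms} with $F$ as in \eqref{e-gue190709yyd}; the extra summand $f_\lambda\phi^{n+1}$ merely records that \eqref{bdms} fixes $F$ only modulo a smooth multiple of $\phi^{n+1}$, since $\phi^{n+1}\bigl(-i(\phi+i0)\bigr)^{-n-1}$ is smooth. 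For $j\geq n+1$ we have $n-j=-m$ with $m=j-n\geq1$; inserting a cut-off near $t=0$ (harmless modulo $\cC^\infty$, because $\int^\infty_0 e^{i\phi t}\psi(t)\,dt$ with $\psi$ smooth and compactly supported in $]0,\infty[$ defines a $\cC^\infty$ function of $(x,y)$), the second formula gives $\int^\infty_0 e^{i\phi t}\chi(t)b_{n+m}t^{-m}\,dt\equiv\frac{(-1)^{m}}{(m-1)!}b_{n+m}(-i\phi)^{m-1}\log\bigl(-i(\phi+i0)\bigr)$ modulo $\cC^\infty$. Re-indexing by $j=m-1\geq0$ and summing yields $G\log\bigl(-i(\phi+i0)\bigr)$ with $G\equiv\sum_{j\geq0}\frac{(-1)^{j+1}}{j!}b_{n+j+1}(-i\phi)^{j}$, as claimed; the expansion for $G$ holds only modulo $\cC^\infty$ because it is a Borel-type resummation.

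It remains to handle the tail. Fixing $N$ and writing $b=\sum^{N}_{j=0}\chi(t)b_j t^{n-j}+r_N$ with $r_N$ a symbol of order $n-N-1$ in $t$, the plan is to show that $\int^\infty_0 e^{i\phi t}r_N(x,y,t)\,dt$ is of class $\cC^{k(N)}$ with $k(N)\to\infty$ as $N\to\infty$: away from $\mathrm{diag}\bigl((U\times U)\cap(X\times X)\bigr)$ one has $\mathrm{Im}\,\phi\geq c>0$ locally, so the integral and all its derivatives converge absolutely; near the diagonal one combines $\mathrm{Im}\,\phi\gtrsim d\bigl(\,\cdot\,,\mathrm{diag}(X\times X)\bigr)^{2}$ with $d_x\phi(x,x)=-\omega_0(x)-id\rho(x)\neq0$ to bound the oscillatory integral of the order-$(n-N-1)$ symbol by a function whose regularity grows with $N$. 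Letting $N\to\infty$ and combining the three contributions gives \eqref{bdms} and \eqref{e-gue190709yyd} modulo $\cC^\infty$. The hard part is this regularized evaluation — choosing the branches of $\bigl(-i(\phi+i0)\bigr)^{-n-1}$ and $\log\bigl(-i(\phi+i0)\bigr)$ when $\mathrm{Im}\,\phi$ is merely $\geq0$, checking that the polynomial residues from the poles of $\Gamma$ and from the cut-off near $t=0$ are genuinely in $\cC^\infty\bigl((U\times U)\cap(\ol M\times\ol M)\bigr)$, and the remainder estimate — but all of this is by now classical, following Fefferman~\cite{Fer74}, Boutet~de Monvel and Sj{\"o}strand~\cite{BS76}, and the first author's treatment of the non-degenerate higher-energy case in~\cite{Hsiao08}.
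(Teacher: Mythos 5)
Your proposal is correct and is exactly the argument the paper has in mind: the corollary is obtained by integrating the oscillatory integral of Theorem~\ref{t-gue190708yyd} in $t$ term by term, using $\int_0^\infty e^{i\sigma t}t^k\,dt=k!\,(-i(\sigma+i0))^{-k-1}$ for the powers $t^{n-j}$, $0\le j\le n$, and the regularized (cut-off) integrals of $t^{-m}$ producing the $\frac{(-1)^m}{(m-1)!}(-i\phi)^{m-1}\log(-i(\phi+i0))$ terms, with the tail handled by the standard remainder estimate; your constants, branch choice (using ${\rm Im}\,\phi\ge0$), and the interpretation of the $f_\lambda\phi^{n+1}$ term all match \eqref{bdms}--\eqref{e-gue190709yyd}.
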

\begin{defn}\label{d-gue190609yyd}
Let $U$ be an open set in $M'$ with $U\cap X\neq\emptyset$. 
We say that $\Box^{(q)}$ has local closed range in $U$ if for every open set 
$W\subset U$ with $W\cap X\neq\emptyset$, $\ol W\Subset U$, 
there is a constant $C_W>0$ such that 
\[\norm{(I-B^{(q)})u}_M\leq C_W\norm{\Box^{(q)}u}_M,
\:\:\text{$u\in\Omega^{0,q}_0(W\cap\ol M)\cap{\rm Dom\,}\Box^{(q)}$.}\]
Note that if $\Box^{(q)}$ has closed range then
$\Box^{(q)}$ has local closed range in $U$ for any $U$.
\end{defn}
Our second main result is the following. 

\begin{thm}\label{t-gue190709yyd}
Let $M$ be an open relatively compact subset with smooth boundary $X$ of a complex 
manifold $M'$ of complex dimension $n$. Let $U$ be an open set of $M'$ with 
$U\cap X\neq\emptyset$. 
Suppose that the Levi form is non-degenerate of constant signature $(n_-, n_+)$
on $U\cap X$.
Let $q=n_-$. 
Suppose that $\Box^{(q)}$ has local closed range in $U$. Then 
\begin{equation}\label{e-gue190709ycdh}
B^{(q)}(x,y)\equiv\int^\infty_0e^{i\phi(x, y)t}b(x, y, t)dt
\mod\cC^\infty((U\times U)\cap(\ol M\times\ol M)),
\end{equation}
where $b(x, y, t)\in S^{n}_{1, 0}((U\times U)\cap(\ol M\times\ol M)\times]0, 
\infty[,T^{*0,q}M'\boxtimes(T^{*0,q}M')^*)$ and $\phi(x,y)
\in\cC^\infty((U\times U)\cap(\ol M\times\ol M))$ 
are as in Theorem~\ref{t-gue190708yyd}. 
In particular, $B^{(q)}(x,y)$ has asymptotics as in \eqref{bdms}.
\end{thm}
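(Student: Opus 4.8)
The plan is to deduce the result from Theorem \ref{t-gue190708yyd} by letting $\lambda\to 0$, using the local closed range hypothesis to control the "intermediate" spectral mass $E\big((0,\lambda]\big)$. Write $B^{(q)}=B^{(q)}_{\leq\lambda}-R_\lambda$, where $R_\lambda:=E\big((0,\lambda]\big)$ is the spectral projection onto the eigenspace (in the closed-range case) or the spectral space for energies in $(0,\lambda]$. Since Theorem \ref{t-gue190708yyd} already gives the desired oscillatory-integral description of $B^{(q)}_{\leq\lambda}(x,y)$ modulo $\cC^\infty$ on $(U\times U)\cap(\ol M\times\ol M)$, with a phase $\phi$ and symbol $b(x,y,t)$ that are \emph{independent} of $\lambda$, it suffices to show that $R_\lambda(x,y)\equiv 0\mod\cC^\infty((U\times U)\cap(\ol M\times\ol M))$ for $\lambda>0$ small enough. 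Then \eqref{e-gue190709ycdh} follows immediately, and the last sentence (asymptotics as in \eqref{bdms}) follows exactly as in Corollary \ref{c-gue190709yyd} by integrating in $t$.

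First I would localize: fix open sets $W_1\Subset W_2\Subset W_3\Subset U$ with $W_j\cap X\neq\emptyset$ and cutoffs $\chi_j\in\cC^\infty_0(W_{j+1})$ equal to $1$ on $\ol W_j$. The goal is to prove that $\chi_1 R_\lambda\chi_1$ is smoothing. The key analytic input is the local closed range inequality in Definition \ref{d-gue190609yyd}: for $u\in\Omega^{0,q}_0(W\cap\ol M)\cap{\rm Dom\,}\Box^{(q)}$ one has $\norm{(I-B^{(q)})u}_M\leq C_W\norm{\Box^{(q)}u}_M$. Applied to $u$ in the range of $R_\lambda$ (more precisely, after a cutoff, to $\chi u$ with $u\in{\rm Ran\,}E((0,\lambda])$), together with the spectral bound $\norm{\Box^{(q)}u}_M\leq\lambda\norm{u}_M$, this forces such localized low-energy forms to have small norm unless $\lambda$ is below a threshold determined by $C_W$; choosing $\lambda>0$ small enough (depending on $W_2$) one obtains that $\chi_2 R_\lambda\chi_2=0$ as an operator, or at least that its kernel is smooth. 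I expect the clean statement to be: there is $\lambda_0>0$ (depending on $W_2\Subset U$) such that for $0<\lambda<\lambda_0$, $\chi_2 E((0,\lambda])\chi_2$ is a smoothing operator on $(W_2\times W_2)\cap(\ol M\times\ol M)$; this is the local analogue of the fact that closed range makes the nonzero spectrum bounded away from $0$.

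To turn this into the kernel statement I would use the microlocal machinery already developed for Theorem \ref{t-gue190708yyd}: the spectral projection $E((0,\lambda])=E([0,\lambda])-B^{(q)}$ has a parametrix built from the heat/resolvent of $\Box^{(q)}$, and on the non-degenerate part of $X$ the relevant FIO calculus (with complex phase $\phi$) controls the wavefront set. The local closed range condition guarantees that the contribution of $(0,\lambda]$ does not add any new singularity at the boundary, so the only boundary singularity of $B^{(q)}$ near $U$ is the one already present in $B^{(q)}_{\leq\lambda}$. Concretely: $B^{(q)}(x,y)=B^{(q)}_{\leq\lambda}(x,y)-R_\lambda(x,y)$, the first term is $\equiv\int_0^\infty e^{i\phi t}b\,dt$ by Theorem \ref{t-gue190708yyd}, and the second is $\equiv 0$ by the previous step; hence \eqref{e-gue190709ycdh}. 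The main obstacle is precisely the second step — showing $R_\lambda$ is smoothing near $U$ from only a \emph{local} closed range hypothesis, rather than a global one, since one cannot directly invoke a global spectral gap; the cutoff errors $[\Box^{(q)},\chi_j]$ must be absorbed, which requires the finite-propagation/local-regularity properties of $\Box^{(q)}$ near the non-degenerate boundary and the subelliptic estimates on the interior. Once this localization is in place, the remainder of the argument is formal, following Theorem \ref{t-gue190708yyd} and Corollary \ref{c-gue190709yyd} verbatim.
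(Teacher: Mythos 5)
Your reduction $B^{(q)}=B^{(q)}_{\leq\lambda}-E\bigl((0,\lambda]\bigr)$ is a reasonable starting point, but the step that carries all the weight — showing that $\chi_2\,E\bigl((0,\lambda]\bigr)\,\chi_2$ is zero, or at least smoothing, for $\lambda$ small — is not established, and the mechanism you propose for it does not work. The local closed range inequality of Definition~\ref{d-gue190609yyd} only applies to forms $u\in\Omega^{0,q}_0(W\cap\ol M)\cap{\rm Dom\,}\Box^{(q)}$, i.e.\ compactly supported in $W\cap\ol M$. An element $u$ of ${\rm Ran\,}E\bigl((0,\lambda]\bigr)$ is neither compactly supported near $U$ nor (a priori) smooth up to the boundary, so you must cut off; but then $\Box^{(q)}(\chi u)=\chi\Box^{(q)}u+[\Box^{(q)},\chi]u$, and the commutator is a first-order operator whose contribution is of the size of $\norm{u}_{H^1}$, with no smallness in $\lambda$, while also $(I-B^{(q)})(\chi u)\neq\chi u$ because $B^{(q)}\chi u\neq\chi B^{(q)}u$. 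Hence the inequality does not force localized low-energy forms to be small, and the asserted conclusion ``$\chi_2 E((0,\lambda])\chi_2=0$ for $\lambda$ small'' is in fact false in general (it would be a local spectral gap plus a unique-continuation-type vanishing, neither of which follows from local closed range). The fallback claim that the kernel of $\chi_2 E((0,\lambda])\chi_2$ is smooth is exactly what has to be proved, and your appeal to ``the FIO calculus controls the wavefront set'' and ``local closed range guarantees no new singularity'' is circular: that smoothness is equivalent to the theorem you are trying to prove (since $E((0,\lambda])=B^{(q)}_{\leq\lambda}-B^{(q)}$), so it cannot be invoked as an input. There is also no finite propagation speed available for $\Box^{(q)}$ to absorb the cutoff errors, as you hope.

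The paper avoids this trap by never applying the closed range estimate to spectral forms. Instead it applies it to $\Pi^{(q)}u$ with $u\in\Omega^{0,q}_0(W\cap\ol M)$: since $\Pi^{(q)}$ is properly supported, $\Pi^{(q)}u$ is an admissible test form in ${\rm Dom\,}\Box^{(q)}$, and $\Box^{(q)}\Pi^{(q)}u=r^{(q)}_5u$ is smoothing by \eqref{e-gue190531yyd}, so $\norm{(I-B^{(q)})\Pi^{(q)}u}_M\leq C\norm{r^{(q)}_5u}_M$ and $(I-B^{(q)})\Pi^{(q)}$ extends continuously $H^{-s}_{\rm comp}\To L^2$. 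Combining this with the microlocal Hodge decomposition identities ($N^{(q)}\Box^{(q)}+\Pi^{(q)}=I+r^{(q)}_1$ on ${\rm Dom\,}\Box^{(q)}$, $\Box^{(q)}_fN^{(q)}+\Pi^{(q)}=I+r^{(q)}_0$ on smooth forms, together with $\Box^{(q)}B^{(q)}=0$ and $B^{(q)}\Box^{(q)}N^{(q)}=0$), and with the approximate self-adjointness and idempotency of $\Pi^{(q)}$, one gets that $\Pi^{(q)}-B^{(q)}$ maps $H^{-s}_{\rm comp}\To L^2$ and $L^2\To H^{s}_{\rm loc}$ for all $s$, so $(\Pi^{(q)}-B^{(q)})^2$ is smoothing; since $(\Pi^{(q)}-B^{(q)})^2=\Pi^{(q)}-B^{(q)}$ modulo smoothing, the theorem follows. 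If you want to keep your spectral decomposition, you would still need an argument of this operator-theoretic type to handle $E\bigl((0,\lambda]\bigr)$; the $\lambda$-smallness idea by itself cannot replace it.
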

H\"ormander \cite[Theorem 4.6]{Hor04} determined the asymptotics
of $B^{(n-1)}(x,y)$ in the distributional sense near a boundary
point where the Levi form is negative definite
under the condition that $\Box^{(n-1)}$ has closed range.
Theorem \ref{t-gue190709yyd} thus generalizes this result
and gives the asymptotics in the $\cC^\infty$ sense.
\begin{rem}\label{r-gue190810yyd}
Let $(E,h^E)$ be a Hermitian holomorphic vector bundle over $M'$. 
As in \eqref{e-gue190312syd} below, we can consider the 
$\ddbar$-Neumann Laplacian on $(0, q)$-forms with values in $E$: 
\begin{equation} \label{e-gue1900810yydI}
\Box^{(q)}=\ddbar\,\ddbar^*+\ddbar^*\,\ddbar: 
{\rm Dom\,}\Box^{(q)}\subset L^2_{(0,q)}(M,E)\To L^2_{(0,q)}(M,E),
\end{equation}
where $L^2_{(0,q)}(M,E)$ denotes the $L^2$ space
of $(0,q)$-forms with values in $E$.
We can define 
$B^{(q)}_{\leq\lambda}(x,y)$ in the same way as above and by the same proofs, 
Theorem~\ref{t-gue190708yyd} and Theorem~\ref{t-gue190709yyd} 
hold also in the presence of a vector bundle $E$. 

In particular, we can consider the trivial line bundle $E=\C$ with 
the metric $h^E=e^{-\varphi}$, where $\varphi\in\cC^\infty(M')$
is a weight function. 
In this case the space $L^2_{(0,q)}(M,E)$ is the completion of 
$\Omega^{0,q}(\ol{M})$ with respect to the weighted $L^2$ inner product
$(u|v)_{\varphi}=\int_M\langle u|v\rangle e^{-\varphi} dv_{M'},$
and is denoted by $L^2_{(0,q)}(M,\varphi)$. 
The Bergman projection and kernel are denoted by $B^{(q)}_\varphi$ and
$B^{(q)}_\varphi(\cdot,\cdot)$, respectively.
So all the results above have versions 
for \emph{weighted Bergman kernels} $B^{(q)}_\varphi(\cdot,\cdot)$.
\end{rem} 
We now give some applications of the results above.
\begin{cor}
(i) Let $M$ be a bounded domain of holomorphy in $\C^n$ 
with smooth boundary and let $\varphi$ be any finction in $\cC^\infty(\ol{M})$. 
Let $U$ be an open set in $\C^n$ such that
$U\cap\partial M$ is 
strictly pseudoconvex.
Then the weighted Bergman kernel $B^{(0)}_\varphi(\cdot,\cdot)$
has the asymptotics \eqref{e-gue190709ycdh} and \eqref{bdms}
on $U$. In particular, Fefferman's asymptotics on the diagonal \eqref{feff}
hold for $B^{(0)}_\varphi(x,x)$ on $U$.

(ii) Let $M$ be an open relatively compact domain with smooth boundary
$X$ in a complex manifold $M'$ of dimension $n$. 
Assume that $M$ satisfies condition
$Z(1)$, i.\,e., the Levi form of $X$ has everywhere either $n-1$ positive 
or $2$ negative eigenvalues. 
Let $U$ be an open set in $M'$ such that
$U\cap X$ is strictly pseudoconvex. Then the Bergman kernel
$B^{(0)}(\cdot,\cdot)$
has the asymptotics \eqref{feff}, \eqref{bdms} and \eqref{e-gue190709ycdh}
on $U$. 
\end{cor}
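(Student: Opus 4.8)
The plan is to deduce both statements from Theorem~\ref{t-gue190709yyd} (in the version with a Hermitian line bundle, resp.\ a weight, recorded in Remark~\ref{r-gue190810yyd}), applied with $q=0$. To run this argument one needs two things on $U$: that the Levi form is non-degenerate of constant signature $(n_-,n_+)$ with $n_-=q=0$, and that $\Box^{(0)}$ (resp.\ $\Box^{(0)}_\varphi$) has local closed range in $U$ in the sense of Definition~\ref{d-gue190609yyd}. The first is immediate: strict pseudoconvexity of $U\cap\partial M$ (resp.\ $U\cap X$) means exactly that the Levi form is positive definite there, i.e.\ of constant signature $(0,n-1)$, so indeed $n_-=0=q$. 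For the second, by the remark following Definition~\ref{d-gue190609yyd} it suffices to prove that the relevant $\ddbar$-Neumann Laplacian has \emph{global} $L^2$ closed range; this is where the specific hypotheses of the corollary enter. Once this is in place, Theorem~\ref{t-gue190709yyd} gives \eqref{e-gue190709ycdh} and, as stated there, also \eqref{bdms}, and \eqref{feff} follows by restricting \eqref{bdms} to the diagonal.

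For part (i): a bounded domain of holomorphy $M\subset\C^n$ is bounded and pseudoconvex, so by H\"ormander's $L^2$ estimates \cite{Hor65} the maximal operator $\ddbar\colon L^2_{(0,0)}(M)\to L^2_{(0,1)}(M)$ has closed range (the $\ddbar$-equation in bidegree $(0,1)$ is globally solvable with $L^2$ estimate). Since $\varphi\in\cC^\infty(\ol M)$, the weight $e^{-\varphi}$ is bounded above and below by positive constants on the compact set $\ol M$, so the weighted $L^2$ norms are equivalent to the unweighted ones; as closed range is a topological property, $\ddbar$ also has closed range on $L^2_{(0,0)}(M,\varphi)$. On $(0,0)$-forms $\ddbar^*$ vanishes, hence $\Box^{(0)}_\varphi=\ddbar^*\,\ddbar$, and one invokes the standard fact that if a closed densely defined operator $T$ has closed range then so does $T^*T$ (a coercive estimate $\norm{Tu}\geq c\norm{u}$ on $(\Ker T)^\perp$ yields $\norm{T^*Tu}\geq c^2\norm{u}$ there). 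Thus $\Box^{(0)}_\varphi$ has closed range, a fortiori local closed range in $U$, and Theorem~\ref{t-gue190709yyd} applies with the trivial line bundle carrying the metric $e^{-\varphi}$ and $q=0$, giving \eqref{e-gue190709ycdh} and \eqref{bdms} for $B^{(0)}_\varphi(\cdot,\cdot)$ on $U$.

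For part (ii): condition $Z(1)$ on all of $X$ yields, by Kohn's $L^2$ estimates (see \cite{FK72,Ko63-64,KN65}), that the $\ddbar$-Neumann problem in bidegree $1$ is subelliptic, so $\Box^{(1)}$ has finite-dimensional kernel and closed range and the $\ddbar$-equation in bidegree $(0,1)$ is globally solvable with $L^2$ estimate modulo harmonics; in particular $\ddbar\colon L^2_{(0,0)}(M)\to L^2_{(0,1)}(M)$ has closed range. Exactly as in part (i) this forces $\Box^{(0)}=\ddbar^*\,\ddbar$ to have closed range, hence local closed range in $U$. Since $U\cap X$ is strictly pseudoconvex the signature there is $(0,n-1)$ and $q=0=n_-$, so Theorem~\ref{t-gue190709yyd} gives \eqref{e-gue190709ycdh} and \eqref{bdms} for $B^{(0)}(\cdot,\cdot)$ on $U$.

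Finally, to pass from \eqref{bdms} to \eqref{feff} I would put $x=y$. Because $\phi(x,y)=-\ol{\phi(y,x)}$, the value $\phi(x,x)$ is purely imaginary; combining $\operatorname{Im}\phi\geq0$, $\operatorname{Im}\phi>0$ off $X\times X$, $\phi(x,x)=0$ on $U\cap X$ and $d_x\phi(x,x)=-\omega_0(x)-id\rho(x)$ on $U\cap X$ from \eqref{e-gue190708ycdII}, one sees that $-i\phi(x,x)=\operatorname{Im}\phi(x,x)$ is a positive smooth function on $U\cap M$ near $X$ vanishing to first order precisely on $X$ with nonvanishing normal derivative, hence $-i\phi(x,x)=(-\rho(x))\,h(x)$ for a strictly positive $h\in\cC^\infty$. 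Substituting this into \eqref{bdms} and absorbing the smooth positive factors into the coefficients produces $a,b\in\cC^\infty$ with $B^{(0)}_\varphi(x,x)=a(x)(-\rho(x))^{-n-1}+b(x)\log(-\rho(x))$ near $U\cap X$ (and likewise in case (ii)), which is \eqref{feff}. I do not expect a genuine obstacle here: all the analytic difficulty is already packaged in Theorems~\ref{t-gue190708yyd}--\ref{t-gue190709yyd}, and the only points needing care are the correct invocation of the classical closed-range results (H\"ormander for domains of holomorphy, Kohn for $Z(1)$) together with the elementary bookkeeping relating $-i\phi(x,x)$ to $-\rho(x)$.
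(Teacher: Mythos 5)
Your proposal is correct and follows essentially the same route as the paper: the paper's proof likewise reduces everything to Theorem~\ref{t-gue190709yyd} (with Remark~\ref{r-gue190810yyd} for the weight) after citing the classical global closed-range results of H\"ormander for case (i) and Folland--Kohn/H\"ormander under $Z(1)$ for case (ii), and handles the weight $\varphi$ exactly as you do, by equivalence of the weighted and unweighted norms. You merely spell out details the paper leaves implicit, namely the step from closed range of $\ddbar$ to closed range of $\Box^{(0)}=\ddbar^*\ddbar$ and the bookkeeping identifying $-i\phi(x,x)$ with $(-\rho)h$, $h>0$, to recover \eqref{feff}; both are sound.
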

Indeed, it follows from \cite[Theorem 2.2.1']{Hor65} in case (i) and 
\cite[Theorem 3.1.19]{FK72}, 
\cite[Theorem 3.4.1]{Hor65} in case (ii) that $\Box^{(0)}$ has closed range.
Note that these assertions are independent of the choice of the function 
$\varphi\in\cC^\infty(\ol{M})$, since changing $\varphi$ only means introducing 
equivalent norms in the Hilbert spaces concerned.
Obviously, the items (i) and (ii) hold also if we work with
Bergman kernels of holomorphic sections in a Hermitian
holomorphic vector bundle $(E,h^E)$ defined in a neighbourhood of $\ol{M}$
(cf.\ Remark \ref{r-gue190810yyd}).

We consider next Bergman kernels on corona domains.
\begin{cor}
Let $M\Subset\C^n$ be the corona domain $M=M_0\setminus\ol{M}_1$
between two pseudoconvex domains  $M_0$ and $M_1$ 
with smooth boundary and $M_1\Subset M_0$\,, $n\geq2$.
Let $U$ an open set such that $U\cap\partial M_1$ is strictly
pseudoconvex and $U\cap\partial M_0=\emptyset$.
Then the Bergman kernel $B^{(n-1)}(x,y)$ on $(0,n-1)$-forms
has the asymptotics \eqref{e-gue190709ycdh} and \eqref{bdms}.
\end{cor}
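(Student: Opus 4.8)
The plan is to reduce the statement to a direct application of Theorem~\ref{t-gue190709yyd} with $q=n-1$ on the given open set $U$, so the only real work is verifying the two hypotheses of that theorem on $U$: the Levi form has constant signature $(n_-,n_+)=(n-1,0)$ on $U\cap X$ with $q=n_-$, and $\Box^{(n-1)}$ has local closed range in $U$. First I would observe that the relevant piece of the boundary of the corona domain $M=M_0\setminus\ol{M}_1$ lying in $U$ is $U\cap X=U\cap\partial M_1$, since $U\cap\partial M_0=\emptyset$ by hypothesis. Because $M$ lies on the \emph{outside} of $M_1$, the induced orientation reverses, so the Levi form of $X$ as the boundary of $M$ is the negative of the Levi form of $\partial M_1$ as the boundary of the pseudoconvex domain $M_1$. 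Since $U\cap\partial M_1$ is strictly pseudoconvex (all $n-1$ Levi eigenvalues positive), the Levi form of $X=\partial M$ on $U\cap X$ has all $n-1$ eigenvalues negative, i.e.\ constant signature $(n_-,n_+)=(n-1,0)$. Hence the failure index is $q=n_-=n-1$, which matches the form degree in the statement; note this forces $n\geq 2$ so that $q=n-1\geq 1$ is a meaningful positive degree, consistent with the hypothesis.

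Next I would establish that $\Box^{(n-1)}$ has \emph{closed range} on the whole corona domain $M$ (which by the remark in Definition~\ref{d-gue190609yyd} implies local closed range in $U$, indeed in any open set). The corona domain $M$ is a relatively compact domain in $\C^n$ whose boundary $X=\partial M_0\sqcup\partial M_1$ is smooth and pseudoconvex from the side of $M$ along $\partial M_0$ and pseudoconcave along $\partial M_1$; more precisely, condition $Z(q)$ for $M$ holds at every boundary point for the degree $q=n-1$: at points of $\partial M_0$ the Levi form has $n-1$ positive eigenvalues (so $Z(n-1)$ holds since $n-q=1$), and at points of $\partial M_1$ it has $n-1$ negative eigenvalues (so $Z(n-1)$ holds since $q+1=n$). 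Thus $Z(n-1)$ holds at every point of $X$, and by Kohn's $L^2$ estimates (Kohn--Nirenberg, see \cite{FK72,Ko63-64,KN65,Hor65} as quoted in the introduction) the operator $\Box^{(n-1)}$ has a compact resolvent modulo its finite-dimensional kernel; in particular it has $L^2$ closed range. (One could alternatively invoke H\"ormander's $L^2$ estimates for $\ddbar$ directly on the corona domain, using that both $M_0$ and the complement of $M_1$ are pseudoconvex near the two pieces of the boundary.)

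With both hypotheses in hand, Theorem~\ref{t-gue190709yyd} applied with this $U$ and $q=n-1=n_-$ yields
\[
B^{(n-1)}(x,y)\equiv\int^\infty_0 e^{i\phi(x,y)t}\,b(x,y,t)\,dt
\mod\cC^\infty\bigl((U\times U)\cap(\ol M\times\ol M)\bigr),
\]
with $b$ and $\phi$ as in Theorem~\ref{t-gue190708yyd}, and the last sentence of that theorem then gives the Fefferman-type expansion \eqref{bdms} via Corollary~\ref{c-gue190709yyd}. I expect the main (and only genuine) obstacle to be the closed-range verification: one must be careful that the two boundary components contribute $Z(n-1)$ with the correct sign of the Levi form, so that the \emph{global} $L^2$ estimate holds on $M$ despite $M$ being neither pseudoconvex nor pseudoconcave as a whole — the point is precisely that $q=n-1$ is the unique degree for which $Z(q)$ holds simultaneously at the convex and the concave parts of $\partial M$. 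Everything else is a direct citation of Theorem~\ref{t-gue190709yyd} and the sign bookkeeping for the Levi form under the orientation reversal at $\partial M_1$.
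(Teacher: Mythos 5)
Your Levi-form bookkeeping and the final citation of Theorem~\ref{t-gue190709yyd} match the paper, but the closed-range step contains a genuine error. The condition $Z(n-1)$ at a boundary point requires at least $n-q=1$ \emph{positive} or at least $q+1=n$ \emph{negative} eigenvalues of the Levi form; since the Levi form has only $n-1$ eigenvalues, the second alternative is vacuous, so $Z(n-1)$ holds exactly where the Levi form has at least one positive eigenvalue. At points of $U\cap\partial M_1$, where (after the orientation reversal you correctly describe) all $n-1$ eigenvalues are negative, $Z(n-1)$ therefore \emph{fails} — your claim that ``$Z(n-1)$ holds since $q+1=n$'' misreads the condition. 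This is not a repairable slip in your argument: if $Z(n-1)$ did hold on all of $X$, then by the subelliptic estimates quoted in the introduction $B^{(n-1)}$ would be smooth up to the boundary, contradicting the very singularity asymptotics \eqref{e-gue190709ycdh} and \eqref{bdms} the corollary asserts. In addition, $\partial M_0$ is only assumed pseudoconvex, not strictly pseudoconvex, so you also cannot claim $n-1$ positive eigenvalues there; $Z(n-1)$ may fail at Levi-degenerate points of $\partial M_0$ as well. Consequently neither the Kohn--Nirenberg subelliptic machinery nor a compact-resolvent argument is available, and the global $L^2$ closed range of $\Box^{(n-1)}$ does not follow from the route you propose.

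The paper closes this gap by quoting a nontrivial external result: by Shaw \cite[Theorem 3.5]{Sh10}, $\Box^{(n-1)}$ has closed range in $L^2$ for an annulus (corona) domain between two pseudoconvex domains. Global closed range implies local closed range in $U$ (the remark after Definition~\ref{d-gue190609yyd}), the Levi form of $\partial M$ is negative definite on $U\cap\partial M$ so that $q=n-1=n_-$, and Theorem~\ref{t-gue190709yyd} together with Corollary~\ref{c-gue190709yyd} then gives \eqref{e-gue190709ycdh} and \eqref{bdms}. So the reduction you set up is the right one, but the hypothesis verification must go through Shaw's closed-range theorem rather than condition $Z(n-1)$.
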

By \cite[Theorem 3.5]{Sh10}, the operator $\Box^{(n-1)}$ has closed range
in $L^2$ for a corona domain between two pseudoconvex domains
as above.
Moreover,  the Levi form of $\partial M$ is negative definite
on $U\cap\partial M$, so the Corollary follows from 
Theorem \ref{t-gue190709yyd}.

Let $M'$ be a complex manifold of complex dimension $n$. 
Let $M_0=\set{z\in M';\, \rho(z)<0}$ be a smooth pseudoconvex 
relatively compact open set, 
where $\rho\in\cC^\infty(M',\mathbb R)$
is a defining function with $d\rho\neq0$ on $X_0:=\partial M_0$
whose Levi form $\mathcal{L}$ is positive semidefinite on the holomorphic tangential
space to $X_0$.
Consider the corona domain
\begin{equation}\label{e-gue190905ycd}
M:=\set{z\in M';\, -\varepsilon<\rho(z)<0},
\end{equation}
where $\varepsilon>0$. We assume that 
$d\rho\neq0$ on $X_1:=\set{z\in M';\, \rho(z)=-\varepsilon}$.
Then $M$ is a relatively open subset with smooth boundary 
$X:=X_0\cup X_1$ of $M'$. If $\varepsilon>-\min_{\overline{M}_0}\varphi$
we have $X_1=\emptyset$ and we have $M=M_0$.

Suppose that there is a holomorphic line bundle $(L, h^L)$ 
over $M'$, where $h^L$ denotes 
a Hermitian metric of $L$ and $R^L$ is the curvature of $L$ induced by $h^L$. 
For every $k\in\mathbb N$, let $(L^k, h^{L^k})$ be the $k$-th power of 
$(L,h^L)$. Let $(\,\cdot\,|\,\cdot\,)_k$ be the $L^2$ inner product on 
$\Omega^{0,q}(M,L^k)$ induced by the given Hermitian metric 
$\langle\,\cdot\,|\,\cdot\,\rangle$ on $\Complex TM'$ and 
$h^L$ and let $L^2_{(0,q)}(M,L^k)$ be the completion of $\Omega^{0,q}(M,L^k)$. Let 
\[\Box^{(q)}_k: \ddbar\,\ddbar^*+\ddbar^*\,\ddbar: 
{\rm Dom\,}\Box^{(q)}\subset L^2_{(0,q)}(M,L^k)\To L^2_{(0,q)}(M,L^k)\]
be the $\ddbar$-Neumann operator on $M$  with values in $L^k$ and let 
\[B^{(q)}_k: L^2_{(0,q)}(M,L^k)\To{\rm Ker\,}\Box^{(q)}_k\]
be the orthogonal projection with respect to $(\,\cdot\,|\,\cdot\,)_k$ 
and let $B^{(q)}_k(\cdot,\cdot)\in\mathscr D'(M\times M, L^k\boxtimes(L^k)^*)$ 
be the distribution kernel of $B^{(q)}_k$.
\begin{thm}\label{t-gue190905ycd}
Assume that $(L,h^L)$ is positive in a neighborhood of $X_0$.
Let $U$ be an open set 
of $X_0$ in $M'$ with $U\cap X_1=\emptyset$. There exists $k_0\in\mathbb N$, 
such that for every $k\in\mathbb N$, $k\geq k_0$, $\Box^{(0)}_k$ has local closed range in $U$. 
\end{thm}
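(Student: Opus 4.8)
The plan is to reduce the local closed range estimate for $\Box^{(0)}_k$ near $X_0$ to the positivity of the curvature $R^L$ together with the pseudoconvexity of $M_0$, using a Bochner--Kodaira--Nakano--Hörmander type $L^2$ estimate with the large parameter $k$. The point is that near $X_0$ the domain $M$ coincides with $M_0$, which is pseudoconvex, so the boundary term in the $\ddbar$-Neumann a priori estimate has a favorable sign; the remaining interior term is controlled from below by $k$ times the smallest eigenvalue of $R^L$, which is bounded below by a positive constant on a neighborhood of $X_0$ by hypothesis.

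The key steps, in order, are as follows. First I would fix an open $W\Subset U$ with $W\cap X_0\neq\emptyset$, $\ol W\subset U$, and observe that since $U\cap X_1=\emptyset$ we may shrink things so that $W\cap\ol M=W\cap\ol{M_0}$; thus for $u\in\Omega^{0,0}_0(W\cap\ol M,L^k)\cap\mathrm{Dom}\,\Box^{(0)}_k$ the form $u$ is supported in a piece of the pseudoconvex domain $M_0$. Second, for $(0,0)$-forms the Neumann boundary condition is vacuous and the Morrey--Kohn--Hörmander identity gives
\begin{equation}\label{e-proof-mkh}
\norm{\ddbar u}^2_k=\norm{\ol\partial^* u}^2_k+\int_M\langle i R^{L^k} u,u\rangle\,dv+\text{(curvature of $X$)}+\int_{X_0}\mathcal{L}(u,u)\,dS,
\end{equation}
where the Levi-form boundary integral over $X_0$ is $\geq0$ by the pseudoconvexity of $M_0$ (semipositivity of $\mathcal{L}$), and the term over $X_1$ is absent because $u\equiv0$ near $X_1$. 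Since $R^{L^k}=kR^L$ and $R^L$ is positive on a neighborhood of $X_0$, after shrinking $W$ we get $\int_M\langle iR^{L^k}u,u\rangle\,dv\geq c_0 k\norm{u}^2_k$ for some $c_0>0$ and all large $k$, the bounded curvature-of-$X$ and metric torsion terms being absorbed once $k\geq k_0$. Third, this yields $\norm{u}^2_k\leq \frac{2}{c_0 k}\bigl(\norm{\ddbar u}^2_k+\norm{\ol\partial^* u}^2_k\bigr)\leq\frac{2}{c_0 k}(\Box^{(0)}_k u\mid u)_k$ for $u$ in the relevant dense subspace, hence the spectral gap bound $\norm{(I-B^{(0)}_k)u}^2_k\leq\frac{2}{c_0 k}(\Box^{(0)}_k u\mid u)_k$; Cauchy--Schwarz then turns this into $\norm{(I-B^{(0)}_k)u}_k\leq\frac{2}{c_0 k}\norm{\Box^{(0)}_k u}_k$, which is precisely the local closed range inequality of Definition \ref{d-gue190609yyd} with $C_W=\frac{2}{c_0 k}$ (uniform in $k\geq k_0$).

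The main obstacle I expect is making the Morrey--Kohn--Hörmander identity \eqref{e-proof-mkh} legitimate for \emph{compactly supported in $W\cap\ol M$} forms that meet $X_0$ but are cut off before $X_1$: one must check that the integration-by-parts producing the boundary term over $X_0$ is valid for $u\in\mathrm{Dom}\,\Box^{(0)}_k$ with the stated support, and that no spurious contribution appears from the (interior, fictitious) ``boundary'' where the cutoff lives — this is handled by an elliptic regularization / density argument using that $\Omega^{0,0}(\ol M,L^k)\cap\mathrm{Dom}\,\ol\partial^*$ is dense in graph norm, together with a partition of unity adapted to $W$. A secondary point is uniformity of the lower bound $c_0$ and of the threshold $k_0$ as $W$ ranges over relatively compact subsets of $U$: since the curvature positivity is an open condition that holds on a fixed neighborhood of $X_0$ and the error terms (torsion, second fundamental form of $X_0$) are continuous hence bounded on $\ol U\cap\ol M$, a single $c_0$ and $k_0$ work for all such $W$, which is all that Definition \ref{d-gue190609yyd} requires.
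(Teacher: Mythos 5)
There is a genuine gap, and it sits at the heart of your argument: you invoke the Morrey--Kohn--H\"ormander/Bochner--Kodaira estimate with the curvature term $k\,R^L$ acting positively on \emph{$(0,0)$-forms}. That estimate only exists in degrees $q\geq1$; for sections of $L^k$ (functions) the curvature term in the Bochner--Kodaira--Nakano identity enters with the opposite sign, which is consistent with the fact that $\ker\Box^{(0)}_k$ consists of all $L^2$ holomorphic sections and is huge. Concretely, your claimed inequality $\norm{u}^2_k\leq\frac{2}{c_0k}\bigl(\norm{\ddbar u}^2_k+\norm{\ddbar^*u}^2_k\bigr)$ for $u\in\Omega^{0,0}_0(W\cap\ol M,L^k)\cap{\rm Dom\,}\Box^{(0)}_k$ is false: take a local holomorphic frame $e_L$ on a chart in $W\cap M$, normalize the weight so that $\abs{e_L}^2_{h}=e^{-\varphi}$ with $\varphi$ having a strict interior minimum, and set $u_k=\chi e_L^{\otimes k}$ with $\chi$ a cutoff equal to $1$ near that minimum; then $\ddbar u_k=(\ddbar\chi)e_L^{\otimes k}$ is supported where $\varphi$ is strictly larger, so $\norm{\ddbar u_k}^2_k/\norm{u_k}^2_k\to0$ exponentially fast, whereas your estimate would force this ratio to grow like $k$. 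So the step from curvature positivity to a spectral-gap bound for $\Box^{(0)}_k$ on compactly supported functions cannot be repaired by regularization or density; the positivity must be used one degree up.

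The way positivity of $L$ near $X_0$ actually enters is through $L^2$ solvability of $\ddbar g=\ddbar u$ in bidegree $(0,1)$ with a norm bound, after which one compares $g$ with the minimal solution $(I-B^{(0)}_k)u$ and uses $\norm{\ddbar u}^2_k=(\Box^{(0)}_ku\,|\,(I-B^{(0)}_k)u)_k$ to get exactly the inequality of Definition~\ref{d-gue190609yyd}. Note, moreover, that this solvability cannot be obtained by a purely local estimate on the corona $M$: the Levi form of $X_0$ is only positive semidefinite, $L$ is assumed positive only in a neighborhood of $X_0$, and $M$ has the second boundary component $X_1$ with unfavorable sign, while $(I-B^{(0)}_k)u$ is a global object not supported in $W$. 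The paper handles all of this by solving $\ddbar g=\ddbar u$ on the pseudoconvex filling $M_0$ via Takegoshi's theorem (Theorem~\ref{t-gue190905ycdq}), restricting $g$ to $M$, and then using minimality of $(I-B^{(0)}_k)u$; some global input of this kind is indispensable and is entirely missing from your proposal.
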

From 
Theorem~\ref{t-gue190709yyd}, Remark~\ref{r-gue190810yyd} 
and Theorem~\ref{t-gue190905ycd} we get: 
\begin{thm}\label{t-gue190909yyd}
Let $M_0=\set{z\in M';\, \rho(z)<0}$ be a smooth pseudoconvex domain 
in a complex manifold $M'$. Let $\varepsilon>0$ such that 
the corona domain
$M:=\set{z\in M';\, -\varepsilon<\rho(z)<0}$ is smooth and set $X_0=\{\rho=0\}$,
$X_1=\{\rho=-\varepsilon\}$.
Let $L$ be a holomorphic line bundle on $M'$
which is positive near $X_0$. 
Let $U$ be an open set of $M'$ such that $U\cap X_0$ is strictly
pseudoconvex and $U\cap X_1=\emptyset$. Let $k_0\in\mathbb N$ be as in 
Theorem~\ref{t-gue190905ycd}. 
Then for every $k\in\mathbb N$, $k\geq k_0$
the Bergman kernel of $M$ with values in $L^k$ satisfies
\[B^{(0)}_k(x,y)\equiv\int^{\infty}_0e^{i\phi(x,y)t}b(x,y,t)dt
\mod\cC^\infty((U\times U)\cap(\ol M\times\ol M),L^k\boxtimes(L^k)^*),\]
where $\phi(x,y)\in\cC^\infty((U\times U)\cap(\ol M\times\ol M))$ is as in 
Theorem~\ref{t-gue190708yyd}, 
\[
\begin{split}
&b(x,y,t)\in S^n_{1,0}(((U\times U)\cap(\ol M\times\ol M))\times
]0,+\infty[,L^k\boxtimes(L^k)^*),\\ 
b(x,y,t)\sim&\sum^{+\infty}_{j=0}b_j(x,y)t^{n-j}\:\: \text{in 
$S^n_{1,0}((U\times U)\cap(\ol M\times\ol M))\times]0,+\infty[,
L^k\boxtimes(L^k)^*)$},
\end{split}
\]
with
$b_0(x,x)=\pi^{-n}\abs{\det\,\mathcal{L}_x}|\ddbar\rho(x)|^2$
for every $x\in U\cap X$.
\end{thm}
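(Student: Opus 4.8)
The plan is to deduce Theorem~\ref{t-gue190909yyd} by feeding the local closed range statement of Theorem~\ref{t-gue190905ycd} into the line bundle version of Theorem~\ref{t-gue190709yyd} provided by Remark~\ref{r-gue190810yyd}, and then to read off the leading coefficient $b_0(x,x)$ from the general formula \eqref{e-gue190531yyda}. First I would check the geometric hypotheses of Theorems~\ref{t-gue190708yyd} and \ref{t-gue190709yyd}. Since $M_0\Subset M'$ and $M=\set{-\varepsilon<\rho<0}\subset M_0$, the corona domain $M$ is an open, relatively compact subset of $M'$, and the conditions $d\rho\neq0$ on $X_0$ and on $X_1$ make the boundary $X=X_0\cup X_1$ smooth. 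On $U$ we have $U\cap X=U\cap X_0$, because $U\cap X_1=\emptyset$, and $U\cap X_0$ is strictly pseudoconvex by assumption; hence the Levi form of $X$ is non-degenerate of constant signature $(n_-,n_+)=(0,n-1)$ on $U\cap X$. (The Levi form may be degenerate elsewhere on $X$, but this is immaterial, since Theorems~\ref{t-gue190708yyd} and \ref{t-gue190709yyd} only use non-degeneracy on $U\cap X$.) In particular $q=0=n_-$, which is exactly the case covered by those results.

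Next, by Theorem~\ref{t-gue190905ycd}, for every $k\geq k_0$ the operator $\Box^{(0)}_k$ has local closed range in $U$ in the sense of Definition~\ref{d-gue190609yyd}. Applying Theorem~\ref{t-gue190709yyd} in the form valid for a Hermitian holomorphic bundle (Remark~\ref{r-gue190810yyd}) with $E=L^k$ therefore gives, modulo $\cC^\infty((U\times U)\cap(\ol M\times\ol M),L^k\boxtimes(L^k)^*)$,
\[
B^{(0)}_k(x,y)\equiv\int^\infty_0e^{i\phi(x,y)t}b(x,y,t)\,dt,
\]
with $\phi$ the phase function of Theorem~\ref{t-gue190708yyd} and $b\in S^{n}_{1,0}$ admitting the asymptotic expansion $b(x,y,t)\sim\sum_{j\geq0}b_j(x,y)t^{n-j}$. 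This already yields all the assertions of the theorem except the explicit value of $b_0(x,x)$.

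It remains to identify $b_0(x,x)$, and for this I would specialize the explicit expression \eqref{e-gue190531yyda} for $b_0$ on the diagonal to $q=n_-=0$. Since $(0,0)$-forms are functions and $L^k$ is a line bundle, $b_0(x,x)$ is scalar-valued; moreover the zeroth order term of $\Box^{(0)}_k$ agrees with that of the scalar $\ddbar$-Neumann Laplacian --- the curvature $R^{L^k}$ enters only at lower order --- so $b_0(x,x)$ coincides with the scalar case and does not depend on $h^L$. Evaluating \eqref{e-gue190531yyda} in the strictly pseudoconvex, zero-form situation gives $\pi^{-n}\abs{\det\mathcal{L}_x}$ up to a factor accounting for the normalization of the defining function: as $\rho$ here is not normalized so that $\norm{d\rho}=1$ on $X$, tracking how $\phi$ and the Levi form rescale under a change of defining function produces the factor $\abs{\ddbar\rho(x)}^2$, so that $b_0(x,x)=\pi^{-n}\abs{\det\mathcal{L}_x}\,\abs{\ddbar\rho(x)}^2$ for $x\in U\cap X$, as claimed. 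The main obstacle I anticipate is precisely this last step: correctly simplifying the general coefficient formula \eqref{e-gue190531yyda} in the strictly pseudoconvex case, keeping careful track of the defining-function normalization and verifying that the bundle $L^k$ contributes only the identity endomorphism to the leading term.
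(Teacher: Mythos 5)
Your proposal is correct and follows essentially the same route as the paper, whose proof consists precisely of combining Theorem~\ref{t-gue190905ycd} (local closed range of $\Box^{(0)}_k$ in $U$ for $k\geq k_0$) with the vector bundle version of Theorem~\ref{t-gue190709yyd} from Remark~\ref{r-gue190810yyd} applied to $E=L^k$, the hypotheses being satisfied since $U\cap X=U\cap X_0$ is strictly pseudoconvex so that $q=0=n_-$. One small correction to your last step: the factor $\abs{\ddbar\rho(x)}^2$ does not arise from tracking an unnormalized defining function, but is already contained in \eqref{e-gue190531yyda}, since for $q=n_-=0$ the projection $\tau_{x,0}$ is the identity on scalars and the endomorphism $(\ddbar\rho(x))^{\wedge,*}(\ddbar\rho(x))^{\wedge}$ acts on $(0,0)$-forms as multiplication by $\abs{\ddbar\rho(x)}^2$.
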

The next applications concerns the asymptotics of the $S^1$-equivariant Bergman kernel
and embedding theorems.
We assume that $M'$ admits a holomorphic $S^{1}$-action $e^{i\theta}$, 
$\theta\in[0,2\pi[$, $e^{i\theta}: M'\to M'$, $x\in M'\To e^{i\theta}\circ x\in M'$. 
It means that the $S^{1}$-action preserves the complex structure $J$ of $M'$. 
Let $T_0\in\cC^\infty(M',TM')$ be the global real vector field on $M'$ induced 
by $e^{i\theta}$, that is 
$(T_0u)(x)=\frac{\pr}{\pr\theta}u(e^{i\theta}\circ x)\big|_{\theta=0}$ 
for every $u\in\cC^\infty(M')$.  We assume that

\begin{ass}\label{a-gue190812yyd}
$\Complex T_0(x)\oplus T^{1,0}_xX\oplus T^{0,1}_xX=\Complex T_xX$, 
for every $x\in X$, and the $S^{1}$-action preserves the boundary $X$, 
that is, we can find a defining function $\rho\in\cC^\infty(M',\Real)$ of $X$ 
such that $\rho(e^{i\theta}\circ x)=\rho(x)$, for every $x\in M'$ and every 
$\theta\in[0,2\pi]$.
\end{ass}

We take the Hermitian metric $\langle\,\cdot\,|\,\cdot\,\rangle$ on 
$\Complex TM'$ to be $S^1$-invariant and $\langle\,T_0\,|\,T_0\,\rangle=1$ on $X$. 
We take a $S^1$- invariant definite function $\rho$ so that $\norm{d\rho}=1$ on $X$. 
Fix an open connected component $X_0$ of $X$. From the fact that 
$\langle J(d\rho), T_0\rangle$ is always non-zero on $X_0$, we can check that 
\begin{equation}\label{e-gue190812yyd}
\langle\,J(d\rho)\,,\,T_0\,\rangle<0 ~\text{on}~X_0,
\end{equation}
where $J$ is the complex structure map on $T^*M'$. 
From \eqref{e-gue190812yyd} and noting that $\norm{T_0}=\norm{d\rho}=1$ 
on $X$, it is easy to see that 
\begin{equation}\label{e-gue190812yydI}
T_0=T\ \ \mbox{on $X_0$}, 
\end{equation}
where $T$ is given by \eqref{e-gue190312scdqI} below. 
For every $m\in\mathbb{Z}$, put
\begin{equation}\label{e-gue190812yydII}
\Omega^{0,q}_{m}(M')=\{u\in\Omega^{0,q}(M');\, \mathcal L_Tu=imu \}
\end{equation}
where $\mathcal L_Tu$ is the Lie derivative of $u$ along direction $T$. 
For convenience, we write $Tu:=\mathcal L_Tu$. Similarly, 
let $\Omega^{0,q}_m(\ol M)$ denote the space of restrictions to $M$ 
of elements in $\Omega^{0,q}_m(M')$. 
We write $\cC^\infty_m(\ol M):=\Omega^{0,0}_m(\ol M)$. 
Let $L^2_{(0,q),m}(M)$ be the completion of $\Omega^{0,q}_m(\ol M)$ 
with respect to $(\,\cdot\,|\,\cdot\,)_M$. For $q=0$, we write 
$L^2_m(M):=L^2_{(0,0),m}(M)$. 
Fix $\lambda\geq0$ and $m\in\mathbb Z$. Put 
\begin{equation}\label{e-gue190812yydIII}
H^q_{\leq\lambda,m}(\ol M):=
H^q_{\leq\lambda}(\ol M)\cap L^2_{(0,q),m}(M),
\end{equation}
where $H^q_{\leq\lambda}(\ol M)$ is given by \eqref{e-gue190708yyd}. Let 
\begin{equation}\label{e-gue190812ycd}
B^{(q)}_{\leq\lambda,m}: L^2_{(0,q)}(M)\To H^q_{\leq\lambda,m}(\ol M)
\end{equation}
be the orthogonal projection with respect to $(\,\cdot\,|\,\cdot\,)_M$ 
and let 
$$B^{(q)}_{\leq\lambda,m}(x,y)\in\mathscr D'(M\times M, T^{*0,q}M'
\boxtimes(T^{*0,q}M')^*),$$ 
be the distribution kernel of $B^{(q)}_{\leq\lambda,m}$. 
For $\lambda=0$, we write $H^q_{m}(\ol M):=H^q_{\leq0,m}(\ol M)$, 
$B^{(q)}_{m}:=B^{(q)}_{\leq0,m}$, 
$B^{(q)}_{m}(x,y):=B^{(q)}_{\leq0,m}(x,y)$. 
From~\cite[Theorem 3.3]{HHLS}, we see that 
$H^q_{\leq\lambda,m}(\ol M)$ is a finite dimensional subspace of 
$\Omega^{0,q}_m(\ol M)$ and hence 
$$B^{(q)}_{\leq\lambda,m}(x,y)\in\cC^\infty(\ol{M}\times\ol{M}, T^{*0,q}M'
\boxtimes(T^{*0,q}M')^*).$$ 
Moreover, it is straightforward to see that 
\begin{equation}\label{e-gue190812ycdI}
B^{(q)}_{\leq\lambda,m}(x,y)=
\frac{1}{2\pi}\int^{\pi}_{-\pi}B^{(q)}_{\leq\lambda}(x,e^{i\theta}y)
e^{im\theta}d\theta.
\end{equation}  
We have the following  
asymptotic expansion for the $S^1$-equivariant Bergman kernel
(see also Theorem~\ref{t-gue190817yydh}). 

\begin{thm}\label{t-gue190817yydhi}
With the notations and assumptions used above, fix $p\in X_0$ 
and let $U$ be an open set of $p$ in $M'$ with $U\cap X_0\neq\emptyset$. 
Suppose that the Levi form is 
positive $U\cap X_0$. Suppose that $Z(1)$ holds on $X$. 
Let $N_p:=\set{g\in S^1;\, g\circ p=p}=\set{g_0:=e, g_1,\ldots, g_r}$, 
where $e$ denotes the identify element in $S^1$ and $g_j\neq g_\ell$ if $j\neq \ell$, 
for every $j, \ell=0,1,\ldots,r$. We have 
\begin{equation}\label{e-gue190816yydli}
B^{(0)}_{m}(x,y)\equiv\sum^r_{\alpha=0}g^m_\alpha 
e^{im\phi(x,g_\alpha y)}b_{\alpha}(x,y,m)\mod O(m^{-\infty})\ \ 
\mbox{on $U\cap\ol M$}, 
\end{equation}
where for every $\alpha=0,1,\ldots,r$, 
\begin{equation}\label{e-gue190816ycdi}
\begin{split}
&b_\alpha(x, y, m)\in S^{n}_{{\rm loc\,}}((U\times U)\cap(\ol M\times\ol M)),\\
&\mbox{$b_\alpha(x, y, m)\sim\sum^\infty_{j=0}b_{\alpha,j}(x, y)m^{n-j}$ 
in $S^{n}_{{\rm loc\,}}((U\times U)\cap(\ol M\times\ol M))$},\\
&b_{\alpha,0}(x,x)=b_0(x,x),\ \ \mbox{$b_0(x,x)$ is given by \eqref{e-gue190531yyda}}, 
\end{split}
\end{equation}
and $\phi(x, y)\in\cC^\infty((U\times U)\cap(\ol M\times\ol M))$ is as \eqref{e-gue190708ycdII}. 
\end{thm}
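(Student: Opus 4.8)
The plan is to obtain Theorem~\ref{t-gue190817yydhi} from the non-equivariant off-diagonal expansion of Theorem~\ref{t-gue190709yyd} by averaging the ordinary Bergman kernel over the circle and then extracting the $m$-th Fourier mode by a complex stationary phase argument localized at the stabilizer $N_p$. First I would record that, since $Z(1)$ holds at every point of $X$, Kohn's $L^2$-estimates for $\Box^{(1)}$ force $\ddbar$ on functions, hence $\Box^{(0)}=\ddbar^{\,*}\ddbar$, to have $L^2$ closed range; in particular $\Box^{(0)}$ has local closed range in $U$ in the sense of Definition~\ref{d-gue190609yyd}, and $B^{(0)}(x,y)$ is smooth away from the diagonal. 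Since the Levi form is positive on $U\cap X_0$, it is non-degenerate there of constant signature $(n_-,n_+)=(0,n-1)$ and $q=0=n_-$, so Theorem~\ref{t-gue190709yyd} gives $B^{(0)}(x,y)\equiv\int_0^\infty e^{i\phi(x,y)t}b(x,y,t)\,dt$ modulo $\cC^\infty$ on $(U\times U)\cap(\ol M\times\ol M)$, with $b\sim\sum_{j\geq0}b_j(x,y)t^{n-j}$ and $\phi$ as in \eqref{e-gue190708ycdII}. Because $M$, $\rho$ and the metric are $S^1$-invariant, $\Box^{(0)}$ commutes with the action and $B^{(0)}(e^{i\theta}x,e^{i\theta}y)=B^{(0)}(x,y)$; I would also arrange $\phi$ to be $S^1$-invariant.

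Next I would use \eqref{e-gue190812ycdI} with $\lambda=0$, that is $B^{(0)}_m(x,y)=\frac{1}{2\pi}\int_{-\pi}^{\pi}B^{(0)}(x,e^{i\theta}y)e^{im\theta}\,d\theta$, and, for $x,y$ in a small neighbourhood $U'$ of $p$ with $\ol{U'}\Subset U$, split the $\theta$-integral with a partition of unity $1=\chi_\infty+\sum_{\alpha=0}^r\chi_\alpha$, where $\chi_\alpha$ is supported near $\theta_\alpha$ (with $g_\alpha=e^{i\theta_\alpha}$) and $\chi_\infty$ is supported away from $N_p$. Since $S^1\cdot p\cong S^1/N_p$, for $x,y$ near $p$ and $\theta\in\operatorname{supp}\chi_\infty$ the point $(x,e^{i\theta}y)$ stays off the diagonal, so that piece is $O(m^{-\infty})$ by non-stationary phase in $\theta$. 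For each $\alpha$, on $\operatorname{supp}\chi_\alpha$ one has $(x,e^{i\theta}y)\in(U\times U)\cap(\ol M\times\ol M)$, so I substitute the oscillatory integral of Theorem~\ref{t-gue190709yyd} and rescale $t=m\sigma$, $\sigma>0$. Using that $\partial_\theta\phi(x,e^{i\theta}y)=\langle d_y\phi(x,e^{i\theta}y),T_0(e^{i\theta}y)\rangle$ equals, at $x=y=p$, $\theta=\theta_\alpha$, the constant $\langle\omega_0(p),T(p)\rangle=-1$ (using $T_0=T$ on $X_0$ from \eqref{e-gue190812yydI} and the Reeb normalisation), the rescaled phase $\Psi_\alpha(\sigma,\theta;x,y):=\phi(x,e^{i\theta}y)\sigma+\theta$ is non-stationary in $\theta$ for $\sigma$ outside a fixed compact subinterval of $(0,\infty)$; inserting a cut-off in $\sigma$ leaves, modulo $O(m^{-\infty})$, a genuine stationary phase integral with large parameter $m$ and amplitude $b(x,e^{i\theta}y,m\sigma)\sim m^n\sum_j b_j(x,e^{i\theta}y)\sigma^{n-j}m^{-j}$.

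I would then apply the Melin--Sj\"ostrand stationary phase formula to this $(\sigma,\theta)$-integral, uniformly up to the boundary. The critical-point equations $\phi(x,e^{i\theta}y)=0$ and $\sigma\,\partial_\theta\phi(x,e^{i\theta}y)=-1$ have, near $(\sigma,\theta)=(1,\theta_\alpha)$, a unique complex solution $(\sigma_c(x,y),\theta_c(x,y))$ with $\operatorname{Im}\Psi_\alpha(\sigma_c,\theta_c)\geq0$; since $\phi$ vanishes at the critical point the critical value equals $\theta_c$, and writing $\theta_c=\theta_\alpha+\eta_c$ produces the prefactor $e^{im\theta_\alpha}=g_\alpha^m$ and a residual phase $\eta_c(x,y)$. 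The first-order relation $\phi(x,e^{i\eta}y)=\phi(x,y)+\eta\langle d_y\phi(x,y),T_0(y)\rangle+O(\eta^2)$ with $\langle\omega_0,T\rangle=-1$, combined with the jet-properties of $\phi$ in Theorem~\ref{t-gue190531yyda} and its $S^1$-invariance, identifies $\eta_c(x,y)$ with $\phi(x,g_\alpha y)$ up to equivalence of phase functions. The two-dimensional stationary phase costs one power of $m$, so each $b_\alpha(x,y,m)$ is a classical symbol of order $n$; summing over $\alpha$ and restoring the $\chi_\infty$-term gives \eqref{e-gue190816yydli}, while the principal symbol on the diagonal, evaluated by the same Melin--Sj\"ostrand computation as in the non-equivariant case of Theorem~\ref{t-gue190708yyd} (the local computation at $x$ not seeing the $S^1$-twisting to leading order), is $b_{\alpha,0}(x,x)=b_0(x,x)$ with $b_0(x,x)$ as in \eqref{e-gue190531yyda}.

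The main obstacle is this final step: running the almost-analytic stationary phase \emph{with the boundary $X$ present}, so that the critical points, Hessians and symbol bounds are uniform up to $X$, and above all verifying that the residual phase $\eta_c(x,y)$ coincides with the shifted Bergman phase $\phi(x,g_\alpha y)$ modulo a phase equivalence; this identity is not formal and relies on the precise eikonal/transport structure of $\phi$ recorded in Theorem~\ref{t-gue190531yyda} together with the $S^1$-invariance of $\phi$. A secondary technical point is carrying out the $t=m\sigma$ rescaling and the $\sigma$-cutoff at the level of oscillatory-integral distributions rather than pointwise, which uses the $S^n_{1,0}$ estimates on $b$ supplied by Theorem~\ref{t-gue190708yyd}.
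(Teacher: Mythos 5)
Your overall strategy is sound and your first reduction agrees with the paper: $Z(1)$ on $X$ gives $L^2$ closed range of $\Box^{(0)}$ (Folland--Kohn), hence local closed range, so Theorem~\ref{t-gue190709yyd} applies with $q=n_-=0$, and the $m$-th Fourier coefficient formula \eqref{e-gue190812ycdI} (with $\lambda=0$) reduces everything to extracting the mode $m$ from the $S^1$-average of the Bergman kernel, with the contribution away from the stabilizer $N_p$ killed because the kernel is smooth off the boundary diagonal. Where you diverge from the paper is in \emph{where} the stationary phase is performed. You propose to substitute the interior oscillatory integral $\int_0^\infty e^{i\phi(x,e^{i\theta}y)t}b\,dt$, rescale $t=m\sigma$, and run Melin--Sj\"ostrand in $(\sigma,\theta)$ uniformly up to the boundary, then identify the resulting critical value with $\phi(x,g_\alpha y)$. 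The paper instead proves $B^{(0)}\equiv\Pi^{(0)}\equiv\tilde PS_-L^{(0)}$ (Theorems~\ref{t-gue190609yyd} and \eqref{e-gue190529ycd}), so that $B^{(0)}_m\equiv\tilde PS_{-,m}L^{(0)}\mod O(m^{-\infty})$ with $S_{-,m}(x,y)=\frac{1}{2\pi}\int_{-\pi}^{\pi}S_-(x,e^{i\theta}y)e^{im\theta}d\theta$; the Fourier-mode stationary phase is then carried out on the \emph{boundary} Szeg\H{o}-type kernel $S_-$, where it is exactly the already established result \cite[Theorem 3.12]{HM14} (real phase $\varphi_-$ on $X\times X$, no boundary-uniformity issue), and the interior phase $\phi(x,g_\alpha y)$ and the symbol order $n$ come out automatically from composing with the Poisson operator as in \cite[Proposition 7.8 of Part II]{Hsiao08} (Theorems~\ref{t-gue190813yyd}, \ref{t-gue190816yyydh}, \ref{t-gue190817yydh}). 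The trade-off: your route is a single self-contained stationary-phase computation, but the two steps you flag as the ``main obstacle'' --- uniform complex stationary phase up to $X$ where ${\rm Im\,}\phi>0$ off the boundary, and the identification of the residual critical value with $\phi(x,g_\alpha y)$ up to phase equivalence --- are precisely the content you would have to re-prove and are nontrivial; the paper's factorization through $S_-$ converts them into citations, and it also avoids your extra (unverified, though arrangeable) hypothesis that the interior phase $\phi$ be chosen $S^1$-invariant, since in the paper invariance is only used through the kernel identity \eqref{e-gue190812ycdI}. So your proposal is a legitimate alternative in outline, but as written it defers rather than closes the key analytic step that the paper's boundary-reduction is designed to handle.
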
 

Actually, we have more general results than 
Theorem~\ref{t-gue190817yydhi}. In Theorem~\ref{t-gue190813yyd}, 
we get an asymptotic expansion for $B^{(q)}_{\leq\lambda,m}$ in 
$m$ for every $\lambda>0$, and in Theorem~\ref{t-gue190816yyydh}, 
we get an asymptotic expansion for $B^{(q)}_m$ in $m$ 
under the local closed range condition of $\Box^{(q)}$. 

For every $m\in\mathbb N$, let 
\begin{equation}\label{e-gue190817ycdi}
\Phi_m: \ol M\To\Complex^{d_m},\;\:
x\mapsto (f_1(x),\ldots,f_{d_m}(x)),
\end{equation}
where $\set{f_1(x), \ldots,f_{d_m}(x)}$ is an orthonormal basis for 
$H^0_m(\ol M)$ with respect to $(\,\cdot\,|\,\cdot\,)_M$ and 
$d_m={\rm dim\,}H^0_m(\ol M)$. 
We have the following $S^1$-equivaraint embedding theorem 
(see Theorem~\ref{t-gue190817yydI}).

\begin{thm}\label{t-gue190817yydIi}
With the notations and assumptions used above, 
assume that the Levi form is positive definite on $X_0$ and $Z(1)$ holds on $X$. 
For every $m_0\in\mathbb N$, there exist 
$m_1\in \mathbb N, \ldots, m_k\in\mathbb N$, with 
$m_j\geq m_0$, $j=1,\ldots,k$, and a $S^1$-invariant open set 
$V$ of $X_0$ such that the map 
\begin{equation}\label{e-gue190817ycdIi}
\begin{split}
\Phi_{m_1,\ldots,m_k}: V\cap\ol M&\To\Complex^{\hat d_m},\\
x&\mapsto (\Phi_{m_1}(x),\ldots,\Phi_{m_k}(x)),
\end{split}
\end{equation}
is a holomorphic embedding, where $\Phi_{m_j}$ is given by 
\eqref{e-gue190817ycdi} and $\hat d_m=d_{m_1}+\cdots+d_{m_k}$. 
\end{thm}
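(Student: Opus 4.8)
The plan is to deduce Theorem~\ref{t-gue190817yydIi} from the pointwise asymptotic expansion of the $S^1$-equivariant Bergman kernel in Theorem~\ref{t-gue190817yydhi} by the standard Kodaira-type argument, adapted to the boundary. First I would record the diagonal asymptotics: from \eqref{e-gue190816yydli} and \eqref{e-gue190816ycdi}, together with the fact that $\phi(x,x)=0$ and $\phi(x,g_\alpha x)\neq 0$ for $g_\alpha\neq e$ on $X_0$ (here $N_x$ may be larger than $N_p$ only on a set not meeting a small enough $S^1$-invariant neighbourhood of an appropriately chosen point, but the contribution of the $\alpha\geq 1$ terms is $O(m^{-\infty})$ after restricting near the diagonal), one gets $B^{(0)}_m(x,x)=b_0(x,x)m^n+O(m^{n-1})$ uniformly on $V\cap\ol M$ for a suitable $S^1$-invariant open $V\supset$ (a point of) $X_0$, with $b_0(x,x)=\pi^{-n}\abs{\det\mathcal L_x}\abs{\ddbar\rho(x)}^2>0$ since the Levi form is positive definite on $X_0$. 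In particular $d_m={\rm dim\,}H^0_m(\ol M)\to\infty$ and for $m$ large the map $\Phi_m$ in \eqref{e-gue190817ycdi} is well defined on $V\cap\ol M$.

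Next I would upgrade this to the jet-level estimates that govern immersivity and injectivity. Writing $B^{(0)}_m(x,y)=\sum_{j}f_j(x)\overline{f_j(y)}$, the quantities $\sum_j\abs{f_j(x)}^2=B^{(0)}_m(x,x)$, $\sum_j\overline{f_j(x)}\,d_xf_j$, and $\sum_j\overline{f_j(x)}\,\partial_x\partial_{\bar y}$-type derivatives are all read off from $x$- and $y$-derivatives of the kernel at the diagonal. Differentiating the oscillatory integral $\int_0^\infty e^{i\phi(x,y)t}b(x,y,t)\,dt$ and using $d_x\phi(x,x)=-\omega_0(x)-id\rho(x)$ from \eqref{e-gue190708ycdII} (whose real and imaginary parts together span the conormal directions), one obtains, exactly as in the CR case treated in \cite{HMM19,HHLS}, that the leading term of the second-order diagonal derivative of $B^{(0)}_m$ is $m^{n+2}$ times a positive-definite Hermitian form (built from $\omega_0$, $d\rho$ and the Levi form). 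This yields, for $m$ large, the two standard inequalities: (i) $\abs{\langle\,\Phi_m(x)\,|\,\Phi_m(x)\,\rangle}$ bounded below, controlling $\Phi_m(x)\neq 0$; and (ii) the differential $d\Phi_m(x)$ is injective on $\C T_x(\ol M)$, uniformly for $x$ in a fixed $S^1$-invariant neighbourhood of $X_0$. Both are phrased $S^1$-equivariantly because $B^{(0)}_m$ intertwines the $S^1$-action by $e^{im\theta}$, so that the estimates descend to $\Phi_m$ restricted to an $S^1$-invariant $V$.

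Then I would pass from a single weight $m$ to a finite collection $m_1,\dots,m_k\geq m_0$. Immersivity is open, so a single large $m_1$ already makes $\Phi_{m_1}$ an immersion on a fixed $S^1$-invariant neighbourhood $V$ of $X_0$; hence $\Phi_{m_1,\dots,m_k}$ is automatically an immersion for any further choice. For injectivity one argues by contradiction/compactness: if no finite family separated points, one would get sequences $x_\ell\neq y_\ell$ in $V\cap\ol M$ with $\Phi_m(x_\ell)$ proportional to $\Phi_m(y_\ell)$ for all $m\leq \ell$; passing to the limit $x_\ell\to x$, $y_\ell\to y$ and using the off-diagonal decay $\phi(x,g_\alpha y)\neq 0$ when $(x,y)$ is not in the same $S^1$-orbit-with-isotropy configuration, the kernel asymptotics force $x$ and $y$ to lie in the same $S^1$-orbit; a short separate argument using the derivative estimate along the orbit (equivalently, that the characters $g_\alpha^m$ already distinguish points on a single orbit as $m$ ranges over a suitable residue system modulo the order of $N_p$) then produces the contradiction. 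Shrinking $V$ to an $S^1$-invariant neighbourhood on which all finitely many conditions hold simultaneously gives the embedding.

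The main obstacle is the bookkeeping at boundary points with non-trivial isotropy: the sum over $\alpha=0,\dots,r$ in \eqref{e-gue190816yydli} means that on an $S^1$-orbit the relevant object is not a single exponential but a character sum $\sum_\alpha g_\alpha^m e^{im\phi(x,g_\alpha y)}b_\alpha$, and one must check that after restricting to a small enough $S^1$-invariant $V$ the isotropy is locally constant (equal to $N_p$) and that the finitely many characters $\{g_\alpha^m\}$, for $m$ in an arithmetic progression of the form $m\equiv m_0\ (\mathrm{mod}\ \#N_p)$ together with the phase factors, still separate orbit points and directions. This is precisely the point where one imports the CR embedding machinery of \cite{HHLS} (and its $S^1$-equivariant refinement), transplanted to $\ol M$ via Theorem~\ref{t-gue190817yydhi}; the condition $Z(1)$ on $X$ guarantees $H^0_m(\ol M)$ is finite-dimensional and contained in $\Omega^{0,0}_m(\ol M)$, so that the $f_j$ are genuine smooth holomorphic functions on $\ol M$ and $\Phi_{m_1,\dots,m_k}$ is holomorphic.
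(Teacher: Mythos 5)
Your overall strategy (use the $S^1$-equivariant kernel expansion plus the CR embedding machinery of \cite{HHL18}/\cite{HHLS}) is the right starting point, but the key new step of this theorem --- injectivity on an open set $V\cap\ol M$ \emph{inside} $\ol M$, not just on $X_0$ --- is where your argument breaks. The expansion \eqref{e-gue190816yydli} is a boundary phenomenon: by \eqref{e-gue190708ycdII} one has $\phi(x,x)=0$ only for $x\in X$, while ${\rm Im\,}\phi(x,x)>0$ at interior points, so $B^{(0)}_m(x,x)=O(m^{-\infty})$ at every fixed $x\in M$. Hence your claims that $B^{(0)}_m(x,x)=b_0(x,x)m^n+O(m^{n-1})$ \emph{uniformly on a fixed} $S^1$-invariant neighbourhood of $X_0$, and that $\abs{\Phi_m(x)}$ and the jet estimates are bounded below on such a fixed neighbourhood, are false off the boundary. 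The same problem defeats your compactness/contradiction argument for injectivity: you fix $V$ first and let the family of weights grow, so the limit points $x\neq y$ of your sequences may be interior points of $M$, where the kernel asymptotics are vacuous ($B^{(0)}_m(x,x)$, $B^{(0)}_m(y,y)$, $B^{(0)}_m(x,y)$ are all $O(m^{-\infty})$) and give no way to force $x,y$ into one orbit. The quantifier order matters: the theorem only asks for injectivity on \emph{some} thin invariant collar $V$ chosen \emph{after} $m_1,\ldots,m_k$ are fixed, and your scheme does not exploit that. (A secondary inaccuracy: at a point with nontrivial isotropy the terms $\alpha\geq1$ in \eqref{e-gue190816yydli} are \emph{not} $O(m^{-\infty})$ on the diagonal, since $g_\alpha x=x$ there; the leading diagonal behaviour involves the character sum $\sum_\alpha g_\alpha^m b_{\alpha,0}(x,x)$, which is why \cite{HHL18} works along arithmetic progressions of $m$.)

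The paper avoids all interior analysis: using Theorem~\ref{t-gue190817yydh} it repeats the proof of \cite[Theorem 1.2]{HHL18} to produce $m_1,\ldots,m_k\geq m_0$ such that $\Phi_{m_1,\ldots,m_k}$ is an embedding of the compact CR manifold $X_0$ and an immersion on some one-sided neighbourhood $U\cap\ol M$; the passage from injectivity on $X_0$ to injectivity on a collar is then a purely metric covering argument, with no further kernel input: cover $X_0$ by finitely many invariant sets $\Omega_{x_j}\Subset W_{x_j}\Subset U_{x_j}$ on which the map is injective, set $\delta_{x_j}=\inf\abs{\Phi(x)-\Phi(y)}>0$ for $x\in\Omega_{x_j}\cap X_0$, $y\in X_0\setminus W_{x_j}$ (positivity uses the embedding of $X_0$), choose thin invariant collars $V^{x_j}$ so that points of $V^{x_j}\cap\ol M$ outside $U_{x_j}$ are $\tfrac{\delta_{x_j}}{2}$-close in image to far-away boundary points, and conclude injectivity on $V=U\cap(\cap_jV^{x_j})\cap(\cup_j\Omega_{x_j})$ by the triangle inequality. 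If you want to salvage your approach, the correct shape of the contradiction argument is to fix the $m_j$'s first, take shrinking collars $V_\ell\downarrow X_0$ with bad pairs $x_\ell\neq y_\ell$, let them accumulate at boundary points, use the embedding of $X_0$ to force the limits to coincide, and then contradict the uniform local injectivity coming from the immersion on $U\cap\ol M$ --- i.e.\ essentially the quantitative statement the paper's $\delta$-argument encodes.
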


Without the $Z(1)$ condition, we can still formulate the following 
$S^1$-equivaraint embedding theorem (see the proof of Theorem~\ref{t-gue190817yydII})

\begin{thm}\label{t-gue190817yydIIi}
With the notations and assumptions used above, assume that the 
Levi form is positive definite on $X_0$.  For every $m_0\in\mathbb N$, 
there exist a $S^1$-invariant open set $V$ of $X_0$ and 
$f_j(x)\in\cC^\infty(V\cap\ol M)$, $j=1,\ldots,K$, with 
$\ddbar f_j=0$ on $V\cap\ol M$, $f_j(e^{i\theta}x)=e^{im_j\theta}f(x)$, 
$m_j\geq m_0$, $j=1,\ldots,K$, for every $e^{i\theta}\in S^1$ and 
every $x\in V$,  such that the map 
\begin{equation}\label{e-gue190817ycdIIi}
\begin{split}
\Phi: V\cap\ol M&\To\Complex^{K},\\
x&\mapsto (f_1(x),\ldots,f_K(x)),
\end{split}
\end{equation}
is a holomorphic embedding.
\end{thm}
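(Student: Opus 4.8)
\emph{Sketch of proof.} The plan is to run the argument of the $Z(1)$--case (Theorem~\ref{t-gue190817yydIi}, see Theorem~\ref{t-gue190817yydI}), but with the global Bergman kernel $B^{(0)}_m$ replaced by the low--energy $S^1$--equivariant kernel $B^{(0)}_{\leq\lambda,m}$ for a fixed but arbitrary $\lambda>0$, and with a final local correction turning low--energy forms into holomorphic functions. The first step is to record the $S^1$--equivariant analog of Theorem~\ref{t-gue190817yydhi} for $B^{(0)}_{\leq\lambda,m}$: applying the $S^1$--averaging \eqref{e-gue190812ycdI} to the expansion of Theorem~\ref{t-gue190708yyd} (valid with \emph{no} global closed range hypothesis precisely because $\lambda>0$, in contrast with Theorem~\ref{t-gue190709yyd} and Theorem~\ref{t-gue190816yyydh}) gives, near any $p\in X_0$, a semiclassical expansion of the form \eqref{e-gue190816yydli} with leading term of order $m^n$ on the diagonal, leading coefficient a positive multiple of $b_0(x,x)$ as in \eqref{e-gue190531yyda}, and $O(m^{-\infty})$ decay away from the orbit $\set{(x,g_\alpha x);\,g_\alpha\in N_x}$.

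From this expansion I would construct, for every $x_0$ in a small $S^1$--invariant neighbourhood of a prescribed orbit in $X_0$, an $m$--equivariant low--energy \emph{peak form} $u_{x_0,m}\in H^0_{\leq\lambda,m}(\ol M)$ which, after $L^2$--normalization, satisfies $\abs{u_{x_0,m}(x_0)}\asymp m^{n/2}$, is Gaussian--concentrated near the orbit of $x_0$, and has $\cC^1$--norm $O(m^{-\infty})$ off that orbit, with controlled derivatives at $x_0$. This is the standard peak--section construction (Tian/Zelditch/Ma--Marinescu, and in the CR setting the version entering the proof of Theorem~\ref{t-gue190817yydI}) applied to the kernel $B^{(0)}_{\leq\lambda,m}(\cdot,\cdot)$; it already yields local immersion and point separation for the map attached to an orthonormal basis of $H^0_{\leq\lambda,m}(\ol M)$.

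The main obstacle is that the $u_{x_0,m}$ are not $\ddbar$--closed; one only has $\norm{\ddbar u_{x_0,m}}_M^2\leq(\Box^{(0)}u_{x_0,m}\,|\,u_{x_0,m})_M\leq\lambda$, and no global closed range is available to correct this globally. Instead, on a fixed small $S^1$--invariant patch $W$ with $\ol W\cap X=\ol W\cap X_0$, where the Levi form is positive definite, I would solve $\ddbar v_{x_0,m}=\ddbar u_{x_0,m}$ on $W\cap\ol M$ by Kohn's subelliptic estimate (equivalently, H\"ormander's local $L^2$--estimate on a strictly pseudoconvex one--sided collar), obtaining $v_{x_0,m}$ with $\cC^1$--norm $O(1)$ on a slightly smaller patch; averaging over $S^1$ against $e^{im\theta}$ keeps it $m$--equivariant. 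The delicate point is that these correction estimates must be performed on a patch that does \emph{not} shrink with $m$ and with $m$--independent constants, so that the bound $O(1)$ is genuinely of lower order than the peak height $m^{n/2}$; strict pseudoconvexity of $U\cap X_0$ is exactly what makes this possible. Then $f_{x_0,m}:=u_{x_0,m}-v_{x_0,m}$ is holomorphic on $W\cap\ol M$, $m$--equivariant, still peaks at $x_0$ with controlled first derivatives.

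Finally, collecting such holomorphic peak forms over finitely many points (and exploiting compactness of the component $X_0$ together with $S^1$--invariance to work on a common $S^1$--invariant open set $V$ with $V\cap X\subset X_0$), I would obtain a holomorphic map whose joint differential is injective and which separates points on $V\cap\ol M$. As in Theorem~\ref{t-gue190817yydhi}, at points with non--trivial stabilizer the diagonal expansion of $B^{(0)}_{\leq\lambda,m}$ carries the extra unimodular factors $g_\alpha^m$, which may cancel for a single weight $m$; this is precisely why, given $m_0$, one is forced to pass to finitely many weights $m_1,\dots,m_K\geq m_0$, chosen by a Dirichlet/pigeonhole argument on the roots of unity $g_\alpha^{m}$ so that for every relevant point and every $g_\alpha\in N_x$ at least one weight $m_j$ produces a non--cancelling, order--$m_j^{n/2}$ peak. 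Assembling the corresponding $f_1,\dots,f_K$ and shrinking $V$ gives the desired holomorphic embedding $\Phi\colon V\cap\ol M\to\C^{K}$ with $\ddbar f_j=0$ and $f_j(e^{i\theta}x)=e^{im_j\theta}f_j(x)$. \qed
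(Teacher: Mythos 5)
Your route---peak forms extracted from the low--energy equivariant kernel $B^{(0)}_{\leq\lambda,m}$ followed by a local $\ddbar$--correction---is genuinely different from the paper's, but it has a gap exactly at the globalization step. The theorem requires $f_j\in\cC^\infty(V\cap\ol M)$ with $\ddbar f_j=0$ on $V\cap\ol M$, where $V$ is an $S^1$--invariant neighborhood of the \emph{entire} compact component $X_0$ (this is how $V$ is produced and used in Theorem~\ref{t-gue190817yydI}). Your corrected functions $f_{x_0,m}=u_{x_0,m}-v_{x_0,m}$ are holomorphic only on the individual patch $W\cap\ol M$ on which you solved $\ddbar v_{x_0,m}=\ddbar u_{x_0,m}$; holomorphic functions cannot be cut off and glued, so the final step of ``collecting such holomorphic peak forms over finitely many points \ldots\ on a common $S^1$--invariant open set $V$'' is not justified as stated. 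If instead you take $W$ to be a collar around all of $X_0$, then $W\cap M$ is the corona $\set{-\varepsilon<\rho<0}$, which is \emph{not} pseudoconvex (its inner boundary hypersurface is strictly pseudoconcave), so neither H\"ormander's $L^2$ estimates nor ``Kohn's subelliptic estimate on a strictly pseudoconvex one--sided collar'' provides the uniform solvability you invoke; what is needed is precisely the closed range/$Z(1)$ solvability for $\ddbar$ on the corona domain, i.e.\ the global input your sketch set out to avoid. (The quantitative parts of your sketch---the bound $\norm{\ddbar u_{x_0,m}}_M\leq\sqrt{\lambda}$, the uniform smoothness of $\ddbar B^{(0)}_{\leq\lambda}$ up to the strictly pseudoconvex boundary as in Lemma~\ref{l-gue190531ycdh}, and the pigeonhole over weights at points with nontrivial stabilizer---are fine in principle; the missing piece is a domain on which one single $\ddbar$--problem can be solved so that the $f_j$ live on a common one--sided neighborhood of $X_0$.)

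For comparison, the paper's proof of Theorem~\ref{t-gue190817yydII} is a short reduction that supplies exactly this missing global input: replace $M$ by the thin corona $\hat M=\set{x\in M';\,-\varepsilon<\rho(x)<0}$. Its boundary consists of $X_0$, which is strictly pseudoconvex, and the inner hypersurface $\set{\rho=-\varepsilon}$, which is strictly pseudoconcave for small $\varepsilon$, so $Z(1)$ holds on $\pr\hat M$; hence the $Z(1)$--case embedding theorem (Theorem~\ref{t-gue190817yydI}, whose proof uses the closed range of $\Box^{(0)}$ guaranteed by $Z(1)$) applies to $\hat M$, and since every sufficiently small neighborhood $V$ of $X_0$ satisfies $V\cap\ol M=V\cap\ol{\hat M}$, the resulting holomorphic embedding restricts to $V\cap\ol M$. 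If you wish to keep your peak--section construction, the honest repair is the same reduction: carry out the correction (or the whole argument, via Theorem~\ref{t-gue190816yyydh} and Theorem~\ref{t-gue190817yydh}) on the corona $\hat M$, where closed range is available---at which point you have essentially reproduced the paper's argument with considerably more work.
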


The paper is organized as follows. In Section~\ref{s:prelim}, 
we collect some standard notations, terminology, definitions and statements 
we use throughout. To construct parametrices for $\Box^{(q)}$, 
we introduce in Section~\ref{s-gue190312scd} the operator $\Box^{(q)}_-$. 
In Section~\ref{s-gue190531yyd}, we construct parametrices for $\Box^{(q)}$ 
near a point $p\in X$ under the assumption that $Z(q)$ holds at $p$. 
Up to the authors' knowledge, the parametrices construction in 
Section~\ref{s-gue190531yyd} are also new results. 
In Section~\ref{s-gue190321myyd}, 
we obtain microlocal Hodge decomposition theorems for $\Box^{(q)}$ 
near a point $p\in X$ under the assumption that $Z(q)$ fails at $p$. 
By using the results in Section~\ref{s-gue190531yyd} and 
Section~\ref{s-gue190321myyd}, we prove Theorem~\ref{t-gue190708yyd} and 
Theorem~\ref{t-gue190709yyd} in Section~\ref{s-gue190531yyda}. 
In Section~\ref{s-gue190810yyd}, we prove the asymptotic expansions of the
$S^1$-equivariant Bergman kernel and embedding theorems for 
domains with holomorphic $S^1$-action.

\section{Preliminaries}\label{s:prelim}

\subsection{Standard notations} \label{s-ssna}
We shall use the following notations: $\mathbb N=\set{1,2,\ldots}$
is the set of natural numbers, $\mathbb N_0=\mathbb N\cup\set{0}$, $\Real$ 
is the set of real numbers, $\ol\Real_+:=\set{x\in\Real;\, x\geq0}$. 
For a multiindex $\alpha=(\alpha_1,\ldots,\alpha_n)\in\mathbb N_0^n$
we denote by $\abs{\alpha}=\alpha_1+\ldots+\alpha_n$ its norm and by $l(\alpha)=n$ its length.
For $m\in\mathbb N$, write $\alpha\in\set{1,\ldots,m}^n$ if $\alpha_j\in\set{1,\ldots,m}$, 
$j=1,\ldots,n$. $\alpha$ is strictly increasing if $\alpha_1<\alpha_2<\ldots<\alpha_n$. For $x=(x_1,\ldots,x_n)$ we write
\[
\begin{split}
&x^\alpha=x_1^{\alpha_1}\ldots x^{\alpha_n}_n,\\
& \pr_{x_j}=\frac{\pr}{\pr x_j}\,,\quad
\pr^\alpha_x=\pr^{\alpha_1}_{x_1}\ldots\pr^{\alpha_n}_{x_n}=\frac{\pr^{\abs{\alpha}}}{\pr x^\alpha}\,,\\
&D_{x_j}=\frac{1}{i}\pr_{x_j}\,,\quad D^\alpha_x=D^{\alpha_1}_{x_1}\ldots D^{\alpha_n}_{x_n}\,,
\quad D_x=\frac{1}{i}\pr_x\,.
\end{split}
\]
Let $z=(z_1,\ldots,z_n)$, $z_j=x_{2j-1}+ix_{2j}$, $j=1,\ldots,n$, be coordinates of $\Complex^n$.
We write
\[
\begin{split}
&z^\alpha=z_1^{\alpha_1}\ldots z^{\alpha_n}_n\,,\quad\ol z^\alpha=\ol z_1^{\alpha_1}\ldots\ol z^{\alpha_n}_n\,,\\
&\pr_{z_j}=\frac{\pr}{\pr z_j}=
\frac{1}{2}\Big(\frac{\pr}{\pr x_{2j-1}}-i\frac{\pr}{\pr x_{2j}}\Big)\,,\quad\pr_{\ol z_j}=
\frac{\pr}{\pr\ol z_j}=\frac{1}{2}\Big(\frac{\pr}{\pr x_{2j-1}}+i\frac{\pr}{\pr x_{2j}}\Big),\\
&\pr^\alpha_z=\pr^{\alpha_1}_{z_1}\ldots\pr^{\alpha_n}_{z_n}=\frac{\pr^{\abs{\alpha}}}{\pr z^\alpha}\,,\quad
\pr^\alpha_{\ol z}=\pr^{\alpha_1}_{\ol z_1}\ldots\pr^{\alpha_n}_{\ol z_n}=
\frac{\pr^{\abs{\alpha}}}{\pr\ol z^\alpha}\,.
\end{split}
\]
For $j, s\in\mathbb Z$, set $\delta_{j,s}=1$ if $j=s$, $\delta_{j,s}=0$ if $j\neq s$.

Let $W$ be a $\cC^\infty$ paracompact manifold.
We let $TW$ and $T^*W$ denote the tangent bundle of $W$
and the cotangent bundle of $W$ respectively.
The complexified tangent bundle of $W$ and the complexified cotangent bundle of $W$ are be denoted by $\Complex TW$
and $\Complex T^*W$, respectively. Write $\langle\,\cdot\,,\cdot\,\rangle$ to denote the pointwise
duality between $TW$ and $T^*W$.
We extend $\langle\,\cdot\,,\cdot\,\rangle$ bilinearly to $\Complex TW\times\Complex T^*W$.
Let $G$ be a $\cC^\infty$ vector bundle over $W$. The fiber of $G$ at $x\in W$ will be denoted by $G_x$.
Let $E$ be another vector bundle over $W$. We write
$E\boxtimes G^*$ to denote the vector bundle over $W\times W$ with fiber over $(x, y)\in W\times W$
consisting of the linear maps from $G_y$ to $E_x$.  Let $Y\subset W$ be an open set. 
From now on, the spaces of distribution sections of $G$ over $Y$ and
smooth sections of $G$ over $Y$ will be denoted by $\mathscr D'(Y, G)$ and $\cC^\infty(Y, G)$ respectively.
Let $\mathscr E'(Y, G)$ be the subspace of $\mathscr D'(Y, G)$ whose elements have compact support in $Y$ and 
let $\cC^\infty_0(Y, G)$ be the subspace of $\cC^\infty(Y, G)$ whose elements have compact support in $Y$.
For $m\in\Real$, let $H^m(Y, G)$ denote the Sobolev space
of order $m$ of sections of $G$ over $Y$. Put
\begin{gather*}
H^m_{\rm loc\,}(Y, G)=\big\{u\in\mathscr D'(Y, G);\, \varphi u\in H^m(Y, G),
      \, \mbox{for every $\varphi\in \cC^\infty_0(Y)$}\big\}\,,\\
      H^m_{\rm comp\,}(Y, G)=H^m_{\rm loc}(Y, G)\cap\mathscr E'(Y, G)\,.
\end{gather*}
We recall the Schwartz kernel theorem \cite[Theorems\,5.2.1, 5.2.6]{Hor03}, \cite[Thorem\,B.2.7]{MM07}.
Let $G$ and $E$ be $\cC^\infty$ vector
bundles over a paracompact orientable $\cC^\infty$ manifold $W$ equipped with a smooth density of integration. If
$A: \cC^\infty_0(W,G)\To \mathscr D'(W,E)$
is continuous, we write $A(x, y)$ to denote the distribution kernel of $A$.
The following two statements are equivalent
\begin{enumerate}
\item $A$ is continuous: $\mathscr E'(W,G)\To \cC^\infty(W,E)$,
\item $A(x,y)\in \cC^\infty(W\times W,E\boxtimes G^*)$.
\end{enumerate}
If $A$ satisfies (a) or (b), we say that $A$ is smoothing on $W$. Let
$A,B: \cC^\infty_0(W,G)\to \mathscr D'(W,E)$ be continuous operators.
We write 
\begin{equation} \label{e-gue160507f}
\mbox{$A\equiv B$ (on $W$)} 
\end{equation}
if $A-B$ is a smoothing operator. 

We say that $A$ is properly supported if the restrictions of the two projections 
$(x,y)\mapsto x$, $(x,y)\mapsto y$ to ${\rm Supp\,}A(x,y)$
are proper.

Let $H(x,y)\in\mathscr D'(W\times W,E\boxtimes G^*)$. We write $H$ to denote the unique 
continuous operator $\cC^\infty_0(W,G)\To\mathscr D'(W,E)$ with distribution kernel $H(x,y)$. 
In this work, we identify $H$ with $H(x,y)$. 

\subsection{Some standard notations in semi-classical analysis}\label{s-gue190813}

Let $W_1$ be an open set in $\Real^{N_1}$ and let $W_2$ be an open set in $\Real^{N_2}$. Let $E$ and $F$ be vector bundles over $W_1$ and $W_2$, respectively. 
An $m$-dependent continuous operator
$A_m:\cC^\infty_0(W_2,F)\To\mathscr D'(W_1,E)$ is called $m$-negligible on $W_1\times W_2$
if, for $m$ large enough, $A_m$ is smoothing and, for any $K\Subset W_1\times W_2$, any
multi-indices $\alpha$, $\beta$ and any $N\in\mathbb N$, there exists $C_{K,\alpha,\beta,N}>0$
such that
\begin{equation}\label{e-gue13628III}
\abs{\pr^\alpha_x\pr^\beta_yA_m(x, y)}\leq C_{K,\alpha,\beta,N}m^{-N}\:\: \text{on $K$},\ \ \forall m\gg1.
\end{equation}
In that case we write
\[A_m(x,y)=O(m^{-\infty})\:\:\text{on $W_1\times W_2$,}\]
or
\[A_m=O(m^{-\infty})\:\:\text{on $W_1\times W_2$.}\]
If $A_m, B_m:\cC^\infty_0(W_2, F)\To\mathscr D'(W_1, E)$ are $m$-dependent continuous operators,
we write $A_m= B_m+O(m^{-\infty})$ on $W_1\times W_2$ or $A_m(x,y)=B_m(x,y)+O(m^{-\infty})$ on $W_1\times W_2$ if $A_m-B_m=O(m^{-\infty})$ on $W_1\times W_2$. 
When $W=W_1=W_2$, we sometime write "on $W$". 

Let $\Omega_1$ and $\Omega_2$ be smooth manifolds and let $E$ and $F$ be vector bundles over $\Omega_1$ and $\Omega_2$, respectively. Let $A_m, B_m: \cC^\infty(\Omega_2,F)\To\cC^\infty(\Omega_1,E)$ be $m$-dependent smoothing operators. We write $A_m=B_m+O(m^{-\infty})$ on $\Omega_1\times\Omega_2$ if on every local coordinate patch $D$ of $\Omega_1$ and local coordinate patch $D_1$ of $\Omega_2$, $A_m=B_m+O(m^{-\infty})$ on $D\times D_1$.
When $\Omega_1=\Omega_2$, we sometime write on $\Omega_1$.

We recall the definition of the semi-classical symbol spaces

\begin{defn} \label{d-gue140826}
Let $W$ be an open set in $\Real^N$. Let
\begin{equation*}
\begin{split}
&S(1;W):=\Big\{a\in\cC^\infty(W)\,|\, \mbox{for every $\alpha\in\mathbb N^N_0$}:
\sup_{x\in W}\abs{\pr^\alpha a(x)}<\infty\Big\},\\
S^0_{{\rm loc\,}}(1;W):=&
\Big\{(a(\cdot,m))_{m\in\Real}\,|\, \mbox{for all $\alpha\in\mathbb N^N_0$,
$\chi\in\cC^\infty_0(W)$}\,:\:\sup_{m\geq1}\sup_{x\in W}
\abs{\pr^\alpha(\chi a(x,m))}<\infty\Big\}\,.
\end{split}
\end{equation*}
For $k\in\Real$, let
\[
S^k_{{\rm loc}}(1):=S^k_{{\rm loc}}(1;W)=\Big\{(a(\cdot,m))_{m\in\Real}\,|\,(m^{-k}a(\cdot,m))\in S^0_{{\rm loc\,}}(1;W)\Big\}\,.
\]
Hence $a(\cdot,m)\in S^k_{{\rm loc}}(1;W)$ if for every $\alpha\in\mathbb N^N_0$ and $\chi\in\cC^\infty_0(W)$, there
exists $C_\alpha>0$ independent of $m$, such that $\abs{\pr^\alpha (\chi a(\cdot,m))}\leq C_\alpha m^{k}$ holds on $W$.

Consider a sequence $a_j\in S^{k_j}_{{\rm loc\,}}(1)$, $j\in\N_0$, where $k_j\searrow-\infty$,
and let $a\in S^{k_0}_{{\rm loc\,}}(1)$. We say
\[
a(\cdot,m)\sim
\sum\limits^\infty_{j=0}a_j(\cdot,m)\:\:\text{in $S^{k_0}_{{\rm loc\,}}(1)$},
\]
if, for every
$\ell\in\N_0$, we have $a-\sum^{\ell}_{j=0}a_j\in S^{k_{\ell+1}}_{{\rm loc\,}}(1)$ .
For a given sequence $a_j$ as above, we can always find such an asymptotic sum
$a$, which is unique up to an element in
$S^{-\infty}_{{\rm loc\,}}(1)=S^{-\infty}_{{\rm loc\,}}(1;W):=\cap _kS^k_{{\rm loc\,}}(1)$.

Similarly, we can define $S^k_{{\rm loc\,}}(1;Y,E)$ in the standard way, where $Y$ is a smooth manifold and $E$ is a vector bundle over $Y$. 
\end{defn}

\subsection{Set up and Terminology} \label{s-su}

Let $M$ be a relatively compact open subset with $\cC^\infty$ boundary $X$ of a
complex manifold $M'$ of dimension $n$. Let $T^{1,0}M'$ and $T^{0,1}M'$ be the holomorphic tangent bundle of $M'$ and the anti-holomorphic tangent bundle of $M'$. 
We fix a Hermitian metric $\langle\,\cdot\,|\,\cdot\,\rangle$ on $\Complex TM'$ so that $T^{1,0}M'\perp T^{0,1}M'$. For $p, q\in\mathbb N$, let $T^{*p,q}M'$ be the vector bundle of $(p,q)$ forms on $M'$. The Hermitian metric $\langle\,\cdot\,|\,\cdot\,\rangle$ on $\Complex TM'$ induces by duality, a Hermitian metric $\langle\,\cdot\,|\,\cdot\,\rangle$ on 
$\oplus^{p,q=n}_{p,q=1}T^{*p,q}M'$. Let $\norm{\cdot}$ be the corresponding pointwise norm with respect to $\langle\,\cdot\,|\,\cdot\,\rangle$. 
Let $\rho\in\cC^\infty(M',\Real)$ be a defining function of $X$. That is, $\rho=0$ on $X$, $\rho<0$ on $M$ and $d\rho\neq0$ near $X$. From now on, we take a defining function $\rho$ so that $\norm{d\rho}=1$ on $X$. 

Let $A$ be a $\cC^\infty$ vector bundle over $M'$. 
Let $U$ be an open set in $M'$. Let 
\[
\begin{split}
&\cC^\infty(U\cap \ol M,A),\ \ \mathscr D'(U\cap \ol M,A),\ \ 
\mathscr E'(U\cap \ol M,A),\\ 
H^s&(U\cap \ol M,A),\ \ H^s_{{\rm comp\,}}(U\cap \ol M,A),\ \ 
H^s_{{\rm loc\,}}(U\cap \ol M,A),
\end{split}
\]
(where $\ s\in\mathbb R$)
denote the spaces of restrictions to $U\cap\ol M$ of elements in 
\[
\begin{split}
\cC^\infty&(U\cap M',A),\ \ \mathscr D'(U\cap M',A),\ \ 
\mathscr E'(U\cap M',A),\\  
&H^s(M',A),\ \  H^s_{{\rm comp\,}}(M',A),\ \  
H^s_{{\rm loc\,}}(M',A),
\end{split}
\] 
respectively. Write 
\[
\begin{split}
L^2(U\cap\ol M,A):=&H^0(U\cap \ol M,A),\ \ 
L^2_{{\rm comp\,}}(U\cap\ol M,A):=H^0_{{\rm comp\,}}(U\cap \ol M,A),\\ 
&L^2_{{\rm loc\,}}(U\cap\ol M,A):=H^0_{{\rm loc\,}}(U\cap \ol M,A).
\end{split}
\]
For every $p, q=1,\ldots,n$, we denote 
\[
\begin{split}
\Omega^{p,q}&(U\cap\ol M):=\cC^\infty(U\cap\ol M,T^{*p,q}M'),\ \ 
\Omega^{p,q}(M'):=\cC^\infty(M',T^{*p,q}M'),\\ 
&\Omega^{p,q}_0(M'):=\cC^\infty_0(M',T^{*p,q}M'),\ \ 
\Omega^{p,q}_0(M):=\cC^\infty_0(M,T^{*p,q}M').
\end{split}
\]
Let $A$ and $B$ be $\cC^\infty$ vector bundles over $M'$. 
Let $U$ be an open set in $M'$. Let 
$$F_1, F_2: \cC^\infty_0(U\cap\ol M,A)\To\mathscr D'(U\cap\ol M,B)$$ 
be continuous operators. Let 
$F_1(x,y), F_2(x,y)\in\mathscr D'((U\times U)\cap(\ol M\times\ol M), A\boxtimes B^*)$ 
be the distribution kernels of $F_1$ and $F_2$ respectively. 
We write 
$$F_1\equiv F_2\mod\cC^\infty((U\times U)\cap(\ol M\times\ol M))$$ 
or $F_1(x,y)\equiv F_2(x,y)\mod\cC^\infty((U\times U)\cap(\ol M\times\ol M))$ 
if $F_1(x,y)=F_2(x,y)+r(x,y)$, where 
$r(x,y)|_{(U\times U)\cap(\ol M\times\ol M)}\in\cC^\infty((U\times U)\cap(\ol M\times\ol M),A\boxtimes B^*)$. Similarly, 
let $\hat F_1, \hat F_2: \cC^\infty_0(U\cap\ol M,A)\To\mathscr D'(U\cap X,B)$ be continuous operators. Let 
$\hat F_1(x,y), \hat F_2(x,y)\in\mathscr D'((U\times U)\cap(X\times\ol M), A\boxtimes B^*)$ be the distribution kernels of $\hat F_1$ and $\hat F_2$ respectively. We write 
$\hat F_1\equiv\hat F_2\mod\cC^\infty((U\times U)\cap(X\times\ol M))$ or $\hat F_1(x,y)\equiv\hat F_2(x,y)\mod\cC^\infty((U\times U)\cap(X\times\ol M))$ if $\hat F_1(x,y)=\hat F_2(x,y)+\hat r(x,y)$, where 
$\hat r(x,y)\in\cC^\infty((U\times U)\cap(X\times\ol M),A\boxtimes B^*)$. Similarly, let $\tilde F_1, \tilde F_2: \cC^\infty_0(U\cap X,A)\To\mathscr D'(U\cap\ol M,B)$ be continuous operators. Let 
\[\tilde F_1(x,y), \tilde F_2(x,y)\in\mathscr D'((U\times U)\cap(\ol M\times X), A\boxtimes B^*)\] 
be the distribution kernels of $\tilde F_1$ and $\tilde F_2$ respectively. We write 
$\tilde F_1\equiv\tilde F_2\mod\cC^\infty((U\times U)\cap(\ol M\times X))$ or $\tilde F_1(x,y)\equiv\tilde F_2(x,y)\mod\cC^\infty((U\times U)\cap(\ol M\times X))$ if $\tilde F_1(x,y)=\tilde F_2(x,y)+\tilde r(x,y)$, where 
$\tilde r(x,y)\in\cC^\infty((U\times U)\cap(\ol M\times X),A\boxtimes B^*)$.

Let $F_m, G_m: \cC^\infty_0(U\cap\ol M,A)\To\mathscr D'(U\cap\ol M,B)$ be $m$-dependent continuous operators. Let $F_m(x,y), G_m(x,y)\in\mathscr D'((U\times U)\cap(\ol M\times\ol M), A\boxtimes B^*)$ be the distribution kernels of $F_m$ and $G_m$ respectively. We write 
\begin{equation}\label{e-gue190813yyd}
F_m\equiv G_m\mod O(m^{-\infty})\ \ \mbox{ on $U\cap\ol M$} 
\end{equation}
if there is a $r_m(x,y)\in \cC^\infty(U\times U, A\boxtimes B^*)$ with $r_m(x,y)=O(m^{-\infty})$ on $U\times U$ such that  $r_m(x,y)|_{(U\times U)\cap(\ol M\times\ol M)}=F_m(x,y)-G_m(x,y)$, for $m\gg1$. 

Let $k\in\Real$. Let $U$ be an open set in $M'$ and let $E$ be a vector bundle over $M'\times M'$. Let 
\begin{equation}\label{e-gue190813yydI}
S^k_{{\rm loc}}(1,(U\times U)\cap(\ol M\times\ol M),E)
\end{equation}
denote the space of restrictions to $U\cap\ol M$ of elements in 
$S^k_{{\rm loc}}(1,U\times U,E)$.
Let
\[a_j\in S^{k_j}_{{\rm loc}}(1,(U\times U)\cap(\ol M\times\ol M),E),\ \ j=0,1,2,\dots,\] 
with $k_j\searrow -\infty$, $j\To \infty$.
Then there exists
\[a\in S^{k_0}_{{\rm loc}}(1,(U\times U)\cap(\ol M\times\ol M),E)\]
such that
\[a-\sum^{\ell-1}_{j=0}a_j\in S^{k_\ell}_{{\rm loc}}(1,(U\times U)\cap(\ol M\times\ol M),E),\]
for every $\ell=1,2,\ldots$. If $a$ and $a_j$ have the properties above, we write
\[a\sim\sum^\infty_{j=0}a_j \text{ in }
S^{k_0}_{{\rm loc}}(1,(U\times U)\cap(\ol M\times\ol M),E).\]

Let $dv_{M'}$ be the volume form on $M'$ induced by the Hermitian metric $\langle\,\cdot\,|\,\cdot\,\rangle$ on $\Complex TM'$ and 
and let $(\,\cdot\,|\,\cdot\,)_M$ and $(\,\cdot\,|\,\cdot\,)_{M'}$ be the inner products on $\Omega^{0,q}(\ol M)$ and $\Omega^{0,q}_0(M')$
defined by
\begin{equation} \label{e-gue190312}
\begin{split}
&(\,f\,|\,h\,)_M=\int_M\langle\,f\,|\,h\,\rangle dv_{M'},\ \ f, h\in\Omega^{0,q}(\ol M),\\
&(\,f\,|\,h\,)_{M'}=\int_{M'}\langle\,f\,|\,h\,\rangle dv_{M'},\ \ f, h\in\Omega^{0,q}_0(M').
\end{split}
\end{equation}
Let $\norm{\cdot}_M$ and $\norm{\cdot}_{M'}$ be the corresponding norms with respect to $(\,\cdot\,|\,\cdot\,)_M$ and $(\,\cdot\,|\,\cdot\,)_{M'}$ respectively. 
Let $L^2_{(0,q)}(M)$ be the completion of $\Omega^{0,q}(\ol M)$ with respect to $(\,\cdot\,|\,\cdot\,)_M$. We extend $(\,\cdot\,|\,\cdot\,)_M$ to $L^2_{(0,q)}(M)$
in the standard way. 
Let $\ddbar: \Omega^{0,q}(M')\To\Omega^{0,q+1}(M')$
be the part of the exterior differential operator which maps forms of type $(0,q)$ to forms of
type $(0,q+1)$ and we denote by
$\ol{\pr}^*_f: \Omega^{0,q+1}(M')\To\Omega^{0,q}(M')$
the formal adjoint of $\ddbar$. That is
\[(\,\ddbar f\,|\,h\,)_{M'}=(f\,|\,\ol{\pr}^*_fh\,)_{M'},\]
$f\in\Omega^{0,q}_0(M')$, $h\in\Omega^{0,q+1}(M')$.
We shall also use the notation $\ddbar$ for the closure in $L^2$ of the $\ddbar$ operator, initially defined on
$\Omega^{0,q}(\ol M)$ and $\ddbar^*$  for the Hilbert space adjoint 
of $\ddbar$. Recall that for $u\in L^2_{(0,q}(M)$, we say that 
$u\in{\rm Dom\,}\ddbar$ if we can find a sequence 
$u_j\in\Omega^{0,q}(\ol M)$, $j=1,2,\ldots$, 
with $\lim_{j\To\infty}\norm{u_j-u}_M=0$ such that 
$\lim_{j\To\infty}\norm{\ddbar u_j-v}_M=0$, for some 
$v\in L^2_{(0,q+1)}(M)$. We set $\ddbar u=v$. 
The $\ddbar$-Neumann Laplacian on $(0, q)$-forms is then 
the non-negative self-adjoint operator in the space 
$L^2_{(0,q)}(M)$ (see~\cite[Chapter 1]{FK72}):
\begin{equation} \label{e-gue190312syd}
\Box^{(q)}=\ddbar\,\ddbar^*+\ddbar^*\,\ddbar: 
{\rm Dom\,}\Box^{(q)}\subset L^2_{(0,q)}(M)\To L^2_{(0,q)}(M),
\end{equation}
where
\begin{equation} \label{e-gue190312sydq}
\begin{split}
{\rm Dom\,}\Box^{(q)}=\Big\{
& u\in L^2_{(0,q)}(M), u\in{\rm Dom\,}\ddbar^*\cap{\rm Dom\,}\ddbar, \\
&\ddbar^*u\in{\rm Dom\,}\ddbar,\ \ \ddbar u\in{\rm Dom\,}\ddbar^*\Big\}
\end{split}\end{equation}
and $\Omega^{0,q}(\ol M)\cap{\rm Dom\,}\Box^{(q)}$ is dense in 
${\rm Dom\,}\Box^{(q)}$ for the norm
\[u\in{\rm Dom\,}\Box^{(q)}\To \big\|u\big\|_M+\big\|\ddbar u\big\|_M+
\big\|\ddbar^*u\big\|_M\]
(see~\cite[p.14]{FK72}). We shall write ${\rm Spec\,}\Box^{(q)}$ 
to denote the spectrum of $\Box^{(q)}$. 
For a Borel set $B\subset\Real$ we denote by $E(B)$ 
the spectral projection of $\Box^{(q)}$ 
corresponding to the set $B$, where $E$ is the spectral measure 
of $\Box^{(q)}$. For $\lambda\geq0$, we set
\begin{equation} \label{e-gue190312sydI}
H^q_{\leq\lambda}(\ol M):={\rm Ran\,}E\bigr((-\infty,\lambda]\bigr)\subset L^2_{(0,q)}(M).
\end{equation}
For $\lambda=0$, we denote
\begin{equation} \label{e-gue190312sydII}
H^q(\ol M):=H^q_{\leq0}(\ol M)={\rm Ker\,}\Box^{(q)}.
\end{equation}
For $\lambda\geq0$, let
\begin{equation}\label{e-gue190312sydIII}
B^{(q)}_{\leq\lambda}:L^2_{(0,q)}(M)\To H^q_{\leq\lambda}(\ol M)
\end{equation}
be the orthogonal projection with respect to the product $(\,\cdot\,|\,\cdot\,)_M$ and let
\begin{equation}\label{e-gue190312yyd}
B^{(q)}_{\leq\lambda}(x,y)
\in\mathscr D'(M\times M,T^{*0,q}M\boxtimes(T^{*0,q}M)^*),
\end{equation}
denote the distribution kernel of $B^{(q)}_{\leq\lambda}$. For $\lambda=0$, 
we denote $B^{(q)}:=B^{(q)}_{\leq0}$, $B^{(q)}(x,y):=B^{(q)}_{\leq0}(x,y)$.

Now, we consider the boundary $X$ of $M$. The boundary $X$ is a compact CR manifold 
of dimension $2n-1$ with natural CR structure 
$T^{1,0}X:=T^{1,0}M'\cap\Complex TX$. Let $T^{0,1}X:=\ol{T^{1,0}X}$. 
The Hermitian metric on $\Complex TM'$ induces Hermitian metrics 
$\langle\,\cdot\,|\,\cdot\,\rangle$ on $\Complex TX$ and 
also on the bundle $\oplus^{2n-1}_{j=1}\Lambda^j(\Complex T^*X)$. 
Let $dv_X$ be the volume form on $X$ induced by the Hermitian metric 
$\langle\,\cdot\,|\,\cdot\,\rangle$ on $\Complex TX$ and let $(\,\cdot\,|\,\cdot\,)_X$ 
be the $L^2$ inner product on 
$\cC^\infty(X, \oplus^{2n-1}_{j=1}\Lambda^j(\Complex T^*X))$ 
induced by $dv_X$ and the Hermitian metric 
$\left\langle\,\cdot\,|\,\cdot\,\right\rangle$ on 
$\oplus^{2n-1}_{j=1}\Lambda^j(\Complex T^*X)$. 

Let $\frac{\pr}{\pr\rho}\in\cC^\infty(X,TM')$ be the global real vector field on $X$ given by 
\begin{equation}\label{e-gue190312scdq}
\begin{split}
&\Big\langle\,\frac{\pr}{\pr\rho}\,,\,d\rho\,\Big\rangle=1\ \ \text{on $X$},\\
&\Big\langle\,\frac{\pr}{\pr\rho}(p)\,|\,v\,\Big\rangle=0\ \ 
\text{at every $p\in X$, for every $v\in T_pX$.}
\end{split}
\end{equation}
Let $J: TM'\To TM'$ be  the complex structure map and put
\begin{equation}\label{e-gue190312scdqI}
T=J\Big(\frac{\pr}{\pr\rho}\Big)\in\cC^\infty(M',TM').
\end{equation}
It is easy to see that $T$ is a global non-vanishing vector 
field on $X$, $T$ is orthogonal to $T^{1,0}X\oplus T^{0,1}X$ 
and $\norm{T}=1$ on $X$. 
Put 
$$T^{*1,0}X:=(T^{0,1}X\oplus\Complex T)^\perp\subset\Complex T^*X\,,\:\: 
T^{*0,1}X:=(T^{1,0}X\oplus\Complex T)^\perp\subset\Complex T^*X.$$
Let $\omega_0\in\cC^\infty(X,T^*X)$ be the global one form on $X$ given by 
\begin{equation}\label{e-gue190312scdqII}
\begin{split}
&\langle\,\omega_0(p)\,,\,u\,\rangle=0,\ \ 
\text{for every $p\in X$ and every $u\in T^{1,0}_pX\oplus T^{0,1}_pX$},\\
&\langle\,\omega_0\,,\,T\,\rangle=-1\ \ \mbox{on $X$}.
\end{split}
\end{equation}
We have the pointwise orthogonal decompositions:
\begin{equation} \label{e-gue190312scdqIII}
\begin{split}
&\Complex T^*X=T^{*1,0}X\oplus T^{*0,1}X\oplus\set{\lambda\omega_0;\,
\lambda\in\Complex},\\
&\Complex TX=T^{1,0}X\oplus T^{0,1}X\oplus\set{\lambda T;\,\lambda\in\Complex}.
\end{split}
\end{equation}
For $p\in X$, the Levi form $\mathcal{L}_p$ is the Hermitian quadratic form on $T^{1,0}_pX$ given by
\begin{equation} \label{e-gue190312sds}
\mathcal{L}_p(Z,\ol W)=-\frac{1}{2i}\langle\,d\omega_0(p)\,,Z\wedge\ol W\,\rangle,\ \ Z, W\in T^{1,0}_pX. 
\end{equation}

Define the vector bundle of $(0, q)$ forms by $T^{*0,q}X:=\Lambda^{q}T^{*0,1}X$.
Let $D\subset X$ be an open set. Let $\Omega^{0,q}(D)$ denote the space of smooth sections 
of $T^{*0,q}X$ over $D$ and let $\Omega^{0,q}_0(D)$ be the subspace of
$\Omega^{0,q}(D)$ whose elements have compact support in $D$.

\section{The operator $\Box^{(q)}_-$}\label{s-gue190312scd}

In this section, we fix $q\in\set{0,1,\ldots,n-1}$. Let
\[\Box^{(q)}_f=\ddbar\,\ddbar^*_f+\ddbar^*_f\,\ddbar: \Omega^{0,q}(M')
\To\Omega^{0,q}(M')\]
denote the complex Laplace-Beltrami operator on $(0, q)$ forms. 
Let $\gamma$ denote the operator of restriction to the boundary $X$. Let us consider the map
\begin{equation}\label{e-gue190312scd}
\begin{split}
F^{(q)}:H^{2}(\ol M,T^{*0,q}M')&\rightarrow L^{2}_{(0,q)}(M)\oplus
H^{\frac{3}{2}}(X,T^{*0,q}M')\\
u&\mapsto (\Box_f^{(q)}u, \gamma u).
\end{split}
\end{equation}
By \cite{B71} we know that
$\dim{\rm Ker\,}F^{(q)}<\infty$ and ${\rm Ker\,}F^{(q)}\subset \Omega^{0,q}(\ol M)$. Let
\begin{equation}\label{e-gue190312scdI}
K^{(q)}: H^2(\overline M, T^{*0,q}M')\rightarrow{\rm Ker\,}F^{(q)}
\end{equation}
be the orthogonal projection with respect to $(\,\cdot\,|\,\cdot\,)_M$.  
Put $\tilde \Box_f^{(q)}=\Box^{(q)}_f+K^{(q)}$ and consider the map
\begin{equation}\label{e-gue190312scdII}
\begin{split}
\tilde F^{(q)}: H^2(\overline M, T^{*0,q}M')&\rightarrow 
L^{2}_{(0,q)}(M)\oplus H^{\frac{3}{2}}(X,T^{*0,q}M'),\\
u&\mapsto (\tilde\Box_f^{(q)}u, \gamma u).
\end{split}
\end{equation}
It is easy to see that $\tilde F^{(q)}$ is injective. Let
\begin{equation}\label{e-gue190312scdIII}
\tilde P: \cC^\infty(X, T^{*0,q}M')\rightarrow\Omega^{0,q}(\overline M)
\end{equation}
be the Poisson operator for $\tilde \Box^{(q)}_f$ which is well-defined 
since \eqref{e-gue190312scdII} is injective. The Poisson operator $\tilde P$ satisfies
\begin{equation}\label{e-gue190312mscd}
\begin{split}
&\tilde\Box^{(q)}_f\tilde Pu=0,\ \ \gamma\tilde Pu=u,\ \ 
\text{ for every} u\in\cC^\infty(X, T^{*0,q}M').\\
\end{split}
\end{equation}
By Boutet de Monvel~\cite[p.\,29]{B71} the operator $\tilde P$ 
extends continuously
\begin{equation}\label{e-gue190313ad}
\tilde P: H^s(X, T^{*0,q}M')\rightarrow H^{s+\frac{1}{2}}(\overline M, T^{*0,q}M'),\ \ \forall s\in\mathbb R,
\end{equation}
and there is a continuous operator 
\begin{equation}\label{e-gue190429yyd}
D^{(q)}: H^s(\ol M, T^{*0,q}M')\rightarrow H^{s+2}(\overline M, T^{*0,q}M'),\ \ 
\forall s\in\mathbb R,
\end{equation}
such that 
\begin{equation}\label{e-gue190418yyda}
D^{(q)}\tilde\Box^{(q)}_f+\tilde P\gamma=I\ \ \mbox{on $\Omega^{0,q}(\ol M)$}.
\end{equation}
Let $\hat{\mathscr E}'(\overline M, T^{*0,q}M')$ denote the space 
of continuous linear map from $\Omega^{0,q}(\ol M)$ to $\Complex$ 
with respect to $(\,\cdot\,|\,\cdot\,)_M$. 
Let
\begin{equation}\label{e-gue190515yyda}
\tilde P^*: \hat{\mathscr E}'(\overline M, T^{*0,q}M')\rightarrow\mathscr D'(X, T^{*0,q}M')
\end{equation} 
be the operator defined by
\[(\,\tilde P^* u\,|\,v\,)_X=(\,u\,|\,\tilde Pv\,)_M,\ \ 
u\in \hat{\mathscr E}'(\overline M, T^{*0,q}M'),\ \  
v\in\cC^\infty(X, T^{*0,q}M').\]
By \cite[p.\,30]{B71} the operator
\begin{equation}\label{e-gue190515yyd}
\tilde P^*: H^s(\ol M, T^{*0,q}M')\rightarrow H^{s+\frac{1}{2}}(X, T^{*0,q}M'),\ \,
\end{equation} 
is continuous for every $s\in\mathbb R$ and
\[\tilde P^*: \Omega^{0,q}(\overline M)\rightarrow\cC^\infty(X, T^{*0,q}M').\]

Let $L\in T^{*0,1}M'$ and let $L^{\wedge}: T^{*0,q}M'\To T^{*0,q+1}M'$ 
be the operator with wedge multiplication by $L$ and
let $L^{\wedge,*}:T^{*0,q+1}M'\To T^{*0,q}M'$ be its adjoint with respect 
to $\langle\,\cdot\,|\,\cdot\,\rangle$, that is,
\begin{equation}\label{e-gue190312mscdI}
\langle\,L\wedge u\,|\,v\,\rangle=\langle\,u\,|\,L^{\wedge,*} v\,\rangle,\ \ 
u\in T^{*0,q}M',\ \ v\in T^{*0,q+1}M'.
\end{equation}
Let 
\begin{equation}\label{e-gue190312mscdII}
\Box^{(q)}_-:=(\ddbar\rho)^{\wedge,*}\gamma\ddbar
\tilde P: \Omega^{0,q}(X)\To\Omega^{0,q}(X). 
\end{equation}
In this section, we will construct parametrix for $\Box^{(q)}_-$ 
under certain Levi curvature assumptions. Let 
\[\triangle_X:=dd^*+d^*d:\cC^\infty(X,\Lambda^q(\Complex T^*X))\To
\cC^\infty(X,\Lambda^q(\Complex T^*X))\]
be the De-Rham Laplacian, where 
\[d^*: \cC^\infty(X,\Lambda^{q+1}(\Complex T^*X))\To 
\cC^\infty(X,\Lambda^q(\Complex T^*X))\] 
is the formal adjoint of the exterior derivative $d$ with respect to 
$(\,\cdot\,|\,\cdot\,)_X$. Let $\sqrt{-\triangle_X}$ be the square 
root of $-\triangle_X$. Put 
\begin{equation}\label{e-gue190313sydI}
\begin{split}
&\Sigma^-=\set{(x,\lambda\omega_0(x))\in T^*X;\, \lambda<0},\\
&\Sigma^+=\set{(x,\lambda\omega_0(x))\in T^*X;\, \lambda>0}.
\end{split}
\end{equation}
\begin{thm}[{\cite[Proposition 4.1]{Hsiao08}}] \label{t-gue180313syd}
The operator 
\[\Box^{(q)}_-:=(\ddbar\rho)^{\wedge,*}\gamma\ddbar\tilde P: 
\Omega^{0,q}(X)\To\Omega^{0,q}(X)\]
is a classical
pseudodifferential operator of order one and we have
\begin{equation} \label{e-gue190313sydII}
\Box^{(q)}_-=\frac{1}{2}(iT+\sqrt{-\triangle_X})+\text{lower order terms}.
\end{equation}
In particular, $\Box^{(q)}_-$ is elliptic outside $\Sigma^-$.
\end{thm}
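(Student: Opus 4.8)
The plan is to compute the full symbol of $\Box^{(q)}_-$ by unwinding its definition $\Box^{(q)}_-=(\ddbar\rho)^{\wedge,*}\gamma\ddbar\tilde P$ as a composition of operators whose symbolic behaviour is understood. First I would recall the structure of the Poisson operator $\tilde P$: by Boutet de Monvel's theory \cite{B71}, $\tilde P$ is an operator of the Poisson type associated with the elliptic boundary value problem $\tilde F^{(q)}$ in \eqref{e-gue190312scdII}, and its composition with the boundary trace of a first-order tangential-plus-normal differential operator produces a classical pseudodifferential operator on $X$. Concretely, $\gamma\ddbar\tilde P$ is a first-order $\Psi$DO on $X$; applying $(\ddbar\rho)^{\wedge,*}$ is just a zeroth-order (bundle) operator, so $\Box^{(q)}_-$ is a classical $\Psi$DO of order one. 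This order-one assertion and the mapping properties are essentially already contained in \cite[Proposition 4.1]{Hsiao08}, which is being quoted; so the real content I would present is the identification of the principal symbol \eqref{e-gue190313sydII}.

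To get the principal symbol I would work in boundary normal coordinates adapted to $\rho$: choose coordinates $(x',x_n)$ on $M'$ near a boundary point so that $X=\{x_n=0\}$, $\rho=x_n$ up to higher order, and $\partial/\partial\rho$ corresponds (to leading order) to $\partial/\partial x_n$. The key computation is the indicial/characteristic equation of $\tilde\Box^{(q)}_f$ in the normal direction: since $\tilde\Box^{(q)}_f$ has the same principal part as the (real) Laplace–Beltrami operator, its principal symbol is $|\xi'|^2+\xi_n^2$ (with values in endomorphisms of $T^{*0,q}M'$, scalar to leading order), so the decaying solution of $\tilde\Box^{(q)}_f\tilde Pu=0$ with boundary value $u$ behaves like $e^{-x_n|\xi'|}$ to leading order; hence $\gamma(\partial_{x_n}\tilde P u)$ has principal symbol $-|\xi'|$, i.e.\ $\gamma\,\partial_\rho\tilde P\sim -\sqrt{-\triangle_X}$ at the symbol level. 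Then I would split $\ddbar$ near $X$ into its tangential part and its $\partial/\partial\bar\rho$-component: writing $\ddbar = \ddbar_b + (\ddbar\rho)^\wedge\,\overline{\partial_\rho}\,+\dots$ and using $\overline{\partial_\rho}=\tfrac12(\partial_\rho + iT)$ together with $\|d\rho\|=1$, the contraction $(\ddbar\rho)^{\wedge,*}(\ddbar\rho)^\wedge$ contributes the factor $\tfrac12$ and produces, from the normal derivative, $\tfrac12\sqrt{-\triangle_X}$, while the $iT$ term contributes $\tfrac12 iT$; the tangential piece $(\ddbar\rho)^{\wedge,*}\ddbar_b\tilde P$ is of lower order because $\ddbar_b$ differentiates only in directions where $\tilde P$ is smoothing to that order. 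Combining gives $\Box^{(q)}_-=\tfrac12(iT+\sqrt{-\triangle_X})+\text{l.o.t.}$

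Finally, the ellipticity statement follows by inspecting where the principal symbol $\tfrac12(i\langle\omega_0,\cdot\rangle\text{-part}+|\xi'|)$ vanishes. Decomposing $\xi\in T^*X$ according to \eqref{e-gue190312scdqIII} as a tangential CR part plus a multiple $\lambda\omega_0$ of the Reeb form, the term $\sqrt{-\triangle_X}$ has principal symbol $\|\xi\|\geq0$ which vanishes only at $\xi=0$, while $iT$ has principal symbol $i\lambda$; the sum vanishes precisely when the tangential CR components of $\xi$ are zero and $\lambda<0$ — that is, exactly on $\Sigma^-$ as defined in \eqref{e-gue190313sydI} (the signs being fixed by $\langle\omega_0,T\rangle=-1$). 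Hence $\Box^{(q)}_-$ is elliptic on $T^*X\setminus\Sigma^-$. I expect the main obstacle to be the bookkeeping in the second step: carefully justifying that the tangential part of $\ddbar\tilde P$ is genuinely lower order and correctly tracking the endomorphism-valued contributions of $(\ddbar\rho)^{\wedge,*}$ on $T^{*0,q}X$, so that the scalar leading symbol really is $\tfrac12(iT+\sqrt{-\triangle_X})\otimes\mathrm{Id}$; but since the order-one and $\Psi$DO statements are imported from \cite[Proposition 4.1]{Hsiao08}, this amounts to a principal-symbol computation rather than a full parametrix construction.
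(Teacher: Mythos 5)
Your overall strategy is the natural one, and it is essentially the route of the cited source: the paper itself gives no proof of this theorem (it is imported verbatim from \cite[Proposition 4.1]{Hsiao08}), and there the statement is obtained exactly by the kind of boundary symbol computation you outline, namely the Dirichlet-to-Neumann behaviour of the Poisson operator $\tilde P$ combined with the splitting of $\ddbar$ near $X$ into tangential and normal parts and contraction with $(\ddbar\rho)^{\wedge,*}$. So the plan is sound; the problem is that, as written, the quantitative part of your computation — which is the entire content of \eqref{e-gue190313sydII}, since what matters is the constant $\tfrac12$ and the relative sign that puts the characteristic set on $\Sigma^-$ rather than $\Sigma^+$ — contains errors and is internally inconsistent.

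Concretely: (i) with your own convention $\rho=x_n$ and $M=\{x_n<0\}$, the solution of the model equation that is bounded into the interior is $e^{x_n|\xi'|}$, not $e^{-x_n|\xi'|}$, so $\gamma\,\partial_\rho\tilde P$ has principal symbol $+|\xi'|$, i.e. $+\sqrt{-\triangle_X}$, not $-\sqrt{-\triangle_X}$ as you assert; if your stated sign were carried through honestly you would land on $\tfrac12(iT-\sqrt{-\triangle_X})$, which is characteristic on $\Sigma^+$ and contradicts the ellipticity claim — instead the sign is silently flipped in the next sentence, so the target formula is matched rather than derived. (ii) The normal component of $\ddbar$ is $(\ddbar\rho)^\wedge(\partial_\rho+iT)$ with no factor $\tfrac12$ (compute with the dual frames $t_n=\sqrt2\,\ddbar\rho$, $T_n=\tfrac1{\sqrt2}(iT+\partial_\rho)$, or check in the flat model $\rho=x_{2n-1}$, where the $d\ol z_n$-term of $\ddbar u$ is $((\partial_\rho+iT)u)\,\ddbar\rho$); your normalization $(\ddbar\rho)^\wedge\cdot\tfrac12(\partial_\rho+iT)$, combined with $(\ddbar\rho)^{\wedge,*}(\ddbar\rho)^\wedge=\norm{\ddbar\rho}^2=\tfrac12$ on $T^{*0,q}X$, would produce $\tfrac14(iT+\sqrt{-\triangle_X})$; the $\tfrac12$ in \eqref{e-gue190313sydII} comes solely from the contraction. (iii) The tangential part is lower order not because ``$\tilde P$ is smoothing to that order in those directions'' (it is not), but for the algebraic reason that $(\ddbar\rho)^{\wedge,*}t_j^\wedge=-t_j^\wedge(\ddbar\rho)^{\wedge,*}$ annihilates the leading term on $\Omega^{0,q}(X)$, leaving only order-zero commutator contributions. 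A useful consistency check is \eqref{e-gue190313hcd}, quoted in the paper from \cite[(4.11)]{Hsiao08}: there the normal part of $\gamma\ol{\pr}^*_f\tilde P$ is $(\ddbar\rho)^{\wedge,*}\circ(iT-\sqrt{-\triangle_X})$ with no extra $\tfrac12$ and with the plus sign for the Dirichlet-to-Neumann symbol, confirming both corrections. With these three points repaired your argument does give \eqref{e-gue190313sydII}, and your identification of the characteristic set with $\Sigma^-$ (using $\langle\omega_0,T\rangle=-1$) is then correct.
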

Let $\ddbar_b:\Omega^{0,q}(X)\To\Omega^{0,q+1}(X)$ 
be the tangential Cauchy-Riemann operator. It is not difficult to see that 
\begin{equation}\label{e-gue190313sydIII}
\ddbar_b=2(\ddbar\rho)^{\wedge,*}(\ddbar\rho)^\wedge\gamma\ddbar\tilde P
: \Omega^{0,q}(X)\To\Omega^{0,q+1}(X).
\end{equation}
We notice that for $u\in\cC^\infty(X,\Lambda^q(\Complex T^*X))$, 
\begin{equation}\label{e-gue190313scs}
\mbox{$u\in\Omega^{0,q}(X)$ if and only if $u=
2(\ddbar\rho)^{\wedge,*}(\ddbar\rho)^\wedge u$ on $X$}
\end{equation}
and 
\begin{equation}\label{e-gue190313scsI}
\mbox{$2(\ddbar\rho)^{\wedge,*}
(\ddbar\rho)^\wedge+2(\ddbar\rho)^\wedge(\ddbar\rho)^{\wedge,*}=I$ 
on $\cC^\infty(X,\Lambda^q(\Complex T^*X))$}.
\end{equation}
Consider 
\[\gamma\ol{\pr}^*_f\tilde Pu: \cC^\infty(X,  \Lambda^{q+1}
(\Complex T^*X))\To\cC^\infty(X, \Lambda^q(\Complex T^*X)).\]
 It is not difficult to check that (see~\cite[Lemma 2.2]{Hsiao08})
\begin{equation}\label{e-gue190313scd}
\gamma\ol{\pr}^*_f\tilde P: \Omega^{0,q+1}(X)\To\Omega^{0,q}(X). 
\end{equation}
Put 
\begin{equation}\label{e-gue190313scdI}
\Td\Box^{(q)}_b:=\gamma\ol{\pr}^*_f\tilde P\ddbar_b+\ddbar_b\gamma\ol{\pr}^*_f\tilde P:\Omega^{0,q}(X)\To\Omega^{0,q}(X). 
\end{equation}
\begin{lem}\label{l-gue190312mscd}
We have 
\[\Td\Box^{(q)}_b=-4(\ddbar\rho)^{\wedge,*}(\ddbar\rho)^\wedge\gamma\ol{\pr}^*_f\tilde P(\ddbar\rho)^\wedge\Box^{(q)}_-+R^{(q)}\ \ \mbox{on $\Omega^{0,q}(X)$},\]
where $R^{(q)}: \Omega^{0,q}(X)\To\Omega^{0,q}(X)$ is a smoothing operator. 
\end{lem}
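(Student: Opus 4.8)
The plan is to prove the identity by a direct operator computation, rewriting each of $\gamma\ol{\pr}^*_f\tilde P\ddbar_b$ and $\ddbar_b\gamma\ol{\pr}^*_f\tilde P$ modulo smoothing operators until only $\Box^{(q)}_-$ and the bundle maps $(\ddbar\rho)^\wedge$, $(\ddbar\rho)^{\wedge,*}$ survive. The first, purely bookkeeping, step is to pin down form degrees: in $\gamma\ol{\pr}^*_f\tilde P\ddbar_b$ the Poisson operator $\tilde P$ is the one attached to $\tilde\Box^{(q+1)}_f$ and $\ddbar_b$ acts on $(0,q)$-forms, whereas in $\ddbar_b\gamma\ol{\pr}^*_f\tilde P$ the operator $\tilde P$ is attached to $\tilde\Box^{(q)}_f$ and $\ddbar_b$ acts on $(0,q-1)$-forms; so I keep the three levels $q-1,q,q+1$ of \eqref{e-gue190312scdII}--\eqref{e-gue190418yyda} simultaneously in play. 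Recall also that by \eqref{e-gue190313scs} the zeroth order operator $2(\ddbar\rho)^{\wedge,*}(\ddbar\rho)^\wedge$ is the identity on $\Omega^{0,q}(X)$, hence on the range of $\Td\Box^{(q)}_b$.

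The key reductions, all consequences of the parametrix identity $D^{(q)}\tilde\Box^{(q)}_f+\tilde P\gamma=I$ of \eqref{e-gue190418yyda} (valid at each form degree), the formal commutations $\ddbar\,\Box^{(q)}_f=\Box^{(q+1)}_f\ddbar$ and $\ol{\pr}^*_f\Box^{(q)}_f=\Box^{(q-1)}_f\ol{\pr}^*_f$, the relation $\tilde\Box^{(q)}_f\tilde Pu=0$ of \eqref{e-gue190312mscd}, and the fact that $K^{(q)}=\tilde\Box^{(q)}_f-\Box^{(q)}_f$ is a finite rank smoothing operator, are: (i) $\Box^{(q)}_f\tilde P=\tilde\Box^{(q)}_f\tilde P-K^{(q)}\tilde P=-K^{(q)}\tilde P$ is smoothing from $\cC^\infty(X)$ into $\cC^\infty(\ol M)$; (ii) using (i), $\tilde\Box^{(q-1)}_f\,\ol{\pr}^*_f\tilde P=\ol{\pr}^*_f\Box^{(q)}_f\tilde P+K^{(q-1)}\ol{\pr}^*_f\tilde P$ and $\tilde\Box^{(q+1)}_f\ddbar\tilde P=\ddbar\Box^{(q)}_f\tilde P+K^{(q+1)}\ddbar\tilde P$ are both smoothing, so applying $D^{(q-1)}$ resp.\ $D^{(q+1)}$ in \eqref{e-gue190418yyda} gives $\tilde P\gamma\,\ol{\pr}^*_f\tilde P\equiv\ol{\pr}^*_f\tilde P$ and $\ddbar\tilde P\equiv\tilde P\gamma\ddbar\tilde P$ modulo smoothing operators into $\cC^\infty(\ol M)$. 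With these in hand: for $\ddbar_b\gamma\ol{\pr}^*_f\tilde P$ I replace the inner $\tilde P\gamma$ by (ii) to get $2(\ddbar\rho)^{\wedge,*}(\ddbar\rho)^\wedge\gamma\ddbar\,\ol{\pr}^*_f\tilde P$, then use $\ddbar\,\ol{\pr}^*_f=\Box^{(q)}_f-\ol{\pr}^*_f\ddbar$ together with (i) to rewrite $\gamma\ddbar\,\ol{\pr}^*_f\tilde P\equiv-\gamma\ol{\pr}^*_f\ddbar\tilde P$, and finally (ii) for $\ddbar\tilde P$, reaching $\ddbar_b\gamma\ol{\pr}^*_f\tilde P\equiv-2(\ddbar\rho)^{\wedge,*}(\ddbar\rho)^\wedge(\gamma\ol{\pr}^*_f\tilde P)\gamma\ddbar\tilde P$. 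For $\gamma\ol{\pr}^*_f\tilde P\ddbar_b$ I insert \eqref{e-gue190313scsI} into $\ddbar_b=2(\ddbar\rho)^{\wedge,*}(\ddbar\rho)^\wedge\gamma\ddbar\tilde P$, i.e.\ write $\gamma\ddbar\tilde P=\ddbar_b+2(\ddbar\rho)^\wedge\Box^{(q)}_-$, obtaining $\gamma\ol{\pr}^*_f\tilde P\ddbar_b=(\gamma\ol{\pr}^*_f\tilde P)\gamma\ddbar\tilde P-2(\gamma\ol{\pr}^*_f\tilde P)(\ddbar\rho)^\wedge\Box^{(q)}_-$.

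Adding the two contributions, the terms containing $(\gamma\ol{\pr}^*_f\tilde P)\gamma\ddbar\tilde P$ combine, again by \eqref{e-gue190313scsI}, into $(I-2(\ddbar\rho)^{\wedge,*}(\ddbar\rho)^\wedge)(\gamma\ol{\pr}^*_f\tilde P)\gamma\ddbar\tilde P=2(\ddbar\rho)^\wedge(\ddbar\rho)^{\wedge,*}(\gamma\ol{\pr}^*_f\tilde P)\gamma\ddbar\tilde P$. Now I apply $2(\ddbar\rho)^{\wedge,*}(\ddbar\rho)^\wedge$ to the whole identity: on the left it acts as the identity since $\Td\Box^{(q)}_b$ has range in $\Omega^{0,q}(X)$; on the right the combined term is annihilated because $(\ddbar\rho)^\wedge(\ddbar\rho)^\wedge=0$, while $2(\ddbar\rho)^{\wedge,*}(\ddbar\rho)^\wedge$ maps smoothing operators to smoothing operators; what remains is exactly $-4(\ddbar\rho)^{\wedge,*}(\ddbar\rho)^\wedge(\gamma\ol{\pr}^*_f\tilde P)(\ddbar\rho)^\wedge\Box^{(q)}_-$, which is the asserted formula with $R^{(q)}$ the accumulated smoothing error. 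There is no genuine obstruction here; the only delicate point is the form-degree bookkeeping, together with checking that every error term produced by the finite rank operators $K^{(q)},K^{(q\pm1)}$ — composed with the continuous operators $D^{(q-1)},D^{(q+1)},\tilde P,\gamma,\ddbar,\ol{\pr}^*_f$ — is genuinely smoothing on $X$; once this is organized the remaining manipulations are purely algebraic and use only the pointwise relations \eqref{e-gue190313scs}, \eqref{e-gue190313scsI} and $(\ddbar\rho)^\wedge(\ddbar\rho)^\wedge=0$.
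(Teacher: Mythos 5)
Your proof is correct, and it uses the same basic toolkit as the paper's own argument: the formula \eqref{e-gue190313sydIII} for $\ddbar_b$, the projector identities \eqref{e-gue190313scs}--\eqref{e-gue190313scsI}, the definition \eqref{e-gue190312mscdII} of $\Box^{(q)}_-$, and the fact that $\Box^{(q)}_f\tilde P=-K^{(q)}\tilde P$ is smoothing by \eqref{e-gue190312mscd}, \eqref{e-gue190312scdI} and \eqref{e-gue190313ad}. The arrangement, however, is genuinely different in two places. The paper applies $2(\ddbar\rho)^{\wedge,*}(\ddbar\rho)^\wedge$ at the start and then combines the two cross terms $\gamma\ol{\pr}^*_f\tilde P\gamma\ddbar\tilde P$ and $\gamma\ddbar\tilde P\gamma\ol{\pr}^*_f\tilde P$ directly into $2(\ddbar\rho)^{\wedge,*}(\ddbar\rho)^\wedge\gamma\Box^{(q)}_f\tilde P=-2(\ddbar\rho)^{\wedge,*}(\ddbar\rho)^\wedge\gamma K^{(q)}\tilde P$; strictly speaking that step holds only modulo smoothing, because it tacitly uses the reproducing identities $\tilde P\gamma\ddbar\tilde P\equiv\ddbar\tilde P$ and $\tilde P\gamma\ol{\pr}^*_f\tilde P\equiv\ol{\pr}^*_f\tilde P$ — exactly the statements you isolate and prove in your step (ii) from \eqref{e-gue190418yyda}, so your write-up makes explicit what the paper's chain of equalities leaves implicit. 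In exchange, you rewrite the summand $\ddbar_b\gamma\ol{\pr}^*_f\tilde P$ as $-2(\ddbar\rho)^{\wedge,*}(\ddbar\rho)^\wedge\gamma\ol{\pr}^*_f\tilde P\gamma\ddbar\tilde P$ (using $\ddbar\,\ol{\pr}^*_f=\Box^{(q)}_f-\ol{\pr}^*_f\ddbar$ and the smoothness of $\Box^{(q)}_f\tilde P$) and then kill the leftover cross term at the very end by applying the projection and invoking $(\ddbar\rho)^\wedge(\ddbar\rho)^\wedge=0$, a nilpotency argument the paper never needs. Both routes absorb all errors — compositions of the finite-rank operators $K^{(q)},K^{(q\pm1)}$ with $D^{(q\pm1)},\tilde P,\gamma,\ddbar,\ol{\pr}^*_f$ and zeroth-order bundle maps — into $R^{(q)}$; the paper's version is shorter and exhibits the single explicit error $-2(\ddbar\rho)^{\wedge,*}(\ddbar\rho)^\wedge\gamma K^{(q)}\tilde P$, while yours is a bit longer but every ``modulo smoothing'' replacement is justified, including the form-degree bookkeeping for the three Poisson operators at levels $q-1,q,q+1$.
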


\begin{proof}
From \eqref{e-gue190313sydIII}, \eqref{e-gue190313scs}, \eqref{e-gue190313scsI},  \eqref{e-gue190313scd} and \eqref{e-gue190312mscd}, we have 
\begin{equation}\label{e-gue190313scsII}
\begin{split}
\Td\Box^{(q)}_b&=2(\ddbar\rho)^{\wedge,*}(\ddbar\rho)^\wedge\Td\Box^{(q)}_b\\
&=2(\ddbar\rho)^{\wedge,*}(\ddbar\rho)^\wedge\Bigr(\gamma\ol{\pr}^*_f\tilde P\ddbar_b+\ddbar_b\gamma\ol{\pr}^*_f\tilde P)\\
&=2(\ddbar\rho)^{\wedge,*}(\ddbar\rho)^\wedge\gamma\ol{\pr}^*_f\tilde P\ddbar_b+2(\ddbar\rho)^{\wedge,*}(\ddbar\rho)^\wedge\ddbar_b\gamma\ol{\pr}^*_f\tilde P\\
&=2(\ddbar\rho)^{\wedge,*}(\ddbar\rho)^\wedge\gamma\ol{\pr}^*_f\tilde P\ddbar_b+2(\ddbar\rho)^{\wedge,*}(\ddbar\rho)^\wedge\gamma\ddbar\tilde P\gamma\ol{\pr}^*_f\tilde P\\
&=2(\ddbar\rho)^{\wedge,*}(\ddbar\rho)^\wedge\gamma\ol{\pr}^*_f\tilde P\Bigr(\gamma\ddbar\tilde P-2(\ddbar\rho)^\wedge(\ddbar\rho)^{\wedge,*}\gamma\ddbar\tilde P\Bigr)\\
&\quad+2(\ddbar\rho)^{\wedge,*}(\ddbar\rho)^\wedge\gamma\ddbar\tilde P\gamma\ol{\pr}^*_f\tilde P\\
&=2(\ddbar\rho)^{\wedge,*}(\ddbar\rho)^\wedge\gamma\Box^{(q)}_f\tilde P-4(\ddbar\rho)^{\wedge,*}(\ddbar\rho)^\wedge\gamma\ol{\pr}^*_f\tilde P(\ddbar\rho)^\wedge\Box^{(q)}_-\\
&=-2(\ddbar\rho)^{\wedge,*}(\ddbar\rho)^\wedge\gamma K^{(q)}\tilde P-4(\ddbar\rho)^{\wedge,*}(\ddbar\rho)^\wedge\gamma\ol{\pr}^*_f\tilde P(\ddbar\rho)^\wedge\Box^{(q)}_-,
\end{split}
\end{equation}
where $K^{(q)}$ is as in \eqref{e-gue190312scdI}. Note that $K^{(q)}\equiv0\mod\cC^\infty(\ol M\times\ol M)$. From this observation and \eqref{e-gue190313ad}, we deduce that 
\[-2(\ddbar\rho)^{\wedge,*}(\ddbar\rho)^\wedge\gamma K^{(q)}\tilde P: H^s(X, T^{*0,q}M')\rightarrow H^{s+N}(X, T^{*0,q}M'),\]
for every $s\in\mathbb R$ and every $N\in\mathbb N$. Hence, $-2(\ddbar\rho)^{\wedge,*}(\ddbar\rho)^\wedge\gamma K^{(q)}\tilde P$ is smoothing. From this observation and \eqref{e-gue190313scsII}, the lemma follows. 
\end{proof}

Lemma~\ref{l-gue190312mscd} gives a relation between $\Td\Box^{(q)}_b$ and $\Box^{(q)}_-$. Put 
\begin{equation}\label{e-gue190313adI}
A^{(q)}:=-4(\ddbar\rho)^{\wedge,*}(\ddbar\rho)^\wedge\gamma\ol{\pr}^*_f\tilde P(\ddbar\rho)^\wedge: \Omega^{0,q}(X)\To\Omega^{0,q}(X). 
\end{equation}
Then, $\Td\Box^{(q)}_b\equiv A^{(q)}\Box^{(q)}_-$. The operator $A^{(q)}$ is a classical pseudodifferential operaor of order one. We are going to show that 
$A^{(q)}$ is elliptic near $\Sigma^-$. 
We pause and introduce some notations. Near $X$, put
\begin{equation} \label{e-gue190313acd}
\Td T^{*0,1}_zM'=\set{u\in T^{*0,1}_zM';\, \langle\,u\,|\,\ddbar\rho(z)\,\rangle=0}
\end{equation}
and
\begin{equation} \label{e-gue190313acdI}
\Td T^{0,1}_zM'=\set{u\in T^{0,1}_zM';\, \langle\,u\,|\,(iT+\frac{\pr}{\pr\rho})(z)\,\rangle=0}.
\end{equation}
We have the orthogonal decompositions with respect to $\langle\,\cdot\,|\,\cdot\,\rangle$ for every $z\in M'$, $z$ is near $X$: 
\begin{equation} \label{e-gue190313acdII}
\begin{split}
&T^{*0,1}_zM'=\Td T^{*,0,1}_zM'\oplus\set{\lambda(\ddbar\rho)(z);\, \lambda\in\Complex},\\
&T^{0,1}_zM'=\Td T^{0,1}_zM'\oplus\set{\lambda(iT+\frac{\pr}{\pr\rho})(z);\, \lambda\in\Complex}.
\end{split}
\end{equation}
Note that $\Td T^{*,0,1}_zM'=T^{*0,1}_zX$, $\Td T^{0,1}_zM'=T^{0,1}_zX$,  for every $z\in X$. Fix $z_0\in X$. We can choose an orthonormal frame
$t_1(z),\ldots,t_{n-1}(z)$
for $\Td T^{*,0,1}_zM'$ varying smoothly with $z$ in a neighborhood $U$ of $z_0$ in $M'$. Then 
\[t_1(z),\ldots,t_{n-1}(z),t_n(z):=\frac{\ddbar\rho(z)}{\norm{\ddbar\rho(z)}}\]
is an orthonormal frame for $T^{*0,1}_zM'$. Let
\[T_1(z),\ldots,T_{n-1}(z),T_n(z)\]
denote the basis of $T^{0,1}_zM'$ which is dual to
$t_1(z),\ldots,t_n(z)$. We have $T_j(z)\in\Td T^{0,1}_zM'$, $j=1,\ldots,n-1$, 
and $T_n=\frac{iT+\frac{\pr}{\pr\rho}}{\norm{iT+\frac{\pr}{\pr\rho}}}$.
By \cite[(4.11]{Hsiao08})
\begin{equation} \label{e-gue190313hcd}
\gamma\ol{\pr_f}^*\tilde P=\sum^{n-1}_{j=1}t^{\wedge, *}_j\circ T^*_j
+(\ddbar\rho)^{\wedge, *}\circ(iT-\sqrt{-\triangle_X})+
\text{lower order terms},
\end{equation}
where $T^*_j$ is the adjoint of $T_j$, $j=1,\ldots,n-1$, that is, 
$(\,T_jf\,|\,g\,)_X=(\,f\,|\,T^*_jg\,)_X$, for every 
$f, g\in\cC^\infty_0(U\cap X)$, $j=1,\ldots,n-1$. 

\begin{thm}\label{t-gue190313ad}
We have 
\begin{equation}\label{e-gue190314cydII}
A^{(q)}=-(iT-\sqrt{-\triangle})+\mbox{lower order terms}\ \ \mbox{on $\Omega^{0,q}(X)$}.
\end{equation}
Hence, the  operator $A^{(q)}$ is elliptic near $\Sigma^-$. 
\end{thm}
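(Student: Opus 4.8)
The plan is to substitute the expansion \eqref{e-gue190313hcd} of $\gamma\ol{\pr}^*_f\tilde P$ into the definition \eqref{e-gue190313adI} of $A^{(q)}$ and to simplify the resulting composition modulo (pseudo)differential operators of order $\le0$, which are discarded as lower order terms. The algebraic input to be used is the following. First, the frame $t_1(z),\dots,t_{n-1}(z)$ of $\Td T^{*0,1}_zM'$ chosen before \eqref{e-gue190313hcd} satisfies $\langle\,t_j\,|\,\ddbar\rho\,\rangle=0$ by \eqref{e-gue190313acd}, so the Clifford relation $\{t_j^{\wedge,*},(\ddbar\rho)^\wedge\}=\langle\,t_j\,|\,\ddbar\rho\,\rangle=0$ gives $t_j^{\wedge,*}(\ddbar\rho)^\wedge=-(\ddbar\rho)^\wedge t_j^{\wedge,*}$. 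Second, $(\ddbar\rho)^\wedge(\ddbar\rho)^\wedge=0$ and $(\ddbar\rho)^{\wedge,*}(\ddbar\rho)^{\wedge,*}=0$. Third, by \eqref{e-gue190313scsI} the bundle map $P:=2(\ddbar\rho)^{\wedge,*}(\ddbar\rho)^\wedge$ is a self-adjoint idempotent with range $T^{*0,q}X$, hence by \eqref{e-gue190313scs} it acts as the identity on $\Omega^{0,q}(X)$. Finally, the first order operators $iT$, $\sqrt{-\triangle_X}$ and $T_j^*$ commute with the smooth bundle maps $(\ddbar\rho)^\wedge$, $(\ddbar\rho)^{\wedge,*}$, $t_j^{\wedge,*}$ modulo lower order terms.

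Substituting \eqref{e-gue190313hcd} into \eqref{e-gue190313adI}, the tangential part $\sum_{j=1}^{n-1}t_j^{\wedge,*}T_j^*$ of $\gamma\ol{\pr}^*_f\tilde P$ contributes, modulo lower order terms,
\[
-4(\ddbar\rho)^{\wedge,*}(\ddbar\rho)^\wedge\Bigl(\sum_{j=1}^{n-1}t_j^{\wedge,*}T_j^*\Bigr)(\ddbar\rho)^\wedge
=4(\ddbar\rho)^{\wedge,*}(\ddbar\rho)^\wedge(\ddbar\rho)^\wedge\sum_{j=1}^{n-1}t_j^{\wedge,*}T_j^*=0,
\]
where one first moves the rightmost $(\ddbar\rho)^\wedge$ to the left through $T_j^*$ (modulo lower order terms) and through each $t_j^{\wedge,*}$ (picking up a sign), and then uses $(\ddbar\rho)^\wedge(\ddbar\rho)^\wedge=0$. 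The normal part $(\ddbar\rho)^{\wedge,*}(iT-\sqrt{-\triangle_X})$ contributes, modulo lower order terms and after commuting $iT-\sqrt{-\triangle_X}$ past the rightmost $(\ddbar\rho)^\wedge$,
\[
-4(\ddbar\rho)^{\wedge,*}(\ddbar\rho)^\wedge(\ddbar\rho)^{\wedge,*}(\ddbar\rho)^\wedge\bigl(iT-\sqrt{-\triangle_X}\bigr)
=-P^2\bigl(iT-\sqrt{-\triangle_X}\bigr)=-P\bigl(iT-\sqrt{-\triangle_X}\bigr),
\]
which on $\Omega^{0,q}(X)$ equals $-(iT-\sqrt{-\triangle_X})$ modulo lower order terms, by \eqref{e-gue190313scs}. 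Adding the two contributions yields \eqref{e-gue190314cydII}.

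For the ellipticity statement I would compare principal symbols. Let $p(x,\xi)$ denote the (real-valued, homogeneous of degree one) principal symbol of $iT$; recall that the principal symbol of $\sqrt{-\triangle_X}$ is the positive length $\abs{\xi}_x$. By Theorem~\ref{t-gue180313syd}, the symbol $p(x,\xi)+\abs{\xi}_x$ of $iT+\sqrt{-\triangle_X}$ vanishes exactly on $\Sigma^-$; there $p(x,\xi)=-\abs{\xi}_x$, so the symbol $p(x,\xi)-\abs{\xi}_x$ of $iT-\sqrt{-\triangle_X}$ equals $-2\abs{\xi}_x\neq0$ on $\Sigma^-$ (in fact its characteristic set is $\Sigma^+$, which is disjoint from $\Sigma^-$). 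Since $p(x,\xi)-\abs{\xi}_x$ is scalar, it commutes with $\sigma(P)$, and on $T^{*0,q}X={\rm Ran\,}P$ the latter is the identity; hence by \eqref{e-gue190314cydII} the principal symbol of $A^{(q)}$, as an endomorphism of $T^{*0,q}X$, equals $-(p(x,\xi)-\abs{\xi}_x)$ times the identity, which is invertible on a conic neighbourhood of $\Sigma^-$. Therefore $A^{(q)}$ is elliptic near $\Sigma^-$.

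The only step requiring real care is the bookkeeping of the lower order terms: one must check that every operator discarded in the manipulations above --- in particular every commutator of the properly supported $\Psi$DO $\sqrt{-\triangle_X}$ with the smooth bundle endomorphisms $(\ddbar\rho)^\wedge$, $(\ddbar\rho)^{\wedge,*}$, $t_j^{\wedge,*}$ --- is a classical pseudodifferential operator of order $\le0$. This is routine from the pseudodifferential calculus, whereas the exterior-algebra cancellations doing the actual work are purely formal.
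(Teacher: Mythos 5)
Your proposal is correct and follows essentially the same route as the paper: substitute the expansion \eqref{e-gue190313hcd} into the definition \eqref{e-gue190313adI}, commute the order-zero bundle maps past the first-order operators modulo lower order terms, and kill the tangential part via $(\ddbar\rho)^\wedge(\ddbar\rho)^\wedge=0$ while reducing the normal part to $-(iT-\sqrt{-\triangle_X})$ using \eqref{e-gue190313scs} and \eqref{e-gue190313scsI} (the paper's identities \eqref{e-gue190314cydI}). Your symbol computation for the ellipticity near $\Sigma^-$ is also the intended (if unwritten) justification of the paper's final ``Hence''.
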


\begin{proof}
From \eqref{e-gue190313hcd} and \eqref{e-gue190313adI}, we have 
\begin{equation}\label{e-gue190314cyd}
A^{(q)}=-4(\ddbar\rho)^{\wedge,*}(\ddbar\rho)^\wedge\Bigr(\sum^{n-1}_{j=1}t^{\wedge,*}_j(\ddbar\rho)^\wedge T^*_j+(\ddbar\rho)^{\wedge,*}(\ddbar\rho)^\wedge(iT-\sqrt{-\triangle})+\mbox{lower order terms}\Bigr).
\end{equation}
We notice that 
\begin{equation}\label{e-gue190314cydI}
\begin{split}
&(\ddbar\rho)^{\wedge,*}(\ddbar\rho)^\wedge(\ddbar\rho)^\wedge T^*_j=0\ \ \mbox{on $\Omega^{0,q}(X)$, for every $j=1,\ldots,n-1$},\\
&4(\ddbar\rho)^{\wedge,*}(\ddbar\rho)^\wedge(\ddbar\rho)^{\wedge,*}(\ddbar\rho)^\wedge=I\ \ \mbox{on $\Omega^{0,q}(X)$}. 
\end{split}
\end{equation}
From \eqref{e-gue190314cyd} and \eqref{e-gue190314cydI}, we get \eqref{e-gue190314cydII}. 
\end{proof}

We pause and introduce some notations. Let $D$ be an open set of $X$. Let
$$L^m_{\frac{1}{2},\frac{1}{2}}(D,T^{*0,q}X\boxtimes(T^{*0,q}X)^*)\,,\:\:
\:\:L^m_{{\rm cl\,}}(D,T^{*0,q}D\boxtimes(T^{*0,q}D)^*),$$
denote the space of pseudodifferential operators on $D$ of order $m$ type $(\frac{1}{2},\frac{1}{2})$
from sections of $T^{*0,q}X$ to sections of $T^{*0,q}X$ and the space of classical
pseudodifferential operators on $D$ of order $m$ from sections of
$T^{*0,q}X$ to sections of $T^{*0,q}X$ respectively. The classical result of
Calderon and Vaillancourt tells us that any 
$A\in L^m_{\frac12,\frac12}(D,T^{*0,q}X\boxtimes(T^{*0,q}X)^*)$ 
induces for any $s\in\Real$ a continuous operator 
\begin{equation}\label{e-gue190322yyd}
A:H^s_{\rm comp}(D,T^{*0,q}X)\To H^{s-m}_{\rm loc}(D,T^{*0,q}X).
\end{equation}
We refer to H\"{o}rmander~\cite[Chapter 18]{Hor85} for a proof. 
Let $A\in L^m_{\frac{1}{2},\frac{1}{2}}(D,T^{*0,q}X\boxtimes(T^{*0,q}X)^*)$, 
$B\in L^{m_1}_{\frac{1}{2},\frac{1}{2}}(D,T^{*0,q}X\boxtimes(T^{*0,q}X)^*)$, 
where $m, m_1\in\mathbb R$. If $A$ or $B$ is properly supported, then the composition of $A$
and $B$ is well-defined and
\begin{equation}\label{e-gue190326scd}
AB\in L^{m+m_1}_{\frac{1}{2},\frac{1}{2}}(D,T^{*0,q}X\boxtimes(T^{*0,q}X)^*).
\end{equation}
For $m\in\mathbb R$, $\rho, \delta\in\mathbb R$, $0\leq\rho, \delta\leq1$, let 
$$S^{m}_{\rho,\delta}(T^*D,T^{*0,q}X\boxtimes(T^{*0,q}X)^*)$$ be the 
H\"ormander symbol space on $T^*D$ 
with values in $T^{*0,q}X\boxtimes(T^{*0,q}X)^*$ of order $m$ and 
type $(\rho,\delta)$. 
Let 
\[
S^{-\infty}_{\rho,\delta}(T^*D,T^{*0,q}X\boxtimes(T^{*0,q}X)^*):=
\cap_{m\in\mathbb{R}} S^m_{\rho,\delta}(T^*D,T^{*0,q}X
\boxtimes(T^{*0,q}X)^*).
\]
Let $a_j\in S^{m_j}_{\rho,\delta}(T^*D,T^{*0,q}X\boxtimes(T^{*0,q}X)^*)$, 
$j=0,1,2,\ldots\,$, with $m_j\To-\infty$, $j\To\infty$. 
Then there exists $a\in S^{m_0}_{\rho,\delta}(T^*D,T^{*0,q}X\boxtimes(T^{*0,q}X)^*)$ 
 such that 
$$a-\sum^{k-1}_{j=0}a_j\in S^{m_k}_{1,0}(T^*D,T^{*0,q}X\boxtimes(T^{*0,q}X)^*) 
\:\:\text{for $k=1,2,\ldots$\,.}$$ 
In this case we write $$a\sim\sum^{+\infty}_{j=0}a_j\:\:\text{in 
$S^{m_0}_{\rho,\delta}(T^*D,T^{*0,q}X\boxtimes(T^{*0,q}_xX)^*)$}.$$
The symbol $a$ is
unique modulo $S^{-\infty}_{\rho,\delta}(T^*D,T^{*0,q}X\boxtimes(T^{*0,q}X)^*)$.

\begin{defn}\label{d-gue190322yyd}
Let $A\in L^m_{\frac{1}{2},\frac{1}{2}}(D,T^{*0,q}X\boxtimes(T^{*0,q}X)^*)$, 
where $m\in\Real$. We write 
\[\mbox{$A\equiv 0$ near $\Sigma^-\cap T^*D$}\]
if there exists $A'\in L^m_{\frac{1}{2},\frac{1}{2}}(D,T^{*0,q}X\boxtimes(T^{*0,q}X)^*)$ 
with full symbol 
$a(x,\eta)\in S^m_{\frac{1}{2},\frac{1}{2}}(T^*D,T^{*0,q}X\boxtimes(T^{*0,q}X)^*)$ 
such that 
\[\mbox{$A\equiv A'$ on $D$}\]
and $a(x,\eta)$ vanishes in an open neighborhood of $\Sigma^-\cap T^*D$. 
\end{defn}

We now come back to our situation. Let $D\subset X$ be an open coordinate patch with local coordinates $x=(x_1,\ldots,x_{2n-1})$. Assume that the Levi form is non-degenerate of constant signature $(n_-,n_+)$ on $D$. It is clear that 
\[\gamma\ol{\pr}^*_f\tilde P=\ddbar^*_b+\mbox{lower order terms}.\]
From this observation, we can repeat the proof of~\cite[Proposition 6.3]{Hsiao08} with minor change and deduce that 

\begin{thm}\label{t-gue190314mscd}
With the notations and assumptions above, let $q\neq n_-$. We can find properly supported operator $E^{(q)}\in L^{-1}_{\frac{1}{2},\frac{1}{2}}(D,T^{*0,q}X\boxtimes(T^{*0,q}X)^*)$ such that 
\begin{equation}\label{e-gue190314yd}
\Td\Box^{(q)}_bE^{(q)}\equiv I+R\ \ \mbox{on $D$},
\end{equation}
where $R\in L^1_{\frac{1}{2},\frac{1}{2}}(D,T^{*0,q}X\boxtimes(T^{*0,q}X)^*)$ with $R\equiv0$ near $\Sigma^-\cap T^*D$. 
\end{thm}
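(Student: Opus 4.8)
The plan is to run the proof of \cite[Proposition 6.3]{Hsiao08} essentially verbatim, the only change being that the Kohn Laplacian is replaced throughout by $\Td\Box^{(q)}_b$. The bridge between the two is already available: by Lemma~\ref{l-gue190312mscd} we have $\Td\Box^{(q)}_b\equiv A^{(q)}\Box^{(q)}_-$ on $D$, and by Theorem~\ref{t-gue190313ad} the operator $A^{(q)}=-(iT-\sqrt{-\triangle})+\text{(lower order terms)}$ is a classical pseudodifferential operator of order one that is elliptic on a conic neighbourhood of $\Sigma^-\cap T^*D$; moreover, since $\gamma\ol{\pr}^*_f\tilde P=\ddbar^*_b+\text{(lower order terms)}$, the operator $\Td\Box^{(q)}_b$ has the same principal symbol as the tangential Kohn Laplacian $\ddbar_b\ddbar^*_b+\ddbar^*_b\ddbar_b$. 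Together with Theorem~\ref{t-gue180313syd} (ellipticity of $\Box^{(q)}_-$ outside $\Sigma^-$) this shows that the characteristic set of $\Td\Box^{(q)}_b$ in $T^*D$ is contained in $\Sigma^-\cup\Sigma^+$, and that, microlocally near $\Sigma^-$, inverting $\Td\Box^{(q)}_b$ amounts---after composing with the elliptic classical factor $A^{(q)}$---to inverting $\Box^{(q)}_-$. Since \eqref{e-gue190314yd} only asks for control of the error off $\Sigma^-$, it suffices to build $E^{(q)}$ microlocally supported near $\Sigma^-\cap T^*D$ and acting there as a right parametrix.

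Concretely, I would first fix a small conic neighbourhood $\Gamma^-$ of $\Sigma^-\cap T^*D$ on which $A^{(q)}$ is elliptic, and a classical parametrix $\check A^{(q)}\in L^{-1}_{\frac12,\frac12}(D,T^{*0,q}X\boxtimes(T^{*0,q}X)^*)$ for $A^{(q)}$ there, so $A^{(q)}\check A^{(q)}\equiv I$ on $\Gamma^-$. The substantive step, imported from \cite[\S\,6]{Hsiao08}, is the construction of a properly supported $G^{(q)}\in L^{0}_{\frac12,\frac12}(D,T^{*0,q}X\boxtimes(T^{*0,q}X)^*)$ with $\Box^{(q)}_-G^{(q)}\equiv I$ on $\Gamma^-$. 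Here one uses Theorem~\ref{t-gue180313syd} to write $\Box^{(q)}_-=\tfrac12(iT+\sqrt{-\triangle_X})+\text{(lower order terms)}$; passing to local contact coordinates along the symplectic cone $\Sigma^-$ and diagonalizing the Levi form, $\Box^{(q)}_-$ acting on $(0,q)$-forms is put into a harmonic-oscillator normal form near $\Sigma^-$ whose model on $(0,q)$-forms has a nontrivial null space precisely when $q$ equals the number of negative Levi eigenvalues; since $q\neq n_-$ the model is invertible, and one quantizes its inverse---equivalently, one carries out the Fourier-integral construction $\int_0^\infty e^{i\psi t}(\,\cdot\,)\,dt$ in the $T$-frequency $t$---to obtain $G^{(q)}$, of type $(\tfrac12,\tfrac12)$, the type reflecting the parabolic scaling transverse to $\Sigma^-$. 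Finally I set $E^{(q)}:=\chi\,G^{(q)}\check A^{(q)}$, where $\chi$ is a properly supported pseudodifferential cutoff with symbol equal to $1$ on a smaller conic neighbourhood of $\Sigma^-\cap T^*D$ and microsupported in $\Gamma^-$; then $E^{(q)}\in L^{-1}_{\frac12,\frac12}(D,T^{*0,q}X\boxtimes(T^{*0,q}X)^*)$ and is properly supported.

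To read off the conclusion, note that on the conic neighbourhood of $\Sigma^-\cap T^*D$ where $\chi\equiv 1$ (so $[\Td\Box^{(q)}_b,\chi]\equiv 0$ there) and the identities above hold,
\[
\Td\Box^{(q)}_bE^{(q)}\equiv A^{(q)}\Box^{(q)}_-G^{(q)}\check A^{(q)}\equiv A^{(q)}\check A^{(q)}\equiv I .
\]
Hence $R:=\Td\Box^{(q)}_bE^{(q)}-I$ satisfies $R\equiv 0$ near $\Sigma^-\cap T^*D$; and since $\Td\Box^{(q)}_b$ has order $2$ and $E^{(q)}\in L^{-1}_{\frac12,\frac12}$, the composition rule \eqref{e-gue190326scd} gives $R\in L^{1}_{\frac12,\frac12}(D,T^{*0,q}X\boxtimes(T^{*0,q}X)^*)$. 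This is \eqref{e-gue190314yd}.

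The main obstacle is, as in \cite{Hsiao08}, the construction of $G^{(q)}$ near $\Sigma^-$: putting $\Box^{(q)}_-$ into the harmonic-oscillator normal form along the symplectic cone $\Sigma^-$, checking that the resulting model on $(0,q)$-forms is invertible exactly in the regime $q\neq n_-$ (this is where the hypothesis enters), and then executing the attendant $(\tfrac12,\tfrac12)$-symbol calculus together with the asymptotic summation of the parametrix symbol. Relative to \cite[\S\,6]{Hsiao08} the only genuinely new point is to verify that $\Td\Box^{(q)}_b$---rather than the Kohn Laplacian---feeds correctly into this machinery, which is guaranteed by the two facts quoted at the outset: $\gamma\ol{\pr}^*_f\tilde P=\ddbar^*_b+\text{(lower order terms)}$ and $\Td\Box^{(q)}_b\equiv A^{(q)}\Box^{(q)}_-$ with $A^{(q)}$ elliptic near $\Sigma^-$.
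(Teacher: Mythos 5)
Your reduction of the theorem to the existence of a properly supported $G^{(q)}$ with $\Box^{(q)}_-G^{(q)}\equiv I$ microlocally near $\Sigma^-$ is reasonable as far as it goes (modulo the usual care needed to combine ``$\equiv0$ near $\Sigma^-\cap T^*D$'' in the sense of Definition~\ref{d-gue190322yyd} with compositions of type $(\tfrac12,\tfrac12)$ operators), but it places the entire content of the theorem in that one step, and that is exactly the step your argument does not justify. In this paper the microlocal invertibility of $\Box^{(q)}_-$ near $\Sigma^-$ for $q\neq n_-$ is not an input but a \emph{consequence} of the present theorem: Theorem~\ref{t-gue190314mscdI} produces $G^{(q)}$ from $E^{(q)}$ using the same elliptic factor $A^{(q)}$ you invoke, so the two statements are essentially equivalent, and taking the $\Box^{(q)}_-$ statement as given reverses the paper's logical order. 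Your sketch of a direct construction of $G^{(q)}$ does not close this gap: the harmonic-oscillator normal form whose null space on $(0,q)$-forms is nontrivial exactly when $q=n_-$ is the microlocal model for the \emph{second-order} Kohn-Laplacian-type operators ($\Box^{(q)}_b$, $\Box^{(q)}_\beta$, $\Td\Box^{(q)}_b$), and that is precisely what \cite[Proposition 6.3]{Hsiao08} provides; it is not a statement about the first-order operator $\Box^{(q)}_-$. For $\Box^{(q)}_-$ the principal symbol is scalar, essentially $\tfrac12(i\sigma_T+\abs{\xi})$, and it degenerates to second order on the symplectic cone $\Sigma^-$, so any ``model'' argument would have to go through the matrix-valued subprincipal symbol of $\Box^{(q)}_-$ on $(0,q)$-forms and a double-characteristic analysis in the style of Boutet de Monvel; neither Theorem~\ref{t-gue180313syd} (which records only the principal part plus unspecified lower-order terms) nor the cited parts of \cite{Hsiao08} supply this, and you do not carry it out. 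Indeed, the fact that the paper proves Theorem~\ref{t-gue190314mscdI} rather than quoting it indicates that no off-the-shelf parametrix for $\Box^{(q)}_-$ near $\Sigma^-$ is being assumed.

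The intended proof is much shorter and goes in the opposite direction: since $\gamma\ol{\pr}^*_f\tilde P=\ddbar^*_b+\text{lower order terms}$, the operator $\Td\Box^{(q)}_b$ agrees up to lower-order terms with the second-order operator treated in \cite[Proposition 6.3]{Hsiao08}, and the parametrix construction there (valid for $q\neq n_-$, producing an error $\equiv0$ near $\Sigma^-$) is insensitive to such perturbations, so one simply reruns it for $\Td\Box^{(q)}_b$. The factorization $\Td\Box^{(q)}_b\equiv A^{(q)}\Box^{(q)}_-$ of Lemma~\ref{l-gue190312mscd} together with the ellipticity of $A^{(q)}$ near $\Sigma^-$ (Theorem~\ref{t-gue190313ad}) is then used \emph{afterwards}, in Theorem~\ref{t-gue190314mscdI}, to transfer the parametrix from $\Td\Box^{(q)}_b$ to $\Box^{(q)}_-$. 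To repair your write-up you should either adopt this route, or else genuinely construct $G^{(q)}$ by analyzing $\Box^{(q)}_-$ directly (computing its subprincipal symbol on $(0,q)$-forms and verifying the non-degeneracy of the resulting model when $q\neq n_-$), which is a substantive piece of work not contained in your proposal.
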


We can now prove the main result of this section. 

\begin{thm}\label{t-gue190314mscdI}
Let $D\subset X$ be an open coordinate patch with local coordinates $x=(x_1,\ldots,x_{2n-1})$. Assume that the Levi form is non-degenerateof constant signature $(n_-, n_+)$ on $D$. Let $q\neq n_-$. Then, we can find a properly supported operator $G^{(q)}\in L^0_{\frac{1}{2},\frac{1}{2}}(D,T^{*0,q}X\boxtimes(T^{*0,q}X)^*)$ such that 
\begin{equation}\label{e-gue190314ydI}
\mbox{$\Box^{(q)}_-G^{(q)}\equiv I$ on $D$}. 
\end{equation}
\end{thm}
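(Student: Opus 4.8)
The plan is to build $G^{(q)}$ by gluing together two parametrices: an ordinary elliptic parametrix of $\Box^{(q)}_-$ on the part of $T^*D$ away from $\Sigma^-$, where $\Box^{(q)}_-$ is elliptic by Theorem~\ref{t-gue180313syd}, and a parametrix near $\Sigma^-$ manufactured from the right parametrix $E^{(q)}$ of $\Td\Box^{(q)}_b$ furnished by Theorem~\ref{t-gue190314mscd} --- this is the only place where the hypothesis $q\neq n_-$ enters --- together with the factorization $\Td\Box^{(q)}_b\equiv A^{(q)}\Box^{(q)}_-$ and the ellipticity of $A^{(q)}$ near $\Sigma^-$ from Theorem~\ref{t-gue190313ad}. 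Concretely, first I would fix nested open conic subsets $\Sigma^-\cap T^*D\subset W_0\Subset W_1\Subset W_2\Subset W_3\subset T^*D$ chosen small enough that on $W_3$ the operator $A^{(q)}$ is elliptic and the remainder $R$ of Theorem~\ref{t-gue190314mscd} satisfies $R\equiv0$. Since $\Box^{(q)}_-$ is classical of order $1$ and elliptic on $T^*D\setminus(\Sigma^-\cap T^*D)$, the standard elliptic construction yields a properly supported $F^{(q)}\in L^{-1}_{{\rm cl}}(D,T^{*0,q}X\boxtimes(T^{*0,q}X)^*)$ and a properly supported classical operator $\chi$ of order $0$, with microsupport in $W_1$ and $\chi\equiv I$ microlocally on $W_0$, such that $\Box^{(q)}_-F^{(q)}\equiv I-\chi$ on $D$. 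It then suffices to produce a properly supported $G_2\in L^0_{\frac{1}{2},\frac{1}{2}}$ with $\Box^{(q)}_-G_2\equiv\chi$ on $D$, because $G^{(q)}:=F^{(q)}+G_2$ will then satisfy $\Box^{(q)}_-G^{(q)}\equiv(I-\chi)+\chi=I$ and lie in $L^0_{\frac{1}{2},\frac{1}{2}}$.

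To construct $G_2$ I would proceed as follows. Picking a properly supported classical operator $\psi$ of order $0$ with microsupport in $W_2$ and $\psi\equiv I$ microlocally on $W_1$, one gets from Theorem~\ref{t-gue190314mscd} and $R\equiv0$ on $W_3$ that $\Td\Box^{(q)}_b(E^{(q)}\psi)\equiv\psi$, hence by $\Td\Box^{(q)}_b\equiv A^{(q)}\Box^{(q)}_-$ that $A^{(q)}\Box^{(q)}_-E^{(q)}\psi\equiv\psi$ on $D$. Since $A^{(q)}$ is classical of order $1$ and elliptic on $W_3$, it admits a properly supported left microlocal parametrix $\hat A^{(q)}\in L^{-1}_{{\rm cl}}$ with $\hat A^{(q)}A^{(q)}\equiv I$ microlocally on $W_2$; applying $\hat A^{(q)}$ on the left and discarding the error terms, whose microsupports are disjoint from $W_2$ because $\psi$ and the correction of $\hat A^{(q)}A^{(q)}$ are classical and all operators in play are microlocal, yields $\Box^{(q)}_-E^{(q)}\psi\equiv\hat A^{(q)}\psi$ on $D$. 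Now set $G_2:=E^{(q)}\,\psi\, A^{(q)}\,\chi$. Composing the last identity on the right with $A^{(q)}\chi$, using $\psi\equiv I$ on the microsupport of $\chi$ and then $\hat A^{(q)}A^{(q)}\equiv I$ there, one obtains $\Box^{(q)}_-G_2\equiv\hat A^{(q)}\psi A^{(q)}\chi\equiv\hat A^{(q)}A^{(q)}\chi\equiv\chi$ on $D$. Finally, since only $E^{(q)}$ is of type $(\frac{1}{2},\frac{1}{2})$ while $\psi$, $A^{(q)}$ and $\chi$ are classical, the composition $G_2=E^{(q)}(\psi A^{(q)}\chi)\in L^{-1}_{\frac{1}{2},\frac{1}{2}}\circ L^1_{{\rm cl}}$ lies in $L^0_{\frac{1}{2},\frac{1}{2}}(D,T^{*0,q}X\boxtimes(T^{*0,q}X)^*)$ and is properly supported, which completes the construction.

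The hard part will be the microlocal bookkeeping rather than any single estimate. The factorization $\Td\Box^{(q)}_b\equiv A^{(q)}\Box^{(q)}_-$ carries $A^{(q)}$ on the \emph{left}, whereas what is needed is a \emph{right} parametrix of $\Box^{(q)}_-$; reconciling this forces one to invert $A^{(q)}$ microlocally near $\Sigma^-$ and reinsert it on the right, which is legitimate only after localizing to a neighborhood of $\Sigma^-$ on which $A^{(q)}$ is two-sided invertible. One must therefore choose the cutoffs $\chi$, $\psi$ and the neighborhoods $W_0\Subset\cdots\Subset W_3$ with enough room that every remainder arising along the way is genuinely smoothing, and one must keep careful track of symbol types: the symbol calculus for operators of type $(\frac{1}{2},\frac{1}{2})$ is delicate in general, but goes through here because the only such factor, $E^{(q)}$, is always composed with classical operators, for which the usual asymptotic symbol expansion is available. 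This is exactly the pattern of the corresponding parametrix construction in \cite{Hsiao08}, which one can follow with only notational changes.
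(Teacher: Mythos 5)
Your proposal is correct and is essentially the paper's own argument: the heart in both cases is the ansatz $E^{(q)}A^{(q)}$ near $\Sigma^-$ (combining Theorem~\ref{t-gue190314mscd}, the factorization $\Td\Box^{(q)}_b\equiv A^{(q)}\Box^{(q)}_-$ of Lemma~\ref{l-gue190312mscd}, and a microlocal parametrix of $A^{(q)}$ near $\Sigma^-$ from Theorem~\ref{t-gue190313ad}), corrected away from $\Sigma^-$ by the ellipticity of $\Box^{(q)}_-$ there. The paper streamlines your cutoff bookkeeping by directly showing $\Box^{(q)}_-E^{(q)}A^{(q)}=I+r$ with $r\equiv0$ near $\Sigma^-\cap T^*D$ and then solving $\Box^{(q)}_-r_1\equiv-r$ to take $G^{(q)}\equiv E^{(q)}A^{(q)}+r_1$, rather than gluing via $\chi$ and $\psi$ as you do, but the two constructions are the same in substance.
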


\begin{proof}
Let $A^{(q)}\in L^1_{{\rm cl\,}}(D,T^{*0,q}X\boxtimes(T^{*0,q}X)^*)$ be as in \eqref{e-gue190313adI}. Since $A^{(q)}$ is elliptic near $\Sigma^-$ (see Theorem~\ref{t-gue190313ad}), there is a properly supported 
elliptic pseudodifferential operators $H^{(q)}, H^{(q)}_1\in  L^{-1}_{{\rm cl\,}}(D,T^{*0,q}X\boxtimes(T^{*0,q}X)^*)$ such that 
\begin{equation}\label{e-gue190314d}
\begin{split}
A^{(q)}H^{(q)}-I\equiv0\ \ \mbox{near $\Sigma^-\cap T^*D$},\\
H^{(q)}_1A^{(q)}-I\equiv0\ \ \mbox{near $\Sigma^-\cap T^*D$}.
\end{split}
\end{equation}
From Lemma~\ref{l-gue190312mscd}, \eqref{e-gue190313adI} and \eqref{e-gue190314d}, we have 
$\Td\Box^{(q)}_b\equiv A^{(q)}\Box^{(q)}_-$, $H^{(q)}_-\Td\Box^{(q)}_b\equiv H^{(q)}_1A^{(q)}\Box^{(q)}_-$ and hence 
\begin{equation}\label{e-gue190314dI}
\Box^{(q)}_-\equiv H^{(q)}_1\Td\Box^{(q)}_b\ \ \mbox{near $\Sigma^-\cap T^*D$}.
\end{equation}
Let $E^{(q)}\in L^{-1}_{\frac{1}{2},\frac{1}{2}}(D,T^{*0,q}X\boxtimes(T^{*0,q}X)^*)$ be as in Theorem~\ref{t-gue190314mscd}. 
From \eqref{e-gue190314dI}, we have 
\begin{equation}\label{e-gue190314dII}
\mbox{$\Box^{(q)}_-E^{(q)}A^{(q)}-I\equiv H^{(q)}_1\Td\Box^{(q)}_bE^{(q)}A^{(q)}-I$ near $\Sigma^-\cap T^*D$}.
\end{equation}
From \eqref{e-gue190314yd}, we have $H^{(q)}_1\Td\Box^{(q)}_bE^{(q)}A^{(q)}-I\equiv H^{(q)}_1(I+R)A^{(q)}-I$ and hence 
\begin{equation}\label{e-gue190314dd}
\mbox{$H^{(q)}_1\Td\Box^{(q)}_bE^{(q)}A^{(q)}-I\equiv H^{(q)}_1A^{(q)}-I$ near $\Sigma^-\cap T^*D$}. 
\end{equation}
From \eqref{e-gue190314dd} and \eqref{e-gue190314d}, we get 
\begin{equation}\label{e-gue190314ddd}
\mbox{$H^{(q)}_1\Td\Box^{(q)}_bE^{(q)}A^{(q)}-I\equiv0$ near $\Sigma^-\cap T^*D$}. 
\end{equation}
From \eqref{e-gue190314dII} and \eqref{e-gue190314dd} and \eqref{e-gue190314ddd}, we conclude that 
\[\Box^{(q)}_-E^{(q)}A^{(q)}=I+r,\]
where $r\in L^{1}_{\frac{1}{2},\frac{1}{2}}(D,T^{*0,q}X\boxtimes(T^{*0,q}X)^*)$ with $r\equiv0$ near $\Sigma^-\cap T^*D$. Since $\Box^{(q)}_-$ is elliptic outside $\Sigma^-$, we can 
find a properly supported operator $r_1\in L^{1}_{\frac{1}{2},\frac{1}{2}}(D,T^{*0,q}X\boxtimes(T^{*0,q}X)^*)$ such that 
$\Box^{(q)}_-r_1\equiv-r$ on $D$.
Let $G^{(q)}\in  L^{0}_{\frac{1}{2},\frac{1}{2}}(D,T^{*0,q}X\boxtimes(T^{*0,q}X)^*)$ be a properly supported operator so that 
$G^{(q)}\equiv E^{(q)}A^{(q)}+r_1$ on $D$. Then, $\Box^{(q)}_-G^{(q)}\equiv I$ on $D$. The theorem follows. 
\end{proof}

\section{Parametics for the $\ddbar$-Neumann Laplacian}\label{s-gue190531yyd}

Let $D$ be a local coordinate patch of $X$ with local coordinates $x=(x_1,\ldots,x_{2n-1})$. Then, $\hat x:=(x_1,\ldots,x_{2n-1},\rho)$ are local coordinates of $M'$ defined in 
an open set $U$ of $M'$ with $U\cap X=D$. Until further notice, we work on $U$. 

We introduce some notations. Let $F: \Omega^{0,q}_0(U\cap\ol M)\To\mathscr D'(U\cap\ol M, T^{*0,q}M')$ be a continuous operator. We say that $F$ is properly supported on $U\cap\ol M$ if for every $\chi\in\cC^\infty_0(U\cap\ol M)$, there are $\chi_1\in\cC^\infty_0(U\cap\ol M)$, $\chi_2\in\cC^\infty_0(U\cap\ol M)$, such that $F\chi u=\chi_2Fu$, $\chi Fu=F\chi_1u$, for every $u\in\Omega^{0,q}_0(U\cap\ol M)$. When $F$ is properly supported on $U\cap\ol M$, $F$ can be extended 
continuously to $F: \Omega^{0,q}(U\cap\ol M)\To\mathscr E'(U\cap\ol M, T^{*0,q}M')$. 
We say that $F$ is smoothing away the diagonal on $U\cap\ol M$ if for every $\chi, \chi_1\in\cC^\infty_0(U\cap\ol M)$ with ${\rm Supp\,}\chi\cap{\rm Supp\,}\chi_1=\emptyset$, we have 
\[\chi F\chi_1\equiv0\mod\cC^\infty((U\times U)\cap(\ol M\times\ol M)).\]
We need 

\begin{lem}\label{l-gue190429yyd}
Let $\tau_1\in\cC^\infty(X)$, $\tau\in\cC^\infty(\ol M)$ with ${\rm Supp\,}\tau\cap{\rm Supp\,}\tau_1=\emptyset$. Then, 
\[\tau\tilde P\tau_1\equiv0\mod\cC^\infty(\ol M\times X).\]
\end{lem}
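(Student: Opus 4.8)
The plan is to exploit the pseudolocality of the Poisson operator $\tilde P$ together with the elliptic regularity of the operator $\tilde\Box^{(q)}_f$ in the interior. Recall from \eqref{e-gue190418yyda} that $D^{(q)}\tilde\Box^{(q)}_f+\tilde P\gamma=I$ on $\Omega^{0,q}(\ol M)$, where $D^{(q)}$ gains two derivatives in all Sobolev spaces. The key point is that $\tilde P u$ is a solution of $\tilde\Box^{(q)}_f(\tilde P u)=0$ in the interior of $M$, hence smooth there by interior elliptic regularity of $\tilde\Box^{(q)}_f$; thus the only possible singularities of the Schwartz kernel $\tilde P(x,y)$, $(x,y)\in\ol M\times X$, lie over the boundary diagonal $\{(x,x):x\in X\}$. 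Since ${\rm Supp\,}\tau\subset\ol M$ and ${\rm Supp\,}\tau_1\subset X$ are disjoint, in particular ${\rm Supp\,}\tau\cap{\rm Supp\,}\tau_1=\emptyset$ as subsets of $\ol M$, the cutoffs $\tau$ and $\tau_1$ have supports that avoid the boundary diagonal jointly, so $\tau\tilde P\tau_1$ has smooth kernel.

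To make this precise I would proceed as follows. First, I would recall the standard fact that $\tilde P$ is, microlocally, an operator whose wavefront set (as a Fourier integral/Poisson operator in the sense of Boutet de Monvel, \cite{B71}) is contained in the conormal to the boundary diagonal; equivalently, $\tilde P(x,y)\in\cC^\infty((\ol M\times X)\setminus{\rm diag}(X\times X))$. A clean self-contained argument: pick $\psi\in\cC^\infty(\ol M)$ with $\psi\equiv1$ on ${\rm Supp\,}\tau$ and ${\rm Supp\,}\psi\cap{\rm Supp\,}\tau_1=\emptyset$; then for $u\in\cC^\infty_0(X)$ with ${\rm Supp\,}u\subset{\rm Supp\,}\tau_1$, the form $v:=\tilde P(\tau_1 u)$ satisfies $\tilde\Box^{(q)}_f v=0$ on $M$ and $\gamma v=\tau_1 u$ which vanishes on ${\rm Supp\,}\psi\cap X$; hence $\psi v$ solves an elliptic equation $\tilde\Box^{(q)}_f(\psi v)=[\tilde\Box^{(q)}_f,\psi]v$ with right-hand side supported in ${\rm Supp\,}(d\psi)$, and the Cauchy data of $\psi v$ on $X$ vanish near ${\rm Supp\,}\tau$. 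Applying \eqref{e-gue190429yyd} (i.e.\ $D^{(q)}\tilde\Box^{(q)}_f+\tilde P\gamma=I$) to $\psi v$ and iterating the two-derivative gain of $D^{(q)}$ shows $\psi v\in H^s$ near ${\rm Supp\,}\tau$ for every $s$, hence $\tau v=\tau\tilde P(\tau_1 u)\in\cC^\infty$, with estimates uniform in $u$; by the Schwartz kernel theorem this gives $\tau\tilde P\tau_1(x,y)\in\cC^\infty(\ol M\times X)$.

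The main obstacle — and the step I would write most carefully — is handling the regularity of $\tilde P v$ \emph{up to the boundary} away from the diagonal, since $\tilde\Box^{(q)}_f$ with the Dirichlet-type condition $\gamma$ is an elliptic boundary value problem only in the Boutet de Monvel calculus, not a classical one; one must invoke the continuity \eqref{e-gue190313ad} and the parametrix identity \eqref{e-gue190418yyda} rather than naive interior elliptic regularity. Concretely, the bootstrap uses that $\tilde P\gamma$ reconstructs $\psi v$ from its boundary value modulo the smoothing correction $D^{(q)}\tilde\Box^{(q)}_f(\psi v)$, and the latter is controlled because $[\tilde\Box^{(q)}_f,\psi]$ is a first-order operator supported away from both ${\rm Supp\,}\tau$ and ${\rm Supp\,}\tau_1$, so at each stage the argument never sees the boundary diagonal. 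Once this localized bootstrap is set up, the conclusion $\tau\tilde P\tau_1\equiv0\bmod\cC^\infty(\ol M\times X)$ is immediate.
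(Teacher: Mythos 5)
Your argument is correct and is essentially the paper's own proof: the identity $\psi\tilde P\tau_1u=D^{(q)}[\tilde\Box^{(q)}_f,\psi]\tilde P\tau_1u$ coming from \eqref{e-gue190418yyda} together with the vanishing of $\gamma(\psi\tilde P\tau_1u)$ near ${\rm Supp\,}\tau$, followed by the nested-cutoff bootstrap using the mapping properties \eqref{e-gue190313ad} and \eqref{e-gue190429yyd}, is exactly the mechanism in the paper (with $\psi$ playing the role of $\tau$, $\Td\tau$). The opening appeal to interior elliptic regularity and wavefront sets is dispensable, but you correctly identify that the real content is the up-to-boundary gain of one derivative per iteration, which you carry out as in the paper.
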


\begin{proof}
From \eqref{e-gue190418yyda}, we have 
\begin{equation}\label{e-gue190429yydI}
\begin{split}
\tau\tilde P\tau_1=(D^{(q)}\Box^{(q)}_f+\tilde P\gamma)\tau\tilde P\tau_1
=D^{(q)}\Box^{(q)}_f\tau\tilde P\tau_1=D^{(q)}[\tau, \Box^{(q)}_f]\tilde P\tau_1.
\end{split}
\end{equation}
By \eqref{e-gue190429yyd} the operator 
\[D^{(q)}[\tau, \Box^{(q)}_f]: H^s(\ol M, T^{*0,q}M')\To H^{s+1}(\ol M, T^{*0,q}M')\]
is continuous, for every $s\in\mathbb Z$. Using this observation, 
\eqref{e-gue190313ad} and \eqref{e-gue190429yydI}, we have 
\[\tau\tilde P\tau_1: H^s(X, T^{*0,q}M')\To H^{s+\frac{3}{2}}(\ol M, T^{*0,q}M')\]
is continuous, for every $s\in\mathbb Z$. We have proved that for any 
$\Td\tau\in\cC^\infty(\ol M)$ with ${\rm Supp\,}\Td\tau\cap{\rm Supp\,}\tau_1=\emptyset$, 
then
\begin{equation}\label{e-gue190429yydII}
\Td\tau\tilde P\tau_1: H^s(X, T^{*0,q}M')\To H^{s+\frac{3}{2}}(\ol M, T^{*0,q}M')
\end{equation}
is continuous, for every $s\in\mathbb Z$.

Let $\Td\tau\in\cC^\infty(\ol M)$ with $\Td\tau=1$ near ${\rm Supp\,}\tau$ and ${\rm Supp\,}\Td\tau\cap{\rm Supp\,}\tau_1=\emptyset$. From \eqref{e-gue190429yydI}, we have 
\begin{equation}\label{e-gue190429yydIII}
\tau\tilde P\tau_1=D^{(q)}[\tau, \Box^{(q)}_f]\Td\tau\tilde P\tau_1.
\end{equation}
From \eqref{e-gue190429yydIII}, \eqref{e-gue190429yydII} and \eqref{e-gue190418yyda}, we have
\[\tau\tilde P\tau_1: H^s(X, T^{*0,q}M')\To H^{s+\frac{5}{2}}(\ol M, T^{*0,q}M')\]
is continuous, for every $s\in\mathbb Z$. Continuing in this way, we conclude that 
\[\tau\tilde P\tau_1: H^s(X, T^{*0,q}M')\To H^{s+\frac{2N+1}{2}}(\ol M, T^{*0,q}M')\]
is continuous, for every $s\in\mathbb Z$ and $N>0$. The lemma follows. 
\end{proof}

From Lemma~\ref{l-gue190429yyd}, we get 

\begin{lem}\label{l-gue190429yydI}
Let $\tau_1\in\cC^\infty(X)$, $\tau\in\cC^\infty(\ol M)$ with ${\rm Supp\,}\tau\cap{\rm Supp\,}\tau_1=\emptyset$. Then, 
\[\tau_1\tilde P^*\tau\equiv0\mod\cC^\infty(X\times\ol M).\]
Recall that $\tilde P^*$ is given by \eqref{e-gue190515yyda}. 
\end{lem}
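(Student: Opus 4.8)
The plan is to deduce the statement from Lemma~\ref{l-gue190429yyd} by duality, exploiting that $\tilde P^*$ is, by its defining relation (see \eqref{e-gue190515yyda}), the adjoint of the Poisson operator $\tilde P$ with respect to the inner products $(\,\cdot\,|\,\cdot\,)_X$ and $(\,\cdot\,|\,\cdot\,)_M$. First I would reduce to the case that $\tau$ and $\tau_1$ are real-valued: passing to $\ol\tau$ and $\ol\tau_1$ leaves the supports, hence the disjointness hypothesis, unchanged, so nothing is lost.

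The main step is to identify $\tau_1\tilde P^*\tau$ as the formal adjoint of $\tau\tilde P\tau_1$. For $u\in\Omega^{0,q}(\ol M)$ and $v\in\cC^\infty(X,T^{*0,q}M')$, I would compute, using the definition of $\tilde P^*$ in \eqref{e-gue190515yyda} and that multiplication by the real functions $\tau$, $\tau_1$ is self-adjoint,
\[(\,\tau_1\tilde P^*\tau u\,|\,v\,)_X=(\,\tilde P^*(\tau u)\,|\,\tau_1 v\,)_X=(\,\tau u\,|\,\tilde P(\tau_1 v)\,)_M=(\,u\,|\,\tau\tilde P\tau_1 v\,)_M.\]
Thus the distribution kernels are related by $(\tau_1\tilde P^*\tau)(x,y)=(\tau\tilde P\tau_1)^*(x,y)$, where $*$ stands for taking the fiberwise adjoint of the maps $T^{*0,q}M'\to T^{*0,q}M'$ together with interchanging the two base variables (one in $X$, the other in $\ol M$). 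By Lemma~\ref{l-gue190429yyd} the kernel $(\tau\tilde P\tau_1)(y,x)$ is smooth on $\ol M\times X$; since passing to the adjoint/transpose preserves smoothness of kernels, it follows that $(\tau_1\tilde P^*\tau)(x,y)\in\cC^\infty(X\times\ol M)$, which is exactly the asserted $\tau_1\tilde P^*\tau\equiv0\mod\cC^\infty(X\times\ol M)$.

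I do not expect a genuine obstacle here; the argument is a routine duality. The only points needing a little attention are the bookkeeping for the bundle-valued kernel (the fiberwise adjoints and the swap of the $X$- and $\ol M$-factors) and the immediate observation that $\tilde P^*$, although introduced in \eqref{e-gue190515yyda} on $\hat{\mathscr E}'(\ol M,T^{*0,q}M')$, does satisfy the formal-adjoint property against smooth test sections and maps as in \eqref{e-gue190515yyd}. Alternatively, one could avoid duality altogether and repeat the proof of Lemma~\ref{l-gue190429yyd} almost verbatim, using \eqref{e-gue190418yyda}, \eqref{e-gue190515yyd} and a commutator bootstrap in the Sobolev scale; the duality route is shorter and is the one I would present.
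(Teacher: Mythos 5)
Your argument is correct and is essentially the paper's own: the paper deduces this lemma directly from Lemma~\ref{l-gue190429yyd} by precisely this duality, since by \eqref{e-gue190515yyda} the operator $\tau_1\tilde P^*\tau$ is (up to conjugating the cut-offs, which does not change supports) the adjoint of $\tau\tilde P\tau_1$, whose kernel is smooth. The only cosmetic remark is that the reduction to real-valued $\tau,\tau_1$ should be justified by linearity (splitting into real and imaginary parts), or avoided altogether by working with $\ol\tau,\ol\tau_1$ directly.
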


We come back to our situation. Until further notice, we assume that the Levi form is non-degenerate of constant signature $(n_-,n_+)$ on $D$. 

\begin{thm}\label{t-gue190321myyd}
With the assumptions and notations above, let $q\neq n_-$. We can find properly supported continuous operator on $U\cap\ol M$: 
\[N^{(q)}: H^s_{{\rm loc\,}}(U\cap\ol M, T^{*0,q}M')\To H^{s+1}_{{\rm loc\,}}(U\cap\ol M, T^{*0,q}M'),\ \ \mbox{for every $s\in\mathbb Z$},\]
such that  
\begin{equation}\label{e-gue190326yyd}
\mbox{$(\ddbar\rho)^{\wedge,*}\gamma N^{(q)}u|_D=0$, for every $u\in\Omega^{0,q}(U\cap\ol M)$}, 
\end{equation}
\begin{equation}\label{e-gue190326yydI}
\mbox{$(\ddbar\rho)^{\wedge,*}\gamma\ddbar N^{(q)}u|_D=0$, for every $u\in\Omega^{0,q}(U\cap\ol M)$}
\end{equation}
and 
\begin{equation}\label{e-gue190326yydII}
\mbox{$\Box^{(q)}_fN^{(q)}=I+F^{(q)}$ on $\Omega^{0,q}_0(U\cap M)$},
\end{equation}
where $F^{(q)}: \mathscr D'(U\cap M)\To\Omega^{0,q}(U\cap M)$ is a properly supported continuous operator on $U\cap\ol M$ with $F^{(q)}\equiv0\mod\cC^\infty((U\times U)\cap(\ol M\times\ol M))$.
\end{thm}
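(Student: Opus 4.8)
The plan is to build $N^{(q)}$ by correcting a parametrix of $\widetilde\Box^{(q)}_f$ that respects the Dirichlet condition by two successive Poisson terms, the second of which is manufactured from the parametrix $G^{(q)}$ of $\Box^{(q)}_-$ supplied by Theorem~\ref{t-gue190314mscdI}; the hypothesis $q\neq n_-$ enters exactly through the availability of $G^{(q)}$. Throughout one uses $\Box^{(q)}_f=\widetilde\Box^{(q)}_f-K^{(q)}$ with $K^{(q)}$ smoothing, the relations $\widetilde\Box^{(q)}_f\widetilde P=0$, $\gamma\widetilde P={\rm id}$ from \eqref{e-gue190312mscd}, and the mapping properties \eqref{e-gue190313ad}, \eqref{e-gue190429yyd} of $\widetilde P$ and $D^{(q)}$. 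After fixing cut-offs $\chi,\chi_1\in\cC^\infty_0(U)$ with $\chi_1\equiv1$ near ${\rm supp\,}\chi$ and $\chi\equiv1$ near $D$ (so ${\rm supp\,}d\chi$ lies in the interior, away from $X$), one first produces, by elliptic parametrix and Poisson-operator constructions localized to $U$ (cf.\ Section~\ref{s-gue190312scd}), a properly supported operator $v_0$ on $U\cap\ol M$ which gains two derivatives and satisfies $\Box^{(q)}_fv_0=I+(\text{smoothing})$ on $\Omega^{0,q}_0(U\cap M)$; here interior pseudolocality of $D^{(q)}$ and smoothness of $K^{(q)}$ absorb the cut-off commutators.

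Now correct the boundary values. To obtain \eqref{e-gue190326yyd}, put $\alpha:=\gamma v_0$ and replace $v_0$ by $v_1:=v_0-\chi\,\widetilde P\bigl(2(\ddbar\rho)^{\wedge}(\ddbar\rho)^{\wedge,*}\chi_1\alpha\bigr)$. Using $\gamma\widetilde P={\rm id}$, the identity \eqref{e-gue190313scsI} and $(\ddbar\rho)^{\wedge,*}(\ddbar\rho)^{\wedge,*}=0$, one checks $(\ddbar\rho)^{\wedge,*}\gamma v_1|_D=0$; and since $\widetilde\Box^{(q)}_f\widetilde P=0$, this changes $\Box^{(q)}_fv_0$ only by a smoothing operator. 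To obtain \eqref{e-gue190326yydI}, put $g_1:=(\ddbar\rho)^{\wedge,*}\gamma\ddbar v_1$, a $(0,q)$-form on $D$, and replace $v_1$ by $v_2:=v_1-\chi\,\widetilde P\bigl(G^{(q)}(\chi_1 g_1)\bigr)$. Since $G^{(q)}$ takes values in $\Omega^{0,q}(X)$ and $(\ddbar\rho)^{\wedge,*}$ annihilates $\Omega^{0,q}(X)$, the relation \eqref{e-gue190326yyd} is preserved; by the definition \eqref{e-gue190312mscdII} of $\Box^{(q)}_-$,
\[
(\ddbar\rho)^{\wedge,*}\gamma\ddbar v_2|_D=\bigl(I-\Box^{(q)}_-G^{(q)}\bigr)g_1=-R\,g_1,\qquad R:=\Box^{(q)}_-G^{(q)}-I,
\]
with $R$ smoothing on $D$ by Theorem~\ref{t-gue190314mscdI}, while $\Box^{(q)}_fv_2=I+(\text{smoothing})$ as before. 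Since the $\ddbar$ in $g_1$ costs one derivative, $G^{(q)}$ has order $0$ and $\widetilde P$ gains $\tfrac12$, this second correction gains exactly one derivative — which is precisely why $N^{(q)}$ maps $H^s_{{\rm loc\,}}$ into $H^{s+1}_{{\rm loc\,}}$ rather than $H^{s+2}_{{\rm loc\,}}$.

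It remains to upgrade \eqref{e-gue190326yydI} from ``modulo a smoothing operator'' to an identity, without spoiling \eqref{e-gue190326yyd} or the smoothness of $\Box^{(q)}_fN^{(q)}-I$. Iterating the last correction yields operators with boundary defect $(-R)^kg_1$, all smoothing in $u$; the whole tail is removed either by asymptotically summing this sequence of smoothing corrections, or — what comes to the same — by replacing the step ``$\widetilde PG^{(q)}(\,\cdot\,)$'' by ``$\widetilde Ph$ with $\Box^{(q)}_-h=g_1$ solved exactly'', which is legitimate because for $q\neq n_-$ the operator $\Box^{(q)}_-$ is elliptic off $\Sigma^-$ and microlocally invertible near $\Sigma^-$ (the analysis underlying Theorem~\ref{t-gue190314mscdI}), hence locally solvable on $D$ with a smooth solution for smooth data. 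The resulting $N^{(q)}$, together with $F^{(q)}:=\Box^{(q)}_fN^{(q)}-I$ (smoothing and properly supported), then satisfies \eqref{e-gue190326yyd}, \eqref{e-gue190326yydI} and \eqref{e-gue190326yydII}. I expect this last point — forcing the boundary relations to hold identically rather than modulo a smoothing operator, while keeping $N^{(q)}$ properly supported and the interior parametrix identity intact — to be the main technical obstacle; the remaining ingredients (the algebra of $(\ddbar\rho)^{\wedge}$ and $(\ddbar\rho)^{\wedge,*}$, the relations from \eqref{e-gue190312mscd}, smoothness of $K^{(q)}$, and pseudolocality) are routine.
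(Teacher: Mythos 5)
Your construction up to the second correction follows the paper's own route: an interior elliptic parametrix, a Poisson-operator correction of the trace, and then the correction $\tilde P G^{(q)}(\ddbar\rho)^{\wedge,*}\gamma\ddbar(\cdot)$ built from Theorem~\ref{t-gue190314mscdI} (this is exactly the paper's passage from $N^{(q)}_4$ to $N^{(q)}_5$, including the correct count that only one derivative is gained). The genuine gap is precisely the step you yourself flag as the main obstacle: upgrading \eqref{e-gue190326yyd} and \eqref{e-gue190326yydI} from identities modulo smoothing operators to exact identities. Neither of your two suggestions works as stated. Iterating the $G^{(q)}$-correction produces defects $(-R)^k g_1$ which are all smoothing operators of the same ``order''; there is no asymptotic summation available for such a sequence (Borel summation needs strictly decreasing symbol orders), and the formal Neumann series $(I+R)^{-1}$ has no reason to converge, so ``summing the tail'' does not remove it. The alternative, solving $\Box^{(q)}_-h=g_1$ exactly, is not justified either: Theorems~\ref{t-gue180313syd} and \ref{t-gue190314mscdI} only give ellipticity off $\Sigma^-$ and a parametrix with $\Box^{(q)}_-G^{(q)}\equiv I$, i.e.\ invertibility modulo smoothing; exact local solvability of $\Box^{(q)}_-$ on $D$, by an operator with the required continuity and proper support, is a substantially stronger statement that the paper never proves and that your argument does not supply. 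A secondary but related defect is the cut-off device: a $\chi\in\cC^\infty_0(U)$ with $\chi\equiv1$ near $D=U\cap X$ does not exist when $D$ is not relatively compact in $U$, so exactness of \eqref{e-gue190326yyd} ``where the cut-offs equal $1$'' cannot be globalized this way; proper supportedness has to be arranged by taking properly supported representatives modulo smoothing, which is exactly what re-destroys the exact boundary conditions and forces the issue above.

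The paper's resolution is different and much more elementary: once all defects are smoothing, it never tries to invert $\Box^{(q)}_-$ exactly, but subtracts explicit smoothing correctors. First it subtracts a properly supported smoothing extension $R^{(q)}$ of $2(\ddbar\rho)^{\wedge}(\ddbar\rho)^{\wedge,*}\gamma N^{(q)}_6$, which by \eqref{e-gue190313scsI} makes $(\ddbar\rho)^{\wedge,*}\gamma N^{(q)}_7u|_D=0$ hold identically. Then, with $J^{(q)}$ a properly supported smoothing extension of the remaining defect $(\ddbar\rho)^{\wedge,*}\gamma\ddbar N^{(q)}_7$, it sets $N^{(q)}:=N^{(q)}_7-2\chi(\rho)\rho J^{(q)}$: the factor $\rho$ has zero trace, so \eqref{e-gue190326yyd} is preserved, while $\ddbar$ falling on $\rho$ produces $2(\ddbar\rho)^{\wedge}\gamma J^{(q)}$ at the boundary, whose image under $(\ddbar\rho)^{\wedge,*}$ cancels the defect exactly, giving \eqref{e-gue190326yydI}; both correctors are smoothing, so \eqref{e-gue190326yydII} survives with a new smoothing $F^{(q)}$. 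Your proof is incomplete without an argument of this kind (or a proof of exact solvability for $\Box^{(q)}_-$), since everything downstream—in particular the integrations by parts in Lemma~\ref{l-gue190403yyd} and Theorem~\ref{t-gue190403yydI}—relies on the boundary conditions holding exactly, not merely modulo smoothing.
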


\begin{proof}
Since $\Box^{(q)}_f$ is an elliptic operator on $M'$, we can find a properly supported continuous operator
\[N^{(q)}_1: H^s_{{\rm loc\,}}(U\cap\ol M, T^{*0,q}M')\To H^{s+2}_{{\rm loc\,}}(U\cap\ol M, T^{*0,q}M'),\ \ \mbox{for every $s\in\mathbb Z$},\]
such that $N^{(q)}_1$ is smoothing away the diagonal on $U\cap\ol M$ and 
\begin{equation}\label{e-gue190326yydIII}
\mbox{$\Box^{(q)}_fN^{(q)}_1=I+F_1$ on $\Omega^{0,q}_0(U\cap M')$},
\end{equation}
where $F_1\equiv0\mod\cC^\infty((U\times U)\cap(\ol M\times\ol M))$.
Consider for every $s\in\mathbb Z$,
\[N^{(q)}_2:=N^{(q)}_1-\tilde P\gamma N^{(q)}_1
:H^s_{{\rm comp\,}}(U\cap\ol M, T^{*0,q}M')
\To H^{s+2}_{{\rm loc\,}}(U\cap\ol M, T^{*0,q}M').\]
From \eqref{e-gue190312mscd} and \eqref{e-gue190326yydIII}, we see that 
\begin{equation}\label{e-gue190326ycd}
\mbox{$\gamma N^{(q)}_2u|_D=0$, for every $u\in\Omega^{0,q}_0(U\cap\ol M)$}, 
\end{equation}
and 
\begin{equation}\label{e-gue190326yscdI}
\mbox{$\Box^{(q)}_fN^{(q)}_2=I+F_2$ on $\Omega^{0,q}_0(U\cap M')$}, 
\end{equation}
where $F_2\equiv0 \mod\cC^\infty((U\times U)\cap(\ol M\times\ol M))$. 
From Lemma~\ref{l-gue190429yyd}, it is not difficult to check that $N^{(q)}_2$ is smoothing away the diagonal on $U\cap\ol M$. Hence, we can find a 
properly supported continuous operator
\[N^{(q)}_3: H^s_{{\rm loc\,}}(U\cap\ol M, T^{*0,q}M')\To H^{s+2}_{{\rm loc\,}}(U\cap\ol M, T^{*0,q}M'),\ \ \mbox{for every $s\in\mathbb Z$},\]
such that 
\begin{equation}\label{e-gue190326yscdII}
N^{(q)}_3\equiv N^{(q)}_2\mod\cC^\infty((U\times U)\cap(\ol M\times\ol M)).
\end{equation}
From \eqref{e-gue190326ycd} and \eqref{e-gue190326yscdII}, we conclude that 
\begin{equation}\label{e-gue190326yscdIII}
\gamma N^{(q)}_3\equiv 0\mod\cC^\infty((U\times U)\cap(X\times\ol M)).
\end{equation}
Let $E^{(q)}\equiv0\mod\cC^\infty((U\times U)\cap(\ol M\times\ol M))$ be any smoothing properly supported extension of $\gamma N^{(q)}_3$. That is, 
$\gamma E^{(q)}u|D=\gamma N^{(q)}_3u|_D$, for every $u\in\Omega^{0,q}(U\cap\ol M)$ and $E^{(q)}$ is properly supported on $U\cap\ol M$. Let 
\begin{equation}\label{e-gue190326syd}
\begin{split}
N^{(q)}_4&:=N^{(q)}_3-E^{(q)}\\
&: H^s_{{\rm loc\,}}(U\cap\ol M, T^{*0,q}M')\To H^{s+2}_{{\rm loc\,}}(U\cap\ol M, T^{*0,q}M'),\ \ \mbox{for every $s\in\mathbb Z$}.
\end{split}
\end{equation}
Then $N^{(q)}_4$ is properly supported on $U\cap\ol M$ and 
\begin{equation}\label{e-gue190326sydI}
\begin{split}
&\mbox{$\gamma N^{(q)}_4u|_D=0$, for every $u\in\Omega^{0,q}(U\cap\ol M)$}, \\
&\mbox{$\Box^{(q)}_fN^{(q)}_4=I+F_3$ on $\Omega^{0,q}_0(U\cap M)$}, 
\end{split}
\end{equation}
where $F_3\equiv0 \mod\cC^\infty((U\times U)\cap(\ol M\times\ol M))$. 
Let $G^{(q)}\in L^0_{\frac{1}{2},\frac{1}{2}}(D,T^{*0,q}X\boxtimes(T^{*0,q}X)^*)$ be as in Theorem~\ref{t-gue190314mscdI}. Put 
\begin{equation}\label{e-gue190326sydII}
\begin{split}
N^{(q)}_5&:=N^{(q)}_4-\tilde PG^{(q)}(\ddbar\rho)^{\wedge,*}\gamma\ddbar N^{(q)}_4\\
&: H^s_{{\rm comp\,}}(U\cap\ol M, T^{*0,q}M')\To H^{s+1}_{{\rm loc\,}}(U\cap\ol M, T^{*0,q}M'),\ \ \mbox{for every $s\in\mathbb Z$}.
\end{split}
\end{equation}
From Theorem~\ref{t-gue190314mscdI}, \eqref{e-gue190314ydI} and \eqref{e-gue190326sydI}, we can check that 
\begin{equation}\label{e-gue190326sydIz}
\begin{split}
&\mbox{$(\ddbar\rho)^{\wedge,*}\gamma N^{(q)}_5u|_D=0$, for every $u\in\Omega^{0,q}(U\cap\ol M)$}, \\
&\mbox{$(\ddbar\rho)^{\wedge,*}\gamma\ddbar N^{(q)}_5\equiv0\mod\cC^\infty((U\times U)\cap(X\times\ol M))$},\\
&\mbox{$\Box^{(q)}_fN^{(q)}_5=I+F_4$ on $\Omega^{0,q}_0(U\cap M)$}, 
\end{split}
\end{equation}
where $F_4\equiv0 \mod\cC^\infty((U\times U)\cap(\ol M\times\ol M))$.
It is not difficult to check that $N^{(q)}_5$ is smoothing away the diagonal on $U\cap\ol M$. Hence, we can find a 
properly supported continuous operator
\[N^{(q)}_6: H^s_{{\rm loc\,}}(U\cap\ol M, T^{*0,q}M')\To H^{s+1}_{{\rm loc\,}}(U\cap\ol M, T^{*0,q}M'),\ \ \mbox{for every $s\in\mathbb Z$},\]
such that 
\begin{equation}\label{e-gue190326sydIIz}
N^{(q)}_5\equiv N^{(q)}_6\mod\cC^\infty((U\times U)\cap(\ol M\times\ol M)).
\end{equation}
Let $R^{(q)}\equiv0\mod\cC^\infty((U\times U)\cap(\ol M\times\ol M))$ 
be any smoothing properly supported extension of 
$2(\ddbar\rho)^\wedge(\ddbar\rho)^{\wedge,*}\gamma N^{(q)}_6$.
For every $s\in\mathbb Z$ put 
\begin{equation}\label{e-gue190327syd}
N^{(q)}_7:=N^{(q)}_6-R^{(q)}: 
H^s_{{\rm loc\,}}(U\cap\ol M, T^{*0,q}M')\To 
H^{s+1}_{{\rm loc\,}}(U\cap\ol M, T^{*0,q}M')\,.
\end{equation}
From \eqref{e-gue190313scsI}, we have 
\begin{equation}\label{e-gue190327sydI}
\begin{split}
(\ddbar\rho)^{\wedge,*}\gamma N^{(q)}_7&=(\ddbar\rho)^{\wedge,*}\gamma N^{(q)}_6-(\ddbar\rho)^{\wedge,*}\gamma R^{(q)}\\
&=(\ddbar\rho)^{\wedge,*}\gamma N^{(q)}_6-2(\ddbar\rho)^{\wedge,*}(\ddbar\rho)^\wedge(\ddbar\rho)^{\wedge,*}\gamma N^{(q)}_6\\
&=(\ddbar\rho)^{\wedge,*}\gamma N^{(q)}_6-(\ddbar\rho)^{\wedge,*}\gamma N^{(q)}_6=0.
\end{split}
\end{equation}
From \eqref{e-gue190327sydI} and \eqref{e-gue190326sydIz}, we have 
\begin{equation}\label{e-gue190327sydII}
\begin{split}
&\mbox{$(\ddbar\rho)^{\wedge,*}\gamma N^{(q)}_7u|_D=0$, for every $u\in\Omega^{0,q}(U\cap\ol M)$}, \\
&\mbox{$(\ddbar\rho)^{\wedge,*}\gamma\ddbar N^{(q)}_7\equiv0\mod\cC^\infty((U\times U)\cap(X\times\ol M))$},\\
&\mbox{$\Box^{(q)}_fN^{(q)}_7=I+F_5$ on $\Omega^{0,q}_0(U\cap M)$},
\end{split}
\end{equation}
where  $F_5\equiv0 \mod\cC^\infty((U\times U)\cap(\ol M\times\ol M))$.
Let $J^{(q)}\equiv0\mod\cC^\infty((U\times U)\cap(\ol M\times\ol M))$ be any smoothing properly supported extension of $(\ddbar\rho)^{\wedge,*}\gamma\ddbar N^{(q)}_7$. 
Let $\chi\in\cC^\infty(]-\varepsilon,\varepsilon[)$ with $\chi\equiv1$ near $0$, where $\varepsilon>0$ is a small constant. For for every $s\in\mathbb Z$ put 
\begin{equation}\label{e-gue190327sydIII}
\begin{split}
N^{(q)}:=N^{(q)}_7-2\chi(\rho)\rho J^{(q)}: 
H^s_{{\rm loc\,}}(U\cap\ol M, T^{*0,q}M')
\To H^{s+1}_{{\rm loc\,}}(U\cap\ol M, T^{*0,q}M').
\end{split}
\end{equation}
It is not difficult to see that $N^{(q)}$ is properly supported on $U\cap\ol M$, 
\[N^{(q)}\equiv N^{(q)}_7\mod\cC^\infty((U\times U)\cap(\ol M\times\ol M))\]
and 
\[\mbox{$(\ddbar\rho)^{\wedge,*}\gamma N^{(q)}u|_D=(\ddbar\rho)^{\wedge,*}\gamma N^{(q)}_7u|_D=0$, for every $u\in\Omega^{0,q}(U\cap\ol M)$}.\]
From \eqref{e-gue190313scsI}, we have for every $u\in\Omega^{0,q}(U\cap M)$, 
\begin{equation}\label{e-gue190327scd}
\begin{split}
(\ddbar\rho)^{\wedge,*}\gamma\ddbar N^{(q)}u|_D&=(\ddbar\rho)^{\wedge,*}\gamma\ddbar N^{(q)}_7u|_D-2(\ddbar\rho)^{\wedge,*}(\ddbar\rho)^\wedge\gamma J^{(q)}u|_D\\
&=(\ddbar\rho)^{\wedge,*}\gamma\ddbar N^{(q)}_7u|_D-2(\ddbar\rho)^{\wedge,*}(\ddbar\rho)^\wedge(\ddbar\rho)^{\wedge,*}\gamma\ddbar N^{(q)}_7u|_D\\
&=(\ddbar\rho)^{\wedge,*}\gamma\ddbar N^{(q)}_7u|_D-(\ddbar\rho)^{\wedge,*}\gamma\ddbar N^{(q)}_7u|_D=0.
\end{split}
\end{equation}
We have proved that $N^{(q)}$ satisfies \eqref{e-gue190326yyd}, \eqref{e-gue190326yydI} and \eqref{e-gue190326yydII}. 
The theorem follows. 
\end{proof}

Let $N^{(q)}$ be as in Theorem~\ref{t-gue190321myyd} and let $(N^{(q)})^*: \Omega^{0,q}_0(U\cap M)\To\mathscr D'(U\cap M,T^{*0,q}M')$ be the formal adjoint of $N^{(q)}$ given by 
\[\mbox{$(\,(N^{(q)})^*u\,|\,v\,)_M=(\,u\,|\,N^{(q)}v\,)_M$, for every $u, v\in\Omega^{0,q}_0(U\cap M)$}.\]

\begin{lem}\label{l-gue190403yyd}
With the assumptions and notations used above, we have 
\begin{equation}\label{e-gue190403yyd}
\mbox{$(N^{(q)})^*u=N^{(q)}u+H^{(q)}u$, for every $u\in\Omega^{0,q}_0(U\cap M)$}, 
\end{equation}
where  $H^{(q)}: \mathscr D'(U\cap M,T^{*0,q}M')\To\Omega^{0,q}(U\cap M)$ is a properly supported continuous operator on $U\cap\ol M$ with $H^{(q)}\equiv0\mod\cC^\infty((U\times U)\cap(\ol M\times\ol M))$.
\end{lem}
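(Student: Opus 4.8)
The statement asserts that $N^{(q)}$ is self-adjoint modulo smoothing; the plan is to deduce this from the formal self-adjointness of $\Box^{(q)}_f$ together with the two boundary conditions \eqref{e-gue190326yyd}, \eqref{e-gue190326yydI} built into $N^{(q)}$. First I would record that, for $u\in\Omega^{0,q}_0(U\cap M)$, the form $N^{(q)}u$ lies in $\cC^\infty(U\cap\ol M,T^{*0,q}M')$ (because $u\in H^s_{{\rm loc\,}}(U\cap\ol M,T^{*0,q}M')$ for every $s$), has compact support in $U\cap\ol M$ since $N^{(q)}$ is properly supported, and satisfies $(\ddbar\rho)^{\wedge,*}\gamma N^{(q)}u|_D=0$ and $(\ddbar\rho)^{\wedge,*}\gamma\ddbar N^{(q)}u|_D=0$ by Theorem~\ref{t-gue190321myyd}.

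The key step is a Green's formula. If $w_1,w_2\in\Omega^{0,q}(\ol M)$ have compact support in $U\cap\ol M$ and both satisfy $(\ddbar\rho)^{\wedge,*}\gamma w_j|_D=0$ and $(\ddbar\rho)^{\wedge,*}\gamma\ddbar w_j|_D=0$, then
\[(\Box^{(q)}_fw_1\,|\,w_2)_M=(\ddbar w_1\,|\,\ddbar w_2)_M+(\ddbar^{*}_fw_1\,|\,\ddbar^{*}_fw_2)_M=(w_1\,|\,\Box^{(q)}_fw_2)_M.\]
I would prove this by applying the integration-by-parts formula for $\ddbar$ to each of $(\ddbar\ddbar^{*}_fw_1\,|\,w_2)_M$ and $(\ddbar^{*}_f\ddbar w_1\,|\,w_2)_M$: every boundary term that appears is, up to a nonzero constant, of the form $\int_X\langle\,\cdot\,|\,(\ddbar\rho)^{\wedge,*}\gamma w_j\,\rangle\,dv_X$ or $\int_X\langle\,\cdot\,|\,(\ddbar\rho)^{\wedge,*}\gamma\ddbar w_j\,\rangle\,dv_X$, hence vanishes, and the surviving expression is visibly Hermitian symmetric in $w_1,w_2$.

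Next I would combine this with the parametrix identity \eqref{e-gue190326yydII}. Let $u,v\in\Omega^{0,q}_0(U\cap M)$. Writing $u=\Box^{(q)}_fN^{(q)}u-F^{(q)}u$ and $\Box^{(q)}_fN^{(q)}v=v+F^{(q)}v$, and applying the symmetry above to $w_1=N^{(q)}u$, $w_2=N^{(q)}v$ (both of which satisfy the boundary conditions by the first step), we get
\begin{align*}
((N^{(q)})^{*}u\,|\,v)_M&=(u\,|\,N^{(q)}v)_M=(\Box^{(q)}_fN^{(q)}u\,|\,N^{(q)}v)_M-(F^{(q)}u\,|\,N^{(q)}v)_M\\
&=(N^{(q)}u\,|\,\Box^{(q)}_fN^{(q)}v)_M-(F^{(q)}u\,|\,N^{(q)}v)_M\\
&=(N^{(q)}u\,|\,v)_M+(N^{(q)}u\,|\,F^{(q)}v)_M-(F^{(q)}u\,|\,N^{(q)}v)_M.
\end{align*}
Since this holds for all $v\in\Omega^{0,q}_0(U\cap M)$, it yields $(N^{(q)})^{*}u=N^{(q)}u+H^{(q)}u$, where $H^{(q)}u$ is characterized by $(H^{(q)}u\,|\,v)_M=(N^{(q)}u\,|\,F^{(q)}v)_M-(F^{(q)}u\,|\,N^{(q)}v)_M$; equivalently $H^{(q)}=(F^{(q)})^{*}N^{(q)}-(N^{(q)})^{*}F^{(q)}$, the compositions being well defined since $N^{(q)}$ and $F^{(q)}$ are properly supported on $U\cap\ol M$.

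Finally I would check the asserted properties of $H^{(q)}$: it is properly supported on $U\cap\ol M$ as a composition of properly supported operators, and since $F^{(q)}\equiv0\mod\cC^\infty((U\times U)\cap(\ol M\times\ol M))$ (hence $(F^{(q)})^{*}\equiv0$ as well), composing a smoothing operator with the continuous operator $N^{(q)}$, respectively $(N^{(q)})^{*}$, on either side again gives a smoothing operator; thus $H^{(q)}\equiv0\mod\cC^\infty((U\times U)\cap(\ol M\times\ol M))$ and $H^{(q)}: \mathscr D'(U\cap M,T^{*0,q}M')\To\Omega^{0,q}(U\cap M)$. The main obstacle is the Green's formula step: one must make sure that the precise boundary conditions imposed on $N^{(q)}$ in Theorem~\ref{t-gue190321myyd} are exactly those needed to kill every boundary contribution in the two integrations by parts, so that $\Box^{(q)}_f$ acts symmetrically on the range of $N^{(q)}$; the rest is routine bookkeeping with properly supported operators.
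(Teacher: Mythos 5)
Your proposal follows the same route as the paper's own proof: you use the parametrix identity $\Box^{(q)}_fN^{(q)}=I+F^{(q)}$ from Theorem~\ref{t-gue190321myyd} together with an integration by parts in which the boundary conditions \eqref{e-gue190326yyd}, \eqref{e-gue190326yydI} kill every boundary contribution; your Green's formula (a consequence of \eqref{e-gue190328yyd}, applied with all four conditions $(\ddbar\rho)^{\wedge,*}\gamma w_j|_D=0$, $(\ddbar\rho)^{\wedge,*}\gamma\ddbar w_j|_D=0$) is exactly what the paper means by ``we can integrate by parts'', and the resulting weak identity $(H^{(q)}u\,|\,v)_M=(N^{(q)}u\,|\,F^{(q)}v)_M-(F^{(q)}u\,|\,N^{(q)}v)_M$ is the same as the paper's \eqref{e-gue190403yydIII}.

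The one step that is not justified as written is the claim that $(N^{(q)})^{*}F^{(q)}$ is smoothing ``because composing a smoothing operator with the continuous operator $(N^{(q)})^{*}$ gives a smoothing operator''. At this point nothing is known about how $(N^{(q)})^{*}$ acts on forms that are smooth up to the boundary --- that regularity is precisely what the lemma is establishing --- and a smoothing operator followed by an operator of unknown regularity need not be smoothing; a naive duality argument does not help either, since the dual of the restriction space $H^{s}(U\cap\ol M)$ consists of distributions supported in $\ol M$, and $F^{(q)}u$ does not vanish on $X$, so one gains essentially nothing this way. The repair is one line and is what the paper does: $(N^{(q)})^{*}F^{(q)}$ is the formal adjoint of $(F^{(q)})^{*}N^{(q)}$, and the latter composition (smoothing factor applied \emph{last}) is properly supported and maps $H^{-s}_{{\rm comp\,}}(U\cap\ol M,T^{*0,q}M')\To H^{s}_{{\rm loc\,}}(U\cap\ol M,T^{*0,q}M')$ for every $s$, hence has kernel smooth on $(U\times U)\cap(\ol M\times\ol M)$; its formal adjoint has kernel $\overline{K(y,x)}$ and is therefore also smoothing and properly supported. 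With that substitution your argument coincides with the paper's proof.
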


\begin{proof}
Let $u, v\in\Omega^{0,q}_0(U\cap M)$. From \eqref{e-gue190326yydII}, we have 
\begin{equation}\label{e-gue190403yydI}
\begin{split}
&(\,(N^{(q)})^*u\,|\,v\,)_M=(\,(N^{(q)})^*(\Box^{(q)}_fN^{(q)}-F^{(q)})u\,|\,v\,)_M\\
&=(\,\Box^{(q)}_fN^{(q)}u\,|\,N^{(q)}v\,)_M-(\,F^{(q)}u\,|\,N^{(q)}v\,)_M.
\end{split}
\end{equation}
From \eqref{e-gue190326yyd} and \eqref{e-gue190326yydI}, we can integrate by parts and get 
\begin{equation}\label{e-gue190403yydII}
\begin{split}
&(\,\Box^{(q)}_fN^{(q)}u\,|\,N^{(q)}v\,)_M=(\,N^{(q)}u\,|\,\Box^{(q)}_fN^{(q)}v\,)_M\\
&=(\,N^{(q)}u\,|\,(I+F^{(q)})v\,)_M,\ \ \mbox{here we used \eqref{e-gue190326yydII}}. 
\end{split}
\end{equation}
From \eqref{e-gue190403yydI} and \eqref{e-gue190403yydII}, we deduce that 
\begin{equation}\label{e-gue190403yydIII}
(\,(N^{(q)})^*u\,|\,v\,)_M=(\,(N^{(q)}+(F^{(q)})^*)u\,|\,v\,)_M-(\,u\,|\,(F^{(q)})^*N^{(q)}v\,)_M,
\end{equation}
where $(F^{(q)})^* : \Omega^{0,q}_0(U\cap M)\To\mathscr D'(U\cap M,T^{*0,q}M')$ is the formal adjoint of $F^{(q)}$ with respect to $(\,\cdot\,|\,\cdot\,)_M$.  
It is clear that $(F^{(q)})^*$ is a properly supported continuous operator on $U\cap\ol M$ with $(F^{(q)})^*\equiv0\mod\cC^\infty((U\times U)\cap(\ol M\times\ol M))$.

It is not difficult to check that $(F^{(q)})^*N^{(q)}$ is a properly supported continuous operator on $U\cap\ol M$ with $(F^{(q)})^*N^{(q)}\equiv0\mod\cC^\infty((U\times U)\cap(\ol M\times\ol M))$. Let $((F^{(q)})^*N^{(q)})^*: \Omega^{0,q}_0(U\cap M)\To\mathscr D'(U\cap M,T^{*0,q}M')$ is the formal adjoint of $(F^{(q)})^*N^{(q)}$ with respect to $(\,\cdot\,|\,\cdot\,)_M$. 
Then $((F^{(q)})^*N^{(q)})^*$ is a properly supported continuous operator on $U\cap\ol M$ with $((F^{(q)})^*N^{(q)})^*\equiv0\mod\cC^\infty((U\times U)\cap(\ol M\times\ol M))$. From this observation and \eqref{e-gue190403yydIII}, we have
\[(\,(N^{(q)})^*u\,|\,v\,)_M=(\,\bigr(N^{(q)}+(F^{(q)})^*-((F^{(q)})^*N^{(q)})^*\bigr)u\,|\,v\,)_M.\]
We get \eqref{e-gue190403yyd}. 
\end{proof}

From \eqref{e-gue190403yyd}, we can extend $(N^{(q)})^*$ to
\[(N^{(q)})^*: L^2_{{\rm loc\,}}(U\cap\ol M, T^{*0,q}M')\To L^2_{{\rm loc\,}}(U\cap\ol M, T^{*0,q}M'),\ \ \mbox{for every $s\in\mathbb Z$},\]
as a properly supported continuous operator on $U\cap\ol M$ and we have 
\begin{equation}\label{e-gue190404yyd}
\mbox{$(N^{(q)})^*u=N^{(q)}u+H^{(q)}u$, for every $u\in L^2_{{\rm loc\,}}(U\cap M,T^{*0,q}M')$}, 
\end{equation}
where $H^{(q)}$ is as in \eqref{e-gue190403yyd}. Moreover, for every $g\in L^2_{{\rm comp\,}}(U\cap M, T^{*0,q}M')$ and every $u\in L^2_{{\rm loc\,}}(U\cap M,T^{*0,q}M')$, we have 
\begin{equation}\label{e-gue190404yyda}
(\,(N^{(q)})^*u\,|\,g)_M=(\,u\,|\,N^{(q)}g\,)_M,\ \ (\,(N^{(q)})^*g\,|\,u)_M=(\,g\,|\,N^{(q)}u\,)_M.
\end{equation}

We can now improve Theorem~\ref{t-gue190321myyd}.

\begin{thm}\label{t-gue190403yydI}
With the assumptions and notations used above, let $q\neq n_-$. We have 
\begin{equation}\label{e-gue190403yydIz}
\mbox{$N^{(q)}\Box^{(q)}u=u+F^{(q)}_1u$ on $U\cap M$, for every $u\in{\rm Dom\,}\Box^{(q)}$},
\end{equation} 
\begin{equation}\label{e-gue190403yydq}
\mbox{$\Box^{(q)}_fN^{(q)}u=u+F^{(q)}_2u$ on $U\cap M$, for every $u\in\Omega^{0,q}(U\cap\ol M)$}
\end{equation} 
and 
\begin{equation}\label{e-gue190419sydh}
\mbox{$\Box^{(q)}_fN^{(q)}u=u+F^{(q)}_2u$ on $U\cap M$, for every $u\in H^s_{{\rm loc\,}}(U\cap\ol M,T^{*0,q}M')$, $s\in\mathbb Z$},
\end{equation}
where $F^{(q)}_1, F^{(q)}_2: \mathscr D'(U\cap M)\To\Omega^{0,q}(U\cap M)$ are properly supported continuous operators on $U\cap\ol M$ with $F^{(q)}_1\equiv0\mod\cC^\infty((U\times U)\cap(\ol M\times\ol M))$, $F^{(q)}_2\equiv0\mod\cC^\infty((U\times U)\cap(\ol M\times\ol M))$. 
\end{thm}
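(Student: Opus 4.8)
The plan is to treat the three assertions in order of difficulty. I would obtain \eqref{e-gue190403yydq} and \eqref{e-gue190419sydh} from \eqref{e-gue190326yydII} by a short duality argument followed by a density argument, and I would prove the left–parametrix identity \eqref{e-gue190403yydIz} by pairing $N^{(q)}\Box^{(q)}u$ against interior test forms and exploiting that, by the boundary relations \eqref{e-gue190326yyd} and \eqref{e-gue190326yydI}, $N^{(q)}$ maps into ${\rm Dom\,}\Box^{(q)}$.

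\emph{Proof of \eqref{e-gue190403yydq} and \eqref{e-gue190419sydh}.} First, from \eqref{e-gue190326yydII}, the formal self-adjointness of $\Box^{(q)}_f$, and the formal-adjoint relations \eqref{e-gue190404yyda}, I would deduce that $(N^{(q)})^*\Box^{(q)}_f=I+(F^{(q)})^*$ on $\Omega^{0,q}_0(U\cap M)$: for $v,w\in\Omega^{0,q}_0(U\cap M)$ one has $(\,\Box^{(q)}_fN^{(q)}w\,|\,v\,)_M=(\,N^{(q)}w\,|\,\Box^{(q)}_fv\,)_M=(\,w\,|\,(N^{(q)})^*\Box^{(q)}_fv\,)_M$, with no boundary contributions since $v$ is supported in $U\cap M$, and comparing this with $(\,(I+F^{(q)})w\,|\,v\,)_M$ gives the claim. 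Then, for arbitrary $u\in\Omega^{0,q}(U\cap\ol M)$ and $v\in\Omega^{0,q}_0(U\cap M)$,
\[(\,\Box^{(q)}_fN^{(q)}u\,|\,v\,)_M=(\,N^{(q)}u\,|\,\Box^{(q)}_fv\,)_M=(\,u\,|\,(N^{(q)})^*\Box^{(q)}_fv\,)_M=(\,(I+F^{(q)})u\,|\,v\,)_M,\]
so \eqref{e-gue190403yydq} holds with $F^{(q)}_2:=F^{(q)}$ of Theorem~\ref{t-gue190321myyd}. Finally \eqref{e-gue190419sydh} follows because $\Box^{(q)}_fN^{(q)}$ and $I+F^{(q)}_2$ are continuous on $H^s_{\rm loc}(U\cap\ol M)$ for each $s\in\mathbb Z$ and agree on the dense subspace $\Omega^{0,q}(U\cap\ol M)$.

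\emph{Proof of \eqref{e-gue190403yydIz}.} Fix $u\in{\rm Dom\,}\Box^{(q)}$ and pair $N^{(q)}\Box^{(q)}u$ against an arbitrary $v\in\Omega^{0,q}_0(U\cap M)$. By \eqref{e-gue190404yyd} and \eqref{e-gue190404yyda},
\[(\,N^{(q)}\Box^{(q)}u\,|\,v\,)_M=(\,\Box^{(q)}u\,|\,(N^{(q)})^*v\,)_M=(\,\Box^{(q)}u\,|\,N^{(q)}v\,)_M+(\,\Box^{(q)}u\,|\,H^{(q)}v\,)_M.\]
The crucial point is that $N^{(q)}v$ is smooth up to the boundary (as $v$ is smooth), has compact support in $U\cap\ol M$ (as $N^{(q)}$ is properly supported), and satisfies \eqref{e-gue190326yyd} and \eqref{e-gue190326yydI}, i.e.\ $(\ddbar\rho)^{\wedge,*}\gamma N^{(q)}v=0$ and $(\ddbar\rho)^{\wedge,*}\gamma\ddbar N^{(q)}v=0$ on $D$; extending it by zero to $\ol M$, these hold on all of $X$, so $N^{(q)}v\in{\rm Dom\,}\ddbar^*$, $\ddbar N^{(q)}v\in{\rm Dom\,}\ddbar^*$, hence $N^{(q)}v\in{\rm Dom\,}\Box^{(q)}$. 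Self-adjointness of $\Box^{(q)}$ then gives $(\,\Box^{(q)}u\,|\,N^{(q)}v\,)_M=(\,u\,|\,\Box^{(q)}N^{(q)}v\,)_M$, and since $\Box^{(q)}$ acts on its domain as the differential expression $\Box^{(q)}_f$, \eqref{e-gue190326yydII} yields $\Box^{(q)}N^{(q)}v=(I+F^{(q)})v$ on $U\cap M$. Using $\Box^{(q)}u=\Box^{(q)}_fu$ once more we obtain
\[(\,N^{(q)}\Box^{(q)}u\,|\,v\,)_M=(\,u\,|\,v\,)_M+(\,(F^{(q)})^*u\,|\,v\,)_M+(\,(H^{(q)})^*\Box^{(q)}_fu\,|\,v\,)_M,\]
and, $v$ being arbitrary, \eqref{e-gue190403yydIz} follows with $F^{(q)}_1:=(F^{(q)})^*+(H^{(q)})^*\Box^{(q)}_f$, which is properly supported and smoothing since a smoothing operator followed by the differential operator $\Box^{(q)}_f$ is again smoothing.

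I expect the main obstacle to be the boundary bookkeeping in the proof of \eqref{e-gue190403yydIz}: carefully checking that $N^{(q)}v$, extended by zero, genuinely lies in ${\rm Dom\,}\Box^{(q)}$ — this is exactly where the two conditions \eqref{e-gue190326yyd}, \eqref{e-gue190326yydI} are decisive, together with the fact that $N^{(q)}v$ vanishes with all its derivatives near $X\setminus D$ — and ensuring that the integration by parts behind the self-adjointness step produces no residual boundary integrals. A smaller but essential point, used repeatedly, is that the self-adjoint operator $\Box^{(q)}$ acts on ${\rm Dom\,}\Box^{(q)}$ as the differential expression $\Box^{(q)}_f$, which is what permits invoking \eqref{e-gue190326yydII} on $N^{(q)}v$ and recognizing $F^{(q)}_1$ as smoothing.
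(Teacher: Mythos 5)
Your proposal is correct and uses essentially the same machinery as the paper: the adjoint relations \eqref{e-gue190404yyd}--\eqref{e-gue190404yyda}, the observation that the boundary conditions \eqref{e-gue190326yyd}, \eqref{e-gue190326yydI} put $N^{(q)}v$ into ${\rm Dom\,}\Box^{(q)}$ so that self-adjointness of $\Box^{(q)}$ and \eqref{e-gue190326yydII} can be invoked, and the same density argument for \eqref{e-gue190419sydh}. The only (harmless) deviation is organizational: you prove \eqref{e-gue190403yydq} directly from \eqref{e-gue190326yydII} by duality against interior test forms (getting $F^{(q)}_2=F^{(q)}$), whereas the paper proves \eqref{e-gue190403yydIz} first and then deduces \eqref{e-gue190403yydq} from it; the resulting smoothing remainders differ only by formal adjoints of smoothing terms, which is immaterial for the statement.
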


\begin{rem}\label{r-gue190404yyd}
Let $u\in{\rm Dom\,}\Box^{(q)}$. The equation \eqref{e-gue190403yydIz} means that for every $g\in\Omega^{0,q}_0(U\cap M)$, we have 
\begin{equation}\label{e-gue190404syd}
(\,N^{(q)}\Box^{(q)}u\,|\,g\,)_M=(\,u+F^{(q)}_1u\,|\,g\,)_M.
\end{equation}
Since $N^{(q)}$ and $F^{(q)}_1$ are properly supported operators on $U\cap\ol M$, \eqref{e-gue190404syd} makes sense. For $u\in\Omega^{0,q}(U\cap M)$, the equation \eqref{e-gue190403yydq} means that for every $g\in\Omega^{0,q}_0(U\cap M)$, we have 
\begin{equation}\label{e-gue190404sydI}
(\,\Box^{(q)}_fN^{(q)}u\,|\,g\,)_M=(\,u+F^{(q)}_2u\,|\,g\,)_M.
\end{equation}
Similarly, the meaning of \eqref{e-gue190419sydh} is the same as \eqref{e-gue190403yydq}
\end{rem}

\begin{proof}[Proof of Theorem~\ref{t-gue190403yydI}]
Let $u\in{\rm Dom\,}\Box^{(q)}$. Then, $\Box^{(q)}u\in L^2_{(0,q)}(M)\subset L^2_{{\rm loc\,}}(U\cap\ol M,T^{*0,q}M')$. Let $g\in\Omega^{0,q}_0(U\cap M)$. 
From \eqref{e-gue190404yyd} and \eqref{e-gue190404yyda}, we have 
\begin{equation}\label{e-gue190404sydII}
\begin{split}
&(\,N^{(q)}\Box^{(q)}u\,|\,g)_M=(\,(N^{(q)})^*-H^{(q)})\Box^{(q)}u\,|\,g\,)_M\\
&=(\,\Box^{(q)}u\,|\,N^{(q)}g\,)_M-(\,H^{(q)}\Box^{(q)}_fu\,|\,g\,)_M.
\end{split}
\end{equation}
Since $u\in{\rm Dom\,}\Box^{(q)}$ and by \eqref{e-gue190326yyd}, \eqref{e-gue190326yydI}, $N^{(q)}g\in{\rm Dom\,}\Box^{(q)}$, we can integrate by parts and get 
\begin{equation}\label{e-gue190404sydIII}
(\,\Box^{(q)}u\,|\,N^{(q)}g\,)_M=(\,u\,|\,\Box^{(q)}N^{(q)}g\,)_M=(\,u\,|\,(I+F^{(q)})g\,)_M=(u+(F^{(q)})^*u\,|\,g\,)_M,
\end{equation}
where $F^{(q)}$ is as in \eqref{e-gue190326yydII} and $(F^{(q)})^*$ is the formal adjoint of $F^{(q)}$. From \eqref{e-gue190404sydII} and \eqref{e-gue190404sydIII}, we have 
\begin{equation}\label{e-gue190404ycd}
(\,N^{(q)}\Box^{(q)}u\,|\,g)_M=(u+(F^{(q)})^*u-H^{(q)}\Box^{(q)}_fu\,|\,g\,)_M.
\end{equation}
From \eqref{e-gue190404ycd}, we get \eqref{e-gue190403yydIz} with $F^{(q)}_1=(F^{(q)})^*-H^{(q)}\Box^{(q)}_f$. 

Let $u\in\Omega^{0,q}(U\cap\ol M)$ and let $g\in\Omega^{0,q}_0(U\cap M)$. From \eqref{e-gue190404yyd}, \eqref{e-gue190404yyda},
\eqref{e-gue190403yydIz}, and note that $N^{(q)}$ is properly supported on $U\cap\ol M$, we have 
\begin{equation}\label{e-gue190404ycdI}
\begin{split}
&(\,\Box^{(q)}_fN^{(q)}u\,|\,g\,)_M=(\,N^{(q)}u\,|\,\Box^{(q)}_fg\,)_M=(\,u\,|\,(N^{(q)})^*\Box^{(q)}_fg\,)_M\\
&=(\,u\,|\,(N^{(q)}+H^{(q)})\Box^{(q)}_fg\,)_M=(\,u\,|\,g+F^{(q)}_1g+H^{(q)}\Box^{(q)}_fg\,)_M\\
&=(\,u+(F^{(q)}_1)^*u+(H^{(q)}\Box^{(q)}_f)^*u\,|\,g\,)_M, 
\end{split}
\end{equation}
where $(F^{(q)}_1)^*$ and $(H^{(q)}\Box^{(q)}_f)^*$ are the formal adjoints of $F^{(q)}_1$ and $H^{(q)}\Box^{(q)}_f$ respectively. From \eqref{e-gue190404ycdI}, we get 
\eqref{e-gue190403yydq} with $F^{(q)}_2=(F^{(q)}_1)^*+(H^{(q)}\Box^{(q)}_f)^*$. 

Let $u\in H^s_{{\rm loc\,}}(U\cap\ol M,T^{*0,q}M')$, $s\in\mathbb Z$. We can take $u_j\in\Omega^{0,q}_0(U\cap\ol M)$, $j=1,2,\ldots$, such that 
$u_j\To u$ in $H^s_{{\rm loc\,}}(U\cap\ol M,T^{*0,q}M')$ as $j\To\infty$. By \eqref{e-gue190403yydq}, we see that 
\begin{equation}\label{e-gue190419scda}
\Box^{(q)}_fN^{(q)}u_j=u_j+F^{(q)}_2u_j,\ \ \mbox{for every $j=1,2,\ldots$}.
\end{equation}
Note that $\Box^{(q)}_fN^{(q)}u_j\To\Box^{(q)}_fN^{(q)}u$ 
in $\mathscr D'(U\cap M,T^{*0,q}M')$ as $j\To+\infty$. 
From this observation and \eqref{e-gue190419scda}, we get 
\eqref{e-gue190419sydh}. 
\end{proof}

From Theorem~\ref{t-gue190321myyd} and Theorem~\ref{t-gue190403yydI}, 
we get the main result of this section:

\begin{thm}\label{t-gue190531syd}
Let $U$ be an open set of $M'$ with $U\cap X\neq\emptyset$. 
Suppose that the Levi form is non-degenerate of constant signature 
$(n_-, n_+)$ on $U\cap X$. 
Let $q\neq n_-$. We can find properly supported continuous operators on 
$U\cap\ol M$:
\[N^{(q)}: H^s_{{\rm loc\,}}(U\cap\ol M, T^{*0,q}M')
\To H^{s+1}_{{\rm loc\,}}(U\cap\ol M, T^{*0,q}M'),\ \ 
\mbox{for every $s\in\mathbb Z$},\]
such that  \eqref{e-gue190326yyd}, \eqref{e-gue190326yydI}, 
\eqref{e-gue190403yyd}, \eqref{e-gue190404yyda}, 
\eqref{e-gue190403yydIz}, \eqref{e-gue190403yydq} and \eqref{e-gue190419sydh} hold. 
\end{thm}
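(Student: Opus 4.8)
The plan is to build the parametrix $N^{(q)}$ on $U$ by patching together, via a partition of unity, the local parametrices produced by Theorem~\ref{t-gue190321myyd} and Theorem~\ref{t-gue190403yydI} (whose hypotheses only require the trace of the ambient open set on $X$ to be a single coordinate patch), and then to repair the two \emph{exact} boundary identities \eqref{e-gue190326yyd} and \eqref{e-gue190326yydI} by the very same correction carried out at the end of the proof of Theorem~\ref{t-gue190321myyd}. To begin, I would cover $U\cap X$ by coordinate patches $D_j=U_j\cap X$ with $U_j$ open in $M'$, and complete $\set{U_j}_j$ to a locally finite open cover of a neighbourhood in $M'$ of $U\cap\ol M$ by adjoining open sets $V_k$ with $\ol{V_k}\cap X=\emptyset$. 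On each $U_j$ the Levi form is non-degenerate of constant signature $(n_-,n_+)$ and $q\neq n_-$, so Theorem~\ref{t-gue190321myyd} furnishes a properly supported operator $N^{(q)}_j:H^s_{{\rm loc\,}}\To H^{s+1}_{{\rm loc\,}}$ satisfying \eqref{e-gue190326yyd}, \eqref{e-gue190326yydI}, \eqref{e-gue190326yydII} on $U_j$, which by its construction (and Lemma~\ref{l-gue190429yyd}) is moreover smoothing away the diagonal on $U_j\cap\ol M$. On each $V_k$, where there is no boundary, ellipticity of $\Box^{(q)}_f$ on $M'$ gives an interior parametrix $N^{(q)}_k$, properly supported, smoothing away the diagonal, with $\Box^{(q)}_fN^{(q)}_k\equiv I$ and no boundary conditions to impose.

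The gluing step is to pick a subordinate locally finite partition of unity $\set{\chi_j}_j\cup\set{\chi_k}_k$ (summing to $1$ near $U\cap\ol M$) and smooth cut-offs $\tilde\chi_j\in\cC^\infty_0(U_j)$, $\tilde\chi_k\in\cC^\infty_0(V_k)$ equal to $1$ near the support of the corresponding $\chi$, and to set $\Td N^{(q)}:=\sum_j\tilde\chi_jN^{(q)}_j\chi_j+\sum_k\tilde\chi_kN^{(q)}_k\chi_k$. Local finiteness makes this a well-defined properly supported operator $H^s_{{\rm loc\,}}(U\cap\ol M,T^{*0,q}M')\To H^{s+1}_{{\rm loc\,}}(U\cap\ol M,T^{*0,q}M')$, smoothing away the diagonal. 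The verification that $\Td N^{(q)}$ has the three defining properties up to smoothing is the routine core of the argument: multiplication by the function $\tilde\chi_j$ commutes with $\gamma$ and with $(\ddbar\rho)^{\wedge,*}$, and $\gamma N^{(q)}_k\chi_k=0$, so \eqref{e-gue190326yyd} passes to $\Td N^{(q)}$ exactly; for $\Box^{(q)}_f\Td N^{(q)}=I+(\text{smoothing})$ one writes $\Box^{(q)}_f\tilde\chi_jN^{(q)}_j\chi_j=\tilde\chi_j\Box^{(q)}_fN^{(q)}_j\chi_j+[\Box^{(q)}_f,\tilde\chi_j]N^{(q)}_j\chi_j$, uses $\tilde\chi_j\chi_j=\chi_j$ and $\sum_j\chi_j+\sum_k\chi_k\equiv1$ near $U\cap\ol M$, and observes that $[\Box^{(q)}_f,\tilde\chi_j]$ is supported in ${\rm Supp\,}(d\tilde\chi_j)$, which is disjoint from ${\rm Supp\,}\chi_j$, so the commutator term is smoothing because $N^{(q)}_j$ is smoothing away the diagonal; and for $(\ddbar\rho)^{\wedge,*}\gamma\ddbar\Td N^{(q)}$ one expands $\ddbar(\tilde\chi_jv)=\tilde\chi_j\ddbar v+\ddbar\tilde\chi_j\wedge v$, the first piece vanishing by \eqref{e-gue190326yydI} and the second being smoothing by the same disjoint-support reasoning, so that $(\ddbar\rho)^{\wedge,*}\gamma\ddbar\Td N^{(q)}\equiv0$ modulo $\cC^\infty((U\times U)\cap(X\times\ol M))$.

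The correction step mirrors \eqref{e-gue190327sydIII}: take a smoothing properly supported extension $J^{(q)}$ to $U\cap\ol M$ of $(\ddbar\rho)^{\wedge,*}\gamma\ddbar\Td N^{(q)}$, choose $\chi\in\cC^\infty(]-\varepsilon,\varepsilon[)$ with $\chi\equiv1$ near $0$, and put $N^{(q)}:=\Td N^{(q)}-2\chi(\rho)\rho J^{(q)}$, which is still properly supported, maps $H^s_{{\rm loc\,}}\To H^{s+1}_{{\rm loc\,}}$, and agrees with $\Td N^{(q)}$ modulo $\cC^\infty((U\times U)\cap(\ol M\times\ol M))$. Since $\rho\equiv0$ on $X$, subtracting $2\chi(\rho)\rho J^{(q)}$ does not affect \eqref{e-gue190326yyd}; and the computation \eqref{e-gue190327scd}, with $\Td N^{(q)}$ in place of $N^{(q)}_7$ — which uses only $\chi(0)=1$ together with \eqref{e-gue190313scsI} and $(\ddbar\rho)^{\wedge,*}(\ddbar\rho)^{\wedge,*}=0$ — now yields \eqref{e-gue190326yydI} exactly, while \eqref{e-gue190326yydII} survives because $\Box^{(q)}_f$ applied to the smoothing correction is again smoothing. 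Finally, since the proofs of Lemma~\ref{l-gue190403yyd} and of Theorem~\ref{t-gue190403yydI} use $N^{(q)}$ only through \eqref{e-gue190326yyd}, \eqref{e-gue190326yydI} and \eqref{e-gue190326yydII}, they apply verbatim to the present $N^{(q)}$ and deliver \eqref{e-gue190403yyd}, \eqref{e-gue190404yyda}, \eqref{e-gue190403yydIz}, \eqref{e-gue190403yydq} and \eqref{e-gue190419sydh}.

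I expect the only real obstacle to be bookkeeping: making sure the locally finite sum defining $\Td N^{(q)}$ really is a properly supported operator with the stated mapping properties even though $U$, hence $U\cap X$, is not assumed relatively compact, and that every cross term generated by the partition of unity is genuinely smoothing. The two ingredients that make this go through are precisely the ``smoothing away the diagonal'' property of each local $N^{(q)}_j$ and the disjointness ${\rm Supp\,}(d\tilde\chi_j)\cap{\rm Supp\,}\chi_j=\emptyset$; once these are in hand, everything else is a transcription of computations already made in Sections~\ref{s-gue190312scd} and~\ref{s-gue190531yyd}.
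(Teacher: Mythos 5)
Your proposal is correct and takes essentially the paper's route: the paper deduces Theorem~\ref{t-gue190531syd} directly from Theorem~\ref{t-gue190321myyd} and Theorem~\ref{t-gue190403yydI}, leaving the passage from a single boundary coordinate patch to a general open set $U$ implicit, and your partition-of-unity gluing together with the $2\chi(\rho)\rho J^{(q)}$ correction (and the disjoint-support/smoothing-away-the-diagonal argument for the cross terms) is exactly that routine globalization made explicit. Once the glued operator satisfies \eqref{e-gue190326yyd}, \eqref{e-gue190326yydI} and \eqref{e-gue190326yydII}, your remark that the proofs of Lemma~\ref{l-gue190403yyd} and Theorem~\ref{t-gue190403yydI} apply verbatim to yield the remaining identities is also how the paper proceeds.
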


\section{Microlocal Hodge decomposition theorems for the 
$\ddbar$-Neumann Laplacian}\label{s-gue190321myyd}

We recall the following~(see page 13 of \cite{FK72} for the proof).

\begin{lem} \label{l-gue190327ycd}
For all $f\in\Omega^{0,q}(\ol M)$, $g\in\Omega^{0,q+1}(\ol M)$, we have 
\begin{equation} \label{e-gue190328yyd}
(\,g\,|\,\ddbar f\,)_M=(\,\ol{\pr}^*_fg\,|\,f\,)_M+
(\,(\ddbar\rho)^{\wedge,*}\gamma g\,|\, \gamma f\,)_X.
\end{equation}
\end{lem}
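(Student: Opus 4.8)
The plan is to derive \eqref{e-gue190328yyd} from Stokes' theorem: the difference of the two interior integrals is the integral over $M$ of an exact top-degree form, and its boundary contribution over $X$ is the desired term. First I would recall that on the manifold without boundary $M'$ the operator $\ol{\pr}^*_f$ is by definition the formal transpose of $\ddbar$ with respect to $(\,\cdot\,|\,\cdot\,)_{M'}$, so that $\ddbar$ and $\ol{\pr}^*_f$ are first-order differential operators with mutually adjoint principal symbols. Hence, for any $f\in\Omega^{0,q}(\ol M)$ and $g\in\Omega^{0,q+1}(\ol M)$ the pointwise $(n,n)$-form $\langle\,\ddbar f\,|\,g\,\rangle\,dv_{M'}-\langle\,f\,|\,\ol{\pr}^*_fg\,\rangle\,dv_{M'}$ has no zeroth-order part and is in fact exact, equal to $d\beta(f,g)$, where $\beta(f,g)$ is a $(2n-1)$-form depending pointwise and bilinearly (with no derivatives) on $(f,g)$. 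Concretely, writing $\star$ for the Hodge star operator attached to the Hermitian metric, one may take $\beta(f,g)$ to be a fixed sign times $f\wedge\star g$: this form has bidegree $(n,n-1)$, so $d\beta(f,g)=\pm\ddbar\bigl(f\wedge\star g\bigr)$ (the $\pr$-part vanishing for bidegree reasons), and expanding by the Leibniz rule together with the identity expressing $\ol{\pr}^*_f$ through $\star$, $\pr$, $\star$ in the relevant bidegree reproduces the displayed pointwise formula; I would pin down the signs in this step once and for all.

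Integrating over $M$ and applying Stokes' theorem, $\int_M d\beta(f,g)=\int_X\iota^*\beta(f,g)$ with $\iota\colon X\hookrightarrow M'$ the inclusion and $X$ carrying the orientation induced by $M$ (recall $\rho<0$ on $M$, so that $d\rho$ is the outward conormal along $X$), gives
\[
(\,\ddbar f\,|\,g\,)_M-(\,f\,|\,\ol{\pr}^*_fg\,)_M=\int_X\iota^*\beta(f,g).
\]
Taking complex conjugates and using that $(\,\cdot\,|\,\cdot\,)_M$ and $(\,\cdot\,|\,\cdot\,)_X$ are Hermitian, the asserted identity is then equivalent to the statement that $\int_X\iota^*\beta(f,g)$ equals $\overline{\bigl(\,(\ddbar\rho)^{\wedge,*}\gamma g\,\big|\,\gamma f\,\bigr)_X}$, so everything reduces to identifying this boundary integral.

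This last identification is the only nontrivial point and the step I expect to be the main obstacle. To carry it out I would work near a boundary point in a local orthonormal coframe $t_1,\dots,t_{n-1},t_n=\ddbar\rho/\norm{\ddbar\rho}$ for $T^{*0,1}M'$, as in Section~\ref{s-gue190312scd}, expand $f$ and $g$ in the induced orthonormal bases of $T^{*0,q}M'$ and $T^{*0,q+1}M'$, and compute $\iota^*(f\wedge\star g)$; restricting to $X$ kills the conormal covector $t_n$ in the $f$-slot, so only the part of $\star g$ carrying $t_n$ (equivalently the factor $\ddbar\rho$) survives the wedge, and a direct accounting of the Hodge star shows that the surviving contribution equals $\langle\,(\ddbar\rho)^{\wedge,*}\gamma g\,|\,\gamma f\,\rangle\,dv_X$ pointwise on $X$, the constant coming out to be exactly one once one uses $\norm{d\rho}=1$ on $X$ (so that $\norm{\ddbar\rho}^2=\tfrac12$ there and $\iota^*(d\rho\wedge dv_X)=\pm dv_{M'}|_X$); integrating and conjugating then yields \eqref{e-gue190328yyd}. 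A more conceptual route to this step is the general Green formula $(\,Pf\,|\,g\,)_M-(\,f\,|\,P^*g\,)_M=\tfrac1i\bigl(\sigma_P(d\rho)\gamma f\,\big|\,\gamma g\bigr)_X$ for a first-order differential operator $P$ on $\ol M$, applied with $P=\ddbar$: its principal symbol is $\sigma_{\ddbar}(\xi)=i\,\xi^{0,1}\wedge\,\cdot\,$, and the $(0,1)$-part of $d\rho$ is $\ddbar\rho$, so the right-hand side equals $\bigl((\ddbar\rho)^\wedge\gamma f\,\big|\,\gamma g\bigr)_X=\bigl(\gamma f\,\big|\,(\ddbar\rho)^{\wedge,*}\gamma g\bigr)_X$, and conjugating gives the lemma directly. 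Either way, the substance of the proof is just the careful sign- and normalization-tracking in this boundary computation.
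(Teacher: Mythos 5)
Your proposal is correct and coincides with the argument the paper relies on: the paper gives no proof of this lemma, citing Folland--Kohn (p.~13 of \cite{FK72}), where the identity is obtained by exactly this Stokes/Green-formula integration by parts, with the boundary term identified via the principal symbol of $\ddbar$ at the unit outward conormal $d\rho$, whose $(0,1)$-part is $\ddbar\rho$, so the constant is $1$ thanks to $\norm{d\rho}=1$ on $X$. The only detail to fix in your first route is that the primitive should be (a sign times) $f\wedge\star\ol g$ with the conjugate-linear Hodge star, so that it indeed has bidegree $(n,n-1)$; this is precisely the sign/normalization bookkeeping you already flagged, and your symbol-based Green formula route settles it directly.
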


Let $D$ be a local coordinate patch of $X$ with local coordinates 
$x=(x_1,\ldots,x_{2n-1})$. Then, $\hat x:=(x_1,\ldots,x_{2n-1},\rho)$ 
are local coordinates of $M'$ defined in 
an open set $U$ of $M'$ with $U\cap X=D$. Until further notice, we work on $U$. 
\begin{lem}\label{l-gue190327ycdz}
Let $u\in\Omega^{0,q}(U\cap\ol M)$. Assume that $\ddbar\rho^{\wedge,*}\gamma u|_D=0$. Then, 
\begin{equation}\label{e-gue190327ycdII}
(\ddbar\rho)^{\wedge,*}\gamma\ol{\pr}^*_fu|_D=0.
\end{equation}
\end{lem}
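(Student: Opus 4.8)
The statement is a pointwise (in fact zeroth-order) compatibility fact on the boundary: if a form $u$ satisfies the first $\ddbar$-Neumann boundary condition $(\ddbar\rho)^{\wedge,*}\gamma u|_D=0$, then $\ol\pr^*_f u$ automatically satisfies it as well. The natural route is to read off $\gamma\ol\pr^*_f u$ in the local frame $t_1(z),\ldots,t_{n-1}(z),t_n(z)=\ddbar\rho/\norm{\ddbar\rho}$ for $T^{*0,1}_zM'$ introduced before \eqref{e-gue190313hcd}, and observe that the coefficient of the $t_n$-direction in $\ol\pr^*_f u$ at the boundary involves \emph{only} the normal derivative of the $t_n$-component of $u$, together with tangential derivatives of the $t_n$-component of $u$ — never a normal derivative of the tangential components. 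Since the hypothesis $(\ddbar\rho)^{\wedge,*}\gamma u|_D=0$ says precisely that the $t_n$-component of $\gamma u$ vanishes identically on $D$, all the tangential derivatives of that component along $D$ vanish as well; the only term that could survive is the one carrying $\pr_\rho$ of the tangential components of $u$, and that term does not contribute to $(\ddbar\rho)^{\wedge,*}\gamma\ol\pr^*_f u$.

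Concretely, I would first write $u=\sum_{|J|=q}' u_J\,\bar t^J$ near $z_0\in D$, and split each multi-index $J$ according to whether it contains $n$ or not: $u=\alpha+\bar t^n\wedge\beta$ where $\alpha,\beta$ have components involving only $t_1,\ldots,t_{n-1}$. The hypothesis $(\ddbar\rho)^{\wedge,*}\gamma u|_D=0$ is equivalent to $\gamma\beta=0$ on $D$, i.e.\ all coefficients of $\beta$ restrict to $0$ on $D$ (their tangential derivatives along $D$ therefore also vanish). Next I would compute $\ol\pr^*_f u$: using the expression $\ol\pr^*_f=\sum_j t_j^{\wedge,*}\circ\overline{\text{(something)}}$-type local formula for the formal adjoint (equivalently, integrate by parts via Lemma~\ref{l-gue190327ycd} applied with the right test forms, or just use the standard local expression for $\ol\pr^*_f$ in an orthonormal coframe), isolate the component of $\ol\pr^*_f u$ in the $\bar t^n$-direction, i.e.\ $(\ddbar\rho)^{\wedge,*}\gamma\ol\pr^*_f u$ up to the nonvanishing factor $\norm{\ddbar\rho}$. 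That component is a sum of two kinds of terms: (i) $T_n$ (the normal-type vector field $\propto iT+\pr/\pr\rho$) applied to coefficients of $\beta$, plus lower-order terms in coefficients of $\beta$ and their tangential derivatives; and (ii) terms where some $t_j^{\wedge,*}$, $j\le n-1$, picks out $\bar t^n\wedge(\cdots)$ from $\alpha$ — but $\alpha$ contains no $\bar t^n$, so these vanish. Hence only type (i) terms remain, and every one of them either is a tangential derivative along $D$ of a coefficient of $\beta$ (hence $0$ on $D$ by hypothesis) or is a coefficient of $\beta$ times a smooth function (hence $0$ on $D$). This forces $(\ddbar\rho)^{\wedge,*}\gamma\ol\pr^*_f u|_D=0$.

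The one subtlety — the step I expect to be the main obstacle — is bookkeeping the interaction of $t_n^{\wedge,*}$ with the wedge structure: one must make sure that when $\ol\pr^*_f u$ is expanded, the only way to produce a $\bar t^n$-component is to differentiate (with the normal vector field $T_n$) a coefficient \emph{of the part of $u$ that already carries $\bar t^n$}, i.e.\ of $\beta$, and \emph{not} of $\alpha$. This is exactly the algebraic identity $(\ddbar\rho)^{\wedge,*}(\ddbar\rho)^\wedge\alpha=0$ and $2(\ddbar\rho)^{\wedge,*}(\ddbar\rho)^\wedge(\bar t^n\wedge\beta)=\bar t^n\wedge\beta$ up to normalization (cf.\ \eqref{e-gue190313scs}, \eqref{e-gue190313scsI}), combined with the observation that $t_j^{\wedge,*}$ for $j\le n-1$ commutes past $(\ddbar\rho)^\wedge$. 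Once this contraction algebra is organized cleanly, the conclusion drops out; alternatively, one can avoid explicit local coefficients altogether by testing against arbitrary $g\in\Omega^{0,q-1}_0(D)$ extended suitably to $U\cap\ol M$ and applying Lemma~\ref{l-gue190327ycd} twice, turning the claim into the vanishing of a boundary pairing that only sees $\gamma\beta$. I would present whichever of the two is shorter once the coframe computation is written out.
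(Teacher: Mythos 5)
Your primary route (the coframe computation) has a genuine gap exactly at the point you flag as ``the main obstacle'', and the ingredient needed to close it is not among the tools you list. Writing $u=\alpha+t_n\wedge\beta$ with $t_n=\ddbar\rho/\norm{\ddbar\rho}$, the first-order part of $t_n^{\wedge,*}\ol{\pr}^*_fu$ is indeed $\sum_{j\le n-1}\ol T_j$ applied componentwise to contractions of $\beta$ (tangential derivatives, killed on $D$ by the hypothesis), since the $j=n$ term dies by $t_n^{\wedge,*}t_n^{\wedge,*}=0$. But $\ol{\pr}^*_f$ also has a zero-order part, the pointwise adjoint of $g\mapsto\sum_K g_K\,\ddbar(t^K)$, and a priori this bundle map feeds the tangential part $\alpha$ of $u$ (which does \emph{not} vanish on $D$) into the $t_n$-direction of $\ol{\pr}^*_fu$. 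The identities \eqref{e-gue190313scs}, \eqref{e-gue190313scsI} and the commutation of $t_j^{\wedge,*}$ past $(\ddbar\rho)^\wedge$ do not control these frame-derivative terms. What saves the computation is the special structure of the frame: $\ddbar(\ddbar\rho)=0$ (type considerations and $d^2\rho=0$), hence $\ddbar t_n=\ddbar(\norm{\ddbar\rho}^{-1})\wedge\ddbar\rho$ lies in the ideal generated by $t_n$, so for $K\ni n$ the form $\ddbar(t^K)$ also lies in that ideal and pairs to zero against $\alpha$. Without this observation (or an equivalent one), your assertion that ``the lower-order terms involve only coefficients of $\beta$'' is unproved, and it would be false for a generic orthonormal coframe with $t_n$ not proportional to $\ddbar\rho$.

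Your fallback is essentially the paper's proof, and it is the shorter one, but as sketched it is under-specified and the degree is off: one tests against $g\in\Omega^{0,q-2}_0(U\cap\ol M)$ (not $(0,q-1)$-forms on $D$) and pairs $\ol{\pr}^*_fu$ with $\ddbar g$. Applying \eqref{e-gue190328yyd} once to the pair $(\ol{\pr}^*_fu,\ddbar g)$ and using $(\ol{\pr}^*_f)^2=0$ gives $(\,\ol{\pr}^*_fu\,|\,\ddbar g\,)_M=(\,(\ddbar\rho)^{\wedge,*}\gamma\ol{\pr}^*_fu\,|\,\gamma g\,)_X$, while applying it to $(u,\ddbar^2g)$ and using $\ddbar^2g=0$ together with the hypothesis $(\ddbar\rho)^{\wedge,*}\gamma u|_D=0$ gives $(\,\ol{\pr}^*_fu\,|\,\ddbar g\,)_M=0$; since $g$ is arbitrary, \eqref{e-gue190327ycdII} follows. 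The crucial device is precisely this use of $\ddbar g$ as test form so that both interior terms and the unwanted boundary term vanish; ``applying Lemma~\ref{l-gue190327ycd} twice'' without this specification does not yet constitute a proof. If you complete either route as indicated, the argument is correct; the duality route is the one the paper takes and avoids all frame bookkeeping.
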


\begin{proof}
Let $g\in\Omega^{0,q-2}_0(U\cap\ol M)$. From \eqref{e-gue190328yyd}, we have 
\begin{equation}\label{e-gue190327ccd}
\begin{split}
&(\,\ol{\pr}^*_fu\,|\,\ddbar g\,)_M\\
&=(\,(\ol{\pr}^*_f)^2u\,|\,g\,)_M+
(\,(\ddbar\rho)^{\wedge,*}\gamma\ol{\pr}^*_fu\,|\,\gamma g\,)_X\\
&=(\,(\ddbar\rho)^{\wedge,*}\gamma\ol{\pr}^*_fu\,|\,\gamma g\,)_X.
\end{split}
\end{equation}
On the other hand, from \eqref{e-gue190328yyd} again, we have 
\begin{equation}\label{e-gue190327ccdI}
\begin{split}
&0=(\,u\,|\,\ddbar^2g\,)_M=(\,\ol{\pr}^*_fu\,|\,\ddbar g\,)_M+
(\,(\ddbar\rho)^{\wedge,*}\gamma u\,|\,\gamma\ddbar g\,)_X\\
&=(\,\ol{\pr}^*_fu\,|\,\ddbar g\,)_M
\end{split}
\end{equation}
since $(\ddbar\rho)^{\wedge,*}\gamma u|_D=0$. 
From \eqref{e-gue190327ccd} and \eqref{e-gue190327ccdI}, we conclude that 
\[(\,(\ddbar\rho)^{\wedge,*}\gamma\ol{\pr}^*_fu\,|\,\gamma g\,)_X=0.\] 
Since $g$ is arbitrary, 
$(\ddbar\rho)^{\wedge,*}\gamma\ol{\pr}^*_fu|_D=0$.
\end{proof}

We now assume that the Levi form is non-degenerate of 
constant signature $(n_-,n_+)$ on $D=U\cap X$. 
Let $q=n_-$. Let $N^{(q+1)}$ and $N^{(q-1)}$ be as in Theorem~\ref{t-gue190531syd}. Put 
\begin{equation}\label{e-gue190327ycd}
\begin{split}
\hat N^{(q)}&:=\ol{\pr}^*_f(N^{(q+1)})^2\ddbar+
\ddbar(N^{(q-1)})^2\ol{\pr}^*_f\\
&: H^s_{{\rm loc\,}}(U\cap\ol M, T^{*0,q}M')
\To H^{s}_{{\rm loc\,}}(U\cap\ol M, T^{*0,q}M'),\ \ 
\mbox{for every $s\in\mathbb Z$}.
\end{split}
\end{equation}
Put 
\begin{equation}\label{e-gue190520yyd}
A^{0,q}(U\cap\ol M):=\set{u\in\Omega^{0,q}(U\cap\ol M);\,  
(\ddbar\rho)^{\wedge,*}\gamma u|_D=0}.
\end{equation}
We notice that if $u\in{\rm Dom\,}\ol{\pr}^*\cap\Omega^{0,q}(U\cap\ol M)$, 
then $u\in A^{0,q}(U\cap\ol M)$. We define 
\begin{equation}\label{e-gue190327ycdI}
\begin{split}
\hat\Pi^{(q)}&:=I-\ol{\pr}^*_fN^{(q+1)}\ddbar-
\ddbar N^{(q-1)}\ol{\pr}^*_f: A^{0,q}(U\cap\ol M)\To\Omega^{0,q}(U\cap\ol M),\\
&u\in A^{0,q}(U\cap\ol M)\To 
\Bigr(I-\ol{\pr}^*_fN^{(q+1)}\ddbar-\ddbar N^{(q-1)}\ol{\pr}^*_f\Bigr)u
\in\Omega^{0,q}(U\cap\ol M).
\end{split}
\end{equation}

\begin{thm}\label{t-gue190404ycd}
With the assumptions and notations above, let $q=n_-$. We have 
\begin{equation}\label{e-gue190404scd}
\mbox{$\hat N^{(q)}u\in A^{0,q}(U\cap\ol M)$, for every $u\in\Omega^{0,q}(U\cap\ol M)$}, 
\end{equation}
\begin{equation}\label{e-gue190404scdI}
\mbox{$(\ddbar\rho)^{\wedge,*}\gamma\ddbar\hat N^{(q)}u=
H^{(q)}_1u$, for every $u\in\Omega^{0,q}(U\cap\ol M)$}, 
\end{equation}
\begin{equation}\label{e-gue190404scdII}
\mbox{$\hat\Pi^{(q)}u\in A^{0,q}(U\cap\ol M)$, for every $u\in A^{0,q}(U\cap\ol M)$}, 
\end{equation}
\begin{equation}\label{e-gue190404scdIII}
\mbox{$(\ddbar\rho)^{\wedge,*}\gamma\ddbar\hat\Pi^{(q)}u
=H^{(q)}_2u$, for every $u\in A^{0,q}(U\cap\ol M)$},
\end{equation}
\begin{equation}\label{e-gue190404scdz}
\mbox{$\Box^{(q)}_f\hat N^{(q)}u+\hat\Pi^{(q)}u=
u+H^{(q)}_3u$, for every $u\in A^{0,q}(U\cap\ol M)$}
\end{equation}
and 
\begin{equation}\label{e-gue190520ycd}
\mbox{$\ddbar\hat\Pi^{(q)}u=H^{(q)}_4u$, for every 
$u\in A^{0,q}(U\cap\ol M)\cap\Omega^{0,q}_0(U\cap\ol M)$},
\end{equation}
\begin{equation}\label{e-gue190520ycdI}
\mbox{$\ol{\pr}^*_f\hat\Pi^{(q)}u=H^{(q)}_5u$ 
for every $u\in A^{0,q}(U\cap\ol M)\cap\Omega^{0,q}_0(U\cap\ol M)$},
\end{equation}
where $H^{(q)}_1\equiv0\mod\cC^\infty((U\times U)
\cap(X\times\ol M))$, $H^{(q)}_2\equiv0\mod\cC^\infty((U\times U)\cap(X\times\ol M))$, 
$H^{(q)}_3\equiv0\mod\cC^\infty((U\times U)\cap(\ol M\times\ol M))$, 
$H^{(q)}_4\equiv0\mod\cC^\infty((U\times U)\cap(\ol M\times\ol M))$ and 
$H^{(q)}_5\equiv0\mod\cC^\infty((U\times U)\cap(\ol M\times\ol M))$, 
$H^{(q)}_1, H^{(q)}_2, H^{(q)}_3, H^{(q)}_4, H^{(q)}_5$ 
are properly supported on $U\cap\ol M$. 
\end{thm}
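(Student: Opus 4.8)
The plan is to deduce all seven assertions from the definitions \eqref{e-gue190327ycd}, \eqref{e-gue190327ycdI}, the structure of $N^{(q\pm1)}$ furnished by Theorem~\ref{t-gue190531syd} — chiefly the boundary relations \eqref{e-gue190326yyd}, \eqref{e-gue190326yydI}, the right parametrix identity \eqref{e-gue190419sydh} and the left parametrix identity \eqref{e-gue190403yydIz} — together with Lemma~\ref{l-gue190327ycdz} and the elementary facts $\ddbar^{\,2}=0$, $(\ol{\pr}^*_f)^2=0$, $\Box^{(q)}_f=\ddbar\,\ol{\pr}^*_f+\ol{\pr}^*_f\ddbar$ and the commutations $\ddbar\,\Box^{(q)}_f=\Box^{(q+1)}_f\ddbar$, $\ol{\pr}^*_f\Box^{(q)}_f=\Box^{(q-1)}_f\ol{\pr}^*_f$. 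Since $\hat N^{(q)}$ and $\hat\Pi^{(q)}$ are built from the properly supported operators $N^{(q\pm1)}$, for $u$ compactly supported in $U\cap\ol M$ every form produced below is again compactly supported there and, after extension by zero, may be viewed as an element of the corresponding global space.

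First I would dispose of the boundary relations \eqref{e-gue190404scd}--\eqref{e-gue190404scdIII}, which are essentially algebraic. For \eqref{e-gue190404scd}: the first summand of $\hat N^{(q)}u$ is $\ol{\pr}^*_f$ applied to $(N^{(q+1)})^2\ddbar u\in A^{0,q+1}$ (by \eqref{e-gue190326yyd}), hence lies in $A^{0,q}$ by Lemma~\ref{l-gue190327ycdz}; the second summand is $\ddbar$ applied to $(N^{(q-1)})^2\ol{\pr}^*_f u$, and $(\ddbar\rho)^{\wedge,*}\gamma$ of it vanishes on $D$ by \eqref{e-gue190326yydI}. For \eqref{e-gue190404scdI}, only the first summand survives under $\ddbar$ because $\ddbar^{\,2}=0$; rewriting $\ddbar\,\ol{\pr}^*_f=\Box^{(q+1)}_f-\ol{\pr}^*_f\ddbar$ and using \eqref{e-gue190419sydh} in degree $q+1$ yields $\ddbar\hat N^{(q)}u=N^{(q+1)}\ddbar u+F^{(q+1)}_2N^{(q+1)}\ddbar u-\ol{\pr}^*_f\ddbar(N^{(q+1)})^2\ddbar u$, and applying $(\ddbar\rho)^{\wedge,*}\gamma$ the first term vanishes by \eqref{e-gue190326yyd} and the last by Lemma~\ref{l-gue190327ycdz} (since $\ddbar(N^{(q+1)})^2\ddbar u\in A^{0,q+2}$ by \eqref{e-gue190326yydI}), leaving the smoothing contribution $H^{(q)}_1u$. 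The assertions \eqref{e-gue190404scdII}, \eqref{e-gue190404scdIII} are handled likewise: for \eqref{e-gue190404scdII} one uses $(\ddbar\rho)^{\wedge,*}\gamma u|_D=0$ ($u\in A^{0,q}$) together with \eqref{e-gue190326yyd}, \eqref{e-gue190326yydI} and Lemma~\ref{l-gue190327ycdz}; for \eqref{e-gue190404scdIII} one argues as in \eqref{e-gue190404scdI}, the a priori harmless boundary term $(\ddbar\rho)^{\wedge,*}\gamma\ddbar u|_D$ being absorbed into the principal part of the parametrix. For the Hodge identity \eqref{e-gue190404scdz} I would expand $\Box^{(q)}_f\hat N^{(q)}u$, discard all terms carrying $\ddbar^{\,2}$ or $(\ol{\pr}^*_f)^2$, rewrite the survivors with $\ddbar\,\ol{\pr}^*_f=\Box^{(q+1)}_f-\ol{\pr}^*_f\ddbar$ and $\ol{\pr}^*_f\ddbar=\Box^{(q-1)}_f-\ddbar\,\ol{\pr}^*_f$, and apply \eqref{e-gue190419sydh} in degrees $q\pm1$; this gives $\Box^{(q)}_f\hat N^{(q)}u=\ol{\pr}^*_fN^{(q+1)}\ddbar u+\ddbar N^{(q-1)}\ol{\pr}^*_f u+H^{(q)}_3u$ with $H^{(q)}_3u=\ol{\pr}^*_fF^{(q+1)}_2N^{(q+1)}\ddbar u+\ddbar F^{(q-1)}_2N^{(q-1)}\ol{\pr}^*_f u$, and adding the definition of $\hat\Pi^{(q)}u$ yields \eqref{e-gue190404scdz}.

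It remains to prove \eqref{e-gue190520ycd} and \eqref{e-gue190520ycdI}. Applying $\ddbar$ (resp.\ $\ol{\pr}^*_f$) to $\hat\Pi^{(q)}u$, using $\ddbar^{\,2}=0$ (resp.\ $(\ol{\pr}^*_f)^2=0$) and the rewriting above, one obtains $\ddbar\hat\Pi^{(q)}u=-F^{(q+1)}_2\ddbar u+\ol{\pr}^*_f\ddbar N^{(q+1)}\ddbar u$ and $\ol{\pr}^*_f\hat\Pi^{(q)}u=-F^{(q-1)}_2\ol{\pr}^*_f u+\ddbar\,\ol{\pr}^*_fN^{(q-1)}\ol{\pr}^*_f u$. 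Since $F^{(q\pm1)}_2$ are smoothing, it suffices to establish the commutation relations $\ddbar N^{(q+1)}\equiv N^{(q+2)}\ddbar$ and $\ol{\pr}^*_fN^{(q-1)}\equiv N^{(q-2)}\ol{\pr}^*_f$ on $\Omega^{0,\bullet}_0(U\cap\ol M)$ modulo $\cC^\infty((U\times U)\cap(\ol M\times\ol M))$; granting these, $\ddbar N^{(q+1)}\ddbar u\equiv N^{(q+2)}\ddbar^{\,2}u=0$ and $\ol{\pr}^*_fN^{(q-1)}\ol{\pr}^*_f u\equiv N^{(q-2)}(\ol{\pr}^*_f)^2u=0$, giving \eqref{e-gue190520ycd}, \eqref{e-gue190520ycdI} with $H^{(q)}_4=-F^{(q+1)}_2\ddbar$, $H^{(q)}_5=-F^{(q-1)}_2\ol{\pr}^*_f$ up to smoothing. (When $q=0$ or $q=n-1$ the relevant term is absent or of top degree and the statement is immediate.) To prove the first relation, put $E:=\ddbar N^{(q+1)}-N^{(q+2)}\ddbar$; for $v$ compactly supported, \eqref{e-gue190326yyd} and \eqref{e-gue190326yydI} show that $Ev$, extended by zero, lies in ${\rm Dom\,}\Box^{(q+2)}$, while $\ddbar\,\Box^{(q+1)}_f=\Box^{(q+2)}_f\ddbar$ and \eqref{e-gue190419sydh} give $\Box^{(q+2)}Ev=\ddbar F^{(q+1)}_2v-F^{(q+2)}_2\ddbar v$, which is smoothing; hence $Ev=N^{(q+2)}\Box^{(q+2)}Ev-F^{(q+2)}_1Ev\equiv0$ by \eqref{e-gue190403yydIz}. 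The second relation is proved in the same way with $\ol{\pr}^*_f$ in place of $\ddbar$ and degrees $q-1,q-2$, applied to $v=\ol{\pr}^*_f u$ with $u\in A^{0,q}$; here the form $E'v:=\ol{\pr}^*_fN^{(q-1)}v-N^{(q-2)}\ol{\pr}^*_f v$ meets the $\ddbar$-Neumann boundary conditions only modulo a smoothing term, so one first subtracts a suitable smoothing operator (as in the proof of Theorem~\ref{t-gue190321myyd}) to place it exactly in ${\rm Dom\,}\Box^{(q-2)}$ before invoking \eqref{e-gue190403yydIz}.

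The algebra behind \eqref{e-gue190404scd}--\eqref{e-gue190404scdz} is routine; the genuine difficulty is the commutation relations underlying \eqref{e-gue190520ycd}, \eqref{e-gue190520ycdI}. The delicate point there is that one must apply the \emph{left} parametrix identity \eqref{e-gue190403yydIz} — valid only on ${\rm Dom\,}\Box$ — to forms that satisfy the $\ddbar$-Neumann boundary conditions only modulo smoothing; reconciling this is exactly what forces the compact support hypothesis (so that zero-extensions land in the global domain) together with the smoothing correction above. A secondary bookkeeping point is to keep the two flavours of smoothing remainder separate, namely $\cC^\infty$ on $\ol M\times\ol M$ (for $H^{(q)}_3,H^{(q)}_4,H^{(q)}_5$) versus $\cC^\infty$ on $X\times\ol M$ (for $H^{(q)}_1,H^{(q)}_2$), since the latter only control the restriction of the kernel to the boundary in the first variable.
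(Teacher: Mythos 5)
Your proposal is correct and takes essentially the same route as the paper: the same algebraic reductions via $\ddbar\,\ol{\pr}^*_f=\Box^{(q+1)}_f-\ol{\pr}^*_f\ddbar$, Lemma~\ref{l-gue190327ycdz} and the boundary relations \eqref{e-gue190326yyd}, \eqref{e-gue190326yydI} for \eqref{e-gue190404scd}--\eqref{e-gue190404scdz}, and for \eqref{e-gue190520ycd}--\eqref{e-gue190520ycdI} the same key mechanism of correcting $\ol{\pr}^*_fN^{(q-1)}\ol{\pr}^*_fu$ (resp.\ treating $\ddbar N^{(q+1)}\ddbar u$, which already satisfies the boundary conditions) by a smoothing operator so as to land in ${\rm Dom\,}\Box^{(q\mp2)}$ and then applying the left parametrix identity \eqref{e-gue190403yydIz}. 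Your repackaging of that step as commutation relations $\ddbar N^{(q+1)}\equiv N^{(q+2)}\ddbar$ and $\ol{\pr}^*_fN^{(q-1)}\equiv N^{(q-2)}\ol{\pr}^*_f$ modulo smoothing (used only on the relevant inputs) is merely cosmetic relative to the paper's computations \eqref{e-gue190521yyd}--\eqref{e-gue190521yydIII}.
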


\begin{proof}
From \eqref{e-gue190326yyd}, \eqref{e-gue190326yydI}, 
Lemma~\ref{l-gue190327ycdz} and the definitions of $\hat N^{(q)}$, 
$\hat\Pi^{(q)}$, we get \eqref{e-gue190404scd} and \eqref{e-gue190404scdII}. 

Let $u\in\Omega^{0,q}(U\cap\ol M)$. From \eqref{e-gue190403yydq} 
and \eqref{e-gue190327ycd}, we have 
\begin{equation}\label{e-gue190404yydh}
\begin{split}
&(\ddbar\rho)^{\wedge,*}\gamma\ddbar\hat N^{(q)}u=
(\ddbar\rho)^{\wedge,*}\gamma\ddbar\,\ol{\pr}^*_f(N^{(q+1)})^2\ddbar u\\
&=(\ddbar\rho)^{\wedge,*}\gamma\Box^{(q+1)}_f(N^{(q+1)})^2\ddbar u-
(\ddbar\rho)^{\wedge,*}\gamma\ol{\pr}^*_f\ddbar(N^{(q+1)})^2\ddbar u\\
&=(\ddbar\rho)^{\wedge,*}\gamma(I+F^{(q+1)}_2)N^{(q+1)}\ddbar u-
(\ddbar\rho)^{\wedge,*}\gamma\ol{\pr}^*_f\ddbar(N^{(q+1)})^2\ddbar u\\
&=(\ddbar\rho)^{\wedge,*}\gamma F^{(q+1)}_2N^{(q+1)}\ddbar u+
(\ddbar\rho)^{\wedge,*}\gamma N^{(q+1)}\ddbar u-
(\ddbar\rho)^{\wedge,*}\gamma\ol{\pr}^*_f\ddbar(N^{(q+1)})^2\ddbar u,
\end{split}
\end{equation}
where $F^{(q+1)}_2\equiv0\mod\cC^\infty((U\times U)\cap(\ol M\times\ol M))$ 
is as in \eqref{e-gue190403yydq}. Again, from \eqref{e-gue190326yyd}, 
\eqref{e-gue190326yydI} and Lemma~\ref{l-gue190327ycdz}, 
we see that $(\ddbar\rho)^{\wedge,*}\gamma N^{(q+1)}\ddbar u=0$ 
and $(\ddbar\rho)^{\wedge,*}\gamma\ol{\pr}^*_f\ddbar(N^{(q+1)})^2\ddbar u|_D=0$. 
From this observation, \eqref{e-gue190404yydh} and notice that 
\[(\ddbar\rho)^{\wedge,*}\gamma F^{(q+1)}_2N^{(q+1)}\ddbar
\equiv0\mod\cC^\infty((U\times U)\cap(X\times\ol M)),\]
we get \eqref{e-gue190404scdI}. The proof of \eqref{e-gue190404scdIII} is similar. 

Let $u\in A^{0,q}(U\cap\ol M)$. From \eqref{e-gue190403yydq}, 
\eqref{e-gue190327ycd} and \eqref{e-gue190327ycdI}, we have 
\begin{equation}\label{e-gue190404yydc}
\begin{split}
&\Box^{(q)}_f\hat N^{(q)}u=
\Box^{(q)}_f\Bigr(\ol{\pr}^*_f(N^{(q+1)})^2\ddbar+
\ddbar(N^{(q-1)})^2\ol{\pr}^*_f\Bigr)u\\
&=\ol{\pr}^*_f\Box^{(q+1)}_f(N^{(q+1)})^2\ddbar u+
\ddbar\Box^{(q-1)}_f(N^{(q-1)})^2\ol{\pr}^*_fu\\
&=\ol{\pr}^*_f(I+F^{(q+1)}_2)N^{(q+1)}\ddbar u+
\ddbar(I+F^{(q-1)}_2)N^{(q-1)}\ol{\pr}^*_fu\\
&=\ol{\pr}^*_fN^{(q+1)}\ddbar u+
\ddbar N^{(q-1)}\ol{\pr}^*_fu+
\ol{\pr}^*_fF^{(q+1)}_2N^{(q+1)}\ddbar u+
\ddbar F^{(q-1)}_2N^{(q-1)}\ol{\pr}^*_fu\\
&=(I-\hat\Pi^{(q)})u+
\Bigr(\ol{\pr}^*_fF^{(q+1)}_2N^{(q+1)}\ddbar+
\ddbar F^{(q-1)}_2N^{(q-1)}\ol{\pr}^*_f\Bigr)u, 
\end{split}
\end{equation}
where $F^{(q+1)}_2\equiv0\mod\cC^\infty((U\times U)\cap(\ol M\times\ol M))$,  
$F^{(q-1)}_2\equiv0\mod\cC^\infty((U\times U)\cap(\ol M\times\ol M))$ 
are as in \eqref{e-gue190403yydq}. It is clear that 
\[\ol{\pr}^*_fF^{(q+1)}_2N^{(q+1)}\ddbar+
\ddbar F^{(q-1)}_2N^{(q-1)}\ol{\pr}^*_f\equiv
0\mod\cC^\infty((U\times U)\cap(\ol M\times\ol M)).\]
From this observation and \eqref{e-gue190404yydc}, we get \eqref{e-gue190404scdz}. 

Let $u\in A^{0,q}(U\cap\ol M)\cap\Omega^{0,q}_0(U\cap\ol M)$, 
from \eqref{e-gue190403yydIz}, \eqref{e-gue190403yydq}, 
\eqref{e-gue190327ycd} and \eqref{e-gue190327ycdI}, we have 
\begin{equation}\label{e-gue190404yydy}
\begin{split}
&\ol{\pr}^*_f\hat\Pi^{(q)}u=\ol{\pr}^*_fu-
\ol{\pr}^*_f\Bigr(\ol{\pr}^*_fN^{(q+1)}\ddbar u-
\ddbar N^{(q-1)}\ol{\pr}^*_fu\Bigr)\\
&=\ol{\pr}^*_fu-\ol{\pr}^*_f\,\ddbar N^{(q-1)}\ol{\pr}^*_fu\\
&=\ol{\pr}^*_fu-\Bigr(\Box^{(q-1)}_f-
\ddbar\,\ol{\pr}^*_f\Bigr)N^{(q-1)}\ol{\pr}^*_fu\\
&=\ol{\pr}^*_fu-(I+F^{(q-1)}_2)\ol{\pr}^*_fu+
\ddbar\,\ol{\pr}^*_fN^{(q-1)}\ol{\pr}^*_fu\\
&=-F^{(q-1)}_2\ol{\pr}^*_fu+
\ddbar\,\ol{\pr}^*_fN^{(q-1)}\ol{\pr}^*_fu.
\end{split}
\end{equation}
For every $g\in A^{0,q}(U\cap\ol M)\cap\Omega^{0,q}_0(U\cap\ol M)$, 
from \eqref{e-gue190326yydI}, \eqref{e-gue190327ycdII} and 
\eqref{e-gue190403yydq}, we have 
\begin{equation}\label{e-gue190521yyd}
\begin{split}
&(\ddbar\rho)^{\wedge,*}\gamma\ddbar\,\ol{\pr}^*_fN^{(q-1)}\ol{\pr}^*_fg\\
&=(\ddbar\rho)^{\wedge,*}\gamma\Bigr(\Box^{(q-1)}_f-\ol{\pr}^*_f\,
\ddbar\Bigr)N^{(q-1)}\ol{\pr}^*_fg\Bigr)\\
&=(\ddbar\rho)^{\wedge,*}\gamma(I+F^{(q-1)}_2)
\ol{\pr}^*_fg-(\ddbar\rho)^{\wedge,*}\gamma\ol{\pr}^*_f\,
\ddbar N^{(q-1)}\ol{\pr}^*_fg\\
&=(\ddbar\rho)^{\wedge,*}\gamma F^{(q-1)}_2\ol{\pr}^*_fg.
\end{split}
\end{equation}
Since $(\ddbar\rho)^{\wedge,*}\gamma F^{(q-1)}_2
\equiv0\mod\cC^\infty((U\times U)\cap(X\times\ol M))$, 
we can repeat the proof of Theorem~\ref{t-gue190321myyd} 
and deduce that there is a properly supported operator 
\[\varepsilon^{(q-1)}: \mathscr D'(U\cap M,T^{*0,q-1}M')
\To\Omega^{0,q-2}(U\cap M)\] 
on $U\cap\ol M$ with $\varepsilon^{(q-1)}\equiv0
\mod\cC^\infty((U\times U)\cap(\ol M\times\ol M))$ 
such that 
\begin{equation}\label{e-gue190521yydI}
\begin{split}
&(\ddbar\rho)^{\wedge,*}\gamma\Bigr(\ol{\pr}^*_fN^{(q-1)}-
\varepsilon^{(q-1)}\Bigr)\ol{\pr}^*_fg=0,\\
&(\ddbar\rho)^{\wedge,*}\gamma\ddbar\,\Bigr(\ol{\pr}^*_fN^{(q-1)}-
\varepsilon^{(q-1)}\Bigr)\ol{\pr}^*_fg=0,
\end{split}
\end{equation}
for every $g\in A^{0,q}(U\cap\ol M)\cap\Omega^{0,q}_0(U\cap\ol M)$ and hence 
\begin{equation}\label{e-gue190521yydII}
\Bigr(\ol{\pr}^*_fN^{(q-1)}-\varepsilon^{(q-1)}\Bigr)\ol{\pr}^*_fg\in{\rm Dom\,}\Box^{(q-2)}, 
\end{equation}
for every $g\in A^{0,q}(U\cap\ol M)\cap\Omega^{0,q}_0(U\cap\ol M)$. 
From \eqref{e-gue190403yydq}, \eqref{e-gue190403yydIz}, 
\eqref{e-gue190521yydII} and \eqref{e-gue190404yydy}, we have 
\begin{equation}\label{e-gue190521yydIII}
\begin{split}
&\ol{\pr}^*_f\hat\Pi^{(q)}u\\
&=-F^{(q-1)}_2\ol{\pr}^*_fu+\ddbar\,\ol{\pr}^*_fN^{(q-1)}\ol{\pr}^*_fu\\
&=-F^{(q-1)}_2\ol{\pr}^*_fu+\ddbar\,\Bigr(\ol{\pr}^*_fN^{(q-1)}-
\varepsilon^{(q-1)}\Bigr)\ol{\pr}^*_fu+\ddbar\varepsilon^{(q-1)}\ol{\pr}^*_fu\\
&=-F^{(q-1)}_2\ol{\pr}^*_fu+\ddbar\,\Bigr(N^{(q-2)}\Box^{(q-2)}-
F^{(q-2)}_1\Bigr)\Bigr(\ol{\pr}^*_fN^{(q-1)}-\varepsilon^{(q-1)}\Bigr)\ol{\pr}^*_fu\\
&\quad+\ddbar\varepsilon^{(q-1)}\ol{\pr}^*_fu\\
&=-F^{(q-1)}_2\ol{\pr}^*_fu+\ddbar\,\Bigr(N^{(q-2)}\Box^{(q-2)}_f-
F^{(q-2)}_1\Bigr)\Bigr(\ol{\pr}^*_fN^{(q-1)}-\varepsilon^{(q-1)}\Bigr)\ol{\pr}^*_fu\\
&\quad+\ddbar\varepsilon^{(q-1)}\ol{\pr}^*_fu\\
&=-F^{(q-1)}_2\ol{\pr}^*_fu+\ddbar N^{(q-2)}
\Box^{(q-2)}_f\ol{\pr}^*_fN^{(q-1)}\ol{\pr}^*_fu-
\ddbar N^{(q-2)}\Box^{(q-2)}_f\varepsilon^{(q-1)}\ol{\pr}^*_fu\\
&\quad-\ddbar F^{(q-2)}_1\Bigr(\ol{\pr}^*_fN^{(q-1)}-
\varepsilon^{(q-1)}\Bigr)\ol{\pr}^*_fu+
\ddbar\varepsilon^{(q-1)}\ol{\pr}^*_fu\\
&=-F^{(q-1)}_2\ol{\pr}^*_fu+
\ddbar N^{(q-2)}\ol{\pr}^*_f\Box^{(q-1)}_fN^{(q-1)}\ol{\pr}^*_fu-
\ddbar N^{(q-2)}\Box^{(q-2)}_f\varepsilon^{(q-1)}\ol{\pr}^*_fu\\
&\quad-\ddbar F^{(q-2)}_1\Bigr(\ol{\pr}^*_fN^{(q-1)}-
\varepsilon^{(q-1)}\Bigr)\ol{\pr}^*_fu+\ddbar\varepsilon^{(q-1)}\ol{\pr}^*_fu\\
&=-F^{(q-1)}_2\ol{\pr}^*_fu+
\ddbar N^{(q-2)}\ol{\pr}^*_f\Bigr(I+F^{(q-1)}_2\Bigr)\ol{\pr}^*_fu-
\ddbar N^{(q-2)}\Box^{(q-2)}_f\varepsilon^{(q-1)}\ol{\pr}^*_fu\\
&\quad-\ddbar F^{(q-2)}_1\Bigr(\ol{\pr}^*_fN^{(q-1)}-
\varepsilon^{(q-1)}\Bigr)\ol{\pr}^*_fu+
\ddbar\varepsilon^{(q-1)}\ol{\pr}^*_fu\\
&=-F^{(q-1)}_2\ol{\pr}^*_fu+
\ddbar N^{(q-2)}\ol{\pr}^*_fF^{(q-1)}_2\ol{\pr}^*_fu-
\ddbar N^{(q-2)}\Box^{(q-2)}_f\varepsilon^{(q-1)}\ol{\pr}^*_fu\\
&\quad-\ddbar F^{(q-2)}_1\Bigr(\ol{\pr}^*_fN^{(q-1)}-
\varepsilon^{(q-1)}\Bigr)\ol{\pr}^*_fu+\ddbar\varepsilon^{(q-1)}\ol{\pr}^*_fu,
\end{split}
\end{equation}
where $u\in A^{0,q}(U\cap\ol M)\cap\Omega^{0,q}_0(U\cap\ol M)$.
It is clear that 
\[\begin{split}
&-F^{(q-1)}_2\ol{\pr}^*_f+
\ddbar N^{(q-2)}\ol{\pr}^*_fF^{(q-1)}_2\ol{\pr}^*_f-
\ddbar N^{(q-2)}\Box^{(q-2)}_f\varepsilon^{(q-1)}\ol{\pr}^*_f\\
&\quad-\ddbar F^{(q-2)}_1\Bigr(\ol{\pr}^*_fN^{(q-1)}-
\varepsilon^{(q-1)}\Bigr)\ol{\pr}^*_f+
\ddbar\varepsilon^{(q-1)}\ol{\pr}^*_f\equiv0
\mod\cC^\infty((U\times U)\cap(\ol M\times\ol M)).
\end{split}\]
From this observation and \eqref{e-gue190521yydIII}, 
we get \eqref{e-gue190520ycdI}. The proof of \eqref{e-gue190520ycd} 
is similar but simpler and therefore we omit the details. 
\end{proof}

From \eqref{e-gue190520ycd} and \eqref{e-gue190520ycdI}, we get 

\begin{equation}\label{e-gue190404scdq}
\mbox{$\Box^{(q)}_f\hat\Pi^{(q)}u=H^{(q)}_6u$, for every $u\in A^{0,q}(U\cap\ol M)\cap\Omega^{0,q}_0(U\cap\ol M)$},
\end{equation}
where $H^{(q)}_6\equiv0\mod\cC^\infty((U\times U)\cap(X\times\ol M))$, $H^{(q)}_6$ is properly supported on $U\cap\ol M$. 

We need 

\begin{lem}\label{l-gue190521yyd}
With the assumptions and notations above, let $q=n_-$. We have that 
\[(\,\hat N^{(q)}u\,|\,v\,)_M=(\,u\,|\,\hat N^{(q)}v\,)_M+(\,u\,|\,\hat\Gamma^{(q)}v\,)_M,\]
for every $u\in L^2_{{\rm comp\,}}(U\cap\ol M,T^{*0,q}M')$, $v\in L^2_{{\rm loc\,}}(U\cap\ol M,T^{*0,q}M')$, where $\hat\Gamma^{(q)}\equiv0\mod\cC^\infty((U\times U)\cap(\ol M\times\ol M))$ and $\hat\Gamma^{(q)}$ is properly supported on $U\cap\ol M$.
\end{lem}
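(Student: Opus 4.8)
The plan is to compute the formal adjoint of $\hat N^{(q)}=\ol{\pr}^*_f(N^{(q+1)})^2\ddbar+\ddbar(N^{(q-1)})^2\ol{\pr}^*_f$ term by term, using the properties of $N^{(q\pm1)}$ established in Theorem~\ref{t-gue190531syd}, and to show that each ``commutator-type'' error that arises is smoothing. First I would recall that the formal adjoint of $\ddbar$ is $\ol{\pr}^*_f$ and vice versa \emph{as operators on $M'$}; the subtlety is that here we work with $(\,\cdot\,|\,\cdot\,)_M$, so integration by parts over $M$ produces a boundary term on $X$ (Lemma~\ref{l-gue190327ycd}). The key point is that all the functions $N^{(q\pm1)}$ satisfy the boundary conditions \eqref{e-gue190326yyd}, \eqref{e-gue190326yydI}, so when $\ol{\pr}^*_f$ or $\ddbar$ hits $N^{(q\pm1)}(\text{something})$ and we integrate by parts, the boundary term $(\,(\ddbar\rho)^{\wedge,*}\gamma(\cdot)\,|\,\gamma(\cdot)\,)_X$ vanishes; hence on the level of the leading terms the adjoint of $\hat N^{(q)}$ is again of the same shape, with $N^{(q\pm1)}$ replaced by $(N^{(q\pm1)})^*$.

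Concretely, for $u\in L^2_{\rm comp}(U\cap\ol M,T^{*0,q}M')$ and $v\in L^2_{\rm loc}(U\cap\ol M,T^{*0,q}M')$ (first taken smooth, then passed to the limit using that all operators involved are properly supported and continuous on Sobolev spaces), I would write
\[(\,\ol{\pr}^*_f(N^{(q+1)})^2\ddbar u\,|\,v\,)_M
=(\,(N^{(q+1)})^2\ddbar u\,|\,\ddbar v\,)_M + (\text{boundary term}),\]
where the boundary term vanishes because $(N^{(q+1)})^2\ddbar u\in A^{0,q+1}(U\cap\ol M)$ by \eqref{e-gue190326yyd}. Then I would move the two copies of $N^{(q+1)}$ across using \eqref{e-gue190404yyda}, which introduces the error operators $H^{(q+1)}$ from \eqref{e-gue190403yyd}; since $H^{(q+1)}\equiv0\mod\cC^\infty$ and composing a smoothing properly supported operator with $\ddbar$, $\ol{\pr}^*_f$, or $N^{(q\pm1)}$ stays smoothing, these contribute to $\hat\Gamma^{(q)}$. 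Finally I would integrate by parts once more to put $\ddbar$ back as $\ol{\pr}^*_f$ acting on $v$; again $(N^{(q+1)})^2\ddbar u$ lies in $A^{0,q+1}$ so there is no boundary term, and one obtains $(\,u\,|\,\ol{\pr}^*_f(N^{(q+1)})^2\ddbar v\,)_M$ modulo smoothing. Treating the $\ddbar(N^{(q-1)})^2\ol{\pr}^*_f$ summand symmetrically (using that $\ol{\pr}^*_f u$, being in the image of $\ol{\pr}^*_f$, has vanishing $(\ddbar\rho)^{\wedge,*}\gamma(\cdot)$ on $D$, i.e.\ lies in $A^{0,q-1}$, and likewise $(N^{(q-1)})^2\ol{\pr}^*_f u\in A^{0,q-1}$ by \eqref{e-gue190326yyd}) gives $(\,u\,|\,\ddbar(N^{(q-1)})^2\ol{\pr}^*_f v\,)_M$ modulo smoothing. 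Adding the two yields $(\,\hat N^{(q)}u\,|\,v\,)_M=(\,u\,|\,\hat N^{(q)}v\,)_M+(\,u\,|\,\hat\Gamma^{(q)}v\,)_M$ with $\hat\Gamma^{(q)}$ a finite sum of compositions each containing one factor $\equiv0\mod\cC^\infty$, hence $\hat\Gamma^{(q)}\equiv0\mod\cC^\infty((U\times U)\cap(\ol M\times\ol M))$, and $\hat\Gamma^{(q)}$ is properly supported since all building blocks are.

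I expect the main obstacle to be bookkeeping rather than conceptual: one must verify carefully that every time a derivative $\ddbar$ or $\ol{\pr}^*_f$ is moved by integration by parts in $(\,\cdot\,|\,\cdot\,)_M$, the relevant argument genuinely lies in the space $A^{0,q\pm1}(U\cap\ol M)$ so that the boundary term in \eqref{e-gue190328yyd} drops out — this is where \eqref{e-gue190326yyd}, \eqref{e-gue190326yydI} and Lemma~\ref{l-gue190327ycdz} are used repeatedly — and that the properly-supported property is preserved under all the compositions so that the pairings with compactly supported $u$ make sense. The density/limiting argument (approximating $v\in L^2_{\rm loc}$ by smooth forms and $u\in L^2_{\rm comp}$ by compactly supported smooth forms, then using continuity of $\hat N^{(q)}$, $(N^{(q\pm1)})^*$ and the smoothing remainders on the appropriate local Sobolev spaces) is routine and parallels the argument already given for Lemma~\ref{l-gue190403yyd}.
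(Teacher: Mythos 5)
Your overall architecture (density in $L^2$, integration by parts via \eqref{e-gue190328yyd}, moving the factors $N^{(q\pm1)}$ across with \eqref{e-gue190403yyd}--\eqref{e-gue190404yyda} and dumping the $H^{(q\pm1)}$-errors into $\hat\Gamma^{(q)}$) is the same as the paper's, and your treatment of the summand $\ol{\pr}^*_f(N^{(q+1)})^2\ddbar$ is essentially right: there the boundary term in \eqref{e-gue190328yyd} is $((\ddbar\rho)^{\wedge,*}\gamma g\,|\,\gamma f)_X$ with $g$ the $N^{(q+1)}$-factor, so \eqref{e-gue190326yyd} kills it. But your treatment of the summand $\ddbar(N^{(q-1)})^2\ol{\pr}^*_f$ has a genuine gap. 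First, the claim that $\ol{\pr}^*_f u$ ``being in the image of $\ol{\pr}^*_f$'' satisfies $(\ddbar\rho)^{\wedge,*}\gamma\ol{\pr}^*_fu|_D=0$ is false: Lemma~\ref{l-gue190327ycdz} gives this only under the hypothesis $(\ddbar\rho)^{\wedge,*}\gamma u|_D=0$, i.e.\ $u\in A^{0,q}(U\cap\ol M)$, which is not available for the general smooth approximants of an arbitrary $u\in L^2_{{\rm comp\,}}$. Second, and independently of that, the boundary terms in this summand are not of the type your argument kills: writing $(\ddbar w\,|\,v)_M=(w\,|\,\ol{\pr}^*_fv)_M+(\gamma w\,|\,(\ddbar\rho)^{\wedge,*}\gamma v)_X$ with $w=(N^{(q-1)})^2\ol{\pr}^*_fu$, the boundary integral pairs the (tangential but nonzero) full trace $\gamma w$ against the normal trace of $v$; the condition $(\ddbar\rho)^{\wedge,*}\gamma w=0$ from \eqref{e-gue190326yyd} does not make it vanish, and one needs $(\ddbar\rho)^{\wedge,*}\gamma v|_D=0$ (symmetrically, the reverse step produces $((\ddbar\rho)^{\wedge,*}\gamma u\,|\,\gamma[(N^{(q-1)})^2\ol{\pr}^*_fv])_X$, needing the normal trace of $u$ to vanish). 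Such boundary integrals are concentrated on $X\times X$ and cannot be absorbed into a term $(u\,|\,\hat\Gamma^{(q)}v)_M$ with $\hat\Gamma^{(q)}\equiv0\mod\cC^\infty((U\times U)\cap(\ol M\times\ol M))$, so as written your computation does not close. The same issue is glossed over when you convert the error terms $(\ddbar u\,|\,R\,\ddbar v)_M$ (with $R$ the smoothing $N H+H N+H^2$ contributions) into the form $(u\,|\,\cdot\,v)_M$: that requires one more integration by parts whose boundary term involves $\gamma u$ and is not controlled by any boundary condition on $R$.

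The repair is simple and brings you back to the paper's (terse) proof: choose the approximating sequences $u_j,v_j\in\Omega^{0,q}_0(U\cap M)$, i.e.\ supported away from $X$; these are still dense in $L^2_{{\rm comp\,}}$ and $L^2_{{\rm loc\,}}$ since $X$ has measure zero, and then every boundary integral above vanishes for trivial reasons, so the identity $(\hat N^{(q)}u_j\,|\,v_j)_M=(u_j\,|\,\hat N^{(q)}v_j)_M+(u_j\,|\,\hat\Gamma^{(q)}v_j)_M$ follows exactly by your term-by-term computation with $\hat\Gamma^{(q)}=\ol{\pr}^*_fR_1\ddbar+\ddbar R_2\ol{\pr}^*_f$ smoothing and properly supported; the passage to the limit uses the $L^2$-continuity of $\hat N^{(q)}$ from \eqref{e-gue190327ycd} and the proper supports, as in Lemma~\ref{l-gue190403yyd}. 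Alternatively one may work with approximants in $A^{0,q}(U\cap\ol M)$, for which the offending normal traces vanish; but some such restriction on the approximants must be stated, since for general smooth forms up to the boundary the identity with a fixed smoothing $\hat\Gamma^{(q)}$ is not what your integration by parts yields.
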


\begin{proof}
Let $u\in L^2_{{\rm comp\,}}(U\cap\ol M, T^{*0,q}M')$, $v\in L^2_{{\rm loc\,}}(U\cap\ol M, T^{*0,q}M')$. 
Let $u_j\in\Omega^{0,q}_0(U\cap\ol M)$, $v_j\in\Omega^{0,q}_0(U\cap\ol M)$, $j=1,2,\ldots$, such that
$u_j\To u$ in $L^2_{{\rm comp\,}}(U\cap\ol M, T^{*0,q}M')$ as $j\To+\infty$ and $v_j\To v$ in $L^2_{{\rm loc\,}}(U\cap\ol M, T^{*0,q}M')$ as $j\To+\infty$. 
From \eqref{e-gue190327ycd}, we see that 
\begin{equation}\label{e-gue190718yyd}
(\,\hat N^{(q)}u\,|\,v\,)_M=\lim_{j\To+\infty}(\,\hat N^{(q)}u_j\,|\,v_j\,)_M.
\end{equation}
From \eqref{e-gue190403yyd} and \eqref{e-gue190404yyda}, we can check that 
\begin{equation}\label{e-gue190718yydI}
(\,\hat N^{(q)}u_j\,|\,v_j\,)_M=(\,u_j\,|\,\hat N^{(q)}v_j\,)_M+(\,u_j\,|\,\hat\Gamma^{(q)}v_j\,)_M,\ \ \mbox{for every $j=1,2,\ldots$}, 
\end{equation}
where $\hat\Gamma^{(q)}\equiv0\mod\cC^\infty((U\times U)\cap(\ol M\times\ol M))$ 
and $\hat\Gamma^{(q)}$ is properly supported on $U\cap\ol M$. 
From \eqref{e-gue190718yyd} and \eqref{e-gue190718yydI}, the lemma follows. 
\end{proof}

\begin{lem}\label{l-gue190521yydI}
With the assumptions and notations used above, let $q=n_-$. 
Fix an open set $W\subset U$ with $\ol W$ is a compact subset of $U$. 
There is a constant $C_W>0$ such that 
\begin{equation}\label{e-gue190520yydI}
\norm{\hat\Pi^{(q)}u}_M\leq C_W\norm{u}_M,\ \ 
\mbox{for every $u\in A^{0,q}(U\cap\ol M)\cap\Omega^{0,q}_0(W\cap\ol M)$}.
\end{equation}
\end{lem}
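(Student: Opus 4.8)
The plan is to establish the a priori estimate $\norm{\hat\Pi^{(q)}u}_M^2\leq C_W\norm{u}_M\bigl(\norm{\hat\Pi^{(q)}u}_M+\norm{u}_M\bigr)$ and then absorb the factor $\norm{\hat\Pi^{(q)}u}_M$ on the left. Fix $u\in A^{0,q}(U\cap\ol M)\cap\Omega^{0,q}_0(W\cap\ol M)$. Since $u$ is smooth with compact support and all operators involved (the operators $N^{(q\pm1)}$, $\hat N^{(q)}$, $\hat\Pi^{(q)}$ and the smoothing remainders $H^{(q)}_j$, $F^{(q+1)}_2$ from Theorem~\ref{t-gue190531syd} and Theorem~\ref{t-gue190404ycd}) are properly supported on $U\cap\ol M$, every form obtained by applying them to $u$ is smooth up to $X$ and supported in a fixed compact set $K\Subset U$ depending only on $W$. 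Moreover $\norm{H^{(q)}_ju}_{\cC^k(\ol M)}\leq C_{W,k}\norm{u}_M$ for all $k$, and since $\hat N^{(q)}$ maps $H^s_{{\rm loc\,}}$ into $H^s_{{\rm loc\,}}$ for every $s$ (see \eqref{e-gue190327ycd}), the operators $\hat N^{(q)}$, $\ddbar\hat N^{(q)}$, $\ol{\pr}^*_f\hat N^{(q)}$ send $L^2_{{\rm comp\,}}$ into $L^2_{{\rm loc\,}}$, $H^{-1}_{{\rm loc\,}}$, $H^{-1}_{{\rm loc\,}}$ respectively, with norms over $K$ bounded by $C_W\norm{u}_M$.

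First rewrite the microlocal Hodge decomposition \eqref{e-gue190404scdz} as $\hat\Pi^{(q)}u=u+H^{(q)}_3u-\Box^{(q)}_f\hat N^{(q)}u$ and pair it with $\hat\Pi^{(q)}u$, so that
\[\norm{\hat\Pi^{(q)}u}_M^2=(\,\hat\Pi^{(q)}u\,|\,u\,)_M+(\,\hat\Pi^{(q)}u\,|\,H^{(q)}_3u\,)_M-(\,\hat\Pi^{(q)}u\,|\,\Box^{(q)}_f\hat N^{(q)}u\,)_M;\]
the first two terms are immediately $\leq C_W\norm{u}_M\norm{\hat\Pi^{(q)}u}_M$. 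For the last term I would integrate by parts twice with Lemma~\ref{l-gue190327ycd}, i.e.\ \eqref{e-gue190328yyd}, which gives for $g,h\in\Omega^{0,q}(\ol M)$
\[(\,\Box^{(q)}_fg\,|\,h\,)_M=(\,\ol{\pr}^*_fg\,|\,\ol{\pr}^*_fh\,)_M+(\,\ddbar g\,|\,\ddbar h\,)_M+(\,\gamma\ol{\pr}^*_fg\,|\,(\ddbar\rho)^{\wedge,*}\gamma h\,)_X-(\,(\ddbar\rho)^{\wedge,*}\gamma\ddbar g\,|\,\gamma h\,)_X.\]
Taking $g=\hat N^{(q)}u$, $h=\hat\Pi^{(q)}u$ and using $(\ddbar\rho)^{\wedge,*}\gamma\hat\Pi^{(q)}u=0$ (because $\hat\Pi^{(q)}u\in A^{0,q}(U\cap\ol M)$ by \eqref{e-gue190404scdII}), $\ol{\pr}^*_f\hat\Pi^{(q)}u=H^{(q)}_5u$, $\ddbar\hat\Pi^{(q)}u=H^{(q)}_4u$ (by \eqref{e-gue190520ycd}, \eqref{e-gue190520ycdI}) and $(\ddbar\rho)^{\wedge,*}\gamma\ddbar\hat N^{(q)}u=H^{(q)}_1u$ (by \eqref{e-gue190404scdI}), this reduces to
\[(\,\hat\Pi^{(q)}u\,|\,\Box^{(q)}_f\hat N^{(q)}u\,)_M=(\,H^{(q)}_5u\,|\,\ol{\pr}^*_f\hat N^{(q)}u\,)_M+(\,H^{(q)}_4u\,|\,\ddbar\hat N^{(q)}u\,)_M-(\,\gamma\hat\Pi^{(q)}u\,|\,H^{(q)}_1u\,)_X.\]
The two volume terms are $\leq C_W\norm{u}_M^2$ by the mapping properties recalled above (for the first one may alternatively integrate by parts once more, the boundary contribution vanishing since $\hat N^{(q)}u\in A^{0,q}(U\cap\ol M)$ by \eqref{e-gue190404scd}, obtaining $(\,\ddbar H^{(q)}_5u\,|\,\hat N^{(q)}u\,)_M$).

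The crux is the boundary term $(\,\gamma\hat\Pi^{(q)}u\,|\,H^{(q)}_1u\,)_X$: there is no trace inequality bounding $\norm{\gamma\hat\Pi^{(q)}u}_X$ by $\norm{u}_M$, so a factor-by-factor estimate is hopeless, and one must exploit the explicit shape of $H^{(q)}_1u$ coming from the proof of \eqref{e-gue190404scdI}. Namely, by \eqref{e-gue190404yydh} one has $H^{(q)}_1u=(\ddbar\rho)^{\wedge,*}\gamma w$ with $w:=F^{(q+1)}_2N^{(q+1)}\ddbar u$, which is a \emph{smooth} $(0,q+1)$-form on $U\cap\ol M$ (because $F^{(q+1)}_2$ is smoothing), supported in $K$, with $\norm{w}_{\cC^k(\ol M)}\leq C_{W,k}\norm{u}_M$. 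Applying \eqref{e-gue190328yyd} with this $w$ and $f=\hat\Pi^{(q)}u$ trades the boundary pairing for volume integrals,
\[(\,(\ddbar\rho)^{\wedge,*}\gamma w\,|\,\gamma\hat\Pi^{(q)}u\,)_X=(\,w\,|\,\ddbar\hat\Pi^{(q)}u\,)_M-(\,\ol{\pr}^*_fw\,|\,\hat\Pi^{(q)}u\,)_M=(\,w\,|\,H^{(q)}_4u\,)_M-(\,\ol{\pr}^*_fw\,|\,\hat\Pi^{(q)}u\,)_M,\]
so that $\abs{(\,\gamma\hat\Pi^{(q)}u\,|\,H^{(q)}_1u\,)_X}\leq\norm{w}_M\norm{H^{(q)}_4u}_M+\norm{\ol{\pr}^*_fw}_M\norm{\hat\Pi^{(q)}u}_M\leq C_W\norm{u}_M^2+C_W\norm{u}_M\norm{\hat\Pi^{(q)}u}_M$. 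Collecting all the estimates gives $\norm{\hat\Pi^{(q)}u}_M^2\leq C_W\norm{u}_M\norm{\hat\Pi^{(q)}u}_M+C_W\norm{u}_M^2$, and since $\norm{\hat\Pi^{(q)}u}_M<\infty$ the elementary inequality $ab\leq\tfrac12a^2+\tfrac12b^2$ yields $\norm{\hat\Pi^{(q)}u}_M\leq C_W\norm{u}_M$, as required. Thus the only genuine difficulty is the treatment of the boundary term $(\,\gamma\hat\Pi^{(q)}u\,|\,H^{(q)}_1u\,)_X$; once it is rewritten through the structure of $H^{(q)}_1$ and integrated by parts against the controllable quantities $\ddbar\hat\Pi^{(q)}u=H^{(q)}_4u$ and $\hat\Pi^{(q)}u$ itself, everything else is routine.
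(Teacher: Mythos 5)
Your skeleton is the same as the paper's (pair \eqref{e-gue190404scdz} against $\hat\Pi^{(q)}u$, use $(\ddbar\rho)^{\wedge,*}\gamma\hat\Pi^{(q)}u=0$, $\ddbar\hat\Pi^{(q)}u=H^{(q)}_4u$, $\ol{\pr}^*_f\hat\Pi^{(q)}u=H^{(q)}_5u$, then absorb), but your treatment of the boundary term is genuinely different and is correct: the paper never sees $(\gamma\hat\Pi^{(q)}u\,|\,H^{(q)}_1u)_X$ at all, because it first replaces $\hat N^{(q)}$ by a smoothing correction $N^{(q)}$ with $N^{(q)}g\in{\rm Dom\,}\Box^{(q)}$ (see \eqref{e-gue190521sydI}) and integrates by parts with no boundary contribution, whereas you keep $\hat N^{(q)}$, accept the boundary term, and kill it by using the explicit form $H^{(q)}_1u=(\ddbar\rho)^{\wedge,*}\gamma\bigl(F^{(q+1)}_2N^{(q+1)}\ddbar u\bigr)$ from \eqref{e-gue190404yydh} together with Lemma~\ref{l-gue190327ycd}. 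That part of your argument is sound.

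The genuine gap is the bound for the remaining volume term $(H^{(q)}_4u\,|\,\ddbar\hat N^{(q)}u)_M$. Your justification is that $H^{(q)}_4u$ is smooth with $\cC^k$ norms $\leq C\norm{u}_M$ while $\ddbar\hat N^{(q)}u$ lies in $H^{-1}_{\rm loc}$ with norm $\leq C\norm{u}_M$; but for the \emph{restriction} Sobolev spaces used here this pairing is not bounded by the product of those norms. The dual of $H^{-1}(U\cap\ol M)$ is (up to equivalence) the space of $H^1$ sections supported in $\ol{U\cap M}$, and $H^{(q)}_4u$ extended by zero is not in $H^1$ unless it vanishes on $X$: a family concentrating at the boundary, e.g.\ $v_\epsilon=\pr_\rho\bigl(g(\rho/\epsilon)\bigr)$ with $g$ a bump, $g(0)\neq0$, has $H^{-1}$ restriction norm $O(\sqrt{\epsilon})$ yet pairs with a smooth form nonvanishing on $X$ to a quantity bounded away from $0$. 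Your parenthetical fix rescues the companion term $(H^{(q)}_5u\,|\,\ol{\pr}^*_f\hat N^{(q)}u)_M$ because the boundary term it creates contains $(\ddbar\rho)^{\wedge,*}\gamma\hat N^{(q)}u=0$ by \eqref{e-gue190404scd}; the same move on $(H^{(q)}_4u\,|\,\ddbar\hat N^{(q)}u)_M$ instead produces $((\ddbar\rho)^{\wedge,*}\gamma H^{(q)}_4u\,|\,\gamma\hat N^{(q)}u)_X$, and the full trace $\gamma\hat N^{(q)}u$ of a merely $L^2$-controlled form is not bounded by $\norm{u}_M$. The step is repairable without changing your architecture: with $W:=(N^{(q+1)})^2\ddbar u$ (so $\norm{W}_{H^1}\leq C\norm{u}_M$ and $\ddbar\hat N^{(q)}u=\ddbar\,\ol{\pr}^*_fW$), write $\ddbar\,\ol{\pr}^*_f=\Box^{(q+1)}_f-\ol{\pr}^*_f\ddbar$, use \eqref{e-gue190419sydh} to get $\Box^{(q+1)}_fW=(I+F^{(q+1)}_2)N^{(q+1)}\ddbar u$ with $L^2$ norm $\leq C\norm{u}_M$, and integrate $\ol{\pr}^*_f$ by parts in the second piece; the boundary term now vanishes because $(\ddbar\rho)^{\wedge,*}\gamma\ddbar W=(\ddbar\rho)^{\wedge,*}\gamma\ddbar N^{(q+1)}\bigl(N^{(q+1)}\ddbar u\bigr)=0$ by \eqref{e-gue190326yydI}, leaving $(\ddbar H^{(q)}_4u\,|\,\ddbar W)_M\leq C\norm{u}_M^2$. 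With this (or with the paper's corrected $N^{(q)}$) your proof closes.
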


\begin{proof}
Let $u\in A^{0,q}(U\cap\ol M)\cap\Omega^{0,q}_0(W\cap\ol M)$. 
From \eqref{e-gue190404scdz}, we have 
\begin{equation}\label{e-gue190521syd}
\begin{split}
&(\,\hat\Pi^{(q)}u\,|\,\hat\Pi^{(q)}u\,)_M=
(\,\hat\Pi^{(q)}u\,|\,u\,)_M-(\,\hat\Pi^{(q)}u\,|\,(I-\hat\Pi^{(q)})u)_M\\
&=(\,\hat\Pi^{(q)}u\,|\,u\,)_M-
(\,\hat\Pi^{(q)}u\,|\,(\Box^{(q)}_f\hat N^{(q)}-H^{(q)}_3)u\,)_M. 
\end{split}
\end{equation}
From \eqref{e-gue190404scd} and \eqref{e-gue190404scdI}, 
we can repeat the proof of Theorem~\ref{t-gue190321myyd} 
and deduce that there is a properly supported operator 
$N^{(q)}:  \mathscr D'(U\cap M,T^{*0,q}M')\To\Omega^{0,q}(U\cap M)$ 
on $U\cap\ol M$ with
$N^{(q)}-\hat N^{(q)}\equiv0\mod\cC^\infty((U\times U)\cap(\ol M\times\ol M))$ 
such that 
\begin{equation}\label{e-gue190521sydI}
N^{(q)}g\in{\rm Dom\,}\Box^{(q)},
\end{equation}
for every $g\in A^{0,q}(U\cap\ol M)\cap\Omega^{0,q}_0(W\cap\ol M)$. 
From \eqref{e-gue190521syd}, \eqref{e-gue190521sydI}, 
\eqref{e-gue190404scdz}, \eqref{e-gue190404scdII}, 
\eqref{e-gue190520ycd}, \eqref{e-gue190520ycdI},
we have 
\begin{equation}\label{e-gue190419sydII}
\begin{split}
&(\,\hat\Pi^{(q)}u\,|\,\hat\Pi^{(q)}u\,)_M\\
&=(\,\hat\Pi^{(q)}u\,|\,u\,)_M-(\,\hat\Pi^{(q)}u\,|\,(\Box^{(q)}_f\hat N^{(q)}-
H^{(q)}_3)u\,)_M\\
&=(\,\hat\Pi^{(q)}u\,|\,u\,)_M-(\,\hat\Pi^{(q)}u\,|\,(\Box^{(q)}_fN^{(q)}-
H^{(q)}_3)u\,)_M+(\,\hat\Pi^{(q)}u\,|\,\Box^{(q)}(N^{(q)}-
\hat N^{(q)})u\,)_M\\
&=(\,\hat\Pi^{(q)}u\,|\,u\,)_M-
(\,\ddbar\hat\Pi^{(q)}u\,|\,\ddbar\,N^{(q)}u\,)_M-
(\,\ol{\pr}^*_f\hat\Pi^{(q)}u\,|\,\ol{\pr}^*N^{(q)}u\,)_M\\
&\quad+(\,\hat\Pi^{(q)}u\,|\,H^{(q)}_3u\,)_M+
(\,\hat\Pi^{(q)}u\,|\,\Box^{(q)}(N^{(q)}-\hat N^{(q)})u\,)_M\\
&=(\,\hat\Pi^{(q)}u\,|\,u\,)_M-(\,H^{(q)}_4u\,|\,\ddbar\,N^{(q)}u\,)_M-
(\,H^{(q)}_5u\,|\,\ol{\pr}^*N^{(q)}u\,)_M\\
&\quad+(\,\hat\Pi^{(q)}u\,|\,H^{(q)}_3u\,)_M+
(\,\hat\Pi^{(q)}u\,|\,\Box^{(q)}(N^{(q)}-\hat N^{(q)})u\,)_M\\
&=(\,\hat\Pi^{(q)}u\,|\,u\,)_M-
(\,u\,|\,\Bigr((H^{(q)}_4)^*\ddbar\,N^{(q)}+
(H^{(q)}_5)^*\ol{\pr}^*N^{(q)}\Bigr)u\,)_M\\
&\quad+(\,\hat\Pi^{(q)}u\,|\,H^{(q)}_3u\,)_M+
(\,\hat\Pi^{(q)}u\,|\,\Box^{(q)}(N^{(q)}-\hat N^{(q)})u\,)_M,
\end{split}
\end{equation}
where 
\[H^{(q)}_3, H^{(q)}_4, H^{(q)}_5\equiv0
\mod\cC^\infty((U\times U)\cap(\ol M\times\ol M))\]
are as in \eqref{e-gue190404scdz}, \eqref{e-gue190520ycd}, 
\eqref{e-gue190520ycdI}, and 
$(H^{(q)}_4)^*$ and $(H^{(q)}_5)^*$ are the 
formal adjoints of $H^{(q)}_4$ and $H^{(q)}_5$ respectively. 
Note that
\[\begin{split}
&(H^{(q)}_4)^*\ddbar N^{(q)}+
(H^{(q)}_5)^*\ol{\pr}^*N^{(q)}, H^{(q)}_3, 
\Box^{(q)}(N^{(q)}-\hat N^{(q)})\\
&: L^2_{{\rm loc\,}}(U\cap\ol M, T^{*0,q}M')\To 
L^2_{{\rm loc\,}}(U\cap\ol M, T^{*0,q}M'),
\end{split}\]
are continuous. From this observation and \eqref{e-gue190419sydII}, we deduce that 
\begin{equation}\label{e-gue190419sydIII}
(\,\hat\Pi^{(q)}u\,|\,\hat\Pi^{(q)}u\,)_M\leq
\hat C\Bigr(\norm{\hat\Pi^{(q)}u}_M\norm{u}_M+\norm{u}^2_M\Bigr),
\end{equation}
where $\hat C>0$ is a constant independent of $u$. 
From \eqref{e-gue190419sydIII}, we get \eqref{e-gue190520yydI}. 
\end{proof}

\begin{rem}\label{r-gue190521syda}
Since $N^{(q-1)}$ and $N^{(q+1)}$ are properly supported on 
$U\cap\ol M$, $\hat\Pi$ is properly supported on $U\cap\ol M$. Hence
for every $\chi\in\cC^\infty_0(U\cap\ol M)$, there are 
$\chi_1\in\cC^\infty_0(U\cap\ol M)$, $\chi_2\in\cC^\infty_0(U\cap\ol M)$, such that 
\[\hat\Pi^{(q)}\chi u=\chi_2\hat\Pi^{(q)}u,\ \ \mbox{for every $u\in A^{0,q}(U\cap\ol M)$},\]
and 
\[\chi\hat\Pi^{(q)}u=\hat\Pi^{(q)}\chi_1u,\ \ \mbox{for every $u\in A^{0,q}(U\cap\ol M)$}.\]
\end{rem}

From Lemma~\ref{l-gue190521yydI}, we extend 
$\hat\Pi^{(q)}$ to $L^2_{{\rm comp\,}}(U\cap\ol M,T^{*0,q}M')$ 
by density. More precisely, let $u\in L^2_{{\rm comp\,}}(U\cap\ol M,T^{*0,q}M')$. 
Suppose that ${\rm Supp\,}u\subset W$, where $W\subset U$ is an open set with 
$\ol W\Subset U$. Take any sequence 
$u_j\in A^{0,q}(U\cap\ol M)\cap\Omega^{0,q}_0(W\cap\ol M)$, 
$j=1,2,\ldots$, with $\lim_{j\To+\infty}\norm{u_j-u}_M=0$. 
Since $\hat\Pi^{(q)}$ is properly supported on $U\cap\ol M$, we have   
\begin{equation}\label{e-gue190521sydb}
\mbox{$\hat\Pi^{(q)}u:=\lim_{j\To+\infty}\hat\Pi^{(q)}u_j$ in 
$L^2_{{\rm comp\,}}(U\cap\ol M, T^{*0,q}M')$}. 
\end{equation}
By using  $\hat\Pi^{(q)}$ is properly supported on $U\cap\ol M$, 
we can extend $\hat\Pi^{(q)}$ to $L^2_{{\rm loc\,}}(U\cap\ol M,T^{*0,q}M')$ 
and we have
 \begin{equation}\label{e-gue190521sydc}
\begin{split}
&\hat\Pi^{(q)}: L^2_{{\rm comp\,}}(U\cap\ol M, T^{*0,q}M')\To 
L^2_{{\rm comp\,}}(U\cap\ol M, T^{*0,q}M'),\\
&\hat\Pi^{(q)}: L^2_{{\rm loc\,}}(U\cap\ol M, T^{*0,q}M')\To 
L^2_{{\rm loc\,}}(U\cap\ol M, T^{*0,q}M'),
\end{split}
\end{equation}
is continuous. 

\begin{lem}\label{l-gue190521sydu}
With the assumptions and notations above, let $q=n_-$. We have that 
\begin{equation}\label{e-gue190521sydu}
(\,\hat\Pi^{(q)}u\,|\,v\,)_M=(\,u\,|\,\hat\Pi^{(q)}v\,)_M+(\,u\,|\,\hat\Gamma^{(q)}_1v\,)_M,
\end{equation}
for every $u\in L^2_{{\rm comp\,}}(U\cap\ol M, T^{*0,q}M')$, 
$v\in L^2_{{\rm loc\,}}(U\cap\ol M, T^{*0,q}M')$, where 
$\hat\Gamma^{(q)}_1\equiv0
\mod\cC^\infty((U\times U)\cap(\ol M\times\ol M))$ 
is a properly supported continuous operator on $U\cap\ol M$. 
\end{lem}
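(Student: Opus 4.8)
The plan is to prove \eqref{e-gue190521sydu} first for forms that are smooth and compactly supported in the \emph{interior} $U\cap M$, where every boundary integral in \eqref{e-gue190328yyd} disappears, and then to pass to general $u\in L^2_{{\rm comp\,}}(U\cap\ol M,T^{*0,q}M')$, $v\in L^2_{{\rm loc\,}}(U\cap\ol M,T^{*0,q}M')$ by density, using the continuity of $\hat\Pi^{(q)}$ established in \eqref{e-gue190521sydc}.

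\emph{Step 1.} For $u,v\in\Omega^{0,q}_0(U\cap M)$ I would compute the formal transpose of $\hat\Pi^{(q)}=I-\ol{\pr}^*_fN^{(q+1)}\ddbar-\ddbar N^{(q-1)}\ol{\pr}^*_f$. In $(\,\ol{\pr}^*_fN^{(q+1)}\ddbar u\,|\,v\,)_M$ one moves $\ol{\pr}^*_f$ onto $v$ via \eqref{e-gue190328yyd} (the boundary term vanishes since $\gamma v=0$), then uses \eqref{e-gue190404yyda} to pass $N^{(q+1)}$ to its formal adjoint, and moves $\ddbar$ back off $u$ (again $\gamma u=0$); by \eqref{e-gue190403yyd} the result is $(\,u\,|\,\ol{\pr}^*_fN^{(q+1)}\ddbar v\,)_M+(\,u\,|\,\ol{\pr}^*_fH^{(q+1)}\ddbar v\,)_M$. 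Handling $\ddbar N^{(q-1)}\ol{\pr}^*_f$ in the same fashion and invoking \eqref{e-gue190403yyd} at level $q-1$ produces an additional term $(\,u\,|\,\ddbar H^{(q-1)}\ol{\pr}^*_f v\,)_M$. Therefore, for $u,v\in\Omega^{0,q}_0(U\cap M)$,
\[(\,\hat\Pi^{(q)}u\,|\,v\,)_M=(\,u\,|\,\hat\Pi^{(q)}v\,)_M+(\,u\,|\,\hat\Gamma^{(q)}_1v\,)_M,\qquad \hat\Gamma^{(q)}_1:=-\bigl(\ol{\pr}^*_fH^{(q+1)}\ddbar+\ddbar H^{(q-1)}\ol{\pr}^*_f\bigr).\]
Because $H^{(q\pm1)}\equiv0\mod\cC^\infty((U\times U)\cap(\ol M\times\ol M))$ and $H^{(q\pm1)}$, $N^{(q\pm1)}$ are properly supported on $U\cap\ol M$, the operator $\hat\Gamma^{(q)}_1$ is a properly supported continuous operator on $U\cap\ol M$ with $\hat\Gamma^{(q)}_1\equiv0\mod\cC^\infty((U\times U)\cap(\ol M\times\ol M))$.

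\emph{Step 2.} Forms in $\Omega^{0,q}_0(U\cap M)$ belong to $A^{0,q}(U\cap\ol M)$, so by the construction leading to \eqref{e-gue190521sydb}--\eqref{e-gue190521sydc} the continuous extension of $\hat\Pi^{(q)}$ agrees on $\Omega^{0,q}_0(U\cap M)$ with the direct action of $I-\ol{\pr}^*_fN^{(q+1)}\ddbar-\ddbar N^{(q-1)}\ol{\pr}^*_f$. Given $u\in L^2_{{\rm comp\,}}(U\cap\ol M,T^{*0,q}M')$ and $v\in L^2_{{\rm loc\,}}(U\cap\ol M,T^{*0,q}M')$, I would choose $u_j\in\Omega^{0,q}_0(U\cap M)$ with supports in a fixed compact set and $u_j\to u$ in $L^2$ (cut off a thin collar of $X$ and mollify), and $v_j\in\Omega^{0,q}_0(U\cap M)$ with $v_j\to v$ in $L^2_{{\rm loc\,}}$. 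Applying Step 1 to $(u_j,v_j)$ and letting $j\to\infty$, using that $\hat\Pi^{(q)}$ is continuous on $L^2_{{\rm comp\,}}$ and on $L^2_{{\rm loc\,}}$, that $\hat\Gamma^{(q)}_1$ is smoothing and properly supported, and that the pairings involved localize to a fixed compact set, yields \eqref{e-gue190521sydu}.

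I expect the only delicate point to be the bookkeeping of function spaces: every integration by parts must be run against a form with compact support in the interior so that the boundary integrals in \eqref{e-gue190328yyd} vanish, and \eqref{e-gue190404yyda} must be invoked only for arguments with interior compact support. Symmetrizing directly on boundary-touching forms in $A^{0,q}(U\cap\ol M)$ would leave a genuine boundary integral $(\,\gamma u\,|\,(\ddbar\rho)^{\wedge,*}\gamma H^{(q+1)}\ddbar v\,)_X$ which is not of the required form, so passing through interior forms and the density argument is essential. The remaining ingredient, consistency of $\hat\Pi^{(q)}$ with its continuous extension on $\Omega^{0,q}_0(U\cap M)$, is immediate from the construction preceding \eqref{e-gue190521sydc}.
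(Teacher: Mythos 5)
Your proposal is correct, and it follows the same basic mechanism as the paper---symmetrize $\hat\Pi^{(q)}=I-\ol{\pr}^*_fN^{(q+1)}\ddbar-\ddbar N^{(q-1)}\ol{\pr}^*_f$ on a dense class of smooth forms using \eqref{e-gue190328yyd}, \eqref{e-gue190403yyd}, \eqref{e-gue190404yyda} and the boundary property \eqref{e-gue190326yyd} of $N^{(q\pm1)}$, then pass to $L^2$ by density and proper supportedness---but with a different dense class. The paper runs the computation on $A^{0,q}(U\cap\ol M)\cap\Omega^{0,q}_0(U\cap\ol M)$, i.e.\ on boundary-touching forms satisfying $(\ddbar\rho)^{\wedge,*}\gamma u|_D=0$, and then uses the very approximants that define the extension \eqref{e-gue190521sydb}; you instead work on $\Omega^{0,q}_0(U\cap M)$, where every boundary integral in \eqref{e-gue190328yyd} vanishes for the trivial reason $\gamma u=\gamma v=0$. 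Your choice buys a cleaner argument: as you note, on $A^{0,q}$-forms the step $(\ddbar u\,|\,H^{(q+1)}\ddbar v)_M=(u\,|\,\ol{\pr}^*_fH^{(q+1)}\ddbar v)_M+(\gamma u\,|\,(\ddbar\rho)^{\wedge,*}\gamma H^{(q+1)}\ddbar v)_X$ leaves a boundary term that the condition $(\ddbar\rho)^{\wedge,*}\gamma u=0$ does not kill (both entries are tangential), and the paper's one-line justification leaves this point implicit; interior supports remove it at the source. The price is exactly the consistency check you make: since $\Omega^{0,q}_0(W\cap M)\subset A^{0,q}(U\cap\ol M)\cap\Omega^{0,q}_0(W\cap\ol M)$ and Lemma~\ref{l-gue190521yydI} makes the extension \eqref{e-gue190521sydb} independent of the approximating sequence, the extended $\hat\Pi^{(q)}$ can indeed be computed along your interior approximants, and \eqref{e-gue190521sydc} handles the $v$-side. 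One small caveat: in the limit, $\hat\Gamma^{(q)}_1=-\bigl(\ol{\pr}^*_fH^{(q+1)}\ddbar+\ddbar H^{(q-1)}\ol{\pr}^*_f\bigr)$ should be read as the properly supported operator whose kernel is obtained from those of $H^{(q\pm1)}$ by letting the differential operators act on the kernel ($x$-side directly, $y$-side by formal transposition); this coincides with the literal composition on interior-supported forms, has kernel smooth up to the boundary, and is continuous on $L^2_{{\rm loc\,}}$, which is what your limit passage uses, whereas the literal composition applied to boundary-touching smooth forms would differ from it by a trace term. Since the lemma only asserts the existence of some properly supported $\hat\Gamma^{(q)}_1\equiv0\mod\cC^\infty((U\times U)\cap(\ol M\times\ol M))$, this does not affect the conclusion.
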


\begin{proof}
From \eqref{e-gue190403yyd}, \eqref{e-gue190404yyda} 
and \eqref{e-gue190327ycdI}, we get \eqref{e-gue190521sydu} 
for $u, v\in A^{0,q}(U\cap\ol M)\cap\Omega^{0,q}_0(U\cap\ol M)$. 
By using density argument and notice that $\hat\Pi^{(q)}$ 
is properly supported on $U\cap\ol M$, we get \eqref{e-gue190521sydu}. 
\end{proof}

\begin{thm}\label{t-gue190521scda}
We have that 
\begin{equation}\label{e-gue190521scdh}
\mbox{$\chi\hat\Pi^{(q)}u\in{\rm Dom\,}\ol{\pr}^*$
for every $\chi\in\cC^\infty_0(U\cap\ol M)$, every 
$u\in L^2_{{\rm loc\,}}(U\cap\ol M,T^{*0,q}M')$}, 
\end{equation}
\begin{equation}\label{e-gue190521scdi}
\mbox{$\ddbar\hat\Pi^{(q)}u=H^{(q)}_4u$, 
for every $u\in L^2_{{\rm loc\,}}(U\cap\ol M,T^{*0,q}M')$}, 
\end{equation}
\begin{equation}\label{e-gue190521scdj}
\mbox{$\ol{\pr}^*_f\hat\Pi^{(q)}u=H^{(q)}_5u$
for every $u\in L^2_{{\rm loc\,}}(U\cap\ol M,T^{*0,q}M')$}, 
\end{equation}
\begin{equation}\label{e-gue190521scdk}
\mbox{$\Box^{(q)}_f\hat N^{(q)}u+\hat\Pi^{(q)}u=
u+H^{(q)}_3u$, for every $u\in\Omega^{0,q}(U\cap\ol M)$},
\end{equation}
where $H^{(q)}_j\equiv0\mod\cC^\infty((U\times U)\cap(\ol M\times\ol M))$, 
$j=3,4,5$, are as in Theorem~\ref{t-gue190404ycd}. 
\end{thm}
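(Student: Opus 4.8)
The plan is to deduce all four assertions from Theorem~\ref{t-gue190404ycd} by extending its identities from $A^{0,q}(U\cap\ol M)$ (and its compactly supported part) to $L^2_{{\rm loc\,}}(U\cap\ol M,T^{*0,q}M')$, using the $L^2$-continuity of $\hat\Pi^{(q)}$ (Lemma~\ref{l-gue190521yydI} and its extension \eqref{e-gue190521sydb}, \eqref{e-gue190521sydc}), the proper support of $\hat\Pi^{(q)}$ (Remark~\ref{r-gue190521syda}), the smoothness of the kernels of $H^{(q)}_3,H^{(q)}_4,H^{(q)}_5$, and the closedness of the Hilbert space operator $\ol{\pr}^*$.

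First I would dispose of \eqref{e-gue190521scdk}, which requires no limiting argument. The computation \eqref{e-gue190404yydc} uses only the differential operator intertwinings $\Box^{(q)}_f\ol{\pr}^*_f=\ol{\pr}^*_f\Box^{(q+1)}_f$ and $\Box^{(q)}_f\ddbar=\ddbar\Box^{(q-1)}_f$ on $M'$, the parametrix identity \eqref{e-gue190419sydh} (valid for all $H^s_{{\rm loc\,}}$ sections, hence for $N^{(q\pm1)}\ddbar u$ and $N^{(q\pm1)}\ol{\pr}^*_fu$ when $u\in\Omega^{0,q}(U\cap\ol M)$), and the formula $\hat\Pi^{(q)}=I-\ol{\pr}^*_fN^{(q+1)}\ddbar-\ddbar N^{(q-1)}\ol{\pr}^*_f$; none of this invokes the boundary condition $u\in A^{0,q}$. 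Hence running \eqref{e-gue190404yydc} verbatim gives \eqref{e-gue190521scdk} for every $u\in\Omega^{0,q}(U\cap\ol M)$ with the same $H^{(q)}_3=\ol{\pr}^*_fF^{(q+1)}_2N^{(q+1)}\ddbar+\ddbar F^{(q-1)}_2N^{(q-1)}\ol{\pr}^*_f$. One also checks once that the $L^2_{{\rm loc\,}}$-extension of $\hat\Pi^{(q)}$ from \eqref{e-gue190521sydb} agrees with this differential operator formula, which is immediate since both are continuous into $H^{-1}_{{\rm loc\,}}$ and coincide on the $L^2_{{\rm comp\,}}$-dense subspace $A^{0,q}(U\cap\ol M)\cap\Omega^{0,q}_0(U\cap\ol M)$.

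For \eqref{e-gue190521scdi} and \eqref{e-gue190521scdj} I would use density and localization. Since $\hat\Pi^{(q)}$, $\ddbar$, $\ol{\pr}^*_f$, $H^{(q)}_4$ and $H^{(q)}_5$ are properly supported on $U\cap\ol M$, it is enough to prove the identities for $u\in L^2_{{\rm comp\,}}(U\cap\ol M,T^{*0,q}M')$. Given such $u$ supported in $W$ with $\ol W\Subset U$, take $u_j\in A^{0,q}(U\cap\ol M)\cap\Omega^{0,q}_0(W\cap\ol M)$ as in \eqref{e-gue190521sydb} with $u_j\to u$ in $L^2$. Then $\ddbar\hat\Pi^{(q)}u_j=H^{(q)}_4u_j$ and $\ol{\pr}^*_f\hat\Pi^{(q)}u_j=H^{(q)}_5u_j$ by \eqref{e-gue190520ycd}, \eqref{e-gue190520ycdI}. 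As $j\to\infty$, $\hat\Pi^{(q)}u_j\to\hat\Pi^{(q)}u$ in $L^2_{{\rm loc\,}}$ by \eqref{e-gue190521sydb}, so the left sides converge in $\mathscr D'$; and $H^{(q)}_4u_j\to H^{(q)}_4u$, $H^{(q)}_5u_j\to H^{(q)}_5u$ in $\cC^\infty$ because $H^{(q)}_4$ and $H^{(q)}_5$ are properly supported with smooth kernels, hence continuous from $L^2_{{\rm comp\,}}$ to $\cC^\infty$. This yields \eqref{e-gue190521scdi}, \eqref{e-gue190521scdj} for $u\in L^2_{{\rm comp\,}}$, and proper support propagates them to all $u\in L^2_{{\rm loc\,}}(U\cap\ol M,T^{*0,q}M')$.

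The genuinely delicate point is \eqref{e-gue190521scdh}, where the extended $\hat\Pi^{(q)}u$ is a priori only an $L^2$ limit while membership in ${\rm Dom\,}\ol{\pr}^*$ encodes the $\ddbar$-Neumann boundary condition. I would fix $\chi\in\cC^\infty_0(U\cap\ol M)$, write $\chi\hat\Pi^{(q)}u=\hat\Pi^{(q)}(\chi_1u)$ with $\chi_1u\in L^2_{{\rm comp\,}}(U\cap\ol M,T^{*0,q}M')$ by Remark~\ref{r-gue190521syda} (extended by density), and pick $v_j\in A^{0,q}(U\cap\ol M)\cap\Omega^{0,q}_0(W\cap\ol M)$ with $v_j\to\chi_1u$ in $L^2$. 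By \eqref{e-gue190404scdII} and the proper support of $\hat\Pi^{(q)}$, each $\hat\Pi^{(q)}v_j$ is a smooth, compactly supported form on $U\cap\ol M$ with $(\ddbar\rho)^{\wedge,*}\gamma\hat\Pi^{(q)}v_j=0$ on $U\cap X$; extending it by zero to $\ol M$ it belongs to ${\rm Dom\,}\ol{\pr}^*$ with $\ol{\pr}^*(\hat\Pi^{(q)}v_j)=\ol{\pr}^*_f\hat\Pi^{(q)}v_j=H^{(q)}_5v_j$ by \eqref{e-gue190520ycdI}. Since $\hat\Pi^{(q)}v_j\to\chi\hat\Pi^{(q)}u$ in $L^2$ by \eqref{e-gue190521sydb} and $H^{(q)}_5v_j\to H^{(q)}_5(\chi_1u)$ in $L^2$ (smooth kernel), the closedness of $\ol{\pr}^*$ gives $\chi\hat\Pi^{(q)}u\in{\rm Dom\,}\ol{\pr}^*$. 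The main obstacle is exactly this step — keeping the approximants inside ${\rm Dom\,}\ol{\pr}^*$ and appealing to the closed graph of $\ol{\pr}^*$ — together with the bookkeeping needed to identify the remainders produced by the limiting and localization arguments with the fixed properly supported smoothing operators $H^{(q)}_3,H^{(q)}_4,H^{(q)}_5$ of Theorem~\ref{t-gue190404ycd}.
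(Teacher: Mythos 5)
Your proposal is correct, and for \eqref{e-gue190521scdh}, \eqref{e-gue190521scdi}, \eqref{e-gue190521scdj} it is essentially the paper's argument: the paper also takes approximants $u_j\in A^{0,q}(U\cap\ol M)\cap\Omega^{0,q}_0(U\cap\ol M)$, uses proper support to write $\chi\hat\Pi^{(q)}=\hat\Pi^{(q)}\chi_1$, and verifies the pairing $(\,\chi\hat\Pi^{(q)}u\,|\,\ddbar g\,)_M=(\,H^{(q)}_5\chi_1u\,|\,g\,)_M$ for all $g\in{\rm Dom\,}\ddbar$, which is the same content as your appeal to the closedness of the adjoint $\ol{\pr}^*$ (each $\hat\Pi^{(q)}u_j$ lies in ${\rm Dom\,}\ol{\pr}^*$ by \eqref{e-gue190404scdII} and Lemma~\ref{l-gue190327ycd}). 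The only place you genuinely deviate is \eqref{e-gue190521scdk}: the paper proves it by the double duality--density computation \eqref{e-gue190521sch}--\eqref{e-gue190723yyd}, moving $\Box^{(q)}_f$ onto the test form and using only the $L^2$-continuity of $\hat N^{(q)}$ and $\hat\Pi^{(q)}$, whereas you rerun \eqref{e-gue190404yydc} for arbitrary $u\in\Omega^{0,q}(U\cap\ol M)$ (correctly observing that neither the intertwining relations nor \eqref{e-gue190419sydh} use the boundary condition) and then reconcile the theorem's $\hat\Pi^{(q)}$, defined as the $L^2$-extension \eqref{e-gue190521sydb}, with the defining expression $I-\ol{\pr}^*_fN^{(q+1)}\ddbar-\ddbar N^{(q-1)}\ol{\pr}^*_f$ by continuity into $H^{-1}_{{\rm loc\,}}$ plus density. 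That reconciliation is legitimate, but note it leans on the negative-order Sobolev mapping property $N^{(q\pm1)}:H^{s}_{{\rm loc\,}}\To H^{s+1}_{{\rm loc\,}}$, $s\in\mathbb Z$, from Theorem~\ref{t-gue190321myyd} (to make $\ol{\pr}^*_fN^{(q+1)}\ddbar$ and $\ddbar N^{(q-1)}\ol{\pr}^*_f$ continuous from $L^2_{{\rm comp\,}}$), a hypothesis the paper's weak formulation avoids; in exchange your route identifies the remainder directly as the same $H^{(q)}_3$ without a second density step in the test form. Either way the statement follows, with equality in \eqref{e-gue190521scdk} understood, as in Remark~\ref{r-gue190404yyd}, against interior test forms, for which your identification in $\mathscr D'(U\cap M)$ suffices since both sides are locally $L^2$.
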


\begin{proof}
Let $u\in L^2_{{\rm loc\,}}(U\cap\ol M,T^{*0,q}M')$ 
and let $\chi\in\cC^\infty_0(U\cap\ol M)$. 
Since $\hat\Pi^{(q)}$ is properly supported 
on $U\cap\ol M$ (see Remark~\ref{r-gue190521syda}), 
there is a $\chi_1\in\cC^\infty(U\cap\ol M)$ such that 
$\chi\hat\Pi^{(q)}=\hat\Pi^{(q)}\chi_1$ 
on $L^2_{{\rm loc\,}}(U\cap\ol M,T^{*0,q}M')$. 
Let $g\in{\rm Dom\,}\ddbar\cap L^2_{(0,q)}(\ol M)$. 
Let $u_j\in A^{0,q}(U\cap\ol M)\cap\Omega^{0,q}_0(U\cap\ol M)$, 
$j=1,2,\ldots$, with $\lim_{j\To+\infty}\norm{u_j-\chi_1u}_M=0$. Then, 
\begin{equation}\label{e-gue190521scdy}
\begin{split}
&(\,\chi\hat\Pi^{(q)}u\,|\,\ddbar g\,)_M=
(\,\hat\Pi^{(q)}\chi_1u\,|\,\ddbar g\,)_M=
\lim_{j\To+\infty}(\,\hat\Pi^{(q)}u_j\,|\,\ddbar g\,)_M\\
&=\lim_{j\To+\infty}(\,\ol{\pr}^*\hat\Pi^{(q)}u_j\,|\,g\,)_M=
\lim_{j\To+\infty}(\,H^{(q)}_5u_j\,|\,g\,)_M=(\,H^{(q)}_5u\,|\,g)_M,
\end{split}
\end{equation}
where $H^{(q)}_5\equiv0\mod\cC^\infty((U\times U)\cap(\ol M\times\ol M))$ 
is as in \eqref{e-gue190520ycdI}. 

From \eqref{e-gue190521scdy}, we deduce that 
$\chi\hat\Pi^{(q)}u\in{\rm Dom\,}\ol{\pr}^*$, 
we get \eqref{e-gue190521scdh} and we also get 
\eqref{e-gue190521scdj}. The proof of \eqref{e-gue190521scdi} is similar. 
We now prove \eqref{e-gue190521scdk}. 

Let $u\in\Omega^{0,q}(U\cap\ol M)$ and let 
$g\in\Omega^{0,q}_0(U\cap M)$. 
Since $\hat\Pi^{(q)}$, $\hat N^{(q)}$ and $H^{(q)}_3$ 
are properly supported on $U\cap\ol M$, there is a 
$\tau\in\cC^\infty_0(U\cap\ol M)$ such that 
\begin{equation}\label{e-gue190521sch}
\begin{split}
&(\,(\Box^{(q)}_f\hat N^{(q)}+\hat\Pi^{(q)})u\,|\,g\,)_M=
(\Box^{(q)}_f\hat N^{(q)}+\hat\Pi^{(q)})\tau u\,|\,g\,)_M,\\
&(\,(I+H^{(q)}_3)u\,|\,g\,)_M=(\,(I+H^{(q)}_3)\tau u\,|\,g\,)_M.
\end{split}
\end{equation}
 Let $u_j\in A^{0,q}(U\cap\ol M)\cap\Omega^{0,q}_0(U\cap\ol M)$, 
 $j=1,2,\ldots$, with $\lim_{j\To+\infty}\norm{u_j-\tau u}_M=0$. 
 From \eqref{e-gue190404scdz} and \eqref{e-gue190521sch}, we have 
 \begin{equation}\label{e-gue190523yyd}
 \begin{split}
&(\,(\Box^{(q)}_f\hat N^{(q)}+\hat\Pi^{(q)})u\,|\,g\,)_M= 
(\,(\Box^{(q)}_f\hat N^{(q)}+\hat\Pi^{(q)})\tau u\,|\,g\,)_M\\
&=(\,\hat N^{(q)}\tau u\,|\,\Box^{(q)}_fg\,)_M+
(\,\hat\Pi^{(q)}\tau u\,|\,g\,)_M\\
&=\lim_{j\To+\infty}\Bigr((\,\hat N^{(q)}u_j\,|\,\Box^{(q)}_fg\,)_M+
(\,\hat\Pi^{(q)}u_j\,|\,g\,)_M\Bigr)\\
&=\lim_{j\To+\infty}\Bigr((\,(\Box^{(q)}_f\hat N^{(q)}+
\hat\Pi^{(q)})u_j\,|\,g\,)_M=\lim_{j\To+\infty}(\,(I+H^{(q)}_3)u_j\,|\,g\,)_M\\
&=(\,(I+H^{(q)}_3)\tau u\,|\,g\,)_M=
(\,(I+H^{(q)}_3)u\,|\,g\,)_M.
 \end{split}
 \end{equation}
 Let $h\in\Omega^{0,q}_0(U\cap\ol M)$. 
 Take $h_j\in\Omega^{0,q}_0(U\cap M)$, 
 $j=1,2,\ldots$, so that $\lim_{j\To+\infty}\norm{h_j-h}_M=0$. 
 From \eqref{e-gue190523yyd} and \eqref{e-gue190521sydc}, we have 
 \begin{equation}\label{e-gue190723yyd}
 \begin{split}
 &(\,(\Box^{(q)}_f\hat N^{(q)}+\hat\Pi^{(q)})u\,|\,h\,)_M=
 \lim_{j\To+\infty}(\,(\Box^{(q)}_f\hat N^{(q)}+
 \hat\Pi^{(q)})u\,|\,h_j\,)_M\\
 &=\lim_{j+\infty}(\,(I+H^{(q)}_3)u\,|\,h_j\,)_M=
 (\,(I+H^{(q)}_3)u\,|\,h\,)_M. 
 \end{split}
 \end{equation}
 From \eqref{e-gue190723yyd}, we get \eqref{e-gue190521scdk}. 
\end{proof}

From Theorem~\ref{t-gue190404ycd} and 
Theorem~\ref{t-gue190521scda}, we can repeat 
the procedure in the proof of Theorem~\ref{t-gue190321myyd} and conclude that 

\begin{thm}\label{t-gue190404hyyd}
With the assumptions and notations used above, let $q=n_-$. 
We can find properly supported continuous operators on $U\cap\ol M$, 
\begin{equation}\label{e-gue190417ycd}
\begin{split}
&N^{(q)}: H^s_{{\rm loc\,}}(U\cap\ol M, T^{*0,q}M')\To 
H^{s}_{{\rm loc\,}}(U\cap\ol M, T^{*0,q}M'),\ \ 
\mbox{for every $s\in\mathbb Z$},\\
&\Pi^{(q)}: L^2_{{\rm loc\,}}(U\cap\ol M, T^{*0,q}M')\To 
L^2_{{\rm loc\,}}(U\cap\ol M, T^{*0,q}M'),
\end{split}\end{equation}
such that  
\begin{equation}\label{e-gue190419yyd}
\begin{split}
&N^{(q)}-\hat N^{(q)}\equiv0\mod
\cC^\infty((U\times U)\cap(\ol M\times\ol M)),\\
&\Pi^{(q)}-\hat\Pi^{(q)}\equiv0
\mod\cC^\infty((U\times U)\cap(\ol M\times\ol M)),
\end{split}
\end{equation}
\begin{equation}\label{e-gue190404hyydII}
\begin{split}
&\mbox{$\Box^{(q)}_fN^{(q)}u+\Pi^{(q)}u=
u+R^{(q)}_0u$, for every  $u\in \Omega^{0,q}(U\cap\ol M)$},\\
&\mbox{$\Box^{(q)}_f\Pi^{(q)}u=R^{(q)}_1u$, 
for every  $u\in L^2_{{\rm loc\,}}(U\cap M)$},\\
&\mbox{$\ddbar\Pi^{(q)}u=R^{(q)}_2u$, for every 
$u\in L^2_{{\rm loc\,}}(U\cap\ol M,T^{*0,q}M')$}, \\
&\mbox{$\ol{\pr}^*_f\Pi^{(q)}u=R^{(q)}_3u$, for every $u\in L^2_{{\rm loc\,}}(U\cap\ol M,T^{*0,q}M')$},
\end{split}
\end{equation}
\begin{equation}\label{e-gue190404hyyd}
\begin{split}
&\mbox{$(\ddbar\rho)^{\wedge,*}\gamma N^{(q)}u|_D=
0$, for every $u\in\Omega^{0,q}(U\cap\ol M)$}, \\
&\mbox{$\chi\Pi^{(q)}u\in{\rm Dom\,}\ol{\pr}^*$
for every $\chi\in\cC^\infty_0(U\cap\ol M)$ and every 
$u\in L^2_{{\rm loc\,}}(U\cap\ol M,T^{*0,q}M')$},
\end{split}
\end{equation}
\begin{equation}\label{e-gue190404hyydI}
\begin{split}
&\mbox{$(\ddbar\rho)^{\wedge,*}\gamma\ddbar N^{(q)}u|_D=
0$, for every $u\in\Omega^{0,q}(U\cap\ol M)$},\\
&\mbox{$(\ddbar\rho)^{\wedge,*}\gamma\ddbar\Pi^{(q)}u|_D=
0$, for every $u\in L^2_{{\rm loc\,}}(U\cap\ol M,T^{*0,q}M')$},
\end{split}
\end{equation}
where $R^{(q)}_j: \mathscr D'(U\cap M)\To\Omega^{0,q}(U\cap M)$ 
is a properly supported continuous operator on 
$U\cap\ol M$ with $R^{(q)}_j\equiv0
\mod\cC^\infty((U\times U)\cap(\ol M\times\ol M))$, $j=0,1,2,3$. 
\end{thm}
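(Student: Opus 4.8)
The plan is to obtain $N^{(q)}$ and $\Pi^{(q)}$ from the operators $\hat N^{(q)}$ and $\hat\Pi^{(q)}$ supplied by Theorem~\ref{t-gue190404ycd} and Theorem~\ref{t-gue190521scda} by a single smoothing correction supported near $X$, mimicking the last steps \eqref{e-gue190327sydIII}, \eqref{e-gue190327scd} in the proof of Theorem~\ref{t-gue190321myyd}. Indeed, by Theorem~\ref{t-gue190521scda} we already know that $\hat N^{(q)}u\in A^{0,q}(U\cap\ol M)$ for every smooth $u$, that $\chi\hat\Pi^{(q)}u\in{\rm Dom\,}\ol{\pr}^*$ for every $\chi\in\cC^\infty_0(U\cap\ol M)$, that
\[
\ddbar\hat\Pi^{(q)}u=H^{(q)}_4u,\qquad \ol{\pr}^*_f\hat\Pi^{(q)}u=H^{(q)}_5u,\qquad \Box^{(q)}_f\hat N^{(q)}u+\hat\Pi^{(q)}u=u+H^{(q)}_3u,
\]
with $H^{(q)}_3,H^{(q)}_4,H^{(q)}_5\equiv0\mod\cC^\infty((U\times U)\cap(\ol M\times\ol M))$, whence also $\Box^{(q)}_f\hat\Pi^{(q)}u=\ddbar H^{(q)}_5u+\ol{\pr}^*_fH^{(q)}_4u\equiv0$ modulo $\cC^\infty((U\times U)\cap(\ol M\times\ol M))$. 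The only conditions in \eqref{e-gue190404hyyd}--\eqref{e-gue190404hyydI} not yet available \emph{exactly} are the two $\ddbar$-boundary identities $(\ddbar\rho)^{\wedge,*}\gamma\ddbar\hat N^{(q)}u|_D=H^{(q)}_1u$ and $(\ddbar\rho)^{\wedge,*}\gamma\ddbar\hat\Pi^{(q)}u|_D=H^{(q)}_2u$ of \eqref{e-gue190404scdI}, \eqref{e-gue190404scdIII}; these only vanish modulo $\cC^\infty((U\times U)\cap(X\times\ol M))$.

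First I would pick properly supported operators $J^{(q)},K^{(q)}:\mathscr D'(U\cap M,T^{*0,q}M')\To\Omega^{0,q}(U\cap M)$, $\equiv0\mod\cC^\infty((U\times U)\cap(\ol M\times\ol M))$, whose restriction to $D$ equals $(\ddbar\rho)^{\wedge,*}\gamma\ddbar\hat N^{(q)}$ and $(\ddbar\rho)^{\wedge,*}\gamma\ddbar\hat\Pi^{(q)}$ respectively; this is possible because by \eqref{e-gue190404scdI}, \eqref{e-gue190404scdIII} those boundary traces have Schwartz kernels smooth on $(U\times U)\cap(X\times\ol M)$, which extend to kernels smooth on $(U\times U)\cap(\ol M\times\ol M)$. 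With $\chi\in\cC^\infty(]-\varepsilon,\varepsilon[)$, $\chi\equiv1$ near $0$, I set
\[
N^{(q)}:=\hat N^{(q)}-2\chi(\rho)\rho\,J^{(q)},\qquad \Pi^{(q)}:=\hat\Pi^{(q)}-2\chi(\rho)\rho\,K^{(q)}.
\]
Then \eqref{e-gue190419yyd} holds and $N^{(q)},\Pi^{(q)}$ remain properly supported with the mapping properties \eqref{e-gue190417ycd}. The point, exactly as in \eqref{e-gue190327scd}, is that $\chi(\rho)\rho$ vanishes on $X$ while $\ddbar(\chi(\rho)\rho)|_X=\ddbar\rho$, and that $(\ddbar\rho)^{\wedge,*}\bigl((\ddbar\rho)^{\wedge,*}\gamma\ddbar\hat N^{(q)}u\bigr)=0$; combined with the identity $2(\ddbar\rho)^{\wedge,*}(\ddbar\rho)^\wedge v=v$ for $v$ with $(\ddbar\rho)^{\wedge,*}v=0$ (a consequence of \eqref{e-gue190313scsI}), this gives $(\ddbar\rho)^{\wedge,*}\gamma\ddbar N^{(q)}u|_D=0$ and $(\ddbar\rho)^{\wedge,*}\gamma\ddbar\Pi^{(q)}u|_D=0$. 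Since moreover $\chi(\rho)\rho\,J^{(q)}u$ and $\chi(\rho)\rho\,K^{(q)}u$ vanish on $X$, the conditions $(\ddbar\rho)^{\wedge,*}\gamma N^{(q)}u|_D=0$ and $\chi\Pi^{(q)}u\in{\rm Dom\,}\ol{\pr}^*$ are inherited from $\hat N^{(q)}$, $\hat\Pi^{(q)}$.

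Because $2\chi(\rho)\rho\,J^{(q)}$ and $2\chi(\rho)\rho\,K^{(q)}$, together with their images under the differential operators $\ddbar$, $\ol{\pr}^*_f$ and $\Box^{(q)}_f$, are all properly supported and $\equiv0\mod\cC^\infty((U\times U)\cap(\ol M\times\ol M))$, adding them to the identities for $\hat N^{(q)}$, $\hat\Pi^{(q)}$ recalled above yields the remaining relations of \eqref{e-gue190404hyydII}: $\ddbar\Pi^{(q)}u=R^{(q)}_2u$, $\ol{\pr}^*_f\Pi^{(q)}u=R^{(q)}_3u$, $\Box^{(q)}_f\Pi^{(q)}u=\ddbar R^{(q)}_3u+\ol{\pr}^*_fR^{(q)}_2u=R^{(q)}_1u$, and $\Box^{(q)}_fN^{(q)}u+\Pi^{(q)}u=u+R^{(q)}_0u$, all remainder operators being properly supported and $\equiv0\mod\cC^\infty((U\times U)\cap(\ol M\times\ol M))$; the $L^2_{{\rm loc\,}}$ versions follow by the density arguments already used in Theorem~\ref{t-gue190521scda}. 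The main obstacle is the bookkeeping verifying that the single correction term simultaneously (i) restores the exact $\ddbar$-boundary identity, (ii) preserves the other boundary condition and the membership in ${\rm Dom\,}\ol{\pr}^*$, and (iii) introduces only errors smooth on $(U\times U)\cap(\ol M\times\ol M)$ rather than merely on $(U\times U)\cap(X\times\ol M)$; (i) and (ii) reduce to the algebraic relations for $(\ddbar\rho)^\wedge$, $(\ddbar\rho)^{\wedge,*}$ in \eqref{e-gue190313scsI}, while (iii) uses that multiplying a Schwartz kernel by the smooth cut-off $2\chi(\rho)\rho$ and applying a first- or second-order differential operator preserve smoothness.
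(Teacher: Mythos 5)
Your proposal is correct and is essentially the paper's own argument: the paper proves Theorem~\ref{t-gue190404hyyd} by saying that, given Theorem~\ref{t-gue190404ycd} and Theorem~\ref{t-gue190521scda}, one ``repeats the procedure in the proof of Theorem~\ref{t-gue190321myyd}'', and your construction $N^{(q)}=\hat N^{(q)}-2\chi(\rho)\rho J^{(q)}$, $\Pi^{(q)}=\hat\Pi^{(q)}-2\chi(\rho)\rho K^{(q)}$ with smoothing properly supported extensions of the $\ddbar$-boundary traces is exactly that final correction step (cf.\ \eqref{e-gue190327sydIII}, \eqref{e-gue190327scd}). The verification via \eqref{e-gue190313scsI}, the vanishing of $\chi(\rho)\rho$ on $X$, and the extension of the identities to $L^2_{\rm loc}$ through \eqref{e-gue190521scdh}--\eqref{e-gue190521scdk} matches the intended proof.
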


From Lemma~\ref{l-gue190521yyd} and  Lemma~\ref{l-gue190521sydu}, we get: 

\begin{thm}\label{t-gue190523ycd}
With the assumptions and notations used above, let $q=n_-$. We have
\begin{equation}\label{e-gue190417yyd}
(\,N^{(q)}u\,|\,v\,)_M=(u\,|\,N^{(q)}v\,)_M+(\,u\,|\,\Gamma^{(q)}v\,)_M,
\end{equation}
and 
\begin{equation}\label{e-gue190419scd}
(\,\Pi^{(q)}u\,|\,v\,)_M=(u\,|\,\Pi^{(q)}v\,)_M+(\,u\,|\,\Gamma^{(q)}_1v\,)_M,
\end{equation}
for every $u\in L^2_{{\rm comp\,}}(U\cap\ol M,T^{*0,q}M')$, 
$v\in L^2_{{\rm loc\,}}(U\cap\ol M,T^{*0,q}M')$, where $N^{(q)}$ 
and $\Pi^{(q)}$ are as in Theorem~\ref{t-gue190404hyyd}, 
$\Gamma^{(q)}, \Gamma^{(q)}_1\equiv0
\mod\cC^\infty((U\times U)\cap(\ol M\times\ol M))$, 
$\Gamma^{(q)}$ and $\Gamma^{(q)}_1$ are properly 
supported on $U\cap\ol M$.
\end{thm}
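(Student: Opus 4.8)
The plan is to read off the two identities from Lemma~\ref{l-gue190521yyd} and Lemma~\ref{l-gue190521sydu}, using Theorem~\ref{t-gue190404hyyd} to replace the operators $\hat N^{(q)}$, $\hat\Pi^{(q)}$ by $N^{(q)}$, $\Pi^{(q)}$ at the cost of a properly supported smoothing error. By \eqref{e-gue190419yyd} we may write $N^{(q)}=\hat N^{(q)}+R^{(q)}$ and $\Pi^{(q)}=\hat\Pi^{(q)}+S^{(q)}$, where $R^{(q)}$ and $S^{(q)}$ are properly supported on $U\cap\ol M$ and satisfy $R^{(q)}\equiv0$, $S^{(q)}\equiv0\mod\cC^\infty((U\times U)\cap(\ol M\times\ol M))$. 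Since $N^{(q)}$, $\hat N^{(q)}$, $R^{(q)}$ are all properly supported on $U\cap\ol M$ (and likewise in the $\Pi$-family), for $u\in L^2_{{\rm comp\,}}(U\cap\ol M,T^{*0,q}M')$ and $v\in L^2_{{\rm loc\,}}(U\cap\ol M,T^{*0,q}M')$ all of the $L^2$ pairings below are well-defined.

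First I would establish \eqref{e-gue190417yyd}. Expanding and applying Lemma~\ref{l-gue190521yyd} to the $\hat N^{(q)}$-term,
\[
(\,N^{(q)}u\,|\,v\,)_M
=(\,\hat N^{(q)}u\,|\,v\,)_M+(\,R^{(q)}u\,|\,v\,)_M
=(\,u\,|\,\hat N^{(q)}v\,)_M+(\,u\,|\,\hat\Gamma^{(q)}v\,)_M+(\,u\,|\,(R^{(q)})^{*}v\,)_M,
\]
where $(R^{(q)})^{*}$ is the formal adjoint of $R^{(q)}$ with respect to $(\,\cdot\,|\,\cdot\,)_M$. Replacing $\hat N^{(q)}v$ by $N^{(q)}v-R^{(q)}v$ gives \eqref{e-gue190417yyd} with
\[
\Gamma^{(q)}:=\hat\Gamma^{(q)}-R^{(q)}+(R^{(q)})^{*}.
\]
To finish one checks that $\Gamma^{(q)}$ has the stated properties: $\hat\Gamma^{(q)}$ does by Lemma~\ref{l-gue190521yyd}, and the formal adjoint of a properly supported operator whose Schwartz kernel is smooth on $(U\times U)\cap(\ol M\times\ol M)$ is again properly supported with smooth kernel there (its kernel being essentially $R^{(q)}(y,x)$, conjugate-transposed in the bundle variable), so $(R^{(q)})^{*}\equiv0\mod\cC^\infty((U\times U)\cap(\ol M\times\ol M))$. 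The identity \eqref{e-gue190419scd} is obtained by the same computation with $(\hat N^{(q)},N^{(q)},R^{(q)},\hat\Gamma^{(q)})$ replaced by $(\hat\Pi^{(q)},\Pi^{(q)},S^{(q)},\hat\Gamma^{(q)}_1)$ and Lemma~\ref{l-gue190521sydu} used in place of Lemma~\ref{l-gue190521yyd}; here Lemma~\ref{l-gue190521sydu} is already phrased on $L^2_{{\rm comp\,}}\times L^2_{{\rm loc\,}}$ thanks to the density extension \eqref{e-gue190521sydb}--\eqref{e-gue190521sydc} of $\hat\Pi^{(q)}$, so no further approximation is needed, and one takes $\Gamma^{(q)}_1:=\hat\Gamma^{(q)}_1-S^{(q)}+(S^{(q)})^{*}$.

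There is no real obstacle: the content is entirely in the two cited lemmas, and the only point requiring care is the elementary fact that the formal adjoint of a properly supported smoothing operator on the manifold-with-boundary piece $U\cap\ol M$ is again properly supported and smoothing there, together with the bookkeeping that every pairing is legitimate because all operators are properly supported and $u$ has compact support.
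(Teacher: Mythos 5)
Your proposal is correct and follows essentially the same route as the paper: the paper deduces the theorem directly from Lemma~\ref{l-gue190521yyd} and Lemma~\ref{l-gue190521sydu} together with the relations \eqref{e-gue190419yyd}, which is exactly your decomposition $N^{(q)}=\hat N^{(q)}+R^{(q)}$, $\Pi^{(q)}=\hat\Pi^{(q)}+S^{(q)}$ with $\Gamma^{(q)}=\hat\Gamma^{(q)}-R^{(q)}+(R^{(q)})^{*}$ and its $\Pi$-analogue. Your added bookkeeping (proper supports of the differences and of their formal adjoints, legitimacy of the pairings) correctly fills in the details the paper leaves implicit.
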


We can now prove 

\begin{thm}\label{t-gue190419ycdb}
With the assumptions and notations used above, let $q=n_-$. 
Let $N^{(q)}$ and $\Pi^{(q)}$ be as in Theorem~\ref{t-gue190404hyyd}. 
We have 
\begin{equation}\label{e-gue190419ycdb}
\mbox{$\Pi^{(q)}\Box^{(q)}u=\Lambda^{(q)}_0u$ on $U\cap\ol M$, 
for every $u\in{\rm Dom\,}\Box^{(q)}$}
\end{equation}
and 
\begin{equation}\label{e-gue190419ycdc}
\mbox{$N^{(q)}\Box^{(q)}u+\Pi^{(q)}u=u+
\Lambda^{(q)}u$ on $U\cap\ol M$, for every $u\in{\rm Dom\,}\Box^{(q)}$},
\end{equation}
where $\Lambda^{(q)}_0, \Lambda^{(q)}\equiv0
\mod\cC^\infty((U\times U)\cap(\ol M\times\ol M))$, 
$\Lambda^{(q)}_0$, $\Lambda^{(q)}$ are properly supported on $U\cap\ol M$.
\end{thm}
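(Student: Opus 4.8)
The plan is to follow the proof of Theorem~\ref{t-gue190403yydI} essentially verbatim, with the parametrix $N^{(q)}$ of Theorem~\ref{t-gue190321myyd} replaced by the pair $\bigl(N^{(q)},\Pi^{(q)}\bigr)$ of Theorem~\ref{t-gue190404hyyd}, and the symmetry relations \eqref{e-gue190404yyd}--\eqref{e-gue190404yyda} replaced by \eqref{e-gue190417yyd} and \eqref{e-gue190419scd}. By Remark~\ref{r-gue190404yyd} the two identities are to be read in the weak sense, so it is enough to fix $u\in{\rm Dom\,}\Box^{(q)}$ and a test form $g\in\Omega^{0,q}_0(U\cap M)$ (note that $\Box^{(q)}u\in L^2_{(0,q)}(M)\subset L^2_{{\rm loc\,}}(U\cap\ol M,T^{*0,q}M')$) and to compute $(\,N^{(q)}\Box^{(q)}u\,|\,g\,)_M$ and $(\,\Pi^{(q)}\Box^{(q)}u\,|\,g\,)_M$ modulo expressions $(\,\Lambda u\,|\,g\,)_M$ with $\Lambda$ properly supported and smoothing on $(U\times U)\cap(\ol M\times\ol M)$.

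The one point requiring care is the following preliminary claim: \emph{for every $g\in\Omega^{0,q}_0(U\cap M)$ both $N^{(q)}g$ and $\Pi^{(q)}g$ lie in ${\rm Dom\,}\Box^{(q)}$.} Indeed, $g$ being supported in $U\cap M$ we have $g\in A^{0,q}(U\cap\ol M)$; since $N^{(q)}\equiv\hat N^{(q)}$ and $\Pi^{(q)}\equiv\hat\Pi^{(q)}$ modulo $\cC^\infty$ (see \eqref{e-gue190419yyd}) and $\hat N^{(q)}$, $\hat\Pi^{(q)}$ are assembled from the properly supported operators $N^{(q\pm 1)}$ and the differential operators $\ddbar,\ol{\pr}^*_f$ (see \eqref{e-gue190327ycd}, \eqref{e-gue190327ycdI}), the forms $N^{(q)}g$ and $\Pi^{(q)}g$ are smooth and compactly supported in $U\cap\ol M$, hence belong to $\Omega^{0,q}(\ol M)$. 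The first relations in \eqref{e-gue190404hyyd}, \eqref{e-gue190404hyydI} give $(\ddbar\rho)^{\wedge,*}\gamma N^{(q)}g|_D=0$ and $(\ddbar\rho)^{\wedge,*}\gamma\ddbar N^{(q)}g|_D=0$ (and these forms vanish near $X\setminus D$), which is precisely $N^{(q)}g\in{\rm Dom\,}\ddbar^*$ and $\ddbar N^{(q)}g\in{\rm Dom\,}\ddbar^*$; together with the smoothness of $N^{(q)}g$ and $\ol{\pr}^*_fN^{(q)}g$ this verifies \eqref{e-gue190312sydq}, so $N^{(q)}g\in{\rm Dom\,}\Box^{(q)}$. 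Likewise, the second relation in \eqref{e-gue190404hyyd}, the second relation in \eqref{e-gue190404hyydI}, and the last two identities in \eqref{e-gue190404hyydII} (which present $\ddbar\Pi^{(q)}g=R^{(q)}_2g$ and $\ol{\pr}^*_f\Pi^{(q)}g=R^{(q)}_3g$ as smooth forms) give $\Pi^{(q)}g\in{\rm Dom\,}\Box^{(q)}$. Since $\Box^{(q)}$ coincides with $\Box^{(q)}_f$ on smooth forms lying in its domain, the first two identities of \eqref{e-gue190404hyydII} become $\Box^{(q)}N^{(q)}g=g+R^{(q)}_0g-\Pi^{(q)}g$ and $\Box^{(q)}\Pi^{(q)}g=R^{(q)}_1g$.

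Granting this, \eqref{e-gue190419ycdb} comes out at once: by \eqref{e-gue190419scd} (used with $g$ in the compactly supported slot) and the self-adjointness of $\Box^{(q)}$,
\[
(\,\Pi^{(q)}\Box^{(q)}u\,|\,g\,)_M
=(\,\Box^{(q)}u\,|\,\Pi^{(q)}g\,)_M-(\,\Gamma^{(q)}_1\Box^{(q)}u\,|\,g\,)_M
=(\,u\,|\,R^{(q)}_1g\,)_M-(\,\Gamma^{(q)}_1\Box^{(q)}u\,|\,g\,)_M,
\]
so \eqref{e-gue190419ycdb} holds with $\Lambda^{(q)}_0=(R^{(q)}_1)^*-\Gamma^{(q)}_1\Box^{(q)}_f$. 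For \eqref{e-gue190419ycdc}, using \eqref{e-gue190417yyd} and self-adjointness in the same way gives
\[
(\,N^{(q)}\Box^{(q)}u\,|\,g\,)_M
=(\,\Box^{(q)}u\,|\,N^{(q)}g\,)_M-(\,\Gamma^{(q)}\Box^{(q)}u\,|\,g\,)_M
=(\,u\,|\,g+R^{(q)}_0g-\Pi^{(q)}g\,)_M-(\,\Gamma^{(q)}\Box^{(q)}u\,|\,g\,)_M,
\]
and one more use of \eqref{e-gue190419scd} to replace $(\,u\,|\,\Pi^{(q)}g\,)_M$ by $(\,\Pi^{(q)}u\,|\,g\,)_M+(\,\Gamma^{(q)}_1u\,|\,g\,)_M$, followed by collecting the remainders, yields $(\,N^{(q)}\Box^{(q)}u+\Pi^{(q)}u\,|\,g\,)_M=(\,u+\Lambda^{(q)}u\,|\,g\,)_M$ with $\Lambda^{(q)}=(R^{(q)}_0)^*-\Gamma^{(q)}_1-\Gamma^{(q)}\Box^{(q)}_f$. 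In both cases the resulting error operators are properly supported and smoothing on $(U\times U)\cap(\ol M\times\ol M)$: $(R^{(q)}_0)^*$, $(R^{(q)}_1)^*$ and $\Gamma^{(q)}_1$ are so by Theorems~\ref{t-gue190404hyyd} and \ref{t-gue190523ycd}, while $\Gamma^{(q)}_1\Box^{(q)}_f$ and $\Gamma^{(q)}\Box^{(q)}_f$ are compositions of a smoothing operator with the differential operator $\Box^{(q)}_f$, hence again smoothing.

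I expect the only genuine obstacle to be the preliminary claim, i.e.\ verifying that $N^{(q)}g$ and $\Pi^{(q)}g$ satisfy \emph{both} $\ddbar$-Neumann boundary conditions of \eqref{e-gue190312sydq}; this is exactly where the boundary relations carefully propagated through Theorems~\ref{t-gue190321myyd}, \ref{t-gue190404ycd} and \ref{t-gue190404hyyd} — and where the hypothesis $q=n_-$, which guarantees the existence of $\Pi^{(q)}$ — come into play. The remaining steps, namely the integrations by parts and the identification of the accumulated remainders with a single properly supported smoothing operator, are the same bookkeeping already performed for Theorem~\ref{t-gue190403yydI}, so they present no new difficulty.
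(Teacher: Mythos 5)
Your proposal is correct and follows essentially the same route as the paper: pairing against test forms $g\in\Omega^{0,q}_0(U\cap M)$, integrating by parts via the boundary relations \eqref{e-gue190404hyyd}, \eqref{e-gue190404hyydI} (which indeed give $N^{(q)}g,\Pi^{(q)}g\in{\rm Dom\,}\Box^{(q)}$), and invoking \eqref{e-gue190404hyydII} together with the adjoint identities \eqref{e-gue190417yyd}, \eqref{e-gue190419scd}. The only differences are cosmetic — you apply the adjoint relations with the compactly supported form in the slot where they are stated, so your error operators $\Lambda^{(q)}_0,\Lambda^{(q)}$ take a slightly different (but equally smoothing and properly supported) form, and you make explicit the domain-membership step that the paper leaves implicit.
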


\begin{proof}
Let $u\in{\rm Dom\,}\Box^{(q)}$ and let 
$v\in\Omega^{0,q}_0(U\cap M)$. From \eqref{e-gue190404hyyd}, 
\eqref{e-gue190404hyydI}, \eqref{e-gue190404hyydII} and 
\eqref{e-gue190419scd}, we have 
\begin{equation}\label{e-gue190419ycdd}
\begin{split}
&(\,\Pi^{(q)}\Box^{(q)}u\,|\,v\,)_M=
(\,\Box^{(q)}u\,|\,\Pi^{(q)}v\,)_M+(\,\Box^{(q)}u\,|\,\Gamma^{(q)}_1v\,)_M\\
&=(\,u\,|\,\Box^{(q)}\Pi^{(q)}v\,)_M+(\,\Box^{(q)}u\,|\,\Gamma^{(q)}_1v\,)_M\\
&=(\,u\,|\,R^{(q)}_1v\,)_M+(\,\Box^{(q)}u\,|\,\Gamma^{(q)}_1v\,)_M\\
&=(\,\bigr((R^{(q)}_1)^*+(\Gamma^{(q)}_1)^*\Box^{(q)}_f\bigr)u\,|\,v\,)_M, 
\end{split}
\end{equation}
where $R^{(q)}_1$, $\Gamma^{(q)}_1$ are as in 
\eqref{e-gue190404hyydII} and \eqref{e-gue190419scd} respectively and 
$(R^{(q)}_1)^*$ and $(\Gamma^{(q)}_1)^*$ are the formal adjoints 
of $R^{(q)}_1$ and $\Gamma^{(q)}_1$ with respect to 
$(\,\cdot\,|\,\cdot\,)_M$ respectively. It is clear that 
$(R^{(q)}_1)^*+(\Gamma^{(q)}_1)^*\Box^{(q)}_f\equiv0
\mod\cC^\infty((U\times U)\cap(\ol M\times\ol M))$. 
From this observation and \eqref{e-gue190419ycdd}, 
we get \eqref{e-gue190419ycdb}. 

Let $u\in{\rm Dom\,}\Box^{(q)}$and let $v\in\Omega^{0,q}_0(U\cap M)$. 
From \eqref{e-gue190404hyyd}, \eqref{e-gue190404hyydI}, \eqref{e-gue190404hyydII}, 
\eqref{e-gue190419scd} and \eqref{e-gue190417yyd}, we have 
\begin{equation}\label{e-gue190419ycde}
\begin{split}
&(\,N^{(q)}\Box^{(q)}u+\Pi^{(q)}u\,|\,v\,)_M\\
&=(\,\Box^{(q)}u\,|\,N^{(q)}v\,)_M+(\,\Box^{(q)}u\,|\,\Gamma^{(q)}v\,)_M+(\,u\,|\,\Pi^{(q)}v\,)_M+(\,u\,|\,\Gamma^{(q)}_1v\,)_M\\
&=(\,u\,|\,\Box^{(q)}N^{(q)}v\,)_M+(\,(\Gamma^{(q)})^*\Box^{(q)}_fu\,|\,v\,)_M+(\,u\,|\,\Pi^{(q)}v\,)_M+(\,u\,|\,\Gamma^{(q)}_1v\,)_M\\
&=(\,u\,|\,(\Box^{(q)}N^{(q)}+\Pi^{(q)})v\,)_M+(\,(\Gamma^{(q)})^*\Box^{(q)}_fu\,|\,v\,)_M+(\,u\,|\,\Gamma^{(q)}_1v\,)_M\\
&=(\,u\,|\,R^{(q)}_0v\,)_M+(\,(\Gamma^{(q)})^*\Box^{(q)}_fu\,|\,v\,)_M+(\,u\,|\,\Gamma^{(q)}_1)v\,)_M\\
&=(\,((R^{(q)}_0+\Gamma^{(q)}_1)^*+(\Gamma^{(q)})^*\Box^{(q)}_f)u\,|\,v\,)_M, 
\end{split}
\end{equation}
where $R^{(q)}_0$, $\Gamma^{(q)}$, $\Gamma^{(q)}_1$ are as in \eqref{e-gue190404hyydII}, \eqref{e-gue190417yyd} and \eqref{e-gue190419scd} respectively, 
$(\Gamma^{(q)})^*$ is the formal adjoint of $\Gamma^{(q)}$ with respect to  $(\,\cdot\,|\,\cdot\,)_M$ 
and $(R^{(q)}_0+\Gamma^{(q)}_1)^*$ is the formal adjoint of $R^{(q)}_0+\Gamma^{(q)}_1$ with respect to $(\,\cdot\,|\,\cdot\,)_M$. It is clear that 
$(\Gamma^{(q)})^*\Box^{(q)}_f\equiv0\mod\cC^\infty((U\times U)\cap(\ol M\times\ol M))$ and 
$(R^{(q)}_0+\Gamma^{(q)}_1)^*\equiv0\mod\cC^\infty((U\times U)\cap(\ol M\times\ol M))$. From this observation and \eqref{e-gue190419ycde}, 
we get \eqref{e-gue190419ycdc}. 
\end{proof}

In the rest of this section, we will study regularity property and distribution kernel of $\Pi^{(q)}$. Let $[\,\cdot\,|\,\cdot\,]_X$ be the $L^2$ inner product on $H^{-\frac{1}{2}}(X,T^{*0,q}M')$ given by 
\begin{equation}\label{e-gue190515syd}
[\,u\,|\,v\,]_X:=(\,\tilde Pu\,|\,\tilde Pv\,)_M,\ \ u, v\in H^{-\frac{1}{2}}(X, T^{*0,q}M').
\end{equation}
Recall that $\tilde P$ is the Poisson operator given by \eqref{e-gue190312scdIII}. 
Let $\tilde P^*: \Omega^{0,q}(\ol M)\To\cC^\infty(X,T^{*0,q}M')$ be as in \eqref{e-gue190515yyda}. Then, 
\[\tilde P^*\tilde P: \cC^\infty(X, T^{*0,q}M')\To\cC^\infty(X, T^{*0,q}M')\]
is an injective continuous operator. Let 
\[(\tilde P^*\tilde P)^{-1}: \cC^\infty(X, T^{*0,q}M')\To\cC^\infty(X, T^{*0,q}M')\]
be the inverse of $\tilde P^*\tilde P$. It is well-known that (see~\cite{B71}) $(\tilde P^*\tilde P)^{-1}$ is a classical pseudodifferential operator of order one on $X$. 

Let 
\[{\rm Ker\,}(\ddbar\rho)^{\wedge,*}:=\set{u\in H^{-\frac{1}{2}}(X, T^{*0,q}M');\, (\ddbar\rho)^{\wedge,*}u=0}.\]
It is easy to see that ${\rm Ker\,}(\ddbar\rho)^{\wedge,*}=H^{-\frac{1}{2}}(X,T^{*0,q}X)$. Let 
\begin{equation}\label{e-gue190514yyd}
Q^{(q)}: H^{-\frac{1}{2}}(X,T^{*0,q}M')\To{\rm Ker\,}(\ddbar\rho)^{\wedge,*}
\end{equation}
be the orthogonal projection with respect to $[\,\cdot\,|\,\cdot\,]_X$. The following is well-known (see~\cite{Hsiao08})

\begin{thm}\label{t-gue190514yyd}
We have that $Q^{(q)}$ is a classical pseudodifferential operator of order $0$ with principal symbol $2(\ddbar\rho)^{\wedge,*}(\ddbar\rho)^{\wedge}$. 
Moreover, 
\begin{equation}\label{e-gue190514yydI}
I-Q^{(q)}=(\tilde P^*\tilde P)^{-1}(\ddbar\rho)^{\wedge}R,
\end{equation}
where $R: \cC^\infty(X, T^{*0,q}M')\To\cC^\infty(X,T^{*0,q-1}M')$ is a classical pseudodifferential operator of order $-1$. 
\end{thm}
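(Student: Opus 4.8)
The plan is to realize $Q^{(q)}$ explicitly in terms of $\tilde P^*\tilde P$ and the pointwise operators $(\ddbar\rho)^{\wedge}$, $(\ddbar\rho)^{\wedge,*}$, and then read off the order and the principal symbol. Write $G:=\tilde P^*\tilde P$; by \eqref{e-gue190515syd} and the definition of $\tilde P^*$ we have $[\,u\,|\,v\,]_X=(\,Gu\,|\,v\,)_X$, so $G$ is a $(\,\cdot\,|\,\cdot\,)_X$-self-adjoint positive classical pseudodifferential operator on $X$ of order $-1$, and by the cited result of Boutet de Monvel \cite{B71} its inverse $G^{-1}=(\tilde P^*\tilde P)^{-1}$ is a classical pseudodifferential operator of order $1$; moreover, since $\tilde\Box^{(q)}_f$ is of Laplace type (scalar principal symbol), the Poisson-operator calculus of \cite{B71} gives that the principal symbol of $G^{-1}$ is a positive multiple of the identity endomorphism. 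First I would identify the $[\,\cdot\,|\,\cdot\,]_X$-orthogonal complement of $V:={\rm Ker\,}(\ddbar\rho)^{\wedge,*}=H^{-\frac12}(X,T^{*0,q}X)$: a form $w$ is $[\,\cdot\,|\,\cdot\,]_X$-orthogonal to $V$ iff $Gw$ is pointwise $\langle\,\cdot\,|\,\cdot\,\rangle$-orthogonal to $T^{*0,q}X$, i.e.\ iff $Gw$ lies in $(\ddbar\rho)^{\wedge}\cC^\infty(X,T^{*0,q-1}X)$ (the orthogonal decomposition underlying \eqref{e-gue190313scs}, \eqref{e-gue190313scsI}), so $V^{\perp}=G^{-1}(\ddbar\rho)^{\wedge}\cC^\infty(X,T^{*0,q-1}X)$.

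Since ${\rm Ran\,}(I-Q^{(q)})=V^{\perp}$, one expects $I-Q^{(q)}=G^{-1}(\ddbar\rho)^{\wedge}R$; to pin down $R$, note that $Q^{(q)}u\in V$ forces $(\ddbar\rho)^{\wedge,*}u=(\ddbar\rho)^{\wedge,*}G^{-1}(\ddbar\rho)^{\wedge}Ru$. This suggests setting $R:=R_1(\ddbar\rho)^{\wedge,*}$ with $R_1:=\bigl((\ddbar\rho)^{\wedge,*}G^{-1}(\ddbar\rho)^{\wedge}\bigr)^{-1}$, the inverse taken on $\cC^\infty(X,T^{*0,q-1}X)$. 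The operator $(\ddbar\rho)^{\wedge,*}G^{-1}(\ddbar\rho)^{\wedge}$ is a classical pseudodifferential operator of order $1$ on $T^{*0,q-1}X$; by \eqref{e-gue190313scs}--\eqref{e-gue190313scsI} one has $(\ddbar\rho)^{\wedge,*}(\ddbar\rho)^{\wedge}=\frac12 I$ on $T^{*0,q-1}X$, so its principal symbol is a positive scalar multiple of the identity and it is elliptic; being in addition $(\,\cdot\,|\,\cdot\,)_X$-self-adjoint and positive (because $(\,(\ddbar\rho)^{\wedge,*}G^{-1}(\ddbar\rho)^{\wedge}h\,|\,h\,)_X=(\,G^{-1}(\ddbar\rho)^{\wedge}h\,|\,(\ddbar\rho)^{\wedge}h\,)_X>0$ for $h\neq0$, using $G^{-1}$ positive and $(\ddbar\rho)^{\wedge}$ injective on $T^{*0,q-1}X$), it is invertible, and $R_1$ is a classical pseudodifferential operator of order $-1$; hence $R=R_1(\ddbar\rho)^{\wedge,*}$ is a classical pseudodifferential operator of order $-1$ mapping $\cC^\infty(X,T^{*0,q}M')$ into $\cC^\infty(X,T^{*0,q-1}X)\subset\cC^\infty(X,T^{*0,q-1}M')$.

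It then remains to check that $\Td Q:=I-G^{-1}(\ddbar\rho)^{\wedge}R$ really equals $Q^{(q)}$: a short computation gives $(\ddbar\rho)^{\wedge,*}\Td Qu=(\ddbar\rho)^{\wedge,*}u-(\ddbar\rho)^{\wedge,*}u=0$ (so ${\rm Ran\,}\Td Q\subset V$), $\Td Qu=u$ for $u\in V$ (so $\Td Q$ is idempotent with range exactly $V$), and, using self-adjointness of $G$ and of $R_1$, that $\Td Q$ is self-adjoint for $[\,\cdot\,|\,\cdot\,]_X$; a self-adjoint idempotent onto $V$ is the orthogonal projection onto $V$, hence $\Td Q=Q^{(q)}$, which is \eqref{e-gue190514yydI}. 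Counting orders ($G^{-1}$ of order $1$, $(\ddbar\rho)^{\wedge}$ of order $0$, $R$ of order $-1$) shows $Q^{(q)}$ is a classical pseudodifferential operator of order $0$, and computing its principal symbol with the scalar factors from $G^{-1}$ and $R_1$ cancelling, together with \eqref{e-gue190313scsI}, gives $\sigma_0(Q^{(q)})=I-2(\ddbar\rho)^{\wedge}(\ddbar\rho)^{\wedge,*}=2(\ddbar\rho)^{\wedge,*}(\ddbar\rho)^{\wedge}$. The step requiring most care is the symbolic input on the Poisson operator, namely that $G^{-1}=(\tilde P^*\tilde P)^{-1}$ has scalar principal symbol: this is what makes $(\ddbar\rho)^{\wedge,*}G^{-1}(\ddbar\rho)^{\wedge}$ elliptic with the clean symbol above and ultimately forces the principal symbol of $Q^{(q)}$ to be the pointwise orthogonal projection onto $T^{*0,q}X$, and it is here that one invokes the Boutet de Monvel calculus \cite{B71} for the Laplace-type operator $\tilde\Box^{(q)}_f$.
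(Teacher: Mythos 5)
Your proposal is correct: the paper gives no proof of this theorem (it cites it as well known from \cite{Hsiao08}), and your construction --- solving $(\ddbar\rho)^{\wedge,*}u=(\ddbar\rho)^{\wedge,*}(\tilde P^*\tilde P)^{-1}(\ddbar\rho)^{\wedge}Ru$ by inverting the elliptic, positive, formally self-adjoint first-order operator $(\ddbar\rho)^{\wedge,*}(\tilde P^*\tilde P)^{-1}(\ddbar\rho)^{\wedge}$ on sections of $T^{*0,q-1}X$, then checking that $I-(\tilde P^*\tilde P)^{-1}(\ddbar\rho)^{\wedge}R$ is a $[\,\cdot\,|\,\cdot\,]_X$-self-adjoint idempotent with range ${\rm Ker\,}(\ddbar\rho)^{\wedge,*}$ --- is essentially the standard argument behind the cited result. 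The one external input you invoke, that $(\tilde P^*\tilde P)^{-1}$ is a classical pseudodifferential operator of order one with scalar positive principal symbol (it is $2\sqrt{-\triangle_X}$ up to lower order terms), is indeed furnished by the Boutet de Monvel calculus \cite{B71}, and with it your order count and the computation $\sigma_0(Q^{(q)})=I-2(\ddbar\rho)^{\wedge}(\ddbar\rho)^{\wedge,*}=2(\ddbar\rho)^{\wedge,*}(\ddbar\rho)^{\wedge}$ via \eqref{e-gue190313scsI} go through as stated.
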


Let $u\in\Omega^{0,q}_0(U\cap M)$. From Theorem~\ref{t-gue190321myyd}, \eqref{e-gue190327ycdI} and Theorem~\ref{t-gue190404hyyd}, we see that 
$\Pi^{(q)}u\in\Omega^{0,q}_0(U\cap\ol M)$ and $\gamma\Pi^{(q)}u\in\cC^\infty(X,T^{*0,q}M')$.
We need

\begin{thm}\label{t-gue190418ycd}
With the assumptions and notations used before, we have 
\begin{equation}\label{e-gue190418ycd}
\Pi^{(q)}u=\tilde P\gamma\Pi^{(q)}u+\epsilon^{(q)}u,\ \ \mbox{for every $u\in\Omega^{0,q}_0(U\cap M)$}, 
\end{equation}
where $\epsilon^{(q)}\equiv0\mod\cC^\infty((U\times U)\cap(\ol M\times\ol M))$. 
\end{thm}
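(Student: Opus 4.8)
The plan is to use the microlocal Hodge decomposition from Theorem~\ref{t-gue190404hyyd} together with the key fact that $\Pi^{(q)}u$ is annihilated by $\Box^{(q)}_f$ modulo smoothing. Recall from \eqref{e-gue190404hyydII} that $\Box^{(q)}_f\Pi^{(q)}u=R^{(q)}_1u$ with $R^{(q)}_1\equiv0\mod\cC^\infty$, and from \eqref{e-gue190404hyyd} that $(\ddbar\rho)^{\wedge,*}\gamma\Pi^{(q)}u|_D=0$ (so $\gamma\Pi^{(q)}u$ takes values in $T^{*0,q}X$, i.e.\ lies in the kernel of $(\ddbar\rho)^{\wedge,*}$). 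First I would apply the identity \eqref{e-gue190418yyda}, namely $D^{(q)}\tilde\Box^{(q)}_f+\tilde P\gamma=I$ on $\Omega^{0,q}(\ol M)$, to $v:=\Pi^{(q)}u$ for $u\in\Omega^{0,q}_0(U\cap M)$. This gives
\[
\Pi^{(q)}u=D^{(q)}\tilde\Box^{(q)}_f\Pi^{(q)}u+\tilde P\gamma\Pi^{(q)}u
=D^{(q)}\bigl(\Box^{(q)}_f+K^{(q)}\bigr)\Pi^{(q)}u+\tilde P\gamma\Pi^{(q)}u,
\]
and since $\Box^{(q)}_f\Pi^{(q)}u=R^{(q)}_1u$ is smoothing and $K^{(q)}\equiv0\mod\cC^\infty$, the term $D^{(q)}(\Box^{(q)}_f+K^{(q)})\Pi^{(q)}u$ is of the form $\epsilon^{(q)}u$ with $\epsilon^{(q)}\equiv0\mod\cC^\infty((U\times U)\cap(\ol M\times\ol M))$; here one uses that $D^{(q)}$ gains two derivatives by \eqref{e-gue190429yyd} and that $R^{(q)}_1$, $K^{(q)}$ are smoothing.

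The one technical point needing care is that the global identity \eqref{e-gue190418yyda} holds on $\Omega^{0,q}(\ol M)$ (forms defined on all of $\ol M$), while $\Pi^{(q)}u$ is only defined on $U\cap\ol M$. Since $\Pi^{(q)}$ is properly supported on $U\cap\ol M$, for a fixed $u\in\Omega^{0,q}_0(U\cap M)$ the form $\Pi^{(q)}u$ has compact support in $U\cap\ol M$, hence extends by zero to an element of $\Omega^{0,q}(\ol M)$; one applies \eqref{e-gue190418yyda} to this extension. Similarly $\gamma\Pi^{(q)}u\in\cC^\infty(X,T^{*0,q}M')$ is compactly supported in $D$, and one should check—using Lemma~\ref{l-gue190429yyd} (smoothing of $\tau\tilde P\tau_1$ off the diagonal) and the proper support of $\Pi^{(q)}$—that replacing the global Poisson term $\tilde P\gamma\Pi^{(q)}u$ by its restriction to $U\cap\ol M$ only introduces another smoothing error absorbed into $\epsilon^{(q)}$. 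The localization bookkeeping (choosing cutoffs $\chi,\chi_1,\chi_2$ adapted to the proper support) is exactly the kind of routine argument already carried out repeatedly in Section~\ref{s-gue190531yyd} and Section~\ref{s-gue190321myyd}, so I would cite those patterns rather than repeat them.

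Thus the proof reduces to: (1) extend $\Pi^{(q)}u$ by zero; (2) apply \eqref{e-gue190418yyda}; (3) use \eqref{e-gue190404hyydII} and the smoothing of $K^{(q)}$, $R^{(q)}_1$ to see the $D^{(q)}\tilde\Box^{(q)}_f$-term is smoothing in $(x,y)$ on $(U\times U)\cap(\ol M\times\ol M)$; (4) handle the mismatch between global and local Poisson operators via Lemma~\ref{l-gue190429yyd} and proper support. The main obstacle—though still fairly mild—is step (4): verifying that the difference between $\tilde P(\gamma\Pi^{(q)}u)$ computed globally versus the composition $\tilde P$ applied after localizing near $D$ is smoothing, which hinges on $\Pi^{(q)}$ being properly supported on $U\cap\ol M$ (established in the discussion around \eqref{e-gue190521sydc} and Remark~\ref{r-gue190521syda}) and on the off-diagonal smoothing of $\tilde P$. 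Everything else is a direct consequence of the operator identities already available in the excerpt.
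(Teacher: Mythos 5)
Your proposal is correct and essentially identical to the paper's proof: since $\Pi^{(q)}$ is properly supported, $\Pi^{(q)}u\in\Omega^{0,q}_0(U\cap\ol M)\subset\Omega^{0,q}(\ol M)$, so one applies \eqref{e-gue190418yyda} to it and uses $\Box^{(q)}_f\Pi^{(q)}=R^{(q)}_1$ from \eqref{e-gue190404hyydII}, the fact that $\tilde\Box^{(q)}_f-\Box^{(q)}_f\equiv0\mod\cC^\infty(\ol M\times\ol M)$, and the mapping property \eqref{e-gue190429yyd} of $D^{(q)}$ to conclude $D^{(q)}\tilde\Box^{(q)}_f\Pi^{(q)}\equiv0\mod\cC^\infty((U\times U)\cap(\ol M\times\ol M))$. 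Your step (4) is an unnecessary precaution: the conclusion \eqref{e-gue190418ycd} involves the global Poisson operator $\tilde P$ applied to $\gamma\Pi^{(q)}u$ directly, so no comparison with a localized Poisson operator is required.
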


\begin{proof}
Let $u\in\Omega^{0,q}_0(U\cap M)$. Since $\Pi^{(q)}$ is properly supported on $U\cap\ol M$, 
\[\Pi^{(q)}u\in\Omega^{0,q}_0(U\cap\ol M)\subset\Omega^{0,q}(\ol M).\]
From \eqref{e-gue190418yyda}, we have 
\begin{equation}\label{e-gue190418yydb}
D^{(q)}\tilde \Box_f^{(q)}\Pi^{(q)}u+\tilde P\gamma\Pi^{(q)}u=\Pi^{(q)}u.
\end{equation}
From \eqref{e-gue190404hyydII} and $\tilde\Box^{(q)}_f-\Box^{(q)}_f\equiv0\mod\cC^\infty(\ol M\times\ol M)$, we see that 
$D^{(q)}\tilde\Box^{(q)}_f\Pi^{(q)}\equiv0\mod\cC^\infty((U\times U)\cap(\ol M\times\ol M))$. From this observation and \eqref{e-gue190418yydb}, we get 
\eqref{e-gue190418ycd}. 
\end{proof}

From \eqref{e-gue190418ycd}, we have 
\begin{equation}\label{e-gue190418scdh}
\begin{split}
&(\tilde P^*\tilde P)^{-1}\tilde P^*\Pi^{(q)}u=(\tilde P^*\tilde P)^{-1}\tilde P^*\tilde P\gamma\Pi^{(q)}u+(\tilde P^*\tilde P)^{-1}\tilde P^*\epsilon^{(q)}u\\
&=\gamma\Pi^{(q)}u+(\tilde P^*\tilde P)^{-1}\tilde P^*\epsilon^{(q)}u
\end{split}
\end{equation}
and 
\begin{equation}\label{e-gue190524yydI}
\Pi^{(q)}u=\tilde P(\tilde P^*\tilde P)^{-1}\tilde P^*\Pi^{(q)}u+\epsilon^{(q)}_1u,
\end{equation}
for every $u\in\Omega^{0,q}_0(U\cap M)$, where $\epsilon^{(q)}_1=-\tilde P(\tilde P^*\tilde P)^{-1}\tilde P^*\epsilon^{(q)}u+\epsilon^{(q)}\equiv0\mod\cC^\infty((U\times U)\cap(\ol M\times\ol M))$. 
From \eqref{e-gue190313ad} and \eqref{e-gue190515yyd}, we see that $\tilde P(\tilde P^*\tilde P)^{-1}\tilde P^*\Pi^{(q)}$ is well-defined as a continuous operator 
\[\tilde P(\tilde P^*\tilde P)^{-1}\tilde P^*\Pi^{(q)}: L^2_{{\rm comp\,}}(U\cap\ol M, T^{*0,q}M')\To L^2_{{\rm loc\,}}(U\cap\ol M, T^{*0,q}M').\]
From this observation, \eqref{e-gue190524yydI} and by using density argument, we conclude that 
\begin{equation}\label{e-gue190524yydII}
\Pi^{(q)}-\tilde P(\tilde P^*\tilde P)^{-1}\tilde P^*\Pi^{(q)}\equiv0\mod\cC^\infty((U\times U)\cap(\ol M\times\ol M)). 
\end{equation}
Similarly, from \eqref{e-gue190313ad} and \eqref{e-gue190515yyd}, we see that $\Pi^{(q)}\tilde P(\tilde P^*\tilde P)^{-1}\tilde P^*$ is well-defined as a continuous operator 
\[\Pi^{(q)}\tilde P(\tilde P^*\tilde P)^{-1}\tilde P^*: L^2(\ol M, T^{*0,q}M')\To L^2_{{\rm loc\,}}(U\cap\ol M, T^{*0,q}M').\]
We need 

\begin{lem}\label{l-gue190524yyd}
With the assumptions and notations used before, we have 
\[\Pi^{(q)}\tilde P(\tilde P^*\tilde P)^{-1}\tilde P^*-\Pi^{(q)}\equiv0\mod\cC^\infty((U\times U)\cap(\ol M\times\ol M)).\]
\end{lem}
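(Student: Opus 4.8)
The plan is to deduce the identity from its already-established "left" counterpart \eqref{e-gue190524yydII} together with the near-self-adjointness of $\Pi^{(q)}$, by a short formal-adjoint computation. Write $A:=\tilde P(\tilde P^*\tilde P)^{-1}\tilde P^*$; the excerpt already records that $\Pi^{(q)}A$ and $A\Pi^{(q)}$ are well-defined continuous operators into $L^2_{{\rm loc\,}}(U\cap\ol M,T^{*0,q}M')$. First I would note that \emph{$A$ is formally self-adjoint for $(\,\cdot\,|\,\cdot\,)_M$}: $\tilde P$ and $\tilde P^*$ are mutual formal adjoints for the pairings $(\,\cdot\,|\,\cdot\,)_M$ and $(\,\cdot\,|\,\cdot\,)_X$, and $\tilde P^*\tilde P$ is injective and formally self-adjoint, hence so is its inverse, so $A^*=A$; moreover, by \eqref{e-gue190312scdIII}, the mapping property of $\tilde P^*$ recorded after \eqref{e-gue190515yyd}, and $(\tilde P^*\tilde P)^{-1}$ preserving $\cC^\infty(X)$, the operator $A$ maps $\Omega^{0,q}(\ol M)$ into itself. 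Second, by Theorem~\ref{t-gue190523ycd} (see \eqref{e-gue190419scd}), the formal adjoint of $\Pi^{(q)}$ equals $\Pi^{(q)}+\Gamma^{(q)}_1$, where $\Gamma^{(q)}_1\equiv0\mod\cC^\infty((U\times U)\cap(\ol M\times\ol M))$ and is properly supported on $U\cap\ol M$.

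With these two facts the formal adjoint of $\Pi^{(q)}A-\Pi^{(q)}$ with respect to $(\,\cdot\,|\,\cdot\,)_M$ is
\[
\bigl(\Pi^{(q)}A-\Pi^{(q)}\bigr)^{*}=A\bigl(\Pi^{(q)}+\Gamma^{(q)}_1\bigr)-\bigl(\Pi^{(q)}+\Gamma^{(q)}_1\bigr)=\bigl(A\Pi^{(q)}-\Pi^{(q)}\bigr)+(A-I)\Gamma^{(q)}_1 .
\]
The first term on the right is $\equiv0\mod\cC^\infty((U\times U)\cap(\ol M\times\ol M))$ by \eqref{e-gue190524yydII}. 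For the second, $\Gamma^{(q)}_1$ is a smoothing, properly supported operator, so it maps $L^2_{{\rm comp\,}}(U\cap\ol M,T^{*0,q}M')$ into $\Omega^{0,q}(\ol M)$; applying in turn $\tilde P^*$, $(\tilde P^*\tilde P)^{-1}$ and $\tilde P$ stays within $\cC^\infty$, and the dependence on the source variable is smooth because $\Gamma^{(q)}_1$ is smoothing, whence $A\Gamma^{(q)}_1$, and so $(A-I)\Gamma^{(q)}_1$, is $\equiv0\mod\cC^\infty((U\times U)\cap(\ol M\times\ol M))$. Therefore $\bigl(\Pi^{(q)}A-\Pi^{(q)}\bigr)^{*}\equiv0\mod\cC^\infty((U\times U)\cap(\ol M\times\ol M))$, and taking the formal adjoint once more — which leaves the smoothness of the Schwartz kernel on $(U\times U)\cap(\ol M\times\ol M)$ invariant — yields $\Pi^{(q)}A-\Pi^{(q)}\equiv0$, which is the assertion.

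The main obstacle is not the algebra but the functional-analytic bookkeeping around it: one has to ensure that the compositions $\Pi^{(q)}A$, $A\Pi^{(q)}$ and $A\Gamma^{(q)}_1$ are genuinely defined as continuous operators between the appropriate local $L^2$ (or $\cC^\infty$) spaces, and that the formal-adjoint identities and the passage between an operator and its Schwartz kernel on $(U\times U)\cap(\ol M\times\ol M)$ are legitimate. This is exactly where the regularity \eqref{e-gue190313ad}, \eqref{e-gue190515yyd} of the Poisson operator and its adjoint, the ellipticity of $\tilde P^*\tilde P$, and the proper support of $\Pi^{(q)}$ and $\Gamma^{(q)}_1$ enter; once these are in place the proof reduces to the computation above.
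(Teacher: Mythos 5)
Your argument is correct and is essentially the paper's own proof: the paper proves the lemma by the same duality computation — pairing $\Pi^{(q)}u$ against test forms, invoking \eqref{e-gue190419scd}, \eqref{e-gue190524yydI} and the formal self-adjointness of $\tilde P(\tilde P^*\tilde P)^{-1}\tilde P^*$, and absorbing the compositions involving $\Gamma^{(q)}_1$ and $\epsilon^{(q)}_1$ into smoothing errors — which is precisely your identity $(\Pi^{(q)}A-\Pi^{(q)})^{*}\equiv(A\Pi^{(q)}-\Pi^{(q)})+(A-I)\Gamma^{(q)}_1$ unrolled at the level of inner products. The only difference is presentational: you take formal adjoints twice at the operator level, while the paper carries out the same manipulation in a single chain of equalities in \eqref{e-gue190524yydIII}.
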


\begin{proof}
Let $u\in L^2_{(0,q)}(\ol M)$ and let $v\in\Omega^{0,q}_0(U\cap\ol M)$. From \eqref{e-gue190419scd} and \eqref{e-gue190524yydI}, we have 
\begin{equation}\label{e-gue190524yydIII}
\begin{split}
&(\,\Pi^{(q)}u\,|\,v\,)_M=(\,u\,|\,\Pi^{(q)}v\,)_M+(\,u\,|\,\Gamma^{(q)}_1v\,)_M\\
&=(\,u\,|\,\tilde P(\tilde P^*\tilde P)^{-1}\tilde P^*\Pi^{(q)}v\,)_M+(\,u\,|\,\epsilon^{(q)}_1v\,)_M+(\,u\,|\,\Gamma^{(q)}_1v\,)_M\\
&=(\,\tilde P(\tilde P^*\tilde P)^{-1}\tilde P^*u\,|\,\Pi^{(q)}v\,)_M+(\,u\,|\,\epsilon^{(q)}_1v\,)_M+(\,u\,|\,\Gamma^{(q)}_1v\,)_M\\
&=(\,\Pi^{(q)}\tilde P(\tilde P^*\tilde P)^{-1}\tilde P^*u\,|\,v\,)_M-(\,\tilde P(\tilde P^*\tilde P)^{-1}\tilde P^*u\,|\,\Gamma^{(q)}_1v\,)_M+(\,u\,|\,\epsilon^{(q)}_1v\,)_M+(\,u\,|\,\Gamma^{(q)}_1v\,)_M\\
&=(\,\Pi^{(q)}\tilde P(\tilde P^*\tilde P)^{-1}\tilde P^*u\,|\,v\,)_M-(\,(\Gamma^{(q)}_1)^*\tilde P(\tilde P^*\tilde P)^{-1}\tilde P^*u\,|\,v\,)_M+(\,(\epsilon^{(q)}_1+\Gamma^{(q)}_1)^*u\,|\,v\,)_M,
\end{split}
\end{equation}
where $(\Gamma^{(q)}_1)^*$ and $(\epsilon^{(q)}_1+\Gamma^{(q)}_1)^*$ are the formal adjoints of $\Gamma^{(q)}_1$ and $\epsilon^{(q)}_1+\Gamma^{(q)}_1$ respectively. 
Note that 
\[(\Gamma^{(q)}_1)^*\tilde P(\tilde P^*\tilde P)^{-1}\tilde P^*, (\epsilon^{(q)}_1+\Gamma^{(q)}_1)^*\equiv0\mod\cC^\infty((U\times U)\cap(\ol M\times\ol M)).\]
From this observation and \eqref{e-gue190524yydIII}, the lemma follows. 
\end{proof}

We can prove 

\begin{thm}\label{t-gue190524ycd}
With the assumptions and notations used before, we have 
\begin{equation}\label{e-gue190524ycd}
\Pi^{(q)}-\Pi^{(q)}\tilde PQ^{(q)}(\tilde P^*\tilde P)^{-1}\tilde P^*\equiv0\mod\cC^\infty((U\times U)\cap(\ol M\times\ol M))
\end{equation}
and 
\begin{equation}\label{e-gue190524ycdI}
\Pi^{(q)}-\tilde PQ^{(q)}(\tilde P^*\tilde P)^{-1}\tilde P^*\Pi^{(q)}\equiv0\mod\cC^\infty((U\times U)\cap(\ol M\times\ol M)).
\end{equation}
\end{thm}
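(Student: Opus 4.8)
The plan is to combine the two ``absorption'' identities already established, namely Lemma~\ref{l-gue190524yyd} and \eqref{e-gue190524yydII}, with the structure theorem for $Q^{(q)}$ in Theorem~\ref{t-gue190514yyd}. Recall that $I-Q^{(q)}=(\tilde P^*\tilde P)^{-1}(\ddbar\rho)^\wedge R$ with $R$ a classical $\Psi$DO of order $-1$ from $\cC^\infty(X,T^{*0,q}M')$ to $\cC^\infty(X,T^{*0,q-1}M')$. The key point is that the operator
\[
\tilde P(I-Q^{(q)})(\tilde P^*\tilde P)^{-1}\tilde P^*
=\tilde P(\tilde P^*\tilde P)^{-1}(\ddbar\rho)^\wedge R(\tilde P^*\tilde P)^{-1}\tilde P^*
\]
has range in the ideal generated by $(\ddbar\rho)^\wedge$ after applying $\tilde P$; more precisely, for $u$ in the relevant domain, $(\ddbar\rho)^\wedge R(\tilde P^*\tilde P)^{-1}\tilde P^* u$ is a smooth section of the form $(\ddbar\rho)^\wedge(\text{something})$, and since $\Pi^{(q)}$ satisfies $(\ddbar\rho)^{\wedge,*}\gamma\Pi^{(q)}u|_D=0$ (by \eqref{e-gue190404hyyd}) together with the regularity/boundary properties in Theorem~\ref{t-gue190404hyyd}, composing $\Pi^{(q)}$ on the right with $\tilde P$ applied to such a section produces a smoothing operator. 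I will make this precise using the boundary pairing identity \eqref{e-gue190328yyd} and the fact that $\Pi^{(q)}$ maps into $A^{0,q}(U\cap\ol M)$.

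First I would prove \eqref{e-gue190524ycdI}. Starting from \eqref{e-gue190524yydII}, write
\[
\Pi^{(q)}\equiv \tilde P(\tilde P^*\tilde P)^{-1}\tilde P^*\Pi^{(q)}
= \tilde P Q^{(q)}(\tilde P^*\tilde P)^{-1}\tilde P^*\Pi^{(q)}
+ \tilde P(I-Q^{(q)})(\tilde P^*\tilde P)^{-1}\tilde P^*\Pi^{(q)}
\]
modulo $\cC^\infty((U\times U)\cap(\ol M\times\ol M))$. It then suffices to show the last term is smoothing. Using Theorem~\ref{t-gue190514yyd}, this term equals $\tilde P(\tilde P^*\tilde P)^{-1}(\ddbar\rho)^\wedge R(\tilde P^*\tilde P)^{-1}\tilde P^*\Pi^{(q)}$; I would argue that $(\tilde P^*\tilde P)^{-1}\tilde P^*\Pi^{(q)}u$ is, by \eqref{e-gue190418scdh}, equal to $\gamma\Pi^{(q)}u$ modulo a smoothing operator, and then $\gamma\Pi^{(q)}u$ is a smooth section lying in $\Omega^{0,q}(X)$ (i.e., annihilated by $(\ddbar\rho)^{\wedge,*}$), so by \eqref{e-gue190313scs} it is fixed by $2(\ddbar\rho)^{\wedge,*}(\ddbar\rho)^\wedge$, whence $Q^{(q)}\gamma\Pi^{(q)}u=\gamma\Pi^{(q)}u$ modulo smoothing and $(I-Q^{(q)})\gamma\Pi^{(q)}u\equiv0$. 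Feeding this back gives $\tilde P(I-Q^{(q)})(\tilde P^*\tilde P)^{-1}\tilde P^*\Pi^{(q)}\equiv0$, which yields \eqref{e-gue190524ycdI}.

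For \eqref{e-gue190524ycd} I would run the dual argument: by Lemma~\ref{l-gue190524yyd}, $\Pi^{(q)}\equiv\Pi^{(q)}\tilde P(\tilde P^*\tilde P)^{-1}\tilde P^*$, and inserting $I=Q^{(q)}+(I-Q^{(q)})$ reduces matters to showing $\Pi^{(q)}\tilde P(I-Q^{(q)})(\tilde P^*\tilde P)^{-1}\tilde P^*\equiv0$. Here I would use Theorem~\ref{t-gue190523ycd}, i.e.\ the near-self-adjointness \eqref{e-gue190419scd} of $\Pi^{(q)}$, to transfer the computation to the adjoint side: $(\,\Pi^{(q)}\tilde P(I-Q^{(q)})(\tilde P^*\tilde P)^{-1}\tilde P^* u\,|\,v\,)_M$ becomes, modulo smoothing, $(\,\tilde P(I-Q^{(q)})(\tilde P^*\tilde P)^{-1}\tilde P^* u\,|\,\Pi^{(q)} v\,)_M$, and then use \eqref{e-gue190328yyd} together with $\gamma\Pi^{(q)}v\in\Omega^{0,q}(X)$ and the explicit form $(I-Q^{(q)})=(\tilde P^*\tilde P)^{-1}(\ddbar\rho)^\wedge R$ — noting $(\ddbar\rho)^{\wedge,*}(\ddbar\rho)^\wedge$ acting appropriately kills the pairing against something in $T^{*0,q}X$ — to conclude the pairing is given by a smoothing kernel. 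I expect the main obstacle to be the bookkeeping of domains and the justification that all the compositions of $\tilde P$, $(\tilde P^*\tilde P)^{-1}$, $\tilde P^*$ with $\Pi^{(q)}$ are well-defined on $L^2_{\rm comp}/L^2_{\rm loc}$ (so that the density arguments behind \eqref{e-gue190524yydII} and Lemma~\ref{l-gue190524yyd} apply), and the careful use of \eqref{e-gue190313ad} and \eqref{e-gue190515yyd} for this; the algebraic heart, once \eqref{e-gue190514yydI} and the boundary identity \eqref{e-gue190313scs} are in hand, is short.
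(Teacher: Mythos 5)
Your proposal is correct in substance and uses the same circle of ideas as the paper (Lemma~\ref{l-gue190524yyd}, \eqref{e-gue190524yydII}, the near-self-adjointness \eqref{e-gue190419scd}, the Poisson representation \eqref{e-gue190418ycd}--\eqref{e-gue190418scdh}, and the projection $Q^{(q)}$), but the routing differs in one interesting way. The paper first proves $\Pi^{(q)}\tilde P(I-Q^{(q)})(\tilde P^*\tilde P)^{-1}\tilde P^*\equiv0$ by the duality computation you describe for \eqref{e-gue190524ycd} (this is \eqref{e-gue190524syda}), and then obtains the other vanishing $\tilde P(I-Q^{(q)})(\tilde P^*\tilde P)^{-1}\tilde P^*\Pi^{(q)}\equiv0$ by a \emph{second} duality computation that transfers it back to \eqref{e-gue190524syda}, using the self-adjointness of $Q^{(q)}$ with respect to $[\,\cdot\,|\,\cdot\,]_X$. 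You instead prove the second vanishing \emph{directly}: by \eqref{e-gue190418scdh}, $(\tilde P^*\tilde P)^{-1}\tilde P^*\Pi^{(q)}u=\gamma\Pi^{(q)}u$ modulo a smoothing term, and $(I-Q^{(q)})\gamma\Pi^{(q)}u=0$, so only the smoothing remainder survives. This is a legitimate shortcut that avoids the paper's computation \eqref{e-gue190524sydb}, while your argument for \eqref{e-gue190524ycd} coincides with the paper's proof of \eqref{e-gue190524syda}.

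Two justifications should be repaired, though neither is a genuine gap. First, the step ``$\gamma\Pi^{(q)}u$ is fixed by $2(\ddbar\rho)^{\wedge,*}(\ddbar\rho)^\wedge$, hence $Q^{(q)}\gamma\Pi^{(q)}u=\gamma\Pi^{(q)}u$ modulo smoothing'' does not follow from the symbol alone (that would only give an error of one order lower); the correct and stronger statement is exact: since $Q^{(q)}$ is by definition \eqref{e-gue190514yyd} the orthogonal projection onto ${\rm Ker\,}(\ddbar\rho)^{\wedge,*}$ with respect to $[\,\cdot\,|\,\cdot\,]_X$ and $(\ddbar\rho)^{\wedge,*}\gamma\Pi^{(q)}u|_D=0$, one has $(I-Q^{(q)})\gamma\Pi^{(q)}u=0$ identically. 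Second, in your argument for \eqref{e-gue190524ycd} the integration-by-parts identity \eqref{e-gue190328yyd} is not what does the work; what is needed is the relation \eqref{e-gue190515syd} between $[\,\cdot\,|\,\cdot\,]_X$ and $(\,\cdot\,|\,\cdot\,)_M$, so that after replacing $\Pi^{(q)}v$ by $\tilde P\gamma\Pi^{(q)}v$ via \eqref{e-gue190418ycd} the pairing becomes $[\,(I-Q^{(q)})(\tilde P^*\tilde P)^{-1}\tilde P^*u\,|\,\gamma\Pi^{(q)}v\,]_X$, which vanishes either by the orthogonality of ${\rm Ran\,}(I-Q^{(q)})$ to ${\rm Ker\,}(\ddbar\rho)^{\wedge,*}$ or, as you indicate, by the explicit formula \eqref{e-gue190514yydI} and moving $(\ddbar\rho)^\wedge$ across to hit $(\ddbar\rho)^{\wedge,*}\gamma\Pi^{(q)}v=0$. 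The remaining bookkeeping (proper supports, the mapping properties \eqref{e-gue190313ad}, \eqref{e-gue190515yyd}, \eqref{e-gue190521sydc}, and density) is exactly as in the paper.
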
 

\begin{proof}
Let $u\in L^2_{(0,q)}(\ol M)$ and let $v\in\Omega^{0,q}_0(U\cap M)$. From \eqref{e-gue190419scd} and \eqref{e-gue190418ycd}, we have 
\begin{equation}\label{e-gue190524ycdII}
\begin{split}
&(\,\Pi^{(q)}\tilde P(I-Q^{(q)})(\tilde P^*\tilde P)^{-1}\tilde P^*u\,|\,v\,)_M\\
&=(\,\tilde P(I-Q^{(q)})(\tilde P^*\tilde P)^{-1}\tilde P^*u\,|\,\Pi^{(q)}v\,)_M+(\,\tilde P(I-Q^{(q)})(\tilde P^*\tilde P)^{-1}\tilde P^*u\,|\,\Gamma^{(q)}_1v\,)_M\\
&=(\,\tilde P(I-Q^{(q)})(\tilde P^*\tilde P)^{-1}\tilde P^*u\,|\,\tilde P\gamma\Pi^{(q)}v\,)_M+(\,\tilde P(I-Q^{(q)})(\tilde P^*\tilde P)^{-1}\tilde P^*u\,|\,\epsilon^{(q)}v\,)_M\\
&\quad+(\,\tilde P(I-Q^{(q)})(\tilde P^*\tilde P)^{-1}\tilde P^*u\,|\,\Gamma^{(q)}_1v\,)_M\\
&=[\,(I-Q^{(q)})(\tilde P^*\tilde P)^{-1}\tilde P^*u\,|\,\gamma\Pi^{(q)}v\,]_X+(\,(\epsilon^{(q)})^*\tilde P(I-Q^{(q)})(\tilde P^*\tilde P)^{-1}\tilde P^*u\,|\,v\,)_M\\
&\quad+(\,(\Gamma^{(q)}_1)^*\tilde P(I-Q^{(q)})(\tilde P^*\tilde P)^{-1}\tilde P^*u\,|\,v\,)_M,
\end{split}
\end{equation}
where $(\epsilon^{(q)})^*$ and $(\Gamma^{(q)}_1)^*$ are the formal adjoints of $\epsilon^{(q)}$ and $\Gamma^{(q)}_1$ respectively. 
From \eqref{e-gue190404hyydI}, we see that $[\,(I-Q^{(q)})(\tilde P^*\tilde P)^{-1}\tilde P^*u\,|\,\gamma\Pi^{(q)}v\,]_X=0$. From this observation, \eqref{e-gue190524ycdII} and notice that 
\[(\epsilon^{(q)})^*\tilde P(I-Q^{(q)})(\tilde P^*\tilde P)^{-1}\tilde P^*, (\Gamma^{(q)}_1)^*\tilde P(I-Q^{(q)})(\tilde P^*\tilde P)^{-1}\tilde P^*\equiv0\mod\cC^\infty((U\times U)\cap(\ol M\times\ol M)),\]
we get 
\begin{equation}\label{e-gue190524syda}
\Pi^{(q)}\tilde P(I-Q^{(q)})(\tilde P^*\tilde P)^{-1}\tilde P^*\equiv0\mod\cC^\infty((U\times U)\cap(\ol M\times\ol M)).
\end{equation}
From \eqref{e-gue190524syda} and Lemma~\ref{l-gue190524yyd}, we get \eqref{e-gue190524ycd}. 

Let $u\in L^2_{{\rm comp\,}}(\ol M)$ and let $v\in\Omega^{0,q}_0(U\cap M)$. From \eqref{e-gue190419scd}, we have 
\begin{equation}\label{e-gue190524sydb}
\begin{split}
&(\,\tilde P(I-Q^{(q)})(\tilde P^*\tilde P)^{-1}\tilde P^*\Pi^{(q)}u\,|\,v\,)_M\\
&=(\,(I-Q^{(q)})(\tilde P^*\tilde P)^{-1}\tilde P^*\Pi^{(q)}u\,|\,\tilde P^*v\,)_X\\
&=(\,(I-Q^{(q)})(\tilde P^*\tilde P)^{-1}\tilde P^*\Pi^{(q)}u\,|\,(\tilde P^*\tilde P)(\tilde P^*\tilde P)^{-1}\tilde P^*v\,)_X\\
&=[\,(I-Q^{(q)})(\tilde P^*\tilde P)^{-1}\tilde P^*\Pi^{(q)}u\,|\,(\tilde P^*\tilde P)^{-1}\tilde P^*v\,]_X\\
&=[\,(\tilde P^*\tilde P)^{-1}\tilde P^*\Pi^{(q)}u\,|\,(I-Q^{(q)})(\tilde P^*\tilde P)^{-1}\tilde P^*v\,]_X\\
&=(\,\Pi^{(q)}u\,|\,\tilde P(I-Q^{(q)})(\tilde P^*\tilde P)^{-1}\tilde P^*v\,)_M\\
&=(\,u\,|\,\Pi^{(q)}\tilde P(I-Q^{(q)})(\tilde P^*\tilde P)^{-1}\tilde P^*v\,)_M+(\,u\,|\,\Gamma^{(q)}_1\tilde P(I-Q^{(q)})(\tilde P^*\tilde P)^{-1}\tilde P^*v\,)_M.
\end{split}
\end{equation}
From \eqref{e-gue190524sydb} and \eqref{e-gue190524syda}, we deduce that 
\[\tilde P(I-Q^{(q)})(\tilde P^*\tilde P)^{-1}\tilde P^*\Pi^{(q)}\equiv0\mod\cC^\infty((U\times U)\cap(\ol M\times\ol M).\]
From this observation and \eqref{e-gue190524yydII}, we get \eqref{e-gue190524ycdI}.
\end{proof}

We can now prove the following regularity property for $\Pi^{(q)}$. 

\begin{thm}\label{t-gue190524sydh}
With the assumptions and notations used before, $\Pi^{(q)}$ can be continuously extend to 
\begin{equation}\label{e-gue190524sydh}
\begin{split}
&\Pi^{(q)}: H^s_{{\rm loc\,}}(U\cap\ol M, T^{*0,q}M')\To H^{s-1}_{{\rm loc\,}}(U\cap\ol M, T^{*0,q}M'),\ \ \mbox{for every $s\in\mathbb Z$},\\
&\Pi^{(q)}: H^s_{{\rm comp\,}}(U\cap\ol M, T^{*0,q}M')\To H^{s-1}_{{\rm comp\,}}(U\cap\ol M, T^{*0,q}M'),\ \ \mbox{for every $s\in\mathbb Z$}.
\end{split}
\end{equation}
\end{thm}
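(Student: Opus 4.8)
The plan is to derive the regularity of $\Pi^{(q)}$ from an explicit parametrix formula rather than from its construction as an $L^2$ operator. By \eqref{e-gue190419yyd} we have $\Pi^{(q)}\equiv\hat\Pi^{(q)}\mod\cC^\infty((U\times U)\cap(\ol M\times\ol M))$, and this smoothing remainder is properly supported, hence maps $H^s_{{\rm loc\,}}(U\cap\ol M,T^{*0,q}M')$ into $\cC^\infty(U\cap\ol M,T^{*0,q}M')\subset H^{s-1}_{{\rm loc\,}}$; so it suffices to treat $\hat\Pi^{(q)}$. Since $q=n_-$ and the Levi form is non-degenerate of constant signature on $D=U\cap X$, Theorem~\ref{t-gue190531syd} applies in degrees $q+1$ and $q-1$ (both $\neq n_-$) and provides properly supported continuous operators $N^{(q\pm1)}\colon H^s_{{\rm loc\,}}(U\cap\ol M,T^{*0,q\pm1}M')\to H^{s+1}_{{\rm loc\,}}(U\cap\ol M,T^{*0,q\pm1}M')$ for every $s\in\mathbb Z$, together with \eqref{e-gue190419sydh}, i.e.\ $\Box^{(q\pm1)}_fN^{(q\pm1)}u=u+F^{(q\pm1)}_2u$ with $F^{(q\pm1)}_2$ smoothing.

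Next I would establish that, on $\Omega^{0,q}(U\cap\ol M)$ and modulo a properly supported operator that is smoothing on $U\cap\ol M$,
\[
\Pi^{(q)}u=u-\ol{\pr}^*_fN^{(q+1)}\ddbar u-\ddbar N^{(q-1)}\ol{\pr}^*_fu .
\]
Indeed, \eqref{e-gue190521scdk} gives $\hat\Pi^{(q)}u=u-\Box^{(q)}_f\hat N^{(q)}u-H^{(q)}_3u$ on $\Omega^{0,q}(U\cap\ol M)$, while \eqref{e-gue190327ycd} together with the commutation relations $\Box^{(q)}_f\ol{\pr}^*_f=\ol{\pr}^*_f\Box^{(q+1)}_f$ and $\Box^{(q)}_f\ddbar=\ddbar\Box^{(q-1)}_f$ on $M'$ yields
\[
\Box^{(q)}_f\hat N^{(q)}=\ol{\pr}^*_f\Box^{(q+1)}_f(N^{(q+1)})^2\ddbar+\ddbar\Box^{(q-1)}_f(N^{(q-1)})^2\ol{\pr}^*_f .
\]
Using \eqref{e-gue190419sydh} one has $\Box^{(q\pm1)}_f(N^{(q\pm1)})^2=(\Box^{(q\pm1)}_fN^{(q\pm1)})N^{(q\pm1)}=(I+F^{(q\pm1)}_2)N^{(q\pm1)}\equiv N^{(q\pm1)}$, and substituting this, together with $\Pi^{(q)}\equiv\hat\Pi^{(q)}$, gives the displayed formula. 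This is the step requiring the most care: the point is that the a priori loss of two derivatives in $\Box^{(q)}_f\hat N^{(q)}$ collapses to a loss of exactly one, precisely through the identity $\Box^{(q\pm1)}_f(N^{(q\pm1)})^2\equiv N^{(q\pm1)}$.

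Finally, I would read off the mapping property from the right-hand side. The operators $\ddbar$ and $\ol{\pr}^*_f$ are first order differential operators on $M'$, hence map $H^s_{{\rm loc\,}}(U\cap\ol M)\to H^{s-1}_{{\rm loc\,}}(U\cap\ol M)$, while each $N^{(q\pm1)}$ gains one derivative by Step~1; composing, each of $\ol{\pr}^*_fN^{(q+1)}\ddbar$ and $\ddbar N^{(q-1)}\ol{\pr}^*_f$ is a properly supported continuous operator $H^s_{{\rm loc\,}}\to H^{s-1}_{{\rm loc\,}}$, and being properly supported it also acts continuously $H^s_{{\rm comp\,}}\to H^{s-1}_{{\rm comp\,}}$. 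Adding the identity and the properly supported smoothing remainder, the right-hand side of the displayed formula defines a properly supported continuous operator $H^s_{{\rm loc\,}}(U\cap\ol M,T^{*0,q}M')\to H^{s-1}_{{\rm loc\,}}(U\cap\ol M,T^{*0,q}M')$ (and likewise on the ${\rm comp\,}$ scale) which agrees with $\Pi^{(q)}$ on $\Omega^{0,q}(U\cap\ol M)$; since it differs from the $L^2_{{\rm loc\,}}$ operator $\Pi^{(q)}$ of Theorem~\ref{t-gue190404hyyd} by a smoothing operator, it is the sought continuous extension, which proves \eqref{e-gue190524sydh}.
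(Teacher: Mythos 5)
Your proof is correct, but it takes a genuinely different route from the paper's. The paper derives the statement from Theorem~\ref{t-gue190524ycd}: it writes $\Pi^{(q)}u=\Pi^{(q)}\tilde PQ^{(q)}(\tilde P^*\tilde P)^{-1}\tilde P^*u+\gamma^{(q)}u$ with $\gamma^{(q)}$ smoothing, notes that $\tilde PQ^{(q)}(\tilde P^*\tilde P)^{-1}\tilde P^*u\in A^{0,q}(U\cap\ol M)$ so that on this range $\Pi^{(q)}$ is literally the defining formula $I-\ol{\pr}^*_fN^{(q+1)}\ddbar-\ddbar N^{(q-1)}\ol{\pr}^*_f$ of \eqref{e-gue190327ycdI}, and then gets the loss of one derivative from that formula while the factor $\tilde PQ^{(q)}(\tilde P^*\tilde P)^{-1}\tilde P^*$ is of order zero by \eqref{e-gue190313ad}, \eqref{e-gue190515yyd} and Theorem~\ref{t-gue190514yyd}. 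You bypass the Poisson-operator factorization entirely: starting from \eqref{e-gue190521scdk}, which is valid for every $u\in\Omega^{0,q}(U\cap\ol M)$ with $\hat\Pi^{(q)}$ the density-extended operator, and using $\Box^{(q)}_f\ol{\pr}^*_f=\ol{\pr}^*_f\Box^{(q+1)}_f$, $\Box^{(q)}_f\ddbar=\ddbar\Box^{(q-1)}_f$ and $\Box^{(q\pm1)}_f(N^{(q\pm1)})^2\equiv N^{(q\pm1)}$ from \eqref{e-gue190419sydh}, you show that $\Pi^{(q)}\equiv I-\ol{\pr}^*_fN^{(q+1)}\ddbar-\ddbar N^{(q-1)}\ol{\pr}^*_f$ modulo a properly supported smoothing operator on all smooth forms; this is in effect the computation \eqref{e-gue190404yydc} rerun without the restriction $u\in A^{0,q}(U\cap\ol M)$, and the point that makes this legitimate is precisely \eqref{e-gue190521scdk}. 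Your route is more self-contained at this step (no boundary calculus for $\tilde P$, $\tilde P^*$, $Q^{(q)}$, $(\tilde P^*\tilde P)^{-1}$ is needed), whereas the paper's route reuses Theorem~\ref{t-gue190524ycd}, which it needs anyway for the later identification of $\Pi^{(q)}$ with $\tilde PS_-L^{(q)}$; both arguments rely on the same implicit convention that properly supported operators with kernels in $\cC^\infty((U\times U)\cap(\ol M\times\ol M))$ act continuously on the whole Sobolev scale, including negative orders, and the sign you wrote for $H^{(q)}_3$ is off (it should be $+H^{(q)}_3u$ on the right of \eqref{e-gue190521scdk}), which is harmless since it is a smoothing remainder.
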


\begin{proof}
Let $u\in\Omega^{0,q}_0(U\cap\ol M)$. From \eqref{e-gue190524ycd}, we see that 
\begin{equation}\label{e-gue190524sydj}
\Pi^{(q)}u=\Pi^{(q)}\tilde PQ^{(q)}(\tilde P^*\tilde P)^{-1}\tilde P^*u+\gamma^{(q)}u,
\end{equation}
where $\gamma^{(q)}: \Omega^{0,q}_0(U\cap M)\To\mathscr D'(U\cap M, T^{*0,q}M')$ is a continuous opeator with $\gamma^{(q)}\equiv0\mod\cC^\infty((U\times U)\cap(\ol M\times\ol M))$. From Theorem~\ref{t-gue190321myyd}, \eqref{e-gue190327ycdI}, Theorem~\ref{t-gue190404hyyd} and notice that 
\[\tilde PQ^{(q)}(\tilde P^*\tilde P)^{-1}\tilde P^*u\in A^{0,q}(U\cap\ol M),\]
we conclude that 
\begin{equation}\label{e-gue190524sydk}
\Pi^{(q)}u=(I-\ol{\pr}^*_fN^{(q+1)}\ddbar-\ddbar N^{(q-1)}\ol{\pr}^*_f)\tilde PQ^{(q)}(\tilde P^*\tilde P)^{-1}\tilde P^*u+\gamma^{(q)}_1u,
\end{equation}
where $\gamma^{(q)}_1: \Omega^{0,q}_0(U\cap M)\To\mathscr D'(U\cap M, T^{*0,q}M')$ is a continuous operator with $\gamma^{(q)}_1\equiv0\mod\cC^\infty((U\times U)\cap(\ol M\times\ol M))$. From \eqref{e-gue190524sydk},
\[
\begin{split}
&N^{(q-1)}: H^s_{{\rm comp\,}}(U\cap\ol M, T^{*0,q}M')\To H^{s+1}_{{\rm comp\,}}(U\cap\ol M, T^{*0,q}M'),\ \ \mbox{for every $s\in\mathbb Z$},\\
&N^{(q+1)}: H^s_{{\rm comp\,}}(U\cap\ol M, T^{*0,q}M')\To H^{s+1}_{{\rm comp\,}}(U\cap\ol M, T^{*0,q}M'),\ \ \mbox{for every $s\in\mathbb Z$},
\end{split}\]
are continuous and note that $\Omega^{0,q}_0(U\cap\ol M)$ is dense in $H^s_{{\rm comp\,}}(U\cap \ol M, T^{*0,q}M')$, for every $s\in\mathbb Z$, we get 
\eqref{e-gue190524sydh}. 
\end{proof}

We pause and recall the operators $\ddbar_\beta$ and $\Box^{(q)}_\beta$ introduced in~\cite{Hsiao08}.  Put
\begin{equation} \label{e-gue190527yyd}
\ddbar_\beta=Q^{(q+1)}\gamma\ddbar\tilde P: \Omega^{0,q}(X)\To\Omega^{0,q+1}(X). 
\end{equation}
We recall that $Q^{(q+1)}$ is given by \eqref{e-gue190514yyd}. It is well-known that $\ddbar_\beta$ is a classical
pseudodifferential operator of order one (see Chapter 5 in~\cite{Hsiao08}).  Let
\begin{equation} \label{e-gue190527yydI}
\ol{\pr_\beta}^\dagger: \Omega^{0,q+1}(X)\To\Omega^{0,q}(X)
\end{equation}
be the formal adjoint of $\ddbar_\beta$ with respect to $[\,\cdot\,|\,\cdot\,]_X$, that is
$[\,\ddbar_\beta f\,|\,h\,]=[\,f\,|\,\ol{\pr_\beta}^\dagger h\,]_X$,
$f\in\Omega^{0,q}(X)$, $h\in\Omega^{0,q+1}(X)$. It is well-known that
$\ol{\pr}^\dagger_\beta$ is a classical
pseudodifferential operator of order one and we have 
\begin{equation}\label{e-gue190527yydII}
\mbox{ $\ol{\pr}^\dagger_\beta=\gamma\ol{\pr}^*_f\Td P$ on $\Omega^{0,q}(X)$, for $q=1,\ldots,n-1$}.
\end{equation}
(See Chapter 5 in~\cite{Hsiao08}.)
Set
\begin{equation} \label{e-gue190527yydIII}
\Box^{(q)}_\beta=\ol{\pr}^\dagger_\beta\,\ddbar_\beta+\ddbar_\beta\,\ol{\pr}^\dagger_\beta: 
\mathscr D'(X, T^{*0,q}X)\To\mathscr D'(X,T^{*0,q}X).
\end{equation}
It was shown in~\cite[Chapter 5]{Hsiao08} that $\Box^{(q)}_\beta$ is a classical
pseudodifferential operator of order two and the characteristic manifold of $\Box^{(q)}_\beta$ is given by
$\Sigma=\Sigma^+\bigcup\Sigma^-$,
where $\Sigma^+$, $\Sigma^-$ are as in \eqref{e-gue190313sydI}. 
As before, let $D$ be a local coordinate patch of $X$ with local coordinates $x=(x_1,\ldots,x_{2n-1})$ and we assume that the Levi form is non-degenerate of constant signature $(n_-, n_+)$ on $D$. Let $H\in L^{-1}_{{\rm cl\,}}(D,T^{*0,q}X\boxtimes(T^{*0,q}X)^*)$ be a properly supported pseudodifferential operator of order $-1$ on $D$ such that 
\begin{equation}\label{e-gue190528ycda}
H-\tilde P^*\tilde P\equiv 0\ \ \mbox{on $D$}. 
\end{equation}
The following was established in~\cite[Theorem 6.15]{Hsiao08}.

\begin{thm}\label{t-gue190527syd}
With the assumptions and notations above, let $q=n_-$.  Then there exist properly supported operators 
\[A\in L^{-1}_{\frac{1}{2},
\frac{1}{2}}(D, T^{*0,q}X\boxtimes(T^{*0,q}X)^*),\ \ 
S_-, S_+\in L^{0}_{\frac{1}{2},
\frac{1}{2}}(D, T^{*0,q}X\boxtimes(T^{*0,q}X)^*)\]
such that
\begin{equation} \label{e-gue190527syd} \begin{split}
{\rm WF\,}'(S_-(x,y)) &={\rm diag\,}\Bigr((\Sigma^-\cap T^*D)\times(\Sigma^-\cap T^*D)\Bigr), \\
{\rm WF\,}'(S_+(x,y)) &\subset{\rm diag\,}\Bigr((\Sigma^+\cap T^*D)\times(\Sigma^+\cap T^*D)\Bigr)
\end{split}\end{equation}
and
\begin{gather}
A\Box^{(q)}_\beta+S_-+S_+= I, \label{e-gue190527sydI} \\
\ddbar_\beta S_-\equiv0,\ \ol{\pr}^\dagger_\beta S_-\equiv0, \label{e-gue190527sydII} \\
S_-\equiv S_-^\dagger\equiv S_-^2, \label{e-gue190527sydIII}\\
S_+\equiv0\ \ \mbox{if $q\neq n_+$},\label{e-gue190527sydr}
\end{gather}
where 
\begin{equation}\label{e-gue190528ycdb}
S_-^\dagger:=2Q^{(q)}(\tilde P^*\tilde P)^{-1}S^*_-(\ddbar\rho)^{\wedge,*}(\ddbar\rho)^\wedge H: \Omega^{0,q}_0(D)\To\Omega^{0,q}(X),
\end{equation}
$H$ is given by \eqref{e-gue190528ycda}, $S^*_-$ is the formal adjoint of $S_-$ with respect to $(\,\cdot\,|\,\cdot\,)_X$, 
\[{\rm WF\,}'(S_-(x,y))=\set{(x, \xi, y, \eta)\in T^*X\times T^*X;\, (x, \xi, y, -\eta)\in{\rm WF}(S_-(x,y))}.\]
Here ${\rm WF\,}(S_-(x,y))$ is the wave front set of $S_-(x,y)$ in the sense of H\"{o}rmander. 

Moreover, the kernel $S_-(x,y)$ satisfis
\[S_-(x, y)\equiv\int^{\infty}_{0}\!\! e^{i\varphi_-(x, y)t}a(x, y, t)dt\]
with
\begin{equation}  \label{e-gue190527syda}\begin{split}
&a(x, y, t)\in S^{n-1}_{1, 0}(D\times D\times]0, \infty[,T^{*0,q}X\boxtimes(T^{*0,q}X)^*), \\
&a(x, y, t)\sim\sum^\infty_{j=0}a_j(x, y)t^{n-1-j}\text{ in }  S^{n-1}_{1, 0}(D\times D\times]0, \infty[,T^{*0,q}X\boxtimes(T^{*0,q}X)^*),\\
&a_0(x, x)\neq 0 \text{ for every } x\in D
\end{split}\end{equation}
(a formula for $a_0(x, x)$ will be given in Theorem~\ref{t-gue190528yyd} below),
where 
\[a_j(x, y)\in\cC^\infty(D\times D; T^{*0,q}X\boxtimes(T^{*0,q}X)^*),\ \ j=0,1,\ldots,\]
the phase function $\varphi_-$ is the same as the phase function appearing in the description of the singularities of the Szeg\H{o} kernels for lower energy forms in~\cite{HM17} (we refer the reader to~\cite[Theorems 3.3, 3.4]{HM17} for more properties for $\varphi_-$), in particular, we have 
\begin{gather}
\varphi_-(x, y)\in\cC^\infty(X\times X),\ \ {\rm Im\,}\varphi_-(x, y)\geq0, \label{e-gue190527sydh} \\
\varphi_-(x, x)=0,\ \ \varphi_-(x, y)\neq0\ \ \mbox{if}\ \ x\neq y, \label{e-gue190527sydi} \\
d_x\varphi_-\neq0,\ \ d_y\varphi_-\neq0\ \ \mbox{where}\ \ {\rm Im\,}\varphi_-=0, \label{e-gue190527sydj} \\
d_x\varphi_-(x, y)|_{x=y}=-\omega_0(x), \ \ d_y\varphi_-(x, y)|_{x=y}=\omega_0(x),\label{e-gue190527sydk} \\
\varphi_-(x, y)=-\ol\varphi_-(y, x). \label{e-gue190527sykl}
\end{gather}
\end{thm}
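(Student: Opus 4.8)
The plan is to follow the scheme of Boutet de Monvel--Sj\"ostrand \cite{BS76} for the Szeg\H{o} kernel, adapted to $\Box^{(q)}_\beta$ as in \cite[Chapters 5--6]{Hsiao08}. The inputs I would take for granted are those recalled before the statement: $\Box^{(q)}_\beta$ is a classical pseudodifferential operator of order two, its characteristic set is $\Sigma=\Sigma^+\cup\Sigma^-$ with $\Sigma^\pm$ as in \eqref{e-gue190313sydI}, and near $\Sigma^\pm$ the principal symbol vanishes to second order with a positive transversal Hessian, so that microlocally along $\Sigma^\pm$ the operator is conjugate to a harmonic-oscillator family whose frequencies are the eigenvalues of the Levi form, in the spirit of Theorem~\ref{t-gue180313syd}; in particular this model carries a one-dimensional transversal kernel, located in bidegree $(0,n_-)$ along $\Sigma^-$ and in bidegree $(0,n_+)$ along $\Sigma^+$. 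The plan then proceeds in four steps: (i) invert $\Box^{(q)}_\beta$ microlocally on $T^*D\setminus\Sigma$ by the elliptic calculus; (ii) build near $\Sigma^-$ and near $\Sigma^+$ complex Fourier integral operators realising the projection onto the transversal ground state together with a Hermite parametrix of $\Box^{(q)}_\beta$ on its orthogonal complement; (iii) patch these by a microlocal partition of unity to get \eqref{e-gue190527sydI}; (iv) deduce \eqref{e-gue190527sydII}, \eqref{e-gue190527sydIII} and \eqref{e-gue190527sydr} from the construction.

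For step (i), ellipticity of $\Box^{(q)}_\beta$ off $\Sigma$ yields a properly supported $A_0\in L^{-2}_{\mathrm{cl}}(D,T^{*0,q}X\boxtimes(T^{*0,q}X)^*)$ with $A_0\Box^{(q)}_\beta\equiv I$ microlocally away from $\Sigma$. For step (ii), near a conic neighbourhood of $\Sigma^-$ I would look for $S_-$ of the form $S_-(x,y)\equiv\int_0^\infty e^{i\varphi_-(x,y)t}a(x,y,t)\,dt$, where $\varphi_-$ solves the eikonal equation attached to the principal symbol of $\Box^{(q)}_\beta$ along $\Sigma^-$, subject to $\mathrm{Im}\,\varphi_-\geq 0$ and $d_x\varphi_-(x,y)|_{x=y}=-\omega_0(x)$; this is exactly the phase used to describe the Szeg\H{o} kernel in \cite{BS76} and \cite[Theorems 3.3, 3.4]{HM17}, which gives \eqref{e-gue190527sydh}--\eqref{e-gue190527sykl} at once. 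Substituting the ansatz into $\Box^{(q)}_\beta S_-\equiv0$ produces a hierarchy of transport equations for the $a_j$; solving them and verifying $a_0(x,x)\neq0$ with $a_0$ in bidegree $(0,n_-)$ is where $q=n_-$ is used, since only then does the transversal oscillator possess a ground state in that bidegree. Normalising $a$ so that $S_-$ acts as a microlocal identity near $\Sigma^-$ fixes the construction; because of the parabolic rescaling $x\mapsto x\sqrt{\abs{\xi}}$ in the normal form, $S_-$ lands in the type $(\frac{1}{2},\frac{1}{2})$ calculus of order $0$ and the associated correction to $A_0$ in $L^{-1}_{\frac{1}{2},\frac{1}{2}}$. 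The construction near $\Sigma^+$ is identical; if $q\neq n_+$ there is no transversal kernel in bidegree $(0,q)$ along $\Sigma^+$, so the corresponding operator is smoothing, giving \eqref{e-gue190527sydr}. Patching $A_0$ with the $\Sigma^-$ and $\Sigma^+$ pieces by a partition of unity subordinate to the cover of $T^*D$ by $T^*D\setminus\Sigma$ and conic neighbourhoods of $\Sigma^-$ and of $\Sigma^+$, and absorbing the smoothing remainder into $A$, yields \eqref{e-gue190527sydI}; the wave front set statements \eqref{e-gue190527syd} and the symbol estimates \eqref{e-gue190527syda} are read off the oscillatory-integral representations.

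For step (iv), since $\Box^{(q)}_\beta=\ol{\pr}^\dagger_\beta\ddbar_\beta+\ddbar_\beta\ol{\pr}^\dagger_\beta$ and $\Box^{(q)}_\beta S_-\equiv0$, and since $\ddbar_\beta$ and $\ol{\pr}^\dagger_\beta$ are transversally elliptic along $\Sigma^-$, a symbol computation on the diagonal of $(\Sigma^-\cap T^*D)\times(\Sigma^-\cap T^*D)$ using the transversal ground state forces $\ddbar_\beta S_-\equiv0$ and $\ol{\pr}^\dagger_\beta S_-\equiv0$, which is \eqref{e-gue190527sydII}. Writing the formal adjoint with respect to $[\,\cdot\,|\,\cdot\,]_X$ through $\tilde P^*\tilde P$, $Q^{(q)}$ and the operator $H$ of \eqref{e-gue190528ycda} as in the definition \eqref{e-gue190528ycdb} of $S_-^\dagger$, I would check that $S_-^\dagger$ again satisfies $\Box^{(q)}_\beta S_-^\dagger\equiv0$ with the same wave front set, so by uniqueness (modulo smoothing) of the microlocal spectral projection onto the transversal kernel, $S_-\equiv S_-^\dagger$. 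Finally, applying \eqref{e-gue190527sydI} to $S_-$ gives $S_-=(A\Box^{(q)}_\beta+S_-+S_+)S_-\equiv S_-^2$, since $\Box^{(q)}_\beta S_-\equiv0$ and $S_+S_-\equiv0$ by disjointness of wave front sets; this establishes \eqref{e-gue190527sydIII}. The diagonal value $a_0(x,x)$ comes out of the first transport equation and is recorded in Theorem~\ref{t-gue190528yyd}.

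I expect the main obstacle to be step (ii): constructing the complex-valued phase $\varphi_-$ solving the eikonal equation together with the positivity $\mathrm{Im}\,\varphi_-\geq 0$, and then solving the whole hierarchy of transport equations within the degenerate $S^m_{\frac{1}{2},\frac{1}{2}}$ calculus while keeping everything consistently defined over all of $D$. This requires the Melin--Sj\"ostrand theory of Fourier integral operators with complex phase and a careful diagonalisation of the transversal harmonic-oscillator model along $\Sigma^\pm$ --- the same analysis that underlies \cite{BS76} and \cite{HM17} --- and it is precisely there that the bidegree conditions $q=n_-$ and $q=n_+$ and the non-vanishing of $a_0$ are forced.
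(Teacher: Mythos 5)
Your outline is essentially sound, but note first that the paper does not prove this statement at all: it is imported verbatim from \cite[Theorem 6.15]{Hsiao08}, so the only fair comparison is with the construction carried out there. Your scheme (elliptic inversion of $\Box^{(q)}_\beta$ off $\Sigma$, a complex-phase Fourier integral ansatz $\int_0^\infty e^{i\varphi_-(x,y)t}a(x,y,t)\,dt$ near $\Sigma^-$ and $\Sigma^+$, patching to get $A\Box^{(q)}_\beta+S_-+S_+=I$, then reading off \eqref{e-gue190527sydII}--\eqref{e-gue190527sydr}) reproduces the Boutet de Monvel--Sj\"ostrand strategy that underlies the cited result, and the points where you invoke $q=n_-$ (existence of the transversal ground state, $a_0(x,x)\neq0$) and disjointness of wave front sets (for $S_+S_-\equiv0$ and hence $S_-\equiv S_-^2$) are exactly the right ones. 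The genuine difference in execution is that Hsiao does not impose the stationary equation $\Box^{(q)}_\beta S_-\equiv0$ on a projector ansatz and then ``normalise'': he runs the Menikoff--Sj\"ostrand heat-equation method, constructing $e^{-t\Box^{(q)}_\beta}$ as a Fourier integral operator with a $t$-dependent complex phase and obtaining $S_\mp$ and $A$ from the large-time limit and the time integral respectively; the identity \eqref{e-gue190527sydI}, the $(\tfrac12,\tfrac12)$ symbol classes, the relations \eqref{e-gue190527sydII}--\eqref{e-gue190527sydIII}, and the phase $\varphi_-$ with \eqref{e-gue190527sydh}--\eqref{e-gue190527sykl} then come out of that construction rather than from a separate uniqueness argument. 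This matters for the two weakest links in your sketch: the normalisation step (``$S_-$ acts as a microlocal identity near $\Sigma^-$'') is close to circular unless you either run the full Boutet de Monvel--Sj\"ostrand global projector construction or use the semigroup limit, and the claim that $\ddbar_\beta S_-\equiv0$, $\ol{\pr}^\dagger_\beta S_-\equiv0$ and $S_-\equiv S_-^\dagger$ follow from ``uniqueness of the microlocal spectral projection'' needs the Melin--Sj\"ostrand equivalence-of-phase-functions machinery to be made precise; in the heat-equation approach these are byproducts of the construction. So your route is viable in principle, but it is a variant of, not a shortcut past, the analysis in \cite{Hsiao08}, and the hard work you defer to step (ii) is precisely where that reference spends its effort.
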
 

For a given point $x_0\in D$, let $\{W_j\}_{j=1}^{n-1}$ be an
orthonormal frame of $(T^{1,0}X,\langle\,\cdot\,|\,\cdot\,\rangle)$ near $x_0$, for which the Levi form
is diagonal at $x_0$. Put
\begin{equation}\label{e-gue190528yyd}
\mathcal{L}_{x_0}(W_j,\ol W_\ell)=\mu_j(x_0)\delta_{j\ell}\,,\;\; j,\ell=1,\ldots,n-1\,.
\end{equation}
We will denote by
\begin{equation}\label{e-gue190528yydI}
\det\mathcal{L}_{x_0}=\prod_{j=1}^{n-1}\mu_j(x_0)\,.
\end{equation}
Let $\{e_j\}_{j=1}^{n-1}$ denote the basis of $T^{*0,1}X$, dual to $\{\ol W_j\}^{n-1}_{j=1}$. We assume that
$\mu_j(x_0)<0$ if\, $1\leq j\leq n_-$ and $\mu_j(x_0)>0$ if\, $n_-+1\leq j\leq n-1$. Put
\begin{equation}\label{e-gue190528yydII}
\mathcal{N}(x_0,n_-):=\set{ce_1(x_0)\wedge\ldots\wedge e_{n_-}(x_0);\, c\in\Complex},
\end{equation}
and let
\begin{equation}\label{e-gue190528yydIII}
\tau_{x_0,n_-}:T^{*0,q}_{x_0}X\To\mathcal{N}(x_0,n_-)
\end{equation}
be the orthogonal projection onto $\mathcal{N}(x_0,n_-)$ 
with respect to $\langle\,\cdot\,|\,\cdot\,\rangle$. The following was obtained in~\cite[Proposition 6.17]{Hsiao08}

\begin{thm} \label{t-gue190528yyd}
With the notations and assumptions used in Theorem~\ref{t-gue190527syd}, for $a_0(x,y)$ in \eqref{e-gue190527syda}, we have 
\begin{equation}\label{e-gue190528yyda}
a_0(x, x)=\frac{1}{2}\pi^{-n}\abs{\det\mathcal{L}_{x}}\tau_{x,n_-},\ \ \mbox{for every $x\in D$}, 
\end{equation}
where $\det\mathcal{L}_{x}$ and $\tau_{x,n_-}$ are given by \eqref{e-gue190528yydI} and \eqref{e-gue190528yydIII} respectively. 
\end{thm}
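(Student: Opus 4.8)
The plan is to reduce the evaluation of $a_0(x,x)$ to an explicit computation for a model operator on the Heisenberg group, following the parametrix construction for $\Box^{(q)}_\beta$ of Theorem~\ref{t-gue190527syd}. Since the statement is local, fix $x_0\in D$ and choose the orthonormal frame $\{W_j\}_{j=1}^{n-1}$ of $T^{1,0}X$ and the dual coframe $\{e_j\}_{j=1}^{n-1}$ of $T^{*0,1}X$ as in \eqref{e-gue190528yyd}, so that $\mathcal{L}_{x_0}$ is diagonal with eigenvalues $\mu_1(x_0)<0,\ldots,\mu_{n_-}(x_0)<0$ and $\mu_{n_-+1}(x_0)>0,\ldots,\mu_{n-1}(x_0)>0$.

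First I would compute the symbol of $\Box^{(q)}_\beta$ up to subprincipal order along $\Sigma^-$. Writing $\ddbar_\beta=Q^{(q+1)}\gamma\ddbar\Td P$ and $\ol{\pr}^\dagger_\beta=\gamma\ol{\pr}^*_f\Td P$ as in \eqref{e-gue190527yyd} and \eqref{e-gue190527yydII}, and using the symbol expansions of $\gamma\ddbar\Td P$ and $\gamma\ol{\pr}^*_f\Td P$ recalled in Section~\ref{s-gue190312scd} (see \eqref{e-gue190313hcd} and Theorem~\ref{t-gue180313syd}), one checks that on a conic neighbourhood of $\Sigma^-\cap T^*D$, after a microlocal normalization in which the fibre variable along $-\omega_0$ is the large parameter $t>0$, the operator $\Box^{(q)}_\beta$ becomes, to leading order, a fibrewise direct sum over $j=1,\ldots,n-1$ of harmonic oscillators with frequencies $|\mu_j(x_0)|$ acting on the exterior algebra $\Lambda^\bullet T^{*0,1}_{x_0}X$. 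Its lowest eigenspace in the $(0,q)$-component is one-dimensional precisely when $q=n_-$, and equals the line $\mathcal{N}(x_0,n_-)$ of \eqref{e-gue190528yydII}; this forces $a_0(x,x)$ to be a scalar multiple of the orthogonal projection $\tau_{x,n_-}$.

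Next I would substitute the ansatz $S_-(x,y)\equiv\int_0^\infty e^{i\varphi_-(x,y)t}a(x,y,t)\,dt$, $a\sim\sum_{j\geq0}a_j(x,y)t^{n-1-j}$, into the relations \eqref{e-gue190527sydI}--\eqref{e-gue190527sydIII}. The equations $\ddbar_\beta S_-\equiv0$ and $\ol{\pr}^\dagger_\beta S_-\equiv0$ produce the eikonal equation, whose solution is the Szeg\H{o} phase $\varphi_-$ with the properties \eqref{e-gue190527sydh}--\eqref{e-gue190527sykl} (see \cite[Theorems 3.3, 3.4]{HM17}), together with a hierarchy of transport equations for the $a_j$. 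Restricting the first transport equation to the diagonal $x=y$ and using $\varphi_-(x,x)=0$, $d_x\varphi_-(x,x)=-\omega_0(x)$, it reduces to an ordinary differential equation in the fibre direction whose solution in the line $\mathcal{N}(x,n_-)$ gives $a_0(x,x)=c(x)\tau_{x,n_-}$ for some positive function $c$. Finally, $c(x)$ is determined by the idempotency $S_-\equiv S_-^2$ together with $S_-\equiv S_-^\dagger$, $S_-^\dagger$ as in \eqref{e-gue190528ycdb}: taking the partial Fourier transform in $t$ turns the local model kernel into an idempotent Gaussian, and the classical Folland--Stein formula for the Szeg\H{o} kernel of the model $\ddbar_b$ on the Heisenberg group with Levi eigenvalues $\mu_j(x_0)$ contributes the factor $\pi^{-n}\prod_{j=1}^{n-1}|\mu_j(x_0)|=\pi^{-n}|\det\mathcal{L}_{x_0}|$, while the remaining factor $\tfrac12$ comes from the normalizations $2(\ddbar\rho)^{\wedge,*}(\ddbar\rho)^{\wedge}$ (cf.\ \eqref{e-gue190313scsI}) and from the comparison between $(\,\cdot\,|\,\cdot\,)_X$ and $[\,\cdot\,|\,\cdot\,]_X$ built into $S_-^\dagger$. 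This yields \eqref{e-gue190528yyda}.

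The main obstacle is the subprincipal-order bookkeeping: since $a_0(x,x)$ depends not only on the principal symbol of $\Box^{(q)}_\beta$ but also on its subprincipal symbol and on the integration density, all of these must be tracked carefully through the microlocal normalization and through the change of inner product from $(\,\cdot\,|\,\cdot\,)_X$ to $[\,\cdot\,|\,\cdot\,]_X$. A further technical point is that $\Box^{(q)}_\beta$ is not a classical pseudodifferential operator near $\Sigma$---its parametrix lies in $L^{-1}_{\frac{1}{2},\frac{1}{2}}$---so the eikonal and transport analysis has to be carried out inside the Boutet de Monvel--Sj\"ostrand calculus of Fourier integral operators with complex-valued phase rather than in the classical framework.
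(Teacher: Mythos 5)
There is no argument to compare against here: the paper does not prove Theorem~\ref{t-gue190528yyd} at all, but quotes it verbatim from \cite[Proposition 6.17]{Hsiao08}, just as Theorem~\ref{t-gue190527syd} is quoted from \cite[Theorem 6.15]{Hsiao08}. Your sketch follows, in outline, the strategy of that cited reference (and of \cite{BS76}): a complex-phase Fourier integral ansatz for $S_-$, eikonal and transport equations extracted from $\Box^{(q)}_\beta S_-\equiv0$, $\ddbar_\beta S_-\equiv0$, $\ol{\pr}^\dagger_\beta S_-\equiv0$, and the leading coefficient pinned down by $S_-\equiv S_-^2\equiv S_-^\dagger$. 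So the approach is not misguided; the problem is that it stops exactly where the theorem begins.

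The genuine gap is that the entire content of \eqref{e-gue190528yyda} is the precise value $\tfrac{1}{2}\pi^{-n}\abs{\det\mathcal{L}_x}\,\tau_{x,n_-}$, and your argument never computes it. You assert that the model (Folland--Stein/Heisenberg) kernel contributes $\pi^{-n}\prod_j\abs{\mu_j}$ and that ``the remaining factor $\tfrac12$ comes from the normalizations,'' but neither claim is derived. The factor $\tfrac12$ is precisely the delicate point: it has to be extracted from the comparison of the two inner products $(\,\cdot\,|\,\cdot\,)_X$ and $[\,\cdot\,|\,\cdot\,]_X=(\,\tilde P\cdot\,|\,\tilde P\cdot\,)_M$ of \eqref{e-gue190515syd} (i.e.\ from the symbol of $\tilde P^*\tilde P$ on $\Sigma^-$), from the principal symbol $2(\ddbar\rho)^{\wedge,*}(\ddbar\rho)^{\wedge}$ of $Q^{(q)}$ entering $S_-^\dagger$ in \eqref{e-gue190528ycdb}, and from the normalization \eqref{e-gue190313scsI}; without carrying this bookkeeping through the transport equations one cannot distinguish $\tfrac12$ from $1$, $2$ or $\tfrac14$, and you yourself flag this ``subprincipal-order bookkeeping'' as an unresolved obstacle. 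Likewise, the claim that $a_0(x,x)$ must be proportional to $\tau_{x,n_-}$ rests on the full model computation (vanishing of the lowest eigenvalue of the fibrewise model operator on the $(0,q)$-component exactly when $q=n_-$, with one-dimensional ground state $\mathcal{N}(x,n_-)$), which is stated but not established in your sketch. As written, the proposal is a plan for reproving \cite[Proposition 6.17]{Hsiao08} rather than a proof; the route actually taken by the paper is simply to cite that result, and if you want a self-contained argument you must carry out the model computation and the symbol comparison explicitly, as is done in \cite{Hsiao08}.
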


We come
back to our situation. In view of Lemma~\ref{l-gue190429yyd}, 
Lemma~\ref{l-gue190429yydI} and Theorem~\ref{t-gue190514yyd}, 
we see that $Q^{(q)}(\tilde P^*\tilde P)^{-1}\tilde P^*$ is smoothing away 
the diagonal. Hence, there is a continuous operator 
$L^{(q)}: \Omega^{0,q}_0(U\cap\ol M)\To\Omega^{0,q}(D)$ such that 
\begin{equation}\label{e-gue190524syds}
L^{(q)}-Q^{(q)}(\tilde P^*\tilde P)^{-1}\tilde P^*\equiv0
\mod\cC^\infty((U\times U)\cap(X\times\ol M))
\end{equation}
and $L^{(q)}$ is properly supported on $U\cap\ol M$, that is, 
for every $\chi\in\cC^\infty_0(U\cap\ol M)$, there is a 
$\tau\in\cC^\infty_0(D)$ such that $L^{(q)}\chi=\tau L^{(q)}$ on 
$\Omega^{0,q}_0(U\cap\ol M)$ and for every $\tau_1\in\cC^\infty_0(D)$, 
there is a $\chi_1\in\cC^\infty_0(U\cap\ol M)$ such that 
$\tau_1L^{(q)}=L^{(q)}\chi_1$ 
on $\Omega^{0,q}_0(U\cap\ol M)$. We can extend $L^{(q)}$ to a continuous opeator
\[\begin{split}
&L^{(q)}: \Omega^{0,q}(U\cap\ol M)\To\Omega^{0,q}(D),\\
&L^{(q)}: \Omega^{0,q}_0(U\cap\ol M)\To\Omega^{0,q}_0(D).
\end{split}\]
From Theorem~\ref{t-gue190524ycd}, we have 
\begin{equation}\label{e-gue190524sydp}
\Pi^{(q)}-\tilde PL^{(q)}\Pi^{(q)}\equiv0
\mod\cC^\infty((U\times U)\cap(\ol M\times\ol M)). 
\end{equation}

\begin{lem}\label{l-gue190524sydp}
With the notations and assumptions above, we have 
\begin{equation}\label{e-gue190524sydq}
S_+L^{(q)}\Pi^{(q)}\equiv0\mod\cC^\infty((U\times U)\cap(X\times\ol M)), 
\end{equation}
where $S_+$ is as in Theorem~\ref{t-gue190527syd}. 
\end{lem}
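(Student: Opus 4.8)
The plan is to show that the operator $L^{(q)}\Pi^{(q)}$ is, microlocally in its $X$–variable, concentrated on $\Sigma^-$, and then to finish by a wave front set composition argument: by Theorem~\ref{t-gue190527syd}, $S_+$ is microlocally concentrated over $\Sigma^+$, and $\Sigma^+\cap\Sigma^-=\emptyset$ (see \eqref{e-gue190313sydI}), so the composition $S_+L^{(q)}\Pi^{(q)}$ is smoothing. Throughout, $q=n_-$, so $\Pi^{(q)}$ and the relations \eqref{e-gue190404hyydII}, \eqref{e-gue190418ycd}, \eqref{e-gue190524sydp} of Theorems~\ref{t-gue190404hyyd} and \ref{t-gue190418ycd} are available, as is $S_+$ from Theorem~\ref{t-gue190527syd}.

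First I would record that $L^{(q)}\Pi^{(q)}\equiv\gamma\Pi^{(q)}\mod\cC^\infty((U\times U)\cap(X\times\ol M))$: applying $\gamma$ to \eqref{e-gue190524sydp} (restricting a smooth kernel on $\ol M\times\ol M$ to $X\times\ol M$) and using $\gamma\tilde P=I$ gives $\gamma\Pi^{(q)}\equiv\gamma\tilde PL^{(q)}\Pi^{(q)}=L^{(q)}\Pi^{(q)}$. The key step is then to prove that
\[\Box^{(q)}_-L^{(q)}\Pi^{(q)}\equiv0\mod\cC^\infty((U\times U)\cap(X\times\ol M)).\]
Indeed, by the previous line and the definition \eqref{e-gue190312mscdII} of $\Box^{(q)}_-$ we have $\Box^{(q)}_-L^{(q)}\Pi^{(q)}\equiv(\ddbar\rho)^{\wedge,*}\gamma\ddbar\tilde P\gamma\Pi^{(q)}$; by Theorem~\ref{t-gue190418ycd}, $\tilde P\gamma\Pi^{(q)}\equiv\Pi^{(q)}$ modulo $\cC^\infty((U\times U)\cap(\ol M\times\ol M))$, hence $\Box^{(q)}_-L^{(q)}\Pi^{(q)}\equiv(\ddbar\rho)^{\wedge,*}\gamma\ddbar\Pi^{(q)}$, and this is $\equiv0$ because $\ddbar\Pi^{(q)}\equiv0$ modulo $\cC^\infty((U\times U)\cap(\ol M\times\ol M))$ by \eqref{e-gue190404hyydII}.

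Now I invoke Theorem~\ref{t-gue180313syd}: $\Box^{(q)}_-$ is a classical pseudodifferential operator of order one on $X$ which is elliptic outside $\Sigma^-$. Hence it admits a microlocal left parametrix on every closed conic subset of $T^*D$ disjoint from $\Sigma^-$, and microlocal elliptic regularity applied to the identity just proved shows that for every properly supported pseudodifferential operator $A'$ of order $0$ on $D$ with $\,{\rm WF\,}'(A')\cap\Sigma^-=\emptyset$ one has $A'L^{(q)}\Pi^{(q)}\equiv0\mod\cC^\infty((U\times U)\cap(X\times\ol M))$; equivalently, the first–variable wave front set of the kernel of $L^{(q)}\Pi^{(q)}$ is contained in $\Sigma^-\cap T^*D$. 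On the other hand, by Theorem~\ref{t-gue190527syd}, $\,{\rm WF\,}'(S_+(x,y))\subset{\rm diag\,}\bigl((\Sigma^+\cap T^*D)\times(\Sigma^+\cap T^*D)\bigr)$, so the incoming covectors of $S_+$ lie in $\Sigma^+$. Since $\Sigma^+\cap\Sigma^-=\emptyset$, Hörmander's composition theorem for wave front sets gives $S_+L^{(q)}\Pi^{(q)}\equiv0\mod\cC^\infty((U\times U)\cap(X\times\ol M))$, the properness of supports needed for the composition being ensured because $S_+$ is properly supported and $\Pi^{(q)}$, $L^{(q)}$ are properly supported on $U\cap\ol M$.

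The main obstacle is the key step $\Box^{(q)}_-L^{(q)}\Pi^{(q)}\equiv0$: it requires assembling \eqref{e-gue190524sydp}, \eqref{e-gue190418ycd} and \eqref{e-gue190404hyydII} and applying each on the correct class of forms — in particular \eqref{e-gue190404hyydII} gives $\ddbar\Pi^{(q)}u=R^{(q)}_2u$ for $u\in L^2_{\mathrm{loc}}(U\cap\ol M,T^{*0,q}M')$, which is exactly the regularity needed here, and the identity $\tilde P\gamma\Pi^{(q)}\equiv\Pi^{(q)}$ should be read as an equivalence of operators obtained from \eqref{e-gue190418ycd} by density of $\Omega^{0,q}_0(U\cap M)$. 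Once the microlocal location of $\Box^{(q)}_-L^{(q)}\Pi^{(q)}$ is pinned down, the remaining ellipticity and wave front set composition arguments are routine.
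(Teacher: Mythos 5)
Your reduction to the key identity $\Box^{(q)}_-L^{(q)}\Pi^{(q)}\equiv0\mod\cC^\infty((U\times U)\cap(X\times\ol M))$ is exactly the paper's step \eqref{e-gue190524scdq}, and your derivation of it (restrict \eqref{e-gue190524sydp} to get $L^{(q)}\Pi^{(q)}\equiv\gamma\Pi^{(q)}$, use $\tilde P\gamma\Pi^{(q)}\equiv\Pi^{(q)}$ from \eqref{e-gue190418ycd}, and kill $(\ddbar\rho)^{\wedge,*}\gamma\ddbar\Pi^{(q)}$ by $\ddbar\Pi^{(q)}=R^{(q)}_2$ in \eqref{e-gue190404hyydII}) is a slightly more detailed version of what the paper does. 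The divergence is in how you finish, and there your argument has a genuine gap. The test-operator statement you prove --- $A'L^{(q)}\Pi^{(q)}\equiv0$ for every properly supported $A'$ of order $0$ microsupported away from $\Sigma^-$ --- only controls wave front points of the kernel of $L^{(q)}\Pi^{(q)}$ whose covector in the $X$-variable is \emph{nonzero}; a pseudodifferential operator acting in the $x$-variable alone is never elliptic at a point of the form $(x,0,y,\eta)$, so such points are invisible to this argument. But H\"ormander's composition theorem for kernels contains, besides the composed relation ${\rm WF\,}'(S_+)\circ{\rm WF\,}'(L^{(q)}\Pi^{(q)})$ (which is indeed empty since $\Sigma^+\cap\Sigma^-=\emptyset$), the extra zero-section terms; in particular the term coming from wave front points of $L^{(q)}\Pi^{(q)}$ with vanishing covector in the intermediate variable is not ruled out by your regularity argument, so "disjoint microsupports" alone does not yield that $S_+L^{(q)}\Pi^{(q)}$ is smoothing. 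A second, related problem is that the statement to be proved is smoothness of the composed kernel on $(U\times U)\cap(X\times\ol M)$, i.e.\ up to the boundary in the second variable; the interior wave front calculus and the composition theorem, which are formulated for distributions on open manifolds, do not by themselves deliver smoothness up to $\rho=0$ in $y$.

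Both problems disappear if you use your microlocal parametrix remark directly on $S_+$ instead of passing through wave front sets: since ${\rm WF\,}'(S_+(x,y))\subset{\rm diag\,}\bigl((\Sigma^+\cap T^*D)\times(\Sigma^+\cap T^*D)\bigr)$ and $\Box^{(q)}_-$ is elliptic near $\Sigma^+$ (Theorem~\ref{t-gue180313syd}), there is $E^{(q)}\in L^{-1}_{{\rm cl\,}}(D,T^{*0,q}X\boxtimes(T^{*0,q}X)^*)$ with $S_+-S_+E^{(q)}\Box^{(q)}_-\equiv0$ on $D$, and then
\[
S_+L^{(q)}\Pi^{(q)}\equiv S_+E^{(q)}\bigl(\Box^{(q)}_-L^{(q)}\Pi^{(q)}\bigr)\equiv0
\mod\cC^\infty((U\times U)\cap(X\times\ol M)),
\]
because $S_+E^{(q)}$ is properly supported and acts only in the $x$-variable, hence preserves kernels that are smooth up to the boundary in $y$. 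This is precisely the paper's proof; with that substitution your argument is complete.
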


\begin{proof}
Since ${\rm WF\,}'(S_+(x,y))\subset{\rm diag\,}\Bigr((\Sigma^+\cap T^*D)
\times(\Sigma^+\cap T^*D)\Bigr)$ and $\Box^{(q)}_-$ is elliptic near 
$\Sigma^+$ (see Theorem~\ref{t-gue180313syd}), there is a classical 
pseudodifferential operator 
$E^{(q)}\in L^{-1}_{{\rm cl\,}}(D,T^{*0,q}X\boxtimes(T^{*0,q}X)^*)$ 
such that 
\begin{equation}\label{e-gue190524scdp}
S_+-S_+E^{(q)}\Box^{(q)}_-\equiv0.
\end{equation}
From \eqref{e-gue190404hyydII} and \eqref{e-gue190524sydp}, we deduce that 
\begin{equation}\label{e-gue190524scdq}
\Box^{(q)}_-L^{(q)}\Pi^{(q)}\equiv0\mod\cC^\infty((U\times U)\cap(X\times\ol M)).
\end{equation}
From \eqref{e-gue190524scdp} and \eqref{e-gue190524scdq}, we get \eqref{e-gue190524sydq}. 
\end{proof}

We can now prove the following 

\begin{thm}\label{t-gue190524yydp}
With the notations and assumptions above, we have 
\begin{gather}
S_-L^{(q)}\Pi^{(q)}-L^{(q)}\Pi^{(q)}\equiv0
\mod\cC^\infty((U\times U)\cap(X\times\ol M)), \label{e-gue190524yydp}\\
\tilde PS_-L^{(q)}\Pi^{(q)}-\Pi^{(q)}\equiv0
\mod\cC^\infty((U\times U)\cap(\ol M\times\ol M)), 
\label{e-gue190524yydq}\\
\Pi^{(q)}\tilde PS_-L^{(q)}-\Pi^{(q)}\equiv0
\mod\cC^\infty((U\times U)\cap(\ol M\times\ol M)).\label{e-gue190524yydr}
\end{gather}
\end{thm}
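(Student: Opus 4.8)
The plan is to exploit the operator identity \eqref{e-gue190527sydI}, namely $A\Box^{(q)}_\beta+S_-+S_+= I$ on $D$, and to push it through the operator $L^{(q)}\Pi^{(q)}$, showing that the term $A\Box^{(q)}_\beta L^{(q)}\Pi^{(q)}$ is smoothing. First I would observe that, by Theorem~\ref{t-gue190404hyyd}, $\Pi^{(q)}$ annihilates $\Box^{(q)}_f$ up to smoothing, more precisely $\ddbar\Pi^{(q)}u=R^{(q)}_2u$ and $\ol{\pr}^*_f\Pi^{(q)}u=R^{(q)}_3u$ with $R^{(q)}_2,R^{(q)}_3\equiv0\mod\cC^\infty((U\times U)\cap(\ol M\times\ol M))$. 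Recalling that $\ddbar_\beta=Q^{(q+1)}\gamma\ddbar\tilde P$ and $\ol{\pr}^\dagger_\beta=\gamma\ol{\pr}^*_f\tilde P$, and combining with the representation $\Pi^{(q)}\equiv\tilde PL^{(q)}\Pi^{(q)}$ from \eqref{e-gue190524sydp} together with Lemma~\ref{l-gue190429yyd}, Lemma~\ref{l-gue190429yydI} (which make $L^{(q)}$ and $\tilde P$ smoothing away the diagonal), one deduces
\[
\ddbar_\beta L^{(q)}\Pi^{(q)}\equiv0,\qquad \ol{\pr}^\dagger_\beta L^{(q)}\Pi^{(q)}\equiv0
\]
modulo $\cC^\infty((U\times U)\cap(X\times\ol M))$; hence $\Box^{(q)}_\beta L^{(q)}\Pi^{(q)}\equiv0$ as well. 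Applying $A$ on the left (which preserves the smoothing property since $A$ is a $(\tfrac12,\tfrac12)$-operator acting on $X$) and then applying \eqref{e-gue190527sydI} gives
\[
L^{(q)}\Pi^{(q)}=A\Box^{(q)}_\beta L^{(q)}\Pi^{(q)}+S_-L^{(q)}\Pi^{(q)}+S_+L^{(q)}\Pi^{(q)}\equiv S_-L^{(q)}\Pi^{(q)}+S_+L^{(q)}\Pi^{(q)}.
\]
By Lemma~\ref{l-gue190524sydp}, $S_+L^{(q)}\Pi^{(q)}\equiv0\mod\cC^\infty((U\times U)\cap(X\times\ol M))$, and \eqref{e-gue190524yydp} follows.

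For \eqref{e-gue190524yydq}, I would apply $\tilde P$ on the left of \eqref{e-gue190524yydp}. Since $\tilde P$ is continuous $H^s(X)\to H^{s+1/2}(\ol M)$ by \eqref{e-gue190313ad}, smoothing errors on $X\times\ol M$ are transformed into smoothing errors on $\ol M\times\ol M$, so $\tilde PS_-L^{(q)}\Pi^{(q)}\equiv\tilde PL^{(q)}\Pi^{(q)}$ modulo $\cC^\infty((U\times U)\cap(\ol M\times\ol M))$; combining with \eqref{e-gue190524sydp} yields \eqref{e-gue190524yydq}. For \eqref{e-gue190524yydr}, I would pass to formal adjoints: using \eqref{e-gue190419scd} (self-adjointness of $\Pi^{(q)}$ up to smoothing), the adjoint relation \eqref{e-gue190527sydIII} ($S_-\equiv S_-^\dagger\equiv S_-^2$) and \eqref{e-gue190528ycdb} linking $S_-^\dagger$ to $\tilde P^*,\tilde P$ via $(\tilde P^*\tilde P)^{-1}$, together with Lemma~\ref{l-gue190524yyd} and Theorem~\ref{t-gue190524ycd}, one re-derives a dual version of the computation above. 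Alternatively one can take the $(\,\cdot\,|\,\cdot\,)_M$-adjoint of \eqref{e-gue190524yydq} directly, noting that the adjoint of $\tilde PS_-L^{(q)}$ differs from $\tilde PS_-L^{(q)}$ by a smoothing operator because of the self-adjointness properties of $S_-$ and the relation between $L^{(q)}$ and $\tilde P^*$ recorded in \eqref{e-gue190524syds}; this gives \eqref{e-gue190524yydr}.

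The main obstacle I anticipate is the bookkeeping needed to verify $\ddbar_\beta L^{(q)}\Pi^{(q)}\equiv0$ and $\ol{\pr}^\dagger_\beta L^{(q)}\Pi^{(q)}\equiv0$: one must carefully commute $\gamma\ddbar\tilde P$ and $\gamma\ol{\pr}^*_f\tilde P$ past $L^{(q)}$ and use \eqref{e-gue190524sydp} to replace $\tilde PL^{(q)}\Pi^{(q)}$ by $\Pi^{(q)}$, then invoke $\ddbar\Pi^{(q)}\equiv0$, $\ol{\pr}^*_f\Pi^{(q)}\equiv0$ from \eqref{e-gue190404hyydII}, while controlling the regularity of the intermediate operators through \eqref{e-gue190313ad}, \eqref{e-gue190515yyd} and the ``smoothing away the diagonal'' mechanism supplied by Lemmas~\ref{l-gue190429yyd}--\ref{l-gue190429yydI}. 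Once these two Poisson-extension computations are in place, the rest is a routine application of the parametrix identity \eqref{e-gue190527sydI}, Lemma~\ref{l-gue190524sydp}, and the mapping properties of $\tilde P$ and $\tilde P^*$.
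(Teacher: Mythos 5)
Your proposal is correct and follows essentially the same route as the paper: \eqref{e-gue190524yydp} via $\Box^{(q)}_\beta L^{(q)}\Pi^{(q)}\equiv0$ (from \eqref{e-gue190404hyydII} and \eqref{e-gue190524sydp}), the parametrix identity \eqref{e-gue190527sydI} and Lemma~\ref{l-gue190524sydp}; \eqref{e-gue190524yydq} by applying $\tilde P$ and \eqref{e-gue190524sydp}; and \eqref{e-gue190524yydr} by an adjoint argument resting on $S_-\equiv S_-^\dagger$, $L^{(q)}\approx Q^{(q)}(\tilde P^*\tilde P)^{-1}\tilde P^*$ and the approximate self-adjointness of $\Pi^{(q)}$. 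Your ``take the adjoint of \eqref{e-gue190524yydq}'' shortcut is exactly what the paper's long pairing computation \eqref{e-gue190528syd} implements explicitly, with all the error operators tracked.
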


\begin{proof}
From \eqref{e-gue190404hyydII} and \eqref{e-gue190524sydp}, we see that 
\begin{equation}\label{e-gue190524yydz}
\Box^{(q)}_\beta L^{(q)}\Pi^{(q)}\equiv0\mod\cC^\infty((U\times U)\cap(X\times\ol M)).
\end{equation}
From \eqref{e-gue190524yydz}, \eqref{e-gue190524sydq} and \eqref{e-gue190527sydI}, we have 
\[L^{(q)}\Pi^{(q)}=(A\Box^{(q)}_\beta+S_-+S_+)L^{(q)}\Pi^{(q)}
\equiv S_-L^{(q)}\Pi^{(q)}\mod\cC^\infty((U\times U)\cap(X\times\ol M))\]
and we get \eqref{e-gue190524yydp}. 

From \eqref{e-gue190524yydp} and \eqref{e-gue190524sydp}, 
we get \eqref{e-gue190524yydq}. 

We now prove \eqref{e-gue190524yydr}. Put 
\[\begin{split}
&\gamma^{(q)}:=\Pi^{(q)}-\tilde PL^{(q)}\Pi^{(q)}: 
\Omega^{0,q}_0(U\cap\ol M)\To\Omega^{0,q}(\ol M),\\
&\gamma^{(q)}_0:=\tilde P^*\tilde P-H: \Omega^{0,q}_0(D)\To\Omega^{0,q}(X),\\
&\gamma^{(q)}_1:=S^\dagger_--S_-: \Omega^{0,q}_0(D)\To\Omega^{0,q}(X),\\
&\gamma^{(q)}_2:=L^{(q)}-Q^{(q)}(\tilde P^*\tilde P)^{-1}\tilde P^*: 
\Omega^{0,q}_0(U\cap\ol M)\To\Omega^{0,q}(X),\\
&\gamma^{(q)}_3:=S_-L^{(q)}\Pi^{(q)}-L^{(q)}\Pi^{(q)}: 
\Omega^{0,q}_0(U\cap\ol M)\To\Omega^{0,q}_0(D),
\end{split}\]
where $S^\dagger_-$ is given by \eqref{e-gue190528ycdb}. 
From \eqref{e-gue190524sydp}, \eqref{e-gue190527sydIII}, 
\eqref{e-gue190524syds} and \eqref{e-gue190524yydp}, we see that 
\begin{equation}\label{e-gue190528sydh}
\begin{split}
&\gamma^{(q)}\equiv0\mod\cC^\infty((U\times U)\cap(\ol M\times\ol M)),\ \ 
\gamma^{(q)}_2\equiv0\mod\cC^\infty((U\times U)\cap(X\times\ol M)),\\
&\gamma^{(q)}_3\equiv0\mod\cC^\infty((U\times U)\cap(X\times\ol M)),\ \ 
\gamma^{(q)}_1\equiv0,\ \ \gamma^{(q)}_0\equiv0.
\end{split}
\end{equation}
Let 
\[(\gamma^{(q)})^*: \Omega^{0,q}(\ol M)\To\Omega^{0,q}(U\cap\ol M)\]
be the formal adjoint of $\gamma^{(q)}$ with respect to $(\,\cdot\,|\,\cdot\,)_M$ and let 
\[(\gamma^{(q)}_2)^*: \Omega^{0,q}(X)\To\Omega^{0,q}(U\cap\ol M)\]
be the formal adjoint of $\gamma^{(q)}_2$ with respect to 
$(\,\cdot\,|\,\cdot\,)_M$ and $(\,\cdot\,|\,\cdot\,)_X$, that is, 
\[(\,\gamma^{(q)}_2u\,|\,v\,)_X=(\,u\,|\,(\gamma^{(q)}_2)^*v\,)_M,\]
for every $u\in\Omega^{0,q}_0(U\cap\ol M)$, $v\in\Omega^{0,q}(X)$. It is obvious that 
\begin{equation}\label{e-gue190528sydi}
(\gamma^{(q)})^*\equiv0\mod\cC^\infty((U\times U)\cap(\ol M\times\ol M)),\ \ 
(\gamma^{(q)}_2)^*\equiv0\mod\cC^\infty((U\times U)\cap(\ol M\times X)).
\end{equation}
Let $u, v\in\Omega^{0,q}_0(U\cap\ol M)$. From \eqref{e-gue190419scd},  
it is straightforward to check that
\begin{equation}\label{e-gue190528syd}
\begin{split}
&(\,\Pi^{(q)}\tilde PS_-L^{(q)}u\,|\,v\,)_M\\
&=(\,\tilde PS_-L^{(q)}u\,|\,\Pi^{(q)}v\,)_M+
(\,\tilde PS_-L^{(q)}u\,|\,\Gamma^{(q)}_1v\,)_M\\
&=(\,\tilde PS_-L^{(q)}u\,|\,\tilde PL^{(q)}\Pi^{(q)}v\,)_M+
(\,\tilde PS_-L^{(q)}u\,|\,\gamma^{(q)}v\,)_M+
(\,\tilde PS_-L^{(q)}u\,|\,\Gamma^{(q)}_1v\,)_M\\
&=(\,S_-L^{(q)}u\,|\,HL^{(q)}\Pi^{(q)}v\,)_X+
(\,S_-L^{(q)}u\,|\,\gamma^{(q)}_0L^{(q)}\Pi^{(q)}v\,)_X\\
&\quad+(\,\tilde PS_-L^{(q)}u\,|\,\gamma^{(q)}v\,)_M+
(\,\tilde PS_-L^{(q)}u\,|\,\Gamma^{(q)}_1v\,)_M\\
&=[\,L^{(q)}u\,|\,S^\dagger_-L^{(q)}\Pi^{(q)}v\,]_X+
(\,S_-L^{(q)}u\,|\,\gamma^{(q)}_0L^{(q)}\Pi^{(q)}v\,)_X\\
&\quad+(\,\tilde PS_-L^{(q)}u\,|\,\gamma^{(q)}v\,)_M+
(\,\tilde PS_-L^{(q)}u\,|\,\Gamma^{(q)}_1v\,)_M\\
&=[\,L^{(q)}u\,|\,S_-L^{(q)}\Pi^{(q)}v\,]_X+
[\,L^{(q)}u\,|\,\gamma^{(q)}_1L^{(q)}\Pi^{(q)}v\,]_X+
(\,S_-L^{(q)}u\,|\,\gamma^{(q)}_0L^{(q)}\Pi^{(q)}v\,)_X\\
&\quad+(\,\tilde PS_-L^{(q)}u\,|\,\gamma^{(q)}v\,)_M+
(\,\tilde PS_-L^{(q)}u\,|\,\Gamma^{(q)}_1v\,)_M\\
&=[\,Q^{(q)}(\tilde P^*\tilde P)^{-1}\tilde P^*u\,|\,S_-L^{(q)}\Pi^{(q)}v\,]_X+
[\,\gamma^{(q)}_2u\,|\,S_-L^{(q)}\Pi^{(q)}v\,]_X+
[\,L^{(q)}u\,|\,\gamma^{(q)}_1L^{(q)}\Pi^{(q)}v\,]_X\\
&\quad+(\,S_-L^{(q)}u\,|\,\gamma^{(q)}_0L^{(q)}\Pi^{(q)}v\,)_X+
(\,\tilde PS_-L^{(q)}u\,|\,\gamma^{(q)}v\,)_M+
(\,\tilde PS_-L^{(q)}u\,|\,\Gamma^{(q)}_1v\,)_M\\
&=(\,u\,|\,\tilde PS_-L^{(q)}\Pi^{(q)}v\,)_M+
(\,\tilde P\gamma^{(q)}_2u\,|\,\tilde PS_-L^{(q)}\Pi^{(q)}v\,)_M+
(\,\tilde PL^{(q)}u\,|\,\tilde P\gamma^{(q)}_1L^{(q)}\Pi^{(q)}v\,)_M\\
&\quad+(\,\tilde PS_-L^{(q)}u\,|\,\tilde P(\tilde P^*\tilde P)^{-1}
\gamma^{(q)}_0L^{(q)}\Pi^{(q)}v\,)_M+
(\,\tilde PS_-L^{(q)}u\,|\,\gamma^{(q)}v\,)_M+
(\,\tilde PS_-L^{(q)}u\,|\,\Gamma^{(q)}_1v\,)_M\\
&=(\,u\,|\,\tilde PL^{(q)}\Pi^{(q)}v\,)_M+
(\,u\,|\,\tilde P\gamma^{(q)}_3v\,)_M+
(\,u\,|\,(\gamma^{(q)}_2)^*\tilde P^*\tilde PS_-L^{(q)}\Pi^{(q)}v\,)_M\\
&\quad+(\,u\,|\,(L^{(q)})^*\tilde P^*\tilde P\gamma^{(q)}_1L^{(q)}\Pi^{(q)}v\,)_M+
(\,u\,|\,(L^{(q)})^*S^*_-\gamma^{(q)}_0L^{(q)}\Pi^{(q)}v\,)_M\\
&\quad+(\,u\,|\,(L^{(q)})^*(S_-)^*(\tilde P)^*\gamma^{(q)}v\,)_M+
(\,u\,|\,(L^{(q)})^*(S_-)^*(\tilde P)^*\Gamma^{(q)}_1v\,)_M\\
&=(\,u\,|\,\Pi^{(q)}v\,)_M-(\,u\,|\,\gamma^{(q)}v\,)_M+
(\,u\,|\,\tilde P\gamma^{(q)}_3v\,)_M+
(\,u\,|\,(\gamma^{(q)}_2)^*\tilde P^*\tilde PS_-L^{(q)}\Pi^{(q)}v\,)_M\\
&\quad+(\,u\,|\,(L^{(q)})^*\tilde P^*\tilde P\gamma^{(q)}_1L^{(q)}\Pi^{(q)}v\,)_M+
(\,u\,|\,(L^{(q)})^*S^*_-\gamma^{(q)}_0L^{(q)}\Pi^{(q)}v\,)_M\\
&\quad+(\,u\,|\,(L^{(q)})^*(S_-)^*(\tilde P)^*\gamma^{(q)}v\,)_M+
(\,u\,|\,(L^{(q)})^*(S_-)^*(\tilde P)^*\Gamma^{(q)}_1v\,)_M,
\end{split}
\end{equation}
where $(L^{(q)})^*: \Omega^{0,q}(D)\To\Omega^{0,q}(U\cap\ol M)$
is the formal adjoint of $L^{(q)}$ with respect to $(\,\cdot\,|\,\cdot\,)_M$ 
and $(\,\cdot\,|\,\cdot\,)_X$. Note that $(L^{(q)})^*$ is properly supported. 
From \eqref{e-gue190528syd}, we conclude that there is a continuous operator 
$\varepsilon^{(q)}: \Omega^{0,q}_0(U\cap\ol M)\To\Omega^{0,q}(U\cap\ol M)$ with 
$\varepsilon^{(q)}\equiv0\mod\cC^\infty((U\times U)\cap(\ol M\times\ol M))$ such that 
\begin{equation}\label{e-gue190528sydw}
(\,\Pi^{(q)}\tilde PS_-L^{(q)}u\,|\,v\,)_M=(\,u\,|\,\Pi^{(q)}v\,)_M+(\,u\,|\,\varepsilon^{(q)}v\,)_M,
\end{equation}
for every $u, v\in\Omega^{0,q}_0(U\cap\ol M)$. 
From \eqref{e-gue190528sydw} and \eqref{e-gue190419scd}, we get 
\begin{equation}\label{e-gue190528sydz}
(\,\Pi^{(q)}\tilde PS_-L^{(q)}u\,|\,v\,)_M=
(\,\Pi^{(q)}u\,|\,v\,)_M-(\,(\Gamma^{(q)}_1)^*u\,|\,v\,)_M+
(\,(\varepsilon^{(q)})^*u\,|\,v\,)_M,
\end{equation}
for every $u, v\in\Omega^{0,q}_0(U\cap\ol M)$, where 
$(\Gamma^{(q)}_1)^*, (\varepsilon^{(q)})^*: 
\Omega^{0,q}_0(U\cap\ol M)\To\Omega^{0,q}(U\cap\ol M)$ 
are the formal adjoints of $\Gamma^{(q)}_1$ and $\varepsilon^{(q)}$ 
with respect to $(\,\cdot\,|\,\cdot\,)_M$ respectively. 
Note that $(\Gamma^{(q)}_1)^*, (\varepsilon^{(q)})^*\equiv0
\mod\cC^\infty((U\times U)\cap(\ol M\times\ol M))$. 
From this observation and \eqref{e-gue190528sydz}, we get \eqref{e-gue190524yydr}.
\end{proof}

The following was proved in~\cite[Proposition 6.18]{Hsiao08}.

\begin{thm}\label{t-gue190528sydk}
With the notations and assumptions used above, we have 
\begin{equation}\label{e-gue190528sydk}
\ddbar\tilde PS_-L^{(q)}\equiv0\mod\cC^\infty((U\times U)\cap(\ol M\times\ol M)). 
\end{equation}
\end{thm}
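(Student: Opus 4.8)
The plan is to reduce the statement $\ddbar\tilde PS_-L^{(q)}\equiv0$ to the tangential identity $\ddbar_\beta S_-\equiv0$ from Theorem~\ref{t-gue190527syd}, using the relation \eqref{e-gue190527yyd} between $\ddbar_\beta$ and $\gamma\ddbar\tilde P$ together with the parametrix for $\Box^{(q+1)}_-$ on $D$ from Theorem~\ref{t-gue190314mscdI}. First I would observe that, since $L^{(q)}$ is properly supported on $U\cap\ol M$ and $S_-$ is properly supported with ${\rm WF\,}'(S_-)$ concentrated on the diagonal of $\Sigma^-\cap T^*D$, the operator $S_-L^{(q)}$ maps $\Omega^{0,q}_0(U\cap\ol M)$ continuously to $\Omega^{0,q}_0(D)$, so all compositions below make sense; it suffices to work on $D=U\cap X$. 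Next, apply $\ol\partial$ to $\tilde PS_-L^{(q)}u$ and restrict to the boundary: by definition $\ddbar_\beta=Q^{(q+1)}\gamma\ddbar\tilde P$ (see \eqref{e-gue190527yyd}), so $Q^{(q+1)}\gamma\ddbar\tilde PS_-L^{(q)}=\ddbar_\beta S_-L^{(q)}\equiv0$ by \eqref{e-gue190527sydII}. Thus $(\ddbar\rho)^{\wedge,*}(\ddbar\rho)^\wedge\gamma\ddbar\tilde PS_-L^{(q)}\equiv0$ modulo the part killed by $I-Q^{(q+1)}$, which by Theorem~\ref{t-gue190514yyd} factors through $(\tilde P^*\tilde P)^{-1}(\ddbar\rho)^\wedge R$; hence $(\ddbar\rho)^{\wedge,*}\gamma\ddbar\tilde PS_-L^{(q)}\equiv0\mod\cC^\infty((U\times U)\cap(X\times\ol M))$ as well. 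Combined with $(\ddbar\rho)^\wedge\gamma\ddbar\tilde PS_-L^{(q)}$, which is exactly $\tfrac12\ddbar_b\circ(\text{something})$ — more precisely from \eqref{e-gue190313sydIII} we have $(\ddbar\rho)^\wedge\gamma\ddbar\tilde P$ related to $\ddbar_b$ — one sees that $\gamma\ddbar\tilde PS_-L^{(q)}\equiv0\mod\cC^\infty((U\times U)\cap(X\times\ol M))$, i.e. the boundary value of $\ddbar\tilde PS_-L^{(q)}u$ is smooth.

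Having controlled the boundary value, I would then promote this to smoothness in the interior. Set $v:=\tilde PS_-L^{(q)}u$. Since $\tilde\Box^{(q)}_f\tilde P\equiv0$ and $S_-,L^{(q)}$ are properly supported, $\Box^{(q)}_fv\equiv0\mod\cC^\infty$; therefore $\ddbar v$ satisfies $\Box^{(q+1)}_f(\ddbar v)=\ddbar\Box^{(q)}_fv\equiv0\mod\cC^\infty$ by the standard commutation $\Box^{(q+1)}_f\ddbar=\ddbar\Box^{(q)}_f$ on $M'$. Moreover $\ddbar v$ has smooth boundary value by the previous paragraph, and from $(\ddbar\rho)^{\wedge,*}\gamma(\ddbar v)\equiv0$ one checks $\gamma\ddbar(\ddbar v)=\gamma\ddbar^2 v=0$, so $\ddbar v$ satisfies the $\ddbar$-Neumann-type boundary conditions modulo smooth. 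Now invoke the parametrix identity \eqref{e-gue190418yyda}, $D^{(q+1)}\tilde\Box^{(q+1)}_f+\tilde P\gamma=I$ on $\Omega^{0,q+1}(\ol M)$: applied to $\ddbar v$, the first term is smoothing (since $\tilde\Box^{(q+1)}_f\ddbar v\equiv\Box^{(q+1)}_f\ddbar v\equiv0\mod\cC^\infty$ and $D^{(q+1)}$ gains regularity), and the second term $\tilde P\gamma\ddbar v$ is smoothing because $\gamma\ddbar v$ is smooth on $X$ and $\tilde P$ maps smooth to smooth by \eqref{e-gue190312scdIII}. Hence $\ddbar v\equiv0\mod\cC^\infty((U\times U)\cap(\ol M\times\ol M))$, which is exactly \eqref{e-gue190528sydk}.

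The main obstacle I anticipate is keeping track of the various "modulo $\cC^\infty$" classes relative to the correct product spaces — $(U\times U)\cap(X\times\ol M)$ versus $(U\times U)\cap(\ol M\times\ol M)$ — when passing boundary information into the interior, and ensuring that the reduction via $I-Q^{(q+1)}=(\tilde P^*\tilde P)^{-1}(\ddbar\rho)^\wedge R$ really does not reintroduce a nonsmooth term. One must also verify the interior commutation $\Box^{(q+1)}_f\ddbar=\ddbar\Box^{(q)}_f$ holds for the Laplace--Beltrami operators $\Box^{(q)}_f$ on $M'$ (it does, since $\ddbar^2=0$ and $\Box_f$ is built from $\ddbar$ and its formal adjoint), and that the Poisson-operator identity \eqref{e-gue190418yyda} is applied in the degree $q+1$, for which $q+1\neq n_-$ is generally needed for the parametrix $N^{(q+1)}$ — but here we only need the \emph{elliptic} interior parametrix $D^{(q+1)}$ of Boutet de Monvel, which exists in all degrees, so this is not an issue. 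Finally, to make the argument fully rigorous one would replace the heuristic "$\ddbar v$ satisfies the boundary conditions modulo smooth" by the explicit smoothing extensions (as done throughout Section~\ref{s-gue190321myyd}) and absorb them into the error term; I would cite the proof of Theorem~\ref{t-gue190404ycd} for the technique rather than repeat it.
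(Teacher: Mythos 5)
The interior half of your argument is fine: using \eqref{e-gue190418yyda} to write $\ddbar\tilde PS_-L^{(q)}\equiv\tilde P\gamma\ddbar\tilde PS_-L^{(q)}$ modulo $\cC^\infty((U\times U)\cap(\ol M\times\ol M))$ is exactly the right reduction, since $\Box^{(q+1)}_f\ddbar\tilde PS_-L^{(q)}=\ddbar\Box^{(q)}_f\tilde PS_-L^{(q)}$ is smoothing and $D^{(q+1)}$ gains two derivatives. The gap is in the boundary step. Split $\gamma\ddbar\tilde PS_-L^{(q)}$ by \eqref{e-gue190313scsI} into its tangential part $2(\ddbar\rho)^{\wedge,*}(\ddbar\rho)^\wedge\gamma\ddbar\tilde PS_-L^{(q)}=\ddbar_bS_-L^{(q)}$ and its normal part $2(\ddbar\rho)^\wedge(\ddbar\rho)^{\wedge,*}\gamma\ddbar\tilde PS_-L^{(q)}=2(\ddbar\rho)^\wedge\Box^{(q)}_-S_-L^{(q)}$, where $\Box^{(q)}_-=(\ddbar\rho)^{\wedge,*}\gamma\ddbar\tilde P$ by \eqref{e-gue190312mscdII}. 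Your inference that $(\ddbar\rho)^{\wedge,*}\gamma\ddbar\tilde PS_-L^{(q)}\equiv0$ follows from $\ddbar_\beta S_-\equiv0$ ``modulo the part killed by $I-Q^{(q+1)}$'' is not valid: since ${\rm Ran\,}Q^{(q+1)}\subset{\rm Ker\,}(\ddbar\rho)^{\wedge,*}$, one has $(\ddbar\rho)^{\wedge,*}Q^{(q+1)}=0$ identically, so the smoothing statement $Q^{(q+1)}\gamma\ddbar\tilde PS_-L^{(q)}=\ddbar_\beta S_-L^{(q)}\equiv0$ carries no information at all about the normal component; moreover $I-Q^{(q+1)}=(\tilde P^*\tilde P)^{-1}(\ddbar\rho)^\wedge R$ is an operator of order zero, not a smoothing one, so it cannot be absorbed into the error. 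What remains to be shown is precisely $\Box^{(q)}_-S_-L^{(q)}\equiv0$, and since here $q=n_-$ the operator $\Box^{(q)}_-$ is characteristic exactly on $\Sigma^-$, which is where ${\rm WF\,}'(S_-)$ is concentrated, so no ellipticity or wave-front-set argument disposes of it; this statement is the real content of the theorem and is essentially what the cited result \cite[Proposition 6.18]{Hsiao08} (to which the paper refers instead of giving a proof) establishes.

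A secondary problem is that even your tangential part is not finished: $\ddbar_b\neq\ddbar_\beta$. By Theorem~\ref{t-gue190514yyd} the difference $2(\ddbar\rho)^{\wedge,*}(\ddbar\rho)^\wedge-Q^{(q+1)}$ is a classical pseudodifferential operator of order $-1$ (only the principal symbols agree), so $(\ddbar_b-\ddbar_\beta)\gamma\ddbar\tilde PS_-L^{(q)}$ is of order $\leq0$, not smoothing, and $\ddbar_\beta S_-\equiv0$ does not by itself yield $\ddbar_bS_-L^{(q)}\equiv0$. To repair the argument you would need exact identities expressing $\gamma\ddbar\tilde P$ in terms of $\ddbar_\beta$ and $\Box^{(q)}_-$ with remainders that are either smoothing or have full symbols vanishing in a neighborhood of $\Sigma^-$ (so that their composition with $S_-$, whose wave front set lies in ${\rm diag\,}\bigl((\Sigma^-\cap T^*D)\times(\Sigma^-\cap T^*D)\bigr)$, becomes smoothing), together with an argument that $\Box^{(q)}_-S_-\equiv0$. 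That is the extra work carried out in \cite{Hsiao08}, and it is not recoverable from the statements quoted in this paper alone.
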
 

Let $\delta^{(q)}:=2\rho\tilde P\Bigr((\ddbar\rho)^{\wedge,*}
\gamma\ddbar\tilde PS_-L^{(q)}\Bigr): \Omega^{0,q}_0(U\cap\ol M)\To
\Omega^{0,q}(\ol M)$. 
From \eqref{e-gue190528sydk}, we see that 
\begin{equation}\label{e-gue190529yyd}
\delta^{(q)}\equiv0\mod\cC^\infty((U\times U)\cap(\ol M\times\ol M)). 
\end{equation}
Moreover, it is easy to check that 
\begin{equation}\label{e-gue190529yydI}
\mbox{($\tilde PS_-L^{(q)}-\delta^{(q)})u
\in{\rm Dom\,}\Box^{(q)}\cap\Omega^{0,q}(\ol M)$, 
for every $u\in\Omega^{0,q}_0(U\cap\ol M)$.}
\end{equation}

\begin{thm}\label{t-gue190529yyd}
With the notations and assumptions used above, we have
\begin{equation}\label{e-gue190529yydII}
\Pi^{(q)}\tilde PS_-L^{(q)}-\tilde PS_-L^{(q)}\equiv0
\mod\cC^\infty((U\times U)\cap(\ol M\times\ol M)).
\end{equation}
\end{thm}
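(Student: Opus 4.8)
The plan is to verify \eqref{e-gue190529yydII} by feeding the smooth forms $\tilde PS_-L^{(q)}u$ into the microlocal Hodge decomposition \eqref{e-gue190419ycdc}, after correcting them by the smoothing term $\delta^{(q)}u$ so that they lie in ${\rm Dom\,}\Box^{(q)}$, and then exploiting that $\Box^{(q)}$ annihilates them modulo $\cC^\infty$. First I would fix $u\in\Omega^{0,q}_0(U\cap\ol M)$ and set $v:=(\tilde PS_-L^{(q)}-\delta^{(q)})u$; by \eqref{e-gue190529yydI} we have $v\in{\rm Dom\,}\Box^{(q)}\cap\Omega^{0,q}(\ol M)$, so that $\Box^{(q)}v=\Box^{(q)}_fv$, since on smooth forms in the domain the $\ddbar$-Neumann Laplacian agrees with the formal Laplacian.

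The key computation is to show $\Box^{(q)}v\equiv0\mod\cC^\infty((U\times U)\cap(\ol M\times\ol M))$. Here I would use that $\tilde P$ is the Poisson operator for $\tilde\Box^{(q)}_f=\Box^{(q)}_f+K^{(q)}$ (see \eqref{e-gue190312mscd}), whence $\Box^{(q)}_f\tilde P=-K^{(q)}\tilde P\equiv0\mod\cC^\infty(\ol M\times X)$ because $K^{(q)}\equiv0\mod\cC^\infty(\ol M\times\ol M)$; pre-composing with the properly supported operator $L^{(q)}$, which maps $\Omega^{0,q}_0(U\cap\ol M)$ into $\Omega^{0,q}_0(D)$, then gives $\Box^{(q)}_f\tilde PS_-L^{(q)}\equiv0$ in the class $\cC^\infty((U\times U)\cap(\ol M\times\ol M))$. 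Combined with $\delta^{(q)}\equiv0$ (by \eqref{e-gue190529yyd}) and the observation that applying the differential operator $\Box^{(q)}_f$ to a properly supported smoothing operator leaves it smoothing, this yields $\Box^{(q)}v\equiv0$.

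I would then plug $v$ into \eqref{e-gue190419ycdc}, which reads $N^{(q)}\Box^{(q)}v+\Pi^{(q)}v=v+\Lambda^{(q)}v$ on $U\cap\ol M$; since $v$ is a smooth form in the domain and $N^{(q)}$, $\Pi^{(q)}$ (by Theorem~\ref{t-gue190524sydh}) preserve smoothness up to the boundary, both sides are smooth forms on $U\cap\ol M$ and the identity holds up to $X$. Now $N^{(q)}\Box^{(q)}v\equiv0$ because $N^{(q)}$ is properly supported and continuous on the local Sobolev scale while $\Box^{(q)}v$ is smoothing; $\Lambda^{(q)}v\equiv0$ since $\Lambda^{(q)}$ is smoothing; and $\Pi^{(q)}\delta^{(q)}u\equiv0$ since $\Pi^{(q)}\colon H^s_{{\rm loc\,}}\to H^{s-1}_{{\rm loc\,}}$ and $\delta^{(q)}$ is smoothing. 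Substituting $v=\tilde PS_-L^{(q)}u-\delta^{(q)}u$ and collecting smoothing terms leaves $\Pi^{(q)}\tilde PS_-L^{(q)}u\equiv\tilde PS_-L^{(q)}u\mod\cC^\infty((U\times U)\cap(\ol M\times\ol M))$ for every $u\in\Omega^{0,q}_0(U\cap\ol M)$, and a density argument together with proper support upgrades this to the operator identity \eqref{e-gue190529yydII}.

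The conceptual core — really the only new input — is the combination of \eqref{e-gue190529yydI} (that $\tilde PS_-L^{(q)}$ satisfies the $\ddbar$-Neumann boundary conditions modulo $\cC^\infty$) with $\Box^{(q)}_f\tilde PS_-L^{(q)}\equiv0$; once these are available the argument is essentially formal. The main obstacle is bookkeeping rather than substance: one must track which smoothing class each error term lies in — in particular the mixed-class step where $\Box^{(q)}_f\tilde P$ is smoothing only as an operator $\cC^\infty(X)\to\cC^\infty(\ol M)$ and must be pre-composed with the properly supported $L^{(q)}$ to return to the $(U\times U)\cap(\ol M\times\ol M)$ class — and check that the composite operators $\tilde PS_-L^{(q)}$, $\Pi^{(q)}\tilde PS_-L^{(q)}$ are all well defined on $\Omega^{0,q}_0(U\cap\ol M)$ via the proper-support hypotheses of Theorems~\ref{t-gue190404hyyd} and \ref{t-gue190524ycd}.
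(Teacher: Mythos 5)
Your proof is correct and follows essentially the paper's own argument: correct $\tilde PS_-L^{(q)}$ by the smoothing term $\delta^{(q)}$ so that \eqref{e-gue190529yydI} places it in ${\rm Dom\,}\Box^{(q)}\cap\Omega^{0,q}(\ol M)$, feed it into \eqref{e-gue190419ycdc}, and absorb $N^{(q)}\Box^{(q)}(\cdot)$, $\Lambda^{(q)}(\cdot)$ and $\Pi^{(q)}\delta^{(q)}$ as operators congruent to $0$ modulo $\cC^\infty((U\times U)\cap(\ol M\times\ol M))$ using the stated mapping properties. The only (harmless) deviation is the justification of the key fact $\Box^{(q)}_f\tilde PS_-L^{(q)}\equiv0$: you obtain it directly from the Poisson equation $\tilde\Box^{(q)}_f\tilde P=0$, i.e.\ $\Box^{(q)}_f\tilde P=-K^{(q)}\tilde P$ with $K^{(q)}$ smoothing, while the paper cites \eqref{e-gue190527sydII} and \eqref{e-gue190528sydk}; both routes yield the same conclusion.
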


\begin{proof}
From \eqref{e-gue190419ycdc} and \eqref{e-gue190529yydI}, we have 
\begin{equation}\label{e-gue190529yyda}
N^{(q)}\Box^{(q)}(\tilde PS_-L^{(q)}-\delta^{(q)})u+
\Pi^{(q)}(\tilde PS_-L^{(q)}-\delta^{(q)})u=
(\tilde PS_-L^{(q)}-\delta^{(q)})u+
\Lambda^{(q)}(\tilde PS_-L^{(q)}-\delta^{(q)})u, 
\end{equation}
for every $u\in\Omega^{0,q}_0(U\cap\ol M)$, where 
$\Lambda^{(q)}\equiv0\mod\cC^\infty((U\times U)\cap(\ol M\times\ol M))$ 
is as in \eqref{e-gue190419ycdc}. 
From \eqref{e-gue190527sydII}, \eqref{e-gue190528sydk} and \eqref{e-gue190529yyd}, we have 
\begin{equation}\label{e-gue190529yydb}
N^{(q)}\Box^{(q)}(\tilde PS_-L^{(q)}-\delta^{(q)})u=
N^{(q)}\Box^{(q)}_f(\tilde PS_-L^{(q)}-\delta^{(q)})u=\Lambda^{(q)}_1u,
\end{equation}
for every $u\in\Omega^{0,q}_0(U\cap\ol M)$, where 
$\Lambda^{(q)}_1\equiv0\mod\cC^\infty((U\times U)\cap(\ol M\times\ol M))$. 
From \eqref{e-gue190529yydb}, \eqref{e-gue190529yyda} and 
\eqref{e-gue190529yyd}, we get \eqref{e-gue190529yydII}. 
\end{proof}

From \eqref{e-gue190529yydII} and \eqref{e-gue190524yydr}, we conclude that 
\begin{equation}\label{e-gue190529ycd}
\Pi^{(q)}-\tilde PS_-L^{(q)}\equiv0\mod\cC^\infty((U\times U)\cap(\ol M\times\ol M)).
\end{equation}
Note that $S_-\in L^0_{\frac{1}{2},\frac{1}{2}}(D,T^{*0,q}X
\boxtimes(T^{*0,q}X)^*)$. From this observation and the classical 
result of Calderon and Vaillancourt 
(see \eqref{e-gue190322yyd}), \eqref{e-gue190313ad}, 
\eqref{e-gue190515yyd} and \eqref{e-gue190529ycd}, 
we can improve Theorem~\ref{t-gue190524sydh} as follows.

\begin{thm}\label{t-gue190529ycdh}
With the notations used above, $\Pi^{(q)}$ can be continuously extended to 
\begin{equation}\label{e-gue190529ycdI}
\begin{split}
&\Pi^{(q)}: H^s_{{\rm loc\,}}(U\cap\ol M, T^{*0,q}M')\To 
H^{s}_{{\rm loc\,}}(U\cap\ol M, T^{*0,q}M'),\ \ \mbox{for every $s\in\mathbb Z$},\\
&\Pi^{(q)}: H^s_{{\rm comp\,}}(U\cap\ol M, T^{*0,q}M')\To 
H^{s}_{{\rm comp\,}}(U\cap\ol M, T^{*0,q}M'),\ \ \mbox{for every $s\in\mathbb Z$}.
\end{split}
\end{equation}
\end{thm}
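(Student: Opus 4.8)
The plan is to deduce the improved regularity of $\Pi^{(q)}$ directly from the microlocal identity \eqref{e-gue190529ycd}, namely $\Pi^{(q)}\equiv\tilde PS_-L^{(q)}$ modulo $\cC^\infty((U\times U)\cap(\ol M\times\ol M))$. Since the smoothing remainder maps any Sobolev space into $\cC^\infty$, it suffices to establish the stated mapping properties for the explicit composition $\tilde PS_-L^{(q)}$. This is a matter of concatenating known continuity statements for each of the three factors, reading them off from the results already recorded in the excerpt.

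First I would treat $L^{(q)}$. By construction $L^{(q)}$ is properly supported on $U\cap\ol M$ and $L^{(q)}-Q^{(q)}(\tilde P^*\tilde P)^{-1}\tilde P^*\equiv0\mod\cC^\infty((U\times U)\cap(X\times\ol M))$, so on the relevant local patch $L^{(q)}$ differs from $Q^{(q)}(\tilde P^*\tilde P)^{-1}\tilde P^*$ by a smoothing operator. Now $\tilde P^*:H^s(\ol M,T^{*0,q}M')\To H^{s+\frac12}(X,T^{*0,q}M')$ by \eqref{e-gue190515yyd}, while $(\tilde P^*\tilde P)^{-1}$ is a classical pseudodifferential operator of order $1$ on $X$ and $Q^{(q)}$ is a classical pseudodifferential operator of order $0$ on $X$ (Theorem~\ref{t-gue190514yyd}); hence $L^{(q)}:H^s_{{\rm comp\,}}(U\cap\ol M,T^{*0,q}M')\To H^{s-\frac12}_{{\rm loc\,}}(D,T^{*0,q}X)$ (and the comp/loc variants), for every $s\in\mathbb Z$. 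Next, $S_-\in L^0_{\frac12,\frac12}(D,T^{*0,q}X\boxtimes(T^{*0,q}X)^*)$, so by the Calderon--Vaillancourt theorem recorded in \eqref{e-gue190322yyd} it maps $H^s_{{\rm comp\,}}(D,T^{*0,q}X)\To H^s_{{\rm loc\,}}(D,T^{*0,q}X)$, with no loss of derivatives. Finally, the Poisson operator $\tilde P$ satisfies $\tilde P:H^s(X,T^{*0,q}M')\To H^{s+\frac12}(\ol M,T^{*0,q}M')$ by \eqref{e-gue190313ad}.

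Composing these three, the half-derivative lost through $\tilde P^*$ in $L^{(q)}$ is exactly recovered by the half-derivative gained through $\tilde P$ at the end, while $S_-$ contributes no loss; hence $\tilde PS_-L^{(q)}$ maps $H^s_{{\rm comp\,}}(U\cap\ol M,T^{*0,q}M')$ continuously into $H^s_{{\rm loc\,}}(U\cap\ol M,T^{*0,q}M')$ for every $s\in\mathbb Z$. Since $\tilde P$, $S_-$ and $L^{(q)}$ are all properly supported on the relevant sets, the composition is properly supported on $U\cap\ol M$, which upgrades the $\mathrm{comp}\to\mathrm{loc}$ statement to the $\mathrm{comp}\to\mathrm{comp}$ and (by duality of supports together with properness) $\mathrm{loc}\to\mathrm{loc}$ statements of \eqref{e-gue190529ycdI}. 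Adding the smoothing remainder from \eqref{e-gue190529ycd} does not affect continuity in any Sobolev scale, so $\Pi^{(q)}$ inherits these mapping properties, completing the proof.

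The only point requiring care — and the step I expect to be the mild obstacle — is bookkeeping the transitions between the global Sobolev spaces on $\ol M$ and $X$ that appear in \eqref{e-gue190313ad}, \eqref{e-gue190515yyd} and Theorem~\ref{t-gue190514yyd}, and the local $H^s_{{\rm comp\,}}$, $H^s_{{\rm loc\,}}$ spaces on $U\cap\ol M$ and $D$ in which the final statement is phrased; this is handled by inserting suitable cutoffs $\chi\in\cC^\infty_0(U\cap\ol M)$ and exploiting proper support, exactly as in the passage from \eqref{e-gue190524sydk} to \eqref{e-gue190524sydh}, so no new idea is needed. Everything else is a direct concatenation of continuity statements already established in the excerpt.
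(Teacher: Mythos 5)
Your proposal is correct and follows essentially the same route as the paper, which deduces the theorem precisely from \eqref{e-gue190529ycd} together with the Calderon--Vaillancourt bound \eqref{e-gue190322yyd} for $S_-\in L^0_{\frac12,\frac12}$, the Poisson operator continuity \eqref{e-gue190313ad}, the continuity of $\tilde P^*$ in \eqref{e-gue190515yyd}, and the order count for $Q^{(q)}(\tilde P^*\tilde P)^{-1}$, the half-derivative lost in $L^{(q)}$ being recovered by $\tilde P$. The only slight imprecision is your claim that $\tilde P$ is properly supported (it is only smoothing away the diagonal by Lemma~\ref{l-gue190429yyd}); the ${\rm comp}/{\rm loc}$ statements instead follow because $\Pi^{(q)}$ itself is already known to be properly supported on $U\cap\ol M$ (Remark~\ref{r-gue190521syda}).
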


We introduce some notations. Let $m\in\Real$ and let $U$ be an open set in $M'$. Let 
\[S^{m}_{1, 0}(((U\times U)\cap(\ol M\times\ol M))\times]0, \infty[,
T^{*0,q}M'\boxtimes(T^{*0,q}M')^*)\]
denote the space of restrictions to $U\cap\ol M$ of elements in 
\[S^{m}_{1, 0}(U\times U\times]0, \infty[,T^{*0,q}M'\boxtimes(T^{*0,q}M')^*).\]\
Let
\[a_j\in S^{m_j}_{1, 0}(((U\times U)\cap(\ol M\times\ol M))\times]0, \infty[,
T^{*0,q}M'\boxtimes(T^{*0,q}M')^*),\ \ j=0,1,2,\dots,\] 
with $m_j\searrow -\infty$, $j\To \infty$.
Then there exists
\[a\in S^{m_0}_{1, 0}(((U\times U)\cap(\ol M\times\ol M))\times]0, \infty[,
T^{*0,q}M'\boxtimes(T^{*0,q}M')^*)\]
such that
\[a-\sum^{k-1}_{j=0}a_j\in 
S^{m_k}_{1, 0}((U\times U)\cap(\ol M\times\ol M)\times]0, \infty[,
T^{*0,q}M'\boxtimes(T^{*0,q}M')^*),\]
for every $k=1,2,\ldots$. If $a$ and $a_j$ have the properties above, we write
\[a\sim\sum^\infty_{j=0}a_j \text{ in }
S^{m_0}_{1, 0}(((U\times U)\cap(\ol M\times\ol M))\times]0, \infty[,
T^{*0,q}M'\boxtimes(T^{*0,q}M')^*).\]

We can repeat the procedure in the proof of~\cite[Proposition 7.8 of Part II]{Hsiao08} 
and deduce that the distribution kernel of $\tilde PS_-L^{(q)}$ is of the form 
\begin{equation}\label{e-gue190529ycdII}
\tilde PS_-L^{(q)}(z, w)\equiv\int^\infty_0e^{i\phi(z, w)t}b(z, w, t)dt
\mod\cC^\infty((U\times U)\cap(\ol M\times\ol M)),
\end{equation}
where $\phi(z, w)\in\cC^\infty((U\times U)\cap(\ol M\times\ol M))$ 
is as in~\cite[Theorem 1.4 of Part II]{Hsiao08}, 
\[\mbox{$b(z, w, t)\sim\sum^\infty_{j=0}b_j(z, w)t^{n-j}$ in 
$S^{n}_{1, 0}((U\times U)\cap(\ol M\times\ol M)\times]0, \infty[, 
T^{*0,q}M'\boxtimes(T^{*0,q}M')^*)$}\]
and the leading term $b_0(z,z)$ is given by~\cite[Proposition 1.6 of Part II]{Hsiao08}. 
From Theorem~\ref{t-gue190404hyyd}, 
Theorem~\ref{t-gue190523ycd}, Theorem~\ref{t-gue190419ycdb}, 
Theorem~\ref{t-gue190529ycdh}, \eqref{e-gue190529ycd} and 
\eqref{e-gue190529ycdII}, we get the main result of this section:

\begin{thm}\label{t-gue190529ycdi}
Let $U$ be an open set of $M'$ with $U\cap X\neq\emptyset$. Suppose that the Levi form is non-degenerate of constant signature $(n_-, n_+)$ on $U\cap X$. 
Let $q=n_-$. We can find properly supported continuous operators on $U\cap\ol M$, 
\begin{equation}\label{e-gue190529syd}
\begin{split}
&N^{(q)}: H^s_{{\rm loc\,}}(U\cap\ol M, T^{*0,q}M')\To 
H^{s}_{{\rm loc\,}}(U\cap\ol M, T^{*0,q}M'),\ \ 
\mbox{for every $s\in\mathbb Z$},\\
&\Pi^{(q)}: H^s_{{\rm loc\,}}(U\cap\ol M, T^{*0,q}M')\To 
H^s_{{\rm loc\,}}(U\cap\ol M, T^{*0,q}M'),\ \ \mbox{for every $s\in\mathbb Z$},
\end{split}\end{equation}
such that  
\begin{equation}\label{e-gue190529sydI}
\begin{split}
&N^{(q)}u\in{\rm Dom\,}\Box^{(q)},\ \ \mbox{for every $u\in\Omega^{0,q}_0(U\cap\ol M)$},\\
&\Pi^{(q)}u\in{\rm Dom\,}\Box^{(q)},\ \ \mbox{for every $u\in\Omega^{0,q}_0(U\cap\ol M)$},
\end{split}
\end{equation}
and on $U\cap\ol M$, we have
\begin{equation}\label{e-gue190531yyd}
\begin{split}
&\mbox{$\Box^{(q)}_fN^{(q)}u+\Pi^{(q)}u=u+r^{(q)}_0u$ 
for every  $u\in \Omega^{0,q}(U\cap M)$},\\
&\mbox{$N^{(q)}\Box^{(q)}u+\Pi^{(q)}u=u+r^{(q)}_1u$ 
for every $u\in{\rm Dom\,}\Box^{(q)}$},\\
&\mbox{$\ddbar\Pi^{(q)}u=r^{(q)}_2u$ for every 
$u\in L^2_{{\rm loc\,}}(U\cap\ol M,T^{*0,q}M')$}, \\
&\mbox{$\ol{\pr}^*_f\Pi^{(q)}u=r^{(q)}_3u$, for every 
$u\in L^2_{{\rm loc\,}}(U\cap\ol M,T^{*0,q}M')$},\\
&\mbox{$\Pi^{(q)}\Box^{(q)}u=r^{(q)}_4u$, for every $u\in{\rm Dom\,}\Box^{(q)}$},\\
&\mbox{$\Box^{(q)}_f\Pi^{(q)}u=r^{(q)}_5u$, for every $u\in\Omega^{0,q}(U\cap\ol M)$},\\
&\mbox{$(\Pi^{(q)})^2u-\Pi^{(q)}u=r^{(q)}_6u$, for every $u\in\Omega^{0,q}(U\cap\ol M)$},
\end{split}
\end{equation}
where $r^{(q)}_j$ is properly supported on $U\cap\ol M $ with 
$r^{(q)}_j\equiv0\mod\cC^\infty((U\times U)\cap(\ol M\times\ol M))$, 
for every $j=0,1,2,3,4,5,6$, and the 
distribution kernel of $\Pi^{(q)}$ satisfies 
\begin{equation}\label{e-gue190531yydI}
\Pi^{(q)}(z, w)\equiv\int^\infty_0e^{i\phi(z, w)t}b(z, w, t)dt 
\mod\cC^\infty((U\times U)\cap(\ol M\times\ol M))
\end{equation}
(for the precise meaning of the oscillatory integral 
$\int^\infty_0e^{i\phi(z, w)t}b(z, w, t)dt$, 
see Remark~\ref{r-gue190531yyd} below) with
\begin{equation}\label{e-gue190531yydII}
\begin{split}
&b(z, w, t)\in S^{n}_{1, 0}((U\times U)\cap(\ol M\times\ol M)\times]0, 
\infty[,T^{*0,q}M'\boxtimes(T^{*0,q}M')^*),\\
&\mbox{$b(z, w, t)\sim\sum^\infty_{j=0}b_j(z, w)t^{n-j}$ in 
$S^{n}_{1, 0}((U\times U)\cap(\ol M\times\ol M)\times]0, 
\infty[,T^{*0,q}M'\boxtimes(T^{*0,q}M')^*)$},\\
\end{split}
\end{equation}
with $b_0(z,z)$ given by \eqref{e-gue190531yyda} below. Moreover, 
\begin{equation}\label{e-gue190531yydIIa}
\begin{split}
&\phi(z, w)\in\cC^\infty((U\times U)\cap(\ol M\times\ol M)),\ \ 
{\rm Im\,}\phi\geq0, \\
&\phi(z, z)=0,\ \ z\in U\cap X,\ \ \phi(z, w)\neq0\ \ \mbox{if}\ \ 
(z, w)\notin{\rm diag\,}((U\times U)\cap(X\times X)), \\
&{\rm Im\,}\phi(z, w)>0\ \ \mbox{if}\ \ (z, w)\notin(U\times U)\cap(X\times X), \\
&\phi(z, w)=-\ol\phi(w, z),\\
&\mbox{$d_z\phi(x,x)=-\omega_0(x)-id\rho(x)$, for every $x\in U\cap X$},
\end{split}\end{equation}
 $\phi(z, w)\in\cC^\infty((U\times U)\cap(\ol M\times\ol M))$ 
 is as in~\cite[Theorem 1.4 of Part II]{Hsiao08} 
 and  $\phi(x, y)=\varphi_-(x, y)$ if $x, y\in U\cap X$, where 
 $\varphi_-(x, y)\in\cC^\infty((U\times U)\cap(X\times X))$ 
 is as in Theorem~\ref{t-gue190527syd}.
\end{thm}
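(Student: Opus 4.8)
The plan is to take $N^{(q)}$ and $\Pi^{(q)}$ to be precisely the operators produced in Theorem~\ref{t-gue190404hyyd}, and to assemble all the asserted properties from the results established in Sections~\ref{s-gue190531yyd} and \ref{s-gue190321myyd}; no further construction is needed, only bookkeeping. First I would record the regularity statements \eqref{e-gue190529syd}: for $\Pi^{(q)}$ this is exactly Theorem~\ref{t-gue190529ycdh}, which sharpens Theorem~\ref{t-gue190524sydh} by inserting the identification $\Pi^{(q)}\equiv\tilde PS_-L^{(q)}$ of \eqref{e-gue190529ycd} into the Calder\'on--Vaillancourt bound \eqref{e-gue190322yyd}, together with the mapping properties \eqref{e-gue190313ad}, \eqref{e-gue190515yyd} of the Poisson operator; for $N^{(q)}$ it is part of Theorem~\ref{t-gue190404hyyd}. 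Next, for the domain membership \eqref{e-gue190529sydI}, I would observe that for $u\in\Omega^{0,q}_0(U\cap\ol M)$ both $N^{(q)}u$ and $\Pi^{(q)}u$ lie in $\Omega^{0,q}_0(U\cap\ol M)\subset\Omega^{0,q}(\ol M)$ because the operators are properly supported, and that by \eqref{e-gue190404hyyd} and \eqref{e-gue190404hyydI} they satisfy $(\ddbar\rho)^{\wedge,*}\gamma(\,\cdot\,)|_D=0$ and $(\ddbar\rho)^{\wedge,*}\gamma\ddbar(\,\cdot\,)|_D=0$; Lemma~\ref{l-gue190327ycdz} then yields $(\ddbar\rho)^{\wedge,*}\gamma\ol{\pr}^*_f(\,\cdot\,)|_D=0$ as well, and these are exactly the $\ddbar$-Neumann boundary conditions placing a smooth form in ${\rm Dom\,}\Box^{(q)}$.

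The seven identities in \eqref{e-gue190531yyd} I would then read off one by one: the first is the first line of \eqref{e-gue190404hyydII}; the second and the fifth are \eqref{e-gue190419ycdc} and \eqref{e-gue190419ycdb} from Theorem~\ref{t-gue190419ycdb}; the third, fourth and sixth are the remaining lines of \eqref{e-gue190404hyydII}. For the last one (idempotency modulo smoothing) I would combine \eqref{e-gue190529ycd} with \eqref{e-gue190529yydII}: since $\Pi^{(q)}\equiv\tilde PS_-L^{(q)}$ one gets $(\Pi^{(q)})^2\equiv\Pi^{(q)}\tilde PS_-L^{(q)}\equiv\tilde PS_-L^{(q)}\equiv\Pi^{(q)}$ modulo $\cC^\infty((U\times U)\cap(\ol M\times\ol M))$. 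Throughout, the point that the operators are properly supported on $U\cap\ol M$, and Theorem~\ref{t-gue190523ycd} on their near-adjointness, are used to make the compositions legitimate and to transfer identities across the pairing $(\,\cdot\,|\,\cdot\,)_M$.

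Finally I would establish the Fourier integral representation \eqref{e-gue190531yydI}--\eqref{e-gue190531yydII} from \eqref{e-gue190529ycd} and \eqref{e-gue190529ycdII}: the latter records that the kernel of $\tilde PS_-L^{(q)}$ equals $\int^\infty_0e^{i\phi(z,w)t}b(z,w,t)\,dt$ modulo a smooth kernel, with $b\sim\sum_jb_j(z,w)t^{n-j}$ in $S^n_{1,0}$, obtained by repeating the stationary-phase computation of \cite[Proposition~7.8 of Part~II]{Hsiao08}, and with leading term $b_0(z,z)$ the one in \eqref{e-gue190531yyda} below (namely \cite[Proposition~1.6 of Part~II]{Hsiao08}). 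The phase properties \eqref{e-gue190531yydIIa} are then inherited from those of $\phi$ in \cite[Theorem~1.4 of Part~II]{Hsiao08}, and the identity $\phi|_{X\times X}=\varphi_-$ together with \eqref{e-gue190527sydh}--\eqref{e-gue190527sykl} from Theorem~\ref{t-gue190527syd} supplies ${\rm Im\,}\phi\geq0$, the vanishing of $\phi$ exactly on the diagonal of $X\times X$, strict positivity of ${\rm Im\,}\phi$ off $X\times X$, the relation $\phi(z,w)=-\ol\phi(w,z)$, and the normalization $d_z\phi(x,x)=-\omega_0(x)-id\rho(x)$ on $U\cap X$. The real work is not in this assembly but in the chain of duality arguments of Theorem~\ref{t-gue190524yydp} and Theorem~\ref{t-gue190529yyd} that pin down $\Pi^{(q)}\equiv\tilde PS_-L^{(q)}$ modulo smoothing; this is where the microlocal structure of the boundary projector $S_-$ of Theorem~\ref{t-gue190527syd} gets transported into the interior, and it is the step demanding the most care with the several flavours of $\equiv$ (on $\ol M\times\ol M$, on $X\times\ol M$, on $\ol M\times X$) and with the fact that $S_-$ lies only in $L^0_{\frac12,\frac12}$, so that composing with $\tilde P$ is what restores the Sobolev mapping properties.
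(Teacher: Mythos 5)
Your proposal is correct and follows essentially the same route as the paper: the theorem is obtained by assembling Theorem~\ref{t-gue190404hyyd}, Theorem~\ref{t-gue190523ycd}, Theorem~\ref{t-gue190419ycdb}, Theorem~\ref{t-gue190529ycdh}, \eqref{e-gue190529ycd} and \eqref{e-gue190529ycdII}, exactly the ingredients you cite, with the identification $\Pi^{(q)}\equiv\tilde PS_-L^{(q)}$ (via Theorems~\ref{t-gue190524yydp} and \ref{t-gue190529yyd}) carrying the oscillatory-integral description and the idempotency modulo smoothing. Your reading of \eqref{e-gue190529sydI} from the boundary conditions in \eqref{e-gue190404hyyd}--\eqref{e-gue190404hyydI} and of the seven identities from \eqref{e-gue190404hyydII} and Theorem~\ref{t-gue190419ycdb} matches the paper's intent, so no gap remains.
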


\begin{rem} \label{r-gue190531yyd}
Let $\phi$ and $b(z, w, t)$ be as in Theorem~\ref{t-gue190529ycdi}. Let
$y=(y_1,\ldots,y_{2n-1})$
be local coordinates on $X$ and extend $y_1,\ldots,y_{2n-1}$ 
to real smooth functions in some neighborhood of $X$.
We work with local coordinates
\[w=(y_1,\ldots,y_{2n-1},\rho)\]
defined on some neighborhood $U$ of $p\in X$ in $M'$
Let $u\in\cC^\infty_0(U\cap\ol M)$. Choose a cut-off function $\chi(t)\in\cC^\infty(\Real)$
so that $\chi(t)=1$ when $\abs{t}<1$ and $\chi(t)=0$ when $\abs{t}>2$. Set
\[(\Pi^{(q)}_\epsilon u)(z):=
\int_M\int^\infty_0e^{i\phi(z, w)t}b(z, w, t)\chi(\epsilon t)u(w)dtdv_M(w).
\]
Since $d_y\phi\neq0$ where ${\rm Im\,}\phi=0$ (see \eqref{e-gue190527sydj}),
we can integrate by parts in $y$ and $t$ and obtain
$\lim_{\epsilon\To0}(\Pi^{(q)}_\epsilon u)(z)\in\Omega^{0,q}_0(U\cap\ol M)$.
This means that
$$\Pi^{(q)}=\lim_{\epsilon\To0}\Pi^{(q)}_{\epsilon}: 
\Omega^{0,q}(U\cap\ol M)\To\Omega^{0,q}(U\cap\ol M)$$
is continuous. Then the distribution kernel $\Pi^{(q)}(z, w)$ of $\Pi^{(q)}$ can be
written formally,
$$\Pi^{(q)}(z, w)=\int^\infty_0e^{i\phi(z, w)t}b(z, w, t)dt.$$
\end{rem}
The following was known~\cite[Theorem 1.4 of Part II]{Hsiao08} 
\begin{thm} \label{t-gue190531yyda}
Under the assumptions and notations of Theorem~\ref{t-gue190529ycdi}, fix $p\in U\cap X$.
We choose local holomorphic coordinates
$z=(z_1,\ldots,z_n)$, $z_j=x_{2j-1}+ix_{2j}$, $j=1,\ldots,n$,
vanishing at $p$ such that the metric on $T^{1,0}M'$ is
$\sum^n_{j=1}dz_j\otimes d\ol z_j$ at $p$
and $\rho(z)=\sqrt{2}{\rm Im\,}z_n+\sum^{n-1}_{j=1}\lambda_j\abs{z_j}^2+O(\abs{z}^3)$,
where $\lambda_j$, $j=1,\ldots,n-1$, are the eigenvalues of $\mathcal{L}_p$.
We also write $w=(w_1,\ldots,w_n)$, $w_j=y_{2j-1}+iy_{2j}$, $j=1,\ldots,n$.
Then, we can take $\phi(z, w)$ in \eqref{e-gue190531yydI} so that
\begin{equation} \label{e-gue190531yydh}\begin{split}
\phi(z, w)&=-\sqrt{2}x_{2n-1}+\sqrt{2}y_{2n-1}-
i\rho(z)\Bigr(1+\sum^{2n-1}_{j=1}a_jx_j+\frac{1}{2}a_{2n}x_{2n}\Bigr) \\
&\quad-i\rho(w)\Bigr(1+\sum^{2n-1}_{j=1}\ol a_jy_j+
\frac{1}{2}\ol a_{2n}y_{2n}\Bigr)+
i\sum^{n-1}_{j=1}\abs{\lambda_j}\abs{z_j-w_j}^2 \\
&\quad+\sum^{n-1}_{j=1}i\lambda_j(\ol z_jw_j-z_j\ol w_j)+
O\bigr(\abs{(z, w)}^3\bigr)
\end{split}\end{equation}
in some neighborhood of $(p, p)$ in $M'\times M'$, where
$a_j=\dfrac{1}{2}\dfrac{\pr\sigma(\Box^{(q)}_f)}{\pr x_j}(p, -\omega_0(p)
-id\rho(p))$, $j=1,\ldots,2n$, and $\sigma(\Box^{(q)}_f)$
denotes the principal symbol of  $\Box^{(q)}_f$.
\end{thm}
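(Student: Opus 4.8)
The plan is to reduce the computation of the phase $\phi(z,w)$ to a stationary-phase/characteristic-equation analysis of the operator $\Box^{(q)}_f$ and the boundary $\ddbar$-Neumann conditions, exactly as in~\cite[Theorem 1.4 of Part II]{Hsiao08}, but keeping track of the normalization dictated by the local holomorphic coordinates chosen here. First I would recall from Theorem~\ref{t-gue190529ycdi} and \eqref{e-gue190529ycdII} that $\Pi^{(q)}\equiv\tilde PS_-L^{(q)}$ modulo smoothing, so that the phase $\phi$ is the one produced by composing the Poisson operator $\tilde P$ with the Fourier integral operator $S_-$ on $X$ whose phase is $\varphi_-$. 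Thus $\phi(z,w)|_{X\times X}=\varphi_-(z,w)$ and, by \eqref{e-gue190527sydk}, $d_x\varphi_-(x,x)=-\omega_0(x)$; together with the extension property $d_z\phi(x,x)=-\omega_0(x)-id\rho(x)$ from \eqref{e-gue190531yydIIa}, this pins down the first-order Taylor data of $\phi$ at the diagonal. The content of the theorem is the explicit \emph{second}-order (and the precise $\rho$-linear) part of $\phi$, in the special normal coordinates in which $\rho(z)=\sqrt2\,{\rm Im}\,z_n+\sum_{j=1}^{n-1}\lambda_j|z_j|^2+O(|z|^3)$ and the metric is Euclidean at $p$.

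The key steps, in order, are: (1) Write $\Box^{(q)}_f$ in the coordinates $\hat x=(x_1,\dots,x_{2n-1},\rho)$ near $p$ and identify its principal symbol $\sigma(\Box^{(q)}_f)$ together with the subprincipal data $a_j=\tfrac12\,\partial_{x_j}\sigma(\Box^{(q)}_f)(p,-\omega_0(p)-id\rho(p))$, $j=1,\dots,2n$; these $a_j$ are precisely the quantities that enter the transport equations for the amplitude and, through the eikonal equation, the $\rho$-linear correction to $\phi$. (2) Solve the eikonal (complex Hamilton--Jacobi) equation for $\phi$ with the boundary constraint $\phi|_{X\times X}=\varphi_-$ and the initial data above: the characteristic manifold of $\Box^{(q)}_-$ (hence of $\Box^{(q)}_\beta$) is $\Sigma^-$, and the outgoing/incoming complex bicharacteristic flow starting from $(x,-\omega_0(x))$ into the complexification determines $\phi$ to all orders; one then extracts the explicit quadratic part. (3) Match this against the known expression for $\varphi_-$ on $X\times X$ from~\cite[Theorems 3.3, 3.4]{HM17}, whose quadratic part is $i\sum_{j=1}^{n-1}|\lambda_j|\,|z_j-w_j|^2+\sum_{j=1}^{n-1}i\lambda_j(\bar z_jw_j-z_j\bar w_j)$ in these coordinates, and read off the remaining $x_{2n}$/$\rho$-dependence of $\phi$ from the extension through $\tilde P$, using the normalization $\|d\rho\|=1$ and the chosen holomorphic frame to fix the coefficient $\tfrac12$ in front of $a_{2n}$. (4) Verify that the resulting $\phi$ satisfies all the qualitative properties in \eqref{e-gue190531yydIIa} (Im$\,\phi\ge0$, vanishing exactly on the diagonal of $X\times X$, the conjugation symmetry, and the $d_z\phi$ formula), which is automatic from the construction since these were already established in Theorem~\ref{t-gue190529ycdi}.

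The main obstacle I expect is step (2)–(3): correctly propagating the phase off the boundary $X$ into $M'$ through the Poisson operator $\tilde P$ and isolating the precise linear-in-$\rho$ terms, including the somewhat delicate factor $\tfrac12$ multiplying $a_{2n}x_{2n}$ and $\tfrac12\,\bar a_{2n}y_{2n}$ — this asymmetry between the tangential variables $x_1,\dots,x_{2n-1}$ and the ``missing'' transversal variable $x_{2n}$ reflects the fact that $\rho$ already absorbs one transversal direction, so one must carefully relate $\partial_{x_{2n}}$ to $\partial_\rho$ along $X$. Since this is precisely the computation carried out in~\cite[Part II]{Hsiao08} and the operators $\tilde P$, $S_-$, $L^{(q)}$ here agree with those there modulo smoothing (by Theorem~\ref{t-gue190529ycdi}), I would invoke that computation directly: repeat the procedure in the proof of~\cite[Theorem 1.4 of Part II]{Hsiao08} verbatim, the only change being bookkeeping of the coordinate normalization, which affects none of the FIO constructions. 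The amplitude leading term $b_0(z,z)$ is then given by~\cite[Proposition 1.6 of Part II]{Hsiao08}, i.e. by \eqref{e-gue190531yyda}, completing the proof.
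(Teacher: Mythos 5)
Your proposal ends up exactly where the paper does: the paper offers no independent proof of this theorem, stating it as a known result quoted verbatim from~\cite[Theorem 1.4 of Part II]{Hsiao08}, with the only new input being that $\Pi^{(q)}\equiv\tilde PS_-L^{(q)}$ (from \eqref{e-gue190529ycd} and \eqref{e-gue190529ycdII}) identifies the phase here with the one constructed there --- which is precisely the reduction you make before invoking that computation. Your intermediate sketch of the eikonal/boundary-matching argument is a plausible reconstruction of the cited proof but is not needed beyond the citation, so your approach is essentially the same as the paper's.
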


The following was also known~\cite[Proposition 1.6 of Part II]{Hsiao08}

\begin{thm} \label{t-gue190531yydb}
Under the assumptions and notations of Theorem~\ref{t-gue190529ycdi}, fix $p\in U\cap X$. 
For $b_0(z,w)$ in \eqref{e-gue190531yydII}, we have 
\begin{equation}\label{e-gue190531yyda}
b_0(x, x)=\pi^{-n}\abs{\det\mathcal{L}_{x}}\tau_{x,n_-}
\circ(\ddbar\rho(p))^{\wedge, *}(\ddbar\rho(p))^\wedge,\ \ \mbox{for every $x\in U\cap X$}, 
\end{equation}
where $\det\mathcal{L}_{x}$, $\tau_{x,n_-}$ are given by 
\eqref{e-gue190528yydI}, \eqref{e-gue190528yydIII} respectively and 
$(\ddbar\rho(p))^{\wedge, *}$ is given by \eqref{e-gue190312mscdI}. 
\end{thm}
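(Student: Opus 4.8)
The plan is to extract $b_0(x,x)$ from the factorization $\Pi^{(q)}\equiv\tilde PS_-L^{(q)}$ obtained in \eqref{e-gue190529ycd}, together with the description $L^{(q)}\equiv Q^{(q)}(\tilde P^*\tilde P)^{-1}\tilde P^*$ modulo operators smoothing away the diagonal, which is \eqref{e-gue190524syds}. Thus, modulo $\cC^\infty((U\times U)\cap(\ol M\times\ol M))$, we have $\Pi^{(q)}\equiv\tilde P\,S_-\,Q^{(q)}(\tilde P^*\tilde P)^{-1}\tilde P^*$, so the distribution kernel of $\Pi^{(q)}$ is, up to a smooth error, the composition of the complex Fourier integral operator $S_-$ --- whose kernel by Theorem~\ref{t-gue190527syd} is $\int^\infty_0e^{i\varphi_-(x,y)t}a(x,y,t)\,dt$ with $a_0(x,x)=\tfrac12\pi^{-n}\abs{\det\mathcal{L}_x}\tau_{x,n_-}$ by Theorem~\ref{t-gue190528yyd} --- with the classical pseudodifferential operators $Q^{(q)}$ and $(\tilde P^*\tilde P)^{-1}$ on $X$ and the Poisson operators $\tilde P$, $\tilde P^*$ intertwining $X$ and $\ol M$. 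The procedure used to establish \eqref{e-gue190529ycdII} (that is, the argument of \cite[Proposition 7.8 of Part II]{Hsiao08}) already shows that this composition is again of the form $\int^\infty_0e^{i\phi(z,w)t}b(z,w,t)\,dt$ with $b\sim\sum_jb_jt^{n-j}$ and $\phi$ the extension of $\varphi_-$ off the boundary described in Theorem~\ref{t-gue190531yyda} (in particular $\phi|_{X\times X}=\varphi_-$); what is left is to identify $b_0$ on the diagonal over $U\cap X$.

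First I would restrict the composition to $X$ on both sides. Since $\gamma\tilde Pu=u$ and $\phi$ restricts to $\varphi_-$ on $X\times X$, and since the stationary phase in the normal ($\rho$-)variable through $\tilde P$ and $\tilde P^*$ contributes no extra factor along the boundary diagonal, the leading symbol $b_0(x,x)$, $x\in U\cap X$, is computed by the symbol calculus on $X$: it is the composition of the leading symbol $a_0(x,x)$ of $S_-$ with the principal symbols of $Q^{(q)}$, of $(\tilde P^*\tilde P)^{-1}$ and of $\tilde P^*\tilde P$, the last two cancelling because $\tilde P(\tilde P^*\tilde P)^{-1}\tilde P^*$ restricted back to $X$ acts as the identity to top order. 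By Theorem~\ref{t-gue190514yyd} the principal symbol of $Q^{(q)}$ is $2(\ddbar\rho)^{\wedge,*}(\ddbar\rho)^\wedge$, so combining this with $a_0(x,x)=\tfrac12\pi^{-n}\abs{\det\mathcal{L}_x}\tau_{x,n_-}$ gives
\[b_0(x,x)=2\,a_0(x,x)\circ(\ddbar\rho(p))^{\wedge,*}(\ddbar\rho(p))^\wedge=\pi^{-n}\abs{\det\mathcal{L}_x}\,\tau_{x,n_-}\circ(\ddbar\rho(p))^{\wedge,*}(\ddbar\rho(p))^\wedge,\]
which is \eqref{e-gue190531yyda}. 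Equivalently --- and this is the route of Part II of \cite{Hsiao08} --- one may insert the ansatz $\Pi^{(q)}(z,w)=\int^\infty_0e^{i\phi(z,w)t}b(z,w,t)\,dt$ directly into the relations of Theorem~\ref{t-gue190529ycdi}, namely $\Box^{(q)}_f\Pi^{(q)}\equiv0$, $\ddbar\Pi^{(q)}\equiv0$, $\ol{\pr}^*_f\Pi^{(q)}\equiv0$, the boundary condition $(\ddbar\rho)^{\wedge,*}\gamma\Pi^{(q)}=(\ddbar\rho)^{\wedge,*}\gamma$, and $(\Pi^{(q)})^2\equiv\Pi^{(q)}$, obtain transport equations for the $b_j$, and solve the $j=0$ system on the diagonal at $x\in U\cap X$ using the normal form of $\phi$ from Theorem~\ref{t-gue190531yyda}; the factor $(\ddbar\rho)^{\wedge,*}(\ddbar\rho)^\wedge$ then enters through the boundary condition (it is, up to the constant $\tfrac12$, the orthogonal projection of $T^{*0,q}M'|_X$ onto $T^{*0,q}X$), while $\pi^{-n}\abs{\det\mathcal{L}_x}\tau_{x,n_-}$ comes out of the model computation on the Heisenberg group exactly as in the Szeg\H{o} kernel case of \cite{HM17}.

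The step I expect to be the main obstacle is the careful stationary-phase analysis in the normal variable showing that composing with $\tilde P$, $\tilde P^*$ and $(\tilde P^*\tilde P)^{-1}$ introduces no spurious Jacobian or half-density factor along the diagonal over the boundary --- equivalently, that the normalization of the Poisson operator $\tilde P$ is such that the factor $2$ in the principal symbol of $Q^{(q)}$ from Theorem~\ref{t-gue190514yyd} is exactly what cancels the factor $\tfrac12$ in $a_0(x,x)$. This is a delicate constant-tracking computation, but it has already been carried out in Part II of \cite{Hsiao08}, and that argument applies verbatim in the present local situation.
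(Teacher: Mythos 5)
Your overall route --- factor $\Pi^{(q)}\equiv\tilde PS_-L^{(q)}$ via \eqref{e-gue190529ycd} and \eqref{e-gue190524syds} and reduce the identification of $b_0$ to the composition computation of Part II of \cite{Hsiao08} --- is exactly the paper's route: the paper gives no independent proof of \eqref{e-gue190531yyda} but quotes \cite[Proposition 1.6 of Part II]{Hsiao08}, so deferring the constant-tracking to that reference is in itself legitimate.

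However, the mechanism you sketch for the constant is not correct, and the error sits precisely at the step you flag as delicate. In the composition $\tilde PS_-Q^{(q)}(\tilde P^*\tilde P)^{-1}\tilde P^*$ there is no factor of $\tilde P^*\tilde P$ available to cancel $(\tilde P^*\tilde P)^{-1}$: restricting the Schwartz kernel (taken with respect to $dv_{M'}$) to $X\times X$ uses $\gamma\tilde P=I$ on the left and the delta-type boundary behaviour of the kernel of $\tilde P^*$ on the right, and what survives is the kernel of $S_-Q^{(q)}(\tilde P^*\tilde P)^{-1}$ on $X\times X$; your ``cancellation'' argument instead computes the operator acting on Poisson extensions of boundary data, which is a different object from the kernel of $\Pi^{(q)}$ against the interior volume form. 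The factor $(\tilde P^*\tilde P)^{-1}$ is a classical pseudodifferential operator of order one, and its symbol evaluated on $\Sigma^-$ is exactly what converts the amplitude $a\sim a_0t^{n-1}$ of $S_-$ in \eqref{e-gue190527syda} into the amplitude $b\sim b_0t^{n}$ of $\Pi^{(q)}$ in \eqref{e-gue190531yydII}, and it contributes a numerical factor determined by the normal decay of the Poisson kernel (hence by the normalization $\norm{d\rho}=1$ and the second-order part of $\Box^{(q)}_f$). If, as you assert, the Poisson factors and $(\tilde P^*\tilde P)^{-1}$ contributed nothing along the boundary diagonal, the composed amplitude would still be of order $t^{n-1}$, contradicting \eqref{e-gue190531yydII}; so \eqref{e-gue190531yyda} cannot be obtained merely from the factor $2$ in $\sigma(Q^{(q)})=2(\ddbar\rho)^{\wedge,*}(\ddbar\rho)^{\wedge}$ cancelling the $\tfrac12$ in $a_0(x,x)$ of Theorem~\ref{t-gue190528yyd}. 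The normal-direction stationary phase together with the symbol of $(\tilde P^*\tilde P)^{-1}$ on $\Sigma^-$ must actually be computed --- this is the content of \cite[Proposition 1.6 of Part II]{Hsiao08}. As a citation your argument coincides with the paper's; as a derivation, the bookkeeping you propose is the gap.
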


\section{Microlocal spectral theory for the $\ddbar$-Neumann Laplacian} 
\label{s-gue190531yyda}

In this section, we will apply the microlocal Hodge decomposition theorems 
for $\Box^{(q)}$ from
Section~\ref{s-gue190531yyd} and Section~\ref{s-gue190321myyd} 
to study the singularities for the kernel $B^{(q)}_{\leq\lambda}(x,y)$ near 
the non-degenerate part of the Levi form. 
In particular, we give the proof of Theorem \ref{t-gue190708yyd}.

Until further notice, we fix $\lambda>0$. It is clear that there is a continuous operator
\[A^{(q)}_\lambda:L^2_{(0,q)}(M)\To{\rm Dom\,}\Box^{(q)}\]
such that
\begin{equation}\label{e-gue190531syda}
\begin{split}
&\mbox{$\Box^{(q)}A^{(q)}_\lambda+B^{(q)}_{\leq\lambda}=I$ on $L^2_{(0,q)}(M)$},\\
&\mbox{$A^{(q)}_\lambda\Box^{(q)}+B^{(q)}_{\leq\lambda}=I$ on ${\rm Dom\,}\Box^{(q)}$}.
\end{split}
\end{equation}

Let $U$ be an open set of $M'$ with $U\cap X\neq\emptyset$. 
Suppose that the Levi form is non-degenerate of constant signature 
$(n_-, n_+)$ on $U\cap X$. Until further notice, we let $q=n_-$.

\begin{lem}\label{l-gue190531ycdh}
The map 
\begin{equation}\label{e-gue190531ycdh}
\ddbar B^{(q)}_{\leq\lambda}: L^2_{(0,q)}(M)\To 
H^s_{{\rm loc\,}}(U\cap\ol M, T^{*0,q+1}M')
\end{equation}
is continuous for every $s\in\mathbb N$. 
\end{lem}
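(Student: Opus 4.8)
The plan is to use the microlocal Hodge decomposition of Theorem~\ref{t-gue190529ycdi}, which is available precisely because $q=n_-$ and the Levi form is non-degenerate of constant signature on $U\cap X$. The key identity is the second line of \eqref{e-gue190531yyd}: for every $u\in{\rm Dom\,}\Box^{(q)}$ we have, on $U\cap\ol M$,
\begin{equation}\label{e-gue190531ycdhplan}
N^{(q)}\Box^{(q)}u+\Pi^{(q)}u=u+r^{(q)}_1u,
\end{equation}
where $N^{(q)}$ and $\Pi^{(q)}$ are properly supported on $U\cap\ol M$ and $r^{(q)}_1\equiv0\mod\cC^\infty((U\times U)\cap(\ol M\times\ol M))$. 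I would apply this with $u=B^{(q)}_{\leq\lambda}f$ for $f\in L^2_{(0,q)}(M)$; since ${\rm Ran\,}B^{(q)}_{\leq\lambda}=H^q_{\leq\lambda}(\ol M)\subset{\rm Dom\,}\Box^{(q)}$, this is legitimate.

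First I would record that $\Box^{(q)}B^{(q)}_{\leq\lambda}:L^2_{(0,q)}(M)\To L^2_{(0,q)}(M)$ is a bounded operator: by the spectral theorem $\Box^{(q)}B^{(q)}_{\leq\lambda}=\int_{[0,\lambda]}t\,dE(t)$ has operator norm at most $\lambda$. Hence, applying $N^{(q)}$ (which maps $L^2_{{\rm loc\,}}$ to $H^1_{{\rm loc\,}}$, and more generally, iterating, $H^s_{{\rm loc\,}}\to H^{s}_{{\rm loc\,}}$ by Theorem~\ref{t-gue190529ycdi}) we get that $N^{(q)}\Box^{(q)}B^{(q)}_{\leq\lambda}:L^2_{(0,q)}(M)\To H^0_{{\rm loc\,}}(U\cap\ol M,T^{*0,q}M')$ is continuous. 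Next, from \eqref{e-gue190531yyd} we have $\ddbar\Pi^{(q)}v=r^{(q)}_2v$ for every $v\in L^2_{{\rm loc\,}}(U\cap\ol M,T^{*0,q}M')$, with $r^{(q)}_2\equiv0\mod\cC^\infty((U\times U)\cap(\ol M\times\ol M))$ properly supported; therefore $\ddbar\Pi^{(q)}:L^2_{{\rm loc\,}}(U\cap\ol M,T^{*0,q}M')\To H^s_{{\rm loc\,}}(U\cap\ol M,T^{*0,q+1}M')$ is continuous for every $s$. Similarly $\ddbar r^{(q)}_1:L^2_{(0,q)}(M)\To H^s_{{\rm loc\,}}(U\cap\ol M,T^{*0,q+1}M')$ is continuous for every $s$, since $r^{(q)}_1$ has smooth kernel on $(U\times U)\cap(\ol M\times\ol M)$.

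Now apply $\ddbar$ to \eqref{e-gue190531ycdhplan} with $u=B^{(q)}_{\leq\lambda}f$. On $U\cap\ol M$ this gives
\begin{equation}\label{e-gue190531ycdhplan2}
\ddbar B^{(q)}_{\leq\lambda}f=\ddbar N^{(q)}\Box^{(q)}B^{(q)}_{\leq\lambda}f+\ddbar\Pi^{(q)}B^{(q)}_{\leq\lambda}f-\ddbar r^{(q)}_1f.
\end{equation}
The last two terms on the right are continuous from $L^2_{(0,q)}(M)$ into $H^s_{{\rm loc\,}}(U\cap\ol M,T^{*0,q+1}M')$ for every $s$ by the previous paragraph. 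For the first term, since $N^{(q)}\Box^{(q)}B^{(q)}_{\leq\lambda}f\in L^2_{{\rm loc\,}}(U\cap\ol M,T^{*0,q}M')$ depends continuously on $f$, I would want $\ddbar N^{(q)}$ to gain regularity. This is the main obstacle: $N^{(q)}$ alone only gains one derivative via $\Box^{(q)}_f$, not via $\ddbar$ directly. To handle it, I would bootstrap: write $N^{(q)}\Box^{(q)}B^{(q)}_{\leq\lambda}f=B^{(q)}_{\leq\lambda}f+r^{(q)}_1f-\Pi^{(q)}B^{(q)}_{\leq\lambda}f$ from \eqref{e-gue190531ycdhplan}, so that $\ddbar N^{(q)}\Box^{(q)}B^{(q)}_{\leq\lambda}f=\ddbar B^{(q)}_{\leq\lambda}f+(\text{smoothing in }f)-\ddbar\Pi^{(q)}B^{(q)}_{\leq\lambda}f$; substituting back into \eqref{e-gue190531ycdhplan2} is circular, so instead I would use that $\ddbar B^{(q)}_{\leq\lambda}f\in L^2_{(0,q+1)}(M)$ with $\|\ddbar B^{(q)}_{\leq\lambda}f\|_M^2\le\lambda\|f\|_M^2$ (again by the spectral theorem, since $\ddbar^*\ddbar\le\Box^{(q)}$), giving an a priori $H^0$ bound, and then apply the analogous Hodge parametrix identity for $\Box^{(q+1)}$ on $U$ — valid since $q+1\ne n_-$ when $q=n_-$ unless $n_-=n_+$, but in fact $q+1\neq n_-$ always — namely Theorem~\ref{t-gue190531syd}, which gives $N^{(q+1)}$ with $\Box^{(q+1)}_fN^{(q+1)}v=v+F^{(q+1)}_2v$ and $(\ddbar\rho)^{\wedge,*}\gamma N^{(q+1)}v|_D=0$, to regularize $\ddbar B^{(q)}_{\leq\lambda}f$. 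Concretely, set $v=\ddbar B^{(q)}_{\leq\lambda}f\in{\rm Dom\,}\ddbar^*$ (it lies in ${\rm Dom\,}\Box^{(q+1)}$ since $B^{(q)}_{\leq\lambda}f\in{\rm Dom\,}\Box^{(q)}$ implies $\ddbar B^{(q)}_{\leq\lambda}f\in{\rm Dom\,}\ddbar^*\cap{\rm Dom\,}\ddbar$), note $\Box^{(q+1)}v=\ddbar\Box^{(q)}B^{(q)}_{\leq\lambda}f$ has the $H^0$-bound $\|\Box^{(q+1)}v\|\le\lambda^{3/2}\|f\|$, and apply the $\Box^{(q+1)}$-analogue of \eqref{e-gue190531ycdhplan} to conclude $v=N^{(q+1)}\Box^{(q+1)}v+\Pi^{(q+1)}v+(\text{smoothing})$ on $U$; since $q+1\neq n_-$ the projector $\Pi^{(q+1)}$ is absent (the analogue of \eqref{e-gue190810yyd}), so $v=N^{(q+1)}\Box^{(q+1)}v+(\text{smoothing})$, and iterating $N^{(q+1)}$ (which gains one Sobolev degree each time, by Theorem~\ref{t-gue190531syd}, and maps $H^s_{{\rm loc\,}}$ continuously) against the smooth dependence of $\Box^{(q+1)}v$ on $f$ promotes $v$ to $H^s_{{\rm loc\,}}(U\cap\ol M)$ for every $s$. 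Carefully, one iteration gives $v\in H^1_{{\rm loc\,}}$ continuously in $f$; then $\Box^{(q+1)}v\in H^{-1}_{{\rm loc\,}}$... but we already know $\Box^{(q+1)}v\in H^0$ globally, and elliptic regularity of $\Box^{(q+1)}_f$ interior plus the $\ddbar$-Neumann boundary condition built into $N^{(q+1)}$ allows the standard bootstrap $v\in H^{s}_{{\rm loc\,}}\Rightarrow\Box^{(q+1)}v\in H^{s-2}_{{\rm loc\,}}$, but the key point is that $\Box^{(q+1)}v=\ddbar\Box^{(q)}B^{(q)}_{\leq\lambda}f$ and $\Box^{(q)}B^{(q)}_{\leq\lambda}f\in{\rm Ran\,}E([0,\lambda])\subset{\rm Dom\,}(\Box^{(q)})^N$ for every $N$ with norm $\le\lambda^N\|f\|$, so $\Box^{(q+1)}v$ is as regular as we like in the global $L^2$ sense, hence $\Box^{(q+1)}v\in H^s_{{\rm loc\,}}$ by the elliptic estimate for $\Box^{(q)}_f$ with $\ddbar$-Neumann conditions, and then $v\in H^{s+1}_{{\rm loc\,}}$ follows from the parametrix $N^{(q+1)}$. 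Thus $\ddbar B^{(q)}_{\leq\lambda}f=v\in H^s_{{\rm loc\,}}(U\cap\ol M,T^{*0,q+1}M')$ with continuous dependence on $f\in L^2_{(0,q)}(M)$, for every $s\in\mathbb N$, which is the claim.

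The cleanest route, and the one I would write up, avoids the circular step by working entirely with $\Box^{(q+1)}$: the only facts needed are (i) $\Box^{(q)}B^{(q)}_{\leq\lambda}$ maps $L^2$ continuously into $\{u\in L^2:\Box^{(q)}{}^Nu\in L^2\ \forall N,\ \|\Box^{(q)}{}^Nu\|\le\lambda^N\|u\|\}$; (ii) the global elliptic regularity estimate for $\Box^{(q+1)}_f$ with $\ddbar$-Neumann boundary conditions, $\|w\|_{s+2}\lesssim\|\Box^{(q+1)}w\|_s+\|w\|_0$ for $w\in{\rm Dom\,}\Box^{(q+1)}$ (interior version plus the subelliptic-free estimate since we only claim $H^s_{{\rm loc\,}}$ away from nothing — actually we need only the interior estimate, as $U\cap\ol M$ may touch the boundary and there the parametrix $N^{(q+1)}$ of Theorem~\ref{t-gue190531syd} supplies the needed regularity up to the boundary); and (iii) $\ddbar B^{(q)}_{\leq\lambda}f\in{\rm Dom\,}\Box^{(q+1)}$ with $\Box^{(q+1)}\ddbar B^{(q)}_{\leq\lambda}=\ddbar\Box^{(q)}B^{(q)}_{\leq\lambda}$, which follows from $\ddbar\Box^{(q)}=\Box^{(q+1)}\ddbar$ on the relevant domains. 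Combining (i)--(iii) with the parametrix $N^{(q+1)}$ of Theorem~\ref{t-gue190531syd} and its mapping properties yields the continuity of \eqref{e-gue190531ycdh} for every $s$; the main obstacle, as indicated, is organizing the bootstrap so that the boundary regularity is handled by $N^{(q+1)}$ (whose construction incorporates the $\ddbar$-Neumann conditions \eqref{e-gue190326yyd}, \eqref{e-gue190326yydI}) rather than by an ad hoc boundary estimate.
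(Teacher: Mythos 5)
Your overall strategy -- pass to $v=\ddbar B^{(q)}_{\leq\lambda}f$, check $v\in{\rm Dom\,}\Box^{(q+1)}$, use the commutation $\Box^{(q+1)}\ddbar=\ddbar\Box^{(q)}$ and the parametrix $N^{(q+1)}$ of Theorem~\ref{t-gue190531syd} (legitimate since $q+1\neq n_-$), and bootstrap -- is exactly the paper's, and your first iteration (giving $\ddbar B^{(q)}_{\leq\lambda}:L^2_{(0,q)}(M)\To H^1_{{\rm loc\,}}$ continuously) is correct. The detour through $\Pi^{(q)}$, $N^{(q)}$ in degree $q$ at the start is unnecessary, as you yourself note when you abandon it as circular.

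However, the mechanism you use to continue the bootstrap beyond $H^1$ has a genuine gap. You claim that since $\Box^{(q)}B^{(q)}_{\leq\lambda}f\in{\rm Ran\,}E([0,\lambda])\subset{\rm Dom\,}(\Box^{(q)})^N$ for every $N$, the form $\Box^{(q+1)}v=\ddbar\Box^{(q)}B^{(q)}_{\leq\lambda}f$ lies in $H^s_{{\rm loc\,}}$ "by the elliptic estimate for $\Box^{(q)}_f$ with $\ddbar$-Neumann conditions." No such estimate is available here: on $U\cap X$ the Levi form has exactly $n_-$ negative and $n_+$ positive eigenvalues and $q=n_-$, so $Z(q)$ fails there, and the $\ddbar$-Neumann problem in degree $q$ is not hypoelliptic up to the boundary. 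Membership in ${\rm Dom\,}(\Box^{(q)})^N$ for all $N$ does not give boundary Sobolev regularity -- indeed the elements of $H^q_{\leq\lambda}(\ol M)$ (e.g.\ harmonic forms, or $L^2$ holomorphic functions when $q=0$ on a strictly pseudoconvex domain) are precisely the objects whose boundary singularities this paper is describing; if your claimed implication held, $B^{(q)}(x,y)$ would be smooth up to $X$ and Theorem~\ref{t-gue190708yyd} would be vacuous. The paper avoids this by never asking for regularity of $\Box^{(q)}B^{(q)}_{\leq\lambda}f$ itself: using the spectral calculus one writes $\Box^{(q+1)}v=\ddbar\Box^{(q)}B^{(q)}_{\leq\lambda}f=\ddbar B^{(q)}_{\leq\lambda}\bigl(\Box^{(q)}B^{(q)}_{\leq\lambda}f\bigr)$, so the operator whose mapping property is needed is again $\ddbar B^{(q)}_{\leq\lambda}$, precomposed with the $L^2$-bounded operator $\Box^{(q)}B^{(q)}_{\leq\lambda}$. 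The inductive hypothesis $\ddbar B^{(q)}_{\leq\lambda}:L^2_{(0,q)}(M)\To H^{k}_{{\rm loc\,}}$ then gives $\Box^{(q+1)}v\in H^{k}_{{\rm loc\,}}$ continuously in $f$, and one application of $N^{(q+1)}$ (together with the smoothing error $F^{(q+1)}_1\ddbar B^{(q)}_{\leq\lambda}$) upgrades this to $\ddbar B^{(q)}_{\leq\lambda}:L^2_{(0,q)}(M)\To H^{k+1}_{{\rm loc\,}}$. With this replacement for your regularity claim, the rest of your argument goes through and coincides with the paper's proof.
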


\begin{proof}
Let $u\in L^2(M,T^{*0,q}M')$. Since $B^{(q)}_{\leq\lambda}u\in{\rm Dom\,}\Box^{(q)}$, $\ddbar B^{(q)}_{\leq\lambda}u\in L^2_{(0,q+1)}(M)$. We claim that 
\begin{equation}\label{e-gue190531ycdi}
\ddbar B^{(q)}_{\leq\lambda}u\in{\rm Dom\,}\Box^{(q+1)}.
\end{equation}
It is clear that $\ddbar B^{(q)}_{\leq\lambda}u
\in{\rm Dom\,}\ddbar\cap{\rm Dom\,}\ol{\pr}^*$ and 
$\ddbar^2B^{(q)}_{\leq\lambda}u=0$. Hence, 
$\ddbar^2B^{(q)}_{\leq\lambda}u\in{\rm Dom\,}\ol{\pr}^*$. 
We only need to show that 
$\ol{\pr}^*\,\ddbar B^{(q)}_{\leq\lambda}u\in{\rm Dom\,}\ddbar$. 
We have 
\begin{equation}\label{e-gue190531ycdj}
\ol{\pr}^*\,\ddbar B^{(q)}_{\leq\lambda}u=
\Box^{(q)}B^{(q)}_{\leq\lambda}u-\ddbar\,\ol{\pr}^*B^{(q)}_{\leq\lambda}u.
\end{equation}
By spectral theory, we see that 
$\Box^{(q)}B^{(q)}_{\leq\lambda}u\in{\rm Dom\,}\Box^{(q)}$ 
and hence $\Box^{(q)}B^{(q)}_{\leq\lambda}u\in{\rm Dom\,}\ddbar$. 
Note that $\ddbar^2\ol{\pr}^*B^{(q)}_{\leq\lambda}u=0$, 
$\ddbar\,\ol{\pr}^*B^{(q)}_{\leq\lambda}u\in{\rm Dom\,}\ddbar$. 
From this observation and \eqref{e-gue190531ycdj}, we get \eqref{e-gue190531ycdi}. 

From \eqref{e-gue190403yydIz}, we have 
\begin{equation}\label{e-gue190531scdl}
N^{(q+1)}\Box^{(q+1)}\ddbar B^{(q)}_{\leq\lambda}u=
\ddbar B^{(q)}_{\leq\lambda}u+F^{(q+1)}_1\ddbar B^{(q)}_{\leq\lambda}u,
\end{equation}
where $F^{(q+1)}_1\equiv0\mod\cC^\infty((U\times U)\cap(\ol M\times\ol M))$ 
is as in \eqref{e-gue190403yydIz}. It is clear that 
\begin{equation}\label{e-gue190531scd}
F^{(q+1)}_1\ddbar: L^2_{{\rm loc\,}}(U\cap\ol M, T^{*0,q}M')\To 
H^s_{{\rm loc\,}}(U\cap\ol M, T^{*0,q+1}M')
\end{equation}
is continuous, for every $s\in\mathbb Z$. We have 
\begin{equation}\label{e-gue190531scdI}
\mbox{$N^{(q+1)}\Box^{(q+1)}\ddbar B^{(q)}_{\leq\lambda}=
N^{(q+1)}\ddbar\Box^{(q)}B^{(q)}_{\leq\lambda}$ on $L^2_{(0,q)}(M)$}. 
\end{equation}
By spectral theory, 
\begin{equation}\label{e-gue190531scdII}
\Box^{(q)}B^{(q)}_{\leq\lambda}: L^2_{(0,q)}(M)\To L^2_{(0,q)}(M)
\end{equation}
is continuous. In view of Theorem~\ref{t-gue190321myyd}, we see that 
\begin{equation}\label{e-gue190531ycdk}
N^{(q+1)}\ddbar: H^s_{{\rm loc\,}}(U\cap\ol M, T^{*0,q}M')\To 
H^s_{{\rm loc\,}}(U\cap\ol M, T^{*0,q+1}M')
\end{equation}
is continuous, for every $s\in\mathbb Z$. From \eqref{e-gue190531scdl}, 
\eqref{e-gue190531scd}, \eqref{e-gue190531scdI}, \eqref{e-gue190531scdII} 
and \eqref{e-gue190531ycdk}, we deduce that 
\begin{equation}\label{e-gue190531ycdl}
\ddbar B^{(q)}_{\leq\lambda}: L^2_{(0,q)}(M)\To L^2_{{\rm loc\,}}(U\cap\ol M, T^{*0,q+1}M')
\end{equation}
is continuous. We have 
\[N^{(q+1)}\Box^{(q+1)}\ddbar B^{(q)}_{\leq\lambda}u=
N^{(q+1)}\ddbar\Box^{(q)}B^{(q)}_{\leq\lambda}u=
N^{(q+1)}\ddbar B^{(q)}_{\leq\lambda}\Box^{(q)}B^{(q)}_{\leq\lambda}u.\]
From this observation and \eqref{e-gue190531scdl}, we have 
\begin{equation}\label{e-gue190531scdm}
N^{(q+1)}\ddbar B^{(q)}_{\leq\lambda}\Box^{(q)}B^{(q)}_{\leq\lambda}u=
\ddbar B^{(q)}_{\leq\lambda}u+F^{(q)}_1\ddbar B^{(q)}_{\leq\lambda}u.
\end{equation}
From \eqref{e-gue190531scdm}, \eqref{e-gue190531scd}, 
\eqref{e-gue190531scdII}, \eqref{e-gue190531ycdl} and note that 
\begin{equation}\label{e-gue190531yydz}
N^{(q+1)}: H^s_{{\rm loc\,}}(U\cap\ol M, T^{*0,q+1}M')\To 
H^{s+1}_{{\rm loc\,}}(U\cap\ol M, T^{*0,q+1}M')
\end{equation}
is continuous, for every $s\in\mathbb Z$, we deduce that 
\begin{equation}\label{e-gue190531yydp}
\ddbar B^{(q)}_{\leq\lambda}: L^2_{(0,q)}(M)\To 
H^1_{{\rm loc\,}}(U\cap\ol M, T^{*0,q+1}M')
\end{equation}
is continuous. Continuing in this way, we get the lemma. 
\end{proof}

We can repeat the proof of Lemma~\ref{l-gue190531ycdh} and deduce:

\begin{lem}\label{l-gue190603yyd}
The operator
\begin{equation}\label{e-gue190603yyd}
\ol{\pr}^*B^{(q)}_{\leq\lambda}: L^2_{(0,q)}(M)\To 
H^s_{{\rm loc\,}}(U\cap\ol M, T^{*0,q}M'),
\end{equation}
is continuous for every $s\in\mathbb N$. 
\end{lem}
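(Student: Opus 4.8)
The plan is to run the proof of Lemma~\ref{l-gue190531ycdh} essentially verbatim, with $\ddbar$ replaced by $\ol{\pr}^*$ and the degree shift $q+1$ replaced by $q-1$. Since $q=n_-$, we have $q-1\neq n_-$, so Theorem~\ref{t-gue190531syd} provides a properly supported parametrix $N^{(q-1)}$ for $\Box^{(q-1)}_f$ on $U\cap\ol M$, together with the identity \eqref{e-gue190403yydIz} in degree $q-1$ whose remainder $F^{(q-1)}_1\equiv0\mod\cC^\infty((U\times U)\cap(\ol M\times\ol M))$ is smoothing. If $n_-=0$ there are no $(0,-1)$-forms and $\ol{\pr}^*B^{(0)}_{\leq\lambda}$ is identically zero, so we may assume $q\geq1$.

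First I would check, for every $u\in L^2_{(0,q)}(M)$, that $\ol{\pr}^*B^{(q)}_{\leq\lambda}u\in{\rm Dom\,}\Box^{(q-1)}$, the analogue of the claim $\ddbar B^{(q)}_{\leq\lambda}u\in{\rm Dom\,}\Box^{(q+1)}$ in Lemma~\ref{l-gue190531ycdh}. As there, the only condition that is not immediate is $\ddbar\,\ol{\pr}^*B^{(q)}_{\leq\lambda}u\in{\rm Dom\,}\ol{\pr}^*$; it follows from
\[\ddbar\,\ol{\pr}^*B^{(q)}_{\leq\lambda}u=\Box^{(q)}B^{(q)}_{\leq\lambda}u-\ol{\pr}^*\ddbar B^{(q)}_{\leq\lambda}u,\]
using that $\Box^{(q)}B^{(q)}_{\leq\lambda}u\in{\rm Dom\,}\Box^{(q)}\subset{\rm Dom\,}\ol{\pr}^*$ by spectral theory, while the second summand lies in the range of $\ol{\pr}^*$, which is contained in ${\rm Dom\,}\ol{\pr}^*$. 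Having this, \eqref{e-gue190403yydIz} in degree $q-1$ applied to $\ol{\pr}^*B^{(q)}_{\leq\lambda}u$ yields
\[N^{(q-1)}\Box^{(q-1)}\ol{\pr}^*B^{(q)}_{\leq\lambda}u=\ol{\pr}^*B^{(q)}_{\leq\lambda}u+F^{(q-1)}_1\ol{\pr}^*B^{(q)}_{\leq\lambda}u,\]
where $F^{(q-1)}_1\ol{\pr}^*:L^2_{{\rm loc\,}}(U\cap\ol M)\To H^s_{{\rm loc\,}}(U\cap\ol M)$ is continuous for all $s$ since $F^{(q-1)}_1$ is smoothing.

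Next I would use the commutation $\Box^{(q-1)}\ol{\pr}^*=\ol{\pr}^*\Box^{(q)}$ to rewrite the left side as $N^{(q-1)}\ol{\pr}^*\Box^{(q)}B^{(q)}_{\leq\lambda}u$. Since $\Box^{(q)}B^{(q)}_{\leq\lambda}$ is bounded on $L^2_{(0,q)}(M)$ by spectral theory and $N^{(q-1)}\ol{\pr}^*:H^s_{{\rm loc\,}}(U\cap\ol M)\To H^s_{{\rm loc\,}}(U\cap\ol M)$ is continuous by Theorem~\ref{t-gue190321myyd} ($\ol{\pr}^*$ costs one derivative, $N^{(q-1)}$ gains it back), we obtain first that $\ol{\pr}^*B^{(q)}_{\leq\lambda}:L^2_{(0,q)}(M)\To L^2_{{\rm loc\,}}(U\cap\ol M)$ is continuous. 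Then, exactly as in the final step of the proof of Lemma~\ref{l-gue190531ycdh}, one bootstraps: writing $\Box^{(q)}B^{(q)}_{\leq\lambda}=B^{(q)}_{\leq\lambda}\Box^{(q)}B^{(q)}_{\leq\lambda}$ puts a projection back on the inside, so $\ol{\pr}^*B^{(q)}_{\leq\lambda}\Box^{(q)}B^{(q)}_{\leq\lambda}u$ already lies in the Sobolev class obtained so far, and applying $N^{(q-1)}:H^s_{{\rm loc\,}}\To H^{s+1}_{{\rm loc\,}}$ gains one more derivative; iterating yields continuity of $\ol{\pr}^*B^{(q)}_{\leq\lambda}$ into $H^s_{{\rm loc\,}}(U\cap\ol M)$ for every $s\in\mathbb N$. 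The only point demanding a little care is the domain membership in the first step, where the roles of $\ddbar$ and $\ol{\pr}^*$ are not quite symmetric; but it is routine once one uses that $B^{(q)}_{\leq\lambda}$ maps $L^2_{(0,q)}(M)$ into ${\rm Dom\,}\Box^{(q)}$ and commutes with $\Box^{(q)}$, so no analytic input beyond that of Lemma~\ref{l-gue190531ycdh} is required.
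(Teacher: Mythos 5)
Your proposal is correct and is essentially the argument the paper intends: the paper's own proof of this lemma simply says to repeat the proof of Lemma~\ref{l-gue190531ycdh}, which is exactly what you do, replacing $\ddbar$ by $\ol{\pr}^*$ and $N^{(q+1)}$ by $N^{(q-1)}$ (available since $q-1\neq n_-$), with the same domain check, the commutation $\Box^{(q-1)}\ol{\pr}^*=\ol{\pr}^*\Box^{(q)}$, and the same bootstrap via $\Box^{(q)}B^{(q)}_{\leq\lambda}=B^{(q)}_{\leq\lambda}\Box^{(q)}B^{(q)}_{\leq\lambda}$. Your handling of the degenerate case $q=0$ and of the membership $\ol{\pr}^*B^{(q)}_{\leq\lambda}u\in{\rm Dom\,}\Box^{(q-1)}$ is sound, so no gap remains.
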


From \eqref{e-gue190531ycdh} and \eqref{e-gue190603yyd}, we get: 

\begin{thm}\label{t-gue190603yyd}
The operator
\begin{equation}\label{e-gue190603yydI}
\Box^{(q)}B^{(q)}_{\leq\lambda}: L^2_{(0,q)}(M)\To 
H^s_{{\rm loc\,}}(U\cap\ol M, T^{*0,q}M'),
\end{equation}
is continuous for every $s\in\mathbb N$. 
\end{thm}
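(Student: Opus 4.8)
The plan is to deduce Theorem~\ref{t-gue190603yyd} from the two regularity results just obtained, Lemma~\ref{l-gue190531ycdh} and Lemma~\ref{l-gue190603yyd}, combined with the elementary fact that a first order differential operator with $\cC^\infty$ coefficients on $M'$ — in particular $\ddbar$ and the formal adjoint $\ol{\pr}^*_f$ — maps $H^{s+1}_{{\rm loc\,}}(U\cap\ol M,\cdot)$ continuously into $H^{s}_{{\rm loc\,}}(U\cap\ol M,\cdot)$ for every $s$.

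First I would fix $u\in L^2_{(0,q)}(M)$ (under the standing assumptions of this section: $q=n_-$, and the Levi form non-degenerate of signature $(n_-,n_+)$ on $U\cap X$). Since $B^{(q)}_{\leq\lambda}u\in{\rm Dom\,}\Box^{(q)}$, we have in $L^2_{(0,q)}(M)$
\[
\Box^{(q)}B^{(q)}_{\leq\lambda}u=\ddbar\,\ol{\pr}^*B^{(q)}_{\leq\lambda}u+\ol{\pr}^*\ddbar B^{(q)}_{\leq\lambda}u.
\]
On the open set $U\cap M$ the Hilbert space adjoint $\ol{\pr}^*$ coincides with $\ol{\pr}^*_f$ on forms in ${\rm Dom\,}\ol{\pr}^*$, so on $U\cap M$ the right hand side equals $\ddbar\,\ol{\pr}^*_fB^{(q)}_{\leq\lambda}u+\ol{\pr}^*_f\ddbar B^{(q)}_{\leq\lambda}u$, where now all the operators appearing are genuine differential operators near $X$. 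By Lemma~\ref{l-gue190603yyd}, $\ol{\pr}^*B^{(q)}_{\leq\lambda}u\in H^{s+1}_{{\rm loc\,}}(U\cap\ol M,T^{*0,q-1}M')$ depends continuously on $u$, hence so does $\ddbar\,\ol{\pr}^*_fB^{(q)}_{\leq\lambda}u\in H^{s}_{{\rm loc\,}}(U\cap\ol M,T^{*0,q}M')$; symmetrically, by Lemma~\ref{l-gue190531ycdh}, $\ddbar B^{(q)}_{\leq\lambda}u\in H^{s+1}_{{\rm loc\,}}(U\cap\ol M,T^{*0,q+1}M')$ depends continuously on $u$, hence so does $\ol{\pr}^*_f\ddbar B^{(q)}_{\leq\lambda}u\in H^{s}_{{\rm loc\,}}(U\cap\ol M,T^{*0,q}M')$. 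Adding the two pieces, the $L^2$ form $\Box^{(q)}B^{(q)}_{\leq\lambda}u$ agrees on $U\cap M$ with an element of $H^{s}_{{\rm loc\,}}(U\cap\ol M,T^{*0,q}M')$, and since $U\cap X$ has measure zero this forces $\Box^{(q)}B^{(q)}_{\leq\lambda}u\in H^{s}_{{\rm loc\,}}(U\cap\ol M,T^{*0,q}M')$ with norm bounded by a constant times $\norm{u}_M$, for every $s\in\mathbb N$.

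The proof is essentially mechanical. The only place asking for a modicum of care — and the step I would expect to be the main, though minor, obstacle — is the passage from the interior identity to a statement valid up to the boundary, i.e. verifying that replacing $\ol{\pr}^*$ by $\ol{\pr}^*_f$ on $U\cap M$ is legitimate and that the resulting expression, built from the two lemmas via differential operators, is already an element of the local Sobolev space $H^{s}_{{\rm loc\,}}(U\cap\ol M,\cdot)$ as defined in Section~\ref{s:prelim}; the continuity of $\Box^{(q)}B^{(q)}_{\leq\lambda}$ then follows at once from the continuity assertions in Lemma~\ref{l-gue190531ycdh} and Lemma~\ref{l-gue190603yyd} together with the boundedness of $\ddbar$ and $\ol{\pr}^*_f$ between the relevant spaces.
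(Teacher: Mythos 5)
Your argument is correct and is exactly how the paper obtains this statement: Theorem~\ref{t-gue190603yyd} is deduced directly from Lemma~\ref{l-gue190531ycdh} and Lemma~\ref{l-gue190603yyd} via the decomposition $\Box^{(q)}B^{(q)}_{\leq\lambda}=\ddbar\,\ol{\pr}^{*}B^{(q)}_{\leq\lambda}+\ol{\pr}^{*}\ddbar B^{(q)}_{\leq\lambda}$ and the fact that $\ddbar$ and $\ol{\pr}^*_f$ are first order differential operators mapping $H^{s+1}_{{\rm loc\,}}(U\cap\ol M,\cdot)$ to $H^{s}_{{\rm loc\,}}(U\cap\ol M,\cdot)$. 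Your added care about identifying the Hilbert space adjoint with the formal adjoint in the interior and passing to the boundary Sobolev spaces is sound and matches the paper's (implicit) reasoning.
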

\begin{lem}\label{l-gue190603yydI} 
For every $m\in\mathbb N$, the operator 
\[B^{(q)}_{\leq\lambda}\ddbar(\Box^{(q-1)}_f)^m: \Omega^{0,q-1}_0(M)\To L^2_{(0,q)}(M)\]
can be continuously extended to 
\begin{equation}\label{e-gue190603yydII}
B^{(q)}_{\leq\lambda}\ddbar(\Box^{(q-1)}_f)^m: L^2_{(0,q-1)}(M)\To L^2_{(0,q)}(M).
\end{equation}
\end{lem}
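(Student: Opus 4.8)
The plan is to exploit the identity $\ddbar\Box^{(q-1)}_f=\Box^{(q)}_f\ddbar$ on $\Omega^{0,q-1}(\ol M)$, which lets us push powers of $\Box^{(q-1)}_f$ through $\ddbar$ and convert them into powers of $\Box^{(q)}_f$ acting on $\ddbar u$. Concretely, for $u\in\Omega^{0,q-1}_0(M)$ we have $\ddbar(\Box^{(q-1)}_f)^mu=(\Box^{(q)}_f)^m\ddbar u$, and since $u$ has compact support in the interior, $\ddbar u\in\Omega^{0,q}_0(M)$ and $(\Box^{(q)}_f)^m\ddbar u=\Box^{(q)}(\Box^{(q)}_f)^{m-1}\ddbar u$ with the $\ddbar$-Neumann boundary conditions trivially satisfied because everything is compactly supported away from $X$. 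The point is then that $B^{(q)}_{\leq\lambda}$ annihilates $\ddbar$-exact forms modulo the spectral truncation: more precisely, writing $f=(\Box^{(q)}_f)^{m-1}\ddbar u$, we have $B^{(q)}_{\leq\lambda}\Box^{(q)}f=\Box^{(q)}B^{(q)}_{\leq\lambda}f$ by the spectral theorem, so $B^{(q)}_{\leq\lambda}\ddbar(\Box^{(q-1)}_f)^mu=\Box^{(q)}B^{(q)}_{\leq\lambda}(\Box^{(q)}_f)^{m-1}\ddbar u$.

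The first thing I would do is make this reduction rigorous: iterating, one gets $B^{(q)}_{\leq\lambda}\ddbar(\Box^{(q-1)}_f)^mu=(\Box^{(q)})^m B^{(q)}_{\leq\lambda}\ddbar u$ for $u\in\Omega^{0,q-1}_0(M)$ (using repeatedly that $\ddbar u$ and its images under $\Box^{(q)}$ lie in ${\rm Dom\,}\Box^{(q)}$ and that $B^{(q)}_{\leq\lambda}$ commutes with $\Box^{(q)}$). Now invoke the spectral calculus: since $B^{(q)}_{\leq\lambda}={\rm Ran\,}E((-\infty,\lambda])$, the operator $(\Box^{(q)})^mB^{(q)}_{\leq\lambda}=\int_0^\lambda t^m\,dE(t)$ is a \emph{bounded} operator on $L^2_{(0,q)}(M)$ with norm at most $\lambda^m$. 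Therefore
\[
\norm{B^{(q)}_{\leq\lambda}\ddbar(\Box^{(q-1)}_f)^mu}_M=\norm{(\Box^{(q)})^mB^{(q)}_{\leq\lambda}\ddbar u}_M\leq\lambda^m\norm{B^{(q)}_{\leq\lambda}\ddbar u}_M\leq\lambda^m\norm{\ddbar u}_M.
\]
It remains to bound $\norm{\ddbar u}_M$ by $\norm{u}_M$, which of course fails as stated — $\ddbar$ is a first-order operator — so this naive route needs one extra move.

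The fix is to absorb one factor of $\Box^{(q-1)}_f$ into controlling $\ddbar u$: write $(\Box^{(q-1)}_f)^m=\Box^{(q-1)}_f(\Box^{(q-1)}_f)^{m-1}$ is \emph{not} what helps; rather, use the basic estimate $\norm{\ddbar v}_M^2\leq(\,\Box^{(q-1)}_fv\,|\,v\,)_M+(\text{boundary terms that vanish for }v\in\Omega^{0,q-1}_0(M))$, i.e.\ $\norm{\ddbar v}_M^2+\norm{\ol{\pr}^*_fv}_M^2=(\,\Box^{(q-1)}_fv\,|\,v\,)_M$ for compactly supported $v$ by Lemma~\ref{l-gue190327ycd} (the boundary integral drops since $\gamma v=0$). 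Hence for $u\in\Omega^{0,q-1}_0(M)$, applying this with $v=(\Box^{(q-1)}_f)^{m-1/2}$-type expressions is awkward with odd powers, so instead I would argue as follows: $B^{(q)}_{\leq\lambda}\ddbar(\Box^{(q-1)}_f)^m=(\Box^{(q)})^{m-1}B^{(q)}_{\leq\lambda}\ddbar\Box^{(q-1)}_f$, and now $\norm{B^{(q)}_{\leq\lambda}\ddbar\Box^{(q-1)}_fu}_M\leq\norm{\ddbar\Box^{(q-1)}_fu}_M$; we need instead to land on $\norm{\ddbar w}_M\le C\big((\,\Box^{(q-1)}_fw\,|\,w\,)_M\big)^{1/2}$ applied to $w=(\Box^{(q-1)}_f)^{m-1}u$ — but that still leaves $(\Box^{(q-1)}_f)^{2m-1}$ which we then again push through $\ddbar$.

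The cleanest formulation, and the one I would actually write, is: by the interior elliptic estimate (or directly by the compactly-supported integration by parts $\norm{\ddbar w}_M^2\le\norm{\Box^{(q-1)}_fw}_M\norm{w}_M$ for $w\in\Omega^{0,q-1}_0(M)$, which follows from $\norm{\ddbar w}^2_M+\norm{\ol{\pr}^*_fw}^2_M=(\Box^{(q-1)}_fw\,|\,w)_M\le\norm{\Box^{(q-1)}_fw}_M\norm{w}_M$) combined with $\ddbar(\Box^{(q-1)}_f)^m=(\Box^{(q)})^mB^{(q)}_{\leq\lambda}$-reduction, we get
\[
\norm{B^{(q)}_{\leq\lambda}\ddbar(\Box^{(q-1)}_f)^mu}_M
=\norm{(\Box^{(q)})^{m-1}B^{(q)}_{\leq\lambda}\,\ddbar\,\Box^{(q-1)}_fu}_M
\le\lambda^{m-1}\norm{\ddbar\,\Box^{(q-1)}_fu}_M,
\]
hmm, this again has the wrong number of derivatives. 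I think the right statement to prove is with $\Box^{(q-1)}_f$ appearing so as to \emph{beat} the derivative loss: since $\ddbar(\Box^{(q-1)}_f)^m u=(\Box^{(q)}_f)^m\ddbar u$ and $(\Box^{(q)}_f)^m\ddbar u=\Box^{(q)}(\Box^{(q)}_f)^{m-1}\ddbar u$ lies in the range on which $B^{(q)}_{\leq\lambda}$ acts, we use that $B^{(q)}_{\leq\lambda}(\Box^{(q)})^m=\int_0^\lambda t^m dE(t)$ followed by writing $t^m=t^{m-1}\cdot t$ and pairing: for $u\in\Omega^{0,q-1}_0(M)$,
\begin{align*}
\norm{B^{(q)}_{\leq\lambda}\ddbar(\Box^{(q-1)}_f)^mu}_M^2
&=\big(\,(\Box^{(q)})^{2m}B^{(q)}_{\leq\lambda}\ddbar u\,\big|\,B^{(q)}_{\leq\lambda}\ddbar u\,\big)_M\\
&\le\lambda^{2m-1}\big(\,\Box^{(q)}B^{(q)}_{\leq\lambda}\ddbar u\,\big|\,\ddbar u\,\big)_M\\
&\le\lambda^{2m-1}\norm{\Box^{(q)}B^{(q)}_{\leq\lambda}\ddbar u}_M\,\norm{\ddbar u}_M.
\end{align*}
I would then bound $\norm{\Box^{(q)}B^{(q)}_{\leq\lambda}\ddbar u}_M=\norm{B^{(q)}_{\leq\lambda}\ddbar\Box^{(q-1)}_fu}_M\le\norm{\ddbar\Box^{(q-1)}_fu}_M$ and $\norm{\ddbar u}_M^2\le\norm{\Box^{(q-1)}_fu}_M\norm{u}_M$; iterating this bootstrap a bounded number of times (each step trading $\norm{\Box^{(q-1)}_f(\cdots)u}_M$ for a half-power of a higher $\Box$-norm, which is then re-pushed through $\ddbar$) terminates because the exponent of $\lambda$ that can be extracted from the spectral truncation is exactly large enough to absorb the finitely many derivatives. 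The main obstacle, and the step I would spend the most care on, is precisely this bookkeeping of powers: showing that the spectral cutoff at $\lambda$ supplies enough negative powers of $\Box^{(q)}$ (equivalently, enough boundedness) to compensate the $2m$ derivatives coming from $(\Box^{(q-1)}_f)^m$, so that the final estimate reads $\norm{B^{(q)}_{\leq\lambda}\ddbar(\Box^{(q-1)}_f)^mu}_M\le C_m\norm{u}_M$ with $C_m$ depending only on $m$ and $\lambda$; once that inequality is established on the dense subspace $\Omega^{0,q-1}_0(M)$, the continuous extension to $L^2_{(0,q-1)}(M)$ in \eqref{e-gue190603yydII} is immediate by density.
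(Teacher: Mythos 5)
Your reduction $B^{(q)}_{\leq\lambda}\ddbar(\Box^{(q-1)}_f)^m u=(\Box^{(q)})^m B^{(q)}_{\leq\lambda}\ddbar u$ for $u\in\Omega^{0,q-1}_0(M)$ is fine, as is the spectral bound $\norm{(\Box^{(q)})^m w}_M\leq\lambda^m\norm{w}_M$ for $w\in H^q_{\leq\lambda}(\ol M)$; but the argument you actually close with has a genuine gap. In your ``bootstrap'' you always discard the spectral projection before estimating (e.g.\ $\norm{B^{(q)}_{\leq\lambda}\ddbar u}_M\leq\norm{\ddbar u}_M$ and $\norm{\Box^{(q)}B^{(q)}_{\leq\lambda}\ddbar u}_M\leq\norm{\ddbar\Box^{(q-1)}_f u}_M$) and then invoke $\norm{\ddbar w}^2_M\leq\norm{\Box^{(q-1)}_f w}_M\norm{w}_M$. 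Each such step produces interior derivative norms of $u$ with no spectral cutoff attached ($\norm{\Box^{(q-1)}_fu}_M$, $\norm{\ddbar\Box^{(q-1)}_fu}_M$, \dots), and these can never be bounded by $\norm{u}_M$; powers of $\lambda$ cannot help, since $\lambda$ is a fixed constant and has nothing to do with absorbing derivatives. So the iteration does not terminate, and the final inequality $\norm{B^{(q)}_{\leq\lambda}\ddbar(\Box^{(q-1)}_f)^m u}_M\leq C_m\norm{u}_M$ is asserted rather than proved.

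The missing idea is duality: because $u$ is compactly supported in $M$, every derivative can be moved off $u$ onto the spectrally truncated side with no boundary terms, and there the identity $\norm{\ddbar w}^2_M+\norm{\ol{\pr}^*w}^2_M=(\,\Box^{(q)}w\,|\,w\,)_M$, valid for all $w\in{\rm Dom\,}\Box^{(q)}$ (not just compactly supported $w$), combined with the cutoff at $\lambda$ gives full $L^2$ control. This is exactly how the paper argues: for $v\in L^2_{(0,q)}(M)$ one writes $(\,B^{(q)}_{\leq\lambda}\ddbar(\Box^{(q-1)}_f)^m u\,|\,v\,)_M=(\,u\,|\,\ol{\pr}^*(\Box^{(q)})^mB^{(q)}_{\leq\lambda}v\,)_M$, estimates $\norm{\ol{\pr}^*(\Box^{(q)})^mB^{(q)}_{\leq\lambda}v}^2_M\leq(\,(\Box^{(q)})^{m+1}B^{(q)}_{\leq\lambda}v\,|\,(\Box^{(q)})^mB^{(q)}_{\leq\lambda}v\,)_M\leq\lambda^{2m+1}\norm{v}^2_M$, and then takes $v=B^{(q)}_{\leq\lambda}\ddbar(\Box^{(q-1)}_f)^mu$. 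Equivalently, your first (``naive'') route closes once you replace the crude bound $\norm{B^{(q)}_{\leq\lambda}\ddbar u}_M\leq\norm{\ddbar u}_M$ by the duality bound $\norm{B^{(q)}_{\leq\lambda}\ddbar u}^2_M=(\,\ol{\pr}^*B^{(q)}_{\leq\lambda}\ddbar u\,|\,u\,)_M\leq\lambda^{1/2}\norm{B^{(q)}_{\leq\lambda}\ddbar u}_M\norm{u}_M$, which yields $\norm{B^{(q)}_{\leq\lambda}\ddbar(\Box^{(q-1)}_f)^mu}_M\leq\lambda^{m+\frac{1}{2}}\norm{u}_M$ on the dense subspace, and density finishes the proof.
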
 

\begin{proof}
Let $u\in\Omega^{0,q-1}_0(M)$, $v\in L^2_{(0,q)}(M)$. we have 
\begin{equation}\label{e-gue190604yydIII}
(\,B^{(q)}_{\leq\lambda}\ddbar(\Box^{(q-1)}_f)^mu\,|\,v\,)_M
=(\,B^{(q)}_{\leq\lambda}(\Box^{(q)}_f)^m\ddbar u\,|\,v\,)_M=
(\,u\,|\,\ol{\pr}^*(\Box^{(q})^mB^{(q)}_{\leq\lambda}v\,)_M.
\end{equation}
We have 
\begin{equation}\label{e-gue190604yyda}
\begin{split}
&\norm{\ol{\pr}^*(\Box^{(q)})^mB^{(q)}_{\leq\lambda}v}^2_M
\leq\norm{\ol{\pr}^*(\Box^{(q)})^m
B^{(q)}_{\leq\lambda}v}^2_M+
\norm{\ddbar(\Box^{(q)})^mB^{(q)}_{\leq\lambda}v}^2_M\\
&=\Big(\,(\Box^{(q)})^{m+1}B^{(q)}_{\leq\lambda}v\,|\,
(\Box^{(q)})^mB^{(q)}_{\leq\lambda}v\,\Big)_M
\leq\lambda^{2m+1}\norm{v}^2_M.
\end{split}
\end{equation}
From \eqref{e-gue190604yydIII}, \eqref{e-gue190604yyda} 
and take $v=B^{(q)}_{\leq\lambda}\ddbar(\Box^{(q-1)}_f)^mu$, 
it is straightforward to see that 
\begin{equation}\label{e-gue190604yydb}
\norm{B^{(q)}_{\leq\lambda}\ddbar(\Box^{(q-1)}_f)^mu}_M\leq
\lambda^{m+\frac{1}{2}}\norm{u}_M. 
\end{equation}
From \eqref{e-gue190604yydb} and notice that 
$\Omega^{0,q-1}_0(M)$ is dense in $L^2_{(0,q-1)}(M)$, the lemma follows. 
\end{proof} 

\begin{lem}\label{l-gue190603syd}
The operator $B^{(q)}_{\leq\lambda}\ddbar: 
\Omega^{0,q-1}_0(U\cap\ol M)\To L^2_{(0,q)}(M)$ can be continuously extended to 
\begin{equation}\label{e-gue190603syd}
B^{(q)}_{\leq\lambda}\ddbar: 
H^{-s}_{{\rm comp\,}}(U\cap\ol M, T^{*0,q-1}M')\To L^2_{(0,q)}(M),\ \ 
\mbox{for every $s\in\mathbb N$}.
\end{equation}
\end{lem}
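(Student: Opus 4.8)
The plan is to deduce Lemma~\ref{l-gue190603syd} from Lemma~\ref{l-gue190603yyd} by a transposition argument. Since $B^{(q)}_{\leq\lambda}$ is an orthogonal projection and $\ol{\pr}^{*}=(\ddbar)^{*}$ is the Hilbert space adjoint of $\ddbar$, the operator $B^{(q)}_{\leq\lambda}\ddbar$ is, on smooth forms, the formal adjoint of $\ol{\pr}^{*}B^{(q)}_{\leq\lambda}$; and the latter gains arbitrarily many derivatives over $U\cap\ol M$ by Lemma~\ref{l-gue190603yyd}. So the desired regularity of $B^{(q)}_{\leq\lambda}\ddbar$ in the negative Sobolev scale is forced by duality.

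Concretely, I would fix $s\in\mathbb N$, a compact set $K\subset U\cap\ol M$ and $u\in\Omega^{0,q-1}_0(U\cap\ol M)$ with ${\rm Supp\,}u\subset K$. For $v\in L^2_{(0,q)}(M)$ one has $B^{(q)}_{\leq\lambda}v\in{\rm Ran\,}E\bigl((-\infty,\lambda]\bigr)\subset{\rm Dom\,}\Box^{(q)}\subset{\rm Dom\,}\ol{\pr}^{*}$ by spectral theory, while $u\in{\rm Dom\,}\ddbar$; hence, using that $B^{(q)}_{\leq\lambda}$ is self-adjoint and the definition of $\ol{\pr}^{*}$,
\[
(\,B^{(q)}_{\leq\lambda}\ddbar u\,|\,v\,)_M=(\,\ddbar u\,|\,B^{(q)}_{\leq\lambda}v\,)_M=(\,u\,|\,\ol{\pr}^{*}B^{(q)}_{\leq\lambda}v\,)_M .
\]
No boundary term appears precisely because $B^{(q)}_{\leq\lambda}v\in{\rm Dom\,}\ol{\pr}^{*}$; one may also see this from Lemma~\ref{l-gue190327ycd} together with $(\ddbar\rho)^{\wedge,*}\gamma B^{(q)}_{\leq\lambda}v=0$. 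By Lemma~\ref{l-gue190603yyd} the form $\ol{\pr}^{*}B^{(q)}_{\leq\lambda}v$ lies in $\cC^\infty(U\cap\ol M,T^{*0,q-1}M')$ and satisfies $\norm{\ol{\pr}^{*}B^{(q)}_{\leq\lambda}v}_{H^s(K')}\leq C_{K',s}\norm{v}_M$ for every compact $K'\subset U\cap\ol M$. Choosing $K'\supset K$, the right-hand side of the displayed identity is an $H^{-s}$--$H^{s}$ pairing over $U\cap\ol M$, whence
\[
\bigl|(\,B^{(q)}_{\leq\lambda}\ddbar u\,|\,v\,)_M\bigr|\leq\norm{u}_{H^{-s}}\,\norm{\ol{\pr}^{*}B^{(q)}_{\leq\lambda}v}_{H^s(K')}\leq C_{K,s}\,\norm{u}_{H^{-s}}\,\norm{v}_M .
\]
Taking $v=B^{(q)}_{\leq\lambda}\ddbar u$ gives $\norm{B^{(q)}_{\leq\lambda}\ddbar u}_M\leq C_{K,s}\norm{u}_{H^{-s}}$, and since $\Omega^{0,q-1}_0(U\cap\ol M)$ is dense in $H^{-s}_{{\rm comp\,}}(U\cap\ol M,T^{*0,q-1}M')$, the operator $B^{(q)}_{\leq\lambda}\ddbar$ then extends uniquely and continuously to $H^{-s}_{{\rm comp\,}}(U\cap\ol M,T^{*0,q-1}M')\To L^2_{(0,q)}(M)$, which is the claim.

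The point needing care will be the bookkeeping of the pairing $(\,u\,|\,\ol{\pr}^{*}B^{(q)}_{\leq\lambda}v\,)_M$: it must be read as the duality pairing between $H^{-s}_{{\rm comp\,}}(U\cap\ol M,\cdot)$ and $H^{s}_{{\rm loc\,}}(U\cap\ol M,\cdot)$, which is legitimate because $u$ is compactly supported inside $U\cap\ol M$ and $\ol{\pr}^{*}B^{(q)}_{\leq\lambda}v$ is smooth up to $X$ there; one then checks that this pairing agrees with $(\,\cdot\,|\,\cdot\,)_M$ for smooth $u$ and that the constants stay uniform over compact subsets, all routine near the boundary. As an alternative I could bypass duality and argue directly from Lemma~\ref{l-gue190603yydI}: given $s$, pick $m\in\mathbb N$ with $2m\geq s$, let $E$ be a properly supported elliptic parametrix of $(\Box^{(q-1)}_f)^m$ on $U\cap\ol M$ with $(\Box^{(q-1)}_f)^mE=I+F$ and $F\equiv0\mod\cC^\infty((U\times U)\cap(\ol M\times\ol M))$, and write, for $u\in\Omega^{0,q-1}_0(U\cap\ol M)$,
\[
B^{(q)}_{\leq\lambda}\ddbar u=B^{(q)}_{\leq\lambda}\ddbar(\Box^{(q-1)}_f)^mEu-B^{(q)}_{\leq\lambda}\ddbar Fu ;
\]
here the first term is controlled by $\norm{Eu}_M\leq C\norm{u}_{H^{-s}}$ via Lemma~\ref{l-gue190603yydI}, and the second by $\norm{B^{(q)}_{\leq\lambda}\ddbar Fu}_M\leq\norm{\ddbar Fu}_M\leq C\norm{u}_{H^{-s}}$ since $F$ is smoothing. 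Either way all the analytic substance is already contained in the preceding two lemmas, so no essentially new difficulty arises.
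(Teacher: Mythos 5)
Your reduction to Lemma~\ref{l-gue190603yyd} founders on the norm estimate you dismiss as routine bookkeeping. The identity $(\,B^{(q)}_{\leq\lambda}\ddbar u\,|\,v\,)_M=(\,u\,|\,\ol{\pr}^{*}B^{(q)}_{\leq\lambda}v\,)_M$ is fine, but the bound $\abs{(\,u\,|\,w\,)_M}\leq\norm{u}_{H^{-s}}\norm{w}_{H^{s}(K')}$ is \emph{false} when both norms are the restriction (extendable) Sobolev norms used in this paper, because $M$ has a boundary: the $L^2(M)$ pairing identifies the dual of the restriction space $H^{s}(U\cap\ol M)$ with the space of $H^{-s}$ distributions on $M'$ \emph{supported} in $\ol M$, and the zero-extension of $u$ across $X$ is not controlled in $H^{-s}(M')$ by the restriction norm of $u$. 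A one-dimensional model shows the failure for every $s\geq 1$: on $M=\set{x<0}$ take $u_j(x)=j^{2}\phi'(jx)$ with $\phi\in\cC^\infty_0(\Real)$, $\phi(0)\neq0$; then $u_j$ is the restriction of $\frac{d}{dx}\bigl(j\phi(jx)\bigr)$, whose $H^{-s}(\Real)$ norm is $O(j^{1/2})$, while for $w\equiv1$ near $0$ one has $(\,u_j\,|\,w\,)_M=j\phi(0)+O(1)$. So the chain of inequalities you write does not give $\norm{B^{(q)}_{\leq\lambda}\ddbar u}_M\leq C_{K,s}\norm{u}_{H^{-s}}$; the estimate is true (it is the content of the lemma), but it cannot be obtained from generic $H^{-s}$--$H^{s}$ duality over $U\cap\ol M$ — it needs the special structure of $\ol{\pr}^{*}B^{(q)}_{\leq\lambda}v$ near $X$, which is exactly what the duality argument throws away.

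Your alternative route has a gap at the analogous place. Lemma~\ref{l-gue190603yydI} produces the continuous extension of $B^{(q)}_{\leq\lambda}\ddbar(\Box^{(q-1)}_f)^m$ to $L^2$ starting from forms with compact support \emph{in the interior} $M$; for a form smooth up to $X$ the extended operator is a priori only an abstract $L^2$ limit, and identifying it with the literal composition $B^{(q)}_{\leq\lambda}\bigl(\ddbar(\Box^{(q-1)}_f)^m\,\cdot\,\bigr)$ requires the boundary terms in the integration by parts of \eqref{e-gue190604yydIII} to vanish. If $E$ is a plain interior parametrix of $(\Box^{(q-1)}_f)^m$, the form $Eu$ satisfies no boundary condition, so the step ``the first term is controlled by $\norm{Eu}_M$ via Lemma~\ref{l-gue190603yydI}'' is unjustified. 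This is precisely why the paper does not use a free elliptic parametrix but the operator $N^{(q-1)}$ of Theorem~\ref{t-gue190531syd}, which is built to satisfy the $\ddbar$-Neumann conditions \eqref{e-gue190326yyd}, \eqref{e-gue190326yydI}; with those conditions the identity $B^{(q)}_{\leq\lambda}\ddbar\Box^{(q-1)}_fN^{(q-1)}u=B^{(q)}_{\leq\lambda}\ddbar u+B^{(q)}_{\leq\lambda}\ddbar F^{(q-1)}_2u$ can legitimately be combined with Lemma~\ref{l-gue190603yydI}, and since $N^{(q-1)}$ gains only one derivative one bootstraps $H^{-1}_{\rm comp}\To L^2$, then $H^{-2}_{\rm comp}\To L^2$, and so on, rather than jumping $2m$ derivatives at once. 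To repair your argument you would have to either prove the vanishing of those boundary terms for your $E$ (you cannot in general) or switch to the boundary-adapted parametrix, at which point you are reproducing the paper's proof.
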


\begin{proof}
Let $u\in\Omega^{0,q-1}_0(U\cap\ol M)$. From \eqref{e-gue190403yydq}, we have 
\begin{equation}\label{e-gue190603sydI}
B^{(q)}_{\leq\lambda}\ddbar\Box^{(q-1)}_fN^{(q-1)}u=
B^{(q)}_{\leq\lambda}\ddbar u+B^{(q)}_{\leq\lambda}\ddbar F^{(q-1)}_2u, 
\end{equation}
where $F^{(q-1)}_2\equiv0\mod\cC^\infty((U\times U)\cap(\ol M\times\ol M))$ 
is as in \eqref{e-gue190403yydq}. From \eqref{e-gue190603yydII}, 
\eqref{e-gue190603sydI} and notice that 
\begin{equation}\label{e-gue190603sydII}
N^{(q-1)}: H^{s}_{{\rm comp\,}}(U\cap\ol M, T^{*0,q-1}M')\To 
H^{s+1}_{{\rm comp\,}}(U\cap\ol M, T^{*0,q-1}M')
\end{equation}
is continuous, for every $s\in\mathbb Z$, we deduce that  
$B^{(q)}_{\leq\lambda}\ddbar$ can be continuously extended to 
\begin{equation}\label{e-gue190603sydIII}
B^{(q)}_{\leq\lambda}\ddbar: 
H^{-1}_{{\rm comp\,}}(U\cap\ol M, T^{*0,q-1}M')\To L^2_{(0,q)}(M).
\end{equation}
From Lemma~\ref{l-gue190603yydI}, we can repeat the proof of 
\eqref{e-gue190603sydIII} and deduce that 
$B^{(q)}_{\leq\lambda}\ddbar\Box^{(q-1)}_f$ can be continuously extended to 
\begin{equation}\label{e-gue190603yydh}
B^{(q)}_{\leq\lambda}\ddbar\Box^{(q-1)}_f: 
H^{-1}_{{\rm comp\,}}(U\cap\ol M, T^{*0,q-1}M')\To L^2_{(0,q)}(M).
\end{equation}
From \eqref{e-gue190603sydI}, \eqref{e-gue190603sydII} 
and \eqref{e-gue190603yydh}, we deduce that 
$B^{(q)}_{\leq\lambda}\ddbar$ can be continuously extended to 
\[B^{(q)}_{\leq\lambda}\ddbar: 
H^{-2}_{{\rm comp\,}}(U\cap\ol M, T^{*0,q-1}M')\To L^2_{(0,q)}(M).\]
Continuing in this way, we get the lemma.
\end{proof}

We can repeat the proof of Lemma~\ref{l-gue190603syd} and deduce 

\begin{lem}\label{l-gue190603yydh}
The operator $B^{(q)}_{\leq\lambda}\ol{\pr}^*_f: 
\Omega^{0,q+1}_0(U\cap\ol M)\To L^2_{(0,q)}(M)$ can be continuously extended to 
\begin{equation}\label{e-gue190603syds}
B^{(q)}_{\leq\lambda}\ol{\pr}^*_f: 
H^{-s}_{{\rm comp\,}}(U\cap\ol M, T^{*0,q+1}M')\To 
L^2_{(0,q)}(M),\ \ \mbox{for every $s\in\mathbb N$}.
\end{equation}
\end{lem}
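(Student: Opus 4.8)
The proof is a line-by-line transcription of that of Lemma~\ref{l-gue190603syd}, with the roles of $\ddbar$ and $\ol{\pr}^*_f$ interchanged and the degree pair $(q-1,q)$ replaced by $(q+1,q)$. First I would establish the analogue of Lemma~\ref{l-gue190603yydI}: for every $m\in\mathbb N$ the operator $B^{(q)}_{\leq\lambda}\ol{\pr}^*_f(\Box^{(q+1)}_f)^m:\Omega^{0,q+1}_0(M)\To L^2_{(0,q)}(M)$ extends continuously to $L^2_{(0,q+1)}(M)\To L^2_{(0,q)}(M)$. For $u\in\Omega^{0,q+1}_0(M)$ and $v\in L^2_{(0,q)}(M)$, using that $B^{(q)}_{\leq\lambda}$ is self-adjoint and commutes with $\Box^{(q)}$, that $\ol{\pr}^*_f(\Box^{(q+1)}_f)^m=(\Box^{(q)}_f)^m\ol{\pr}^*_f$ on smooth forms, and that $\ol{\pr}^*_fu$ has compact support in the interior of $M$ (so no boundary term arises), one obtains
\begin{equation*}
(\,B^{(q)}_{\leq\lambda}\ol{\pr}^*_f(\Box^{(q+1)}_f)^mu\,|\,v\,)_M=(\,u\,|\,\ddbar(\Box^{(q)})^mB^{(q)}_{\leq\lambda}v\,)_M.
\end{equation*}
Since $(\Box^{(q)})^mB^{(q)}_{\leq\lambda}v\in H^q_{\leq\lambda}(\ol M)\subset{\rm Dom\,}\Box^{(q)}$, spectral theory gives
\begin{equation*}
\norm{\ddbar(\Box^{(q)})^mB^{(q)}_{\leq\lambda}v}^2_M\leq(\,(\Box^{(q)})^{m+1}B^{(q)}_{\leq\lambda}v\,|\,(\Box^{(q)})^mB^{(q)}_{\leq\lambda}v\,)_M\leq\lambda^{2m+1}\norm{v}^2_M,
\end{equation*}
and choosing $v=B^{(q)}_{\leq\lambda}\ol{\pr}^*_f(\Box^{(q+1)}_f)^mu$ yields $\norm{B^{(q)}_{\leq\lambda}\ol{\pr}^*_f(\Box^{(q+1)}_f)^mu}_M\leq\lambda^{m+\frac{1}{2}}\norm{u}_M$; the extension follows from the density of $\Omega^{0,q+1}_0(M)$ in $L^2_{(0,q+1)}(M)$.

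Next I would feed this into the parametrix identity \eqref{e-gue190403yydq} at degree $q+1$: for $u\in\Omega^{0,q+1}_0(U\cap\ol M)$,
\begin{equation*}
B^{(q)}_{\leq\lambda}\ol{\pr}^*_f\Box^{(q+1)}_fN^{(q+1)}u=B^{(q)}_{\leq\lambda}\ol{\pr}^*_fu+B^{(q)}_{\leq\lambda}\ol{\pr}^*_fF^{(q+1)}_2u,
\end{equation*}
with $F^{(q+1)}_2\equiv0\mod\cC^\infty((U\times U)\cap(\ol M\times\ol M))$. The left-hand side is continuous $H^{-1}_{{\rm comp\,}}(U\cap\ol M,T^{*0,q+1}M')\To L^2_{(0,q)}(M)$, because $N^{(q+1)}$ gains one Sobolev derivative (Theorem~\ref{t-gue190531syd}) and $B^{(q)}_{\leq\lambda}\ol{\pr}^*_f$ is $L^2$-bounded by the case $m=0$ above; the smoothing term $B^{(q)}_{\leq\lambda}\ol{\pr}^*_fF^{(q+1)}_2$ is harmless. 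This gives the extension to $H^{-1}_{{\rm comp\,}}$. For the inductive step, I would apply the same identity with $\ol{\pr}^*_f$ replaced by $\ol{\pr}^*_f\Box^{(q+1)}_f$ — invoking the $m=1$ case of the analogue of Lemma~\ref{l-gue190603yydI} just proved — to obtain that $B^{(q)}_{\leq\lambda}\ol{\pr}^*_f\Box^{(q+1)}_f$ extends continuously to $H^{-1}_{{\rm comp\,}}\To L^2_{(0,q)}(M)$, and then re-insert this into \eqref{e-gue190403yydq} to push the extension from $H^{-s}_{{\rm comp\,}}$ to $H^{-s-1}_{{\rm comp\,}}$; induction on $s$ finishes the proof.

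The only point requiring attention — and what I regard as the main (minor) obstacle — is the bookkeeping in the first step: one must keep careful track of the difference between the formal adjoint $\ol{\pr}^*_f$ appearing on $u$ and the Hilbert-space adjoint $\ol{\pr}^*$ on the spectral subspace, and observe that, because $u$ (hence $\ol{\pr}^*_fu$) is compactly supported in the interior, the boundary integral in \eqref{e-gue190328yyd} simply does not enter, so that $(\,(\Box^{(q)}_f)^m\ol{\pr}^*_fu\,|\,B^{(q)}_{\leq\lambda}v\,)_M=(\,u\,|\,\ddbar(\Box^{(q)})^mB^{(q)}_{\leq\lambda}v\,)_M$ with $(\Box^{(q)}_f)^m$ acting as the self-adjoint $\Box^{(q)}$ on the spectral subspace. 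Everything else is the routine bootstrap already carried out in Lemma~\ref{l-gue190603syd}.
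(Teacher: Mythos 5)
Your proposal is correct and is essentially the paper's own route: the paper proves this lemma simply by ``repeating the proof of Lemma~\ref{l-gue190603syd}'', which is exactly your transcription, including the all-$m$ analogue of Lemma~\ref{l-gue190603yydI} obtained from the adjoint identity and the spectral estimate $\norm{\ddbar(\Box^{(q)})^mB^{(q)}_{\leq\lambda}v}_M\leq\lambda^{m+\frac{1}{2}}\norm{v}_M$. One minor bookkeeping correction: in the base case the $H^{-1}_{{\rm comp\,}}$-boundedness of the left-hand side comes from grouping it as $\bigl(B^{(q)}_{\leq\lambda}\ol{\pr}^*_f\Box^{(q+1)}_f\bigr)\circ N^{(q+1)}$, i.e.\ the $m=1$ bound composed with $N^{(q+1)}: H^{-1}_{{\rm comp\,}}\To L^2_{{\rm comp\,}}$ (not from the $m=0$ bound), and each further step of the induction consumes one more power of $\Box^{(q+1)}_f$ — harmless, since you established the estimate for every $m$.
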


From Lemma~\ref{l-gue190603syd} and Lemma~\ref{l-gue190603yydh}, we get 

\begin{thm}\label{t-gue190603sydk}
The operator $B^{(q)}_{\leq\lambda}\Box^{(q)}_f:
\Omega^{0,q}_0(U\cap\ol M)\To L^2_{(0,q)}(M)$ can be continuously extended to 
\begin{equation}\label{e-gue190603sydl}
B^{(q)}_{\leq\lambda}\Box^{(q)}_f: 
H^{-s}_{{\rm comp\,}}(U\cap\ol M, T^{*0,q}M')\To 
L^2_{(0,q)}(M),\ \ \mbox{for every $s\in\mathbb N$}. 
\end{equation}
\end{thm}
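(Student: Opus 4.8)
The statement combines Lemma~\ref{l-gue190603syd} and Lemma~\ref{l-gue190603yydh} exactly as stated, so the plan is to decompose $\Box^{(q)}_f$ into its $\ddbar$- and $\ol{\pr}^*_f$-parts and apply the two lemmas term by term. First I would recall that on $\Omega^{0,q}_0(U\cap\ol M)$ we have the pointwise (differential-operator) identity $\Box^{(q)}_f=\ddbar\,\ol{\pr}^*_f+\ol{\pr}^*_f\,\ddbar$, so that for $u\in\Omega^{0,q}_0(U\cap\ol M)$,
\[
B^{(q)}_{\leq\lambda}\Box^{(q)}_fu=B^{(q)}_{\leq\lambda}\ddbar\bigl(\ol{\pr}^*_fu\bigr)+B^{(q)}_{\leq\lambda}\ol{\pr}^*_f\bigl(\ddbar u\bigr).
\]
Note that $\ol{\pr}^*_f$ and $\ddbar$ are (formal) differential operators of order one on $M'$, hence properly supported and continuous
\[
\ol{\pr}^*_f:H^{-s}_{{\rm comp\,}}(U\cap\ol M,T^{*0,q}M')\To H^{-s-1}_{{\rm comp\,}}(U\cap\ol M,T^{*0,q-1}M'),
\]
and likewise $\ddbar:H^{-s}_{{\rm comp\,}}(U\cap\ol M,T^{*0,q}M')\To H^{-s-1}_{{\rm comp\,}}(U\cap\ol M,T^{*0,q+1}M')$, for every $s\in\mathbb N$.

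Second, I would feed these into the two preceding lemmas. By Lemma~\ref{l-gue190603syd}, $B^{(q)}_{\leq\lambda}\ddbar$ extends continuously $H^{-s}_{{\rm comp\,}}(U\cap\ol M,T^{*0,q-1}M')\To L^2_{(0,q)}(M)$ for every $s\in\mathbb N$; composing with the continuous map $\ol{\pr}^*_f:H^{-s}_{{\rm comp\,}}(U\cap\ol M,T^{*0,q}M')\To H^{-s-1}_{{\rm comp\,}}(U\cap\ol M,T^{*0,q-1}M')$ shows that $B^{(q)}_{\leq\lambda}\ddbar\,\ol{\pr}^*_f$ extends continuously $H^{-s}_{{\rm comp\,}}(U\cap\ol M,T^{*0,q}M')\To L^2_{(0,q)}(M)$ for every $s$. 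Symmetrically, Lemma~\ref{l-gue190603yydh} gives that $B^{(q)}_{\leq\lambda}\ol{\pr}^*_f$ extends continuously $H^{-s}_{{\rm comp\,}}(U\cap\ol M,T^{*0,q+1}M')\To L^2_{(0,q)}(M)$, and precomposing with $\ddbar:H^{-s}_{{\rm comp\,}}(U\cap\ol M,T^{*0,q}M')\To H^{-s-1}_{{\rm comp\,}}(U\cap\ol M,T^{*0,q+1}M')$ handles the second term. Adding the two continuous extensions gives a continuous extension of $B^{(q)}_{\leq\lambda}\Box^{(q)}_f$ on $H^{-s}_{{\rm comp\,}}(U\cap\ol M,T^{*0,q}M')$; since $\Omega^{0,q}_0(U\cap\ol M)$ is dense in $H^{-s}_{{\rm comp\,}}(U\cap\ol M,T^{*0,q}M')$, this extension is unique and agrees with the original operator on smooth compactly supported forms, proving \eqref{e-gue190603sydl}.

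The one point that needs a word of care — and the only place where there is anything to check beyond bookkeeping — is the compatibility of the algebraic identity $\Box^{(q)}_f=\ddbar\,\ol{\pr}^*_f+\ol{\pr}^*_f\,\ddbar$ with the extension procedure: on $\Omega^{0,q}_0(U\cap\ol M)$ all operators are honest differential operators and the identity is the definition of $\Box^{(q)}_f$ (no boundary terms appear because we act on smooth forms and only afterwards apply $B^{(q)}_{\leq\lambda}$, which absorbs the $L^2$-pairing), so the three operators $B^{(q)}_{\leq\lambda}\Box^{(q)}_f$, $B^{(q)}_{\leq\lambda}\ddbar\,\ol{\pr}^*_f$, $B^{(q)}_{\leq\lambda}\ol{\pr}^*_f\,\ddbar$ coincide on a dense subspace with the sum of the latter two, and each extends. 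I do not expect any genuine obstacle here; the substantive work was already done in Lemmas~\ref{l-gue190603syd} and \ref{l-gue190603yydh} (and, behind them, Lemma~\ref{l-gue190603yydI} and the spectral estimate \eqref{e-gue190604yyda}), so this theorem is essentially a formal corollary obtained by splitting $\Box^{(q)}_f$ and summing.
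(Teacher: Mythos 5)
Your proposal is correct and is exactly the argument the paper intends: the paper derives this theorem directly from Lemma~\ref{l-gue190603syd} and Lemma~\ref{l-gue190603yydh} by writing $\Box^{(q)}_f=\ddbar\,\ol{\pr}^*_f+\ol{\pr}^*_f\,\ddbar$ and composing with the first-order operators $\ddbar$, $\ol{\pr}^*_f$ on compactly supported Sobolev spaces, followed by density. Your extra remark on the compatibility of the formal identity with the extension procedure is fine and does not change the route.
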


We consider 
\[\begin{split}
&\Box^{(q)}B^{(q)}_{\leq\lambda}\Box^{(q)}_f: 
\Omega^{0,q}_0(U\cap\ol M)\To L^2_{(0,q)}(M)\subset 
L^2_{{\rm loc\,}}(U\cap\ol M, T^{*0,q}M'),\\
&(\Box^{(q)})^2B^{(q)}_{\leq\lambda}: 
\Omega^{0,q}_0(U\cap\ol M)\To L^2_{(0,q)}(M)\subset 
L^2_{{\rm loc\,}}(U\cap\ol M, T^{*0,q}M').
\end{split}\]

\begin{thm}\label{t-gue190604yyd}
We have 
\begin{equation}\label{e-gue190604syd}
\Box^{(q)}B^{(q)}_{\leq\lambda}\Box^{(q)}_f\equiv0
\mod\cC^\infty((U\times U)\cap(\ol M\times\ol M))
\end{equation}
and 
\begin{equation}\label{e-gue190604sydI}
(\Box^{(q)})^2B^{(q)}_{\leq\lambda}\equiv0
\mod\cC^\infty((U\times U)\cap(\ol M\times\ol M)).
\end{equation}
\end{thm}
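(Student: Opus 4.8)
The two identities are proved by the same mechanism, so I describe the argument for \eqref{e-gue190604syd} and indicate the obvious adaptation for \eqref{e-gue190604sydI}. The plan is to exploit the smoothing properties of $B^{(q)}_{\leq\lambda}$ just proved in Theorem~\ref{t-gue190603yyd} and Theorem~\ref{t-gue190603sydk}, which are precisely a statement about the two ``sides'' of $B^{(q)}_{\leq\lambda}$: applying $\Box^{(q)}$ on the left gains arbitrarily many derivatives (Theorem~\ref{t-gue190603yyd}), while composing with $\Box^{(q)}_f$ on the right against compactly supported negative Sobolev data lands in $L^2$ (Theorem~\ref{t-gue190603sydk}). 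The operator in \eqref{e-gue190604syd} has $\Box^{(q)}$ on the far left \emph{and} $\Box^{(q)}_f$ on the far right, so morally it should be smoothing on $U\cap\ol M$; the task is to make this precise at the level of distribution kernels. Concretely, I would show that for every $s\in\mathbb N$ the operator
\[
\Box^{(q)}B^{(q)}_{\leq\lambda}\Box^{(q)}_f:\; H^{-s}_{{\rm comp\,}}(U\cap\ol M,T^{*0,q}M')\To H^{s}_{{\rm loc\,}}(U\cap\ol M,T^{*0,q}M')
\]
is continuous, which by the Schwartz kernel theorem (in the version recalled in Section~\ref{s-ssna}, applied on $U\cap\ol M$) forces the kernel to be smooth on $(U\times U)\cap(\ol M\times\ol M)$.

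The key step is to factor the composition through the $L^2$ space. First, by Theorem~\ref{t-gue190603sydk}, $B^{(q)}_{\leq\lambda}\Box^{(q)}_f$ extends continuously from $H^{-s}_{{\rm comp\,}}(U\cap\ol M,T^{*0,q}M')$ to $L^2_{(0,q)}(M)$. Then compose on the left with $\Box^{(q)}B^{(q)}_{\leq\lambda}$, but first observe that $\Box^{(q)}B^{(q)}_{\leq\lambda}\Box^{(q)}_f = \Box^{(q)}B^{(q)}_{\leq\lambda}\cdot B^{(q)}_{\leq\lambda}\Box^{(q)}_f$ only after I insert a spectral identity: since $B^{(q)}_{\leq\lambda}$ is the spectral projection $E((-\infty,\lambda])$, it is idempotent and commutes with $\Box^{(q)}$, so
\[
\Box^{(q)}B^{(q)}_{\leq\lambda}\Box^{(q)}_f = \Box^{(q)}B^{(q)}_{\leq\lambda}\bigl(B^{(q)}_{\leq\lambda}\Box^{(q)}_f\bigr),
\]
and now Theorem~\ref{t-gue190603yyd} (the operator $\Box^{(q)}B^{(q)}_{\leq\lambda}\colon L^2_{(0,q)}(M)\To H^{s}_{{\rm loc\,}}(U\cap\ol M,T^{*0,q}M')$ is continuous for every $s\in\mathbb N$) applies to the middle factor $\Box^{(q)}B^{(q)}_{\leq\lambda}$ acting on the $L^2$ element $B^{(q)}_{\leq\lambda}\Box^{(q)}_f u$. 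Composing the two continuity statements yields continuity $H^{-s}_{{\rm comp\,}}\To H^{s}_{{\rm loc\,}}$ for all $s$, hence the kernel is smooth, i.e.\ \eqref{e-gue190604syd}. For \eqref{e-gue190604sydI} one writes $(\Box^{(q)})^2B^{(q)}_{\leq\lambda} = \Box^{(q)}B^{(q)}_{\leq\lambda}\cdot \Box^{(q)}B^{(q)}_{\leq\lambda}$ using idempotency and commutativity of the spectral projection, and applies Theorem~\ref{t-gue190603yyd} to \emph{both} factors: the right factor maps $L^2$ (hence also $H^{-s}_{{\rm comp\,}}$ after noting $\Box^{(q)}B^{(q)}_{\leq\lambda}$ is bounded on $L^2$ by spectral theory, being $\Box^{(q)}E((-\infty,\lambda])$) into $L^2$, and the left factor maps $L^2$ into $H^{s}_{{\rm loc\,}}$; more directly, $\Box^{(q)}B^{(q)}_{\leq\lambda}$ is bounded $L^2\to L^2$ with norm $\le\lambda$, so $(\Box^{(q)})^2 B^{(q)}_{\leq\lambda} = (\Box^{(q)}B^{(q)}_{\leq\lambda})\circ(\Box^{(q)}B^{(q)}_{\leq\lambda})$ maps $L^2$ into $H^s_{{\rm loc\,}}$, and since $\Omega^{0,q}_0(U\cap\ol M)\subset L^2_{(0,q)}(M)$ and we only need the kernel on $(U\times U)\cap(\ol M\times\ol M)$, smoothing follows.

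The main obstacle—and the point requiring genuine care rather than formal manipulation—is the bookkeeping of \emph{domains and boundary conditions} when one factors $\Box^{(q)}B^{(q)}_{\leq\lambda}\Box^{(q)}_f$ as a genuine composition of the two extended operators. The subtlety is that $\Box^{(q)}_f$ is the \emph{formal} (Laplace–Beltrami) operator with no boundary conditions, whereas $\Box^{(q)}$ is the $\ddbar$-Neumann Laplacian with its domain \eqref{e-gue190312sydq}; the identity $B^{(q)}_{\leq\lambda}\Box^{(q)}_f u = B^{(q)}_{\leq\lambda}\Box^{(q)} u$ on $\Omega^{0,q}_0(U\cap\ol M)$ holds because for $u$ compactly supported inside $M$ the boundary terms vanish, but for $u$ supported up to $X$ one must argue via the density of $\Omega^{0,q}_0(U\cap M)$ and the continuity statements of Lemmas~\ref{l-gue190603syd}, \ref{l-gue190603yydh} rather than naively evaluating $\Box^{(q)}$ on forms outside its domain—this is exactly why the two preceding theorems were phrased as continuous extensions from $H^{-s}_{{\rm comp\,}}$. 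So the proof I would write consists largely of: (i) reducing, by density of $\Omega^{0,q}_0(U\cap M)$, to test forms supported in the interior; (ii) on such forms, using idempotency and commutativity of the spectral projection $B^{(q)}_{\leq\lambda}=E((-\infty,\lambda])$ to split the operator as a composition $\bigl(\Box^{(q)}B^{(q)}_{\leq\lambda}\bigr)\circ\bigl(B^{(q)}_{\leq\lambda}\Box^{(q)}_f\bigr)$; (iii) invoking Theorem~\ref{t-gue190603sydk} and Theorem~\ref{t-gue190603yyd} for the two factors; (iv) concluding via the Schwartz kernel theorem on $U\cap\ol M$ that the kernel is $\cC^\infty$, and finally (v) repeating the same three-line argument for $(\Box^{(q)})^2B^{(q)}_{\leq\lambda}$.
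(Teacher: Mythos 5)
Your argument for \eqref{e-gue190604syd} is essentially identical to the paper's: using idempotency of the spectral projection to write $\Box^{(q)}B^{(q)}_{\leq\lambda}\Box^{(q)}_f=\bigl(\Box^{(q)}B^{(q)}_{\leq\lambda}\bigr)\circ\bigl(B^{(q)}_{\leq\lambda}\Box^{(q)}_f\bigr)$, invoking Theorem~\ref{t-gue190603sydk} for the right factor ($H^{-s}_{{\rm comp\,}}\To L^2$) and Theorem~\ref{t-gue190603yyd} for the left one ($L^2\To H^{s}_{{\rm loc\,}}$), and concluding smoothness of the kernel from the $H^{-s}_{{\rm comp\,}}\To H^{s}_{{\rm loc\,}}$ continuity for all $s$. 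That part needs no repair.

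For \eqref{e-gue190604sydI} you leave the paper's route and there is a genuine gap. The factorization $(\Box^{(q)})^2B^{(q)}_{\leq\lambda}=\bigl(\Box^{(q)}B^{(q)}_{\leq\lambda}\bigr)\circ\bigl(\Box^{(q)}B^{(q)}_{\leq\lambda}\bigr)$ is fine by spectral calculus, but to run the same two-sided argument you need the right-hand factor to map $H^{-s}_{{\rm comp\,}}(U\cap\ol M,T^{*0,q}M')$ into $L^2_{(0,q)}(M)$, and your justification --- that $\Box^{(q)}B^{(q)}_{\leq\lambda}$ is bounded on $L^2$ with norm $\leq\lambda$ --- does not give this (the identity operator is $L^2$-bounded and does not map $H^{-1}_{{\rm comp\,}}$ into $L^2$). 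The property you need would follow from a duality argument: $\Box^{(q)}B^{(q)}_{\leq\lambda}$ is self-adjoint and maps $L^2$ into $H^{s}_{{\rm loc\,}}(U\cap\ol M)$ by Theorem~\ref{t-gue190603yyd}, so pairing against test forms and using density of $\Omega^{0,q}_0(U\cap\ol M)$ in $H^{-s}_{{\rm comp\,}}$ yields the extension $H^{-s}_{{\rm comp\,}}\To L^2$ --- an adjoint step of the kind the paper carries out elsewhere (e.g.\ in the proof of Theorem~\ref{t-gue190609yyd}), but which you never articulate. Your fallback ``more directly'' argument is insufficient on its own: knowing only that $(\Box^{(q)})^2B^{(q)}_{\leq\lambda}$ maps $L^2$ into $H^{s}_{{\rm loc\,}}(U\cap\ol M)$ for every $s$ gives smoothness of the kernel in the outgoing variable with merely $L^2$ dependence on the other one (a rank-one operator $u\mapsto\phi(x)\,(\,u\,|\,g\,)_M$ with $g\in L^2$ non-smooth has exactly this mapping property), so ``smoothing follows'' is a non sequitur. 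The paper sidesteps all of this: it proves the operator identity $(\Box^{(q)})^2B^{(q)}_{\leq\lambda}=\Box^{(q)}B^{(q)}_{\leq\lambda}\Box^{(q)}_f$ on $L^2_{(0,q)}(M)$ by approximating $u\in L^2$ with $u_j\in\Omega^{0,q}_0(M)$ (interior support, so $u_j\in{\rm Dom\,}\Box^{(q)}$ and $\Box^{(q)}u_j=\Box^{(q)}_fu_j$) and using the $L^2$-continuity of $(\Box^{(q)})^2B^{(q)}_{\leq\lambda}$, so that \eqref{e-gue190604sydI} is an immediate consequence of \eqref{e-gue190604syd}. You have this density idea in your step (i), but you deploy it to compare $B^{(q)}_{\leq\lambda}\Box^{(q)}_f$ with $B^{(q)}_{\leq\lambda}\Box^{(q)}$ (not needed once Theorem~\ref{t-gue190603sydk} is in hand) rather than where it is actually required, namely to reduce \eqref{e-gue190604sydI} to \eqref{e-gue190604syd}.
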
 

\begin{proof}
From \eqref{e-gue190603yydI} and \eqref{e-gue190603sydl}, we have 
\[\Box^{(q)}B^{(q)}_{\leq\lambda}\Box^{(q)}_f: 
H^{-s}_{{\rm comp\,}}(U\cap\ol M, T^{*0,q}M')\To 
H^s_{{\rm loc\,}}(U\cap\ol M,T^{*0,q}M'),\]
for every $s\in\mathbb N$. We get \eqref{e-gue190604syd}. 
Let $u\in L^2_{(0,q)}(M)$. Take $u_j\in\Omega^{0,q}_0(M)$, 
$j=1,2,\ldots$, so that $\lim_{j\To+\infty}\norm{u_j-u}_M=0$. 
Since $(\Box^{(q)})^2B^{(q)}_{\leq\lambda}$ is $L^2$ continuous, we have 
\begin{equation}\label{e-gue190604yydj}
(\Box^{(q)})^2B^{(q)}_{\leq\lambda}u=
\lim_{j\To+\infty}(\Box^{(q)})^2B^{(q)}_{\leq\lambda}u_j\ \ 
\mbox{in $L^2_{(0,q)}(M)$}. 
\end{equation}
From the fact that $u_j\in{\rm Dom\,}\Box^{(q)}$, for every 
$j=1,2,\ldots$, we can check that 
\begin{equation}\label{e-gue190604yydk}
(\Box^{(q)})^2B^{(q)}_{\leq\lambda}u_j=
\Box^{(q)}B^{(q)}_{\leq\lambda}\Box^{(q)}u_j=
\Box^{(q)}B^{(q)}_{\leq\lambda}\Box^{(q)}_fu_j,\ \ 
\mbox{for every $j=1,2,\ldots$}. 
\end{equation}
From \eqref{e-gue190604yydk} and \eqref{e-gue190604yydj}, we conclude that 
\begin{equation}\label{e-gue190604yydl}
(\Box^{(q)})^2B^{(q)}_{\leq\lambda}=
\Box^{(q)}B^{(q)}_{\leq\lambda}\Box^{(q)}_f\ \ 
\mbox{on $L^2_{(0,q)}(M)$}. 
\end{equation}
From \eqref{e-gue190604yydl} and \eqref{e-gue190604syd}, 
we get \eqref{e-gue190604sydI}. 
\end{proof} 

\begin{lem}\label{l-gue190604sydk}
The operator $B^{(q)}_{\leq\lambda}$ can be continuously extended to 
\begin{equation}\label{e-gue190604ycd}
B^{(q)}_{\leq\lambda}: 
H^{-s}_{{\rm comp\,}}(U\cap\ol M, T^{*0,q}M')\To 
H^{-s}_{{\rm loc\,}}(U\cap\ol M, T^{*0,q}M'), 
\end{equation}
for every $s\in\mathbb N$.
\end{lem}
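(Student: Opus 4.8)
The plan is to deduce the statement from the microlocal Hodge decomposition of Theorem~\ref{t-gue190529ycdi}, the mapping property of $B^{(q)}_{\leq\lambda}\Box^{(q)}_f$ established in Theorem~\ref{t-gue190603sydk}, and the resolvent-type identities \eqref{e-gue190531syda}. Fix $s\in\mathbb N$ (the case $s=0$ is trivial, $B^{(q)}_{\leq\lambda}$ being $L^2$-bounded). For $u\in\Omega^{0,q}_0(U\cap\ol M)$ the identity $\Box^{(q)}_fN^{(q)}u+\Pi^{(q)}u=u+r^{(q)}_0u$ from \eqref{e-gue190531yyd} gives
\[B^{(q)}_{\leq\lambda}u=B^{(q)}_{\leq\lambda}\Box^{(q)}_fN^{(q)}u+B^{(q)}_{\leq\lambda}\Pi^{(q)}u-B^{(q)}_{\leq\lambda}r^{(q)}_0u ,\]
and since $\Omega^{0,q}_0(U\cap\ol M)$ is dense in $H^{-s}_{{\rm comp\,}}(U\cap\ol M,T^{*0,q}M')$, it suffices to show that each of the three operators on the right extends continuously to a map $H^{-s}_{{\rm comp\,}}(U\cap\ol M,T^{*0,q}M')\To H^{-s}_{{\rm loc\,}}(U\cap\ol M,T^{*0,q}M')$.

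The first and third terms are routine. Since $N^{(q)}$ is properly supported and continuous $H^{-s}_{{\rm loc\,}}\to H^{-s}_{{\rm loc\,}}$ (Theorem~\ref{t-gue190529ycdi}), it maps $H^{-s}_{{\rm comp\,}}(U\cap\ol M)$ continuously into itself; composing with $B^{(q)}_{\leq\lambda}\Box^{(q)}_f\colon H^{-s}_{{\rm comp\,}}(U\cap\ol M)\to L^2_{(0,q)}(M)$ from \eqref{e-gue190603sydl} and then using $L^2_{(0,q)}(M)\subset H^{-s}_{{\rm loc\,}}(U\cap\ol M)$ takes care of $B^{(q)}_{\leq\lambda}\Box^{(q)}_fN^{(q)}$. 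For the third term, $r^{(q)}_0\equiv0$ is smoothing and properly supported, so it maps $H^{-s}_{{\rm comp\,}}(U\cap\ol M)$ continuously into $\cC^\infty_0(U\cap\ol M)\subset L^2_{(0,q)}(M)$, and then $B^{(q)}_{\leq\lambda}$ is $L^2$-bounded.

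The middle term is the crux. For $u\in\Omega^{0,q}_0(U\cap\ol M)$ one has $\Pi^{(q)}u\in{\rm Dom\,}\Box^{(q)}$ by \eqref{e-gue190529sydI}, and $\Box^{(q)}\Pi^{(q)}u=\Box^{(q)}_f\Pi^{(q)}u=r^{(q)}_5u$ by \eqref{e-gue190531yyd} (recall $\Box^{(q)}$ acts as the differential operator $\Box^{(q)}_f$ on its domain). Feeding this into $A^{(q)}_\lambda\Box^{(q)}+B^{(q)}_{\leq\lambda}=I$ on ${\rm Dom\,}\Box^{(q)}$ from \eqref{e-gue190531syda} gives
\[B^{(q)}_{\leq\lambda}\Pi^{(q)}u=\Pi^{(q)}u-A^{(q)}_\lambda r^{(q)}_5u .\]
Here $u\mapsto\Pi^{(q)}u$ is continuous $H^{-s}_{{\rm comp\,}}(U\cap\ol M)\to H^{-s}_{{\rm comp\,}}(U\cap\ol M)$ by Theorem~\ref{t-gue190529ycdh}, and $u\mapsto A^{(q)}_\lambda r^{(q)}_5u$ is continuous $H^{-s}_{{\rm comp\,}}(U\cap\ol M)\to L^2_{(0,q)}(M)\subset H^{-s}_{{\rm loc\,}}(U\cap\ol M)$ (a smoothing operator followed by the bounded operator $A^{(q)}_\lambda$). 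Hence $B^{(q)}_{\leq\lambda}\Pi^{(q)}$ extends continuously with the asserted mapping property, and combining the three steps proves the lemma.

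The main obstacle is precisely this $\Pi^{(q)}$-term: because $Z(q)$ fails at $q=n_-$, $\Pi^{(q)}$ is only an order-zero operator (no Sobolev gain), so it cannot be absorbed by elliptic regularity as in the $q\neq n_-$ case; what saves the day is its near-harmonicity $\Box^{(q)}_f\Pi^{(q)}\equiv0$, which makes $B^{(q)}_{\leq\lambda}$ act on $\Pi^{(q)}u$ trivially modulo smoothing. Beyond this, the only care needed is bookkeeping: one uses that $N^{(q)},\Pi^{(q)},r^{(q)}_0,r^{(q)}_5$ are properly supported, so that $\Box^{(q)}_fN^{(q)}u$ and $r^{(q)}_5u$ are compactly supported sections on $\ol M$ to which the global operators $B^{(q)}_{\leq\lambda}$, $A^{(q)}_\lambda$ legitimately apply, and that the Hodge identity of Theorem~\ref{t-gue190529ycdi} is used up to the boundary.
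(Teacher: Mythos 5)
Your proposal is correct and follows essentially the same route as the paper: both apply $B^{(q)}_{\leq\lambda}$ to the Hodge identity $\Box^{(q)}_fN^{(q)}+\Pi^{(q)}=I+r^{(q)}_0$, use the extension of $B^{(q)}_{\leq\lambda}\Box^{(q)}_f$ from Theorem~\ref{t-gue190603sydk}, and handle the $\Pi^{(q)}$-term via $\Pi^{(q)}u=A^{(q)}_\lambda r^{(q)}_5u+B^{(q)}_{\leq\lambda}\Pi^{(q)}u$ coming from \eqref{e-gue190531syda} and $\Box^{(q)}_f\Pi^{(q)}=r^{(q)}_5$, before invoking the Sobolev continuity of $\Pi^{(q)}$ and density of $\Omega^{0,q}_0(U\cap\ol M)$. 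The only difference is cosmetic bookkeeping: the paper records the intermediate operators $B^{(q)}_{\leq\lambda}-B^{(q)}_{\leq\lambda}\Pi^{(q)}$ and $\Pi^{(q)}-B^{(q)}_{\leq\lambda}\Pi^{(q)}$ mapping into $L^2$ and then subtracts, while you estimate the three terms of the decomposition directly.
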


\begin{proof}
Let $u\in\Omega^{0,q}_0(U\cap\ol M)$. From \eqref{e-gue190531yyd}, we have 
\begin{equation}\label{e-gue190604ycdI}
B^{(q)}_{\leq\lambda}\Box^{(q)}_fN^{(q)}u+B^{(q)}_{\leq\lambda}\Pi^{(q)}u=
B^{(q)}_{\leq\lambda}u+B^{(q)}_{\leq\lambda}r^{(q)}_0u,
\end{equation}
where $r^{(q)}_0\equiv0\mod\cC^\infty((U\times U)\cap(\ol M\times\ol M))$ 
is as in \eqref{e-gue190531yyd}. From \eqref{e-gue190604ycdI}, 
\eqref{e-gue190529syd}, \eqref{e-gue190603sydl} and note that 
$\Omega^{0,q}_0(U\cap\ol M)$ is dense in 
$H^{-s}_{{\rm comp\,}}(U\cap\ol M, T^{*0,q}M')$, 
for every $s\in\mathbb N$, we deduce that 
$B^{(q)}_{\leq\lambda}-B^{(q)}_{\leq\lambda}\Pi^{(q)}$ 
can be continuously extended to 
\begin{equation}\label{e-gue190604ycdII}
B^{(q)}_{\leq\lambda}-B^{(q)}_{\leq\lambda}\Pi^{(q)}: 
H^{-s}_{{\rm comp\,}}(U\cap\ol M, T^{*0,q}M')\To L^2_{(0,q)}(M),\ \ 
\mbox{for every $s\in\mathbb N$}.
\end{equation}
On the other hand, from \eqref{e-gue190531syda} and \eqref{e-gue190531yyd}, 
we have 
\begin{equation}\label{e-gue190606yyd}
\begin{split}
&\Pi^{(q)}u=(A^{(q)}_{\lambda}\Box^{(q)}+B^{(q)}_{\leq\lambda})\Pi^{(q)}u\\
&=A^{(q)}_{\lambda}\Box^{(q)}_f\Pi^{(q)}u+B^{(q)}_{\leq\lambda}\Pi^{(q)}u\\
&=A^{(q)}_\lambda r^{(q)}_5u+B^{(q)}_{\leq\lambda}\Pi^{(q)}u,
\end{split}
\end{equation}
for every $u\in\Omega^{0,q}_0(U\cap\ol M)$, where 
$r^{(q)}_5\equiv0\mod\cC^\infty((U\times U)\cap(\ol M\times\ol M))$ 
is as in \eqref{e-gue190531yyd}.
From \eqref{e-gue190606yyd}, we conclude that 
$\Pi^{(q)}-B^{(q)}_{\leq\lambda}\Pi^{(q)}$ can be continuously extended to 
\begin{equation}\label{e-gue190606yydI}
\Pi^{(q)}-B^{(q)}_{\leq\lambda}\Pi^{(q)}: 
H^{-s}_{{\rm comp\,}}(U\cap\ol M, T^{*0,q}M')\To L^2_{(0,q)}(M),\ \ 
\mbox{for every $s\in\mathbb N$}.
\end{equation}
From \eqref{e-gue190604ycdII} and \eqref{e-gue190606yydI}, 
we deduce that $\Pi^{(q)}-B^{(q)}_{\leq\lambda}$ can be continuously extended to 
\begin{equation}\label{e-gue190606yydII}
\Pi^{(q)}-B^{(q)}_{\leq\lambda}: 
H^{-s}_{{\rm comp\,}}(U\cap\ol M, T^{*0,q}M')\To L^2_{(0,q)}(M),\ \ 
\mbox{for every $s\in\mathbb N$}.
\end{equation}
From \eqref{e-gue190606yydII} and \eqref{e-gue190529syd}, we get \eqref{e-gue190604ycd}.
\end{proof}
\begin{thm}\label{t-gue190606yyd}
We have 
\begin{equation}\label{e-gue190606yydIII}
\Box^{(q)}B^{(q)}_{\leq\lambda}\equiv0\mod\cC^\infty((U\times U)\cap(\ol M\times\ol M)).
\end{equation}
\end{thm}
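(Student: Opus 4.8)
The plan is to apply the microlocal Hodge decomposition of Theorem~\ref{t-gue190529ycdi} not to an arbitrary test form but to the form $w:=\Box^{(q)}B^{(q)}_{\leq\lambda}u$ itself. The key observation is that, by the spectral theorem, for every $u\in L^2_{(0,q)}(M)$ one has $B^{(q)}_{\leq\lambda}u\in H^q_{\leq\lambda}(\ol M)\subset{\rm Dom\,}(\Box^{(q)})^k$ for all $k$; in particular $w=\Box^{(q)}B^{(q)}_{\leq\lambda}u\in{\rm Dom\,}\Box^{(q)}$ with $\Box^{(q)}w=(\Box^{(q)})^2B^{(q)}_{\leq\lambda}u$, and $\Box^{(q)}B^{(q)}_{\leq\lambda}\colon L^2_{(0,q)}(M)\To L^2_{(0,q)}(M)$ is bounded (by $\lambda$). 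Feeding $w$ into the second identity of \eqref{e-gue190531yyd} gives, on $U\cap\ol M$,
\[\Box^{(q)}B^{(q)}_{\leq\lambda}u=w=N^{(q)}\Box^{(q)}w+\Pi^{(q)}w-r^{(q)}_1w=N^{(q)}(\Box^{(q)})^2B^{(q)}_{\leq\lambda}u+\Pi^{(q)}\Box^{(q)}B^{(q)}_{\leq\lambda}u-r^{(q)}_1\Box^{(q)}B^{(q)}_{\leq\lambda}u,\]
and since $B^{(q)}_{\leq\lambda}u\in{\rm Dom\,}\Box^{(q)}$, the fifth identity of \eqref{e-gue190531yyd} replaces the middle term by $r^{(q)}_4B^{(q)}_{\leq\lambda}u$. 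Thus, as operators on $U\cap\ol M$,
\[\Box^{(q)}B^{(q)}_{\leq\lambda}=N^{(q)}(\Box^{(q)})^2B^{(q)}_{\leq\lambda}+r^{(q)}_4B^{(q)}_{\leq\lambda}-r^{(q)}_1\Box^{(q)}B^{(q)}_{\leq\lambda}.\]
No density argument is needed, since the identities of \eqref{e-gue190531yyd} invoked here hold for all $u\in L^2_{(0,q)}(M)$.

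The second step is to check that each of the three summands on the right is $\equiv0\mod\cC^\infty((U\times U)\cap(\ol M\times\ol M))$. For the first, $(\Box^{(q)})^2B^{(q)}_{\leq\lambda}\equiv0$ by \eqref{e-gue190604sydI} (Theorem~\ref{t-gue190604yyd}) and $N^{(q)}$ is properly supported on $U\cap\ol M$, so the composition is smoothing. For the second and third, $r^{(q)}_4$ and $r^{(q)}_1$ are properly supported operators on $U\cap\ol M$ whose kernels are smooth on $(U\times U)\cap(\ol M\times\ol M)$, while $B^{(q)}_{\leq\lambda}$ and $\Box^{(q)}B^{(q)}_{\leq\lambda}$ are continuous on $L^2_{(0,q)}(M)$ (hence extend via their distribution kernels to $\mathscr E'\To\mathscr D'$); composing such an operator on the \emph{left} with a smoothing, properly supported operator again yields a smoothing operator on $(U\times U)\cap(\ol M\times\ol M)$. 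Summing the three contributions gives $\Box^{(q)}B^{(q)}_{\leq\lambda}\equiv0\mod\cC^\infty((U\times U)\cap(\ol M\times\ol M))$.

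The only real subtlety — and the reason the argument is arranged in this order — is the placement of the error terms relative to $B^{(q)}_{\leq\lambda}$. A naive route, substituting $u=\Box^{(q)}_fN^{(q)}u+\Pi^{(q)}u-r^{(q)}_0u$ and then applying $\Box^{(q)}B^{(q)}_{\leq\lambda}$, would produce the term $\Box^{(q)}B^{(q)}_{\leq\lambda}\Pi^{(q)}u=B^{(q)}_{\leq\lambda}\Box^{(q)}_f\Pi^{(q)}u=B^{(q)}_{\leq\lambda}r^{(q)}_5u$, i.e.\ a smoothing operator composed on the \emph{right} with $B^{(q)}_{\leq\lambda}$, which is not manifestly negligible because $B^{(q)}_{\leq\lambda}$ is itself a genuine complex Fourier integral operator near $U\cap X$. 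Decomposing $w$ instead of $u$ and using $\Pi^{(q)}\Box^{(q)}\equiv0$ (the fifth relation in \eqref{e-gue190531yyd}) pushes every error operator to the left of $B^{(q)}_{\leq\lambda}$, which is exactly what makes the composition arguments routine; this is the step I expect to require the most attention.
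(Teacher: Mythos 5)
Your reduction is exactly the paper's: you feed $w=\Box^{(q)}B^{(q)}_{\leq\lambda}u$ into the second identity of \eqref{e-gue190531yyd}, use the fifth identity to convert $\Pi^{(q)}\Box^{(q)}B^{(q)}_{\leq\lambda}$ into $r^{(q)}_4B^{(q)}_{\leq\lambda}$, and kill $N^{(q)}(\Box^{(q)})^2B^{(q)}_{\leq\lambda}$ by Theorem~\ref{t-gue190604yyd}. The gap is in your last step. The principle you invoke --- that an operator bounded on $L^2_{(0,q)}(M)$ ``extends via its distribution kernel to $\mathscr E'\To\mathscr D'$'', so that composing it on the \emph{left} with a properly supported smoothing operator yields a smoothing operator --- is false. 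An $L^2$-bounded operator need not act on compactly supported distributions at all (multiplication by a bounded, non-smooth function $g$ is a counterexample), and for such an operator $A$ the kernel of $r^{(q)}_4A$ is smooth in the first variable but in general only distributional in the second: in the multiplication example the kernel of $r^{(q)}_4A$ is $r^{(q)}_4(x,w)g(w)$, which is not smooth. By self-adjointness of $B^{(q)}_{\leq\lambda}$, smoothness of the kernel of $r^{(q)}_4B^{(q)}_{\leq\lambda}$ in the second variable is itself a regularity statement about $B^{(q)}_{\leq\lambda}$, i.e.\ exactly the kind of fact this chain of results is in the process of proving, so it cannot be taken for granted from $L^2$-boundedness.

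What closes the gap --- and what the paper cites at precisely this point --- is Lemma~\ref{l-gue190604sydk}: $B^{(q)}_{\leq\lambda}$ extends continuously to $H^{-s}_{{\rm comp\,}}(U\cap\ol M, T^{*0,q}M')\To H^{-s}_{{\rm loc\,}}(U\cap\ol M, T^{*0,q}M')$ for every $s\in\mathbb N$, a nontrivial statement proved from the first identity in \eqref{e-gue190531yyd}, Theorem~\ref{t-gue190603sydk}, \eqref{e-gue190531syda} and the Sobolev bounds \eqref{e-gue190529syd} for $\Pi^{(q)}$. Granting \eqref{e-gue190604ycd}, the term $r^{(q)}_4B^{(q)}_{\leq\lambda}$ maps $H^{-s}_{{\rm comp\,}}$ into $H^{s}_{{\rm loc\,}}$ for every $s$ (a properly supported smoothing operator maps $H^{-s}_{{\rm loc\,}}$ into $\cC^\infty$), hence is smoothing; for the third term one should first rewrite $r^{(q)}_1\Box^{(q)}B^{(q)}_{\leq\lambda}=\bigl(r^{(q)}_1\Box^{(q)}_f\bigr)B^{(q)}_{\leq\lambda}$, note that $r^{(q)}_1\Box^{(q)}_f$ is again properly supported and smoothing, and argue the same way. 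With these corrections your argument coincides with the paper's proof; note also that your closing heuristic --- that pushing the error operators to the left of $B^{(q)}_{\leq\lambda}$ makes the compositions ``routine'' --- is not quite right: it makes them provable, but only by virtue of Lemma~\ref{l-gue190604sydk}, which your write-up never invokes.
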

\begin{proof}
Let $\varepsilon^{(q)}:=(\Box^{(q)})^2B^{(q)}_{\leq\lambda}$. 
In view of \eqref{e-gue190604sydI}, we see that 
$\varepsilon^{(q)}\equiv0\mod\cC^\infty((U\times U)\cap(\ol M\times\ol M))$. 
Let $u\in\Omega^{0,q}_0(U\cap\ol M)$. 
From second equation in \eqref{e-gue190531yyd}, we have 
\begin{equation}\label{e-gue190606yyda}
\begin{split}
&\Box^{(q)}B^{(q)}_{\leq\lambda}u\\
&=N^{(q)}(\Box^{(q)})^2B^{(q)}_{\leq\lambda}u+
\Pi^{(q)}\Box^{(q)}B^{(q)}_{\leq\lambda}u-r^{(q)}_1\Box^{(q)}B^{(q)}_{\leq\lambda}u\\
&=N^{(q)}\varepsilon^{(q)}u+r^{(q)}_4B^{(q)}_{\leq\lambda}u-r^{(q)}_1
\Box^{(q)}B^{(q)}_{\leq\lambda}u,
\end{split}
\end{equation}
where $r^{(q)}_4\equiv0\mod\cC^\infty((U\times U)\cap(\ol M\times\ol M))$, 
$r^{(q)}_1\equiv0\mod\cC^\infty((U\times U)\cap(\ol M\times\ol M))$, 
are as in \eqref{e-gue190531yyd}. From \eqref{e-gue190604ycd}, we see that 
\[r^{(q)}_1\Box^{(q)}B^{(q)}_{\leq\lambda}, r^{(q)}_4B^{(q)}_{\leq\lambda}: 
H^{-s}_{{\rm comp\,}}(U\cap\ol M, T^{*0,q}M')\To 
H^{s}_{{\rm loc\,}}(U\cap\ol M, T^{*0,q}M')\]
are continuous, for every $s\in\mathbb N$, 
and hence $r^{(q)}_1\Box^{(q)}B^{(q)}_{\leq\lambda}, 
r^{(q)}_4B^{(q)}_{\leq\lambda}\equiv0\mod\cC^\infty((U\times U)\cap(\ol M\times\ol M))$. 
From this observation and \eqref{e-gue190606yyda}, we get \eqref{e-gue190606yydIII}. 
\end{proof}
We can now prove one of the main results of this work. 
\begin{thm}\label{t-gue190606ycd}
Let $U$ be an open set of $M'$ with $U\cap X\neq\emptyset$. 
Suppose that the Levi form is non-degenerate of constant signature 
$(n_-, n_+)$ on $U\cap X$. Let $q=n_-$ and
fix $\lambda>0$. We have 
\[B^{(q)}_{\leq\lambda}-\Pi^{(q)}\equiv0
\mod\cC^\infty((U\times U)\cap(\ol M\times\ol M)),\]
where $\Pi^{(q)}$ is as in Theorem~\ref{t-gue190529ycdi}. 
\end{thm}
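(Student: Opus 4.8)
The plan is to show that $B^{(q)}_{\leq\lambda}$ and $\Pi^{(q)}$ differ by a smoothing operator on $(U\times U)\cap(\ol M\times\ol M)$ by playing off the two microlocal Hodge decompositions against each other. Concretely, I would exploit the identities collected in \eqref{e-gue190531yyd} (valid for $\Pi^{(q)}$, $N^{(q)}$) together with \eqref{e-gue190531syda} (valid for $B^{(q)}_{\leq\lambda}$, $A^{(q)}_\lambda$), using as the crucial new inputs Theorem~\ref{t-gue190606yyd}, i.e.\ $\Box^{(q)}B^{(q)}_{\leq\lambda}\equiv 0$, and the regularity statements Lemma~\ref{l-gue190604sydk} and Theorem~\ref{t-gue190603sydk}, which let one feed low-regularity (negative Sobolev) sections into $B^{(q)}_{\leq\lambda}$ and into $B^{(q)}_{\leq\lambda}\Box^{(q)}_f$.

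The key steps, in order. First, for $u\in\Omega^{0,q}_0(U\cap\ol M)$, apply the second identity of \eqref{e-gue190531yyd} to the section $B^{(q)}_{\leq\lambda}u$, which lies in $\mathrm{Dom\,}\Box^{(q)}$: this gives
\[
N^{(q)}\Box^{(q)}B^{(q)}_{\leq\lambda}u+\Pi^{(q)}B^{(q)}_{\leq\lambda}u
=B^{(q)}_{\leq\lambda}u+r^{(q)}_1 B^{(q)}_{\leq\lambda}u .
\]
By Theorem~\ref{t-gue190606yyd} the term $N^{(q)}\Box^{(q)}B^{(q)}_{\leq\lambda}$ is smoothing, and by Lemma~\ref{l-gue190604sydk} the composition $r^{(q)}_1 B^{(q)}_{\leq\lambda}$ maps $H^{-s}_{\mathrm{comp}}$ into $H^{s}_{\mathrm{loc}}$, hence is smoothing on $(U\times U)\cap(\ol M\times\ol M)$; therefore
\[
\Pi^{(q)}B^{(q)}_{\leq\lambda}\equiv B^{(q)}_{\leq\lambda}\mod\cC^\infty\bigl((U\times U)\cap(\ol M\times\ol M)\bigr).
\]
Second, run the symmetric argument the other way: apply the first identity of \eqref{e-gue190531syda} to $\Pi^{(q)}u$, namely $A^{(q)}_\lambda\Box^{(q)}\Pi^{(q)}u+B^{(q)}_{\leq\lambda}\Pi^{(q)}u=\Pi^{(q)}u$ (here one must check $\Pi^{(q)}u\in\mathrm{Dom\,}\Box^{(q)}$, which is \eqref{e-gue190529sydI}, and that $\Box^{(q)}\Pi^{(q)}u=\Box^{(q)}_f\Pi^{(q)}u=r^{(q)}_5u$ is smoothing by the sixth line of \eqref{e-gue190531yyd}). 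Since $A^{(q)}_\lambda$ is $L^2$-bounded and $r^{(q)}_5$ is smoothing, $A^{(q)}_\lambda\Box^{(q)}\Pi^{(q)}\equiv 0$ on $(U\times U)\cap(\ol M\times\ol M)$, so
\[
B^{(q)}_{\leq\lambda}\Pi^{(q)}\equiv\Pi^{(q)}\mod\cC^\infty\bigl((U\times U)\cap(\ol M\times\ol M)\bigr).
\]
Third, combine the two congruences. Using the near-symmetry \eqref{e-gue190419scd} of $\Pi^{(q)}$ (with a smoothing error $\Gamma^{(q)}_1$) and the self-adjointness of $B^{(q)}_{\leq\lambda}$, one passes adjoints through: from $\Pi^{(q)}B^{(q)}_{\leq\lambda}\equiv B^{(q)}_{\leq\lambda}$ one also gets $B^{(q)}_{\leq\lambda}\Pi^{(q)}\equiv B^{(q)}_{\leq\lambda}$ modulo smoothing, and then
\[
\Pi^{(q)}\equiv B^{(q)}_{\leq\lambda}\Pi^{(q)}\equiv B^{(q)}_{\leq\lambda}\mod\cC^\infty\bigl((U\times U)\cap(\ol M\times\ol M)\bigr),
\]
which is the assertion. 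Throughout one keeps track of proper supports (all the operators $N^{(q)},\Pi^{(q)},r^{(q)}_j$ are properly supported on $U\cap\ol M$) so that the compositions and the density/approximation arguments by sequences $u_j\in\Omega^{0,q}_0(U\cap\ol M)$ make sense, exactly as in the pairing formulation of Remark~\ref{r-gue190404yyd}.

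\textbf{Main obstacle.} The delicate point is not the formal algebra but justifying that each error term is genuinely smoothing \emph{on the manifold with boundary} $(U\times U)\cap(\ol M\times\ol M)$, rather than merely continuous on a fixed pair of Sobolev spaces. This is where the negative-Sobolev mapping properties of $B^{(q)}_{\leq\lambda}$ (Lemma~\ref{l-gue190604sydk}, resting in turn on Lemmas~\ref{l-gue190603syd}, \ref{l-gue190603yydh} and the spectral-theoretic estimate of Lemma~\ref{l-gue190603yydI}) are essential: an error like $r^{(q)}_1\Box^{(q)}B^{(q)}_{\leq\lambda}$ is smoothing only because $B^{(q)}_{\leq\lambda}$ absorbs arbitrarily negative orders while $r^{(q)}_1$ produces arbitrarily positive orders, and one must invoke the Schwartz kernel theorem in the boundary setting to convert "maps $H^{-s}_{\mathrm{comp}}\to H^{s}_{\mathrm{loc}}$ for all $s$" into "$\equiv 0\bmod\cC^\infty$". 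Handling the adjoints and the density arguments so that these regularity gains are not lost is the part that requires care; the rest is bookkeeping with the identities already established.
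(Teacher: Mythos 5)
Your first step ($\Pi^{(q)}B^{(q)}_{\leq\lambda}\equiv B^{(q)}_{\leq\lambda}$ via Theorem~\ref{t-gue190606yyd} and Lemma~\ref{l-gue190604sydk}) and your adjoint manipulation (passing from $\Pi^{(q)}B^{(q)}_{\leq\lambda}\equiv B^{(q)}_{\leq\lambda}$ to $B^{(q)}_{\leq\lambda}\Pi^{(q)}\equiv B^{(q)}_{\leq\lambda}$ using \eqref{e-gue190419scd}) are sound and coincide with parts of the paper's argument. The gap is your second step. From \eqref{e-gue190531syda} and \eqref{e-gue190531yyd} you only get $\Pi^{(q)}-B^{(q)}_{\leq\lambda}\Pi^{(q)}=A^{(q)}_\lambda\Box^{(q)}\Pi^{(q)}=A^{(q)}_\lambda r^{(q)}_5$, and the assertion that this is $\equiv0\mod\cC^\infty((U\times U)\cap(\ol M\times\ol M))$ because ``$A^{(q)}_\lambda$ is $L^2$-bounded and $r^{(q)}_5$ is smoothing'' does not follow: here the smoothing factor acts \emph{first}, so $A^{(q)}_\lambda r^{(q)}_5$ maps $H^{-s}_{{\rm comp}}$ into $L^2_{(0,q)}(M)$ for every $s$, but its output has no regularity beyond ${\rm Dom\,}\Box^{(q)}$ — its kernel is smooth in the second variable only. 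There is no hypoellipticity for $\Box^{(q)}$ at boundary points where $Z(q)$ fails, so no regularity gain for $A^{(q)}_\lambda$ is available; if one could smooth through $A^{(q)}_\lambda$ this way, most of the machinery of Sections~\ref{s-gue190531yyd}--\ref{s-gue190321myyd} would be unnecessary. Consequently $B^{(q)}_{\leq\lambda}\Pi^{(q)}\equiv\Pi^{(q)}$ — the very congruence your third step rests on — is left unproved, and with it the theorem.

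The paper gets around exactly this obstruction by a squaring argument that you are missing. It records the two one-sided identities $\Pi^{(q)}-\Pi^{(q)}B^{(q)}_{\leq\lambda}=r^{(q)}_4A^{(q)}_\lambda$ and $\Pi^{(q)}-B^{(q)}_{\leq\lambda}\Pi^{(q)}=A^{(q)}_\lambda r^{(q)}_5$ (neither side individually smoothing), writes $\Pi^{(q)}-B^{(q)}_{\leq\lambda}=r^{(q)}_4A^{(q)}_\lambda-\epsilon^{(q)}=A^{(q)}_\lambda r^{(q)}_5-\epsilon^{(q)}_1$ with $\epsilon^{(q)},\epsilon^{(q)}_1$ smoothing, and multiplies the two representations: in $\bigl(\Pi^{(q)}-B^{(q)}_{\leq\lambda}\bigr)^2=\bigl(r^{(q)}_4A^{(q)}_\lambda-\epsilon^{(q)}\bigr)\bigl(A^{(q)}_\lambda r^{(q)}_5-\epsilon^{(q)}_1\bigr)$ every term pairs a factor that absorbs arbitrarily negative Sobolev orders on the right with a factor that produces arbitrarily high orders on the left, so the square is smoothing (\eqref{e-gue190606yydz}). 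Expanding the square with the exact idempotency $(B^{(q)}_{\leq\lambda})^2=B^{(q)}_{\leq\lambda}$, the approximate idempotency $(\Pi^{(q)})^2-\Pi^{(q)}=r^{(q)}_6$ from \eqref{e-gue190531yyd}, and the two congruences $\Pi^{(q)}B^{(q)}_{\leq\lambda}\equiv B^{(q)}_{\leq\lambda}$, $B^{(q)}_{\leq\lambda}\Pi^{(q)}\equiv B^{(q)}_{\leq\lambda}$ then returns $\Pi^{(q)}-B^{(q)}_{\leq\lambda}$ itself modulo smoothing terms, which yields the statement. To repair your proposal you would need either this squaring device (or an equivalent way of combining the two half-regular errors), since a direct proof of $B^{(q)}_{\leq\lambda}\Pi^{(q)}\equiv\Pi^{(q)}$ is not accessible from the ingredients you invoke.
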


\begin{proof}
From the second equation in \eqref{e-gue190531yyd}, we have 
\begin{equation}\label{e-gue190606ycdk}
N^{(q)}\Box^{(q)}B^{(q)}_{\leq\lambda}u+
\Pi^{(q)}B^{(q)}_{\leq\lambda}u=r^{(q)}_1B^{(q)}_{\leq\lambda}u+
B^{(q)}_{\leq\lambda}u,
\end{equation}
for every $u\in\Omega^{0,q}_0(U\cap\ol M)$, where 
$r^{(q)}_1\equiv0\mod\cC^\infty((U\times U)\cap(\ol M\times\ol M))$ 
is as in \eqref{e-gue190531yyd}. 
From \eqref{e-gue190606ycdk}, \eqref{e-gue190606yydIII} and 
\eqref{e-gue190604ycd}, we deduce that 
\begin{equation}\label{e-gue190606syda}
B^{(q)}_{\leq\lambda}-\Pi^{(q)}B^{(q)}_{\leq\lambda}
=:\epsilon^{(q)}\equiv0\mod\cC^\infty((U\times U)\cap(\ol M\times\ol M)). 
\end{equation}
Similarly, from first equation in \eqref{e-gue190531yyd}, we have 
\begin{equation}\label{e-gue190606sydb}
B^{(q)}_{\leq\lambda}\Box^{(q)}_fN^{(q)}u+B^{(q)}_{\leq\lambda}\Pi^{(q)}u
=B^{(q)}_{\leq\lambda}u+B^{(q)}_{\leq\lambda}r^{(q)}_0u, 
\end{equation}
for every $u\in\Omega^{0,q}_0(U\cap\ol M)$, 
where $r^{(q)}_0\equiv0\mod\cC^\infty((U\times U)\cap(\ol M\times\ol M))$ 
is as in \eqref{e-gue190531yyd}. Since $N^{(q)}u\in{\rm Dom\,}\Box^{(q)}$, 
we have 
$$B^{(q)}_{\leq\lambda}\Box^{(q)}_fN^{(q)}u=
B^{(q)}_{\leq\lambda}\Box^{(q)}N^{(q)}u=
\Box^{(q)}B^{(q)}_{\leq\lambda}N^{(q)}u,$$ 
for every $u\in\Omega^{0,q}_0(U\cap\ol M)$. From this observation and \eqref{e-gue190606sydb}, we deduce that 
\begin{equation}\label{e-gue190606sydc}
\Box^{(q)}B^{(q)}_{\leq\lambda}N^{(q)}u+B^{(q)}_{\leq\lambda}\Pi^{(q)}u=B^{(q)}_{\leq\lambda}u+B^{(q)}_{\leq\lambda}r^{(q)}_0u, 
\end{equation}
for every $u\in\Omega^{0,q}_0(U\cap\ol M)$. From \eqref{e-gue190606yydIII}, \eqref{e-gue190604ycd} and \eqref{e-gue190606sydc}, we deduce that 
\begin{equation}\label{e-gue190606sydd}
B^{(q)}_{\leq\lambda}-B^{(q)}_{\leq\lambda}\Pi^{(q)}=:\epsilon^{(q)}_1\equiv0\mod\cC^\infty((U\times U)\cap(\ol M\times\ol M)).
\end{equation}
Let $u\in\Omega^{0,q}_0(U\cap\ol M)$. From \eqref{e-gue190531syda}, we have 
\begin{equation}\label{e-gue190606syde}
\Pi^{(q)}\Box^{(q)}A^{(q)}_\lambda u+\Pi^{(q)}B^{(q)}_{\leq\lambda}u=\Pi^{(q)}u\ \ \mbox{on $U\cap\ol M$}
\end{equation}
and 
\begin{equation}\label{e-gue190606sydf}
A^{(q)}_\lambda\Box^{(q)}\Pi^{(q)}u+B^{(q)}_{\leq\lambda}\Pi^{(q)}u=\Pi^{(q)}u\ \ \mbox{on $U\cap\ol M$}.
\end{equation}
From \eqref{e-gue190531yyd}, we have 
\begin{equation}\label{e-gue190606sydg}
\Pi^{(q)}\Box^{(q)}A^{(q)}_\lambda u=r^{(q)}_4A^{(q)}_\lambda u\ \ \mbox{on $U\cap X$},
\end{equation}
\begin{equation}\label{e-gue190606sydh}
A^{(q)}_\lambda\Box^{(q)}\Pi^{(q)}u=A^{(q)}_\lambda r^{(q)}_5u\ \ \mbox{on $U\cap X$},
\end{equation}
where $r^{(q)}_4\equiv0\mod\cC^\infty((U\times U)\cap(\ol M\times\ol M))$ and 
$r^{(q)}_5\equiv0\mod\cC^\infty((U\times U)\cap(\ol M\times\ol M))$ are as in \eqref{e-gue190531yyd}. 
From \eqref{e-gue190606sydg}, \eqref{e-gue190606sydh} and \eqref{e-gue190606sydf}, we deduce that 
\begin{equation}\label{e-gue190606sydl}
\begin{split}
&\Pi^{(q)}-\Pi^{(q)}B^{(q)}_{\leq\lambda}=r^{(q)}_4A^{(q)}_\lambda,\\
&\Pi^{(q)}-B^{(q)}_{\leq\lambda}\Pi^{(q)}=A^{(q)}_\lambda r^{(q)}_5.
\end{split}
\end{equation}
From \eqref{e-gue190606syda}, \eqref{e-gue190606sydd} and \eqref{e-gue190606sydl}, we get 
\begin{equation}\label{e-gue190606sydm}
\begin{split}
&\Pi^{(q)}-B^{(q)}_{\leq\lambda}=r^{(q)}_4A^{(q)}_\lambda-\epsilon^{(q)},\\
&\Pi^{(q)}-B^{(q)}_{\leq\lambda}=A^{(q)}_\lambda r^{(q)}_5-\epsilon^{(q)}_1.
\end{split}
\end{equation}
From \eqref{e-gue190606sydm}, we have 
\begin{equation}\label{e-gue190606scd}
\begin{split}
&(\Pi^{(q)}-B^{(q)}_{\leq\lambda})(\Pi^{(q)}-B^{(q)}_{\leq\lambda})\\
&=(r^{(q)}_4A^{(q)}_\lambda-\epsilon^{(q)})(A^{(q)}_\lambda r^{(q)}_5-\epsilon^{(q)}_1)\\
&=r^{(q)}_4(A^{(q)}_\lambda)^2r^{(q)}_5-r^{(q)}_4A^{(q)}_\lambda\epsilon^{(q)}_1-\epsilon^{(q)}A^{(q)}_\lambda r^{(q)}_5+\epsilon^{(q)}\epsilon^{(q)}_1\ \ \mbox{on $\Omega^{0,q}_0(U\cap\ol M)$}. 
\end{split}\end{equation}
Note that $r^{(q)}_5$ and $r^{(q)}_4$ are properly supported on $U\cap\ol M$ and $r^{(q)}_4(A^{(q)}_\lambda)^2r^{(q)}_5$, $r^{(q)}_4A^{(q)}_\lambda\epsilon^{(q)}_1$, 
$\epsilon^{(q)}A^{(q)}_\lambda r^{(q)}_5$,  $\epsilon^{(q)}\epsilon^{(q)}_1$, are well-defined as continuous operators: $\Omega^{0,q}_0(U\cap\ol M)\To\Omega^{0,q}(U\cap\ol M)$. Now, 
\[\begin{split}
r^{(q)}_4(A^{(q)}_\lambda)^2r^{(q)}_5:&H^{-s}_{{\rm comp\,}}(U\cap\ol M, T^{*0,q}M')\To H^s_{{\rm comp\,}}(U\cap\ol M, T^{*0,q}M')\subset L^2_{(0,q)}(M)\\
&\To L^2_{(0,q)}(M)\To H^s_{{\rm loc\,}}(U\cap\ol M, T^{*0,q}M')\end{split}\]
is continuous, for every $s\in\mathbb N$. Hence, $r^{(q)}_4(A^{(q)}_\lambda)^2r^{(q)}_5\equiv0\mod\cC^\infty((U\times U)\cap(\ol M\times\ol M))$. Similarly, 
$r^{(q)}_4A^{(q)}_\lambda\epsilon^{(q)}_1$, $\epsilon^{(q)}A^{(q)}_\lambda r^{(q)}_5$, $\epsilon^{(q)}\epsilon^{(q)}_1\equiv0\mod\cC^\infty((U\times U)\cap(\ol M\times\ol M))$. From this observation and \eqref{e-gue190606scd}, we get 
\begin{equation}\label{e-gue190606yydz}
(\Pi^{(q)}-B^{(q)}_{\leq\lambda})(\Pi^{(q)}-B^{(q)}_{\leq\lambda})\equiv0\mod\cC^\infty((U\times U)\cap(\ol M\times\ol M)). 
\end{equation}
Now, 
\begin{equation}\label{e-gue190606sydz}
\begin{split}
&(\Pi^{(q)}-B^{(q)}_{\leq\lambda})(\Pi^{(q)}-B^{(q)}_{\leq\lambda})\\
&=(\Pi^{(q)})^2-\Pi^{(q)}B^{(q)}_{\leq\lambda}-
B^{(q)}_{\leq\lambda}\Pi^{(q)}+(B^{(q)}_{\leq\lambda})^2\\
&=\Pi^{(q)}+r^{(q)}_6-B^{(q)}_{\leq\lambda}+
\epsilon^{(q)}-B^{(q)}_{\leq\lambda}+\epsilon^{(q)}_1+
B^{(q)}_{\leq\lambda}\\
&=\Pi^{(q)}-B^{(q)}_{\leq\lambda}+r^{(q)}_6+
\epsilon^{(q)}+\epsilon^{(q)}_1,
\end{split}
\end{equation}
where $r^{(q)}_6, \epsilon^{(q)}, \epsilon^{(q)}_1\equiv0
\mod\cC^\infty((U\times U)\cap(\ol M\times\ol M))$ are as in 
\eqref{e-gue190531yyd}, \eqref{e-gue190606syda} and 
\eqref{e-gue190606sydd} respectively. From \eqref{e-gue190606yydz} 
and \eqref{e-gue190606sydz}, the theorem follows. 
\end{proof} 

By using Theorem~\ref{t-gue190531syd}, we can repeat the proof of 
Theorem~\ref{t-gue190606ycd} with minor change and deduce 

\begin{thm}\label{t-gue190810yyd}
Let $U$ be an open set of $M'$ with $U\cap X\neq\emptyset$. 
Suppose that the Levi form is non-degenerate of constant signature 
$(n_-, n_+)$ on $U\cap X$. Let $q\neq n_-$. Fix $\lambda>0$. We have 
\[B^{(q)}_{\leq\lambda}\equiv0
\mod\cC^\infty((U\times U)\cap(\ol M\times\ol M)).\]
\end{thm}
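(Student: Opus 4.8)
The plan is to mimic the proof of Theorem~\ref{t-gue190606ycd} exactly, but using the parametrix from Theorem~\ref{t-gue190531syd} (the case $q\neq n_-$) in place of the microlocal Hodge decomposition of Theorem~\ref{t-gue190529ycdi}. The key point is that when $q\neq n_-$ we do not have a projection operator $\Pi^{(q)}$, but we do have the identities $\Box^{(q)}_fN^{(q)}u=u+F^{(q)}_2u$ on $\Omega^{0,q}(U\cap\ol M)$ and $N^{(q)}\Box^{(q)}u=u+F^{(q)}_1u$ on ${\rm Dom\,}\Box^{(q)}$, together with the boundary conditions $(\ddbar\rho)^{\wedge,*}\gamma N^{(q)}u|_D=0$ and $(\ddbar\rho)^{\wedge,*}\gamma\ddbar N^{(q)}u|_D=0$. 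Morally this is the $\Pi^{(q)}\equiv0$ version of the decomposition, so the target conclusion $B^{(q)}_{\leq\lambda}\equiv0$ is the $\Pi^{(q)}\equiv0$ specialization of Theorem~\ref{t-gue190606ycd}.

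First I would establish the regularity lemmas of Section~\ref{s-gue190531yyda} in this setting. The proofs of Lemma~\ref{l-gue190531ycdh}, Lemma~\ref{l-gue190603yyd}, Theorem~\ref{t-gue190603yyd}, Lemma~\ref{l-gue190603yydI}, Lemma~\ref{l-gue190603syd}, Lemma~\ref{l-gue190603yydh} and Theorem~\ref{t-gue190603sydk} only use $N^{(q\pm1)}$ and the spectral estimate $\norm{(\Box^{(q)})^{m+1/2}B^{(q)}_{\leq\lambda}}\leq\lambda^{m+1/2}$, not the projection $\Pi^{(q)}$, so they carry over verbatim: $\Box^{(q)}B^{(q)}_{\leq\lambda}$ maps $L^2_{(0,q)}(M)$ continuously into $H^s_{{\rm loc\,}}(U\cap\ol M,T^{*0,q}M')$ for every $s$, and $B^{(q)}_{\leq\lambda}\Box^{(q)}_f$ extends continuously on $H^{-s}_{{\rm comp\,}}(U\cap\ol M,T^{*0,q}M')\To L^2_{(0,q)}(M)$. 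The analogue of Theorem~\ref{t-gue190604yyd}, namely $(\Box^{(q)})^2B^{(q)}_{\leq\lambda}\equiv0\mod\cC^\infty((U\times U)\cap(\ol M\times\ol M))$, follows by composing these two mapping properties, exactly as before. The analogue of Lemma~\ref{l-gue190604sydk} is where the $q\neq n_-$ case simplifies: using $B^{(q)}_{\leq\lambda}\Box^{(q)}_fN^{(q)}u=B^{(q)}_{\leq\lambda}u+B^{(q)}_{\leq\lambda}F^{(q)}_2u$ (from \eqref{e-gue190403yydq}) and the mapping property of $B^{(q)}_{\leq\lambda}\Box^{(q)}_f$, one gets directly that $B^{(q)}_{\leq\lambda}$ extends continuously $H^{-s}_{{\rm comp\,}}(U\cap\ol M,T^{*0,q}M')\To L^2_{(0,q)}(M)$, with no need for the detour through $\Pi^{(q)}-B^{(q)}_{\leq\lambda}$.

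Next, the analogue of Theorem~\ref{t-gue190606yyd}: applying the second decomposition identity \eqref{e-gue190403yydIz} to $\Box^{(q)}B^{(q)}_{\leq\lambda}u$ gives $\Box^{(q)}B^{(q)}_{\leq\lambda}u=N^{(q)}(\Box^{(q)})^2B^{(q)}_{\leq\lambda}u-F^{(q)}_1\Box^{(q)}B^{(q)}_{\leq\lambda}u$; the first term is smoothing by the analogue of Theorem~\ref{t-gue190604yyd} composed with $N^{(q)}$, and the second is smoothing because $F^{(q)}_1\equiv0$ composed with the already-established continuity of $B^{(q)}_{\leq\lambda}$ on negative Sobolev spaces. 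Hence $\Box^{(q)}B^{(q)}_{\leq\lambda}\equiv0\mod\cC^\infty((U\times U)\cap(\ol M\times\ol M))$. Finally, the analogue of Theorem~\ref{t-gue190606ycd} itself: from $A^{(q)}_\lambda\Box^{(q)}+B^{(q)}_{\leq\lambda}=I$ on ${\rm Dom\,}\Box^{(q)}$ and the decomposition, for $u\in\Omega^{0,q}_0(U\cap\ol M)$ one writes $B^{(q)}_{\leq\lambda}u$ in terms of $N^{(q)}\Box^{(q)}B^{(q)}_{\leq\lambda}u$ plus smoothing remainders — here crucially, since there is no $\Pi^{(q)}$ term, the decomposition $N^{(q)}\Box^{(q)}B^{(q)}_{\leq\lambda}u=B^{(q)}_{\leq\lambda}u+F^{(q)}_1B^{(q)}_{\leq\lambda}u$ together with $\Box^{(q)}B^{(q)}_{\leq\lambda}\equiv0$, the smoothing of $N^{(q)}$ against smoothing kernels, and $F^{(q)}_1\equiv0$ combined with the negative-Sobolev continuity of $B^{(q)}_{\leq\lambda}$, forces $B^{(q)}_{\leq\lambda}\equiv0$ directly. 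The squaring trick of Theorem~\ref{t-gue190606ycd} (used there to gain regularity via $(\Pi^{(q)}-B^{(q)}_{\leq\lambda})^2$) is unnecessary because there is nothing to subtract; one simply runs the linear estimate once.

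The main obstacle I anticipate is purely bookkeeping: one must carefully check that all the mapping-property arguments of Section~\ref{s-gue190531yyda} that were stated for $q=n_-$ in fact used only the operator $N^{(q)}$ from Theorem~\ref{t-gue190531syd} (available precisely when $q\neq n_-$) and never the operator $\Pi^{(q)}$, and that the composition $N^{(q-1)}$, $N^{(q)}$, $N^{(q+1)}$ all exist — which they do, since the Levi form is non-degenerate of constant signature on $U\cap X$ and $q\neq n_-$ forces $q\pm1\neq n_-$ as well in the relevant compositions (or, where $q-1=n_-$ or $q+1=n_-$ could occur, one invokes instead the richer decomposition from Theorem~\ref{t-gue190529ycdi} for that single adjacent degree, noting its $\Pi$-term only contributes a smoothing correction after composition with the smoothing remainders appearing here). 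Apart from tracking which $N$'s and which remainder operators are invoked, the argument is a line-by-line transcription of the proof of Theorem~\ref{t-gue190606ycd} with the $\Pi^{(q)}$ terms deleted, so I would present it as ``repeat the proof of Theorem~\ref{t-gue190606ycd} with minor changes,'' which is indeed how the authors phrase it.
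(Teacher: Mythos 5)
Your overall architecture is the one the paper indicates (replace the microlocal Hodge decomposition of Theorem~\ref{t-gue190529ycdi} by the parametrix of Theorem~\ref{t-gue190531syd}, with the $\Pi^{(q)}$-term absent), but there is a concrete gap in how you handle the adjacent degrees. The assertion that Lemmas~\ref{l-gue190531ycdh}--\ref{l-gue190603yydh} and Theorems~\ref{t-gue190603yyd}, \ref{t-gue190603sydk} ``carry over verbatim'' is false in general: those proofs use the operators $N^{(q\pm1)}$ of Theorem~\ref{t-gue190531syd}, which exist only when $q\pm1\neq n_-$, and the hypothesis $q\neq n_-$ does \emph{not} force $q\pm1\neq n_-$ (e.g.\ $q=n_-\pm1$). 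Your parenthetical patch — use Theorem~\ref{t-gue190529ycdi} in that one adjacent degree and argue that its $\Pi$-term ``only contributes a smoothing correction after composition with the smoothing remainders'' — works where $\Pi^{(q\mp1)}$ lands to the \emph{right} of $\ddbar$ or $\ol{\pr}^*_f$ (as in the analogues of Lemmas~\ref{l-gue190603syd}, \ref{l-gue190603yydh}, where the extra terms are $B^{(q)}_{\leq\lambda}\ddbar\Pi^{(q-1)}$, $B^{(q)}_{\leq\lambda}\ol{\pr}^*_f\Pi^{(q+1)}$ and one can quote $\ddbar\Pi=r_2$, $\ol{\pr}^*_f\Pi=r_3$ from \eqref{e-gue190531yyd}), but it fails in the analogues of Lemmas~\ref{l-gue190531ycdh} and \ref{l-gue190603yyd}: there the extra term is $\Pi^{(q+1)}\ddbar B^{(q)}_{\leq\lambda}u$ (resp.\ $\Pi^{(q-1)}\ol{\pr}^*B^{(q)}_{\leq\lambda}u$), i.e.\ $\Pi$ composed on the \emph{left} with a non-smoothing operator. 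The paper gives no smoothing statement for $\Pi\ddbar$ or $\Pi\ol{\pr}^*_f$, and $\Pi$ gains no Sobolev regularity, so the bootstrap that is supposed to yield $\ddbar B^{(q)}_{\leq\lambda}:L^2\To H^s_{{\rm loc\,}}$ stalls at $L^2_{{\rm loc\,}}$. Since your route to $(\Box^{(q)})^2B^{(q)}_{\leq\lambda}\equiv0$ and to $\Box^{(q)}B^{(q)}_{\leq\lambda}\equiv0$ goes through exactly these lemmas, the proposal as written does not close in the cases $q=n_-\pm1$.

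The fix is available and is in fact a simplification, presumably the ``minor change'' intended by the authors: when $q\neq n_-$ the parametrix $N^{(q)}$ of Theorem~\ref{t-gue190531syd} gains one derivative, so the adjacent degrees are not needed at all. Iterating \eqref{e-gue190403yydIz} with $u$ replaced by $B^{(q)}_{\leq\lambda}v$ and using $\Box^{(q)}B^{(q)}_{\leq\lambda}=B^{(q)}_{\leq\lambda}\Box^{(q)}B^{(q)}_{\leq\lambda}$ gives, for every $k$,
\begin{equation*}
B^{(q)}_{\leq\lambda}=(N^{(q)})^k(\Box^{(q)})^kB^{(q)}_{\leq\lambda}-\sum_{j=0}^{k-1}(N^{(q)})^jF^{(q)}_1(\Box^{(q)})^jB^{(q)}_{\leq\lambda}\quad\text{on } U\cap\ol M,
\end{equation*}
and since $\norm{(\Box^{(q)})^jB^{(q)}_{\leq\lambda}}\leq\lambda^j$ and $N^{(q)}:H^s_{{\rm loc\,}}\To H^{s+1}_{{\rm loc\,}}$, this shows $\chi B^{(q)}_{\leq\lambda}:L^2_{(0,q)}(M)\To H^k(U\cap\ol M)$ for every $k$ and every cut-off $\chi$. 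The right-handed analogue, obtained from \eqref{e-gue190403yydq} and $N^{(q)}u\in{\rm Dom\,}\Box^{(q)}$ (so $B^{(q)}_{\leq\lambda}\Box^{(q)}_fN^{(q)}u=\Box^{(q)}B^{(q)}_{\leq\lambda}N^{(q)}u$), yields $B^{(q)}_{\leq\lambda}:H^{-k}_{{\rm comp\,}}(U\cap\ol M,T^{*0,q}M')\To L^2_{(0,q)}(M)$; alternatively one gets this by duality from the first estimate, using self-adjointness of $B^{(q)}_{\leq\lambda}$. Writing $\chi B^{(q)}_{\leq\lambda}\chi_1=(\chi B^{(q)}_{\leq\lambda})(B^{(q)}_{\leq\lambda}\chi_1)$ (idempotence) then gives continuity $H^{-k}_{{\rm comp\,}}\To H^{k}_{{\rm loc\,}}$ for all $k$, hence $B^{(q)}_{\leq\lambda}\equiv0\mod\cC^\infty((U\times U)\cap(\ol M\times\ol M))$. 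With this degree-$q$-only iteration replacing the chain Lemma~\ref{l-gue190531ycdh}--Theorem~\ref{t-gue190606yyd}, your final step goes through and no squaring trick is needed, as you correctly observed.
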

\begin{proof}[Proof of Theorem \ref{t-gue190708yyd}]
This follows immediately from Theorem~\ref{t-gue190606ycd} and 
Theorem~\ref{t-gue190810yyd}. 
\end{proof}
We remind the reader that the local closed range condition is given by Definition~\ref{d-gue190609yyd}. 
The following is our second main result. 

\begin{thm}\label{t-gue190609yyd}
Let $U$ be an open set of $M'$ with $U\cap X\neq\emptyset$. 
Assume that the Levi form is non-degenerate of constant signature $(n_-, n_+)$ 
on $U\cap X$. 
Let $q=n_-$. Suppose that $\Box^{(q)}$ has local closed range in $U$. Then
the Bergman projection $B^{(q)}$ satisfies
\[B^{(q)}-\Pi^{(q)}\equiv0\mod\cC^\infty((U\times U)\cap(\ol M\times\ol M)),\]
where $\Pi^{(q)}$ is as in Theorem~\ref{t-gue190529ycdi}. 
\end{thm}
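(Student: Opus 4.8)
The plan is to mimic the proof of Theorem~\ref{t-gue190606ycd}, with $B^{(q)}_{\leq\lambda}$ replaced by the genuine Bergman projection $B^{(q)}=B^{(q)}_{\leq0}$, the only essential new ingredient being the local closed range hypothesis of Definition~\ref{d-gue190609yyd}. First I would establish the analogues of Lemma~\ref{l-gue190531ycdh}, Lemma~\ref{l-gue190603yyd}, Theorem~\ref{t-gue190603yyd}, Lemma~\ref{l-gue190603yydI}, Lemma~\ref{l-gue190603syd}, Lemma~\ref{l-gue190603yydh}, Theorem~\ref{t-gue190603sydk}, Theorem~\ref{t-gue190604yyd}, Lemma~\ref{l-gue190604sydk} and Theorem~\ref{t-gue190606yyd} for $B^{(q)}$. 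The spectral-theoretic estimates in those proofs (e.g.\ $\norm{\ol{\pr}^*(\Box^{(q)})^mB^{(q)}_{\leq\lambda}v}_M\le\lambda^{m+1/2}\norm{v}_M$) become even simpler for $\lambda=0$: since $\Box^{(q)}B^{(q)}=0$ on $\mathrm{Dom}\,\Box^{(q)}$, one gets $\ddbar B^{(q)}=0$, $\ol{\pr}^*B^{(q)}=0$, $\Box^{(q)}B^{(q)}=0$, so the maps $\ddbar B^{(q)}$, $\ol{\pr}^*B^{(q)}$, $\Box^{(q)}B^{(q)}$ are identically $0$ rather than merely smoothing, and $(\Box^{(q)})^2B^{(q)}=\Box^{(q)}B^{(q)}\Box^{(q)}_f\equiv0$ holds trivially. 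The only point where positivity of $\lambda$ was used was to bound $B^{(q)}_{\leq\lambda}\ddbar(\Box^{(q-1)}_f)^m$ and $B^{(q)}_{\leq\lambda}\ol{\pr}^*_f$; for $B^{(q)}$ these operators vanish on the relevant ranges after composing with $B^{(q)}$, so the boundedness claims of Lemma~\ref{l-gue190603syd}, Lemma~\ref{l-gue190603yydh} and hence Lemma~\ref{l-gue190604sydk} all go through, giving $B^{(q)}\colon H^{-s}_{\mathrm{comp}}(U\cap\ol M)\to H^{-s}_{\mathrm{loc}}(U\cap\ol M)$ for all $s\in\mathbb N$.

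With these preliminaries in hand, the core of the argument copies Theorem~\ref{t-gue190606ycd}. Using the second equation of \eqref{e-gue190531yyd}, $N^{(q)}\Box^{(q)}u+\Pi^{(q)}u=u+r^{(q)}_1u$, applied to $u=B^{(q)}v$ and using $\Box^{(q)}B^{(q)}=0$, one gets $B^{(q)}-\Pi^{(q)}B^{(q)}\equiv0$; using the first equation $\Box^{(q)}_fN^{(q)}u+\Pi^{(q)}u=u+r^{(q)}_0u$ composed with $B^{(q)}$ on the left, together with $B^{(q)}\Box^{(q)}_fN^{(q)}u=\Box^{(q)}B^{(q)}N^{(q)}u\equiv0$ (here I use that $N^{(q)}u\in\mathrm{Dom}\,\Box^{(q)}$ from \eqref{e-gue190529sydI} and the smoothing property of $\Box^{(q)}B^{(q)}$), one gets $B^{(q)}-B^{(q)}\Pi^{(q)}\equiv0$. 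This is exactly the role of \eqref{e-gue190606syda} and \eqref{e-gue190606sydd}.

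The genuinely new step — and the main obstacle — is to produce the analogue of \eqref{e-gue190606sydl}, i.e.\ $\Pi^{(q)}-\Pi^{(q)}B^{(q)}\equiv0$ and $\Pi^{(q)}-B^{(q)}\Pi^{(q)}\equiv0$ modulo $\cC^\infty((U\times U)\cap(\ol M\times\ol M))$. In the $\lambda>0$ case this came from the bounded operator $A^{(q)}_\lambda$ of \eqref{e-gue190531syda} satisfying $\Box^{(q)}A^{(q)}_\lambda+B^{(q)}_{\leq\lambda}=I$ and $A^{(q)}_\lambda\Box^{(q)}+B^{(q)}_{\leq\lambda}=I$. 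For $\lambda=0$ such a global $A^{(q)}$ need not exist; this is precisely where local closed range enters. The plan is: fix an open $W\Subset U$ with $W\cap X\ne\emptyset$ and a cutoff $\chi\in\cC^\infty_0(W\cap\ol M)$; for $u\in\Omega^{0,q}_0(W\cap\ol M)$ one has $\Pi^{(q)}u\in\mathrm{Dom}\,\Box^{(q)}$ by \eqref{e-gue190529sydI}, and since $\Pi^{(q)}$ is properly supported, $\chi_2\Pi^{(q)}u$ lies in $\Omega^{0,q}_0(W'\cap\ol M)\cap\mathrm{Dom}\,\Box^{(q)}$ for a slightly larger $W'\Subset U$. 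Applying the local closed range estimate $\norm{(I-B^{(q)})v}_M\le C_{W'}\norm{\Box^{(q)}v}_M$ to $v=\chi_2\Pi^{(q)}u$, and using $\Box^{(q)}\Pi^{(q)}=r^{(q)}_5$ smoothing (sixth equation of \eqref{e-gue190531yyd}, noting $\Box^{(q)}\Pi^{(q)}u=\Box^{(q)}_f\Pi^{(q)}u$ on $\Pi^{(q)}u\in\mathrm{Dom}\,\Box^{(q)}$), one obtains $\norm{(I-B^{(q)})\chi_2\Pi^{(q)}u}_M\le C\norm{r^{(q)}_5u}_M$, hence $(I-B^{(q)})\Pi^{(q)}\equiv0$ on $U\cap\ol M$ after summing over a partition of unity; regularity is then upgraded to the $\cC^\infty$ statement by iterating with the Sobolev mapping properties of $\Pi^{(q)}$ (Theorem~\ref{t-gue190529ycdh}), $B^{(q)}$ (the $\lambda=0$ version of Lemma~\ref{l-gue190604sydk}) and $r^{(q)}_5$, exactly as in the bootstrap at the end of the proof of Theorem~\ref{t-gue190606yyd}. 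The transposed statement $\Pi^{(q)}-\Pi^{(q)}B^{(q)}\equiv0$ follows by applying the self-adjointness relation \eqref{e-gue190419scd} for $\Pi^{(q)}$ together with self-adjointness of $B^{(q)}$. Finally, with $\Pi^{(q)}-B^{(q)}\equiv0$ on both sides established modulo smoothing, the algebraic identity \eqref{e-gue190606sydz}, combined with $(\Pi^{(q)})^2-\Pi^{(q)}=r^{(q)}_6$ smoothing (seventh equation of \eqref{e-gue190531yyd}), forces $(\Pi^{(q)}-B^{(q)})^2\equiv\Pi^{(q)}-B^{(q)}$ modulo smoothing, and since $(\Pi^{(q)}-B^{(q)})^2\equiv0$ (it factors through the smoothing operators on left and right, by the two one-sided identities, exactly as in \eqref{e-gue190606scd}–\eqref{e-gue190606yydz}), we conclude $B^{(q)}-\Pi^{(q)}\equiv0\mod\cC^\infty((U\times U)\cap(\ol M\times\ol M))$, which is the assertion.

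The main difficulty to be careful about is the proper-support bookkeeping in the local closed range step: one must check that cutting off $\Pi^{(q)}u$ keeps it in $\mathrm{Dom}\,\Box^{(q)}$ (using \eqref{e-gue190404hyyd}, i.e.\ $\chi\Pi^{(q)}u\in\mathrm{Dom}\,\ol{\pr}^*$, and the boundary conditions $(\ddbar\rho)^{\wedge,*}\gamma\Pi^{(q)}u|_D=0$, $(\ddbar\rho)^{\wedge,*}\gamma\ddbar\Pi^{(q)}u|_D=0$ from \eqref{e-gue190404hyyd}, \eqref{e-gue190404hyydI}), and that the chain of cutoffs $W\Subset W'\Subset\cdots\Subset U$ stays inside $U$ so that the constant $C_{W'}$ from Definition~\ref{d-gue190609yyd} is available at each stage of the Sobolev bootstrap. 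Everything else is a routine transcription of Section~\ref{s-gue190531yyda} with $\lambda=0$.
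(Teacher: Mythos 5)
Your core argument is the paper's own proof of Theorem~\ref{t-gue190609yyd}: the one-sided identities obtained from the first two equations of \eqref{e-gue190531yyd} together with $\Box^{(q)}B^{(q)}=0$ and $B^{(q)}\Box^{(q)}=0$ on ${\rm Dom\,}\Box^{(q)}$, the local closed range estimate applied to $\Pi^{(q)}u$ through $\Box^{(q)}\Pi^{(q)}u=r^{(q)}_5u$, the adjoint relation \eqref{e-gue190419scd} to obtain regularity from the other side, and the algebraic identity \eqref{e-gue190606sydz} with $(\Pi^{(q)})^2-\Pi^{(q)}=r^{(q)}_6$ to conclude from the smoothness of $(\Pi^{(q)}-B^{(q)})^2$. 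So the plan is sound and coincides with the paper in all essential steps.

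There is, however, a genuine flaw in your preliminary step, and you do lean on its conclusion in the bootstrap. You claim that the $\lambda=0$ versions of Lemma~\ref{l-gue190603syd}, Lemma~\ref{l-gue190603yydh} and hence Lemma~\ref{l-gue190604sydk} ``go through'' because the relevant operators vanish. First, while $B^{(q)}\ddbar u=0$ for every $u\in{\rm Dom\,}\ddbar$, the operator $B^{(q)}\ol{\pr}^*_f$ does \emph{not} vanish on $\Omega^{0,q+1}_0(U\cap\ol M)$: by \eqref{e-gue190328yyd}, for harmonic $v$ one has $(\,\ol{\pr}^*_fu\,|\,v\,)_M=-(\,(\ddbar\rho)^{\wedge,*}\gamma u\,|\,\gamma v\,)_X$, and this boundary term need not vanish since $u$ is not required to satisfy the $\ddbar$-Neumann condition. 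Second, and more importantly, $\lambda>0$ enters the proof of Lemma~\ref{l-gue190604sydk} not only through those two lemmas but through the operator $A^{(q)}_\lambda$ of \eqref{e-gue190531syda} (used in \eqref{e-gue190606yyd}); for $\lambda=0$ a bounded $A^{(q)}_0$ with $\Box^{(q)}A^{(q)}_0+B^{(q)}=I$ exists only under a closed range hypothesis, so the Sobolev extension $B^{(q)}:H^{-s}_{{\rm comp\,}}(U\cap\ol M,T^{*0,q}M')\To H^{-s}_{{\rm loc\,}}(U\cap\ol M,T^{*0,q}M')$ cannot be obtained by mere transcription. The repair is contained in your own later steps and is exactly what the paper does: the local closed range estimate yields $\Pi^{(q)}-B^{(q)}\Pi^{(q)}:H^{-s}_{{\rm comp\,}}\To L^2_{(0,q)}(M)$ (this is \eqref{e-gue190609yydI}), the exact identity $B^{(q)}\Pi^{(q)}-B^{(q)}=B^{(q)}r^{(q)}_0$ gives the same for $B^{(q)}-B^{(q)}\Pi^{(q)}$, adding these gives $\Pi^{(q)}-B^{(q)}:H^{-s}_{{\rm comp\,}}\To L^2_{(0,q)}(M)$, and only then does the regularity of $\Pi^{(q)}$ give \eqref{e-gue190609ycdI}, after which $r^{(q)}_1B^{(q)}$ and $B^{(q)}r^{(q)}_0$ are smoothing and the rest of your argument runs. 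A minor further point: do not cut off $\Pi^{(q)}u$ by $\chi_2$ before invoking Definition~\ref{d-gue190609yyd}; multiplication by a cutoff need not preserve ${\rm Dom\,}\Box^{(q)}$, since the condition $(\ddbar\rho)^{\wedge,*}\gamma\ddbar(\chi_2\Pi^{(q)}u)=0$ acquires the extra term $\langle\,\ddbar\chi_2\,|\,\ddbar\rho\,\rangle\gamma\Pi^{(q)}u$, and it is unnecessary because $\Pi^{(q)}$ is properly supported and \eqref{e-gue190529sydI} already places $\Pi^{(q)}u$ in $\Omega^{0,q}_0(W'\cap\ol M)\cap{\rm Dom\,}\Box^{(q)}$.
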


\begin{proof}
Let $W$ be any open set of $U$ with $W\cap U\neq\emptyset$, 
$\ol W\Subset U$. Since $\Pi^{(q)}$ is properly supported on $U\cap\ol M$, 
there is an open set $W'\subset U$ with $W'\cap X\neq\emptyset$, 
$\ol{W'}\Subset U$, such that 
$\Pi^{(q)}u\in\Omega^{0,q}_0(W'\cap\ol M)\cap{\rm Dom\,}\Box^{(q)}$
for every $u\in\Omega^{0,q}_0(W\cap\ol M)$. 
Since $\Box^{(q)}$ has local closed range on $U$, there is a constant 
$C_{W'}>0$ such that 
\begin{equation}\label{e-gue190609yyd}
\norm{(I-B^{(q)})\Pi^{(q)}u}_M\leq 
C_{W'}\norm{\Box^{(q)}\Pi^{(q)}u}_M=\norm{r^{(q)}_5u}_M,\ \ 
\mbox{for every $u\in\Omega^{0,q}_0(W\cap\ol M)$},
\end{equation}
where $r^{(q)}_5\equiv0\mod\cC^\infty((U\times U)\cap(\ol M\times\ol M))$ 
is as in \eqref{e-gue190531yyd}. 
Let $u\in H^{-s}_{{\rm comp\,}}(W\cap\ol M, T^{*0,q}M')$. 
Take $u_j\in\Omega^{0,q}_0(W\cap\ol M)$, $j=1,2,\ldots$, 
$\lim_{j\To+\infty}u_j=u$ in $H^{-s}_{{\rm comp\,}}(W\cap\ol M, T^{*0,q}M')$. 
From \eqref{e-gue190609yyd}, we have 
\[\norm{(I-B^{(q)})(\Pi^{(q)}(u_j-u_k))}_M\leq 
C_{W'}\norm{r^{(q)}_5(u_j-u_k)}_M,\ \ 
\mbox{for every $j, k=1,2,\ldots$}.\]
Since $r^{(q)}_5\equiv0\mod\cC^\infty((U\times U)\cap(\ol M\times\ol M))$, 
$\lim_{j,k\To+\infty}\norm{r^{(q)}_5(u_j-u_k)}_M=0$. Hence, 
$(\Pi^{(q)}-B^{(q)}\Pi^{(q)})u_j$ converges in $L^2_{(0,q)}(M)$, 
as $j\To+\infty$. Thus, $\Pi^{(q)}-B^{(q)}\Pi^{(q)}$ 
can be extended continuously to $u$ and 
$(\Pi^{(q)}-B^{(q)}\Pi^{(q)})u\in L^2_{(0,q)}(M)$. 
We conclude that $\Pi^{(q)}-B^{(q)}\Pi^{(q)}$ can be extended continuously to
\begin{equation}\label{e-gue190609yydI}
\Pi^{(q)}-B^{(q)}\Pi^{(q)}: H^{-s}_{{\rm comp\,}}(U\cap\ol M, T^{*0,q}M')
\To L^2_{(0,q)}(M),\ \ \mbox{for every $s\in\mathbb N$}.
\end{equation}
From the first two equations in \eqref{e-gue190531yyd}, we have 
\begin{equation}\label{e-gue190609yydII}
\begin{split}
&\mbox{$\Pi^{(q)}B^{(q)}u=N^{(q)}\Box^{(q)}B^{(q)}u+
\Pi^{(q)}B^{(q)}u=B^{(q)}u+r^{(q)}_1B^{(q)}u$ on $U\cap X$ 
for every $u\in L^2_{(0,q)}(M)$},\\
&\mbox{$B^{(q)}\Pi^{(q)}u=B^{(q)}\Box^{(q)}N^{(q)}u+
B^{(q)}\Pi^{(q)}u=B^{(q)}u+B^{(q)}r^{(q)}_0u$ on $U\cap X$, 
for every $u\in\Omega^{0,q}_0(U\cap\ol M)$},
\end{split}
\end{equation}
where $r^{(q)}_0, r^{(q)}_1\equiv0\mod\cC^\infty((U\times U)\cap(\ol M\times\ol M))$ 
are as in \eqref{e-gue190531yyd}. From \eqref{e-gue190609yydII}, we conclude that 
$B^{(q)}-\Pi^{(q)}B^{(q)}$ and $B^{(q)}-B^{(q)}\Pi^{(q)}$ 
can be extended continuously to 
\begin{equation}\label{e-gue190609yyda}
B^{(q)}-\Pi^{(q)}B^{(q)}: L^2_{(0,q)}(M)
\To H^s_{{\rm loc\,}}(U\cap\ol M, T^{*0,q}M'),\ \ \mbox{for every $s\in\mathbb N$}, 
\end{equation}
and 
\begin{equation}\label{e-gue190609yydb}
B^{(q)}-B^{(q)}\Pi^{(q)}: H^{-s}_{{\rm comp\,}}(U\cap\ol M, T^{*0,q}M')
\To L^2_{(0,q)}(M),\ \ \mbox{for every $s\in\mathbb N$}.
\end{equation}
Form \eqref{e-gue190609yydI} and \eqref{e-gue190609yydb}, 
we deduce that $\Pi^{(q)}-B^{(q)}$ can be extended continuously to 
\begin{equation}\label{e-gue190609ycd}
\Pi^{(q)}-B^{(q)}: H^{-s}_{{\rm comp\,}}(U\cap\ol M, T^{*0,q}M')
\To L^2_{(0,q)}(M),\ \ \mbox{for every $s\in\mathbb N$}.
\end{equation}
Since $\Pi^{(q)}:  H^{s}_{{\rm comp\,}}(U\cap\ol M, T^{*0,q}M')
\To H^{s}_{{\rm comp\,}}(U\cap\ol M, T^{*0,q}M')$ is continuous
for every $s\in\mathbb Z$, we deduce that 
$B^{(q)}$ can be extended continuously to 
\begin{equation}\label{e-gue190609ycdI}
B^{(q)}:  H^{-s}_{{\rm comp\,}}(U\cap\ol M, T^{*0,q}M')
\To H^{-s}_{{\rm loc\,}}(U\cap\ol M, T^{*0,q}M'),\ \ 
\mbox{for every $s\in\mathbb N$}.
\end{equation}
From \eqref{e-gue190609ycdI}, we deduce that 
\[r^{(q)}_1B^{(q)}, (r^{(q)}_0)^*B^{(q)}: 
H^{-s}_{{\rm comp\,}}(U\cap\ol M, T^{*0,q}M')
\To H^s_{{\rm loc\,}}(U\cap\ol M, T^{*0,q}M'),\ \ 
\mbox{for every $s\in\mathbb N$},\]
where $r^{(q)}_1, r^{(q)}_0$ are as in \eqref{e-gue190531yyd} 
and $(r^{(q)}_0)^*$ is the formal adjoint of $r^{(q)}_0$ 
with respect to $(\,\cdot\,|\,\cdot\,)_M$. Hence, 
\begin{equation}\label{e-gue190609ycdh}
r^{(q)}_1B^{(q)}, (r^{(q)}_0)^*B^{(q)}\equiv0
\mod\cC^\infty((U\times U)\cap(\ol M\times\ol M)).
\end{equation}
By taking adjoint of $(r^{(q)}_0)^*B^{(q)}$, we get 
\begin{equation}\label{e-gue190609ycdi}
B^{(q)}r^{(q)}_0\equiv0\mod\cC^\infty((U\times U)\cap(\ol M\times\ol M)). 
\end{equation}
From \eqref{e-gue190609ycdh}, \eqref{e-gue190609ycdi} 
and \eqref{e-gue190609yydII}, we get 
\begin{equation}\label{e-gue190609yydh}
\begin{split}
&\mbox{$\Pi^{(q)}B^{(q)}u-B^{(q)}u=f^{(q)}_1u$ on 
$U\cap X$, for every $u\in L^2_{(0,q)}(M)$},\\
&\mbox{$B^{(q)}\Pi^{(q)}u-B^{(q)}u=f^{(q)}_2u$ 
on $U\cap X$, for every $u\in\Omega^{0,q}_0(U\cap\ol M)$},
\end{split}
\end{equation}
where $f^{(q)}_1: L^2_{(0,q)}(M)\To\Omega^{0,q}(U\cap\ol M)$, 
$f^{(q)}_1\equiv0\mod\cC^\infty((U\times U)\cap(\ol M\times\ol M))$, 
$f^{(q)}_2: \Omega^{0,q}_0(U\cap\ol M)\To L^2_{(0,q)}(M)$, 
$f^{(q)}_2\equiv0\mod\cC^\infty((U\times U)\cap(\ol M\times\ol M))$. 
Taking adjoint in \eqref{e-gue190609ycd}, we conclude that 
$(\Pi^{(q)})^*-B^{(q)}$ can be extended continuously to 
\begin{equation}\label{e-gue190609syd}
(\Pi^{(q)})^*-B^{(q)}: L^2_{(0,q)}(M)
\To H^s_{{\rm loc\,}}(U\cap\ol M, T^{*0,q}M'),\ \ 
\mbox{for every $s\in\mathbb N$}, 
\end{equation}
where $(\Pi^{(q)})^*$ is the formal adjoint of $\Pi^{(q)}$ 
with respect to $(\,\cdot\,|\,\cdot\,)_M$. From \eqref{e-gue190419scd}, we see that 
\[(\Pi^{(q)})^*=\Pi^{(q)}+\Gamma^{(q)}_1,\]
where $\Gamma^{(q)}_1\equiv0\mod\cC^\infty((U\times U)\cap(\ol M\times\ol M))$. 
From this observation and \eqref{e-gue190609syd}, we deduce that 
$\Pi^{(q)}-B^{(q)}$ can be extended continuously to 
\begin{equation}\label{e-gue190609sydI}
\Pi^{(q)}-B^{(q)}: L^2_{(0,q)}(M)
\To H^s_{{\rm loc\,}}(U\cap\ol M, T^{*0,q}M'),\ \ 
\mbox{for every $s\in\mathbb N$}. 
\end{equation}
From \eqref{e-gue190609ycd} and \eqref{e-gue190609sydI}, we get 
\[(\Pi^{(q)}-B^{(q)})(\Pi^{(q)}-B^{(q)}): 
H^{-s}_{{\rm comp\,}}(U\cap\ol M, T^{*0,q}M')
\To H^s_{{\rm loc\,}}(U\cap\ol M, T^{*0,q}M')\]
is continuous, for every $s\in\mathbb N$. Hence, 
\begin{equation}\label{e-gue190609sydII}
(\Pi^{(q)}-B^{(q)})(\Pi^{(q)}-B^{(q)})\equiv0
\mod\cC^\infty((U\times U)\cap(\ol M\times\ol M)). 
\end{equation}
On the other hand, we have 
\begin{equation}\label{e-gue190609sydIII}
\begin{split}
&(\Pi^{(q)}-B^{(q)})(\Pi^{(q)}-B^{(q)})u\\
&=(\Pi^{(q)})^2u-\Pi^{(q)}B^{(q)}u-B^{(q)}\Pi^{(q)}u+(B^{(q)})^2u\\
&=\Pi^{(q)}u-B^{(q)}u-B^{(q)}u+B^{(q)}u+((\Pi^{(q)})^2-\Pi^{(q)})u\\
&\quad+(B^{(q)}-\Pi^{(q)}B^{(q)})u+(B^{(q)}-B^{(q)}\Pi^{(q)})u\\
&=\Pi^{(q)}u-B^{(q)}u+r^{(q)}_6-f^{(q)}_1u-f^{(q)}_2u,\ \ 
\mbox{for every $u\in\Omega^{0,q}_0(U\cap\ol M)$}, 
\end{split}
\end{equation}
where $r^{(q)}_6\equiv0\mod\cC^\infty((U\times U)\cap(\ol M\times\ol M))$ 
is as in \eqref{e-gue190531yyd}, 
$f^{(q)}_1, f^{(q)}_2\equiv0\mod\cC^\infty((U\times U)\cap(\ol M\times\ol M))$ 
are as in \eqref{e-gue190609yydh}. From \eqref{e-gue190609sydII} and 
\eqref{e-gue190609sydIII}, the theorem follows. 
\end{proof}

\section{Proof of Theorem \ref{t-gue190909yyd}}
To prove Theorem~\ref{t-gue190905ycd}, we need a result of 
Takegoshi~\cite{Takegoshi83}, which is a generalization
of \cite{Ko73}. Consider an open relatively compact subset 
$M_0:=\set{z\in M';\, \rho(z)<0}$ with smooth boundary 
$X_0$ of $M'$.
We have the following (see~\cite[Section 3, Theorem N]{Takegoshi83}).
\begin{thm}\label{t-gue190905ycdq}
Let $M_0$ be a pseudoconvex domain with smooth boundary $X_0$
in a complex manifold $M'$ and let $L$ be a holomorphic line bundle
on $M'$ which is positive on a neighborhood of $X_0$.
Then there exists $k_0\in\mathbb N$, such that the following statement 
holds for every $k\in\mathbb N$, $k\geq k_0$: 
For every $f\in L^2_{(0,1)}(M_0,L^k)$ with $\ddbar f=0$ on $M_0$ 
there exists $g\in L^2(M_0,L^k)$ such that $\ddbar g=f$ on $M_0$ and 
\begin{equation}\label{e-gue190905yydj}
\int_{M_0}\abs{g}^2_{h^{L^k}}dv_{M'}\leq 
C_k\int_{M_0}\abs{f}^2_{h^{L^k}}dv_{M'},
\end{equation}
where $C_k>0$ is a constant independent of $f$ and 
$g$ and $\abs{\cdot}_{h^{L^k}}$ denotes the pointwise norm 
on $\oplus^n_{q=0}T^{*0,q}M'\otimes L^k$ induced by the given 
Hermitian metric $\langle\,\cdot\,|\,\cdot\,\rangle$ on $\Complex TM'$ and $h^L$.
\end{thm}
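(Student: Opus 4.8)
The plan is to derive the statement from the $L^2$ theory of the $\ddbar$-Neumann problem on pseudoconvex domains together with the Bochner--Kodaira--Morrey--Kohn identity, the positivity of $L$ near $X_0$ supplying the gain needed for large $k$. First I would pass from forms with values in $L^k$ to a single line bundle: the fiberwise isomorphism $T^{*0,1}M'\cong T^{*n,1}M'\otimes K_{M'}^{*}$ (contraction with a local holomorphic $n$-form) identifies the equation $\ddbar g=f$ for $f\in L^2_{(0,1)}(M_0,L^k)$ with the corresponding equation for $(n,1)$-forms with values in $E_k:=K_{M'}^{*}\otimes L^k$, and, giving $K_{M'}$ the metric induced by $\langle\,\cdot\,|\,\cdot\,\rangle$, this identification is isometric, so the two $L^2$-norms agree. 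Since $iR^{E_k}=k\,iR^{L}+iR^{K_{M'}^{*}}$ and $iR^{L}>0$ on a neighbourhood $W$ of $X_0$, there is $k_0$ such that $E_k$ is positive on a slightly smaller neighbourhood $W_0$ of $X_0$ for every $k\geq k_0$.

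Next, by Kohn's global regularity theorem for $\ddbar$ on a relatively compact pseudoconvex domain with smooth boundary (see \cite{Ko73}, \cite{FK72}), applied to $(n,1)$-forms with values in $E_k$, the operator $\ddbar$ has closed range in $L^2$ and the $\ddbar$-Neumann operator $N^{(n,1)}_{E_k}$ is bounded. Hence, if the space $\mathcal H$ of $L^2$ harmonic $(n,1)$-forms with values in $E_k$ (with the $\ddbar$-Neumann boundary condition) is trivial, then for every $\ddbar$-closed $f$ the form $g:=\ddbar^{*}N^{(n,1)}_{E_k}f$ solves $\ddbar g=f$ with $\norm{g}_{M_0}\leq C_k\norm{f}_{M_0}$. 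Thus the whole statement reduces to proving $\mathcal H=\{0\}$ for $k$ large.

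For the vanishing, take $u\in\mathcal H$; by the subelliptic estimates for the $\ddbar$-Neumann problem $u$ is smooth up to $X_0$, $\ddbar u=\ddbar^{*}u=0$, and $u\in{\rm Dom\,}\ddbar^{*}$. The Bochner--Kodaira--Morrey--Kohn identity gives
\begin{equation*}
0=\norm{\ddbar u}_{M_0}^2+\norm{\ddbar^{*}u}_{M_0}^2=\norm{\ol\nabla u}_{M_0}^2+\int_{M_0}\big\langle[iR^{E_k},\Lambda]u\,,\,u\big\rangle\,dv_{M'}+\int_{X_0}\big\langle\mathcal L_{\rho}u\,,\,u\big\rangle\,dv_{X}+\mathcal E(u),
\end{equation*}
where $\ol\nabla$ is the $(0,1)$-part of the Chern connection, $\Lambda$ the adjoint of the Lefschetz operator, $\mathcal L_{\rho}$ the Levi form, and $\mathcal E(u)$ collects the torsion and curvature contributions of the fixed Hermitian metric on $M'$, so $\abs{\mathcal E(u)}\leq C\norm{u}_{M_0}^2$ with $C$ independent of $k$. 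The boundary integral is $\geq0$ because $X_0$ is Levi-pseudoconvex, and for $(n,1)$-forms $\langle[iR^{E_k},\Lambda]u,u\rangle\geq\lambda_{\min}(iR^{E_k})\,\abs{u}^2$ pointwise, with $\lambda_{\min}(iR^{E_k})\geq c_0k-C_1$ on $W_0$. The main obstacle is that $L$ is not assumed positive in the interior of $M_0$, so on the compact region $M_0\setminus W_0$ the curvature term is only bounded below by $-C_2k\,\abs{u}^2$ and cannot be absorbed by the positive boundary contribution alone. To overcome this I would run a weighted/exhaustion argument in the spirit of \cite{Takegoshi83}: work with a family of Hermitian metrics $h^{E_k}e^{-t\chi(\rho)}$ with $\chi$ convex increasing, whose extra curvature $t\,i\partial\ddbar\chi(\rho)$ is $\geq0$ by pseudoconvexity and is strictly positive on a subset of $M_0$ exhausting $M_0$ as $t\to\infty$, while the associated $L^2$-cohomology is kept under control; feeding this into the identity above and letting first $k$, then $t$, tend to infinity forces $u\equiv0$. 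Making the estimates uniform and tracking the dependence of the relevant $L^2$-spaces on the metric through this limit is the technical heart of the argument.
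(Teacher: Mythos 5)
Your proposal is attempting to prove a statement that the paper itself does not prove: it is quoted verbatim from Takegoshi \cite[Section 3, Theorem N]{Takegoshi83}, which the authors describe as a generalization of Kohn \cite{Ko73}. Measured against that, your sketch has a genuine gap at its very first analytic step. After the (fine) reduction to $(n,1)$-forms with values in $E_k=K_{M'}^*\otimes L^k$, you invoke ``Kohn's global regularity theorem'' to conclude that $\ddbar$ has closed range in $L^2$ and that the $\ddbar$-Neumann operator $N^{(n,1)}_{E_k}$ exists and is bounded. Kohn's results in \cite{Ko73} (and the Folland--Kohn theory \cite{FK72}) require a strictly plurisubharmonic function near the boundary, or a Stein ambient space, to produce the basic estimate; for a relatively compact pseudoconvex domain in an \emph{arbitrary} complex manifold neither the basic estimate nor closed range holds in general -- this is exactly why the present paper must introduce a local closed range hypothesis elsewhere, and why Takegoshi's theorem, in which the positivity of $L^k$ near $X_0$ substitutes for the missing plurisubharmonic function, is nontrivial. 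Since the curvature of $E_k$ is only bounded below by $c_0k$ near $X_0$ and may be of size $-C_2k$ on a compact interior subset, the Morrey--Kohn--H\"ormander inequality does not give an a priori estimate, so assuming existence and boundedness of $N^{(n,1)}_{E_k}$ begs the main question; the subsequent reduction ``it remains to kill the harmonic space'' is built on an unproved foundation. (Also, on a merely weakly pseudoconvex domain there are no subelliptic estimates, so smoothness up to the boundary of harmonic forms cannot be invoked as you do; one must instead work with the density lemma in the graph norm.)

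The concluding vanishing argument also contains a concrete error and then stops where the real work begins. Since $i\pr\ddbar\chi(\rho)=\chi'(\rho)\,i\pr\ddbar\rho+\chi''(\rho)\,i\pr\rho\wedge\ddbar\rho$, pseudoconvexity of $X_0$ only gives nonnegativity of the Levi form of $\rho$ on the holomorphic tangential directions \emph{at the boundary}; it does not make $\chi(\rho)$ plurisubharmonic on $M_0$, so the asserted inequality $t\,i\pr\ddbar\chi(\rho)\geq0$ is false in general. Moreover the double limit (first $k\to\infty$, then $t\to\infty$) is only described, not carried out, and the theorem requires a single $k_0$ together with the estimate in the original unweighted norm $h^{L^k}$, so any auxiliary weight must be removed with uniform control -- precisely the content of Takegoshi's proof that your sketch defers as ``the technical heart.'' As written the proposal is not a proof; to complete it you would either have to reproduce Takegoshi's argument (an a priori estimate for forms concentrated near $X_0$ coming from the positivity of $L^k$, combined with a treatment of the interior that does not presuppose closed range), or simply cite Theorem N of \cite{Takegoshi83}, as the paper does.
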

\begin{proof}[Proof of Theorem~\ref{t-gue190905ycd}]
Let $k_0\in\mathbb N$ be as in Theorem~\ref{t-gue190905ycdq}. 
Let $k\geq k_0$, $k\in\mathbb N$ and let $U$ be any open set of 
$X_0$ with $U\cap X_1=\emptyset$. 
Let $u\in\cC^\infty_0(U\cap\ol M,L^k)\cap{\rm Dom\,}\Box^{(0)}_k$ 
and let $f:=\ddbar u\in\Omega^{0,1}_0(U\cap M,L^k)\subset 
L^2_{(0,1)}(M_0,L^k)$. From Theorem~\ref{t-gue190905ycdq}, we see 
that there is a $g\in L^2(M_0,L^k)\subset L^2(M,L^k)$ such that 
$\ddbar g=\ddbar u$ on $M_0$ (hence on $M$) and 
\begin{equation}\label{e-gue190905ycdz}
\int_{M_0}\abs{g}^2_{h^{L^k}}dv_{M'}\leq 
C_k\int_{M_0}\abs{\ddbar u}^2_{h^{L^k}}dv_{M'},
\end{equation}
where $C_k>0$ is a constant independent of $u$ and $g$. 
Since $(I-B^{(0)}_k)u$ is the solution of $\ddbar g=\ddbar u$ on $M$
of minimal $L^2$ norm, we have 
\begin{equation}\label{e-gue190905ycdm}
\int_{M}\abs{(I-B^{(0)}_k)u}^2_{h^{L^k}}dv_{M'}\leq
\int_{M}\abs{g}^2_{h^{L^k}}dv_{M'}.
\end{equation}
From \eqref{e-gue190905ycdz} and \eqref{e-gue190905ycdm}, we get 
\begin{equation}\label{e-gue190905ycdn}
\int_{M}\abs{(I-B^{(0)}_k)u}^2_{h^{L^k}}dv_{M'}\leq 
C_k\int_{M_0}\abs{\ddbar u}^2_{h^{L^k}}dv_{M'}.
\end{equation}
Since $\ddbar u$ has compact support in $U\cap\ol M$, we have 
\begin{equation}\label{e-gue190905ycdp}
\int_{M_0}\abs{\ddbar u}^2_{h^{L^k}}dv_{M'}=
\int_{M}\abs{\ddbar u}^2_{h^{L^k}}dv_{M'}.
\end{equation}
From \eqref{e-gue190905ycdn} and \eqref{e-gue190905ycdp}, we get 
\begin{equation}\label{e-gue190911yyd}
\int_{M}\abs{(I-B^{(0)}_k)u}^2_{h^{L^k}}dv_{M'}\leq 
C_k\int_{M}\abs{\ddbar u}^2_{h^{L^k}}dv_{M'}.
\end{equation}
Since $u\in{\rm Dom\,}\Box^{(0)}_k$, we can check that 
\begin{equation}\label{e-gue190909yyd}
\begin{split}
&\int_M\abs{\ddbar u}^2_{h^{L^k}}dv_{M'}=
(\,\ddbar u\,|\,\ddbar u\,)_k=(\,\ddbar u\,|\,\ddbar(I-B^{(0)}_k)u\,)_k\\
&=(\,\Box^{(0)}_ku\,|\,(I-B^{(0)}_k)u\,)_k\leq
\left\|\Box^{(0)}_ku\right\|_k\norm{(I-B^{(0)}_k)u}_k.
\end{split}
\end{equation}
The theorem follows from \eqref{e-gue190911yyd} and \eqref{e-gue190909yyd}. 
\end{proof}

From Theorem~\ref{t-gue190905ycd}, Theorem~\ref{t-gue190709yyd} 
and Remark~\ref{r-gue190810yyd}, we immediately get Theorem \ref{t-gue190909yyd}.

\section{$S^1$-equivariant Bergman kernel 
asymptotics and embedding theorems}\label{s-gue190810yyd}

In this section, we assume that $M'$ admits a holomorphic $S^{1}$-action 
$e^{i\theta}$, $\theta\in[0,2\pi[$, $e^{i\theta}: M'\to M'$, 
$x\in M'\To e^{i\theta}\circ x\in M'$. 
Recall that $X_0$ is an open connected component of $X$ 
such that \eqref{e-gue190812yydI} holds and we work 
with Assumption~\ref{a-gue190812yyd}. 

\begin{thm}\label{t-gue190813yyd}
With the notations and assumptions used in the discussion before 
Theorem~\ref{t-gue190817yydhi}, fix $p\in X_0$ and let $U$ 
be an open set of $p$ in $M'$ with $U\cap X_0\neq\emptyset$. 
Suppose that the Levi form is non-degenerate of constant signature 
$(n_-, n_+)$ on $U\cap X_0$, where $n_-$ denotes the number 
of the negative eigenvalues of the Levi form on $U\cap X_0$. Fix $\lambda>0$. 
If $q\neq n_-$, then 
\begin{equation}\label{e-gue190813ycd}
B^{(q)}_{\leq\lambda,m}\equiv0\mod O(m^{-\infty})\ \ \mbox{on $U\cap\ol M$}.
\end{equation}

Let $q=n_-$. Let $N_p:=\set{g\in S^1;\, g\circ p=p}=
\set{g_0:=e, g_1,\ldots, g_r}$, where $e$ denotes the 
identify element in $S^1$ and $g_j\neq g_\ell$ if $j\neq \ell$, 
for every $j, \ell=0,1,\ldots,r$. We have 
\begin{equation}\label{e-gue190813ycdI}
B^{(q)}_{\leq\lambda,m}(x,y)\equiv
\sum^r_{\alpha=0}g^m_\alpha e^{im\phi(x,g_\alpha y)}b_{\alpha}(x,y,m)
\mod O(m^{-\infty})\ \ \mbox{on $U\cap\ol M$}, 
\end{equation}
where for every $\alpha=0,1,\ldots,r$, 
\begin{equation}\label{e-gue190816ycd}
\begin{split}
&b_\alpha(x, y, m)\in S^{n}_{{\rm loc\,}}((U\times U)\cap
(\ol M\times\ol M),T^{*0,q}M'\boxtimes(T^{*0,q}M')^*),\\
&\mbox{$b_\alpha(x, y, m)\sim
\sum^\infty_{j=0}b_{\alpha,j}(x, y)m^{n-j}$ in 
$S^{n}_{{\rm loc\,}}((U\times U)\cap(\ol M\times\ol M),
T^{*0,q}M'\boxtimes(T^{*0,q}M')^*)$},\\
&b_{\alpha,j}(x, y)\in\cC^\infty((U\times U)\cap(\ol M\times\ol M),
T^{*0,q}M'\boxtimes(T^{*0,q}M')^*),\ \ j=0,1,\ldots,\\
&b_{\alpha,0}(x,x)=b_0(x,x),\ \ \mbox{$b_0(x,x)$ is given by 
\eqref{e-gue190531yyda}}, 
\end{split}
\end{equation}
and $\phi(x, y)\in\cC^\infty((U\times U)\cap(\ol M\times\ol M))$ 
is as in \eqref{e-gue190708ycdII}. 
\end{thm}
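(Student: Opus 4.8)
The plan is to deduce Theorem~\ref{t-gue190813yyd} from Theorem~\ref{t-gue190708yyd} together with the integral representation \eqref{e-gue190812ycdI} of the $S^1$-equivariant Bergman projection. First I would recall that by \eqref{e-gue190812ycdI} we have
\[
B^{(q)}_{\leq\lambda,m}(x,y)=\frac{1}{2\pi}\int^{\pi}_{-\pi}B^{(q)}_{\leq\lambda}(x,e^{i\theta}y)e^{im\theta}d\theta,
\]
so everything reduces to plugging in the description of $B^{(q)}_{\leq\lambda}(x,e^{i\theta}y)$ from Theorem~\ref{t-gue190708yyd} and performing a stationary phase analysis in $\theta$. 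If $q\neq n_-$, then $B^{(q)}_{\leq\lambda}(x,z)\equiv0$ modulo $\cC^\infty$ on $(U'\times U')\cap(\ol M\times\ol M)$ for a suitable $S^1$-invariant neighborhood, and since the smooth error is uniformly smooth in $\theta$, integrating against $e^{im\theta}$ gives $O(m^{-\infty})$, yielding \eqref{e-gue190813ycd}. The only subtlety is that as $\theta$ ranges over $[-\pi,\pi]$ the point $e^{i\theta}y$ leaves $U$; this is handled by a partition of unity on $S^1$, writing $\int_{-\pi}^{\pi}=\sum$ of integrals over small arcs around each $\theta$ with $e^{i\theta}p\in U$ plus an arc where the kernel is genuinely smooth and hence contributes $O(m^{-\infty})$.

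For $q=n_-$, the key point is to localize the $\theta$-integral near the finite set $N_p=\{g_0,\dots,g_r\}$ of stabilizer elements. Writing $g_\alpha=e^{i\theta_\alpha}$, for $\theta$ near $\theta_\alpha$ we have $e^{i\theta}y$ near $g_\alpha y$, which is near $g_\alpha p=p$, so we may use the oscillatory integral representation \eqref{e-gue190708ycd}:
\[
B^{(q)}_{\leq\lambda}(x,e^{i\theta}y)\equiv\int_0^\infty e^{i\phi(x,e^{i\theta}y)t}b(x,e^{i\theta}y,t)\,dt
\]
modulo a smooth error. Inserting this into the $\theta$-integral and using a cutoff $\chi_\alpha(\theta)$ supported near $\theta_\alpha$, one gets
\[
\frac{1}{2\pi}\int\!\!\int_0^\infty e^{i(\phi(x,e^{i\theta}y)t+m\theta)}\chi_\alpha(\theta)b(x,e^{i\theta}y,t)\,dt\,d\theta.
\]
Now I would rescale $t=m s$ (so $dt=m\,ds$) to bring this into the form $\int\!\!\int_0^\infty e^{im(\phi(x,e^{i\theta}y)s+\theta)}m\,\chi_\alpha b(x,e^{i\theta}y,ms)\,ds\,d\theta$ and apply stationary phase in $(\theta,s)$. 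Using the properties \eqref{e-gue190708ycdII}, in particular $\phi(x,x)=0$ and $d_y\phi(x,x)=\omega_0(x)$ together with the $S^1$-invariance and the normalization $\langle\omega_0,T\rangle=-1$, $T_0=T$ on $X_0$, one checks that $\partial_\theta(\phi(x,e^{i\theta}y)s)|_{\theta=\theta_\alpha}=-s\cdot$(something that equals $1$ at the diagonal after $g_\alpha$ is absorbed), so the phase $\phi(x,e^{i\theta}y)s+\theta$ has a nondegenerate critical point in $\theta$ at $s=1$, $\theta=\theta_\alpha$ up to lower-order terms; the stationary phase expansion in $\theta$ then collapses the $\theta$-integral, leaves an integral in $s$ that reconstitutes an oscillatory integral of the form $e^{im\phi(x,g_\alpha y)}b_\alpha(x,y,m)$ after undoing the rescaling, and the leading symbol $b_{\alpha,0}(x,x)=b_0(x,x)$ comes out because at the critical point the phase Hessian contributes a factor that matches \eqref{e-gue190531yyda}. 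This is essentially the computation carried out in~\cite{HHLS, HMM19} for the Szeg\H{o} case, adapted to the present setting.

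The technical heart of the argument — and the step I expect to be the main obstacle — is the stationary phase / complex WKB analysis of the $\theta$-integral: one must verify that the phase $\theta\mapsto\phi(x,e^{i\theta}y)t+m\theta$ (equivalently $\phi(x,e^{i\theta}y)s+\theta$ after rescaling) has, for $(x,y)$ near $(p,p)$, exactly one nondegenerate critical point near each $\theta_\alpha$, that ${\rm Im}\,\phi\geq0$ guarantees the relevant contour deformations and decay estimates, and that the resulting amplitude lies in the right symbol class $S^n_{\rm loc}$ with the asserted asymptotic expansion. One also has to be careful that $\phi(x,g_\alpha y)$ is well-defined and smooth on $(U\times U)\cap(\ol M\times\ol M)$, which requires shrinking $U$ to an $S^1$-invariant set small enough that $g_\alpha(U\cap\ol M)$ overlaps $U$ only appropriately; here $Z(1)$ is not needed for this statement (it enters only the embedding theorems). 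Once the single-critical-point structure and the symbol estimates are established, assembling the pieces — summing over $\alpha$, absorbing the partition of unity, and checking the leading coefficient — is routine and parallels the argument in the CR case. I would close by remarking that the case $\lambda=0$ under the local closed range hypothesis (Theorem~\ref{t-gue190816yyydh}) follows by the same computation starting from Theorem~\ref{t-gue190709yyd} instead of Theorem~\ref{t-gue190708yyd}.
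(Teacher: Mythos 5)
Your proposal reaches the right statement but organizes the $q=n_-$ case along a genuinely different route than the paper. You insert the interior oscillatory representation \eqref{e-gue190708ycd} of $B^{(q)}_{\leq\lambda}(x,e^{i\theta}y)$ directly into the Fourier integral \eqref{e-gue190812ycdI}, localize in $\theta$ near the stabilizer $N_p$, rescale $t=ms$ and perform a complex stationary phase in $\theta$ on $(\ol M\times\ol M)$. The paper instead first writes $B^{(q)}_{\leq\lambda,m}\equiv \tilde P S_{-,m}L^{(q)}\mod O(m^{-\infty})$ using Theorem~\ref{t-gue190606ycd} and \eqref{e-gue190529ycd}, does the $\theta$-analysis only on the boundary Szeg\H{o} kernel $S_-$ (quoting the $S^1$-equivariant expansion obtained as in \cite[Theorem 3.12]{HM14}), and then transfers the resulting expansion into the interior by the Poisson-operator composition procedure of \cite[Proposition 7.8 of Part II]{Hsiao08}; thus the stationary phase is only ever carried out with the boundary phase $\varphi_-$, where the cited results apply verbatim. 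Your route is more direct, but the Melin--Sj\"ostrand analysis you defer to \cite{HHLS,HMM19} is stated there for the CR/boundary situation; adapting it to the phase $\phi$, whose imaginary part is strictly positive off $X\times X$ and vanishes only on the boundary diagonal, uniformly up to the boundary, is precisely the content of the composition step the paper quotes, so it is feasible but is where essentially all the work sits, and you rightly flag it as the main obstacle rather than carrying it out. Two smaller points: for $q\neq n_-$ no partition of unity in $\theta$ is needed once $U$ is replaced by its $S^1$-saturation (the Levi-form hypothesis persists there since the metric and $\rho$ are $S^1$-invariant), so \eqref{e-gue190810yyd} gives smoothness of $B^{(q)}_{\leq\lambda}(x,e^{i\theta}y)$ jointly in $\theta$ along the whole orbit and a single integration by parts in $\theta$ yields \eqref{e-gue190813ycd}, which is the paper's one-line argument; the same saturation remark is what legitimizes using \eqref{e-gue190708ycd} at $(x,e^{i\theta}y)$ for all $\theta$ in your treatment of the $q=n_-$ case.
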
 

\begin{proof}
From \eqref{e-gue190810yyd} and \eqref{e-gue190812ycdI}, 
we can integrate by parts in $\theta$ and get \eqref{e-gue190813ycd}. 
We now prove \eqref{e-gue190813ycdI}. From Theorem~\ref{t-gue190606ycd} 
and \eqref{e-gue190529ycd}, it is straightforward to see that 
\begin{equation}\label{e-gue190816yyd}
B^{(q)}_{\leq\lambda}\equiv\tilde PS_{-,m}L^{(q)}\mod O(m^{-\infty})\ \ 
\mbox{on $U\cap\ol M$},
\end{equation}
where $S_{-,m}(x,y)=\frac{1}{2\pi}\int^{\pi}_{-\pi}S_-(x,e^{i\theta}y)
e^{im\theta}d\theta$ and $S_-(x,y)$ is as in Theorem~\ref{t-gue190527syd}.  
From Theorem~\ref{t-gue190527syd}, we can repeat the 
proof of~\cite[Theorem 3.12]{HM14} with minor change and deduce that 
\begin{equation}\label{e-gue190816yydI}
S_{-,m}(x,y)\equiv\sum^r_{\alpha=0}g^m_\alpha e^{im\varphi_-(x,g_\alpha y)}
a_{\alpha}(x,y,m)\mod O(m^{-\infty})\ \ \mbox{on $U\cap X$}, 
\end{equation}
where for every $\alpha=0,1,\ldots,r$, 
\begin{equation}\label{e-gue190816yydII}
\begin{split}
&a_\alpha(x, y, m)\in S^{n-1}_{{\rm loc\,}}((U\times U)\cap(X\times X),
T^{*0,q}M'\boxtimes(T^{*0,q}M')^*),\\
&\mbox{$a_\alpha(x, y, m)\sim\sum^\infty_{j=0}a_{\alpha,j}(x, y)m^{n-1-j}$ 
in $S^{n-1}_{{\rm loc\,}}((U\times U)\cap(X\times X),
T^{*0,q}M'\boxtimes(T^{*0,q}M')^*)$},\\
&a_{\alpha,j}(x, y)\in\cC^\infty((U\times U)\cap(\ol M\times\ol M),
T^{*0,q}M'\boxtimes(T^{*0,q}M')^*),\ \ j=0,1,\ldots,\\
&a_{\alpha,0}(x,x)=a_0(x,x),\ \ \mbox{$a_0(x,x)$ is given by \eqref{e-gue190528yyda}},
\end{split}
\end{equation}
and $\varphi_-(x, y)\in\cC^\infty((U\times U)\cap(\ol M\times\ol M))$ 
is as in Theorem~\ref{t-gue190527syd}. 
From \eqref{e-gue190816yydI}, we can repeat the procedure in the 
proof of~\cite[Proposition 7.8 of Part II]{Hsiao08} and deduce that the distribution kernel of 
$\tilde PS_{-,m}L^{(q)}$ is of the form \eqref{e-gue190813ycdI}.
\end{proof}

From Theorem~\ref{t-gue190709yyd}, we can repeat the proof of 
Theorem~\ref{t-gue190813yyd} and deduce 

\begin{thm}\label{t-gue190816yyydh}
With the notations and assumptions used in the discussion before 
Theorem~\ref{t-gue190817yydhi}, fix $p\in X_0$ and let $U$ be 
an open set of $p$ in $M'$ with $U\cap X_0\neq\emptyset$. 
Suppose that the Levi form is non-degenerate of constant signature 
$(n_-, n_+)$ on $U\cap X_0$, where $n_-$ denotes the number 
of the negative eigenvalues of the Levi form on $U\cap X_0$. 
Suppose that $\Box^{(q)}$ has local closed range in $U$. If $q\neq n_-$, then 
\begin{equation}\label{e-gue190816yydj}
B^{(q)}_{m}\equiv0\mod O(m^{-\infty})\ \ \mbox{on $U\cap\ol M$}.
\end{equation}

Let $q=n_-$. Let $N_p:=\set{g\in S^1;\, g\circ p=p}=
\set{g_0:=e, g_1,\ldots, g_r}$, where $e$ denotes the identify 
element in $S^1$ and $g_j\neq g_\ell$ if $j\neq \ell$, for every 
$j, \ell=0,1,\ldots,r$. We have 
\begin{equation}\label{e-gue190816yydk}
B^{(q)}_{m}(x,y)\equiv\sum^r_{\alpha=0}g^m_\alpha e^{im\phi(x,g_\alpha y)}
b_{\alpha}(x,y,m)\mod O(m^{-\infty})\ \ \mbox{on $U\cap\ol M$}, 
\end{equation}
where $b_{\alpha}(x,y,m)\in S^{n}_{{\rm loc\,}}((U\times U)
\cap(\ol M\times\ol M),T^{*0,q}M'\boxtimes(T^{*0,q}M')^*)$, 
$\alpha=0,1,\ldots,r$, and $\phi(x,y)\in\cC^\infty((U\times U)
\cap(\ol M\times\ol M))$ are as in Theorem~\ref{t-gue190813yyd}. 
\end{thm}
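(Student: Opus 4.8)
The plan is to reduce Theorem~\ref{t-gue190816yyydh} to the already-established $m$-localized parametrix machinery, exactly as Theorem~\ref{t-gue190813yyd} was reduced in its proof, but now using the \emph{global} closed range input supplied by Theorem~\ref{t-gue190709yyd} in place of the energy cutoff $\lambda>0$. First I would record that, by Theorem~\ref{t-gue190709yyd} together with Theorem~\ref{t-gue190606ycd} (or rather its closed-range analogue Theorem~\ref{t-gue190609yyd}) and the Fourier integral representation \eqref{e-gue190529ycd}, we have
\[
B^{(q)}\equiv\tilde PS_-L^{(q)}\mod\cC^\infty((U\times U)\cap(\ol M\times\ol M)),
\]
where $S_-$, $L^{(q)}$, $\tilde P$ are as in Section~\ref{s-gue190321myyd} and Theorem~\ref{t-gue190527syd}. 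Applying \eqref{e-gue190812ycdI} (which also holds with $\lambda=0$, i.e.\ $B^{(q)}_m(x,y)=\frac{1}{2\pi}\int^{\pi}_{-\pi}B^{(q)}(x,e^{i\theta}y)e^{im\theta}d\theta$), one gets
\[
B^{(q)}_m\equiv\tilde PS_{-,m}L^{(q)}\mod O(m^{-\infty})\ \ \mbox{on $U\cap\ol M$},
\]
with $S_{-,m}(x,y)=\frac{1}{2\pi}\int^{\pi}_{-\pi}S_-(x,e^{i\theta}y)e^{im\theta}d\theta$, in complete parallel with \eqref{e-gue190816yyd}.

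The second step is to insert the $S^1$-stationary phase analysis of $S_{-,m}$. Since $S_-(x,y)$ has the oscillatory integral form $\int^\infty_0 e^{i\varphi_-(x,y)t}a(x,y,t)\,dt$ from Theorem~\ref{t-gue190527syd}, and since $\varphi_-$ restricted to $X\times X$ is the Szeg\H{o}-type phase, the same reasoning as in the proof of~\cite[Theorem 3.12]{HM14} — localization near the orbit of $p$, identification of the finite stabilizer $N_p=\set{g_0,\ldots,g_r}$, and stationary phase in $\theta$ and $t$ — yields
\[
S_{-,m}(x,y)\equiv\sum^r_{\alpha=0}g^m_\alpha e^{im\varphi_-(x,g_\alpha y)}a_{\alpha}(x,y,m)\mod O(m^{-\infty})\ \ \mbox{on $U\cap X$},
\]
with $a_\alpha(x,y,m)\in S^{n-1}_{{\rm loc\,}}$ having the expansion and leading term exactly as in \eqref{e-gue190816yydII}; this is literally \eqref{e-gue190816yydI}. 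Then, composing on the left with $\tilde P$ and on the right with $L^{(q)}$ and running the stationary-phase-in-$t$ argument of~\cite[Proposition 7.8 of Part II]{Hsiao08} — which extends the phase $\varphi_-$ off the boundary to $\phi$ with the properties in \eqref{e-gue190531yydIIa} — one obtains the claimed form \eqref{e-gue190816yydk} with $b_\alpha\in S^n_{{\rm loc\,}}$ and $\phi$ as in Theorem~\ref{t-gue190813yyd}. The case $q\neq n_-$ is handled identically: Theorem~\ref{t-gue190709yyd} gives $B^{(q)}\equiv0$ on $U$, hence $B^{(q)}_m\equiv0\mod O(m^{-\infty})$ on $U\cap\ol M$, which is \eqref{e-gue190816yydj}.

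The only genuinely new verification, compared with Theorem~\ref{t-gue190813yyd}, is that every estimate used in the $\theta$-integration is uniform down to $\lambda=0$; but this is automatic, because the parametrix identity $B^{(q)}\equiv\tilde PS_-L^{(q)}$ under the local closed range hypothesis (Theorem~\ref{t-gue190609yyd}) is already an honest equivalence modulo $\cC^\infty$, with no residual $\lambda$-dependence, and the operators $\tilde P$, $S_-$, $L^{(q)}$ are all $S^1$-compatible with kernels smooth away from the diagonal and from the $S^1$-orbits. Thus the main obstacle is not in the new theorem itself but lies upstream, in Theorem~\ref{t-gue190609yyd} and in the verification that the stationary phase computation of~\cite{HM14} applies to the present off-diagonal phase; granting those, the proof here is a short transcription of the proof of Theorem~\ref{t-gue190813yyd} with $\lambda>0$ replaced by the local closed range condition. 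I would therefore write the proof as: ``By Theorem~\ref{t-gue190709yyd} we get \eqref{e-gue190816yydj} in the case $q\neq n_-$; for $q=n_-$, combining Theorem~\ref{t-gue190609yyd}, \eqref{e-gue190529ycd} and \eqref{e-gue190812ycdI} we obtain $B^{(q)}_m\equiv\tilde PS_{-,m}L^{(q)}\mod O(m^{-\infty})$ on $U\cap\ol M$, and the rest of the argument is identical to the proof of Theorem~\ref{t-gue190813yyd}, using \eqref{e-gue190816yydI}, \eqref{e-gue190816yydII} and the procedure of~\cite[Proposition 7.8 of Part II]{Hsiao08}.''
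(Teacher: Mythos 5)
Your proposal is essentially the paper's own proof: the paper disposes of this theorem in one line, invoking Theorem~\ref{t-gue190709yyd} (equivalently Theorem~\ref{t-gue190609yyd}, which gives $B^{(q)}\equiv\Pi^{(q)}\equiv\tilde PS_-L^{(q)}\mod\cC^\infty$ on $U$) and then repeating the proof of Theorem~\ref{t-gue190813yyd}, i.e.\ the Fourier decomposition \eqref{e-gue190812ycdI} with $\lambda=0$, the expansion \eqref{e-gue190816yydI} of $S_{-,m}$ via the stationary phase argument of \cite{HM14}, and the composition procedure of \cite[Proposition 7.8 of Part II]{Hsiao08} — exactly the steps you list. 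One small correction: for $q\neq n_-$ the vanishing $B^{(q)}\equiv0\mod\cC^\infty$ on $U$ is not literally supplied by Theorem~\ref{t-gue190709yyd}, which is stated only for $q=n_-$; it follows instead from the parametrix of Theorem~\ref{t-gue190531syd} (apply \eqref{e-gue190403yydIz} to $B^{(q)}v$, use $\Box^{(q)}B^{(q)}=0$ and self-adjointness, as in the proof of Theorem~\ref{t-gue190810yyd}), after which the integration by parts in $\theta$ yields \eqref{e-gue190816yydj}.
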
 

Consider $q=0$. When $Z(1)$ holds on $X$, it is well-known 
(see Folland-Kohn~\cite{FK72}) that $\Box^{(0)}$ has $L^2$ closed range. 
From this observation and Theorem~\ref{t-gue190816yyydh}, we deduce that 

\begin{thm}\label{t-gue190817yydh}
With the notations and assumptions used in the discussion 
before Theorem~\ref{t-gue190817yydhi}, fix $p\in X_0$ and let $U$ 
be an open set of $p$ in $M'$ with $U\cap X_0\neq\emptyset$. 
Suppose that the Levi form is 
positive $U\cap X_0$. Suppose that $Z(1)$ holds on $X$. 
Let $N_p:=\set{g\in S^1;\, g\circ p=p}=\set{g_0:=e, g_1,\ldots, g_r}$, 
where $e$ denotes the identify element in $S^1$ and $g_j\neq g_\ell$ 
if $j\neq \ell$, for every $j, \ell=0,1,\ldots,r$. We have 
\begin{equation}\label{e-gue190816yydl}
B^{(0)}_{m}(x,y)\equiv\sum^r_{\alpha=0}g^m_\alpha e^{im\phi(x,g_\alpha y)}
b_{\alpha}(x,y,m)\mod O(m^{-\infty})\ \ \mbox{on $U\cap\ol M$}, 
\end{equation}
where $b_{\alpha}(x,y,m)\in S^{n}_{{\rm loc\,}}((U\times U)
\cap(\ol M\times\ol M))$, $\alpha=0,1,\ldots,r$, and 
$\phi(x,y)\in\cC^\infty((U\times U)\cap(\ol M\times\ol M))$ 
are as in Theorem~\ref{t-gue190813yyd}. 
\end{thm}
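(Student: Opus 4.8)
The plan is to deduce Theorem~\ref{t-gue190817yydh} directly from Theorem~\ref{t-gue190816yyydh} by verifying that, under the hypotheses of the present statement, the single remaining condition needed to apply the earlier result is automatically satisfied. Concretely, Theorem~\ref{t-gue190816yyydh} is stated for general $q$ under the assumption that the Levi form is non-degenerate of constant signature $(n_-,n_+)$ on $U\cap X_0$ and that $\Box^{(q)}$ has local closed range in $U$. Here we specialize to $q=0$: since the Levi form is positive on $U\cap X_0$, we have $n_-=0$, so $q=n_-$, and the conclusion will be the expansion \eqref{e-gue190816yydk}, i.e.\ \eqref{e-gue190816yydl}. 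Thus the only thing to check is that the local closed range hypothesis of Definition~\ref{d-gue190609yyd} holds for $\Box^{(0)}$ in $U$.

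First I would invoke the classical $L^2$ theory of the $\ddbar$-Neumann problem: when condition $Z(1)$ holds on all of $X$, the operator $\Box^{(1)}$ satisfies the subelliptic estimate of Kohn, and consequently $\Box^{(0)}$ has closed range in $L^2_{(0,0)}(M)$. This is exactly the statement recorded in Folland--Kohn~\cite{FK72} (the relevant reference being essentially the same as the one already cited in the Corollary after Remark~\ref{r-gue190810yyd}, namely \cite[Theorem 3.1.19]{FK72} together with the hypoellipticity estimates). The point is that $Z(1)$ on $X$ forces the basic estimate for $(0,1)$-forms, which yields both finite-dimensionality of $\ker\Box^{(1)}$ and the closed range property for $\ddbar$ at the level of functions; the latter is equivalent to $\Box^{(0)}$ having closed range.

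Next, the final remark in Definition~\ref{d-gue190609yyd} states that if $\Box^{(q)}$ has (global) closed range, then $\Box^{(q)}$ has local closed range in $U$ for any open $U$. Applying this with $q=0$, we conclude that $\Box^{(0)}$ has local closed range in the given $U$. All the hypotheses of Theorem~\ref{t-gue190816yyydh} for $q=0$ are therefore met: the Levi form is non-degenerate of constant signature $(0,n_+)$ on $U\cap X_0$ (positivity being exactly this with $n_-=0$), and $\Box^{(0)}$ has local closed range in $U$. Since $q=0=n_-$, the conclusion \eqref{e-gue190816yydk} of Theorem~\ref{t-gue190816yyydh} applies verbatim and gives \eqref{e-gue190816yydl}, with $b_\alpha(x,y,m)\in S^n_{{\rm loc\,}}((U\times U)\cap(\ol M\times\ol M))$ (here the bundle $T^{*0,0}M'$ is trivial, so the symbols are scalar-valued) and with $\phi(x,y)$ as in Theorem~\ref{t-gue190813yyd}.

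Honestly, there is no substantial obstacle here: the statement is essentially a packaging of Theorem~\ref{t-gue190816yyydh} with the classical closed-range input. The only mild care needed is bookkeeping: confirming that $Z(1)$ on $X$ (not merely on $U\cap X_0$) is what produces the global closed range for $\Box^{(0)}$, and then that global closed range upgrades to the local closed range condition in the precise sense of Definition~\ref{d-gue190609yyd}, which is immediate from the remark there. One should also note that the set $N_p=\{g_0=e,g_1,\ldots,g_r\}$ is finite because the holomorphic $S^1$-action is non-trivial near $X_0$ (by Assumption~\ref{a-gue190812yyd}, $T_0$ is transversal to $T^{1,0}X\oplus T^{0,1}X$ on $X$, so the stabilizer of $p$ is a proper closed subgroup of $S^1$, hence finite), which is exactly the hypothesis under which Theorem~\ref{t-gue190816yyydh} was proved. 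With these points in place, the proof is complete.
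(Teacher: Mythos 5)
Your proposal is correct and follows essentially the same route as the paper: the paper's proof likewise observes that $Z(1)$ on $X$ gives $L^2$ closed range for $\Box^{(0)}$ by Folland--Kohn~\cite{FK72}, hence local closed range in the sense of Definition~\ref{d-gue190609yyd}, and then applies Theorem~\ref{t-gue190816yyydh} with $q=n_-=0$ since the Levi form is positive on $U\cap X_0$. Your extra remarks (finiteness of $N_p$, scalar-valued symbols for $q=0$) are consistent bookkeeping and do not change the argument.
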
 

For every $m\in\mathbb N$, let 
\begin{equation}\label{e-gue190817ycd}
\begin{split}
\Phi_m: \ol M&\To\Complex^{d_m},\\
x&\To (f_1(x),\ldots,f_{d_m}(x)),
\end{split}
\end{equation}
where $\set{f_1(x), \ldots,f_{d_m}(x)}$ is an orthonormal basis for 
$H^0_m(\ol M)$ with respect to $(\,\cdot\,|\,\cdot\,)_M$ and 
$d_m={\rm dim\,}H^0_m(\ol M)$. 
We have the following $S^1$-equivaraint embedding theorem.

\begin{thm}\label{t-gue190817yydI}
With the notations and assumptions used in the discussion 
before Theorem~\ref{t-gue190817yydhi}, assume that the 
Levi form is positive on $X_0$ and $Z(1)$ holds on $X$. 
For every $m_0\in\mathbb N$, there exist 
$m_1\in \mathbb N, \ldots, m_k\in\mathbb N$, with 
$m_j\geq m_0$, $j=1,\ldots,k$, and a $S^1$-invariant 
open set $V$ of $X_0$ such that the map 
\begin{equation}\label{e-gue190817ycdI}
\begin{split}
\Phi_{m_1,\ldots,m_k}: V\cap\ol M&\To\Complex^{\hat d_m},\\
x&\To (\Phi_{m_1}(x),\ldots,\Phi_{m_k}(x)),
\end{split}
\end{equation}
is a holomorphic embedding, where $\Phi_{m_j}$ is given 
by \eqref{e-gue190817ycd} and $\hat d_m=d_{m_1}+\cdots+d_{m_k}$. 
\end{thm}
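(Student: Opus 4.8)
\textbf{Proof proposal for Theorem~\ref{t-gue190817yydI}.}
The plan is to use the $S^1$-equivariant Bergman kernel asymptotics of Theorem~\ref{t-gue190817yydh} together with a standard covering/compactness argument. First I would fix $p\in X_0$. Working in local holomorphic coordinates $z=(z_1,\dots,z_n)$ centered at $p$ as in Theorem~\ref{t-gue190531yyda}, the phase $\phi$ satisfies $\phi(p,p)=0$ and $d_z\phi(p,p)=-\omega_0(p)-id\rho(p)\neq0$, and by \eqref{e-gue190528yyda} the leading coefficient $b_{0,0}(p,p)=b_0(p,p)=\pi^{-n}\abs{\det\mathcal L_p}$ is nonzero. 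Since $Z(1)$ holds on $X$ and the Levi form is positive near $p$ on $X_0$, we have $n_-=0$ and $q=0$, so $H^0_m(\ol M)$ consists of CR (holomorphic) functions with $Tf=imf$. The near-diagonal expansion of $B^{(0)}_m(x,y)=\sum_{\alpha}g_\alpha^m e^{im\phi(x,g_\alpha y)}b_\alpha(x,y,m)$ gives, after restricting to a small $S^1$-invariant neighborhood $U_p$ of the orbit of $p$ on which $N_p$ acts (the stabilizer terms with $\alpha\geq1$ only contribute when $g_\alpha y$ is near $x$), the growth $B^{(0)}_m(x,x)\sim\abs{N_p}\,\pi^{-n}\abs{\det\mathcal L_x}m^n$ on $U_p\cap X_0$, hence $B^{(0)}_m(x,x)>0$ for $m$ large.

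Next I would run the standard Bochner--Kodaira--type argument to produce local peak sections and verify the three conditions for $\Phi_m$ (or a finite collection $\Phi_{m_1,\dots,m_k}$) to be an embedding: (i) $\Phi_m(x)\neq0$ for $x\in U_p\cap\ol M$; (ii) $d\Phi_m(x)$ is injective on $T^{1,0}_x$-type directions together with the transversal $T$-direction; (iii) $\Phi_m$ separates points of $U_p\cap\ol M$. For (i), positivity of $B^{(0)}_m(x,x)=\sum_j\abs{f_j(x)}^2$ is immediate. For (ii) and (iii), I would use the localized kernel $B^{(0)}_m(x,y)$: for $y$ in a small coordinate ball, the function $x\mapsto B^{(0)}_m(x,y)$ lies in $H^0_m(\ol M)$ (it is the reproducing kernel), its absolute value peaks sharply at $x=y$ (and at the stabilizer translates $g_\alpha^{-1}y$) on a scale $m^{-1/2}$ because ${\rm Im\,}\phi(x,y)\geq0$ with ${\rm Im\,}\phi>0$ off the diagonal in $X\times X$, and differentiating the phase $e^{im\phi(x,y)}$ in $x$ at $x=y$ produces, via $d_z\phi(x,x)=-\omega_0(x)-id\rho(x)$, exactly the covectors needed to span the conormal directions. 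This is the classical Tian-type almost-isometry estimate adapted to the CR/$S^1$ setting; the stabilizer points $g_1^{-1}p,\dots,g_r^{-1}p$ are handled by shrinking $V$ so that distinct stabilizer translates of the coordinate ball are disjoint, and then using the $S^1$-equivariance $f_j(e^{i\theta}x)=e^{im\theta}f_j(x)$ to transport the estimate.

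Then I would globalize: $X_0$ is compact, so finitely many such invariant neighborhoods $U_{p_1},\dots,U_{p_N}$ cover $X_0$, each coming with an integer $m_0^{(i)}\geq m_0$ and a finite list of twists. Taking $\{m_1,\dots,m_k\}$ to be the union of all these lists, and $V$ a suitable $S^1$-invariant shrinking of $\bigcup_i U_{p_i}$, the combined map $\Phi_{m_1,\dots,m_k}=(\Phi_{m_1},\dots,\Phi_{m_k})$ satisfies (i)--(iii) on $V\cap\ol M$; injectivity across different cover pieces follows because on the overlap at least one component already separates, and away from the diagonal the sharp peaking of every component kernel forces $\abs{\Phi_{m_i}(x)-\Phi_{m_i}(y)}$ to be bounded below after normalization. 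Holomorphicity of $\Phi_{m_1,\dots,m_k}$ on $V\cap M$ and the CR property on $V\cap X_0$ are built in since each $f_j\in H^0_m(\ol M)$ solves $\ddbar f_j=0$ on $\ol M$.

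\textbf{Main obstacle.} The delicate point is controlling the stabilizer contributions $\alpha=1,\dots,r$ in \eqref{e-gue190816yydl} and choosing $V$ small enough that, after passing to the $S^1$-quotient, distinct translates $g_\alpha^{-1}x$ of a point do not collide in a way that destroys point separation, while at the same time keeping $V$ large enough to cover all of $X_0$ and invariant under the full circle action; reconciling these two requirements, and extracting the transversal (Reeb-direction) derivative information from $d_z\phi(x,x)=-\omega_0(x)-id\rho(x)$ rather than just the complex-tangential derivatives, is where the real work lies.
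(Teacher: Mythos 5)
Your overall strategy (start from the $S^1$-equivariant kernel asymptotics of Theorem~\ref{t-gue190817yydh}, produce peak sections, then globalize by covering $X_0$) matches the paper's starting point, which obtains the embedding of $X_0$ and the immersion property on a one-sided neighborhood by repeating the proof of \cite[Theorem 1.2]{HHL18}. The genuine gap is in your separation-of-points step inside the collar. The expansion \eqref{e-gue190816yydl} only carries information \emph{on} the boundary: since ${\rm Im\,}\phi(x,y)>0$ whenever $(x,y)\notin(U\times U)\cap(X\times X)$, the factor $e^{im\phi(x,y)}$ is exponentially small as soon as $x$ or $y$ lies in the interior, so ``sharp peaking after normalization'' gives no lower bound on $\abs{\Phi_{m_i}(x)-\Phi_{m_i}(y)}$ for interior points, and $B^{(0)}_m(x,x)>0$ at interior points is not a consequence of the asymptotics either (fortunately nonvanishing is not needed for an affine embedding). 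Worse, the exponents $m_1,\ldots,m_k$ must be \emph{fixed} before the collar $V$ is chosen, so you cannot invoke $m\to\infty$ estimates uniformly over a neighborhood whose thickness is selected afterwards. The paper closes exactly this gap by a soft argument that uses no asymptotics off $X_0$: once $\Phi_{m_1,\ldots,m_k}$ embeds the compact manifold $X_0$ and is an immersion on $U\cap\ol M$ (hence injective on small one-sided neighborhoods $U_{x_0}\cap\ol M$), one sets $\delta_{x_0}=\inf\{\abs{\Phi(x)-\Phi(y)}:x\in\Omega_{x_0}\cap X_0,\ y\in X_0\setminus W_{x_0}\}>0$ and shrinks the invariant collar $V$ so that any point of $V\cap\ol M$ outside $U_{x_0}$ has image $\delta_{x_0}/2$-close to the image of a boundary point outside $W_{x_0}$; the triangle inequality then yields global injectivity on $V\cap\ol M$. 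Some version of this continuity--compactness step (or another argument not relying on kernel decay at interior points) must replace your peaking claim.

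A secondary inaccuracy: at a point $p$ with nontrivial stabilizer $N_p=\{g_0,\ldots,g_r\}$ the diagonal value is governed by $\sum_{\alpha}g_\alpha^m\,b_\alpha(p,p,m)$, and because of the oscillating factors $g_\alpha^m$ the leading terms can cancel for certain residues of $m$; so the claimed asymptotics $B^{(0)}_m(x,x)\sim\abs{N_p}\pi^{-n}\abs{\det\mathcal L_x}m^n$ and the positivity of $B^{(0)}_m(x,x)$ for \emph{all} large $m$ are not correct as stated. This cancellation is precisely why the theorem requires a finite collection $m_1,\ldots,m_k$ rather than a single large $m$, as in the argument of \cite{HHL18} that the paper invokes; your ``finite list of twists'' gestures at this, but the quantitative statement you use along the way is wrong.
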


\begin{proof}
Fix $m_0\in\mathbb N$. 
By using Theorem~\ref{t-gue190817yydh}, we can repeat 
the proof of~\cite[Theorem 1.2]{HHL18} with minor change and conclude that 
we can find $m_1\in \mathbb N, \ldots, m_k\in\mathbb N$, 
with $m_j\geq m_0$, $j=1,\ldots,k$, such that 
\begin{equation}\label{e-gue190904yyd}
\mbox{$\Phi_{m_1,\ldots,m_k}: X_0\To\Complex^{\hat d_m}$ is an embedding}
\end{equation}
and there is a $S^1$-invariant open set $U$ of $X_0$ such that  
\begin{equation}\label{e-gue190904yydI}
\mbox{$\Phi_{m_1,\ldots,m_k}: U\cap\ol M\To\Complex^{\hat d_m}$ is an immersion}.
\end{equation}
Fix $x_0\in X_0$. From \eqref{e-gue190904yydI}, 
it is straightforward to see that there are $S^1$-invariant open 
sets $\Omega_{x_0}\Subset W_{x_0}\Subset U_{x_0}$ of $x_0$ in $M'$ such that 
\begin{equation}\label{e-gue190904yydII}
\mbox{$\Phi_{m_1,\ldots,m_k}: 
U_{x_0}\cap\ol M\To\Complex^{\hat d_m}$ is injective}.
\end{equation}
Let 
\begin{equation}\label{e-gue190904ycda}
\delta_{x_0}:=\inf\set{\abs{\Phi_{m_1,\ldots,m_k}(x)-
\Phi_{m_1,\ldots,m_k}(y)};\, x\in \Omega_{x_0}\cap X_0, 
y\in X_0, y\notin W_{X_0}\cap X_0}.
\end{equation}
From \eqref{e-gue190904yyd}, we see that $\delta_{x_0}>0$. 
Let $V^{x_0}$ be a small $S^1$-invariant open set of $X_0$ in $M'$ such that 
for every $x\in V^{x_0}\cap\ol M$, $x\notin U_{x_0}$, 
there is a $y\in X_0$, $y\notin W_{X_0}\cap X_0$, such that 
\begin{equation}\label{e-gue190904ycd}
\abs{\Phi_{m_1,\ldots,m_k}(x)-\Phi_{m_1,\ldots,m_k}(y)}\leq\frac{\delta_{x_0}}{2}. 
\end{equation}
Assume that $X_0=\bigcup^N_{j=1}\Bigr(\Omega_{x_j}\cap X_0\Bigr)$, 
$N\in\mathbb N$, and let 
\[V:=U\cap\Bigr(\cap^N_{j=1}V^{x_j}\Bigr)\cap
\Bigr(\bigcup^N_{j=1}\Omega_{x_j}\Bigr),\] 
where $\Omega_{x_j}$, $V^{x_j}$, $j=1,\ldots,N$, 
are as above, and $U$ is as in \eqref{e-gue190904yydI}. 
From \eqref{e-gue190904yydI}, we see that 
$\Phi_{m_1,\ldots,m_k}: V\cap\ol M\To\Complex^{\hat d_m}$
is an immersion. We claim that 
$\Phi_{m_1,\ldots,m_k}: V\cap\ol M\To\Complex^{\hat d_m}$ 
is injective. Let $p, q\in V\cap\ol M$, $p\neq q$. 
We are going to prove that 
$\Phi_{m_1,\ldots,m_k}(p)\neq\Phi_{m_1,\ldots,m_k}(q)$. 
We may assume that $p\in\Omega_{x_1}\cap\ol M$. 
If $q\in U_{x_1}$. From \eqref{e-gue190904yydII}, 
we see that $\Phi_{m_1,\ldots,m_k}(p)\neq \Phi_{m_1,\ldots,m_k}(q)$. 
Assume that $q\notin U_{x_1}$. From the discussion before 
\eqref{e-gue190904ycd}, we see that there is 
$y_0\in X_0$, $y_0\notin W_{x_1}\cap X_0$ such that 
\begin{equation}\label{e-gue190904ycdb}
\abs{\Phi_{m_1,\ldots,m_k}(p)-\Phi_{m_1,\ldots,m_k}(y_0)}\leq\frac{\delta_{x_1}}{2}. 
\end{equation}
From \eqref{e-gue190904ycdb} and \eqref{e-gue190904ycda}, we have 
\[\begin{split}
&\abs{\Phi_{m_1,\ldots,m_k}(p)-\Phi_{m_1,\ldots,m_k}(q)}\\
&\geq \abs{\Phi_{m_1,\ldots,m_k}(p)-\Phi_{m_1,\ldots,m_k}(y_0)}-
\abs{\Phi_{m_1,\ldots,m_k}(y_0)-\Phi_{m_1,\ldots,m_k}(q)}\\
&\geq\delta_{x_1}-\frac{\delta_{x_1}}{2}>0.
\end{split}\]
Hence, $\Phi_{m_1,\ldots,m_k}(p)\neq \Phi_{m_1,\ldots,m_k}(q)$. 
We have proved that $\Phi_{m_1,\ldots,m_k}: V\cap\ol M\To\Complex^{\hat d_m}$ 
is injective. 
The theorem follows. 
\end{proof}

Without $Z(1)$ condition, we can still have the following 
$S^1$-equivaraint embedding theorem

\begin{thm}\label{t-gue190817yydII}
With the notations and assumptions used in the discussion 
before Theorem~\ref{t-gue190817yydhi}, assume that the Levi form 
is positive on $X_0$.  For every $m_0\in\mathbb N$, there exist a 
$S^1$-invariant open set $V$ of $X_0$ and 
$f_j(x)\in\cC^\infty(V\cap\ol M)$, $j=1,\ldots,K$, 
with $\ddbar f_j=0$ on $V\cap\ol M$, $f_j(e^{i\theta}x)=
e^{im_j\theta}f(x)$, $m_j\geq m_0$, $j=1,\ldots,K$, 
for every $e^{i\theta}\in S^1$ and every $x\in V$,  such that the map 
\begin{equation}\label{e-gue190817ycdII}
\begin{split}
\Phi: V\cap\ol M&\To\Complex^{K},\\
x&\To (f_1(x),\ldots,f_K(x)),
\end{split}
\end{equation}
is a holomorphic embedding.
\end{thm}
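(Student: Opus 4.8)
\textbf{Proof proposal for Theorem~\ref{t-gue190817yydIIi}.}
The plan is to reduce the statement to the $S^1$-equivariant Bergman kernel asymptotics of Theorem~\ref{t-gue190816yyydh} (with $q=0$) on a neighborhood of the point $p\in X_0$, exactly as in the proof of Theorem~\ref{t-gue190817yydI}, but \emph{without} invoking the $Z(1)$ condition on all of $X$ and hence without asserting that $\Box^{(0)}$ has global $L^2$ closed range. Since the Levi form is assumed positive only on $X_0$, we cannot use Folland--Kohn to get global closed range; instead, we fix $p\in X_0$, choose an open set $U\subset M'$ with $U\cap X_0\neq\emptyset$ on which the Levi form is positive definite, and observe that $\Box^{(0)}$ trivially has local closed range in $U$ in the sense of Definition~\ref{d-gue190609yyd}. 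Indeed, for $W\Subset U$ with $W\cap X_0\neq\emptyset$ and the Levi form positive on $\ol W$, the standard subelliptic $\frac12$-estimate of Kohn holds for $u\in\Omega^{0,0}_0(W\cap\ol M)\cap{\rm Dom\,}\Box^{(0)}$, which combined with the fact that $H^0(\ol M)$ is the nullspace of $\Box^{(0)}$ gives the required estimate $\norm{(I-B^{(0)})u}_M\leq C_W\norm{\Box^{(0)}u}_M$.

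With local closed range in hand, Theorem~\ref{t-gue190816yyydh} applies at $p$ (here $n_-=0$ since the Levi form is positive, so $q=0=n_-$), giving on a neighborhood $U$ of $p$
\[
B^{(0)}_m(x,y)\equiv\sum^r_{\alpha=0}g^m_\alpha e^{im\phi(x,g_\alpha y)}b_\alpha(x,y,m)\mod O(m^{-\infty})\ \ \mbox{on $U\cap\ol M$},
\]
with $b_{\alpha,0}(x,x)=b_0(x,x)$ nonvanishing on $U\cap X_0$ by \eqref{e-gue190531yyda}. From this expansion one extracts, by the now-standard partial Szeg\H{o}/Bergman kernel argument (compare the proof of~\cite[Theorem 1.2]{HHL18} and the local computations already used for Theorem~\ref{t-gue190817yydhi}), local $S^1$-equivariant holomorphic functions $f_j\in\cC^\infty_{m_j}(\ol M)$ defined near $p$, with $m_j\geq m_0$ and $\ddbar f_j=0$, whose differentials together with the values $(f_j(p))_j$ separate the $1$-jet of the CR/complex structure at $p$: concretely, one uses the leading term of $B^{(0)}_m$ to produce, after rescaling, functions of the form $f_j(x)=\chi(x)\int e^{im\phi(x,w)t}(\cdots)\,dtdv_M(w)$ restricted to the appropriate weight space, which in Kohn-type normal coordinates at $p$ approximate the coordinate functions $z_1,\ldots,z_n$ up to lower order. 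This yields an immersion near each point of $X_0$ that is also locally injective.

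To globalize along $X_0$ into a genuine embedding on an $S^1$-invariant neighborhood $V$ of $X_0$, I would repeat verbatim the covering and injectivity argument from the proof of Theorem~\ref{t-gue190817yydI}: cover the compact set $X_0$ by finitely many $S^1$-invariant sets $\Omega_{x_1},\ldots,\Omega_{x_N}$ on each of which the constructed map $\Phi=(f_1,\ldots,f_K)$ (taking $K$ to be the total number of functions obtained over the finite cover) is injective on a larger set $U_{x_j}$, use the compactness-based positivity of the separation constants $\delta_{x_j}$ on $X_0$ to control points that lie in different members of the cover, and shrink to $V:=\bigl(\cap_j V^{x_j}\bigr)\cap\bigl(\cup_j\Omega_{x_j}\bigr)$. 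Since $\ddbar f_j=0$ and $\Phi$ is an injective immersion on $V\cap\ol M$, it is a holomorphic embedding, and the $S^1$-equivariance $f_j(e^{i\theta}x)=e^{im_j\theta}f_j(x)$ is built in by construction.

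The main obstacle is the first step: establishing that \emph{local} closed range of $\Box^{(0)}$ in $U$ suffices to run Theorem~\ref{t-gue190816yyydh}, and checking that the constructions in~\cite{HHL18} and in the proof of Theorem~\ref{t-gue190817yydhi}, which were carried out under a global hypothesis, go through using only the local expansion near $X_0$. In particular one must verify that the finitely many weight spaces $H^0_{m_j}(\ol M)$ still contain globally defined $S^1$-equivariant holomorphic functions whose restrictions to $V$ realize the local data — this is where one invokes that $H^0_{\leq\lambda,m}(\ol M)$ is finite-dimensional and consists of smooth forms (by~\cite[Theorem 3.3]{HHLS}), so that the asymptotic partial Bergman kernel genuinely comes from true global sections, and a gluing/cutoff argument transfers the local embedding data to these global sections at the cost of enlarging the index set. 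Everything else is a routine adaptation of arguments already present in the paper.
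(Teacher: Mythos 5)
There is a genuine gap at the very first step of your argument. You claim that positivity of the Levi form on $U\cap X_0$ alone gives the local closed range of $\Box^{(0)}$ in $U$ (Definition~\ref{d-gue190609yyd}), justified by ``the standard subelliptic $\frac12$-estimate of Kohn'' for $u\in\Omega^{0,0}_0(W\cap\ol M)\cap{\rm Dom\,}\Box^{(0)}$. This is not available: Kohn's subelliptic estimate holds for $(0,q)$-forms where condition $Z(q)$ holds, and at a strictly pseudoconvex boundary point $Z(0)$ \emph{fails} (the Levi form has only $n-1$ positive eigenvalues, not $n$). That failure is exactly why the Bergman projection is not smoothing and why the whole paper is needed. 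More importantly, the estimate $\norm{(I-B^{(0)})u}_M\leq C_W\norm{\Box^{(0)}u}_M$ involves the global Bergman projection and is of closed-range type; it does not follow from the local geometry of $U\cap X_0$. This is precisely why the paper treats local closed range as an extra hypothesis in Theorem~\ref{t-gue190709yyd} and verifies it only under additional global input (closed range via $Z(1)$ and Folland--Kohn/H\"ormander, or via Takegoshi's theorem for high powers of a positive line bundle in Theorem~\ref{t-gue190905ycd}). If local Levi positivity implied local closed range for $q=0$, those hypotheses and arguments would be superfluous. The later part of your plan also overreaches: the statement only asks for holomorphic functions on $V\cap\ol M$, yet you try to realize them as restrictions of global elements of $H^0_{m}(\ol M)$ by a ``gluing/cutoff argument''; that transfer would require solving $\ddbar$ globally on $M$ with estimates, which is exactly what is unavailable without further assumptions.

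The paper avoids all of this with a short reduction: assuming $X_0=\{\rho=0\}$, it introduces the thin corona domain $\hat M=\{-\varepsilon<\rho(z)<0\}$, observes that $X_0$ is a connected component of $\partial\hat M$ and that $Z(1)$ holds on $\partial\hat M$ (so the closed range input needed for Theorem~\ref{t-gue190817yydI} is available for $\hat M$), and then applies Theorem~\ref{t-gue190817yydI} to $\hat M$. The resulting $S^1$-equivariant holomorphic functions live on the collar and restrict to an $S^1$-invariant neighborhood $V$ of $X_0$, which is all the theorem requires. To repair your proposal you would either have to supply a genuine proof of local closed range of $\Box^{(0)}$ in $U$ for the original $M$ (which is not known from Levi positivity alone), or switch, as the paper does, to an auxiliary domain on which closed range can be established.
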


\begin{proof}
We may assume that $X_0=\set{x\in M';\, \rho(x)=0}$. Consider the Corona domain 
\[\hat M:=\set{x\in M';\, -\varepsilon<\rho(x)<0},\]
where $\varepsilon>0$ is a small constant. 
Then $\hat M$ is a complex manifold with smooth boundary 
$\hat X$. Moreover, it is easy to see that $X_0$ is an open connected 
component of $\hat X$ and $Z(1)$ holds on $\hat X$. Hence, 
we can apply Theorem~\ref{t-gue190817yydI} to get Theorem~\ref{t-gue190817yydII}. 
\end{proof}

\bibliographystyle{plain}

\end{document}